\numberwithin{equation}{section}
\newtheorem{theorem}{\textbf{Theorem}}[section]
\newtheorem{theorem*}{\textbf{Theorem}}
\newtheorem{definition}[theorem]{\textbf{Definition}}
\newtheorem{proposition}[theorem]{\textbf{Proposition}}
\newtheorem{lemma}[theorem]{\textbf{Lemma}}
\newtheorem{step}[theorem*]{\textbf{Step}}
\newtheorem{corollary}[theorem]{\textbf{Corollary}}
\newtheorem{remark}[theorem]{\textbf{Remark}}
\newtheorem{assumption}[theorem]{\textbf{Assumption}}
\newtheorem{example}[theorem]{\textbf{Example}}
\newtheorem{definition/proposition}[theorem]{\textbf{Definition/Proposition}}
\providecommand{\customgenericname}{}
\newcommand{\newcustomtheorem}[2]{%
	\newenvironment{#1}[1]
	{%
		\renewcommand\customgenericname{#2}%
		\renewcommand\theinnercustomgeneric{##1}%
		\innercustomgeneric
	}
	{\endinnercustomgeneric}
}
\def\A{{\mathbb A}}
\def\N{{\mathbb N}}
\def\R{\mathbb{R}}
\def\Z{{\mathbb Z}}
\def\C{{\mathbb C}}
\def\E{{\mathbb E}}
\def\F{{\mathbb F}}
\def\W{{\mathbb W}}
\def\H{{\mathbb H}}
\def\T{{\mathbb T}}
\def\K{{\mathbb K}}
\def\M{{\mathbb M}}
\def\i{\iota}
\def\cA{{\mathcal A}}
\def\cC{{\mathcal C}}
\def\cE{{\mathcal E}}
\def\cF{{\mathcal F}}
\def\cH{{\mathcal H}}
\def\cL{{\mathcal L}}
\def\cM{{\mathcal M}}
\def\cN{{\mathcal N}}
\def\cO{{\mathcal O}}
\def\cP{{\mathcal P}}
\def\cQ{{\mathcal Q}}
\def\cT{{\mathcal T}}
\def\cU{{\mathcal U}}
\def\cV{{\mathcal V}}
\def\cW{{\mathcal W}}
\def\cX{{\mathcal X}}
\def\cY{{\mathcal Y}}
\def\cZ{{\mathcal Z}}
\def\bC{{\bm C}}
\def\bE{{\bm E}}
\def\bF{{\bm F}}
\def\bO{{\bm O}}
\def\bQ{{\bm Q}}
\def\bT{{\bm T}}
\def\bU{{\bm U}}
\def\bV{{\bm V}}
\def\bW{{\bm W}}
\def\bX{{\bm X}}
\def\bY{{\bm Y}}
\def\bZ{{\bm Z}}
\def\hX{{\widehat\cX}}
\def\hZ{{\widehat Z}}
\def\os{{\overline{s}}}
\def\rD{{\rm D}}
\def\rd{{\rm d}}
\def\sc{{\rm sc}}
\def\la{\langle\,}
\def\ra{\,\rangle}
\def\st{\: \big| \:}
\DeclareMathOperator{\Ima}{im}
\DeclareMathOperator{\ind}{ind}
\DeclareMathOperator{\Hom}{Hom}
\DeclareMathOperator{\Id}{id}
\DeclareMathOperator{\ID}{Id}
\DeclareMathOperator{\stab}{stab}
\DeclareMathOperator{\supp}{supp}
\DeclareMathOperator{\diff}{Diff}
\DeclareMathOperator{\coker}{coker}
\DeclareMathOperator{\mor}{Mor}
\DeclareMathOperator{\obj}{Obj}
\DeclareMathOperator{\Map}{Map}
\DeclareMathOperator{\Fix}{Fix}
\DeclareMathOperator{\rank}{rank}
\DeclareMathOperator{\codim}{codim}
\DeclareMathOperator{\End}{End}
\DeclareMathOperator{\eff}{eff}
\title{Quotient Theorems in Polyfold Theory and $S^1$-Equivariant Transversality}
\author{Zhengyi Zhou}
\begin{document}
	\maketitle
\begin{abstract}
We introduce group actions on polyfolds and polyfold bundles. We prove quotient theorems for polyfolds, when the group action has finite isotropy. We prove that the sc-Fredholm property is preserved under quotient if the base polyfold is infinite dimensional. The quotient construction is the main technical tool in the construction of equivariant fundamental class in \cite{equi}. We also analyze the equivariant transversality near the fixed locus in the polyfold setting. In the case of $S^1$-action with fixed locus, we give a sufficient condition for the existence of equivariant transverse perturbations. We outline the application to Hamiltonian-Floer cohomology and a proof of the weak Arnold conjecture for general symplectic manifolds, assuming the existence of Hamiltonian-Floer cohomology polyfolds.
\end{abstract}
\tableofcontents
\section{Introduction}
Polyfold theory \cite{HWZ1,HWZ3,hofer2010sc,HWZ2,hofer2017polyfold} developed by Hofer, Wysocki, and Zehnder provides an analytic framework to solve the regularization problem of moduli spaces in symplectic geometry. The moduli spaces in symplectic geometry, e.g. Gromov-Witten moduli spaces \cite{hofer2017applications} and Floer type moduli spaces \cite{hofer2017application,li,wehrheim2012fredholm}, can be realized as zero sets of sc-Fredholm sections of polyfold bundles. The abstract perturbation machinery in polyfold theory assures that the sc-Fredholm sections can be perturbed into transverse positions, such that the perturbed zero sets have smooth structure and remain compact. Such process is referred to as polyfold regularization and perturbed smooth compact moduli spaces are referred to as regularized moduli spaces. The integration theory developed in \cite{hofer2010integration} then provides meaningful ``counting" by integrating differential forms on the regularized moduli spaces.  Those numbers form atomic components of algebraic invariants like Gromov-Witten invariants and symplectic field theory (SFT).

In this paper, we study group actions on polyfold bundles and quotient constructions in polyfold theory. There are mainly two types of group actions appearing in moduli problems in symplectic geometry. One is actions induced by actions on the ambient space. As a typical example,  if a compact Lie group $G$ acts on the symplectic manifold preserving the symplectic form, then the $G$-action lifts to an action on the Gromov-Witten moduli spaces for any $G$-invariant almost complex structure. Such consideration leads to equivariant Gromov-Witten invariants developed in \cite{givental1996equivariant}. Many equivariant Floer theories \cite{bourgeois2016s,hendricks2016flexible,hendricks2016simplicial,seidel2014equivariant} also fit into this category. The other type is actions induced by reparametrization on the domain Riemann surfaces. A typical example is the $S^1$-action induced by rotating the $S^1$-coordinate of the cylinders in Hamiltonian-Floer theory when both the Hamiltonian and the almost complex structure are time independent. This $S^1$-action plays an important role in Floer's proof \cite{floer1989symplectic} of the weak Arnold conjecture. In this paper, we introduce group actions on polyfolds such that the group actions on the moduli spaces in examples above is a restriction of group actions on the ambient polyfolds. The goal of this paper is showing that the quotients of polyfold bundles exists uniquely and equivariant sc-Fredholm sections induce sc-Fredholm sections on quotients. 
 
\subsection{Main result}
Roughly speaking, a polyfold is a topological space $Z$ which contains a complicated moduli space $\cM:=s^{-1}(0)$ as zero set of a section $s:Z\to W$.  Polyfolds have level structures, i.e. we have a sequence of continuous inclusions of subsets equipped with different topology $\ldots Z_{i+1}\subset Z_{i}\subset \ldots \subset Z_0 := Z$ and $Z_\infty := \cap_{i\in \N} Z_i$ is dense in $Z_i$ for all $i\ge 0$. Then we can shift the levels up by $k$ to get another polyfold $Z^k$ by $(Z^k)_i:=Z_{k+i}$. All the information needed to regularize $\cM$ is encoded in $Z_\infty$, which contains $\cM$ and any of its regularizing perturbations. The level structure is required only to express in what sense the section $s:Z_\infty\to W_\infty$ is sc-smooth and sc-Fredholm, and forgetting the first finitely many levels is immaterial. Thus the regularization theory for $\cM\subset Z^k$ is independent of the shift $k\ge 0$. Our main result is the following quotient theorem after a level shift.

\begin{theorem}\label{thm:main}
	Let $Z$ be an infinite-dimensional tame polyfold, $p:W\to Z$ be a regular tame strong polyfold bundle (Definition \ref{def:regbund}) and $s:Z\to W$ a proper sc-Fredholm section.  Assume a compact Lie group $G$ acts on $p$ sc-smoothly (Definition \ref{def:actbundle}) such that the induced $G$-action on $Z$ only has finite isotropy and $s$ is $G$-equivariant. Then the following holds.
	\begin{enumerate}
		\item There is a $G$-invariant open set $\hZ\subset Z^2$ containing $Z_\infty$, such that induced projection $\overline{p}: p^{-1}(\hZ)/G\to\hZ/G$ is a strong polyfold bundle and the quotient map $\pi_G:p^{-1}(\hZ) \to p^{-1}(\hZ)/G $ is $\sc^\infty$ strong bundle map.
		\item $s$ induces a proper sc-Fredholm section $\os:\hZ^1/G\to p^{-1}(\hZ^1)/G$ by $\pi_G^*\os=s|_{\hZ^1}$.
		\item If $s$ is oriented and the $G$-action preserves the orientation (Definition \ref{def:preori}), then $\os$ is orientable.
	\end{enumerate} 
\end{theorem}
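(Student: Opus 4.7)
The plan is to build the quotient polyfold and quotient bundle via $G$-equivariant sc-smooth slices at each orbit through $Z_\infty$. Since all isotropy groups $G_z$ are finite, every orbit $G\cdot z$ with $z\in Z_\infty$ is a smooth compact submanifold of dimension $\dim G$, and the technical engine of (1) is an equivariant slice theorem in the sc-smooth category: at each $z\in Z_\infty$ I would produce a $G_z$-invariant open subset $S_z$ of a local sc-retract model for $Z$ at $z$ such that the twisted product map $G\times_{G_z} S_z \to Z$ is a sc-diffeomorphism onto a $G$-invariant open neighborhood of $G\cdot z$. I would construct $S_z$ by averaging over the finite group $G_z$ to obtain an invariant sc-complement to the orbit direction, then apply the sc-implicit function theorem to straighten the $G$-action; this combination loses two levels of regularity, which is precisely why the quotient polyfold lives on $Z^2$ rather than $Z$.

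Once the slices are in hand, set $\hZ$ to be the $G$-saturation of $\bigcup_{z\in Z_\infty} S_z$; this is a $G$-invariant open subset of $Z^2$ containing $Z_\infty$. Near $[z]\in \hZ/G$ the quotient has the form $S_z/G_z$, which is a tame polyfold chart because $G_z$ is finite and acts by tame sc-diffeomorphisms on the tame $S_z$; transitions between overlapping charts are sc-smooth by uniqueness of slices up to equivariant sc-diffeomorphism. The strong bundle $p$ trivializes $G$-equivariantly over each $G\cdot S_z$ as $G\times_{G_z}(S_z\times F_z)$ with $F_z$ the local fiber retract carrying a linear $G_z$-action, and these descend to strong bundle charts $(S_z\times F_z)/G_z$ for $\overline{p}$. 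In these charts the quotient map $\pi_G$ is visibly an $\sc^\infty$ strong bundle map, establishing (1).

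For (2), $\os$ is forced pointwise by equivariance and is sc-smooth by restriction to slices. For the Fredholm property, at any $z\in \hZ^1$ the orbit directions satisfy $Ds(z)(\cg\cdot z)=\cg\cdot s(z)$ by differentiating equivariance, so restricting the filled sc-Fredholm linearization $Ds(z)$ to $T_zS_z$ quotients out a $\dim G$-dimensional subspace and yields a sc-Fredholm operator for $D\os$ of index $\ind Ds - \dim G$; the infinite-dimensionality hypothesis on $Z$ guarantees that the sc-Banach complement in the Fredholm filling retains its sc-type after removal of the finite-dimensional orbit subspace, so the sc-scale structure needed for the Fredholm definition survives. Properness of $\os$ follows because for compact $K\subset p^{-1}(\hZ^1)/G$ the lift $s^{-1}(\pi_G^{-1}(K))$ is $G$-invariant and compact by properness of $s$ and compactness of $G$, so its projection $\os^{-1}(K)$ is compact. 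For (3), a chosen orientation of $\cg$ together with the $G$-invariant orientation of $\det(Ds)$ induces an orientation of $\det(D\os)$ on each slice via the canonical splitting of the $\dim G$-dimensional orbit direction, and $G$-invariance of both inputs ensures these slice-wise orientations glue to a well-defined orientation on $\hZ^1/G$.

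The main obstacle I expect is the equivariant sc-smooth slice theorem with the sharp level loss of only two. Combining invariant averaging over $G_z$, the sc-implicit function theorem, and compatibility with the tame sc-retract models must be carried out so that the induced sc-diffeomorphism $G\times_{G_z} S_z \to Z$ costs exactly two levels, which requires care in how the sc-scale structure of $Z$ interacts with the linearization of the $G$-action near points with positive-dimensional orbits and nontrivial finite stabilizer. Once this local slice statement is established, the gluing into a quotient polyfold structure on $\hZ$, the bundle statement, and the Fredholm and orientation descents are comparatively formal.
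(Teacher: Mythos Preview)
Your outline captures the rough geometric picture, but there are two genuine gaps, one technical and one structural.

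\textbf{The sc-Fredholm descent is not a linearization argument.} You write that restricting $Ds(z)$ to $T_zS_z$ ``yields a sc-Fredholm operator for $D\os$'', and that infinite-dimensionality ensures the complement ``retains its sc-type''. But in polyfold theory a section is sc-Fredholm not when its linearization is Fredholm, but when at every smooth point it admits a filling that is, after a strong bundle change of coordinates, a \emph{basic germ} with the contracting property (Definition~\ref{def:basic}, Definition~\ref{def:fredchart}). The naive Fredholm-linearization condition does not imply the implicit function theorem in sc-calculus (see~\cite{counter}). The paper therefore introduces the notion of a \emph{good slice} (Definition~\ref{def:good}): a slice whose defining change of coordinates $h$ is compatible with the Fredholm chart isomorphism $A$ in the precise sense that $\pi_{\R^n}\circ D(h\circ\alpha^{-1})_{(0,0)}|_{\{0\}\times\W}$ is surjective. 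Only for good slices does the restricted section inherit a basic germ (Lemma~\ref{lemma:fred}), and the infinite-dimensional hypothesis enters exactly in Proposition~\ref{prop:goodslice} to guarantee that a good slice can always be found by choosing the complement $\H$ to the infinitesimal orbit directions appropriately. Your argument establishes only that $D\os$ is a Fredholm operator, which is insufficient.

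\textbf{The polyfold isotropy and the group action do not combine as $S_z/G_z$.} You model the quotient locally on $S_z/G_z$, but $Z$ is a polyfold, not an M-polyfold: each point already carries isotropy $\stab_x$ from the ep-groupoid structure. Moreover, the $G$-action on a polyfold (Definition~\ref{def:action}) is a functor $BG\to\mathscr{P}$ that is only \emph{locally representable} by sc-smooth functors between possibly different polyfold structures; there is no direct $G$-action on a fixed object M-polyfold. The paper handles this by constructing, for a fixed polyfold structure $(\cX,\bX)$, the set of \emph{local lifts} $L_\cX(g,x,y)$ (Definition~\ref{def:liftset}) and proving (Proposition~\ref{prop:lifting}, Proposition~\ref{prop:property}) that these form a groupoid-like structure on which the original isotropy acts transitively. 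The quotient local uniformizer is then $\stab^Q_{x_0}\ltimes\tilde\cU_{x_0}$ where $\stab^Q_{x_0}$ sits in an exact sequence $1\to\stab^{\eff}_{x_0}\to\stab^Q_{x_0}\to G_{x_0}\to 1$, mixing the polyfold isotropy with the $G$-isotropy. The \emph{regularity} hypothesis on the polyfold bundle (Definition~\ref{def:regbund}) is essential here: without it the local lifts are not controlled (see Example~\ref{ex:three}) and the morphism space of the quotient ep-groupoid can fail to be Hausdorff (Remark~\ref{rmk:reduced}). Your proposal does not engage with any of this machinery.
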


In contrast to group actions on Kuranishi structures considered in \cite{fukaya2017lie,pardon2016algebraic}, one of the major differences is that we do not assume equivariant local models for group actions. Our definition of group action is merely a functor from $G$ to the category of polyfold bundles, plus local representability (which is local in $G$ and does not assume any equivariant property) to characterize the sc-smoothness. In particular, any sc-smooth polyfold bundle map $G\times W \to W$ satisfying the obvious group property is a sc-smooth group action. 

Sc-Fredholm section was introduced in \cite{HWZ2} to replace the naive definition of Fredholm section such that implicit function theorem holds. The novel extra conditions in the sc-Fredholm notion is necessary as the naive definition does not imply  implicit function theorem \cite{counter}. Therefore preserving sc-Fredholm property is the most important property in any abstract constructions of polyfolds. Unlike the fiber product construction in \cite{ben2018fiber}, the quotient problem has some flexibility in its nature. It turns out the infinite dimensional assumption  in Theorem \ref{thm:main} provides enough flexibility such that sc-Fredholm property is preserved under quotient.

\subsection{Applications}
Theorem \ref{thm:main} provides an infrastructure to study equivariant theories in symplectic geometry using polyfolds. In the following, we discuss briefly some applications of the quotient theorem. 
\subsubsection{Equivariant fundamental class}
One consequence of Theorem \ref{thm:main} is that the Borel construction works in the polyfold category. Let $p:W\to Z$ be a regular strong polyfold bundle with a $G$-action and $s:Z\to W$ an equivariant oriented proper sc-Fredholm section. Assume $Z$ is infinite dimensional and has no boundary and the $G$-action preserves the orientation of $s$. We can construct an equivariant fundamental class using the Borel construction. That is we have a well-defined $H^*_G(pt)$ module map:
\begin{equation}\label{eqn:efc}
s_*:H_G^*(Z) \to H^{*-\ind s}_G(pt).
\end{equation}
The details of the construction and the precise statements will appear in \cite{equi}. As an application of the equivariant fundamental class, let $p_{A,g,m}:W_{A,g,m}\to Z_{A,g,m}$ be the Gromov-Witten polyfold bundle defined in \cite{hofer2017applications} for a symplectic manifold $(M,\omega)$ with a compatible almost complex structure $J$, where $A \in H_2(M)$ is a homology class and $g, m\in N$ are the genus and number of marked points. Assume $G$ acts on $M$ preserving $J$, then the $G$-action lifts to $p_{g,A,m}$ such that the sc-Fredholm section $s_{A,g,m}:Z_{A,g,m}\to W_{A,g,m}$ induced by the Cauchy-Riemann operator is $G$-equivariant. Then equivariant Gromov-Witten invariants can be defined as $$I_{A,g,m}:H_G^*(M)^{\otimes m} \to H^*_G(pt), \quad \alpha_1\otimes \ldots \otimes \alpha_m \mapsto {s_{A,g,m}}_*(ev_1^*\alpha_1\wedge \ldots \wedge ev_m^*\alpha_m),$$
where $ev_i$ is the evaluation map at $i$-th marked point. One property of \eqref{eqn:efc} is that when equivariant transversality holds, $s_*$ is the integration over the moduli space $s^{-1}(0)$. As a consequence, the equivariant Gromov-Witten invariants defined using $s_*$ coincides with the invariants in \cite{givental1996equivariant}. The other more important property of $s_*$ is the localization property for torus actions. Under some technical assumptions, which is satisfied in the Gromov-Witten polyfolds, we show in \cite{equi} that we have a formula in the following form:
$$s_*(\alpha) = s^{T^n}_*(\alpha \wedge e^{-1}_{T^n}(N)),$$
where $s^{T^n}:Z^{T^n} \to W^{T^n}$ is the restriction of $s$ to the fixed locus and $e^{-1}_{T^n}(N)$ is a class in the localized equivariant cohomology of $Z^{T^n}$, which can be understood as the inverse of the equivariant Euler class of the ``virtual normal bundle" $N$.

The Borel construction also yields construction for equivariant Floer theories. In \cite{MB}, we study Morse-Bott cohomology and equivariant cohomology in the framework of flow categories. In particular, we outline the generalization of equivariant theory to polyfolds with boundary and corners which is expected to give definitions of equivariant Floer cohomology in various settings.

\subsubsection{$S^1$-equivariant transversality and the weak Arnold conjecture}
In this paper, we explore under what conditions equivariant regularization can be achieved by a transverse, yet still equivariant perturbation of the section. In the case of Banach manifolds, if the group action only has finite isotropy, Cieliebak, Riera and Salamon \cite{cieliebak2003equivariant} constructed an Euler class by equivariant multivalued perturbations. Since multivalued perturbation has become a part of the perturbation package in polyfold theory \cite[Chapter 13]{hofer2017polyfold}, using Theorem \ref{thm:main} and the polyfold perturbation machinery, we know that if the group action only has finite isotropy then we can find equivariant multivalued transverse perturbations. 

Beyond the finite isotropy case, it is known that equivariant transversality is often obstructed even in finite dimensional case, e.g. see \cite{costenoble1992}. In this more general case, we analyze the equivariant transversality near the fixed locus in Section \ref{s5}. Theorem \ref{thm:equitran} gives a sufficient condition which guarantees the existence of equivariant transverse $\sc^+$-perturbations near the fixed locus. In the special case of $S^1$-action, since $S^1$-actions have finite isotropy away from the fixed locus,  a global equivariant transverse perturbation exists if equivariant transverse perturbations exist near the fixed locus.

\begin{corollary}\label{s1}
	Suppose $p:W\to Z$ is a regular tame strong polyfold bundle with a proper sc-Fredholm section $s$ and $Z$ is infinite dimensional. Assume $S^1$ acts on $p$ such that $s$ is equivariant and the tubular neighborhood assumption (Definition \ref{def:tube}) holds. Provided that for all weights $\lambda \in \N^+$ and $x\in s^{-1}(0)\cap Z^{S^1}$, we have $\ind \rD^\lambda s_x+2>\ind \rD^{S^1}s_x$. Then there exists a $S^1$-invariant neighborhood $\hZ\subset Z^3$ containing $Z_\infty$ and an equivariant $\sc^+$-multisection perturbation $\kappa$ on $\hZ$, such that 
	\begin{enumerate}
		\item $s+\kappa$ is transverse to zero and proper;
		\item there exist a $S^1$-invariant neighborhood $U\subset Z^3$ of $(Z^{S^1})^3$, such that $\kappa|_U$ is single valued.
	\end{enumerate}
\end{corollary}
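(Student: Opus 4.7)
The plan is to combine the local construction near the fixed locus given by Theorem \ref{thm:equitran} with the quotient construction of Theorem \ref{thm:main} away from the fixed locus, and patch the two perturbations together using an $S^1$-invariant cutoff.

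\textbf{Step 1 (near the fixed locus).} The index inequality $\ind \rD^\lambda s_x + 2 > \ind \rD^{S^1} s_x$ for every $\lambda \in \N^+$ and every $x \in s^{-1}(0)\cap Z^{S^1}$ is by hypothesis exactly the sufficient condition of Theorem \ref{thm:equitran}. Apply that theorem, together with the tubular neighborhood assumption, to obtain an equivariant $\sc^+$-perturbation $\kappa_0$ supported in an $S^1$-invariant open neighborhood $U_0 \subset Z^3$ of $s^{-1}(0) \cap (Z^{S^1})^3$, such that $s + \kappa_0$ is transverse to zero on $U_0$ and $\kappa_0$ is single-valued.

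\textbf{Step 2 (away from the fixed locus).} Fix a smaller $S^1$-invariant open neighborhood $U_0'$ of $(Z^{S^1})^3$ with $\overline{U_0'}\subset U_0$. On $Z\setminus \overline{U_0'}$ every stabilizer is a proper closed subgroup of $S^1$, hence finite cyclic. Theorem \ref{thm:main} therefore provides a quotient strong polyfold bundle $\bar p$ on an $S^1$-invariant neighborhood of $Z_\infty \setminus \overline{U_0'}$ inside $Z^2$, and the equivariant section $s+\kappa_0$ descends to a proper sc-Fredholm section $\overline{s+\kappa_0}$ on the quotient. Apply the standard polyfold $\sc^+$-multisection perturbation theorem from \cite{hofer2017polyfold} to this quotient section to obtain a transverse multivalued $\sc^+$-perturbation, and lift it to an equivariant multivalued $\sc^+$-perturbation $\kappa_1$ on $\hat Z$, with support disjoint from $U_0'$.

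\textbf{Step 3 (patching and properness).} Choose an equivariant cutoff $\beta \in \sc^\infty(Z^3,[0,1])$ with $\beta \equiv 1$ on $U_0'$ and $\supp \beta \subset U_0$. Set
\[
\kappa := \kappa_0 + (1-\beta)\,\kappa_1.
\]
This is an equivariant $\sc^+$-multisection perturbation on $\hat Z$. Conclusion (2) is immediate: on $U_0'$ one has $(1-\beta)\kappa_1 = 0$ and $\kappa_0$ is single-valued, so $\kappa$ is single-valued on $U_0'$. For conclusion (1), single-valued transversality of $s+\kappa_0$ holds on $U_0$ by Step 1, while transversality of $(s+\kappa_0) + (1-\beta)\kappa_1$ outside $\overline{U_0'}$ holds, after quotienting and pulling back, by the transversality produced in Step 2. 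Properness of the perturbed zero set follows from the properness of $s$ and the standard $\sc^+$-compactness package, using that $\kappa_0$ and $\kappa_1$ are compactly supported in the natural polyfold sense.

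\textbf{Main obstacle.} The delicate point is the transition annulus $U_0 \setminus \overline{U_0'}$, where both a single-valued equivariant perturbation and a multivalued equivariant perturbation are superimposed. The key is to choose $\kappa_1$ \emph{after} $\kappa_0$ is fixed: treat $s+\kappa_0$ as the new background section, apply the quotient and perturbation theorems to perturb it further, and exploit the openness of transversality under small $\sc^+$-multisection perturbations to guarantee that the contribution of $(1-\beta)\kappa_1$ in the overlap does not destroy the transversality already achieved by $\kappa_0$. This inductive order of construction, combined with the equivariant compactness of the reduction to $s^{-1}(0)$ used in polyfold perturbation theory, is what makes the patched $\kappa$ both globally transverse and single-valued near the entire fixed locus.
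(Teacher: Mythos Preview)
Your two-step plan --- Theorem \ref{thm:equitran} near the fixed locus, then the finite-isotropy quotient away from it --- is exactly the paper's strategy. Two points deserve attention.

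Step 1 overstates Theorem \ref{thm:equitran}: that theorem only delivers transversality of $s+\kappa_0$ \emph{on} $Z^{S^1}$, not on an open neighborhood $U_0$. The paper fills this gap by invoking openness of transversality for sc-Fredholm sections (\cite[Corollary 3.3]{counter}) to obtain an $S^1$-invariant neighborhood $U\subset Z^3$ of $(Z^{S^1})^3$ on which $s+\kappa_0$ remains transverse and proper. Relatedly, for conclusion (2) you need $U_0'$ to be a neighborhood of all of $(Z^{S^1})^3$, whereas your $U_0$ in Step 1 is only declared to be a neighborhood of $s^{-1}(0)\cap (Z^{S^1})^3$; as written, Steps 1 and 2 are inconsistent on this point, and the fix again goes through the openness argument combined with properness of $s+\kappa_0$.

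The paper also patches more cleanly and avoids your cutoff $\beta$ altogether. It applies the \emph{relative} form of Corollary \ref{coro:p2} on $Z\setminus Z^{S^1}$, taking $C:=\overline{U}\setminus Z^{S^1}$ as a closed set on which $s+\kappa_0$ is already transverse, to produce an equivariant multisection $\tau$ with $\tau|_{C}=\kappa_0|_{C}$; then $\kappa$ is defined piecewise as $\kappa_0$ on $U$ and $\tau$ off $U$, agreeing on the overlap by construction. This eliminates your annulus obstacle entirely: there is no region where a scaled multisection might spoil transversality, and no smallness argument is needed. Your cutoff route can be made rigorous, but it costs the extra step of choosing $\kappa_1$ small in an auxiliary norm so that $(1-\beta)\kappa_1$ stays within the openness threshold on $\overline{U_0}$.
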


In Section \ref{s6}, we apply Corollary \ref{s1} to Hamiltonian-Floer cohomology polyfolds with $C^2$ small time-independent Hamiltonians and time-independent almost complex structures. In this case, the $S^1$-action is given by reparametrization in the $S^1$-coordinate of the domain cylinder. In particular, we can generalize Floer's proof of the weak Arnold conjecture \cite{floer1989symplectic} to any closed symplectic manifold, once the construction of Hamiltonian-Floer polyfolds is completed.

\subsection{Organization and notation}
This paper is organized as follows. In section \ref{s2}, we prove a special form of Theorem \ref{thm:main} for free quotients of M-polyfolds. Sections \ref{s3} introduces group actions on polyfolds and proves Theorem \ref{thm:main} up to the assertion on orientation. Orientation is discussed in Section \ref{s4}. In Section \ref{s5}, we analyze the equivariant transversality on the fixed locus. Section \ref{s6} outlines the proof of the weak Arnold conjecture using Corollary \ref{s1}.
We will assume the basics of sc-Banach spaces and sc-calculus from \cite[Chapter 1]{hofer2017polyfold}. We use the following notation throughout this paper.
\begin{itemize}
	\item Blackboard bold letters $\E, \H, \F$ stand for sc-Banach spaces, \cite[Definition 1.1]{hofer2017polyfold}. $\E_i$ is the $i$-th level Banach space of $\E$ and $\E_\infty: = \cap_{i=0}^\infty \E_i$. Points in $\E_\infty$ are also referred to as smooth points. $||\cdot||_m$ denotes the norm on $\E_m$.
	\item For $k\ge 0$, $\E^k$ is the $k$ levels shifted sc-Banach space, i.e. $(\E^k)_i:=\E_{i+k}$.
	\item An open set $D$ in $\E$ is an open set $D\subset \E_0$. We define $D_k := D\cap \E_k$, which is also open set in $\E_k$ for any $k\in \N$. 
	\item Assume $f:\E\to \F$ is a $\sc^\infty$ map \cite[Definition 1.9]{hofer2017polyfold}. We denote by ${\rm D}f_a(b)\in \F_i$ the image of the linearization ${\rm D}f$ at the point $a\in \E_{i+1}$ with the input vector $b\in \E_i$.
\end{itemize}

\subsection*{Acknowledgements}
The results presented here are part of my Ph.D. thesis; I would like to express my deep gratitude to my thesis advisor Katrin Wehrheim for guidance, encouragement and enlightening discussions. I would like to thank Benjamin Filippenko for explaining his work on fiber products of polyfolds. Part of the writing was completed  during  my  stay  at  the  Institute  for  Advanced  Study  supported  by  the National Science Foundation under Grant No.  DMS-1638352.  It is a great pleasure to acknowledge the Institute for its warm hospitality. This paper is dedicated to the memory of Chenxue.

\section{Free Quotients of M-polyfolds}\label{s2}
M-polyfolds were introduced by Hofer, Wysocki and Zehnder \cite[Definition 2.8]{hofer2017polyfold} as a generalization of manifolds. In this section we prove that the quotient of a tame M-polyfold by a free group action is still a tame M-polyfold and an equivariant sc-Fredholm section descends to a sc-Fredholm section on the quotient. 

\begin{theorem}{\label{thm:free}}
Let $\cX$ be an infinite-dimensional tame M-polyfold  (Definition \ref{def:infty}),  $p :\cY \to \cX$ a tame strong M-polyfold bundle (Definition \ref{strbund}) and $s:\cX \to \cY$ a proper sc-Fredholm section (Definition \ref{def:fredchart}). Assume a compact Lie group $G$ acts on the tame strong M-polyfold bundle $p:\cY \to \cX$ sc-smoothly (Definition \ref{def:act}) such that the induced action $\rho_{\cX}$ on the base $\cX$ is free and the section $s$ is $G$-equivariant. Then the following holds.
\begin{enumerate}
	\item\label{part:freeone} There exists a $G$-invariant open set $\hX\subset \cX^2$ containing $\cX_\infty$, such that the induced projection $\overline{p}:p^{-1}(\hX)/G \to \hX/G$ is a tame strong M-polyfold bundle and the quotient map $\pi_G: p^{-1}(\hX) \to p^{-1}(\hX)/G$ is a sc-smooth strong bundle map.
	\item\label{part:freetwo} $s$ induces a proper sc-Fredholm section  $\os:\hX^1/G\to p^{-1}(\hX^1)/G$ by $\pi_G^*\os = s|_{\hX^1}$.
\end{enumerate}	
\end{theorem}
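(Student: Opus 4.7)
My plan is a local slice theorem followed by patching. Fix a smooth base point $x \in \cX_\infty$. Because the action is sc-smooth and free, the orbit map $G \to \cX$, $g \mapsto g\cdot x$, is a smooth injective immersion, and its differential at $e$ is an injection $\mathfrak{g} \hookrightarrow T_x\cX$ with image $V_x$ consisting of smooth vectors. Since $V_x$ is finite-dimensional and smooth, I can split off a sc-closed complement $N_x \subset T_x\cX$; here the infinite-dimensionality of $\cX$ is what allows $N_x$ to carry a full sc-Banach scale. Using a tame M-polyfold chart at $x$, I would define the slice $S_x$ as the preimage of an affine piece through $x$ modeled on $N_x$, intersected with the sc-retract; tameness of $\cX$ and sc-closedness of the splitting give $S_x$ the structure of a tame M-polyfold of codimension $\dim G$.

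The next step is to show that $\Phi_x\colon G \times S_x \to \cX$, $(g,y) \mapsto g\cdot y$, is an sc-diffeomorphism onto a $G$-invariant open tube $U_x$ around $G\cdot x$. Its differential at $(e,x)$ is $(\xi, v) \mapsto \iota_x(\xi) + v$, where $\iota_x\colon \mathfrak{g} \to V_x$ is the infinitesimal action, so it is the sc-isomorphism $\mathfrak{g} \oplus N_x \xrightarrow{\cong} T_x\cX$, and the sc-smooth inverse function theorem then produces the claimed local sc-diffeomorphism. This is where the level shift from $\cX$ to $\cX^2$ arises: the sc-inverse function theorem requires working one level above the input to obtain an sc-smooth inverse, and patching slices based at points of differing regularity costs another level in order to achieve uniform sc-smoothness on overlaps. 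Defining $\hX \subset \cX^2$ as the union of such tubes yields a $G$-invariant open neighborhood of $\cX_\infty$. The slices $\{S_x\}$ serve as charts on $\hX/G$ with transition maps $S_x \hookrightarrow U_x \cap U_y \xrightarrow{\Phi_y^{-1}} G\times S_y \to S_y$ sc-smooth by construction, giving the tame M-polyfold structure on the quotient. The bundle statement is analogous: $p^{-1}(U_x)$ admits an equivariant strong trivialization of the form $G \times p^{-1}(S_x)$, so $\overline{p}$ is locally $p^{-1}(S_x) \to S_x$, a tame strong M-polyfold bundle; sc-smoothness of $\pi_G$ is immediate since in slice coordinates it is the projection $G\times S_x \to S_x$.

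For part \eqref{part:freetwo}, equivariance of $s$ forces $s|_{S_x}$ to take values in $p^{-1}(S_x)$, so $s$ descends to a well-defined section $\os$ on $\hX^1/G$. The main obstacle is showing $\os$ is sc-Fredholm. Equivariance makes $\rD s_x$ annihilate the orbit direction $V_x$, so the restriction of $\rD s_x$ to $N_x$ is a linear Fredholm operator of index $\ind s - \dim G$; however, the sc-Fredholm notion additionally requires a local Fredholm filling together with level-wise regularization estimates, and the key step is to transfer these to the slice. I would arrange for the ambient filling of $s$ near $x$ to be $G$-equivariant — averaging over $G$ using compactness of $G$ if needed — and then extract a filling for $\os$ by restriction to the slice and splitting off the $G$-factor. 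Verifying that the regularization estimates on the slice still hold, and confirming that the slice-restriction is an sc-Fredholm section in the sense of the polyfold framework, is the subtle technical point, and accounts for the further level shift from $\hX$ to $\hX^1$. Properness of $\os^{-1}(0) = s^{-1}(0)/G$ is then immediate from properness of $s^{-1}(0)$ and compactness of $G$.
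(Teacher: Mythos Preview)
Your proposal has a fundamental gap: you invoke ``the sc-smooth inverse function theorem'' to show that $\Phi_x\colon G\times S_x\to\cX$ is a local sc-diffeomorphism, but no such theorem exists in sc-calculus. The paper explicitly cites \cite{counter} for counterexamples; this failure is precisely why the sc-Fredholm notion requires the elaborate basic-germ machinery rather than just a Fredholm linearization. The paper circumvents this by never inverting $\Phi_x$. Instead it builds a sc-smooth map $f\colon\cO\to V\subset G$ to a \emph{finite-dimensional} target via the special implicit function theorem of Lemma~\ref{lemma:help} (which works because only the $G$-variable is being solved for), and then defines the slice projection $\eta(x)=\rho_{\cX}(f(x),x)$ directly. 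The transition maps between slices are then compositions of $\eta$'s and group actions, all manifestly sc-smooth, with no inversion in infinite dimensions.

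Your treatment of part~\eqref{part:freetwo} also misfires in two places. First, the filling $s^f$ lives on the ambient open set $U\subset\R_+^m\times\E$, not on the retract, and the $G$-action is only defined on the retract; there is no action on $U$ to average over, so ``make the filling $G$-equivariant by averaging'' is not available. Second, and more importantly, you have misplaced the role of infinite-dimensionality. Any sc-complement of a finite-dimensional smooth subspace already carries a full sc-structure; that is not the issue. The paper uses $\dim T_x\cX=\infty$ in Proposition~\ref{prop:goodslice} to guarantee the existence of a \emph{good} slice (Definition~\ref{def:good}), meaning one whose defining change of coordinates $h$ is compatible with the Fredholm chart's bundle isomorphism $A$ in the specific sense that $\pi_{\R^n}\circ\rD(h\circ\alpha^{-1})|_{\{0\}\times\W}$ is surjective. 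This compatibility is exactly what allows the contracting germ property to survive restriction to the slice (Lemma~\ref{lemma:fred}); without it the basic-germ condition can be lost. The extra level shift in part~\eqref{part:freetwo} comes from this construction, not from ``patching slices of differing regularity''.
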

Part \eqref{part:freeone} of Theorem \ref{thm:free} is proven in Section \ref{subsec:free} and part \eqref{part:freetwo} is proven in Section \ref{subsec:freeFred}. The uniqueness of the M-polyfold structure is addressed in Proposition \ref{prop:unique}.
\begin{remark} A few remarks on the level shift are in order.
	\begin{itemize}
		\item Tame M-polyfolds are M-polyfolds with well-behaved boundary and corner structures, which play important roles in Floer-type theories like Hamiltonian-Floer cohomology and SFT.  The discussions on the boundary and corners of tame M-polyfolds resp. tame polyfolds can be found in \cite[\S 2.3]{hofer2017polyfold}.
		\item 	Like polyfolds, M-polyfolds have level structures $\ldots \subset \cX_i \subset \ldots \cX_1\subset \cX_0 = \cX$, such that $\cX_\infty:=\bigcap_{i\in \N}\cX_i$ is dense in every level $\cX_i$.  We can shift levels up by $k$ to get $\cX^k$, i.e. $(\cX^k)_i := \cX_{k+i}$. All the information needed to regularize $s^{-1}(0)$ is encoded in $\cX_\infty$. In particular, $s^{-1}(0)$ and any of its regularizing perturbations are contained in $\cX_\infty$.  
		\item 	The two levels shifted in part \eqref{part:freeone} of Theorem \ref{thm:free} come from the construction of the slices to the group action, see Lemma \ref{lemma:slice}. The extra level shift in part \eqref{part:freetwo} is from Lemma \ref{lemma:fred}.
		\item 	The level shift in Theorem \ref{thm:free} seems to be necessary. For example, we have the sc-Banach space $\E$ of continuous functions on $S^1$, i.e. $\E_i:= C^i(S^1)$. Then we have a sc-smooth $S^1$-action on $\E$ by $\theta \bullet f:= f(\theta + \cdot)$, for $f\in \E$ and $\theta \in S^1$. Let $\cX := \{f\in \E| f(\theta + \cdot) \ne f, \forall \theta \in S^1\}$. Then $\cX$ is an open subset of $\E$, hence a M-polyfold. The $S^1$-action restricted to $\cX$ is free. Then we can give the quotient $\cX^1/S^1$ a M-polyfold structure, since we need $C^1$-differentiability to write down the local slice conditions, see \cite[\S 2.2, \S 4.3]{fabert2016polyfolds}  for details. It is not clear whether one can give $\cX/S^1$ a M-polyfold structure such that the quotient map is sc-smooth.
		\item 	 It is not clear to us whether one can construct the quotient bundle and section by shifting only one level.  Since in applications, the invariants are derived from the zero set $s^{-1}(0)$, which is contained in $\cX_{\infty}$. Therefore, shifting one level and three levels do not make an essential difference. 
	\end{itemize}
\end{remark}	
\begin{remark}
	the infinite dimensional assumption  provides enough flexibility such that sc-Fredholm property is preserved under quotient, see Proposition \ref{prop:goodslice}. This condition is satisfied in all known applications \cite{hofer2017application,hofer2017applications,li,wehrheim2012fredholm}.
\end{remark}

\subsection{Free quotients of tame M-polyfolds and tame strong M-polyfold bundles}\label{subsec:free}
If a compact Lie group $G$ acts on a finite dimensional manifold $M$ freely, then the quotient $M/G$ is a smooth manifold, see e.g. \cite{lee2003smooth}. This section proves the analogue for tame M-polyfolds and tame M-polyfold bundles. The proof also provides a local prototype for our main theorem on polyfolds.

\subsubsection{M-polyfolds and M-polyfold bundles}
This subsection reviews some definitions from \cite{hofer2017polyfold} that will be crucial for our construction. Let $\R_+ := [0,\infty)$. We begin with the local models that generalize open subsets of $\R_+^m$ for manifolds with boundary and corner. A partial quadrant is defined to be $\R_+^m \times \E$.
For every $(r,e) \in \R_+^m\times \E$, the \textbf{degeneracy index} $d: \R_+^m \times \E\to \N$ is defined to be 
$$d(r,e) := \#\left\{i \in \{1,\ldots, m\}| \text{the $i$-th coordinate of }r \text{ is zero.}\right\}.$$
For every $x\in \R_+^m\times \E$, we can define the minimal linear subspace $(\R_+^m\times \E)_x \subset \R^m\times \E$ \cite[Definition 2.16]{hofer2017polyfold} as follows: If $x=(r_1,\ldots, r_m, e)\in \R_+^m \times \E$, then
\begin{equation}\label{eqn:reduced}(\R_+^m\times \E)_x :=  \{(v_1,\ldots, v_m, f)| v_i = 0 \text{ if } r_i = 0 \} \subset \R^m \times \E.\end{equation}
This can be understood as the tangent space of the intersection of all the faces containing $x$, i.e. the tangent space of the corner of degeneracy index $d(x)$. 

\begin{remark}
	The original definition of partial quadrant \cite[Definition 1.6]{hofer2017polyfold} is a closed convex subset $C\subset \E'$, such that there exists another sc-Banach space $\E$ and a linear sc-isomorphism $\Psi: \E' \to \R^m \times \E$ satisfying $\Psi(C) = \R_+^m \times \E$. Instead of including $\Psi$ in the discussion, we will work with the standard model  $\R_+^m \times \E$ to simplify notation.
\end{remark}

\begin{definition}[{\cite[Definition 2.17]{hofer2017polyfold}}]\label{splicing}
	Let $U$ be an open subset of $\R_+^m\times \E$. A $\sc^\infty$ map $r:U\to U$ is called a \textbf{tame sc-retraction} if the following conditions hold:
	\begin{itemize}
		\item $r\circ r = r$;
		\item $d(r(x))=d(x)$ for all $x\in U$;
		\item at every x in $r(U)_\infty:=r(U)\cap(\R_+^m\times \E)_\infty$, there exists a sc-subspace $\A \subset (\R_+^m\times \E)_x$, such that $\R^m\times \E=\rD r_x(\R^m\times \E)\oplus \A$.
	\end{itemize}
A pair $(O, \R_+^m\times \E)$ is called a \textbf{tame sc-retract} if there exists a tame sc-retraction $r$ on an open subset $U \subset \R_+^m\times \E$, such that $r(U) = O$.
\end{definition}
\begin{remark}
	A \textbf{sc-retraction} \cite[Definition 2.1]{hofer2017polyfold} is a sc-smooth map $r:U\to U$ such that $r\circ r = r$. The extra conditions for tameness guarantee well- behaved boundary and corner, see \cite[\S 2.3]{hofer2017polyfold}. 
\end{remark}
\begin{remark}
	In \cite{hofer2017polyfold}, a sc-retract is a tuple $(O,C,\E')$ with $C\subset \E'$ a partial quadrant. Since we fix the form of partial quadrants throughout this paper, we have simplified the notation for sc-retracts to $(O, C=\R_+^m\times \E)$.
\end{remark}

The notion of smoothness for maps between open subsets of $\R^m_+$ is generalized by polyfold theory in two ways: First, sc-smoothness for maps between open subsets of sc-Banach spaces is defined in \cite[Definition 1.9]{hofer2017polyfold}. Second, for maps between sc-retracts,  sc-smoothness is defined as follows.

\begin{definition}[{\cite[Definition 2.4]{hofer2017polyfold}}]\label{def:smooth}
	Let $(O,\R_+^m\times \E)$ and $(O', \R_+^{m'}\times \E')$ be two (tame) sc-retracts. A map $f:O \to O'$ is \textbf{sc-smooth} if $f\circ r: \R_+^m\times \E \supset U \to \R_+^{m'}\times \E'$ is sc-smooth, where $r$ is a sc-retraction for $(O, \R_+^m \times \E)$ and $U$ is an open subset of $\R_+^m\times \E$ such that $r(U) = O$. 
\end{definition}
This notion is well-defined by \cite[Proposition 2.3]{hofer2017polyfold}, i.e. the definition does not depend on the choice of sc-retraction $r$ and open set $U$.

\begin{definition}[{\cite[Definition 2.8, 2.19]{hofer2017polyfold}}]\label{Mpoly}
	Let $\cX$ be a topological space. A \textbf{tame M-polyfold chart} for $\cX$ is a triple $(\cO, \phi,(O,\R_+^m\times \E))$,  such that
	\begin{itemize}
		\item $(O,\R_+^m\times \E)$ is a tame sc-retract; 
		\item $\phi:\cO\to O$ is a homeomorphism from an open subset $\cO\subset \cX$.
	\end{itemize}
   Two tame M-polyfold charts $(\cO, \phi,(O,\R_+^m\times \E))$ and $(\cO', \phi',(O',\R_+^{m'}\times \E'))$ are compatible if $\phi'\circ \phi^{-1}$ resp. $\phi\circ \phi'^{-1}$ are sc-smooth map from $\phi(\cO\cap \cO')$ to $\phi'(\cO \cap \cO')$ resp. from $\phi'(\cO\cap \cO')$ to $\phi(\cO\cap \cO')$ in the sense of Definition \ref{def:smooth}. An atlas is a covering by compatible charts. A \textbf{tame M-polyfold structure} on $\cX$ is a maximal atlas of tame M-polyfold charts for $\cX$. A \textbf{tame M-polyfold} $\cX$ is a paracompact Hausdorff space with a tame M-polyfold structure. 
\end{definition}
\begin{remark}
	An \textbf{M-polyfold} \cite[Definition 2.8]{hofer2017polyfold} is a   paracompact Hausdorff space with a covering of compatible M-polyfold charts, i.e. charts modeled on general sc-retracts.
\end{remark}

The following remark defines the notion of the tangent spaces of M-polyfolds.
\begin{remark}
	The tangent space of a partial quadrant $T(\R_+^m\times \E)$ \cite[Definition 1.8]{hofer2017polyfold} is defined to be $(\R_+^m\times \E)^1 \times (\R^m \times \E)$. The tangent space of a sc-retract $(O,\R_+^m\times \E)$ is defined to be the sc-retract $(TO,T(\R_+^m\times \E))$, where $TO=Tr(TU)$ is the image of the tangent map $Tr:TU\to TU,(x,e) \mapsto (x, \rD r_x(e))$ for any choice of retraction $r:U\to U$ with $r(U)=O$. The tangent space of a tame M-polyfold $\cX$ is a M-polyfold $T\cX$ \cite[Proposition 2.5]{hofer2017polyfold} with charts $(T\cO, T\phi,(TO, T(\R_+^m\times \E)))$. Then the projection $\pi:T\cX \to \cX^1$ defines a tame M-polyfold bundle in the sense of Definition \ref{def:bund}. 
\end{remark}

\begin{definition}\label{def:infty}
	We say a tame M-polyfold $\cX$ is infinite dimensional if for every $x\in \cX_{\infty}$, the dimension of the tangent space $T_x\cX$ is infinite.
\end{definition}	

Next we review the notion of (strong) M-polyfold bundles, which generalizes the notion of vector bundles over manifolds.
\begin{definition}\label{def:bund}
		Let $\cX$ be a tame M-polyfold, $\cY$ a paracompact Hausdorff space and $p:\cY\to \cX$ a surjection with $p^{-1}(x)$ a vector space for every $x\in \cX$. A \textbf{tame bundle chart} for the bundle $p: \cY \to \cX$ is a tuple $(\cO, \Phi, (P, (\R_+^m\times\E)\times \F))$ such that the following holds:
		\begin{itemize}
			\item there exists an open subset $U\subset \R_+^m\times \E$ and a sc-smooth retraction  $R: U\times \F \to U \times \F, (x,f) \mapsto (r(x), \varrho(x)f)$, i.e. $R\circ R = R$,  where $\varrho(x)$ is a linear map from $\F$ to $\F$ and $r$ is a tame retraction;
			\item there is a homeomorphism $\phi:\cO \to r(U)$ such that $(\cO,\phi,(r(U), \R_+^m\times \E))$ is compatible chart for the M-polyfold $\cX$.
			\item $\Phi: p^{-1}(\cO) \to P:=R(U\times\F)$ is bundle isomorphism covering $\phi$, i.e. $\Phi$ is a homeomorphism such that $\pi\circ \Phi = \phi\circ p$ and $\Phi_x: p^{-1}(x) \to \Ima \varrho(\phi(x))$ is a linear isomorphism, where $\pi:r(U)\times \F \supset P\to r(U)$ is the projection.
		\end{itemize}
		Two tame bundle charts $(\cO, \Phi,(P, (\R_+^{m}\times \E) \times \F) )$ and $(\cO', \Phi', (P', (\R_+^{m'}\times \E') \times \F'))$ are compatible iff the transition maps $\Phi'\circ \Phi^{-1}: \Phi(p^{-1}(\cO \cap \cO')) \to  \Phi'(p^{-1}(\cO \cap \cO'))$ and  $\Phi\circ \Phi'^{-1}: \Phi'(p^{-1}(\cO \cap \cO')) \to  \Phi(p^{-1}(\cO \cap \cO'))$ are sc-smooth. Then $p: \cY \to \cX$ is a \textbf{tame M-polyfold bundle} iff it is equipped with a strong bundle atlas.
\end{definition}

Since fibers of tame M-polyfold bundles tend to be infinite dimensional vector spaces in applications, it will be necessary to have an extra ``strong" structure on the bundle, which will be used to formulate the notion of compact perturbation of a Fredholm section. In polyfold theory, it uses the following filtrations. Let $U\subset \R_+^m\times \E$ be a open subset and $\F$ another sc-Banach space. We define the non-symmetric product $U\lhd \F$ to be the set $U\times \F$ with extra structures of filtrations $(U\lhd \F)[i]$, which are defined to be the filtrations $((U\lhd \F)[i])_m:=U_m\oplus F_{m+i}$. In particular, $(U\lhd \F)[i]$ is an open subset of the partial quadrant $((\R_+^m\times \E) \lhd \F)[i]$.
\begin{definition}[{\cite[Definition 2.26]{hofer2017polyfold}} ]\label{strbund}
	Let $\cX$ be a tame M-polyfold, $\cY$ a paracompact Hausdorff space and $p:\cY\to \cX$ a surjection with $p^{-1}(x)$ a vector space for every $x\in \cX$. A \textbf{tame strong bundle chart} for the bundle $p: \cY \to \cX$ is a tuple $(\cO, \Phi, (P, (\R_+^m\times\E) \lhd \F))$ such that the following holds:
	\begin{itemize}
		\item there exists an open subset $U\subset \R_+^m\times \E$ and a retraction $R: U\lhd \F \to U \lhd \F, (x,f) \mapsto (r(x), \varrho(x)f)$, i.e. $R\circ R=R$, where $\varrho(x)$ is a linear map from $\F$ to $\F$ and $r$ is a tame retraction;
		\item $R[i]:=R|_{(U \lhd \F)[i]}:(U \lhd \F)[i] \to (U \lhd \F)[i]$ is sc-smooth for $i=0,1$;
		\item there is homeomorphism $\phi:\cO \to r(U)$ such that $(\cO,\phi,(r(U), \R_+^m\times \E))$ is compatible chart for the M-polyfold $\cX$.
		\item $\Phi: p^{-1}(\cO) \to P:=R((U\lhd \F)[0])$ is bundle isomorphism covering $\phi$.
	\end{itemize}
	Two strong bundle charts $(\cV, \Phi,(P, (\R_+^{m}\times \E) \lhd \F) )$ and $(\cV', \Phi', (P', (\R_+^{s'}\times \E') \lhd \F'))$ are compatible iff the transition maps $\Phi'\circ \Phi^{-1}[i]: \Phi(p^{-1}(\cO \cap \cO'))[i] \to  \Phi'(p^{-1}(\cO \cap \cO'))[i]$ and  $\Phi\circ \Phi'^{-1}[i]: \Phi'(p^{-1}(\cO \cap \cO'))[i] \to  \Phi(p^{-1}(\cO \cap \cO'))[i]$ are sc-smooth for $i=0,1$. Then $p: \cY \to \cX$ is a \textbf{tame strong M-polyfold bundle} iff it is equipped with a tame strong bundle atlas.
\end{definition}
Note that a tame strong bundle $\cY$ defines two tame M-polyfold bundles $\cY[i]$ over $\cX$ for $i=0,1$, and there is a $\sc^\infty$ bundle inclusion $\cY[1] \subset \cY[0] = \cY$ covering the identity on $\cX$. Let $p_a:\cY_a \to \cX_a$ and $p_b:\cY_b \to \cX_b$ be two M-polyfold bundles, then a sc-smooth map $F:\cY_a \to \cY_b$ is a bundle map covering a map $f:\cX_a\to \cX_b$ iff $p_b\circ F = f\circ p_a$ and $F$ is linear on each fiber $(\cY_a)_x$. If $p_a,p_b$ are strong bundles, then $F$ is a strong bundle map iff $F[i]:\cY_a[i] \to \cY_b[i]$ are sc-smooth bundle maps for $i=0,1$. A (strong) bundle map is a (strong) bundle isomorphism iff it admits a (strong) bundle map inverse.

\begin{remark}
	Sections of $\cY[1]$ are called $\sc^+$-sections \cite[Definition 2.27]{hofer2017polyfold}, which play the role of compact perturbations.
\end{remark}	

\begin{definition}\label{def:act}
	A \textbf{$\bm{\sc^\infty}$ $\bm{G}$-action} $\rho$ on a tame strong M-polyfold bundle $p:\cY\to\cX$ is a strong bundle map $\rho: G\times \cY \to \cY$, such that $\rho(h, \rho(g, y)) = \rho(hg, y)$ for $h,g\in G$, $y\in \cY$.
\end{definition}

Given a group action $\rho$, since $\rho: G\times \cY \to \cY$ is a bundle map, it induces a map $\rho_{\cX}:G\times \cX\to \cX$ on the base. To see $\rho_{\cX}$ is sc-smooth near a point $(g,x) \in G\times \cX$, we first choose $\sc^\infty$ section $s$ defined on a neighborhood of $x$. Then $\rho_{\cX}(h,z) = p(\rho(h, s(z)))$, which is sc-smooth.

\subsubsection{Slices of free group actions}
Let $p:\cY \to \cX$ be a tame strong M-polyfold bundle. Assume $\rho$ is a sc-smooth $G$ action on $p$ for a compact Lie group $G$. The core of the proof of Theorem \ref{thm:free} is finding a slice $\tilde{O}\subset \cX$ to the group action near every point $x_0\in \cX_\infty$, such that $\tilde{O}$ is transverse to the orbits of the group action. When $\cX$ is a smooth manifold, we require slices to be submanifolds. There are several notions of ``smooth subset" of M-polyfolds, which generalize the notion of submanifold, e.g. \cite[Definition 2.12]{hofer2017polyfold}. We will work with the following notion from \cite{ben2018fiber} in Definition \ref{def:slice}.
\begin{definition}[{\cite[Definition 3.2, 3.4]{ben2018fiber}}]\label{def:rn}
	Consider an open subset $U\subset \R_+^m\times \R^n \times \E$ for some $n\ge 0$, a tame sc-retraction $r:U\to U$ is called  \textbf{$\bm{\R^n}$-sliced} if it satisfies $\pi_{\R^n} \circ r = \pi_{\R^n}$. A tame strong bundle retraction $R: U\lhd \F \to U \lhd \F$ is \textbf{$\bm{\R^n}$-sliced} if it covers a $\R^n$-sliced sc-retraction on $U$.
\end{definition}

An important consequence of Definition \ref{def:rn} is that the restriction of $r$ resp. $R$ to $\R_+^m\times \{v\}\times \E$ resp. $(\R_+^m\times \{v\} \times \E) \lhd \F$ is a tame retraction resp. tame strong bundle retraction for every $v$.
\begin{lemma}[{\cite[Lemma 3.3, 3.5]{ben2018fiber}}]\label{lemma:ben}
	Given a $\R^n$-sliced tame retraction $r: \R_+^m\times\R^n\times \E \supset U \to U$, let $\tilde{U}:=(\R_+^m\times\{0\}\times \E)\cap U$. Then the restriction $\tilde{r}:= r|_{\tilde{U}}:\tilde{U}\to \tilde{U}$ is a tame retraction. Given a $\R^n$-sliced tame strong bundle retraction $R: U\lhd \F \to U \lhd \F$,  the restriction $\tilde{R}:= R|_ {\tilde{U}\lhd \F}: \tilde{U}\lhd \F \to \tilde{U} \lhd \F$ is a tame strong bundle retraction.
\end{lemma}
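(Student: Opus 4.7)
The statement has two parallel claims---one about tame retractions, one about tame strong bundle retractions---and I would treat them together since the bundle case reduces to the base case plus a fiberwise check. The opening step is that $\tilde r$ genuinely lands in $\tilde U$: from the sliced condition $\pi_{\R^n}\circ r=\pi_{\R^n}$, any $x\in\tilde U$ (i.e.\ $\pi_{\R^n}(x)=0$) satisfies $\pi_{\R^n}(r(x))=0$, so $r(x)\in\tilde U$. Sc-smoothness and idempotence of $\tilde r$ are immediate from those of $r$ together with $\tilde U$ being open in the affine sc-subspace $\R_+^m\times\{0\}\times\E$. The degeneracy index on $\R_+^m\times\R^n\times\E$ depends only on the $\R_+^m$-factor, so its restriction to $\R_+^m\times\{0\}\times\E$ coincides with the intrinsic one there and is therefore preserved by $\tilde r$.

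The substantive step is verifying the tame splitting at a smooth point $x\in\tilde r(\tilde U)_\infty$. Let $\A\subset(\R_+^m\times\R^n\times\E)_x$ be a sc-subspace with $\R^m\times\R^n\times\E=\rD r_x(\R^m\times\R^n\times\E)\oplus\A$, provided by tameness of $r$. Differentiating $\pi_{\R^n}\circ r=\pi_{\R^n}$ shows that $\rD r_x$ preserves the $\R^n$-factor, and since $\tilde r$ takes values in $\R^m\times\{0\}\times\E$ we also have $\rD\tilde r_x(\R^m\times\E)\subset\R^m\times\{0\}\times\E$; together these give $\rD r_x(\R^m\times\R^n\times\E)=\rD\tilde r_x(\R^m\times\E)\oplus\R^n$, and hence
\[
\R^m\times\R^n\times\E \;=\; \rD\tilde r_x(\R^m\times\E)\oplus\R^n\oplus\A.
\]
Let $q:\R^m\times\R^n\times\E\to\R^m\times\E$ be the projection that drops the $\R^n$-factor, and set $\tilde{\A}:=q(\A)$. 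Because $\ker q=\R^n$ lies in $\rD r_x(\R^m\times\R^n\times\E)$ and is disjoint from $\A$, the restriction $q|_\A$ is a continuous linear bijection onto $\tilde{\A}$; more precisely, $q$ restricts to a continuous linear bijection from the closed sc-summand $\rD\tilde r_x(\R^m\times\E)\oplus\A$ onto $\R^m\times\E$, and the open mapping theorem upgrades this to a topological direct sum $\R^m\times\E=\rD\tilde r_x(\R^m\times\E)\oplus\tilde{\A}$. Since $q$ respects the sc-filtration and carries $(\R_+^m\times\R^n\times\E)_x$ onto $(\R_+^m\times\E)_x$, the complement $\tilde{\A}$ is a sc-subspace of $(\R_+^m\times\E)_x$, completing the tame-splitting verification.

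For the strong bundle retraction $R(x,f)=(r(x),\varrho(x)f)$, the restriction takes the form $\tilde R(x,f)=(\tilde r(x),\varrho(x)f)$ on $\tilde U\lhd\F$, and I would verify the axioms of a tame strong bundle retraction by inheritance: the prescribed form, idempotence, and the fact that $\tilde R$ covers the already-established tame retraction $\tilde r$ are direct, while sc-smoothness of $\tilde R[i]$ for $i=0,1$ follows because $(\tilde U\lhd\F)[i]$ is a sc-subspace of $(U\lhd\F)[i]$ on which the sc-smooth map $R[i]$ is being restricted. I expect the main obstacle to be precisely the tame-splitting construction above---the passage from $\A$ to $\tilde{\A}$ and the check that it remains a sc-subspace of the \emph{smaller} partial quadrant $(\R_+^m\times\E)_x$---while everything else amounts to bookkeeping with the sliced-ness hypothesis.
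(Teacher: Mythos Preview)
The paper does not prove this lemma; it is quoted from \cite{ben2018fiber}. So there is no in--paper proof to compare against, and the question is simply whether your argument is correct.

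The routine parts are fine: well--definedness of $\tilde r$, sc--smoothness, idempotence, preservation of the degeneracy index, and the bundle case all go through as you say. The gap is in the tame--splitting step. You assert that ``$\ker q=\R^n$ lies in $\rD r_x(\R^m\times\R^n\times\E)$,'' and from this deduce $\rD r_x(\R^m\times\R^n\times\E)=\rD\tilde r_x(\R^m\times\E)\oplus\R^n$ and injectivity of $q|_\A$. But differentiating $\pi_{\R^n}\circ r=\pi_{\R^n}$ only gives $\pi_{\R^n}\circ\rD r_x=\pi_{\R^n}$, which says that $\pi_{\R^n}$ is surjective on $\operatorname{im}\rD r_x$, \emph{not} that $\{0\}\times\R^n\times\{0\}\subset\operatorname{im}\rD r_x$. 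A concrete counterexample: take $m=0$, $n=1$, $\E=\R$, and $r(v,e)=(v,v)$. This is an $\R^1$--sliced tame sc--retraction (all three tameness conditions are trivially satisfied since $m=0$), yet $\operatorname{im}\rD r_{(0,0)}$ is the diagonal in $\R^2$, which does not contain $\R^1\times\{0\}$. Choosing the legitimate complement $\A=\R\times\{0\}$, one gets $q(\A)=\{0\}$, which fails to complement $\operatorname{im}\rD\tilde r_{(0,0)}=\{0\}$ in $\E=\R$. Your ``more precisely'' reformulation (that $q$ restricts to a bijection from $\rD\tilde r_x(\R^m\times\E)\oplus\A$ onto $\R^m\times\E$) fails for the same reason.

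The lemma itself is of course still true; the issue is that your $\tilde\A=q(\A)$ depends on the \emph{choice} of $\A$, and an arbitrary choice need not work. One repair is to first use that $\pi_{\R^n}\!:\operatorname{im}\rD r_x\to\R^n$ is surjective with kernel $\operatorname{im}\rD\tilde r_x$ to split $\operatorname{im}\rD r_x=\operatorname{im}\rD\tilde r_x\oplus S$ with $S$ an $n$--dimensional subspace on which $\pi_{\R^n}$ is an isomorphism, and then work with $S\oplus\A$ in place of $\R^n\oplus\A$; alternatively, one can argue via $\ker\rD r_x=\ker\rD\tilde r_x\subset\R^m\times\{0\}\times\E$ and produce a complement directly inside $(\R_+^m\times\E)_x$. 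Either way, the argument needs more than what you have written.
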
 

\begin{definition}\label{def:slice}
	Let $\cX$ be tame M-polyfold. Then a subset $\tilde{\cO} \subset \cX$ is called a \textbf{slice} if there exists a tame chart $(\cO,\phi, (O,\R_+^m\times \R^n \times \E))$ such that $O$ is defined by a $\R^n$-sliced retraction $r:U\to U$ and $\tilde{\cO} = \phi^{-1}\circ \tilde{r}(\tilde{U}):=\phi^{-1}\circ r((\R_+^m\times\{0\}\times \E) \cap U)$. 
	
	Let $p:\cY\to \cX$ be a tame strong M-polyfold bundle and $\tilde{\cO}$ a subset of $\cX$. The subset $p^{-1}(\tilde{\cO})\subset \cY$ is called a \textbf{bundle slice} if there exists a tame strong bundle chart $(\cO,\Phi, (P,(\R_+^m\times \R^n\times E)\lhd \F))$ such that $P$ is defined by a $\R^n$-sliced bundle retraction $R:U\lhd \F \to U\lhd \F$ and $p^{-1}(\tilde{\cO}) = \Phi^{-1}\circ \tilde{R}(\tilde{U}\lhd F):= \Phi^{-1}\circ R(((\R_+^m\times\{0\}\times \E) \cap U)\lhd \F)$. 
\end{definition}
For a bundle slice $p^{-1}(\tilde{\cO})$, $p|_{p^{-1}(\tilde{\cO})}:p^{-1}(\tilde{\cO}) \to \tilde{\cO}$ is a tame strong M-polyfold bundle by Lemma \ref{lemma:ben}. In particular, slice $\tilde{\cO}$ is a tame M-polyfold. The following normal form for submersions on retracts from  \cite{ben2018fiber} will be used to construct the slices for the quotient. To state the lemma, we first recall the \textbf{reduced tangent space} from \cite{hofer2017polyfold}. Let $(O,\R_+^m\times \E)$ be a sc-retract. For every $x\in O_\infty$, the reduced tangent space \cite[Definition 2.15]{hofer2017polyfold} is the subspace:
$$T^R_xO := T_xO \cap (\R_+^m\times \E)_x \subset T_xO,$$
where $(\R_+^m\times \E)_x$ is defined in \eqref{eqn:reduced}. In particular, if $(0,0)\in O \subset \R_+^m\times \E$, we have $T^R_{(0,0)}O \subset \{0\}\times \E$.
\begin{remark}\label{rmk:reducetangent}
	$T^R_xO$ is invariant under sc-diffeomorphism \cite[Proposition 2.8]{hofer2017polyfold}. Therefore for a M-polyfold $\cX$ and $x\in \cX_\infty$, we can define $T^R_x\cX:= \rd \phi^{-1}(T^R_yO) \subset T_x\cX$ for any chart $(\cO, \phi, (O,\R_+^m\times \E))$ with $\phi(x) = y \in O$. $T^R_x\cX$ is called the \textbf{reduced tangent space} \cite[Definition 2.20]{hofer2017polyfold}.
\end{remark}
\begin{lemma}[{\cite[Lemma 4.2, Remark 4.3]{ben2018fiber}}]\label{lemma:benslice}
Consider a tame sc-retract $(O, \R_+^m\times \E)$ containing $(0,0)\in O_\infty$ and a sc-smooth map $f: O \to \R^n$. Suppose  that $f(0,0) = 0$ and the restriction of the tangent map $\rD f_{(0,0)}|_{T^R_{(0,0)} O}: T^R_{(0,0)}O \to \R^n$ is surjective. Let $\K$ denote $\ker \rD(f\circ r)_{(0,0)}\cap (\{0\}\times \E)$, where $r$ is a retraction for the sc-retract $(O,\R_+^m\times \E)$. Then we can view $\K$ as a subspace of $\E$ of codimension $n$. Assume $L\subset (T^R_{(0,0)}O)_\infty$ is a complement of $\K$ in $\E$. Then there exists neighborhoods $U\subset \R_+^m\times \E^1$ of $(0,0)$ and $U' \subset \R_+^m\times \R^n\times \K^1$ of $(0,0,0)$, such that there exists a sc-diffeomorphism $h:U\to U'$ with the following properties.
\begin{itemize}
	\item $h(0,0) = (0,0,0)$.
	\item $f\circ r \circ h^{-1}:  \R_+^m \times \R^n \times \K^1\supset U' \to \R^n$ is the projection to $\R^n$.
	\item $h\circ r \circ h^{-1}$ is a $\R^n$-sliced retraction on $U'$. In particular,  $f^{-1}(0)\cap U$ is a slice of $O^1$.
	\item\label{b4} $\rD h_{(0,0)}(\{0\}\times L) = \{0\} \times \R^n\times \{0\}$ and $\rD h_{(0,0)}(\{0\}\times \K^1) = \{0\}\times \{0\} \times \K^1$.
\end{itemize} 
\end{lemma}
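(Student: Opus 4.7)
The plan is to apply the sc-smooth implicit function theorem to straighten $f$ into a coordinate projection, while using the splitting $\E = L \oplus \K$ to arrange that in the new coordinates the retraction becomes $\R^n$-sliced. The first observation is that, because $L \subset T^R_{(0,0)}O$ lies in the image of the projection $\rD r_{(0,0)}$, one has $\rD(f\circ r)_{(0,0)}|_L = \rD f_{(0,0)}|_L$; combining the surjectivity hypothesis with the defining property of $\K$ forces this to be a linear isomorphism $L \cong \R^n$. In particular $\dim L = n$.

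Next define the sc-smooth map
$$F:\R_+^m \times L \times \K^1 \to \R^n, \qquad F(x,\ell,k) := f\bigl(r(x,\ell+k)\bigr).$$
Since $\partial_\ell F|_0 = \rD f_{(0,0)}|_L$ is invertible, the sc-implicit function theorem in its partial-quadrant form (cf.\ \cite[Theorem 3.15]{hofer2017polyfold}) produces a unique sc-smooth solution $\ell(x,v,k)$ of $F(x,\ell,k) = v$ near the origin; the single level shift in the statement reflects the standard loss of a level in sc-IFT. Define
$$g(x,v,k) := \bigl(x,\,\ell(x,v,k)+k\bigr),$$
and take $h := g^{-1}$. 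Implicit differentiation, together with $\partial_k F|_0 = \rD(f\circ r)_{(0,0)}|_\K = 0$ (the defining property of $\K$), yields
$$\rD g|_0(a,b,c) = \bigl(a,\;\partial_x\ell|_0 \cdot a + (\rD f_{(0,0)}|_L)^{-1}b + c\bigr),$$
which is a linear isomorphism, so $g$ is a local sc-diffeomorphism onto a neighborhood of $(0,0)$.

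The four listed properties then follow directly. The projection property $f\circ r\circ h^{-1}(x,v,k) = v$ is built into the definition of $g$. For the sliced property, observe that the $\R^n$-component of $h(x',e')$ is precisely $f(r(x',e'))$ (by rearranging the defining equation of $g$); hence $\pi_{\R^n} \circ h\circ r\circ h^{-1}(x,v,k) = f(r(r(x,e))) = f(r(x,e)) = v$ by $r\circ r = r$ and the construction of $h$. The tangent-space identities are read off from $\rD g|_0$ restricted to $\{0\}\times \R^n \times \{0\}$ and $\{0\}\times\{0\}\times \K$ and then inverted. Finally, the in-particular claim is Lemma \ref{lemma:ben} applied to the $\R^n$-sliced retraction $h\circ r \circ h^{-1}$ restricted to $\{v=0\}$, since $f^{-1}(0)\cap U$ corresponds in the new coordinates precisely to $h(O\cap U) \cap \{v=0\}$.

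The main obstacle is to preserve not merely sc-smoothness but also the partial-quadrant and tame structure. One must check that $g$ honestly maps $\R_+^m\times \R^n\times \K^1$ into $\R_+^m\times \E^1$ and preserves the degeneracy index, so that $h\circ r\circ h^{-1}$ is a tame $\R^n$-sliced retraction in the sense of Definition \ref{def:rn}. This hinges on the structural fact $L\subset T^R_{(0,0)}O \subset \{0\}\times \E$: the implicit-function correction $\ell(x,v,k)$ takes values in the second factor only and never disturbs the quadrant coordinate $x\in\R_+^m$; combined with tameness of $r$ (which preserves $d$), this delivers the required tame $\R^n$-sliced retract structure.
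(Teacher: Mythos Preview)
The paper does not give its own proof of this lemma; it is quoted verbatim from \cite[Lemma 4.2, Remark 4.3]{ben2018fiber}. So there is no in-paper argument to compare against, and I can only assess your proof on its own terms.

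Your strategy is the right one and the verifications (projection property, $\R^n$-slicing via $r\circ r=r$, tangent identities, preservation of the quadrant coordinate) are all correct. There is, however, one genuine gap. You write ``$\rD g|_0$ \ldots\ is a linear isomorphism, so $g$ is a local sc-diffeomorphism.'' There is no inverse function theorem in sc-calculus that lets you conclude this from invertibility of the linearization alone; this is precisely the phenomenon the paper alludes to when citing \cite{counter}. Your reference to \cite[Theorem 3.15]{hofer2017polyfold} does not cover this either (and the correct implicit-function input here is Lemma~\ref{lemma:help} of the present paper, not a general sc-IFT).

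The fix is already implicit in your own argument. You observe that the $\R^n$-component of $h(x',e')$ must be $f(r(x',e'))$; completing the computation with the splitting $\E=L\oplus\K$ forces $\pi_\K\circ h(x',e')=\pi_\K(e')$. So one should \emph{define}
\[
h(x',e')\;:=\;\bigl(x',\,f(r(x',e')),\,\pi_\K(e')\bigr),
\]
which is manifestly sc-smooth on $\R_+^m\times\E$ (no level shift needed for $h$ itself), and then check $h\circ g=\Id$ directly and $g\circ h=\Id$ using the uniqueness clause of Lemma~\ref{lemma:help} applied to $F(x,\ell,k)=v$. With $h$ given explicitly, the tameness and degeneracy-index claims are immediate since $h$ is the identity on the $\R_+^m$-factor, and the tangent identities follow by differentiating $h$ rather than by inverting $\rD g|_0$. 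Once you replace the ``$\rD g$ invertible $\Rightarrow$ sc-diffeomorphism'' step by this explicit inverse, your proof is complete.
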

\begin{remark}
	The existence of complement $L$ is guaranteed, see \cite[Lemma 2.2]{hofer2017polyfold}. 
\end{remark}
From the perspective of Lemma \ref{lemma:benslice}, in order to construct slices for the quotient, we need to construct submersive maps to $\R^{\dim G}$. Such maps will not be globally defined on $\cX$ in general. In fact, we can only construct sc-smooth submersive maps near every smooth point $x_0\in\cX_\infty$.  

\begin{definition}
A M-polyfold chart $(\cO, \phi, (O,\R_+^m\times \E))$ is around $x_0\in \cX$ if $\phi(x_0) = (0,0) \in \R_+^m\times \E$.
\end{definition}
It is clear that the existence of M-polyfold chart around $x_0$ is equivalent to $x_0\in \cX_\infty$. Let $\rho_{\cX}:G\times \cX \to \cX$ be a sc-smooth action on the M-polyfold $\cX$. Given a M-polyfold chart around $x_0$ and a neighborhood $B\subset G$ of $\Id$, we have a sc-smooth map that parametrizes the orbit through $x_0$ locally in this chart,
\begin{equation}\label{eqn:b}
\gamma: B\to \R_+^m\times \E,   \qquad g \mapsto \phi \circ \rho_{\cX}(g, x_0).
\end{equation}
The \textbf{infinitesimal directions} of the $G$-action at $x_0$ in this chart is the subspace
\begin{equation}\label{eqn:xi}
\rD \gamma_{\Id}(T_{\Id} B) \subset  (TO_{(0,0)})_\infty \subset\R^m \times \E_\infty.
\end{equation}

\begin{proposition}\label{prop:red}
	$\rD \gamma_{\Id}(T_{\Id} B) \subset (T^R_{(0,0)}O)_\infty\subset \{0\}\times \E_\infty \simeq \E_\infty$. 
\end{proposition}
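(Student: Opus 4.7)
\noindent\textbf{Proof plan for Proposition \ref{prop:red}.} The strategy has two independent ingredients: (i) a degeneracy-index argument showing that $\gamma(B) \subset \{0\}\times \E$ near $\Id$, and (ii) the standard fact that a sc-smooth map from a finite-dimensional manifold lands in the smooth level and has derivatives valued in the smooth level.

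First I would observe that for each $g\in G$, the map $\rho_{\cX}(g,\cdot):\cX\to\cX$ is a sc-diffeomorphism, because $\rho_{\cX}(g^{-1},\cdot)$ is a sc-smooth two-sided inverse by the group law built into Definition \ref{def:act}. Since $\cX$ is a tame M-polyfold, the degeneracy index is well-defined as a $G$-equivariant invariant (sc-diffeomorphisms of tame M-polyfolds preserve the degeneracy index, see \cite[Theorem 2.16]{hofer2017polyfold}). Consequently, for every $g$ in a small neighborhood $B$ of $\Id$ (small enough that $\rho_{\cX}(g,x_0)\in\cO$), we have
\[
d\bigl(\gamma(g)\bigr)=d\bigl(\phi\circ\rho_{\cX}(g,x_0)\bigr)=d\bigl(\rho_{\cX}(g,x_0)\bigr)=d(x_0)=d((0,0))=m.
\]
A point of $\R_+^m\times\E$ of degeneracy $m$ must lie in $\{0\}\times\E$, so $\gamma(B)\subset O\cap(\{0\}\times\E)$.

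Differentiating this inclusion at $\Id$ shows $\rD\gamma_{\Id}(T_{\Id}B)\subset\{0\}\times\E$. Moreover, $\gamma$ takes values in the sc-retract $O$, so the tangent map lands in $T_{(0,0)}O$. Combining the two containments with the definition
\[
T^R_{(0,0)}O = T_{(0,0)}O \cap (\R_+^m\times\E)_{(0,0)} = T_{(0,0)}O \cap (\{0\}\times\E),
\]
we obtain $\rD\gamma_{\Id}(T_{\Id}B)\subset T^R_{(0,0)}O$.

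For the $\infty$-level, I would use that $\gamma$ is sc-smooth and $B$ is a (finite-dimensional) Lie group, all of whose points are smooth in the sc-sense. Sc-smoothness from a finite-dimensional source forces the image and the tangent map at every point to lie in the smooth level, hence $\rD\gamma_{\Id}(T_{\Id}B)\subset \R^m\times\E_\infty$; together with the previous paragraph this yields $\rD\gamma_{\Id}(T_{\Id}B)\subset (T^R_{(0,0)}O)_\infty$, as claimed. The only subtle point is the invariance of the degeneracy index under sc-diffeomorphism, which is a standard but nontrivial fact from \cite{hofer2017polyfold}; once it is invoked, the rest is direct.
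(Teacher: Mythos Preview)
Your proof is correct and follows essentially the same approach as the paper: both argue that $\rho_{\cX}(g,\cdot)$ is a sc-diffeomorphism and hence preserves the degeneracy index, forcing $\gamma(B)\subset\{0\}\times\E$ and therefore $\rD\gamma_{\Id}(T_{\Id}B)\subset T_{(0,0)}O\cap(\{0\}\times\E)=T^R_{(0,0)}O$. Your explicit treatment of the $\infty$-level is a welcome addition; the paper handles this implicitly by already recording in \eqref{eqn:xi} that $\rD\gamma_{\Id}(T_{\Id}B)\subset(T_{(0,0)}O)_\infty$, which follows for the same reason you give.
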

\begin{proof}
	For each $g\in G$, the map $\rho_{\cX}(g,\cdot)$ is a sc-diffeomorphism. By \cite[Proposition 2.8, 2.10]{hofer2017polyfold}, $\rho(g,\cdot)$ preserves the degeneracy index. In particular, we have $\phi\circ \rho_{\cX}(g,x_0)\in \{0\} \times \E \subset \R_+^m \times \E$. Therefore $\rD \gamma_{\Id}(T_{\Id} B) \subset TO_{(0,0)}\cap (\{0\} \times \E) = T^R_{(0,0)}O\subset \{0\}\times \E  \simeq \E.$
\end{proof}
\begin{proposition}\label{prop:free}
	If $\rho_{\cX}$ is free, then infinitesimal directions at $x_0\in \cX_\infty$ in a chart $(\cO,\phi,(O,\R_+^m\times \E))$ around $x_0$ is of dimension $\dim G$.
\end{proposition}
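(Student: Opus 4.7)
The plan is to show that $\rD\gamma_\Id$ is injective; since $\dim T_\Id B=\dim G$, this yields the desired dimension count. So fix $v\in T_\Id B$ with $\rD\gamma_\Id(v)=0$, and aim to show $v=0$ by exploiting the freeness hypothesis through the one-parameter subgroup $c(t):=\exp(tv)\subset G$.

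First I would work in the given chart and introduce the sc-smooth local expression of the action,
$$\tilde\rho(g,y):=\phi\bigl(\rho_\cX(g,\phi^{-1}(y))\bigr),$$
defined on a neighborhood of $(\Id,(0,0))\in G\times O$. Associativity of the $G$-action on $\cX$ becomes the identity $\gamma(gh)=\tilde\rho(g,\gamma(h))$, and setting $\beta(t):=\gamma(c(t))$ specializes it to $\beta(t+s)=\tilde\rho(c(t),\beta(s))$ for $|s|,|t|$ sufficiently small. This is the key structural fact: it converts the infinitesimal condition at $\Id$ into a pointwise condition along the entire curve $c(t)$.

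Next I would differentiate this identity in $s$ at $s=0$ via the sc-smooth chain rule. Because $\beta(0)=(0,0)$ and $\beta'(0)=\rD\gamma_\Id(v)=0$, the right-hand side vanishes, giving
$$\beta'(t)=\rD_2\tilde\rho\bigl(c(t),(0,0)\bigr)\cdot\beta'(0)=0$$
for all small $t$, where $\rD_2$ denotes the partial derivative in the second slot. Since $\beta$ is sc-smooth as a map from an interval into $\R_+^m\times\E$, it is in particular $C^1$ as a map into the level-zero Banach space, so the mean value theorem forces $\beta$ to be constant near $0$. Hence $\rho_\cX(c(t),x_0)=x_0$ for all small $t$. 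Freeness of $\rho_\cX$ now means the stabilizer of $x_0$ is trivial, so $c(t)=\Id$ for small $t$; since $\exp$ is a local diffeomorphism at $0\in T_\Id G$, this forces $v=0$.

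The only steps needing care are the chain-rule differentiation and the implication $\beta'\equiv 0\Rightarrow\beta\equiv\text{const}$ inside the sc-category. Both are routine once one recalls that a sc-smooth map from a finite-dimensional smooth manifold into a sc-Banach space is $C^\infty$ into every level, so both reductions take place in standard Banach-space calculus on level $0$. In effect the argument is the classical Lie-theoretic fact that a free action has immersive orbit maps, transported verbatim into sc-calculus, and I expect no genuine obstacle beyond this bookkeeping.
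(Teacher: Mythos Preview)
Your proof is correct and follows essentially the same approach as the paper's: both argue by contradiction, use the one-parameter subgroup $\exp(tv)$ and the group law to propagate the vanishing of $\beta'(0)$ to $\beta'(t)=0$ for all small $t$, then invoke that a sc-smooth curve from an interval is classically $C^1$ to conclude $\beta$ is constant, contradicting freeness. The paper's writeup is terser (citing \cite[Proposition~1.7]{hofer2017polyfold} for the $C^1$ property rather than the stronger $C^\infty$-into-every-level fact you state), but the argument is the same.
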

\begin{proof}
	Assume otherwise, that is there exists $\xi\in T_{\Id}G$ such that $\rD \gamma_{\Id}(\xi) = 0$. Then $\xi(t):=\rho_{\cX}(\exp(t\xi),x_0):(-\epsilon, \epsilon) \to \cX$ is a sc-smooth map with the property that $\rD \xi(t) = 0$ because of the group property $\rho_{\cX}(\exp(t\xi),\rho_{\cX}((\exp s\xi),x_0))=\rho_{\cX}(\exp(t\xi+s\xi), x_0)$. By \cite[Proposition 1.7]{hofer2017polyfold}, $\phi\circ \xi:(-\epsilon,\epsilon)^1 = (-\epsilon,\epsilon) \to \R_+^m \times \E$ is $C^1$. Then $\xi(t) \equiv x_0$, contradicting the free action assumption.  
\end{proof}

Since $\rD \gamma_{\Id}(T_{\Id} B)\subset \E_\infty$ is finite dimensional, by \cite[Proposition 1.1]{hofer2017polyfold} there exists a sc-complement $\H$ of $\rD \gamma_{\Id}(T_{\Id} B)$ in $\E$. In Lemma \ref{lemma:slice} below, we prove that an open subset of the shifted space $\phi^{-1}(O\cap (\R_+^m\times \H^2)) \subset \cX^2$ is a slice. First, we use Lemma \ref{lemma:benslice} to prove the following technical lemma, which is used in constructing the slices to the group actions in both M-polyfold case (Lemma \ref{lemma:slice}) and polyfold case (Proposition \ref{prop:localmodel}).
\begin{lemma}\label{lemma:gamma}
	Let $p:\cY\to \cX$ be a tame M-polyfold and $(\cO, \Phi, (P, (\R_+^m\times \E)\lhd \F))$ a tame strong bundle chart around $x_0 \in \cX_\infty$ covering a tame M-polyfold chart $(\cO,\phi, (O,\R_+^m\times \E))$. Let $r$ be a tame retraction and $R$ a strong bundle retraction covering $r$, such that $O = r(U)$ and $P = R(U\lhd \F)$ for an open neighborhood $U\subset \R_+^m\times \E$ of $(0,0)$. For a neighborhood $B\subset \R^n$ of $0$, assume $\Lambda: B\times p^{-1}(\cO) \to \cY$ is a sc-smooth strong bundle map such that $\Lambda(0,\cdot)|_{p^{-1}(\cO)} = \Id_{p^{-1}(\cO)}$. 
	Let $\Gamma:B\times \cO \to \cX$ denote the induced map on the base covered by $\Lambda$. Suppose that $\rD \Gamma_{(0,x_0)}(T_{0}B\times \{0\}) \subset (T^R_{x_0}\cO)_\infty$ and is of dimension $n$. Let $\Xi:= \rD (\phi\circ\Gamma)_{(0,x_0)}(T_{0}B\times \{0\}) \subset \{0\}\times \E_\infty\simeq \E_\infty$ and $\H$ any sc-complement of $\Xi$ in $\E$. Then there exist open neighborhoods $\cO' \subset \cO^2$ of $x_0$,$V\subset B$ of $0$, such that the following hold.
	\begin{enumerate} 
		\item \label{part:4} 
		Let $\K := \left(\H \cap T^R_{(0,0)} O^1 \right)\oplus \left(\ker \rD r_{(0,0)} \cap (\{0\} \times \E^1)\right) \subset \E^1$. There exist neighborhoods $U'\subset U^2$ of $(0,0)$, $U''\subset \R_+^m\times \R^n\times \K^1$ of  $(0,0,0)$ and a sc-diffeomorphism $h:U'\to U''$, such that $h\circ r \circ h^{-1}:U''\to U''$ is a $\R^n$-sliced retraction. Moreover $\cO'= \phi^{-1} \circ r(U')$.
		\item\label{part:5}$\rD h_{(0,0)}(\Xi) = \{0\} \times \R^n \times \{0\}$, $\rD h_{(0,0)}(\{0\}\times \K^1) = \{0\}\times \{0\}\times \K^1$.
		\item\label{part:1} There exists a sc-smooth map $f:\cO' \to V$ such that $ \tilde{\cO}' := \phi^{-1}\circ r \circ h^{-1}((\R_+^m\times \{0\}\times \K^1)\cap U'') = f^{-1}(0)$ is a slice of $\cX^2$ containing $x_0$ and $p^{-1}(\tilde{\cO}')$ is a bundle slice of $p:\cY^2\to \cX^2$. 
		\item\label{part:unique} For $x\in \cO'$, $g=f(x)$ is the unique element $g\in V$, such that $\Gamma(g,x)\in \tilde{\cO}'$.
		\item\label{part:3} $\eta:\cO'\to \tilde{\cO}'$ defined by $x \mapsto \Gamma(f(x),x)$ is sc-smooth. $N:p^{-1}(\cO') \to p^{-1}(\tilde{\cO}')$ defined by $v \mapsto \Lambda(f(p(v)),v)$ is a sc-smooth strong bundle map.
		\item\label{part:6} $\tilde{\cO}' = \cO' \cap \phi^{-1}(O \cap (\R_+^m \times \H))$.
	\end{enumerate}
\end{lemma}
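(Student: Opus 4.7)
My plan is to apply Lemma \ref{lemma:benslice} to an auxiliary sc-smooth submersion to produce the sc-diffeomorphism $h$ and the $\R^n$-sliced local model, and then invoke the sc-smooth implicit function theorem to construct $f$.

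For parts (1) and (2), fix a linear identification $\Xi \cong \R^n$ via a basis and consider the sc-smooth map $\tilde f : O \to \R^n$ defined by $\tilde f(t, e) := \pi_\Xi(e)$, where $\pi_\Xi : \E = \Xi \oplus \H \to \Xi$ is the projection along $\H$. Since $\Xi \subset (T^R_{(0,0)} O)_\infty$ by hypothesis, $\rD \tilde f_{(0,0)}|_{T^R_{(0,0)} O}$ is surjective. Apply Lemma \ref{lemma:benslice} with $L := \Xi$ to obtain neighborhoods $U' \subset U^2$, $U'' \subset \R_+^m \times \R^n \times \K^1$ and a sc-diffeomorphism $h : U' \to U''$. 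One then identifies the kernel $\ker \rD(\tilde f \circ r)_{(0,0)} \cap (\{0\} \times \E^1)$ provided by Lemma \ref{lemma:benslice} with the direct-sum expression for $\K$ stated in (1); this uses the idempotence $\rD r_{(0,0)}|_{TO_{(0,0)}} = \Id$ together with the fact that elements of $\K$ are mapped under $\rD r_{(0,0)}$ into $T^R_{(0,0)} O \cap (\{0\} \times \H)$, hence into $\ker \pi_\Xi$. Setting $\cO' := \phi^{-1}(r(U'))$ and defining $\tilde{\cO}'$ as in (3), Lemma \ref{lemma:ben} shows that $\tilde{\cO}'$ is a slice of $\cX^2$ and $p^{-1}(\tilde{\cO}')$ is a bundle slice of $p : \cY^2 \to \cX^2$, completing (1) and (2).

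For parts (3), (4), (5), define the sc-smooth map $\Psi : B \times \cO' \to \R^n$ as the $\R^n$-component of $h \circ \phi \circ \Gamma$. Then $\Psi(0, x_0) = 0$ and $\partial_g \Psi|_{(0, x_0)} : T_0 B \to \R^n$ factors as $T_0 B \xrightarrow{\rD \Gamma_{(0, x_0)}|_{T_0 B \times \{0\}}} \Xi \xrightarrow{\rD h_{(0,0)}} \R^n$, which is a linear isomorphism by the hypothesis $\dim \Xi = n$ and the tangent identity of (2). Applying the sc-smooth implicit function theorem in charts produces a sc-smooth $f : \cO' \to V$ after possibly shrinking neighborhoods, with $\Psi(f(x), x) \equiv 0$; the local uniqueness clause of the implicit function theorem yields (4). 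For (5), $\eta(x) = \Gamma(f(x), x)$ is sc-smooth as a composition and lands in $\tilde{\cO}'$ by construction, while $N(v) = \Lambda(f(p(v)), v)$ is sc-smooth and fiberwise linear, hence a sc-smooth strong bundle map.

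For part (6), the inclusion $\supseteq$ follows from observing that $\phi(y) \in \R_+^m \times \H$ combined with $\phi(y) = r(\phi(y))$ gives $\tilde f(\phi(y)) = 0$; by the $\R^n$-slicing of $h \circ r \circ h^{-1}$, this forces the $\R^n$-coordinate of $h(\phi(y))$ to vanish, placing $\phi(y) = r(h^{-1}(h(\phi(y))))$ in $r \circ h^{-1}((\R_+^m \times \{0\} \times \K^1) \cap U'')$. The reverse inclusion is obtained by running the slicing identity $\pi_{\R^n}(h \circ r \circ h^{-1}(z)) = \pi_{\R^n}(z)$ the other way: any point of the form $r(h^{-1}(z))$ with $\pi_{\R^n}(z) = 0$ satisfies $\tilde f(r(h^{-1}(z))) = 0$, so the $\Xi$-component of its $\E$-part vanishes and it lies in $\R_+^m \times \H$. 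The main technical obstacle is the identification of kernels invoked in paragraph one, which requires careful tangent-space bookkeeping at the corner $(0, 0)$ using the tameness of $r$ and the chosen splitting $\E = \Xi \oplus \H$.
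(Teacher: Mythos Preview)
Your argument is essentially correct and uses the same two ingredients as the paper---Lemma~\ref{lemma:benslice} and the finite-dimensional implicit function theorem Lemma~\ref{lemma:help}---but you apply them in the \emph{opposite order}. The paper first builds the map $q(g,u)=\pi_\Xi\circ\phi\circ\Gamma(g,\phi^{-1}r(u))$, applies Lemma~\ref{lemma:help} to obtain a sc-smooth $t:U^*\to V$ solving $q(t(u),u)=0$, and \emph{then} feeds $t$ into Lemma~\ref{lemma:benslice} to produce $h$; finally $f:=t\circ\phi$. You instead feed the linear projection $\tilde f=\pi_\Xi$ into Lemma~\ref{lemma:benslice} first, and afterwards run the implicit function theorem on $\Psi=\pi_{\R^n}\circ h\circ\phi\circ\Gamma$. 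Both routes produce the same slice $\tilde\cO'=\cO'\cap\phi^{-1}(\R_+^m\times\H)$ and the same $f$ (characterized by property~\eqref{part:unique}), so the outputs agree; only the auxiliary $h$ may differ. Your ordering has the virtue that Lemma~\ref{lemma:benslice} is applied to a linear map, while the paper's ordering makes the level count more transparent.

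Two small points deserve tightening. First, your level bookkeeping is off: applying Lemma~\ref{lemma:benslice} to $\tilde f:O\to\R^n$ yields $U'\subset U^1$, not $U'\subset U^2$; the second level shift comes only after you invoke Lemma~\ref{lemma:help} (which is the ``sc-smooth implicit function theorem'' you should cite explicitly, since a general IFT fails in sc-calculus). You must then restrict $h$ and $U''$ accordingly to land in $U^2$ and $\R_+^m\times\R^n\times\K^1$ with $\K\subset\E^1$ as stated. Second, in~\eqref{part:3} the claim that $\eta$ is sc-smooth \emph{into $\tilde\cO'$} (with its slice M-polyfold structure), not merely into $\cX$, is not automatic from composition; the paper notes that since the $\R^n$-component of $h\circ\phi\circ\eta$ vanishes identically, the map factors sc-smoothly through $\R_+^m\times\{0\}\times\K^1$. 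This is an easy observation, but should be said.
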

\begin{proof}
    Let $\pi_{\Xi}, \pi_{\H}$ be the projections to $\Xi$ and $\H$ in $\E = \Xi\oplus \H$. We define $\Sigma:= B\times U \to \cX, (g,u)\mapsto \Gamma(g, \phi^{-1}\circ r (u))$. Let $W$ be the preimage set $\Sigma^{-1}(\cO)$, which is an open neighborhood of $(0,0,0)\in B\times U$. We consider the well-defined sc-smooth map $q: U\times B \supset W \to \Xi$:
	$$q: (g,u) \mapsto \pi_{\Xi}\circ \phi \circ \Sigma(g,u)= \pi_{\Xi}\circ  \phi \circ    \Gamma(g, \phi^{-1}\circ r(u)).$$
    Since $\rD q_{(0,0,0)}|_{T_0B \times \{0\}\times \{0\}} = \pi_\Xi\circ \rD(\phi\circ \Gamma)_{(0,x_0)}|_{T_0B\times \{0\}}$, the assumption $\rD(\phi\circ \Gamma)_{(0,x_0)}(T_0B\times\{0\}) = \Xi$ implies that 
    \begin{equation}\label{eqn:iso}
    \rD q_{(0,0,0)}|_{T_0B\times \{0\}\times \{0\}} \text{ is an isomorphism on to }\Xi.
    \end{equation}
    Then we can apply Lemma \ref{lemma:help} to get open neighborhoods $U^* \subset U^1$ of $(0,0)$ and $V\subset B$ of $0$, and a sc-smooth function $t:U^* \to V$, such that 
	\begin{equation}\label{eqn:unique}
	\text{$q(v,u) = 0$ for $u\in U^*, v\in V$ iff $v = t(u)$;}
	\end{equation}
	\begin{equation}\label{eqn:linear}
	\rD t_{(0,0)}(u) = (\rD q_{(0,0,0)}|_{T_0V\times \{0\}\times \{0\}})^{-1} \circ \rD q_{(0,0,0)}(0,u),\quad \forall u\in \R^m\times \E^1.
	\end{equation}
	Note that $\Gamma(0,\cdot) = \Id_{\cO}$ by assumption. Then we have
	\begin{equation}\label{eqn:linearofq}
	\rD q_{(0,0,0)}(0,u) = \pi_{\Xi}\circ \rD r_{(0,0)}(u), \quad \forall u \in \R^m\times \E^1.
	\end{equation}
	Since $\rD r_{(0,0)}|_{\{0\}\times \Xi} = \Id_{\Xi}$, \eqref{eqn:linear} and \eqref{eqn:linearofq} imply that
	\begin{equation}\label{eqn:surj}\rD t_{(0,0)}|_{\{0\}\times\Xi} \text{ is surjective onto }T_{0} V.\end{equation}
	Note that $q(r(x), g) = q(x,g)$, hence \eqref{eqn:unique} implies that $t \circ r = t$ on $U^*\times V$. Hence $t|_{r(U^*)}$ is a sc-smooth function.
	
	Since $\Xi \subset T^R_{(0,0)} O$ by assumption, \eqref{eqn:surj} implies that $\rD t_{(0,0)}|_{T^R_{(0,0)}O}: T^R_{(0,0)} O\to T_{0}V$ is surjective. We can apply Lemma \ref{lemma:benslice} to $t$ as follows: There exist open neighborhoods $U' \subset (U^*)^1 \subset U^2$ of $(0,0)$, $U''\subset \R_+^m\times \R^n \times \K^1$ of $(0,0,0)$ and a sc-diffeomorphism $h:U' \to U''$ such that $h\circ r \circ h^{-1}$ is a $\R^n$-sliced retraction and $t \circ r \circ  h^{-1}$ is the projection to $\R^n$, where 
	\begin{equation}\label{eqn:K}
	\K := \ker \rD (t\circ r)_{(0,0)} \cap(\{0\}\times \E^1) = \ker \rD t_{(0,0)} \cap (\{0\}\times \E^1).
	\end{equation}
	Then \eqref{eqn:linearofq} implies that $\K = \left(\H \cap T^R_{(0,0)} O^1 \right)\oplus \left(\ker \rD r_{(0,0)} \cap (\{0\} \times \E^1)\right)$ as in the statement of the proposition. Let $O':= r(U'), O'':= h\circ r \circ h^{-1}(U'')$ and $\cO' := \phi^{-1}(O') = \phi^{-1}\circ h^{-1}(O'')\subset \cX^2$. We also define $H := h\lhd \Id_{\F^2}$ and $R'':= H\circ R \circ H^{-1}$ and $P''= R''(U''\lhd \F^2)$. Then $(\cO', H\circ \Phi, (P'', (\R_+^m \times \R^n\times \K^1) \lhd \F^2))$ is a tame strong bundle chart for $p:\cY^2\to \cX^2$ around $x_0$ covering a M-polyfold chart $(\cO', h\circ \phi, (O'',\R_+^m\times \R^n \times \K^1))$. Therefore by Lemma \ref{lemma:ben}, $\tilde{\cO}' :=\phi^{-1}(t^{-1}(0)\cap O')$ is a slice of $\cX^2$ and $p^{-1}(\tilde{\cO}')$ is a bundle slice. So far, we have proven Property \eqref{part:4} and \eqref{part:1}. Property \eqref{part:5} follows from Lemma \ref{lemma:benslice}.
	
	Let $f:\cO' \to V$ be the sc-smooth map defined by $x \mapsto t\circ \phi(x)$. Let $g \in V$ and $x\in \cO'$. Then $\Gamma(g,x) \in f^{-1}(0) = \tilde{\cO}'$ is equivalent to $q(0, \phi(\Gamma(g,x))) = 0$ by \eqref{eqn:unique}, that is $\pi_\Xi\circ \phi \circ \Gamma(0, \Gamma(g,x)) = \pi_\Xi\circ \phi \circ \Gamma(g,x) = 0$. Therefore $\Gamma(g,x)\in \tilde{\cO}'$ is equivalent to $(g,\phi(x))$ is a solution to $q = 0$ in $V\times U'$. Therefore by \eqref{eqn:unique}, $g=t\circ \phi(x) = f(x)$. Hence property \eqref{part:unique} holds. 
	
	To see $\eta(x) := \Gamma(f(x),x)$ is sc-smooth. By chain rule \cite[Theorem 1.1]{hofer2017polyfold}, $\eta$ is a sc-smooth map from $\cO'$ to $\cO'$. In chart $U''$, that is $h\circ \phi\circ \eta \circ \phi^{-1}\circ r \circ h^{-1}: U'' \to \R_+^m\times \R^n \times \K^1$ is sc-smooth. Since $\Ima(h\circ \phi\circ \eta \circ \phi^{-1}\circ r \circ h^{-1}) \subset \R_+^m\times \{0\} \times \K^1$, $h\circ \phi\circ \eta \circ \phi^{-1}\circ r \circ h^{-1}$ is also a sc-smooth map from $U''$ to $\R_+^m\times \{0\} \times \K^1$. That is $\eta:\cO \to \tilde{\cO}'$ is sc-smooth. The sc-smoothness of $N:=\rho(f(p(x)),x)$ follows from the same argument. This proves property \eqref{part:3}.
	
	To show property \eqref{part:6}, for $x\in O'$, by \eqref{eqn:unique} $t(x) = 0$ is equivalent to $0 = q(0,x) = \pi_{\Xi}\circ \phi\circ (0, \phi^{-1}\circ r(x)) = \pi_{\Xi}(x)$, i.e. $x\in \H$. Since $\tilde{\cO}' = \{\phi^{-1}(x)|t(x) = 0, x\in O\}$, hence $\tilde{\cO}' =\cO'\cap \phi^{-1}(O\cap(\R_+^m\times \H))$. 
\end{proof}
\begin{remark}
	Property \eqref{part:1} - \eqref{part:3} in Lemma \ref{lemma:gamma} are directly used in the construction of $G$-slices, e.g. Lemma \ref{lemma:slice}, Proposition \ref{prop:localmodel} and Proposition \ref{prop:localmodelbundle}. Property \eqref{part:4} and \eqref{part:5} is used to get $G$-slices which is also good in the sense of Definition \ref{def:good}, see Proposition \ref{prop:goodslice}.
\end{remark}

\begin{definition}\label{def:Gslice}
	Let $\rho$ be a sc-smooth action on tame strong M-polyfold bundle $p:\cY \to \cX$. For every $x_0$, a \textbf{bundle $\bm{G}$-slice of $\bm{p}$ around $\bm{x_0}$} is a tuple $(\tilde{\cO},\cO, V,f,\eta,N)$ such that the following holds.
	\begin{itemize}
		\item $\cO$ is open subset of $\cX$ and $V\subset G$ is an open neighborhood of $\Id$. 
		\item $f:\cO \to V$ is a sc-smooth map, such that $x_0\in \tilde{\cO} := f^{-1}(\Id)$ and $p^{-1}(\tilde{\cO})$ is a bundle slice.
		\item For $x\in \cO$, $g=f(x)$ is the unique element $g\in V$ such that $\rho_{\cX}(g,x) \in \tilde{\cO}$.
		\item $\eta:\cO \to \tilde{\cO}$ defined by $x\mapsto \rho_{\cX}(f(x),x)$ is sc-smooth. $N:p^{-1}(\cO) \to p^{-1}(\tilde{\cO})$ defined by $v \mapsto \rho(f(p(v)), v)$ is a sc-smooth strong bundle map.
		\item $\psi: \tilde{\cO} \to \cO \stackrel{\pi_G}{\to} \cX/G$ is injective.
	\end{itemize}
\end{definition}

\begin{lemma}\label{lemma:slice}
	Let $\rho$ be a sc-smooth action on the tame strong M-polyfold bundle $p:\cY \to \cX$ such that $\rho_{\cX}$ is free. Then there exists a bundle $G$-slice of $p:\cY^2\to \cX^2$ around every $x_0\in \cX_\infty$.
\end{lemma}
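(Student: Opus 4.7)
The plan is to apply Lemma \ref{lemma:gamma} with $\Lambda$ being the group action itself, and then shrink the resulting slice so that the local uniqueness statement \eqref{part:unique} of Lemma \ref{lemma:gamma} upgrades to global injectivity on the orbit quotient.

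First fix any tame strong bundle chart $(\cO,\Phi,(P,(\R_+^m\times\E)\lhd \F))$ around $x_0$, and use the exponential map of $G$ to identify a neighborhood $B$ of $\Id\in G$ with a neighborhood of $0$ in $T_{\Id}G\cong \R^n$, where $n=\dim G$. Set $\Lambda(g,v):=\rho(g,v)$ on $B\times p^{-1}(\cO)$; its induced base map is $\Gamma(g,x)=\rho_{\cX}(g,x)$, which satisfies $\Gamma(0,\cdot)=\Id$ by the group property of $\rho$. Proposition \ref{prop:red} gives $\rD\Gamma_{(0,x_0)}(T_0B\times\{0\})\subset (T^R_{x_0}\cO)_\infty$, and Proposition \ref{prop:free} shows this subspace has dimension exactly $n$ because $\rho_{\cX}$ is free. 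Hence all hypotheses of Lemma \ref{lemma:gamma} are satisfied. It produces open neighborhoods $\cO'\subset \cO^2$ of $x_0$ and $V\subset B$ of $\Id$, a sc-smooth map $f:\cO'\to V$, a slice $\tilde{\cO}'\subset \cX^2$ containing $x_0$ with $p^{-1}(\tilde{\cO}')$ a bundle slice, and sc-smooth maps $\eta:\cO'\to\tilde{\cO}'$, $\,N:p^{-1}(\cO')\to p^{-1}(\tilde{\cO}')$ given by applying $\rho$ after translation by $f$. The first four items of Definition \ref{def:Gslice} are direct translations of properties \eqref{part:1}, \eqref{part:unique}, and \eqref{part:3} of Lemma \ref{lemma:gamma}.

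The remaining task is to secure injectivity of $\psi:\tilde{\cO}'\to \cX/G$, and this is where the main (though standard) obstacle lies: Lemma \ref{lemma:gamma}\eqref{part:unique} only guarantees uniqueness of the relating group element inside $V$, not in all of $G$. Suppose $y=\rho_{\cX}(g,x)$ with $x,y\in\tilde{\cO}'$ and $g\in G$; if $g\in V$ then \eqref{part:unique} forces $g=f(x)=\Id$ and $x=y$, so the only issue is to rule out $g\notin V$. I would do this by shrinking $\tilde{\cO}'$ using compactness of $G$ together with freeness: if no shrinking worked, there would exist sequences $x_k,y_k\in\tilde{\cO}'$ with $x_k,y_k\to x_0$ in the $\cX^2$ topology and $g_k\in G\setminus V$ with $y_k=\rho_{\cX}(g_k,x_k)$. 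Compactness of $G$ yields a subsequential limit $g_k\to g^*\in G\setminus V$, and level-$2$ continuity of $\rho_{\cX}$ (which is a consequence of its sc-smoothness) gives $\rho_{\cX}(g^*,x_0)=x_0$, contradicting freeness since $g^*\neq\Id$. Therefore there exists an open $\cX^2$-neighborhood $\tilde U\subset\tilde{\cO}'$ of $x_0$ such that any two points of $\tilde U$ in the same $G$-orbit are related by an element of $V$, and hence coincide. Replacing $\tilde{\cO}'$ by $\tilde U$ and $\cO'$ by $\eta^{-1}(\tilde U)$, and restricting $f$, $\eta$, $N$ accordingly, yields the desired bundle $G$-slice of $p:\cY^2\to\cX^2$ around $x_0$.
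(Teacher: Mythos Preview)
Your proof is correct and follows essentially the same approach as the paper: apply Lemma \ref{lemma:gamma} with $\Lambda=\rho$ to obtain all conditions of a bundle $G$-slice except injectivity, then shrink using compactness of $G$ and freeness of $\rho_{\cX}$. The only cosmetic difference is that the paper assumes $g_n\ne\Id$ and derives $g_n\in V$ for large $n$ to contradict \eqref{part:unique}, whereas you split off the case $g\in V$ first and then run the compactness argument with $g_k\in G\setminus V$; both variants are equally valid.
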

\begin{proof}
	For $x_0 \in \cX_\infty$, let $(\cO,\Phi,(P,(\R_+^m\times \E)\lhd \F))$ be a tame strong M-polyfold bundle chart around $x_0$ covering a tame M-polyfold chart $(\cO, \phi, (O,\R_+^m\times \E))$. Then we define
	$$\Lambda: B\times p^{-1}(\cO) \to \cY, \quad (g,v) \mapsto \rho(g,v),$$
	where $B\subset G$ is a neighborhood of $\Id$. Then $\Lambda(\Id, \cdot)|_{p^{-1}(\cO)} = \Id_{p^{-1}(\cO)}$ and $\Lambda$ covers the map $$\Gamma:B\times \cO \to \cX, (g,x)\mapsto \rho_{\cX}(g,x).$$
	Since the group action is free, by Proposition \ref{prop:red} and Proposition \ref{prop:free} $\rD\Gamma_{(\Id,x_0)}(T_{\Id}B\times \{0\}) = \rD \gamma_{\Id}(T_{\Id}B) \subset T^R_{x_0}\cO$ and $\dim \rD\Gamma_{(\Id,x_0)}(T_{\Id}B\times \{0\})=\dim B$. Hence we can apply Lemma \ref{lemma:gamma} to $\Lambda$. As a consequence, we have $(\tilde{\cO}',\cO', V,f,\eta,N)$ satisfying all the conditions of a bundle $G$-slice except the injective condition. Moreover, by Definition \ref{def:Gslice}, for every open neighborhood $\tilde{\cO}''\subset \tilde{\cO}'$ of $x_0$, let $\cO''= \eta^{-1}(\tilde{\cO}'')$, then $(\tilde{\cO}'',\cO'',V,f,\eta,N)$ satisfies all but the injective condition of a bundle $G$-slice.
	
	We claim that there is an open neighborhood $\tilde{\cO}''\subset \tilde{\cO}'$ of $x_0$, such that $\psi|_{\tilde{\cO}''}$ is injective. Assume otherwise, that is there exist $x_n,y_n\in \tilde{\cO}'$ converging to $x_0$ and $g_n \ne \Id\in G$ such that $\rho_{\cX}(g_n,x_n)=y_n$. Since $G$ is compact, we have $\displaystyle\lim_{i\to \infty} g_{n_i} = g_0$ for a subsequence $\{n_i\}_{i\in \N}$. The continuity of  $\rho_{\cX}$ implies that $\rho_{\cX}(g_0,x_0)=x_0$. Since the group action is free, we have $g_0=\Id$. In particular, there exists $n\in \N$ such that $g_n\in V$. Then both $x_n=\rho_{\cX}(\Id, x_n)$ and $y_n=\rho_{\cX}(g_n ,x_n)$ are in $\tilde{\cO}$, which contradicts property \eqref{part:unique} of Lemma \ref{lemma:gamma}. As a consequence, $(\eta^{-1}(\tilde{\cO}''), \tilde{\cO}'',V,f,\eta, N)$ satisfying all the conditions of a bundle $G$-slice.
\end{proof}

Although a general implicit function theorem does not exists in sc-calculus \cite{counter}, the following special form  holds. It is used in the proof of Lemma \ref{lemma:slice} and Lemma \ref{lemma:fred}.
\begin{lemma}\label{lemma:help}
	Let $q: \R^n\times \R_+^m\times \E  \to \R^n$ be a sc-smooth map. Assume $q(0,0,0) = 0$ and $\rD q_{(0,0,0)}(\R^n\times \{0\}\times \{0\}) = \R^n$. Then there exist open neighborhoods $U\subset \R_+^m \times \E^1$ of $(0,0)$, $V \subset \R^n$ of $0$ and a sc-smooth map $f:U\to V$, such that 
	\begin{enumerate}
		\item $q:\R_+^m \times \E_{k+1}\supset U_k\to V$ is $C^{k+1}$ for all $k\ge 0$.
		\item $q(v,u) = 0$ for $u\in U,v\in V$ iff $v = f(u)$;
		\item $\rD f_{(0,0)}(u) = (\rD q_{(0,0,0)}|_{\R^n\times \{0\}\times \{0\}})^{-1}\circ \rD q_{(0,0,0)}(0,u)$ for $u \in \R^m\times \E^1$.
	\end{enumerate}
\end{lemma}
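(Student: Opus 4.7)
The plan is to reduce to the classical Banach implicit function theorem applied at each level separately, exploiting two facts: the target $\R^n$ carries the trivial sc-filtration, and the hypothesis forces $A := \rD q_{(0,0,0)}|_{\R^n \times \{0\} \times \{0\}}$ to be a linear isomorphism of $\R^n$. The corner structure $\R_+^m$ in the parameter factor causes no difficulty, because we are solving for the unconstrained variable $v \in \R^n$.

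I would first record the classical regularity inherited from sc-smoothness: because the target $\R^n$ has trivial sc-filtration, $q$ restricts to a classically $C^{k+1}$ map on the level-$k$ neighborhood $\R^n \times \R_+^m \times \E_{k}$, for every $k \ge 0$, via the standard passage from sc-smoothness to classical smoothness on each level (cf.\ \cite[Proposition 1.10]{hofer2017polyfold}). At the base level, the classical implicit function theorem with parameters then produces open neighborhoods $U \subset \R_+^m \times \E^1$ of $(0,0)$ and $V \subset \R^n$ of $0$, together with a $C^2$ map $f: U \to V$ characterized by $q(v,u) = 0 \iff v = f(u)$ for $(v,u) \in V \times U$, which is item (2). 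Item (3) then drops out of implicit differentiation of $q(f(u),u) \equiv 0$ at $(0,0)$, using $f(0,0)=0$ and the invertibility of $A$.

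To upgrade $f$ to a sc-smooth function and establish item (1), I would iterate the argument at each higher level. For each $k \ge 1$, sc-smoothness of $q$ provides the required classical regularity on $\R^n \times U_k$ with $U_k := U \cap (\R_+^m \times \E_{k+1})$, and $\partial_v q_{(0,0,0)} = A$ remains invertible. The classical IFT yields a local $C^{k+1}$ solution $\tilde{f}_k$; uniqueness then forces $\tilde{f}_k = f|_{U_k}$ after shrinking $U$, proving item (1). Sc-smoothness of $f$ follows from the implicit tangent formula $\rD f_u(h) = -(\partial_v q_{(f(u),u)})^{-1} \circ \partial_u q_{(f(u),u)}(h)$, which exhibits $\rD f$ as a composition of sc-smooth operations (finite-dimensional matrix inversion is smooth on the open set where $\partial_v q$ remains invertible, guaranteed on a neighborhood of $(0,0,0)$ by continuity), with iteration on the tangent bundle giving sc-smoothness to all orders.

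The only subtlety is coherently organizing the classical regularity at each level into one sc-smooth function; this is resolved cleanly by uniqueness of the implicit function at each level combined with density of smooth points, so no genuine obstacle arises beyond careful level-by-level bookkeeping.
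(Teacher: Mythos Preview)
Your approach is essentially the paper's: pass from sc-smoothness to classical $C^k$ regularity level-by-level, apply the classical implicit function theorem, and use uniqueness to identify the solutions across levels. Two points are worth tightening. First, the phrase ``after shrinking $U$'' is a red flag: if you only apply the level-$k$ IFT at the origin, the resulting neighborhoods may shrink with $k$ and their intersection need not be open. The paper avoids this by observing that the base-level IFT already guarantees $\partial_v q$ is invertible at \emph{every} solution $(f(u),u)\in V\times U$, so one can apply the classical IFT locally near each such point at level $k$; uniqueness then shows $f$ is $C^k$ near every $u\in U_k$, hence on all of $U_k$, with no shrinking. Second, for the sc-smoothness conclusion the paper simply invokes \cite[Proposition~1.8]{hofer2017polyfold} (a map into $\R^n$ that is $C^{m+1}$ on each level $m$ is sc-smooth) rather than your bootstrap via the explicit derivative formula; both routes work, but the citation is cleaner and sidesteps the bookkeeping.
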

\begin{proof}
	By \cite[Proposition 1.7]{hofer2017polyfold}, $q: \R^n\times \R_+^m\times \E^k \to \R^n$ is a $C^k$ map. Since $q(0,0,0) = 0$ and $\rD q_{(0,0,0)}(\R^n\times \{0\}\times \{0\}) = \R^n$, we can apply the classical implicit function theorem for partial quadrants in Banach spaces\footnote{The inverse function theorem for partial quadrant was discussed in \cite[Theorem 2.2.4]{roig1992differential}, and the implicit function theorem for partial quadrants follows as a corollary.}. That is there exist open neighborhoods $U \subset \R_+^m\times \E^1$ of $(0,0)$ and $V\subset \R^n$ of $0$, and a $C^1$ map $f:U\to V$, such that $q(v,u) = 0$ for $u\in U,v\in V$ iff $v = f(u)$. Moreover, $\rD q_{(v,u)}(\R^n\times \{0\}\times \{0\}) = \R^n.$ for every $(v,u) \in V\times U$ with $q(v,u) = 0$. It remains to prove that $f$ is sc-smooth. For every $(v,u) \in V\times U$ with $q(v,u) = 0$, if $u \in \R_+^m\times \E^k$, we can apply the classical implicit function theorem to the $C^k$ map $q:\R^n\times \R_+^m\times \E^k  \to \R^n$ near $(v,u)$ to get a $C^k$ map $\tilde{f}_k$ defined near $u$ solving $q(v,u) = 0$. By the uniqueness of implicit function, we have $f = \tilde{f}_k$ in a neighborhood of $u$ in $\R_+^m\times \E^k$. Therefore $f: U\cap (\R_+^m\times \E^k) \to \R^n$ is $C^k$. By \cite[Proposition 1.8]{hofer2017polyfold}, $f:U\to V$ is sc-smooth.  The last assertion follows from the classical implicit function theorem.
\end{proof}

\subsubsection{Free quotients of M-polyfolds and M-polyfold bundles}
We first prove that bundle G-slices give rise to a tame strong M-polyfold bundle structure on the topological quotient. In particular, part \eqref{part:freeone} of Theorem \ref{thm:free} follows directly form the following lemma.
\begin{lemma}\label{lemma:structure}
	Assume $\rho$ acts on a tame strong M-polyfold bundle $p:\cY \to \cX$ such that the induced action $\rho_{\cX}$ on $\cX$ is free. For every $x_0\in \cX_\infty$, we pick a bundle G-slice $(\tilde{\cO}_{x_0},\cO_{x_0}, V_{x_0}, f_{x_0},\eta_{x_0},N_{x_0})$ of $p$ around $x_0$. Let $\hX :=  \cup_{x_0\in \cX_\infty}\rho_{\cX}(G,\cO_{x_0})$. Then $\overline{p}:p^{-1}(\hX)/G \to \hX/G$ is a tame strong M-polyfold bundle,
	such that map 
	\begin{equation}\label{eqn:Psi}
		\Psi_{x_0} = \pi_G\circ \iota: p^{-1}(\tilde{\cO}_{x_0}) \stackrel{\iota}{\hookrightarrow} p^{-1}(\cO_{x_0}) \stackrel{\pi_G}{\to} p^{-1}(\hX)/G
	\end{equation}
	 is a strong bundle morphism. Moreover, the quotient map $\pi_G:p^{-1}(\hX) \to p^{-1}(\hX)/G$ is a sc-smooth strong bundle map. 
\end{lemma}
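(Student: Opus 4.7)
The plan is to build the tame strong M-polyfold bundle structure on $p^{-1}(\hX)/G$ by declaring the maps $\Psi_{x_0}$, composed with bundle charts of the slices $p^{-1}(\tilde{\cO}_{x_0})$, to be the charts. First I would dispose of the topology. Since $G$ is compact, the action on $\cY$ (and $\cX$) is proper, hence the orbit equivalence relation is closed and the quotients $\hX/G$, $p^{-1}(\hX)/G$ are Hausdorff; paracompactness descends to the quotient because $\hX$ is paracompact and $\pi_G$ is open (as is any group quotient map). The induced projection $\overline{p}:p^{-1}(\hX)/G\to \hX/G$ is well-defined, continuous, and has vector-space fibers because the $G$-action is fiberwise linear.

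Second, I would promote each $\Psi_{x_0}$ to a local chart. By Lemma \ref{lemma:ben}, $p^{-1}(\tilde{\cO}_{x_0})\to \tilde{\cO}_{x_0}$ is itself a tame strong M-polyfold bundle, so it already comes equipped with a tame strong bundle chart. The injectivity clause in Definition \ref{def:Gslice} shows that $\pi_G\circ \iota$ is injective on the base, and fiberwise injectivity is automatic because $G$ acts by linear bundle isomorphisms. Using openness of $\pi_G$ together with the identity $\pi_G(p^{-1}(\cO_{x_0}))=\Psi_{x_0}(p^{-1}(\tilde{\cO}_{x_0}))$ (every element of $\cO_{x_0}$ is $G$-equivalent via $f_{x_0}$ to a unique element of $\tilde{\cO}_{x_0}$), I would verify that $\Psi_{x_0}$ is a homeomorphism onto an open subset. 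This produces a candidate tame strong bundle chart for $\overline{p}$ covering every orbit through a smooth point.

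Third, I would check chart compatibility. Suppose the images of two bundle $G$-slices around $x_0$ and $x_1$ overlap at $[y_0]=[z_0]$ with $y_0\in \tilde{\cO}_{x_0}$, $z_0\in \tilde{\cO}_{x_1}$, and $\rho_{\cX}(g_0,y_0)=z_0$. Shrinking $\tilde{\cO}_{x_0}$ we may assume $\rho_{\cX}(g_0,\tilde{\cO}_{x_0})\subset \cO_{x_1}$ by openness of $\cO_{x_1}$ and continuity of $\rho_{\cX}$. The uniqueness property of Definition \ref{def:Gslice} then forces the transition on the base to be $y\mapsto \eta_{x_1}(\rho_{\cX}(g_0,y))$, which is sc-smooth as a composition of $\rho_{\cX}(g_0,\cdot)$ and $\eta_{x_1}$. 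The corresponding bundle transition is $v\mapsto N_{x_1}(\rho(g_0,v))$, which is a sc-smooth strong bundle map on each level by the strong bundle map property of $\rho$ and $N_{x_1}$. Tameness is preserved because $\rho(g_0,\cdot)$ preserves degeneracy index (Proposition \ref{prop:red} style argument) and $\eta_{x_1}$ lands in the tame slice $\tilde{\cO}_{x_1}$.

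Finally, sc-smoothness of $\pi_G$ follows by reading it in charts: on $p^{-1}(\cO_{x_0})$ the uniqueness of representatives in $\tilde{\cO}_{x_0}$ yields $\Psi_{x_0}^{-1}\circ \pi_G = N_{x_0}$, which is sc-smooth by Definition \ref{def:Gslice}. The main obstacle I anticipate is a clean handling of the quotient topology (Hausdorffness, the openness that makes $\Psi_{x_0}(p^{-1}(\tilde{\cO}_{x_0}))$ open, and paracompactness) together with the check that the transition formula is independent of the choice of $g_0$ up to sc-smooth change of variable; compactness and freeness of the action are what make these points go through, and the remaining work is bookkeeping of bundle versus base data on the two strong-bundle levels.
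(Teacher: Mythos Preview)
Your approach is essentially the paper's: use the bundle $G$-slices $p^{-1}(\tilde{\cO}_{x_0})$ as charts for the quotient, verify that the transition maps are $v\mapsto N_{x_1}(\rho(g_0,v))$ (hence sc-smooth strong bundle maps), and read $\pi_G$ in charts as $N_{x_0}$ (after a fixed translation by some $g_0$ when the point lies only in $\rho_{\cX}(G,\cO_{x_0})$). The openness and compatibility arguments match almost verbatim.

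The one place where your justification is not quite right is the topology of the quotient. Openness of $\pi_G$ together with paracompactness of $\hX$ does \emph{not} imply paracompactness of $\hX/G$; open continuous images of paracompact spaces need not be paracompact. The paper handles this via Lemma~\ref{lemma:quometric}: since $\cX$ is metrizable (\cite[Theorem 2.2]{hofer2017polyfold}) and $G$ is compact, one averages the metric to obtain a $G$-invariant metric and then shows $d_{X/G}(\pi_G(x),\pi_G(y)):=\min_{g\in G}d_G(x,gy)$ metrizes the quotient topology. Metrizability then gives both Hausdorffness and paracompactness at once. You correctly flagged this as the main anticipated obstacle; the resolution is the metrizability lemma rather than a direct descent of paracompactness.
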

\begin{proof}
	By construction, $\hX$ is a $G$-invariant open subset of $\cX$ containing $\cX_{\infty}$. We claim that for every $x_0\in \cX$, the following maps 
    \begin{equation}\label{eqn:psi}
        \psi_{x_0}: \tilde{\cO}_{x_0} \hookrightarrow \hX \stackrel{\pi_G}{\to} \hX/G 
    \end{equation}
    induce  a sc-smooth structure on $\hX/G\subset \cX/G$. 
    
    To prove this claim, we first show $\psi_{x_0}$ is a homeomorphism onto the image. Since $\psi_{x_0}$ is injective, it suffices to show that $\psi_{x_0}$ is an open map. For every open subset $\tilde{\cU} \subset \tilde{\cO}_{x_0}$,  we have $\psi_{x_0}(\tilde{\cU})=\pi_G(\eta_{x_0}^{-1}(\tilde{\cU}))$.  Thus it is sufficient to prove $\pi_G^{-1}\left(\pi_G(\eta_{x_0}^{-1}(\tilde{\cU}))\right)\subset \cX$ is open. Because $\eta_{x_0}^{-1}(\tilde{\cU})\subset \cX$ is open, $ \rho_{\cX} \left(G, \eta_{x_0}^{-1}(\tilde{\cU})\right)\subset \cX$ is also open. Since $\pi_G^{-1}\left(\pi_G(\eta_{x_0}^{-1}(\tilde{\cU}))\right) = \rho_{\cX} \left(G, \eta_{x_0}^{-1}(\tilde{\cU})\right)$, $\pi_G^{-1}\left(\pi_G(\eta_{x_0}^{-1}(\tilde{\cU}))\right)$ is open.

    Next we will prove the compatibility between $\psi_{a}$ and $\psi_{b}$.
    Consider a point $q \in \hX/G$ such that $\psi_{a}(x) = \psi_{b}(y) = q$ for $x\in \tilde{\cO}_{a}, y\in \tilde{\cO}_{b}$. If we view $x,y$ as points in $\cX$, then $\pi_G(x) = \pi_G(y) = q$, i.e. there exists $g_0\in G$ such that $\rho_{\cX}(g_0, y) = x$. We claim the transition map near $y$ can be expressed as
    $$\psi_{a}^{-1}\circ \psi_{b}(z) = \eta_{a}\circ \rho_{\cX}(g_0, z).$$
    This is because $\eta_{a}$ is defined on a neighborhood of $x=\rho_{\cX}(g_0,y)$ in $\cO_{a}$ and $\pi_G\circ \eta_{a}\circ \rho_{\cX}(g_0, z) = \pi_G\circ\rho_{\cX}(g_0, z) = \pi(z)$. Since $\eta_{a}$ is sc-smooth, $\psi_{a}^{-1}\circ \psi_{b}$ is sc-smooth. Since each slice $\tilde{\cO}_{x_0}$ is a tame M-polyfold, we give $\hX/G$ a tame M-polyfold structure. 

    By \cite[Theorem 2.2]{hofer2017polyfold}, $\cX$ is metrizable. Therefore by Lemma \ref{lemma:quometric} below, $\cX/G$ is again a metrizable space and so is $\hX/G$. Hence $\hX/G$ is a paracompact Hausdorff space with tame M-polyfold structure, i.e. a tame M-polyfold.

    Finally we prove that the quotient map $\pi_G:\hX\to \hX/G$ is $\sc^\infty$. Consider $x\in \hX$, by our construction of $\hX$, there exist $g_0\in G$ and a slice $\tilde{\cO}_{x_0}$, such that $\rho_{\cX}(g_0,x)\in \tilde{\cO}_{x_0}$. Therefore the map $\pi_G$ can be locally expressed as 
    $$\pi_G: z \mapsto \eta_{x_0} \circ \rho_{\cX}(g_0, z).$$
    Since $\eta_{x_0}$ and $\rho_{\cX}$ are both sc-smooth, the quotient map $\pi_G$ is sc-smooth by the chain rule.
    
    Similarly, bundle maps $\Psi_{x_0}: p^{-1}(\tilde{\cO}_{x_0}) \hookrightarrow p^{-1}(\hX) \stackrel{\pi_G}{\to} p^{-1}(\hX)/G$ gives
    $p^{-1}(\hX)/G\to \hX/G$ a tame strong M-polyfold bundle structure such that the quotient map $\pi_G:p^{-1}(\hX) \to p^{-1}(\hX)/G$ is a sc-smooth strong bundle map. In particular, 
    \begin{equation}\label{eqn:bundiso}
    \Psi_{x_1}^{-1}\circ \Psi_{x_0} \text{ is a strong bundle isomorphism locally.}
    \end{equation}
    This fact is used in the proof of part \eqref{part:freetwo} of Theorem \ref{thm:free}.
\end{proof}

\begin{proposition}\label{prop:unique}
	Assume two different sets of choices of bundle G-slices in Lemma \ref{lemma:structure} give rise to two quotient M-polyfolds $\hX_a/G$ and $\hX_b/G$. Let $U$ be the topological quotient $(\hX_a \cap \hX_b)/G$, which is an open subset of $\cX/G$ containing $\cX_\infty/G$. Then the identity map $(\hX_a/G) \cap U \to (\hX_b/G) \cap U$ is a sc-diffeomorphism, where $(\hX_a/G) \cap U$ is $U$ with the M-polyfold structure from $\hX_a/G$ and $(\hX_b/G) \cap U$ is $U$ with the M-polyfold structure from $\hX_b/G$. Similarly, the identity map $p^{-1}(\hX_a)/G \cap \overline{p}^{-1}(U) \to p^{-1}(\hX_b)/G \cap \overline{p}^{-1}(U)$ is a strong bundle isomorphism.
\end{proposition}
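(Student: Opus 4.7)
The plan is to reuse verbatim the compatibility argument that established the M-polyfold structure in Lemma \ref{lemma:structure}, except that now we compare a chart coming from the slice collection $a$ with a chart coming from the slice collection $b$, rather than two charts from the same collection. Since both quotient structures share the same underlying topology (the topological quotient $\cX/G$ restricted to $U$), it suffices to check that transition maps between $a$-charts and $b$-charts are sc-smooth in both directions; this is what will exhibit the identity on $U$ as a sc-diffeomorphism.

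Concretely, pick $q\in U$ and choose representatives: there exist $x_0,y_0\in \cX_\infty$ with a slice $(\tilde{\cO}^a_{x_0},\cO^a_{x_0},V^a_{x_0},f^a_{x_0},\eta^a_{x_0},N^a_{x_0})$ from collection $a$ and a slice $(\tilde{\cO}^b_{y_0},\cO^b_{y_0},V^b_{y_0},f^b_{y_0},\eta^b_{y_0},N^b_{y_0})$ from collection $b$, together with $x\in \tilde{\cO}^a_{x_0}$, $y\in \tilde{\cO}^b_{y_0}$ and $g_0\in G$ such that $\rho_\cX(g_0,y)=x$ and $\psi^a_{x_0}(x)=\psi^b_{y_0}(y)=q$. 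Exactly as in Lemma \ref{lemma:structure}, the composite $\eta^a_{x_0}\circ \rho_\cX(g_0,\cdot)$ is defined and sc-smooth on a neighborhood of $y$ inside $\tilde{\cO}^b_{y_0}$, takes values in $\tilde{\cO}^a_{x_0}$, and descends to $\pi_G$ under $\psi^a_{x_0}$. Hence
\[
(\psi^a_{x_0})^{-1}\circ \psi^b_{y_0}(z)=\eta^a_{x_0}\circ \rho_\cX(g_0,z)
\]
holds near $y$, and the right hand side is sc-smooth by the chain rule. Exchanging the roles of $a$ and $b$ gives the sc-smoothness of the inverse transition, so the identity on $U$ is a sc-diffeomorphism.

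For the bundle statement, the same argument applies to the bundle slice data. The map $\Psi^a_{x_0}$ from \eqref{eqn:Psi} and its $b$-counterpart fit into the analogue of \eqref{eqn:bundiso}: on the overlap the transition $(\Psi^a_{x_0})^{-1}\circ \Psi^b_{y_0}$ factors as $N^a_{x_0}\circ \rho(g_0,\cdot)$, which is a sc-smooth strong bundle map because $N^a_{x_0}$ and $\rho$ are strong bundle maps. The same reasoning with $a$ and $b$ swapped yields an inverse strong bundle map, so the identity is a strong bundle isomorphism on $\bar{p}^{-1}(U)$.

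I do not expect any serious obstacle: the only thing one has to notice is that the element $g_0$ arising from the two representatives of $q$ is truly a single group element (which follows from freeness of $\rho_\cX$ and the uniqueness property \eqref{part:unique} of Lemma \ref{lemma:gamma} that is already built into Definition \ref{def:Gslice}), and that $\eta^a_{x_0}$ is defined on an open neighborhood of $x=\rho_\cX(g_0,y)$ inside $\cO^a_{x_0}$, which it is by construction. Everything else is chain rule applied to data that was proven sc-smooth in Lemma \ref{lemma:slice} and Lemma \ref{lemma:structure}.
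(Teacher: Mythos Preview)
Your argument is correct and complete. It differs from the paper's proof in presentation rather than substance: the paper first isolates a \emph{universal property} of the quotient construction (any $G$-invariant sc-smooth map $f:\hX\cap W\to\cZ$ descends uniquely to a sc-smooth map $\overline{f}:(\hX\cap W)/G\to\cZ$), verifies this by noting that $\overline{f}\circ\psi_{x_0}=f|_{\tilde{\cO}_{x_0}\cap W}$ in each slice chart, and then applies the universal property twice with $f=\pi_a$ and $f=\pi_b$ to obtain mutually inverse sc-smooth maps $\overline{\pi_a},\overline{\pi_b}$ between the two quotient structures. You instead go directly to the chart level and write down the transition $(\psi^a_{x_0})^{-1}\circ\psi^b_{y_0}=\eta^a_{x_0}\circ\rho_{\cX}(g_0,\cdot)$, which is precisely the local computation underlying the paper's verification that $\overline{f}$ is sc-smooth. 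Your approach is more concrete and reuses the compatibility step from Lemma~\ref{lemma:structure} verbatim; the paper's formulation via a universal property is slightly more modular and makes the categorical content explicit, but the analytic core is the same formula.
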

\begin{proof}
	We will prove the claim on the base M-polyfold and the assertion on the bundle follows from a similar argument. The quotient construction in Lemma \ref{lemma:structure} has the following universal property: Let $W$ be a $G$-invariant open subset of $\cX$. If there is a $G$-invariant $\sc^\infty$ map $f: \hX\cap W \to \cZ$, then there exists a unique $\sc^\infty$ map $\overline{f}: (\hX\cap W)/G \to \cZ$ such that $f = \overline{f}\circ \pi_G$ on $\hX\cap W$. The existence and uniqueness of a continuous map $\overline{f}$ follows from that $(\hX\cap W)/G$ is the topological quotient of $\hX\cap W$ by $G$. To see $\overline{f}$ is sc-smooth, we have the following commutative diagram,
	$$\xymatrix{
		\tilde{\cO}_{x_0}\cap W \ar@{^{(}->}[r] \ar[rd]_{\psi_{x_0}} & \hX\cap W \ar[d]^{\pi_G} \ar[r]^{f} & \cZ \\
		& (\hX \cap W)/G. \ar[ru]_{\overline{f}} &
		}
	$$
	Recall from Lemma \ref{lemma:structure}, $\psi_{x_0}:\tilde{\cO}_{x_0}\cap W \to (\hX\cap W)/G$ is sc-diffeomorphism onto an open set. By the diagram above, $\overline{f}\circ \psi_{x_0} = f|_{\tilde{\cO}_{x_0} \cap W}$. Therefore $\overline{f}$ is sc-smooth.
	
	Given the conditions of this proposition, we have the following commutative diagram by the universal property,
	$$\xymatrix{
		& \hX_a\cap \hX_b\ar[rd]^{\pi_b}\ar[ld]_{\pi_a} & \\
		\hX_a/G \cap U \ar@<0.5ex>[rr]^{\Id=\overline{\pi_b}} &  & \hX_b/G \cap U, \ar[ll]^{\Id = \overline{\pi_a}} 		
		}
	$$
	where $\pi_a$ resp. $\pi_b$ are the quotient maps on $\hX_a$ resp. $\hX_b$ and $\overline{\pi_a}, \overline{\pi_b}$ are the sc-smooth maps induced by $G$-invariant maps $\pi_a$ resp. $\pi_b$.  Since $\overline{\pi_a} = \overline{\pi_b}=\Id_U$ as topological maps, identity map $\overline{\pi_a}:\hX_a/G\cap U \to \hX_b/G\cap U$ is a sc-diffeomorphism with sc-smooth inverse $\overline{\pi_b}$.	
\end{proof}

The slice construction only gives M-polyfold structures resp. polyfold structures, which are local in nature. Globally, M-polyfolds resp. polyfolds are also metrizable spaces. The following Lemma asserts that the metrizablity passes to the quotients.

\begin{lemma} \label{lemma:quometric}
	Let $X$ be a metrizable space, $G$ a compact Lie group and  $\rho:G\times X \to X$  a continuous group action, then the quotient topology on $X/G$ is also metrizable.
\end{lemma}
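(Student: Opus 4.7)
The plan is to produce a metric on $X/G$ inducing the quotient topology via the standard averaging trick followed by the orbit-distance construction. Let $d$ be a metric inducing the topology on $X$. First I would replace $d$ by the bounded equivalent metric $d_0 := \min(d,1)$, and then average over the group to obtain
\[
\tilde d(x,y) \;:=\; \int_G d_0(g\cdot x,\, g\cdot y)\, \rd \mu(g),
\]
where $\mu$ is the normalized Haar measure on $G$. Symmetry and the triangle inequality are immediate; positive definiteness holds because the integrand is nonnegative and continuous in $g$, so vanishing of the integral forces $d_0(x,y)=d_0(e\cdot x, e\cdot y)=0$, hence $x=y$. By construction $\tilde d$ is $G$-invariant.

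Next I would show that $\tilde d$ induces the same topology as $d$ on $X$. One inequality is the elementary bound $\tilde d(x,y) \le \sup_{g\in G} d_0(g\cdot x, g\cdot y)$; since $G$ is compact and the action is continuous, if $x_n \to x$ in $d$, then $g\cdot x_n \to g\cdot x$ uniformly in $g$ (by a standard compactness argument: continuity of the map $G\times X \to X$ on the compact set $G\times\{x_n\}_{n\in\N\cup\{\infty\}}$), so $\tilde d(x_n, x) \to 0$. Conversely, if $\tilde d(x_n,x)\to 0$ but $d(x_n,x) \not\to 0$, then by Fatou/Egorov type reasoning one extracts a subsequence and uses invariance together with compactness of $G$ to reach a contradiction: on a set of $g$ of positive measure, $d_0(g\cdot x_n, g\cdot x)$ stays bounded below, and by continuity of the $G$-action this lifts back to $d(x_n,x)$ also staying bounded below, contradicting $\tilde d(x_n,x)\to 0$.

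With $\tilde d$ a $G$-invariant metric inducing the topology of $X$, I would then define the quotient pseudo-metric
\[
\bar d\bigl([x],[y]\bigr) \;:=\; \inf_{g\in G} \tilde d(x, g\cdot y),
\]
and observe that by compactness of $G$ the infimum is attained. Symmetry is immediate from $G$-invariance of $\tilde d$, and the triangle inequality is standard. Positive definiteness uses compactness once more: if $\bar d([x],[y])=0$, pick $g_n\in G$ with $\tilde d(x, g_n\cdot y)\to 0$; after passing to a convergent subsequence $g_n\to g_\infty$, continuity gives $\tilde d(x, g_\infty\cdot y)=0$, hence $x = g_\infty\cdot y$ and $[x]=[y]$.

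The remaining step, which is the main technical obstacle, is to identify the topology induced by $\bar d$ with the quotient topology. I would use the quotient map $\pi_G:X\to X/G$. Continuity of $\pi_G$ as a map $(X,d)\to (X/G,\bar d)$ is immediate since $\bar d(\pi_G(x),\pi_G(y))\le \tilde d(x,y)$. Openness of $\pi_G$ follows from the fact that for any open $U\subset X$, $\pi_G^{-1}(\pi_G(U)) = \bigcup_{g\in G} g\cdot U$ is open. From these two properties one concludes that a set $V\subset X/G$ is open in the quotient topology iff $\pi_G^{-1}(V)$ is open in $X$ iff $V$ is open in $(X/G,\bar d)$: for the key direction, given $[x]\in V$ with $\pi_G^{-1}(V)$ open and $d$-open ball $B_r(x)\subset \pi_G^{-1}(V)$, one uses compactness of the orbit $G\cdot x$ and uniform continuity of the action on a compact neighborhood to find $\epsilon>0$ such that $\bar d([y],[x])<\epsilon$ implies $y$ lies in an $\tilde d$-ball around some point of $G\cdot x$ contained in $\pi_G^{-1}(V)$, showing $\{[y]: \bar d([y],[x])<\epsilon\}\subset V$.
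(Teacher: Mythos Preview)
Your proposal is correct and follows essentially the same approach as the paper: average to get a $G$-invariant metric, then define the quotient metric as the infimum over $G$ of orbit-distances, and finally verify that this induces the quotient topology. The only noteworthy difference is in the final step: you invoke compactness of the orbit and uniform continuity of the action, whereas the paper simply observes the identity $\pi_G^{-1}\bigl(B^{\bar d}_r([x])\bigr) = G\cdot B^{\tilde d}_r(x)$, which immediately yields both directions of the topology comparison without any further uniformity argument; you may want to streamline your last paragraph accordingly.
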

\begin{proof}
	Let $d$ be a metric on $X$. We first show that averaging over $G$ gives rise to a $G$-invariant metric $d_G$ on $X$ inducing the same topology. First we equip $G$ with a left invariant Riemannian metric and define a map $d_G:X\times X \to \R_+$ by
	$$d_G(x,y)=\int_G d(gx,gy)dg.$$
	Then $d_G$ is a $G$-invariant metric and $d_G$ is continuous. We claim that $d_G$ induces the same topology as $d$. The continuity implies any $d_G$-ball contains a $d$-ball. It suffices to prove that any $d$-ball contains a $d_G$-ball. Assume otherwise that there is a $d$-ball does not contain any $d_G$-ball, i.e. there exist $\{y_k\}_{k\in \mathbb{N}}$, such that $\lim d_G(x,y_k)=0$ and $d(x,y_k)>C$. Let $f_k:=d(gx,gy_k) \in C^0(G)$. Then $\lim_k d_G(x,y_k)= 0$ implies that the  functions $f_k$ converge to $0$ in the space of absolutely integrable functions $L^1(G)$ .  After passing to a subsequence,  $f_k$ converge to $0$ almost everywhere on $G$.  Then there is a $g_0\in G$, such that $\lim_k f_k(g_0) = \lim_k d(g_0x,g_0y_k)=0$. Since $g_0$ acts continuously on $X$, this implies $d(x,y_k)\to 0$, which contradicts the assumption $d(x,y_k)>C$. Hence $d,d_G$ induce the same topology on $X$. 
	
	Using $d_G$, we can equip $X/G$ a metric as follows. Let $\pi_G$ denote the quotient map $X \to X/G$. Then over $X/G$, we define
	$$d_{X/G}(\pi_G(x), \pi_G(y)):=\min_{g\in G}d_G(x,gy).$$
	For fixed $x,y\in X$, $d_G(x,gy)$ is a continuous function on the compact space $G$. Therefore $\min_{g\in G}d_G(x,gy)$ exists. Because $d_G$ is $G$-invariant, $d_{X/G}$ is well defined, i.e. it only depends on $\pi_G(x), \pi_G(y)$. We claim $d_{X/G}$ is a metric on $X/G$. Since $d_G \ge 0$, $d_{X/G} \ge 0$. If $d_{X/G}(\pi_G(x), \pi_G(y)) = 0$, then there exists $g \in G$ such that $d_G(x, gy) = 0$, hence $x=gy$ and $\pi_G(x) = \pi_G(y)$. Since $d_G(x,gy) = d_G(g^{-1}x,y)$, we have
	\begin{eqnarray*}
		d_{X/G}(\pi_G(x), \pi_G(y)) & = &  \min_{g\in G}d_G(x,gy) \\
		& = & \min_{g\in G}d_G(g^{-1}x,y)\\
		& = & \min_{g\in G}d_G(y,g^{-1}x)\\
	    & = & \min_{g\in G}d_G(y,gx)\\ 
		& = & d_{X/G}(\pi_G(y), \pi_G(x)),
	\end{eqnarray*}
	Therefore $d_{X/G}$ is symmetric. Finally, we check the triangle inequality. For $x,y,z \in X$, pick $g,h\in G$ such that $d_{X/G}(\pi_G(x), \pi_G(y)) = d_G(x, gy)$ and $d_{X/G}(\pi_G(y), \pi_G(z)) = d_G(y, hz)$, since we have $d_G(y,hz) = d_G(gy,ghz)$, then  
	\begin{eqnarray*}
	d_{X/G}(\pi_G(x), \pi_G(y))  + d_{X/G}(\pi_G(y), \pi_G(z))  &  = &  d_G(x, gy) + d_G(y, hz) \\
	& = & d_G(x, gy) + d_G(gy, ghz) \\
	& \ge & d_G(x, ghz) \\
	& \ge & d_{X/G}(\pi_G(x),\pi_G(z)).
	\end{eqnarray*}
	Therefore $d_{X/G}$ is a metric. 
	
	To finish the proof, it suffices to show that $d_{X/G}$ induces the quotient topology. Assume $U\subset X/G$ is open in the quotient topology. Recall that $U\subset X/G$ is open iff $\pi_G^{-1}(U)\subset X$ is open. Then for every point $x\in \pi_G^{-1}(U)$, there exists a $d_G$-ball $B^{d_G}_r(x) \subset \pi_G^{-1}(U)$ of radius $r>0$. We claim the $d_{X/G}$-ball $B^{d_{X/G}}_r(\pi_G(x))$ is contained in $U$. This is because for any point $\pi_G(y) \in B^{d_{X/G}}_r(\pi_G(x))$, there exists $g \in G$ such that $d_G(x, gy) < r$. Therefore we have $g y \in \pi_G^{-1}(U)$, hence $\pi_G(y)\in U$. Conversely, it suffices to check that every $d_{X/G}$-ball is open in the quotient topology. For a $d_{X/G}$-ball $B^{d_{X/G}}_r(\pi_G(x))$, we claim $\pi_G^{-1}(B^{d_{X/G}}_r(\pi_G(x))) = \rho(G, B^{d_G}_r(x))$. This implies $B^{d_{X/G}}_r(\pi_G(x))$ is open in the quotient topology. To prove the claim, note that $y \in \pi_G^{-1}(B^{d_{X/G}}_r(\pi_G(x))) $ iff $d_{X/G}(\pi_G(x), \pi_G(y)) < r$, that is there exists $g\in G$ such that $d_G(x, gy) < r$, which is equivalent to $y \in \rho(G, B^{d_G}_r(x))$.
\end{proof}

\subsection{Free quotients of Fredholm sections}\label{subsec:freeFred}
\subsubsection{Basics of sc-Fredholm sections}
  The most important feature of polyfold theory is that it has a Fredholm theory package \cite[Chapter 3, 5]{hofer2017polyfold}, which includes an implicit function theorem and a perturbation theory. The definition of sc-Fredholm section in polyfold is more involved than the classical Fredholm theory, since the Fredholmness of linearization is not sufficient for an implicit function theorem, also see \cite{counter}.
  
  Recall from Definition \ref{strbund}, strong M-polyfold bundles are modeled on bundle retracts, which can be very complicated subset of a sc-Banach space. When analyzing zero sets of M-polyfold sections, we need to extend sections from retracts to the underlying sc-Banach spaces so that we can apply the implicit function theorem \cite[Theorem 3.7]{hofer2017polyfold} developed for sc-Banach spaces. Moreover, such extensions should not change the zero sets. Therefore the following concept was introduced in \cite[Definition 3.4]{hofer2017polyfold}.
  
  \begin{definition}\label{def:fill} Let $R: U\lhd \F \to U\lhd \F, (x,e) \mapsto (r(x), \varrho(x)e)$ be a tame strong bundle retraction and $s: r(U) \to R(U\lhd F)$ a sc-smooth section. A filled section $s^f$ at $(0,0) \in U\subset \R_+^m\times \E$ is a section $s^f:U \to U\lhd\F$ with the following properties.
	\begin{enumerate}
		\item\label{f1} $s(x) = s^f(x)$ for $x\in r(U)$.
		\item\label{f2} If $s^f(y) = \varrho(r(y))s^f(y)$ for a point $y \in U$, then $y \in \Ima r$.
		\item\label{f3} Let $S(y):= s^f(y)- \varrho(r(y)) s^f(y)$. Then $\rD S_{(0,0)}|_{\ker \rD r_{(0,0)}}:\ker \rD r_{(0,0)} \to \F$ is an isomorphism onto $\ker \varrho(0,0)$.
	\end{enumerate}
  \end{definition}
  We first show that fillings induce fillings for any restriction of the section to a slice in the sense of Definition \ref{def:slice}.
  \begin{proposition}\label{prop:restrict}
	Let $R = (r,\varrho)$ be a $\R^n$-sliced bundle retraction on $U\lhd \F$ for an open neighborhood $U\subset \R_+^m\times \R^n\times \K$ of $(0,0,0)$ covering a $\R^n$-sliced retraction $r$. Let $\tilde{U}:=(\R_+^m\times \{0\}\times \K)\cap U$, $\tilde{R}:=R|_{\tilde{U}\lhd F}$ and $\tilde{r}:= r|_{\tilde{U}}$. Assume $s:r(U)\to R(U\lhd \F)$ be a section with filling $s^f:U\to \F$. Then the restriction to the slice $\tilde{s}:=s|_{r(\tilde{U})}:\tilde{r}(\tilde{U}) \to \tilde{R}(\tilde{U}\lhd \F)$ has a filling $\tilde{s}^f:=s^f|_{\tilde{U}}$. 
 \end{proposition}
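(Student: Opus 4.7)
The plan is to verify each of the three properties in Definition \ref{def:fill} directly for $\tilde{s}^f:=s^f|_{\tilde{U}}$ as a candidate filling of $\tilde{s}$, using the $\R^n$-sliced hypothesis wherever $s^f$ lives on $U$ but we need information only on $\tilde{U}$. Two of the three conditions are essentially tautological restrictions; the only substantive point is understanding how the kernel of $\rD\tilde r_{(0,0)}$ relates to the kernel of $\rD r_{(0,0,0)}$.

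First I would dispose of property \eqref{f1}: on $\tilde r(\tilde U)=r(\tilde U)\subset r(U)$ we have $\tilde s=s|_{\tilde r(\tilde U)}$ and $\tilde s^f=s^f|_{\tilde U}$, so the equality $\tilde s(x)=\tilde s^f(x)$ follows immediately from the corresponding equality for $s^f$. For property \eqref{f2}, suppose $y\in\tilde U$ satisfies $\tilde s^f(y)=\tilde\varrho(\tilde r(y))\tilde s^f(y)$. The same identity then holds for $s^f$ viewed on $U$, so property \eqref{f2} for $s^f$ gives $y=r(z)$ for some $z\in U$. Because $R$ is $\R^n$-sliced, $\pi_{\R^n}\circ r=\pi_{\R^n}$, so $\pi_{\R^n}(z)=\pi_{\R^n}(y)=0$, hence $z\in\tilde U$ and $y=\tilde r(z)\in\Ima\tilde r$.

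The main step is property \eqref{f3}. The key observation is that differentiating the $\R^n$-slicing identity $\pi_{\R^n}\circ r=\pi_{\R^n}$ at $(0,0,0)$ gives $\pi_{\R^n}\circ\rD r_{(0,0,0)}=\pi_{\R^n}$, which forces $\ker\rD r_{(0,0,0)}\subset \R^m\times\{0\}\times\K$. On the other hand, since $\tilde r=r|_{\tilde U}$, a vector $(u,v)\in\R^m\times\K$ lies in $\ker\rD\tilde r_{(0,0)}$ iff $(u,0,v)\in\ker\rD r_{(0,0,0)}$. Combining these, the natural identification $\R^m\times\K\cong\R^m\times\{0\}\times\K$ sends $\ker\rD\tilde r_{(0,0)}$ exactly to $\ker\rD r_{(0,0,0)}$.

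Finally, since $\tilde S:=\tilde s^f-\tilde\varrho(\tilde r(\cdot))\tilde s^f$ is literally the restriction $S|_{\tilde U}$, the linearization $\rD\tilde S_{(0,0)}$ is the restriction of $\rD S_{(0,0,0)}$ to $\R^m\times\{0\}\times\K$. Under the identification of kernels above, $\rD\tilde S_{(0,0)}|_{\ker\rD\tilde r_{(0,0)}}$ coincides with $\rD S_{(0,0,0)}|_{\ker\rD r_{(0,0,0)}}$, which property \eqref{f3} for $s^f$ tells us is a linear isomorphism onto $\ker\varrho(0,0,0)=\ker\tilde\varrho(0,0)$. This completes the verification. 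The only place where something could have gone wrong is the kernel identification in the third step, and this is precisely where the $\R^n$-sliced hypothesis is indispensable; without it, $\ker\rD r_{(0,0,0)}$ could have components in the $\R^n$ direction that are invisible to $\tilde r$.
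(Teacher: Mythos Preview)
Your proof is correct and follows essentially the same approach as the paper's: verify the three filling properties directly, with the $\R^n$-sliced identity $\pi_{\R^n}\circ r=\pi_{\R^n}$ used to identify $\ker\rD\tilde r_{(0,0)}$ with $\ker\rD r_{(0,0,0)}$ for property \eqref{f3}. The only cosmetic difference is in property \eqref{f2}: the paper observes directly that $\Ima\tilde r=\tilde U\cap\Ima r$ (since $y\in\Ima r$ gives $y=r(y)$, and $y\in\tilde U$ then gives $y=\tilde r(y)$), whereas you track the preimage $z$ explicitly and use the slicing to show $z\in\tilde U$; both are equally valid.
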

 \begin{proof}
	It suffices to check the three properties of Definition \ref{def:fill}. Property \eqref{f1} holds because $s^f|_{\tilde{U}}$ is the restriction of $s^f$. For property \eqref{f2}, if $\tilde{s}^f(y) = \varrho(\tilde{r}(y))\tilde{s}^f(y)$ for a point $y\in \tilde{U}$, then $s^f(y) = \varrho(r(y))s^f(y)$. By the property \eqref{f2} of the filling $s^f$, we have $y\in \Ima r$. Since $\Ima \tilde{r} = \tilde{U}\cap \Ima r$, we have $y\in \Ima \tilde{r}$.
	
	For property \eqref{f3}, let $\tilde{S}(y) := \tilde{s}^f(y) - \varrho(\tilde{r}(y))\tilde{s}^f(y)$ for $y\in \tilde{U}$ and $S(y) := s^f(y) - \varrho(r(y))s^f(y)$ for $y \in U$. Then $\tilde{S} = S|_{\tilde{U}}$, hence we have $\rD \tilde{S}_{(0,0)} = \rD S_{(0,0,0)}|_{\R^m\times \{0\} \times \K}$. Since $r$ is $\R^n$-sliced, i.e. $\pi_{\R^n}\circ r = \pi_{\R^n}$, we have $\ker \rD r_{(0,0,0)} = \ker \rD \tilde{r}_{(0,0)} \subset \R^m\times \{0\} \times \K$. Therefore, we have 
	$$\rD \tilde{S}_{(0,0)}|_{\ker \rD (r_0)_{(0,0)}} = \rD S_{(0,0,0)}|_{\ker \rD r_{(0,0,0)}}.$$ 
	Therefore $\rD \tilde{S}_{(0,0)}|_{\ker \rD \tilde{r}_{(0,0)}}$ is an isomorphism onto $\ker \varrho(0,0,0)$ due to property \eqref{f3} of the filling $s^f$.
\end{proof}  
  
The key ingredient of the Fredholm property in polyfold theory is the following contracting property, so that implicit function theorem  \cite[Theorem 3.7]{hofer2017polyfold} holds. Note that for a $C^1$ map between Banach spaces, the following contracting property holds after a $C^1$ change of coordinate as long as the map has a Fredholm linearization.
  
\begin{definition}[{\cite[Definition 3.6]{hofer2017polyfold}}]\label{def:basic}
	Let $C := \R_+^m \times \R^{k}$ and let $\W$ be a sc-Banach space. A \textbf{basic germ} is a sc-smooth germ at $(0,0)$ represented by a sc-smooth section
	$$U \to U\lhd\R^N \times \W,\quad x \mapsto (x,f(x)),$$
	on an open neighborhood $U \subset C\times \W$ of $(0,0)$, such that 
	$f(0,0) = 0$ and
	$$B(c,w):= \pi_{\W} f(c,w)- w,\quad \forall (c, w) \in U$$
	has the contracting property: For every $m \in \N, \epsilon > 0$, there exists a neighborhood $U_{\epsilon, m} \subset U_m$  of $(0,0)$, so that 
	\begin{equation}\label{contr}||B(c,w)-B(c,w')||_m \le \epsilon ||w-w'||_m, \qquad \forall (c,w),(c,w') \in U_{\epsilon, m}.\end{equation}
\end{definition}

Then the definition of sc-Fredholm sections of strong M-polyfold bundles is the combination of Definition \ref{def:fill} and Definition \ref{def:basic}.
\begin{definition}[{\cite[Definition 3.8]{hofer2017polyfold}}]\label{def:fredchart}
	Let $p: \cY \to \cX$ be a M-polyfold bundle and $s:\cX\to \cY$ a sc-smooth section. A bundle chart $(\cO, \Phi, (P, (\R_+^m \times \E) \lhd \F))$ around $x_0\in \cX_\infty$ along with a strong bundle isomorphism $A:U\lhd \F \to U'\lhd (\R^N\times \W),(x,v)\mapsto(\alpha(x),\Lambda(x)v)$ is a \textbf{Fredholm chart} of $s$ around $x_0$ if the following holds.
	\begin{enumerate}
		\item $U\subset \R_+^m\times \E, U'\subset C\times \W:= \R_+^m\times \R^k \times \W$ are open neighborhoods of $(0,0)$, such that $R(U\lhd \F) = P$ for a tame strong bundle retraction $R$.
		\item $\Phi_*s$ has a filling $s^f:U \to \F$.
		\item $\alpha(0,0) = (0,0)$ and there exists a $\sc^+$-section $\gamma:U'\to U'\lhd (\R^N\times \W)$ such that $A_*s^f-\gamma$ is a basic germ at $(0,0)$.
	\end{enumerate}
	The section $s$ is \textbf{sc-Fredholm section}  iff the following two conditions hold.
	\begin{itemize}
		\item\textbf{Regularization property}: if $s(x)\in \cY[1]_m $, then $x\in \cX_{m+1}$.
		\item\textbf{Basic germ property}: for every $x_0\in \cX_\infty$, there exists a Fredholm chart around $x_0$.
		\end{itemize}
	A sc-Fredholm section is proper, if $s^{-1}(0)$ is compact in the $\cX_0$ topology. 
\end{definition}

\begin{remark}\label{rmk:compact}
	Note that the regularization property implies that $s^{-1}(0) \subset \cX_{\infty}$. By \cite[Theorem 5.3]{hofer2017polyfold}, when  $s^{-1}(0)$ is compact in the $\cX_0$ topology, then $s^{-1}(0)$ is compact in the $\cX_{\infty}$ topology\footnote{The $\cX_\infty$ topology is the inverse limit topology of $\cX_i$ topology on $\cX_\infty$.}. As a consequence, $s^{-1}(0)$ is compact in $\cX_i$ topology for every $i$. Therefore we will not specify the topology for the compactness going forward.
\end{remark}
\begin{remark}
	By \cite[Theorem 3.10]{hofer2017polyfold}, sc-Fredholm properties are preserved under $\sc^+$ perturbations after a level shift.
\end{remark}


\subsubsection{Sc-Fredholm property of quotients}
The goal of this subsection is to prove part \ref{part:freetwo} of Theorem \ref{thm:free}, namely that a proper $G$-equivariant sc-Fredholm section descends to a proper sc-Fredholm section on the free quotient of an M-polyfold bundle. The proof can be found at the end of this subsection. Most parts of this proof work with the quotient of the section in local coordinates, i.e. with restrictions of the sc-Fredholm section to $G$-slices (Definition \ref{def:Gslice}) established in Lemma \ref{lemma:slice}. Thus we will more generally consider restrictions of sc-Fredholm sections to slices of M-polyfolds, as defined in Definition \ref{def:slice}. The following proposition follows directly from the definition of regularization property in Definition \ref{def:fredchart}.
\begin{proposition}\label{prop:reg}
	Let $p:\cY\to \cX$ be a tame strong M-polyfold bundle with $s:\cX\to \cY$ a sc-Fredholm section. Assume $p^{-1}(\tilde{\cO})$ is a bundle slice (Definition \ref{def:Gslice}), then $\tilde{s}:=s|_{\tilde{\cO}}:\tilde{\cO} \to p^{-1}(\tilde{\cO})$ has the regularization property in Definition \ref{def:fredchart}. 
\end{proposition}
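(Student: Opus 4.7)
The plan is to unpack the definitions and observe that the regularization property transfers to slices essentially for free, because the filtrations on a bundle slice are just the restrictions of the ambient filtrations.

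First I would verify this claim about filtrations. By Definition \ref{def:slice}, a bundle slice $p^{-1}(\tilde{\cO})$ arises from a $\R^n$-sliced strong bundle retraction $R:U\lhd \F\to U\lhd \F$ with $U\subset \R_+^m\times \R^n\times \K$, by restricting to $\tilde{U}=(\R_+^m\times\{0\}\times\K)\cap U$. The induced retraction $\tilde{R}=R|_{\tilde{U}\lhd \F}$ and the slice $\tilde{\cO}$ inherit their levels from the ambient space, i.e. in the chart $\tilde{\cO}_k=\tilde{\cO}\cap(\R_+^m\times\{0\}\times\K_k)$ which equals $\tilde{\cO}\cap \cX_k$, and similarly $p^{-1}(\tilde{\cO})[1]_m=p^{-1}(\tilde{\cO})\cap \cY[1]_m$. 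This is the only structural fact needed, and it follows from Lemma \ref{lemma:ben} together with how the filtrations of the non-symmetric product $U\lhd\F$ are defined.

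Given this, the argument is immediate. Suppose $x\in \tilde{\cO}$ with $\tilde{s}(x)\in p^{-1}(\tilde{\cO})[1]_m$. Then $\tilde{s}(x)=s(x)\in \cY[1]_m$ via the inclusion of the bundle slice, so the regularization property of the sc-Fredholm section $s$ (Definition \ref{def:fredchart}) yields $x\in \cX_{m+1}$. Combined with $x\in \tilde{\cO}$, this gives $x\in \tilde{\cO}\cap \cX_{m+1}=\tilde{\cO}_{m+1}$, as required. There is no real obstacle in this proposition; the statement is short precisely because it is a direct consequence of the $\R^n$-sliced structure that defines bundle slices, requiring neither the filling nor the basic germ portion of the sc-Fredholm definition.
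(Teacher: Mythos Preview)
Your proposal is correct and matches the paper's approach: the paper simply states that the proposition ``follows directly from the definition of regularization property in Definition \ref{def:fredchart}'' without further elaboration, and your argument is exactly the unpacking of that observation. You have supplied the details (that the slice filtrations are restrictions of the ambient ones, so $\tilde s(x)\in p^{-1}(\tilde{\cO})[1]_m$ implies $s(x)\in \cY[1]_m$, whence $x\in \cX_{m+1}\cap\tilde{\cO}=\tilde{\cO}_{m+1}$), which is precisely what the paper leaves implicit.
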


Next we need to address the question of whether the restriction $\tilde{s}$ has the basic germ property in Definition \ref{def:fredchart}. Since the main condition of the basic germ property is the contracting property \eqref{contr} after a bundle isomorphism $A$ in Definition \ref{def:fredchart}, the basic germ property of $\tilde{s}$ requires some compatibility between the bundle isomorphism $A$ and the slice $\tilde{\cO}$. To be more precise, we introduce the following notion of good slice to ensure the restriction $s|_{\tilde{\cO}}$ inherits the basic germ  property from $s$, see Lemma \ref{lemma:fred}.

\begin{definition}\label{def:good}
	A slice $\tilde{\cO}$ of $\cX$ around $x_0\in \cX_\infty$ is \textbf{good} with respect to a sc-Fredholm section $s:\cX \to \cY$, if we have the following.
	\begin{enumerate}
		\item There is a Fredholm chart $(\cO, \Phi, (P,(\R_+^m\times \E)\lhd \F), A)$ around $x_0$ for $s$ covering a tame M-polyfold chart $(\cO, \phi, (O,\R_+^m\times \E))$. $R$ is a strong bundle retraction covering a tame traction $r$ such that $P=R(U\lhd \F)$ and $O = r(U)$ for an open neighborhood $U\subset \R_+^m\times \E$ of $(0,0)$.
		\item The strong bundle isomorphism $A:U\lhd \F \to U'\lhd (\R^N\times \W)$ is in the form of $(x,v)\mapsto (\alpha(x),\Lambda(x)v)$, where $\alpha$ is a sc-diffeomorphism from $U$ to a neighborhood $U'\subset C\times \W := \R_+^m\times \R^k\times \W$ of $(0,0)$ and $\Lambda(x)$ is a linear isomorphism from $\F$ to $\R^N\times \W^N$ for every $x\in U$.
		\item There is a sc-diffeomorphism $h:U \to U''\subset \R_+^m\times \R^n\times \K$ such that $r'':=h\circ r \circ h^{-1}:U''\to U''$ is $\R^n$-sliced, $h(0,0)=(0,0,0)$ and $\tilde{\cO} = \phi^{-1}\circ r \circ h^{-1}((\R_+^m\times \{0\}\times \K) \cap U'')$.
		\item  $\pi_{\R^n} \circ \rD (h\circ \alpha^{-1})_{(0,0)}|_{\{0\}\times \W}:\W\to\R^n$ is surjective.
	\end{enumerate}  
\end{definition}

\begin{remark}
	Suppose the slice is constructed as in Lemma \ref{lemma:benslice} from a submersive map $f$. Then the following two equivalent conditions imply goodness. Let $E:= T^R_{x_0} \cX, F:=\ker \rD f_{x_0} \cap T^R_{x_0} \cX$.
	\begin{enumerate}
		\item\label{cond:1} There exists a $n$ dimensional subspace $L\subset  T^R_{x_0}\cX_\infty$, such that it is a complement of $F$ in $E$ and $\rD (\alpha \circ \phi)_{x_0}(L)\subset \{0\} \times \W$. This is the $s$-compatibly transversality condition in \cite[Definition 5.8]{ben2018fiber}
		\item\label{cond:2} $\rD (\alpha \circ \phi)_{x_0}(F) \cap \{0\} \times \W_\infty $ is a $n$-codimensional subspace in $\rD (\alpha \circ \phi)_{x_0}(E) \cap \{0\} \times \W_\infty$.
	\end{enumerate}
	The second condition is generically satisfied if we can perturb $F$, which is exactly the proof in Proposition \ref{prop:goodslice} of the existence of good slices.
\end{remark}
\begin{proof}
	By Lemma \ref{lemma:benslice}, we have $\rD(h\circ \alpha^{-1})_{(0,0)} \circ \rD(\alpha\circ \phi)_{x_0}(L) = \R^n$, hence \eqref{cond:1} implies Definition \ref{def:good}. Next we will prove  \eqref{cond:1}$\Leftrightarrow$\eqref{cond:2}. If $L\subset T^R_{x_0}\cX_\infty$ is a complement of $F$ in $E$,  such that $\rD (\alpha \circ \phi)_{x_0}(L)\subset \{0\} \times \W$, then $\rD (\alpha \circ \phi)_{x_0}(L)$ is the complement of $\rD (\alpha \circ \phi)_{x_0}(F) \cap \{0\} \times \W_\infty $ in $\rD (\alpha \circ \phi)_{x_0}(E) \cap \{0\} \times \W_\infty$. Hence \eqref{cond:1}$\Rightarrow$\eqref{cond:2}. If $\rD (\alpha \circ \phi)_{x_0}(F) \cap \{0\} \times \W_\infty$ is $n$-codimensional in $\rD (\alpha \circ \phi)_{x_0}(E) \cap \{0\} \times \W_\infty$. Let $V$ be the $n$-dimensional complement, then there exists $L\subset E_\infty$ such that $\rD(\alpha \circ \phi)_{x_0}(L) = V$. Then $L$ is the complement of $F$ in $E$ satisfying condition \eqref{cond:1}. 
\end{proof}

\begin{lemma}\label{lemma:fred}
	Let $x_0 \in \cX_\infty$ and $\tilde{\cO}$ a good slice of $\cX$ around $x_0$ with respect to a sc-Fredholm section $s:\cX \to \cY$. Then there is a Fredholm chart around $x_0$ for $\tilde{s}:=s|_{\tilde{\cO}^1}:\tilde{\cO}^1\to p^{-1}(\tilde{\cO}^1)$.
\end{lemma}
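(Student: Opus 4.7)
The plan is to transplant the basic germ structure for $s$ to the slice through a coordinate change in the domain of $\alpha$ that parametrizes the slice over the reduced base. Let $\Psi:=h\circ\alpha^{-1}:U'\to U''$ and $\Sigma:=\Psi^{-1}((\R_+^m\times\{0\}\times\K)\cap U'')\subset U'$; the slice in $\alpha$-coordinates is $\Sigma$. By goodness, $\pi_{\R^n}\circ\rD\Psi_{(0,0)}|_{\{0\}\times\W}:\W\to\R^n$ is surjective, so I pick a sc-splitting $\W=\W_1\oplus\W_2$ with $\dim\W_1=n$ such that $\pi_{\R^n}\circ\rD\Psi_{(0,0)}|_{\{0\}\times\W_1}$ is an isomorphism. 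Applying Lemma \ref{lemma:help} to $(w_1,c,w_2)\mapsto\pi_{\R^n}\circ\Psi(c,w_1+w_2)$ (with the finite-dimensional solve variable $w_1\in\W_1\cong\R^n$) yields a sc-smooth $G:\tilde{U}'\to\W_1$ on a neighborhood $\tilde{U}'\subset\R_+^m\times\R^k\times\W_2^1$ of the origin, such that $\iota:(c,w_2)\mapsto(c,G(c,w_2)+w_2)$ sc-diffeomorphically parametrizes $\Sigma\cap(U')^1$. This is the origin of the one-level shift in the statement.

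Now I build the Fredholm chart for $\tilde{s}$. Proposition \ref{prop:restrict} provides the filling $\tilde{s}^f:=(h^{-1})^*s^f|_{\tilde{U}''}$ on $\tilde{U}''$. Set $\tilde{\alpha}:=(\Psi\circ\iota)^{-1}:(\tilde{U}'')^1\to\tilde{U}'$ and define the fiber map $\tilde{\Lambda}(x):=\tau\circ\Lambda(h^{-1}(x))$, where $\tau:\R^N\times\W_1\oplus\W_2\to\R^{N+n}\times\W_2$ is the linear identification that absorbs the finite-dimensional $\W_1$ into an enlarged $\R^{N+n}$. Then $\tilde{A}:=(\tilde{\alpha},\tilde{\Lambda})$ is a strong bundle isomorphism carrying the restricted bundle retract to $\tilde{U}'\lhd(\R^{N+n}\times\W_2)$, and an unwinding of definitions gives $\tilde{A}_*\tilde{s}^f=\tau\circ(A_*s^f)\circ\iota$. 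Setting $\tilde{\gamma}(c,w_2):=\tau\circ\gamma(\iota(c,w_2))$, which is again $\sc^+$ because $\iota$ is sc-smooth and $\tau$ is linear, the section to be recognized as a basic germ is $\tilde{A}_*\tilde{s}^f-\tilde{\gamma}=\tau\circ(A_*s^f-\gamma)\circ\iota$.

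The original basic germ has the form $(c,w)\mapsto((c,w),\hat{f}(c,w))$ with $B(c,w):=\pi_\W\hat{f}(c,w)-w$ contracting. Its composition with $\iota$ and $\tau$ is $((c,w_2),\tau(\hat{f}(c,G(c,w_2)+w_2)))$, and the $\W_2$-remainder simplifies as
\[
\tilde{B}(c,w_2)=\pi_{\W_2}\hat{f}(c,G+w_2)-w_2=\pi_{\W_2}\bigl(\hat{f}(c,G+w_2)-(G+w_2)\bigr)=\pi_{\W_2}B(c,G+w_2),
\]
using $\pi_{\W_2}(G+w_2)=w_2$ since $G\in\W_1$. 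Writing $G=G(c,w_2)$ and $G'=G(c,w_2')$, the contracting property of $B$ and the triangle inequality give $\|\tilde{B}(c,w_2)-\tilde{B}(c,w_2')\|_m\le\epsilon\|(G+w_2)-(G'+w_2')\|_m\le\epsilon(L_m+1)\|w_2-w_2'\|_m$, where $L_m$ is a local Lipschitz constant for $G(c,\cdot):\W_2^1\to\W_1$ on the shifted $m$-th level; such $L_m$ is finite because $G$ is sc-smooth and $\W_1$ is finite-dimensional, so \cite[Proposition 1.7]{hofer2017polyfold} supplies classical $C^1$-regularity on shifted levels. Choosing the initial $\epsilon$ sufficiently small and then shrinking the neighborhood absorbs $L_m+1$, yielding the contracting property for $\tilde{B}$. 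The main obstacle is precisely this last step: transferring contracting bounds through the nonlinear reparametrization $\iota$ requires both the finite-dimensionality of the slice direction $\W_1$ and the tolerance of a one-level shift.
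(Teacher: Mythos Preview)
Your proof is correct and follows essentially the same strategy as the paper's: both parametrize the slice in the $\alpha$-coordinates by solving an implicit equation for the finite-dimensional transverse direction (your $G$ is the paper's $\theta$, your $\W_2$ is the paper's $\T$), arrive at the identical formula $\tilde{B}(c,w_2)=\pi_{\W_2}B(c,G+w_2)$, and close with the same Lipschitz estimate from the $C^1$-regularity furnished by Lemma~\ref{lemma:help}. The only cosmetic difference is that you make the absorption $\R^N\times\W_1\oplus\W_2\cong\R^{N+n}\times\W_2$ explicit via $\tau$, whereas the paper records it in a diagram; one small point to tighten is that your sc-splitting $\W=\W_1\oplus\W_2$ requires $\W_1\subset\W_\infty$, which follows (as in the paper) from density of $\W_\infty$ and the surjectivity hypothesis.
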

\begin{proof}
By the assumption of $\tilde{\cO}$ being a good slice, we have the following structures and properties:
\begin{enumerate}
	\item a strong bundle chart $(\cO,\Phi,(P,(\R_+^m\times \E)\lhd \F))$ around $x_0$ covering a tame M-polyfold chart $(\cO,\phi, (O,\R_+^m\times \E))$, where $P = R(U\lhd \F)$ and $O=r(U)$ for bundle retraction $R$ covering tame retraction $r$ and open neighborhood $U\subset \R_+^m\times \E$ of $(0,0)$;
	\item $\Phi_*s:O \to P$ has a filling $s^f:U\to U\lhd \F$ as in Definition \ref{def:fill};
	\item a strong bundle isomorphism $A:U\lhd \F \to U'\lhd(\R^N\times \W)$ for an open neighborhood $U'\subset C\times \W:=\R_+^m\times \R^k\times \W$ of $(0,0)$, where $A = (\alpha, \Lambda)$ consisting of a sc-diffeomorphism $\alpha:U\to U'$ and a $\sc^\infty$ family of linear isomorphism $\Lambda(x):\F \to \R^N\times \W$ parametrized by $x\in U$;
	\item a $\sc^+$-section $\gamma:U'\to U'\lhd(\R^N\times \W)$ such that $A_*s^f-\gamma$ is a basic germ at $(0,0)$;
	\item a sc-diffeomorphism $h:U\to U''\subset \R_+^m \times \R^n\times \K$ such that $h\circ r \circ h^{-1}:U''\to U''$ is $\R^n$-sliced, $h(0,0)=(0,0,0)$ and $\tilde{\cO}=\phi^{-1}\circ r \circ h^{-1}((\R^m_+\times \{0\}\times \K)\cap U'')$;
	\item $\pi_{\R_n}\circ \rD (h\circ \alpha^{-1})_{(0,0)}|_{\{0\}\times \W}:\W \to \R^n$ is surjective.
\end{enumerate}

Sc-smooth map $H:=h \lhd \Id_\F:U\lhd \F \to U''\lhd \F$ is a strong bundle isomorphism.  Let $R'':= H\circ R \circ H^{-1}$, which is a $\R^n$-sliced bundle retraction on $U''\lhd \F$. Then $(\cO,H\circ \Phi,(R''(U''\lhd \F), (\R_+^m\times \R^n\times \K)\lhd \F))$ is another strong bundle chart around $x_0$. Then $H_*s^f$ is a filling of $(H\circ \Phi)_*s$ by  \cite[proof of part (III) Lemma 4.2]{ben2018fiber}. Let $\tilde{U}'' := (\R_+^m \times \{0\} \times \K)\cap U''$ and $\tilde{s}^f:= H_*s^f|_{\tilde{U}''}$. By Proposition \ref{prop:restrict}, $\tilde{s}^f$ is a filling of $\tilde{s}:\tilde{\cO}\to p^{-1}(\tilde{\cO})$ in the chart induced by $(\cO,H\circ \Phi,(R''(U''\lhd \F), (\R_+^m\times \R^n\times \K)\lhd \F))$.

It remains to verify that $\tilde{s}^f$ is basic germ after a bundle isomorphism. We claim that there exist a sc-Banach subspace $\T\subset \W$ with a finite dimensional complement $\T^\perp$, a sc-diffeomorphism $q:U_\vartriangle \to U_\blacktriangle$ for open neighborhoods $U_\vartriangle\subset C\times \T^1, U_\blacktriangle \subset (\tilde{U}'')^1$ of $(0,0)$ and a strong bundle isomorphism $Q:U_\vartriangle \lhd (\R^N\times \W^1) \to U_\blacktriangle \lhd \F^1$ covering $q$ such that $Q^*\tilde{s}^f$ is a basic germ. For better visualization of all the structures, we have the following diagram.
$$
\xymatrix{
	\llap{$C\times \T^1 \supset {}$}U_\vartriangle  \ar[d]^q \ar[r]^-{Q^*\tilde{s}^f} & U_\vartriangle \lhd (\R^N\times \W^1)\ar[d]^Q \rlap{$=U_\vartriangle\lhd(\R^N\times \T^\perp \times \T^1){}$}& & &  &\\ 
	\llap{$\R_+^m\times \{0\}\times \K^1 \supset {}$}U_\blacktriangle  \ar@{}[d]|-*[@]{\cap} \ar[r]^{\tilde{s}^f} & U_\blacktriangle \lhd \F^1\ar@{}[d]|-*[@]{\cap} & & & & \\
	\llap{$\R_+^m\times \{0\}\times \K \supset {}$}\tilde{U}'' \ar@{^{(}->}[d]^{\iota} \ar[r]^{\tilde{s}^f} & \tilde{U}'' \lhd \F \ar[d]\ar@{^{(}->}[d]^{\iota\lhd \Id_\F} & \ar@{}[l]|-*[@]{\supset}\tilde{r}''(\tilde{U}'')\ar@{^{(}->}[d] \ar[r]^-{(H\circ \Phi)_*\tilde{s}} &  \tilde{R}''(\tilde{U}''\lhd \F)\ar@{^{(}->}[d] & \ar@{.>}[l]_{\qquad H\circ \Phi}\tilde{\cO} \ar@{^{(}->}[d]\ar[r]^-{\tilde{s}} & p^{-1}(\tilde{\cO})\ar@{^{(}->}[d] \\
	\llap{$\R_+^m\times \R^n\times \K \supset {}$}U''  \ar[r]^{H_*s^f} & U''\lhd \F & \ar@{}[l]|-*[@]{\supset} r''(U'') \ar[r]^-{(H\circ \Phi)_*s}& R''(U''\lhd \F) & \ar@{.>}[l]_{\qquad H\circ \Phi}\cO \ar[r]^-{s} & p^{-1}(\cO)\\
	\llap{$\R_+^m\times \E \supset {}$}U \ar[u]_{h} \ar[d]^{\alpha} \ar[r]^{s^f} & U\lhd \F\ar[u]_{H}\ar[d]^{A} & \ar@{}[l]|-*[@]{\supset} r(U) \ar[u]_h \ar[r]^-{\Phi_*s}& R(U\lhd \F)\ar[u]_H & \ar@{.>}[l]_{\qquad \Phi} \cO \ar[u] \ar[r]^-{s} & p^{-1}(\cO) \ar[u]\\
	\llap{$C\times \W \supset {}$}U' \ar[r]^-{A_*s^f} & U'\lhd (\R^N\times \W) & & & & \\
	\ar@{}[r]|-*[@]{}^{\text{\normalsize{Filled sections on partial quadrants}}}& &\ar@{}[r]|-*[@]{}^{\text{\normalsize{Sections on retracts}}} & &\ar@{}[r]|-*[@]{}^{\text{\normalsize{Sections on M-polyfolds}}} &
}
$$
Here $\iota:\tilde{U}''\to U''$ is the inclusion and $\tilde{r}'':= r''|_{\tilde{U}''}$ and $\tilde{R}'':= R''|_{\tilde{U}''}$ are the retractions on the slice. 

Let $\beta:=\alpha \circ h^{-1}$. Since $\pi_{\R^n} \circ \rD\beta^{-1}_{(0,0)}|_{\{0\}\times\W}:\W \to \R^n$ is surjective, $$\T:=\ker\left(\pi_{\R^n} \circ \rD\beta^{-1}_{(0,0)}|_{\{0\}\times\W}\right)$$ is a subspace in $\W$ of codimension $n$. Because $\pi_{\R^n} \circ\rD\beta^{-1}_{(0,0)}(\{0\}\times \W_\infty)$ is a dense subspace of $\pi_{\R^n} \circ \rD\beta^{-1}_{(0,0)}(\{0\}\times\W) = \R^n$,  we have $\pi_{\R^n} \circ \rD\beta^{-1}_{(0,0)}(\W_\infty) = \R^n$. Therefore we can pick $n$ preimages of the basis of $\R^n$ in $\W_\infty$. They form a subspace $\T^\perp\subset \W_\infty$. Then we have a splitting $\W=\T \oplus \T^\perp$ and $\pi_{\R^n} \circ \rD\beta^{-1}_{(0,0)}|_{\{0\}\times \T^\perp }:\T^\perp\to \R^n$ is an isomorphism. For $(u,e)\in \R_+^m \times \K^1$ close to $(0,0)$, we define a sc-smooth map $q'$ by
\begin{equation}\label{eqn:q}
q':(u,e)\mapsto (\pi_C\circ \beta\circ\iota(u,e), \pi_\T\circ \pi_\W\circ \beta\circ\iota(u,e)).
\end{equation}
That is $q'$ is the composition of the $\iota$, $\beta$ and the projection from $C\times \W^1$ to $C\times \T^1$. We claim $q'$ has an inverse given by
$$q:(c,t)\mapsto \beta^{-1}(c,t+\theta(c,t)),$$
where  $(c,t)\in C\times \T^1$ close $(0,0)$ and $\theta(c,t)\in \T^\perp$ solves the following equation,
\begin{equation}\label{fred}
\pi_{\R^n}\circ\beta^{-1}(c,t+\theta)=0.
\end{equation}
Since $\omega(\theta,c,t):=\pi_{\R^n}\circ\beta^{-1}(c,t+\theta)$ is a sc-smooth function from an open neighborhood of $(0,0,0)$ in $\T^\perp \times C\times \T $ to $\R^n$ and ${\rD}\omega_{(0,0,0)}(u,0,0)=\pi_{\R^n} \circ \rD_{(0,0)} \beta^{-1}(u)$, i.e. ${\rD}\omega_{(0,0,0)}$ is invertible on $\T^\perp$. By Lemma \ref{lemma:help}, there is a sc-smooth map $\theta(c,t)$ from an open neighborhood $U_\square \subset C\times \T^1$ of $(0,0)$ to $\T^\perp$, such that $\theta(c,t)$ solves the equation $\omega(\theta,c,t) = 0$ for $(c,t) \in U_\square$ and $\theta\in D$, where $D\subset \T^\perp$ is an open neighborhood of $0$. Moreover, $\theta(0,0) = 0$. This shows $q$ is sc-smooth near $(0,0)$. We now prove $q$ is the inverse to $q'$. First for $(c,t) \in U_\square$
\begin{eqnarray*}
	q'\circ q(c,t) &= &\pi_{C\times \T}\circ \beta \circ \iota \circ \beta^{-1}(c,t+\theta(c,t))\\
	&=&\pi_{C\times \T}\circ \beta \circ \beta^{-1}(c,t+\theta(c,t))\\
	&= & (c,t).
\end{eqnarray*}
On the other hand, if $(u,e)\in (\R_+^m\times \K^1)\cap q'^{-1}(U_\square) \cap \pi_{\T^\perp}\circ \beta \circ \iota^{-1}(D)$ and let $(c,t) := q'(u,e)=\pi_{C\times \T}\circ \beta \circ \iota(u,e)$. Then 
$\left(c,t+\pi_{\T^\perp}\circ \beta \circ \iota(u,e)\right)=\beta(u,e)$, which means $\theta:=\pi_{\T^\perp}\circ \beta \circ \iota(u,e)$ solves equation \eqref{fred} in $U_\square\times D$.  Then we have on $(\R_+^m\times \K^1)\cap q'^{-1}(U_\square) \cap \pi_{\T^\perp}\circ \beta \circ \iota^{-1}(D)$, 
\begin{eqnarray*}
	q\circ q' (u,e) &= & \beta^{-1}\left(q'(r,k)+(0, \pi_{\T^\perp}\circ \beta \circ \iota(u,e))\right)\\
	&\stackrel{\eqref{eqn:q}}{=} &  \beta^{-1}\circ \beta\circ \iota(u,e)\\
	&=&(u,e).
\end{eqnarray*}
Therefore $q$ is the inverse to $q'$ locally. Moreover, $q$ has the following property 
\begin{equation}\label{eqn:property}\beta\circ \iota\circ q(c,t)=\beta \circ q(c,t)=(c,t+\theta(c,t)).
\end{equation}
We choose open neighborhoods $U_\blacktriangle\subset \tilde{U}'',U_\triangle \subset U_\square$, such that $q':U_\blacktriangle\to U_\vartriangle, q:U_\vartriangle\to U_\blacktriangle$ are inverse to each other. 

The strong bundle isomorphism $Q:U_\vartriangle \lhd (\R^N\times \W^1) \to U_\blacktriangle \lhd \F^1$ is defined by $(x,v)\mapsto (q(x), \Lambda(h^{-1}\circ \iota\circ q(x))^{-1}v)$. It has an inverse $(x,v)\mapsto (q'(x), \Lambda(h^{-1}\circ \iota(x))v)$. It remains to show that $Q^*\tilde{s}^f$ up to a $\sc^+$ perturbation is a basic germ at $(0,0)$. 

First, we have
\begin{eqnarray}
Q^*\tilde{s}^f(c,t) & = & \Lambda(h^{-1}\circ \iota \circ q(c,t))\tilde{s}^f(q(c,t)) \nonumber\\
& = & \Lambda(h^{-1}\circ \iota \circ q(c,t)) s^f(h^{-1}\circ \iota \circ q(c,t)) \nonumber\\
& = & A_*s^f\circ \alpha\circ h^{-1}\circ \iota \circ q(c,t) \nonumber \\
& = & A_*s^f\circ \beta \circ \iota \circ q(c,t). \label{eqn:fredcoor}
\end{eqnarray}
Let $\tilde{\gamma}:= Q^*\iota_*H_*A^* \gamma: U_\vartriangle \to U_\vartriangle \lhd (\R^N\times \W^1)$, which is a $\sc^+$-section on $U_\vartriangle$. Then by the same reason for \eqref{eqn:fredcoor}, we have
$$\tilde{\gamma}(c,t) = \gamma \circ \beta \circ \iota \circ q(c,t).$$ 
We write $$B(c,w):=\pi_\W\circ A_*s^f (c,w)-\pi_\W\gamma(c,w)-w$$ and 
\begin{eqnarray*}
\tilde{B}(c,t) & := & \pi_\T\circ Q^*\tilde{s}^f(c,t) - \pi_\T\circ \tilde{\gamma}(t,w) - t \\
& = & \pi_{\T}\circ  A_*s^f \circ \beta \circ \iota\circ q(c,t)-\pi_{\T}\circ  \gamma \circ \beta \circ \iota \circ q(c,t)-t.
\end{eqnarray*}
To show  $Q^*\tilde{s}^f$ is a basic germ, it suffices to prove $\tilde{B}(c,t)$ has the contracting property \eqref{contr}. By \eqref{eqn:property} we have
$$\begin{array}{rcl}
\tilde{B}(c,t)&=&\pi_{\T} \circ A_*s^f\circ \beta \circ \iota \circ q(c,t)-\pi_{\T}\circ  \gamma \circ \beta \circ \iota \circ q(c,t)-t\\
&=& \pi_{\T}\circ A_*s^f(c,t+\theta(c,t))-\pi_{\T}\circ  \gamma (c,t+\theta(c,t))-t\\
&=&\pi_{\T} \circ B(c,t+\theta(c,t)).\end{array}$$
Since $B(c,w)$ has contracting property \eqref{contr} and $\theta(0,0) = 0$,  $\forall \epsilon>0, m\ge 1\in\N$ there exist an open neighborhood $U_{\epsilon, m} \subset C\times \T_m$ of $(0,0)$, over which we have:
$$||\tilde{B}(c,t_1)-\tilde{B}(c,t_2)||_{\T_m}\le \epsilon ||\theta(c,t_1)+t_1-\theta(c,t_2)-t_2||_{\W_{m}}.$$
Since $\theta(c,t)$ is $C^1$ on $C\times \T_m$ for $m\ge 1$ by Lemma \ref{lemma:help}, $||\theta(c, t_1)-\theta(c,t_2)||_{\W_{m}}$ is bounded by $C_{m} ||t_1-t_2||_{\T_m}$ on $U_{(m)}\subset C\times \T_m$. Since $||t_1 - t_2||_{\W_{m}} = ||t_1-t_2||_{\T_m}$ for $t_1,t_2\in \T_m$, we have $$||\tilde{B}(c,t_1)-\tilde{B}(c,t_2)||_{\T_m}\le \epsilon(C_m+1)||t_1-t_2||_{\T_m}, \quad \forall (c,t_1),(c,t_2) \in U_{\epsilon, m}\cap U_{(m)}.$$ This proves that $Q^*\tilde{s}^f$ is a basic germ at $(0,0)$.
\end{proof}

\begin{remark}
	In the proof of Lemma \ref{lemma:fred}, a good slice is used to get a sc-diffeomorphism $q$ such that \eqref{eqn:property} holds. The importance of \eqref{eqn:property} is that $\beta \circ \iota \circ q$ does not change the coordinate in $C$, so that we can use the contracting property of $A_*s^f-\gamma$. 
\end{remark}

Next we prove the existence of good slices. We first prove a more general result that will also be used in later in the construction of polyfold quotient. When constructing slices using Lemma \ref{lemma:gamma}, we can choose different complements $\H$ to construct different changes of coordinates $h$. When the base M-polyfold is infinite dimensional, then it is always possible to find a complement $\H$ such that the slice constructed in Lemma \ref{lemma:gamma} is good. To be more specific, we have the following proposition.
\begin{proposition}\label{prop:goodslice}
	Let  $((\cO, \Phi, (P, (\R_+^m\times \E)\lhd \F)),A)$ be a Fredholm chart (Definition \ref{def:fredchart}) of tame strong M-polyfold bundle $p:\cY\to \cX$ around $x_0\in \cX_\infty$ with respect to a sc-Fredholm section $s:\cX\to \cY$.  Suppose the covered chart on the base is $(\cO, \phi, (O, \R_+^m\times \E))$. Assume the bundle isomorphism $A$ is the form of
	$$U\lhd \F \to U'\lhd (\R^N\times \W),\quad (x,v)\mapsto (\alpha(x),\Lambda(x)v),$$
	where $U\subset \R_+^m\times \E$ and $U'\subset C\times \W := \R_+^m\times \R^k\times \W$ are neighborhoods of $(0,0)$. Under the assumptions of Lemma \ref{lemma:gamma}, that is 
	\begin{enumerate}
		\item 	for a neighborhood $B\subset \R^n$ of $0$ we have a strong bundle map $\Lambda:B\times p^{-1}(\cO)\to \cY$ such that $\Lambda(0,\cdot) = \Id_{p^{-1}(\cO)}$;
		\item  let $\Gamma:B\times \cO \to \cX$ denote the map on the base covered by $\Lambda$, then $\Xi:=\rD(\phi\circ \Gamma)_{(0,x_0)}(T_0B \times \{0\}) \subset (T^R_{x_0}(O))_\infty\subset \{0\}\times \E_\infty$ and $\dim\Xi=n$.
	\end{enumerate}
	If $\cX$ is infinite dimensional, then there exists a sc-complement $\H$ of $\Xi$ in $\E$, such that the slice $\tilde{\cO}'\subset \cX^2$ yield by Lemma \ref{lemma:gamma} is good with respect to $s$.
\end{proposition}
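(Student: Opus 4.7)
The plan is to split the four conditions in Definition~\ref{def:good} into those which are automatically satisfied and the single derivative-surjectivity condition that forces the choice of $\H$. Conditions (1) and (2) hold by hypothesis: we are given a Fredholm chart of $s$ at $x_0$ and the bundle isomorphism $A=(\alpha,\Lambda)$ is in the required form. For any sc-complement $\H$ of $\Xi$ in $\E$, Lemma~\ref{lemma:gamma} produces the sc-diffeomorphism $h$ making $h\circ r\circ h^{-1}$ a $\R^n$-sliced retraction and realizing $\tilde{\cO}'$ as the required slice of $\cX^2$, verifying condition (3). Thus only condition (4) requires work, and the whole task is to choose $\H$ so that $\pi_{\R^n}\circ \rD(h\circ\alpha^{-1})_{(0,0)}|_{\{0\}\times\W}:\W\to\R^n$ is surjective.

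To reformulate condition (4), I would differentiate the identity $\pi_{\R^n}\circ h=t\circ r$ at $(0,0)$, which holds because in the proof of Lemma~\ref{lemma:gamma} the map $h$ was built so that $t\circ r\circ h^{-1}=\pi_{\R^n}$. Using that $\rD r_{(0,0)}$ is idempotent together with the formulas \eqref{eqn:linear}--\eqref{eqn:linearofq} for $\rD t_{(0,0)}$, this gives $\pi_{\R^n}\circ \rD h_{(0,0)}=J^{-1}\circ \pi_\Xi\circ \rD r_{(0,0)}$, where $J:T_0 B\to\Xi$ is the isomorphism from~\eqref{eqn:iso} and $\pi_\Xi:\R^m\times\E\to\Xi$ is the projection along $\R^m\oplus\H$. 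Hence condition (4) is equivalent to
\[ \pi_\Xi\circ \rD r_{(0,0)}\circ \rD\alpha^{-1}_{(0,0)}(\{0\}\times\W)=\Xi. \]

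Next I would use that $\alpha$, being a sc-diffeomorphism preserving the partial quadrant, preserves reduced tangent spaces (Remark~\ref{rmk:reducetangent}). This yields $\rD\alpha^{-1}_{(0,0)}(\{0\}\times\R^k\times\W)=\{0\}\times\E$, so the restriction to the codimension-$k$ subspace $\{0\}\times\{0\}\times\W$ has image a codimension-$k$ subspace of $\{0\}\times\E$. Composing with the surjection $\rD r_{(0,0)}\colon\{0\}\times\E\to T^R_{(0,0)}O$, the set
\[ Z:=\rD r_{(0,0)}\circ \rD\alpha^{-1}_{(0,0)}(\{0\}\times\W)\subset T^R_{(0,0)}O \]
has codimension at most $k$ in $T^R_{(0,0)}O$. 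Under the infinite-dimensional hypothesis on $\cX$, the space $T_{x_0}\cX\cong TO_{(0,0)}$ is infinite dimensional, hence so is $T^R_{(0,0)}O$, and therefore $Z$ is infinite dimensional. Condition (4) reduces to finding a sc-complement $\H$ of $\Xi$ in $\E$ with $\pi_\Xi(Z)=\Xi$, viewing $Z,\Xi\subset T^R_{(0,0)}O\subset\{0\}\times\E\cong\E$.

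The final step is an explicit construction of $\H$. Sc-continuity of $\rD\alpha^{-1}_{(0,0)}$ and $\rD r_{(0,0)}$ together with density of smooth points imply $Z\cap\E_\infty$ is dense in $Z$ and hence infinite dimensional, so $(Z\cap\E_\infty)/(Z\cap\Xi)$ is infinite dimensional. I can therefore pick $z_1,\dots,z_n\in Z\cap\E_\infty$ whose images in $\E/\Xi$ are linearly independent. With $\xi_1,\dots,\xi_n$ a basis of $\Xi$, set $\H_1:=\mathrm{span}\{z_i-\xi_i\mid i=1,\dots,n\}$; since the $z_i$ are linearly independent modulo $\Xi$, so are the $z_i-\xi_i$, and $\Xi\oplus\H_1$ is a $2n$-dimensional subspace of $\E_\infty$. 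By \cite[Lemma~2.2]{hofer2017polyfold} it admits a sc-complement $C$ in $\E$, and $\H:=\H_1\oplus C$ is then a sc-complement of $\Xi$ in $\E$ satisfying $\pi_\Xi(z_i)=\xi_i$, so $\pi_\Xi(Z)=\Xi$. The main obstacle is the derivation in the third paragraph: one must carefully use corner preservation to land $\rD\alpha^{-1}_{(0,0)}(\{0\}\times\W)$ inside $\{0\}\times\E$ and track the codimension through $\rD r_{(0,0)}$ to reach $T^R_{(0,0)}O$; once that reduction is in place, the infinite-dimensional hypothesis makes the construction of $\H$ a routine linear-algebra exercise.
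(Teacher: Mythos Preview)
Your proof is correct, and the overall strategy matches the paper's: reduce condition~(4) of Definition~\ref{def:good} to the linear-algebraic statement that some projection $\pi_\Xi$ hits all of $\Xi$ on an infinite-dimensional subspace, then construct $\H$ to make this happen. The differences are in how each step is carried out.

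\emph{Reduction.} You compute $\pi_{\R^n}\circ\rD h_{(0,0)}=J^{-1}\circ\pi_\Xi\circ\rD r_{(0,0)}$ directly from formulas \eqref{eqn:linear}--\eqref{eqn:linearofq} inside the proof of Lemma~\ref{lemma:gamma}, and thus reduce to $\pi_\Xi(Z)=\Xi$ with $Z=\rD r_{(0,0)}\circ\rD\alpha^{-1}_{(0,0)}(\{0\}\times\W)$. The paper instead uses only the \emph{stated} properties \eqref{part:4}--\eqref{part:5} of Lemma~\ref{lemma:gamma} (that $\rD h_{(0,0)}$ sends $\Xi$ to $\{0\}\times\R^n\times\{0\}$ and $\K^1$ to $\{0\}\times\{0\}\times\K^1$), and reduces to $\pi_\Xi(\M)=\Xi$ with $\M=T^R_{(0,0)}O^2\cap\rD\alpha^{-1}_{(0,0)}(\{0\}\times\W^2)$. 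Your $Z$ is the $\rD r$-projection of $\rD\alpha^{-1}(\{0\}\times\W)$, the paper's $\M$ is its intersection with $T^R O$; both are infinite dimensional, and either reduction is valid. Your route is shorter but leans on proof-internal equations; the paper's is more modular. Your use of corner preservation (via Remark~\ref{rmk:reducetangent} applied to the trivial retracts $U,U'$) to get $\rD\alpha^{-1}_{(0,0)}(\{0\}\times\R^k\times\W)=\{0\}\times\E$ is correct and is a step the paper sidesteps by working with the intersection.

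\emph{Construction of $\H$.} Here you are genuinely more direct: you pick $z_1,\dots,z_n\in Z\cap\E_\infty$ independent modulo $\Xi$ and set $\H=\mathrm{span}\{z_i-\xi_i\}\oplus C$, which visibly gives $\pi_\Xi(z_i)=\xi_i$. The paper instead starts from an arbitrary projection $\pi_0$, builds a family $\pi_\epsilon$ via Hahn--Banach functionals dual to auxiliary $\theta_i\in\M_\infty$, and uses a determinant argument ($\det(\pi_\epsilon|_{\langle\theta_i\rangle})=\prod\epsilon_i+\text{lower order}$) to find a good $\epsilon$. Your construction is cleaner; the paper's is a perturbative existence argument that avoids writing down $\H$ explicitly. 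Both yield the same conclusion.
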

\begin{remark}\label{rmk:goodslice}
	Recall that in Lemma \ref{def:slice}, $G$-slices (Definition \ref{def:Gslice}) are constructed by applying Lemma \ref{lemma:gamma} to $\Lambda: B\times p^{-1}(\cO) \to \cY, (g,v)\mapsto \rho(g,v)$, where $B\subset G$ is a neighborhood $\Id$ and $\rho:G\times \cY \to \cY$ is the action. Therefore by Proposition \ref{prop:goodslice} and Lemma \ref{lemma:slice}, if the base M-polyfold $\cX$ is infinite dimensional, then there exists $G$-slice $(\tilde{\cO},\cO,V,f,\eta,N)$ around $x_0\in \cX_\infty$ such that $\tilde{\cO}\subset \cX^2$ is good.
\end{remark}
\begin{proof}
	If we pick a sc-complement $\H$ of $\Xi$ in $\E$, then Lemma \ref{lemma:gamma} yields a slice $\tilde{\cO}'\subset \cX^2$ around $x_0$. And by property \eqref{part:4} of Lemma \ref{lemma:gamma}, there is a change of coordinate $h:U'\to U''$ for neighborhoods $U'\subset U^2$ of $(0,0)$ and $U''\subset \R_+^m\times \R^n\times \K^1$ of $(0,0,0)$, such that the slice $\tilde{\cO}'=\phi^{-1}\circ r \circ h^{-1}((\R_+^m\times \{0\}\times \K^1)\cap U'')$. Recall from Definition \ref{def:good}, to show $\tilde{\cO}'$ is a good slice, it suffices to prove $\pi_{\R^n}\circ \rD(h\circ \alpha^{-1})_{(0,0)}|_{\{0\}\times \W^2}$ is surjective onto $\R^n$. Since $\rD \alpha^{-1}|_{(0,0)}(\{0\}\times \W^2)\subset \R^m\times \E^2$ is finite codimensional in $\E^2$ and $T^R_{(0,0)}O^2\subset \{0\}\times \E^2$ is infinite dimensional by assumption, then 
	$$\M := T^R_{(0,0)}O^2\cap \rD \alpha^{-1}|_{(0,0)}(\{0\}\times \W^2)\subset T^R_{(0,0)}O^2 \subset \{0\}\times \E^2$$
	is infinite dimensional and it is sufficient to prove 
	\begin{equation}\label{eqn:goal}
	\pi_{\R^n}\circ \rD h_{(0,0)}(\M) = \R^n
	\end{equation}
	By property \eqref{part:4} and \eqref{part:5} of Lemma \ref{lemma:gamma}, we have
	\begin{equation}\label{eqn:lin}
	\pi_{\R^n}\circ \rD h_{(0,0)}(\{0\}\times \Xi) = \R^n, \quad \pi_{\R^n}\circ \rD h_{(0,0)}(T^R_{(0,0)}O^2\cap (\{0\}\times \H)) = 0.
	\end{equation}
	Note that $\Xi\oplus \left(T^R_{(0,0)}O^2\cap (\{0\}\times \H)\right) = T^R_{(0,0)}O^2$, then by \eqref{eqn:lin}, \eqref{eqn:goal} is equivalent to $\M + T^R_{(0,0)}O^2\cap (\{0\}\times \H) = T^R_{(0,0)}O^2$. Let $\pi_\Xi$ denote the projection of $\E=\Xi\oplus \H$ to $\Xi$. Then it suffices to prove $\pi_{\Xi}(\M) = \Xi$. 
	
	We claim that there exists a projection $\pi_\epsilon:\E \to \Xi$ such that $\pi_\epsilon (\M) = \Xi$. Then by the discussion above $\H:= \ker \pi_{\epsilon}$ gives rise to a good slice. To prove the claim, we first pick any projection $\pi_0:\E \to \Xi$ and then we will perturb $\pi_0$ to $\pi_\epsilon$ so that $\pi_\epsilon(\M) = \Xi$. Since $\M$ is infinite dimensional, we can pick linearly independent vectors $\theta_1,\ldots, \theta_n \in \M_\infty$ such that $\la \theta_1,\ldots,\theta_n\ra \cap \Xi = \{0\}$. Then by the Hahn-Banach theorem, there are continuous linear functionals (hence sc-smooth) $l_i:\E \to \R$ such that $l_i(\theta_j) =\delta_{ij}$ and $\Xi\subset \ker l_i$. Let $\xi_1,\ldots,\xi_n$ be a basis of $\Xi$, then we define for $\epsilon := (\epsilon_1,\ldots,\epsilon_n)\in \R^n$, 
	$$\pi_{\epsilon}:\E \to \Xi, \quad e \mapsto \pi_0(e) + \sum_{i=1}^n \epsilon_i l_i(e) \xi_i.$$
	Therefore $\pi_\epsilon(\xi_i) = \xi_i$ and $\pi_\epsilon\circ \pi_\epsilon = \pi_\epsilon$ for every $\epsilon$, that is $\pi_\epsilon$ is a projection from $\E$ to $\Xi$. If we consider $\pi_{\epsilon}|_{\la\theta_1,\ldots,\theta_n\ra}$, then
	$$\det(\pi_{\epsilon}|_{\la\theta_1,\ldots,\theta_n\ra}) = \prod_{i=1}^n\epsilon_i+\text{ lower order terms,}$$
	when we use $\{\theta_1,\ldots,\theta_n\}$ and $\{\xi_1,\ldots,\xi_n\}$ as basis. Therefore there exists $\epsilon$ such that $\pi_\epsilon|_{\la \theta_1,\ldots,\theta_n\ra}$ is an isomorphism. Hence the claim is proven for such $\pi_\epsilon$.
\end{proof}

\begin{proof}[Proof of part \eqref{part:freetwo} of Theorem \ref{thm:free}]
	Let $\{(\tilde{\cO}_x, \cO_x, V_x, f_x,\eta_x,N_x)\}_{x\in \cX_\infty}$ be the bundle $G$-slices used in Lemma \ref{lemma:structure} for the construction of $\hX$. Therefore by Lemma \ref{lemma:structure}, $\Psi_x: p^{-1}(\tilde{\cO}_x) \stackrel{\iota}{\hookrightarrow} p^{-1}(\cO_x) \stackrel{\pi_G}{\to} p^{-1}(\hX)/G$ in \eqref{eqn:Psi} are strong bundle isomorphisms, where $\pi_G:p^{-1}(\hX) \to p^{-1}(\hX)/G$ is the quotient map and $\iota:p^{-1}(\tilde{\cO}_x)\to p^{-1}(\cO_x)$ is the inclusion.
	
	Since the section $s$ is $G$-equivariant, $s$ induces a continuous section $\os:\hX^1/G \to p^{-1}(\hX^1)/G$ by $s|_{\hX^1} = \pi_G^*\os$. To see $\os$ is sc-smooth and has the regularization property, since $\Psi_x^*\os = \iota^*\pi_G^*\os = \iota^*s = \tilde{s}:=s|_{\tilde{\cO}^1_x}$, Proposition \ref{prop:reg} implies that $\os$ is sc-smooth and has the regularization property.
	
	Next, we claim that $\os:\hX^1/G\to p^{-1}(\hX^1)/G$ has the basic germ property. It suffices to show that there is a Fredholm chart in $\cO^1_x$ around $x$ for $\tilde{s}$. By Remark \ref{rmk:goodslice}, for every $x\in \cX_\infty$ there exists a bundle $G$-slice $(\tilde{\cO}'_x, \cO'_x, V_x', f_x',\eta_x', N_x')$ around $x$ such that  $\tilde{\cO}'_x\subset \cX^2$ is a good slice. In particular, by Lemma \ref{fred} there is a Fredholm chart around $x$ in $(\tilde{\cO}'_x)^1$ for $\tilde{s}':=s|_{(\tilde{\cO}'_x)^1}:(\tilde{\cO}'_x)^1 \to p^{-1}((\tilde{\cO}'_x)^1)$. The good slice $\tilde{\cO}'_x$ is not necessarily $\tilde{\cO}_x$ in the construction of $\hX$. However, by \eqref{eqn:bundiso} $\Psi_x^{-1}\circ \Psi'_x$ is a strong bundle isomorphism, where $\Psi'_x$ is the map in \eqref{eqn:Psi} for the slice $\tilde{\cO}'_x$. Since $s$ is $G$-equivariant, we have
	$$\tilde{s}'= s|_{(\tilde{\cO}'_x)^1} = (\Psi'_x)^*\os = (\Psi^{-1}_x\circ \Psi'_x)^*\Psi^*_x \os =  (\Psi^{-1}_x\circ \Psi'_x)^*\tilde{s}$$ 
	in a neighborhood of $x$ where the strong bundle isomorphism $\Psi^{-1}_x \circ \Psi'_x$ is defined. Therefore there is a Fredholm chart in $\tilde{\cO}^1_x$ around $x$ for $\tilde{s}$.
	
	Since $s^{-1}(0)$ is compact in $\cX_0$ topology, by Remark \ref{rmk:compact}, $s^{-1}(0)$ is compact in $\cX^3$ topology. Because $\os^{-1}(0) = s^{-1}(0)/G$ and $G$ is compact, $\os^{-1}(0)$ is compact in $\cX^3/G$ topology, i.e. the $\hX^1/G$ topology. That is $\os:\hX^1/G \to p^{-1}(\hX^1)/G$ is a proper sc-Fredholm section.
\end{proof}

In the following, we recall the concept of general position from \cite{hofer2017polyfold}, which plays an important role in getting boundary and corner structures on the zero set of a sc-Fredholm section.
\begin{definition}[{\cite[Definition 5.9]{hofer2017polyfold}}]
	Given a sc-Fredholm section $s:\cX \to \cY$ of a tame strong M-polyfold bundle $p:\cY \to \cX$, we say $s$ is in \textbf{general position} if the linearization $\rD s_x: T_x\cX \to \cY_x$ for $x\in s^{-1}(0)$ is surjective and $\ker \rD s_x$ has a sc-complement in the reduced tangent space $T^R_x\cX$. Equivalently, $s$ is in general position if for all $x\in s^{-1}(0)$ the restricted linearization $\rD s_x|_{T^R_x\cX}: T^R_x\cX \to \cY_x$ is surjective. In other words, $s$ restricted to all the boundary and corner is transverse to $0$. 
\end{definition}		
\begin{remark}\label{rmk:general}
	Under the assumptions of Theorem \ref{thm:free}, $s$ is in general position iff $\os$ is also in general position. Let $(\tilde{\cO},\cO,V,f,\eta,N)$ be a $G$-slice around $x\in \cX_\infty$, since the infinitesimal directions $\Xi$ of the group action is contained in $T^R_x\cO$. Moreover, $T^R_x\cO = \Xi \oplus T^R_{x}\tilde{\cO}$. Since $s$ is $G$-equivariant, we have $\rD s_x(\Xi) = 0$. Therefore $\rD s_x|_{T^R_x\cO} \to \cY_x$ being surjective is equivalent to that $\rD s_x|_{T^R_x\tilde{\cO}} \to \cY_x$ is surjective, that is $\os$ is general position. 
\end{remark}

\cite[Theorem 5.6]{hofer2017polyfold} assert that there exist $\sc^+$- perturbations so that the perturbed sections are in general position. Moreover, the zero set of a sc-Fredholm section in general position is a manifold  with boundary and corner and the degeneracy index on the zero set equals to the degeneracy index of the base M-polyfold.

\begin{remark}
	The concept of good position was introduced in \cite[Definition 3.16]{hofer2017polyfold}. When a sc-Fredholm (multi)section is in good position on the boundary, the zero set still has boundary and corner structure by \cite[Theorem 3.13]{hofer2017polyfold}. It can be shown that if $s$ is in good position then $\os$ is also in good position. 
\end{remark}


\section{Finite Isotropy quotient of Polyfolds}\label{s3}
 In this section, we prove the main result Theorem \ref{thm:main} except the claim on orientation. Polyfolds are modeled on groupoids as in \cite{moerdijk2002orbifolds}, hence should be thought of as the orbifold version of M-polyfolds. Since a polyfold can have finite isotropy, the quotient theorem for polyfolds also allows the group action to have finite isotropy. If one works on general polyfolds, some pathological polyfolds, e.g Example \ref{ex:irr-one} and Example \ref{ex:irr-two}, obstruct the construction in this paper. In particular, the Hausdorffness of quotients might fail, see Remark \ref{rmk:reduced} and Example \ref{ex:three}. Therefore we introduce regular polyfolds and bundles in Definition \ref{def:reg} and Definition \ref{def:regbund}, the quotient construction works for regular polyfolds and regular polyfold bundles. All Polyfolds in known applications \cite{hofer2017applications,hofer2017application,li,wehrheim2012fredholm} are regular and regularity can be checked easily by Proposition \ref{prop:regular}, Proposition \ref{prop:hofer-regular} and Corollary \ref{cor:regular}. We also point out that orbifolds are always regular.
 
 We review the basics of polyfolds and introduce regular polyfolds in Section 3.1. Section 3.2 introduces the definition of group actions on polyfolds and Section 3.3 discusses properties of group actions. We construct the polyfold quotients in Section 3.4. Section 3.5 is about quotients of polyfold bundles and sc-Fredholm sections.

\subsection{Regular polyfolds}
 We first recall definitions and properties of polyfolds that will be used in this paper from \cite{hofer2017polyfold}. For that purpose, we first recall the definition of an ep-groupoid.
 
\subsubsection{Ep-groupoids and regular ep-groupoids}
\begin{definition}[{\cite[Def 7.3]{hofer2017polyfold}}]\label{groupoid}
	An \textbf{ep-groupoid} $(\cX,\bX)$ is a groupoid such that the object set $\cX$ and the morphism set $\bX$ are M-polyfolds. Moreover, all the structure maps: source $s$, target $t$, composition $m$, unit $u$ and inverse $i$ are $sc$-smooth and the following two properties hold,
	\begin{itemize}
		\item{\textbf{\'{e}tale}}: $s,t$ are surjective local $sc$-diffeomorphisms;
		\item{\textbf{proper}}: for every $x\in \cX$, there exists an open neighborhood $\cV$ of $x$, such that $t: s^{-1}(\overline{\cV})\to \cX$ is proper, where $\overline{\cV}$ is the closure of $\cV$ in $\cX$.\footnote{Hence for every open set $\cW$ with $\overline{\cW}\subset \overline{\cV}$, $t: s^{-1}(\overline{\cW})\to \cX$ is proper.}
	\end{itemize}	
	An ep-groupoid is \textbf{tame} if the object space $\cX$ is a tame M-polyfold.
\end{definition}
The proper condition of Definition \ref{groupoid} implies that for every $x\in \cX$, the isotorpy group $\stab_x$ is finite \cite[Proposition 7.4]{hofer2017polyfold}. The \'{e}tale condition of Definition \ref{groupoid} implies that every $\phi\in \bX$ there exists neighborhoods $\cU_{s(\phi)}\subset \cX,\cU_{t(\phi)} \subset \cX$ and $\bU_{\phi} \subset \bX$ of $s(\phi), t(\phi)$ and $\phi$, such that both $s:\bU_\phi \to \cU_{s(\phi)}$ and $t:\bU_{\phi} \to \cU_{t(\phi)}$ are diffeomorphisms. The following definition was introduced in \cite[\S 7.1]{hofer2017polyfold}.
\begin{definition}\label{def:lphi}
	For every $\phi \in \bX$, we define \begin{equation}\label{eqn:local}
	L_\phi:= t\circ s^{-1}, \quad \cU_{s(\phi)} \to \cU_{t(\phi)},
	\end{equation} which is a sc-diffeomorphism.
\end{definition} 
\begin{definition}\label{def:actgroupoid}
	Let $\Omega$ be a finite group. Assume there is a sc-smooth group action $\rho:\Omega \times \cO \to \cO$ on a M-polyfold $\cO$.  The \textbf{translation groupoid} $\Omega\ltimes \cO$ is defined to be:
	$$\obj(\Omega\ltimes \cO) := \cO, \qquad \mor(\Omega\ltimes \cO) := \Omega\times \cO.$$
	The structure maps are defined by:
	$$ s:(g,x) \mapsto x, \qquad t: (g, x) \mapsto \rho(g,x), \qquad \forall (g,x)\in \mor(\Omega\ltimes \cO)=\Omega \times \cO$$
	The composition for $(h,y)$ and $(g,x)$ satisfying $y = \rho(g,x)$ is defined to be  $(hg, x)$. The inverse map is given by $i(g,x) = (g^{-1}, \rho(g,x))$ and the unit map is given by $u(x) = (\Id, x)$.
\end{definition}
It is clear that the source and target maps of $\Omega\ltimes \cO$ are \'{e}tale and $\Omega \ltimes \cO$ is proper by Proposition \ref{prop:proper}. Therefore $\Omega \ltimes \cO$ is an ep-groupoid. The following theorem, as another consequence of the \'etale and proper properties in Definition \ref{groupoid}, essentially says that an ep-groupoid is locally isomorphic to a translation groupoid.

\begin{theorem}[{\cite[Thm 7.1]{hofer2017polyfold}}]\label{thm:natural}
	Given an ep-groupoid $(\cX, \bX)$, an object $x\in \cX$ and an open neighborhood $\cV_x\subset \cX$ of $x$. Then there exist an open neighborhood $\cU_x\subset \cV_x$ of $x$ and a group homomorphism defined by
	$$\stab_{x}\to \diff_{\sc}(\cU_x), \quad \phi\mapsto L_\phi.$$
	where $\diff_{\sc}(\cU_x)$ is the group of sc-diffeomorphism from $\cU_x$ to itself. Moreover, there is a sc-smooth map
	$$\Sigma: \stab_{x}\times \cU_x \to \bX, $$
	such that they have the following properties:
	\begin{itemize}
		\item $\Sigma(\phi,x)=\phi$; 
		\item $s(\Sigma(\phi,y))=y$ and $t(\Sigma(\phi,y))=L_\phi(y)$ for all $y\in \cU_x$ and $\phi\in \stab_{x}$;
		\item If $\psi:y\to z$ is a morphism connecting two objects $y,z\in \cU$, then there is a unique $\phi\in \stab_{x}$ such that $\Sigma(\phi,y)=\psi$.
	\end{itemize}
In other words, $\stab_x\ltimes \cU_x \stackrel{(i_{\cU_x},\Sigma)}{\longrightarrow}(\cX,\bX)$ is a fully faithful functor, where $i_{\cU_x}:\cU_x \to \cX$ is the inclusion.
\end{theorem}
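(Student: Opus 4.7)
The plan is to first construct the local diffeomorphisms $L_\phi$ and the map $\Sigma$ using the étale property, and then shrink the neighborhood $\cU_x$ using properness to guarantee the uniqueness and group-homomorphism properties.

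First I would apply the étale condition to each $\phi\in \stab_x$. Since $\stab_x$ is finite (by \cite[Proposition 7.4]{hofer2017polyfold}), for each such $\phi:x\to x$ I can choose open neighborhoods $\bU_\phi\subset \bX$ of $\phi$ and $\cU_{\phi}^s,\cU_{\phi}^t\subset \cV_x$ of $x$ on which both $s:\bU_\phi\to \cU_\phi^s$ and $t:\bU_\phi\to \cU_\phi^t$ are sc-diffeomorphisms. Let $\tilde{\cU}_x:=\bigcap_{\phi\in\stab_x}(\cU_\phi^s\cap \cU_\phi^t)$, which is still an open neighborhood of $x$ because $\stab_x$ is finite. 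After further shrinking $\bU_\phi$ so that $s(\bU_\phi)=\tilde{\cU}_x$, I set $L_\phi:=t\circ (s|_{\bU_\phi})^{-1}:\tilde{\cU}_x\to \cX$ as in Definition \ref{def:lphi} and define $\Sigma(\phi,y):=(s|_{\bU_\phi})^{-1}(y)$. The identities $\Sigma(\phi,x)=\phi$, $s(\Sigma(\phi,y))=y$, and $t(\Sigma(\phi,y))=L_\phi(y)$ are then immediate from the construction.

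The main obstacle is ensuring that, after suitable shrinking, the images of $\Sigma(\phi,\cdot)$ exhaust \emph{all} morphisms with both endpoints in $\cU_x$, yielding both uniqueness and the group homomorphism property. For this I will use the properness condition of Definition \ref{groupoid}. Fix a neighborhood $\cV$ of $x$ such that $t:s^{-1}(\overline{\cV})\to \cX$ is proper, with $\overline{\cV}\subset \tilde{\cU}_x$. Suppose no sufficiently small neighborhood $\cU_x\subset \tilde{\cU}_x$ works; then there exist sequences $y_n,z_n\to x$ and morphisms $\psi_n:y_n\to z_n$ with $\psi_n\notin \Sigma(\stab_x\times \cU_x)$ for every candidate $\cU_x$. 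Since $\{\psi_n\}\subset s^{-1}(\overline{\cV})$ and $t(\psi_n)=z_n\to x$ lies in a compact set, properness of $t|_{s^{-1}(\overline{\cV})}$ extracts a convergent subsequence $\psi_n\to \phi$. Continuity of $s,t$ gives $s(\phi)=t(\phi)=x$, so $\phi\in\stab_x$; but then $\psi_n\in \bU_\phi$ for large $n$, hence $\psi_n=\Sigma(\phi,y_n)$, a contradiction. Therefore one can choose $\cU_x\subset \tilde{\cU}_x$ so that every morphism $\psi:y\to z$ with $y,z\in\cU_x$ lies in $\Sigma(\stab_x\times \cU_x)$.

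Uniqueness of $\phi$ with $\Sigma(\phi,y)=\psi$ then follows because $\Sigma(\phi_1,y)=\Sigma(\phi_2,y)$ forces these two morphisms to share the same source and target, and after shrinking further so that the sets $\bU_\phi$ are pairwise disjoint (possible since $\stab_x$ is finite and $\bX$ is Hausdorff) the two must agree as elements of $\bX$, giving $\phi_1=\phi_2$. Finally, to see that $\phi\mapsto L_\phi$ is a group homomorphism I invoke this uniqueness: for $\phi,\psi\in\stab_x$ and $y\in\cU_x$ close enough to $x$ so that $L_\psi(y)\in\cU_x$ as well, the composition $\Sigma(\phi,L_\psi(y))\circ \Sigma(\psi,y)$ is a morphism from $y$ to $L_\phi\circ L_\psi(y)$, and by uniqueness it equals $\Sigma(\phi\psi,y)$; applying $t$ yields $L_{\phi\psi}=L_\phi\circ L_\psi$ on a neighborhood of $x$, and after one last shrinking the identity holds on all of $\cU_x$. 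Sc-smoothness of $\Sigma$ is automatic since each $\Sigma(\phi,\cdot)=(s|_{\bU_\phi})^{-1}$ is a sc-diffeomorphism and $\stab_x$ is discrete.
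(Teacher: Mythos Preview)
The paper does not give its own proof of this theorem; it is quoted verbatim from \cite[Thm 7.1]{hofer2017polyfold} and used as a black box. Your argument follows the standard route of that reference: use \'etaleness to define $\Sigma(\phi,\cdot)=(s|_{\bU_\phi})^{-1}$ and $L_\phi=t\circ\Sigma(\phi,\cdot)$, then use properness to force every morphism between nearby points to lie in one of the $\bU_\phi$. That skeleton is correct.

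There is one genuine omission. The theorem asserts $L_\phi\in\diff_{\sc}(\cU_x)$, i.e.\ each $L_\phi$ is a sc-diffeomorphism \emph{of $\cU_x$ onto itself}. Your construction only gives $L_\phi:\cU_x\to\cX$, and the phrase ``after one last shrinking'' does not produce a $\stab_x$-invariant neighborhood: shrinking $\cU_x$ so that $L_{\phi\psi}=L_\phi\circ L_\psi$ holds on it says nothing about $L_\phi(\cU_x)\subset\cU_x$. The fix is to first obtain a small $\cW\ni x$ on which all the identities $L_{\phi\psi}=L_\phi\circ L_\psi$ are valid and all $L_\phi$ are defined, and then set $\cU_x:=\bigcap_{\phi\in\stab_x}L_\phi(\cW)$. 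This is open, contains $x$ (since $L_{\Id}=\Id$), and is $\stab_x$-invariant because $L_\phi(\cU_x)=\bigcap_\psi L_{\phi\psi}(\cW)=\cU_x$. Only after this step does $\phi\mapsto L_\phi$ land in $\diff_{\sc}(\cU_x)$. With that correction your proof is complete.
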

The following corollary is a direct consequence of Theorem \ref{thm:natural} and Definition \ref{def:lphi}.
\begin{corollary}\label{coro:natural}
	Let $(\cX, \bX)$ be an ep-groupoid and $\cU_x \subset \cX$ a neighborhood of $x\in \cX$ such that Theorem \ref{thm:natural} holds. Let $\psi \in \bX$ such that $s(\psi),t(\psi) \in \cU_x$. Then $L_{\psi}$ is defined on $\cU_x$ and there exists $\phi\in \stab_{x}$ such that $L_{\psi} = L_\phi$ on $\cU_x$.
\end{corollary}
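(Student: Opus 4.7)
The plan is to apply Theorem \ref{thm:natural} to produce the required $\phi \in \stab_x$ and then use the sc-smooth map $\Sigma$ to exhibit a global sc-smooth extension of $L_\psi$ to all of $\cU_x$. First I would set $y := s(\psi)$ and $z := t(\psi)$; both lie in $\cU_x$ by assumption. The third bullet of Theorem \ref{thm:natural} supplies a unique $\phi \in \stab_x$ such that $\Sigma(\phi, y) = \psi$. This $\phi$ is the desired candidate, and the associated $L_\phi$ is already a sc-diffeomorphism from $\cU_x$ to itself by the construction in Theorem \ref{thm:natural}.

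Next I would study the sc-smooth map $\sigma := \Sigma(\phi, \cdot) : \cU_x \to \bX$. By the second bullet of Theorem \ref{thm:natural}, $s \circ \sigma = \Id_{\cU_x}$ and $t \circ \sigma = L_\phi$, so $\sigma$ is a sc-smooth global section of the \'etale map $s$ over $\cU_x$ taking the value $\psi$ at $y$. Because $s$ is a local sc-diffeomorphism, any continuous section of $s$ passing through $\psi$ must coincide near $y$ with the unique local inverse $s^{-1}$ used to define $L_\psi$ in Definition \ref{def:lphi}. Hence on some open neighborhood $\cW \subset \cU_x$ of $y$ we have $\sigma = s^{-1}$, and therefore $L_\psi = t \circ s^{-1} = t \circ \sigma = L_\phi$ on $\cW$.

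Finally, since $L_\psi$ and $L_\phi$ are sc-smooth and agree on an open neighborhood of $y$, while $L_\phi$ is already globally defined on all of $\cU_x$, the map $L_\phi$ is the natural sc-smooth extension of $L_\psi$ from $\cW$ to $\cU_x$; one simply redefines the domain of $L_\psi$ to be $\cU_x$ via $L_\psi := L_\phi$, which is consistent with the original definition on $\cW$. This simultaneously establishes both assertions of the corollary. The only point requiring care is the local uniqueness of sections of $s$ passing through $\psi$, which is immediate from the \'etale property, so I do not anticipate any substantive obstacle beyond carefully tracking the definitions from Theorem \ref{thm:natural} and Definition \ref{def:lphi}.
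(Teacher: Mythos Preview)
Your proposal is correct and follows exactly the approach the paper implicitly intends: the paper only remarks that the corollary ``is a direct consequence of Theorem~\ref{thm:natural} and Definition~\ref{def:lphi}'' without spelling out details, and your argument is precisely the natural unpacking of that claim via the global section $\Sigma(\phi,\cdot)$. The one stylistic point worth tightening is that rather than ``redefining'' $L_\psi$, you can simply observe that $\sigma=\Sigma(\phi,\cdot)$ is itself a valid choice of local inverse $s^{-1}$ on all of $\cU_x$ passing through $\psi$, so $L_\psi = t\circ\sigma = L_\phi$ directly on $\cU_x$; this avoids any ambiguity about extension.
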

\begin{remark}\label{rmk:act}
	If $(\cX,\bX)$ is only an \'etale groupoid with finite isotropy. Then for every $x\in \cX$ and open neighborhood $\cV_x$ containing $x$, there exists an open neighborhood $\cU_x$ containing $x$ such that 
	$$\stab_{x}\to \diff_\sc(\cU_x),\quad \phi \mapsto L_\phi$$
	is a group homomorphism, see the proof of \cite[Thm 7.1]{hofer2017polyfold}.
\end{remark}

The fully faithful functor $\stab_x\ltimes \cU_x \stackrel{(i_{\cU_x},\Sigma)}{\longrightarrow}(\cX,\bX)$ can be thought of as the local charts on an ep-groupoid. To be more precise, the following definition was introduced in \cite{hofer2017polyfold}. For an ep-groupoid $(\cX,\bX)$, there is an equivalence relation $\sim_{\bX} $ on $\cX$ such that $x\sim_{\bX} y$ iff there exists $\phi \in \bX$ with $s(\phi) = x$ and $t(\phi) = y$. Let $|\cX|:= \cX/\sim_{\bX}$. Then $|\cX|$ is a topological space equipped with the quotient topology. We use $|\cdot|$ to denote the quotient map $\cX \to |\cX|$.

\begin{definition}[{\cite[Definition 7.9]{hofer2017polyfold}}]\label{def:uni}
	Let $(\cX,\bX)$ be an ep-groupoid and $x$ be an object in $\cX$. A \textbf{local uniformizer around $\bm{x}$} is a fully faithful functor
	$$\Psi_x: \stab_x \ltimes \cU_x \to (\cX, \bX),$$
	where $\cU_x\subset \cX$ is an open neighborhood of $x$, such that the following properties hold:
	\begin{enumerate}
		\item on the object level $\Psi_x^0:\cU_x\to \cX$ is an inclusion of open subset, on the morphism level $\Psi_x^1:\stab_x\times \cU_x \to \bX$ is sc-smooth;
		\item on the orbit space $|\Psi_x|: \cU_x/\stab_x \to |\cX|$ is homeomorphism onto an open subset of $|\cX|$, i.e. $|\cU_x|$ is open in $|\cX|$.
	\end{enumerate}
\end{definition}

Every neighborhood $\cU_x$ in Theorem \ref{thm:natural} defines a local uniformizer by $\stab_x\ltimes \cU_x\stackrel{(i_{\cU_x},\Sigma)}{\longrightarrow}(\cX,\bX)$, where the property on the orbit space was verified in   \cite[Proposition 7.6]{hofer2017polyfold}. Hence we will call $\cU_x$ a local uniformizer when there is no confusion.

In constructions of ep-groupoids, we need to verify the properness in Definition \ref{groupoid}. For that purpose, we introduce the following characterization of properness in Proposition \ref{prop:proper}
\begin{definition}\label{def:orbitset}
	An \textbf{orbit set} in an open subset $\cU\subset \cX$ of a point $x\in \cX$ is defined to be the set $ S_{x,\cU} := \{\phi \in \bX| s(\phi) = x, t(\phi) \in \cU\}\subset \bX$.
\end{definition}
Note that we do not require $x \in \cU$ and the orbit set $S_{x,\cU}$ only depends on $|x| \in |\cX|$ and $\cU$. Theorem  \ref{thm:natural} implies that if $\cU_{x}$ is a local uniformizer around $x$ and $y\in \cU_{x}$, the cardinality of the orbit set $S_{y,\cU_{x}}$ is $|\stab_{x}|$. Also note that $t:s^{-1}(\overline{\cV})\to \cX$ being proper implies that $t(s^{-1}(\overline{\cV})) \subset \cX$ is sequentially closed. Since M-polyfold $\cX$ is metrizable, $t(s^{-1}(\overline{\cV}))$ is closed. In fact, those two properties imply properness of an \'{e}tale groupoid, i.e. we have the following proposition.

\begin{proposition}\label{prop:proper}
Let $(\cX, \bX)$ be  an \'{e}tale groupoid with finite isotropy. Assume for every $x\in\cX$, there exists an open neighborhood $\cU_x\subset \cX$ of $x$, such that the following two conditions hold.
\begin{enumerate}
	\item\label{ass:1} For every $y\in \cU_x$, $|S_{y,\cU_x}| = |\stab_x|$.
	\item\label{ass:2} There exists an open neighborhood $\cV_x\subset \overline{\cV_x} \subset \cU_x$ of $x$, such that $t(s^{-1}(\overline{\cV_x}))$ is a closed subset of $\cX$.
\end{enumerate} 
Then $(\cX,\bX)$ is an ep-groupoid. 
\end{proposition}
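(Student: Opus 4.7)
The plan is to verify the proper axiom of Definition \ref{groupoid} using the given neighborhood $\cV_x$ from assumption (2); all other conditions in the definition of an ep-groupoid are already part of the hypotheses. Write $N := |\stab_x|$ and introduce the ``target version'' of the orbit set, $T_{y, \cU_x} := i(S_{y, \cU_x}) = \{\sigma \in \bX : t(\sigma) = y,\; s(\sigma) \in \cU_x\}$, which also has cardinality $N$ for every $y \in \cU_x$ by assumption (1), since $i$ is a bijection of $\bX$. Because $\bX$ is a metrizable M-polyfold and $s^{-1}(\overline{\cV_x}) \cap t^{-1}(K)$ is closed in $\bX$ for any compact $K \subset \cX$, it suffices to produce a convergent subsequence of an arbitrary sequence $\phi_n$ in this set.

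After passing to a subsequence I may assume $t(\phi_n) \to y_* \in K$. Assumption (2) then yields a morphism $\psi_0 \in \bX$ with $s(\psi_0) \in \overline{\cV_x}$ and $t(\psi_0) = y_*$; set $x_0 := s(\psi_0) \in \overline{\cV_x} \subset \cU_x$. By the \'etale property I choose an open neighborhood $\bU_0 \ni \psi_0$ on which $t|_{\bU_0}$ is a sc-diffeomorphism, and define $\tilde\phi_n := (t|_{\bU_0})^{-1}(t(\phi_n))$ for $n$ large. By continuity, $\tilde\phi_n \to \psi_0$ and $s(\tilde\phi_n) \to x_0$. The composition $\sigma_n := \tilde\phi_n^{-1} \circ \phi_n$ is then a morphism from $s(\phi_n) \in \overline{\cV_x} \subset \cU_x$ to $s(\tilde\phi_n)$, and since $x_0 \in \cU_x$ is interior, $t(\sigma_n) = s(\tilde\phi_n) \in \cU_x$ for $n$ large; hence $\sigma_n \in T_{t(\sigma_n), \cU_x}$.

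Applying assumption (1) to $x_0 \in \cU_x$, the set $T_{x_0, \cU_x}$ consists of exactly $N$ distinct morphisms $\xi_1, \ldots, \xi_N$. Using the \'etale property together with Hausdorffness of $\bX$, I choose pairwise disjoint open neighborhoods $\bW_j \ni \xi_j$ on each of which $t$ is a sc-diffeomorphism onto an open set. For $n$ large, $t(\sigma_n) = s(\tilde\phi_n)$ lies in the open neighborhood $\bigcap_j t(\bW_j)$ of $x_0$, so I can form $N$ distinct lifts $\sigma_n^{(j)} := (t|_{\bW_j})^{-1}(t(\sigma_n)) \in \bW_j$. Since $s(\sigma_n^{(j)}) \to s(\xi_j) \in \cU_x$ and $\cU_x$ is open, each $\sigma_n^{(j)}$ lies in $T_{t(\sigma_n), \cU_x}$ for $n$ large. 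But $|T_{t(\sigma_n), \cU_x}| = N$ by assumption (1), so the lifts exhaust $T_{t(\sigma_n), \cU_x}$, and in particular $\sigma_n = \sigma_n^{(j_n)}$ for some $j_n \in \{1,\ldots,N\}$. A final subsequence making $j_n = j$ constant yields $\sigma_n \to \xi_j$, and then $\phi_n = \tilde\phi_n \circ \sigma_n \to \psi_0 \circ \xi_j$ by sc-smoothness of composition (the compositions being defined because $t(\sigma_n) = s(\tilde\phi_n)$ and, in the limit, $t(\xi_j) = x_0 = s(\psi_0)$).

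The main technical step is the forced identification of $\sigma_n$ with one of the \'etale lifts $\sigma_n^{(j)}$: a priori $\sigma_n$ could be any morphism with target $t(\sigma_n)$ and source in $\cU_x$. The resolution is precisely the counting assumption (1), which turns the $N$ manifestly distinct \'etale lifts produced above into an exhaustive enumeration of $T_{t(\sigma_n), \cU_x}$. This is the point at which (1) is genuinely used rather than being derived from the ep-groupoid axioms as in Theorem \ref{thm:natural}, and it is what allows the weaker closedness hypothesis (2) to replace the usual compactness-style properness condition.
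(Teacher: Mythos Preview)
Your proof is correct. Both your argument and the paper's hinge on the same key observation: once you know $|T_{y,\cU_x}| = N$ for every $y\in\cU_x$, the $N$ \'etale lifts near the morphisms in $T_{x_0,\cU_x}$ must exhaust $T_{t(\sigma_n),\cU_x}$, forcing $\sigma_n$ to be one of them.

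The organizational difference is that the paper proves compactness of $t^{-1}(K)\cap s^{-1}(\overline{\cV_x})$ by a finite covering argument: for each $y \in K\cap t(s^{-1}(\overline{\cV_x}))$ it builds a neighborhood $\cO \ni y$ over which $t^{-1}(\cO)\cap s^{-1}(\cU_x)$ is a disjoint union of $N$ sheets, then covers the (compact, by assumption~\eqref{ass:2}) set $K\cap t(s^{-1}(\overline{\cV_x}))$ by finitely many such $\cO$. You instead run a sequential compactness argument, which is legitimate since $\bX$ is metrizable, and use the composition $\sigma_n = \tilde\phi_n^{-1}\circ\phi_n$ to transport the problem entirely inside $\cU_x$ before invoking the counting. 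Your route is slightly more hands-on but avoids having to track neighborhoods $\cO'\subset\overline{\cO'}\subset\cO$; the paper's route makes the local product structure of $t$ over $\cU_x$ more visibly explicit. Neither buys anything essential over the other.
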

\begin{proof}
	We claim that $t:s^{-1}(\overline{\cV_x})\to \cX$ is proper. Let $K\subset \cX$ be compact set. It suffices to prove $t^{-1}(K)\cap s^{-1}(\overline{\cV_x}) \subset \bX$ is compact. We will prove compactness by a finite cover of compact sets. Let $n:= |\stab_x|$. By assumption \eqref{ass:1}, for $y\in t(s^{-1}(\cU_x))$, there are exactly $n$ morphisms $\phi_1,\ldots,\phi_n$, such that $t(\phi_i)=y$ and $s(\phi_i)\in \cU_x$. Because the source and target maps $s,t$ are local $sc$-diffeomorphisms,  $t(s^{-1}(\cU_x))$ is open and there exist open neighborhood $\cO\subset t(s^{-1}(\cU_x))$ of $y$ and open neighborhoods $\bO(1), \ldots, \bO(n)$ of $\phi_1,\ldots,\phi_n$ in $\bX$, such that $\bO(i)\cap \bO(j)=\emptyset$ for all $i\ne j$ and both $t|_{\bO(i)}:\bO(i)\to \cO$, $s|_{\bO(i)}:\bO(i)\to s(\bO(i))\subset \cU_x$ are $sc$-diffeomorphisms for all $i$.  For every $z\in \cO$, by assumption \eqref{ass:1} there are also exactly $n$ morphisms $\psi_1,\ldots, \psi_n\in \bX$, such that $s(\psi_i)\in \cU_x$ and $t(\psi_i)=z$. Since $\left(t|_{\bO(i)}\right)^{-1}(z)$ already provides $n$ such elements, we must have $\psi_i\in \bO(i)$ (after a permutation). Therefore for every set $\cW\subset \cO$, the set of all the morphisms from $\cU_x$ to $\cW$ is completely described by:
	$$t^{-1}(\cW)\cap s^{-1}(\cU_x)= \coprod_{i=1}^n \bO(i)\cap t^{-1}(\cW)=\coprod_{i=1}^n t|_{\bO(i)}^{-1}(\cW).$$
	As a consequence, for every open neighborhood $\cO'\subset \cX$ of $y$ with $\overline{\cO'}\subset\cO$, we have
	$$ t^{-1}(\overline{\cO'}\cap K) \cap s^{-1}(\cU_x) = \coprod_{i=1}^n (t|_{\bO(i)})^{-1}(\overline{\cO'}\cap K)$$
	is compact, since $\overline{\cO'}\cap K \subset \cO$ is compact. Since $s^{-1}(\overline{\cV_x})$ is closed and $s^{-1}(\overline{\cV_x}) \subset s^{-1}(\cU_x)$, we have $t^{-1}(\overline{\cO'}\cap K) \cap s^{-1}(\overline{\cV_x})$ is compact. To sum up, we prove that for every $y\in t(s^{-1}(\cU_x))$ there exists an open neighborhood $\cO' \subset t(s^{-1}(\cU_x))$ of $y$ such that $(t|_{s^{-1}(\overline{\cV}_x)})^{-1}(\overline{\cO'}\cap K) = t^{-1}(\overline{\cO'}\cap K) \cap s^{-1}(\overline{\cV_x})$ is compact.
	
	By assumption \eqref{ass:2} that  $t(s^{-1}(\overline{\cV_x}))$ is closed,   $K\cap t(s^{-1}(\overline{\cV_x}))$ is compact. Therefore we can cover the compact set $K\cap t(s^{-1}(\overline{\cV_x}))$ by finitely many such $\cO'$. Then $(t|_{s^{-1}(\overline{\cV_x})})^{-1}(K) = t^{-1}(K)\cap s^{-1}(\overline{\cV_x})$ is a union of finitely many compact sets, hence is also compact. 
\end{proof}
The next proposition asserts that one only needs to verify the first condition in Proposition \ref{prop:proper} for very few points.
\begin{proposition}\label{prop:helper}
	Let $(\cX,\bX)$ be an \'{e}tale groupoid with finite isotropy. Suppose $\cU\subset\cX$ is an open neighborhood  of $x$ and $|\stab_{y,\cU}| = |\stab_x|$ for all $y\in \cU$. Then for every $y\in \cU$, there exists an open neighborhood $\cV \subset \cU$ of $y$, such that $|S_{z,\cV}| = \stab_y$ for $z\in \cV$.
\end{proposition}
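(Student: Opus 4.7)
The plan is to exploit the local structure around $y$ provided by Theorem \ref{thm:natural} (or Remark \ref{rmk:act}), together with the numerical rigidity $|S_{\cdot,\cU}| \equiv |\stab_x|$ across $\cU$, to rule out extra morphisms. First I would enumerate $S_{y,\cU} = \{\phi_1,\ldots,\phi_n\}$ with $n := |\stab_x|$, relabeled so that $\{\phi_1,\ldots,\phi_k\} = \stab_y$ (where $k := |\stab_y|$) and $t(\phi_i) \neq y$ for $i > k$. By the étale property, I may pick pairwise disjoint open neighborhoods $\bO_i$ of $\phi_i$ together with open neighborhoods $\cU^s_i \ni y$ and $\cU^t_i \ni t(\phi_i)$ such that $s|_{\bO_i} \colon \bO_i \to \cU^s_i$ and $t|_{\bO_i} \colon \bO_i \to \cU^t_i$ are sc-diffeomorphisms; by Remark \ref{rmk:act}, I may also fix an open neighborhood $\cU_y \subset \cU$ on which the group homomorphism $\stab_y \to \diff_\sc(\cU_y)$, $\phi \mapsto L_\phi$, is well defined.

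Next I would choose a small open neighborhood $\cV$ of $y$, contained in $\cU_y \cap \bigcap_{i=1}^n \cU^s_i$, with two extra properties: (a) $L_\phi(\cV) \subset \cV$ for every $\phi \in \stab_y$, achievable by replacing $\cV$ with $\bigcap_{\phi \in \stab_y} L_\phi^{-1}(\cV)$ since $L_\phi(y)=y$; and (b) $L_{\phi_i}(\cV) \cap \cV = \emptyset$ for every $i > k$, achievable by shrinking $\cV$ because $L_{\phi_i}(y) = t(\phi_i) \neq y$ in the Hausdorff space $\cX$. For each $z \in \cV$ and each $i$, let $\psi_i(z) \in \bO_i$ denote the unique morphism with source $z$; then $t(\psi_i(z)) = L_{\phi_i}(z)$, which lies in $\cV$ for $i \le k$ by (a) and lies outside $\cV$ for $i > k$ by (b). This produces $\{\psi_1(z),\ldots,\psi_k(z)\} \subset S_{z,\cV}$, giving the lower bound $|S_{z,\cV}| \ge k$.

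For the matching upper bound I would invoke the hypothesis $|S_{z,\cU}| = n$ for $z \in \cU$: the $n$ morphisms $\psi_1(z),\ldots,\psi_n(z)$ are pairwise distinct elements of $S_{z,\cU}$ (distinct because the $\bO_i$ are disjoint), so they exhaust $S_{z,\cU}$. Any $\psi \in S_{z,\cV} \subset S_{z,\cU}$ therefore equals some $\psi_i(z)$, but (b) forces $i \le k$, yielding $|S_{z,\cV}| = k = |\stab_y|$ as required. The only subtle point is the simultaneous achievability of (a) and (b), but this is harmless since each is a smallness condition at $y$ and the finitely many maps $L_{\phi_i}$ are continuous with prescribed values at $y$; the combinatorial heart of the argument is simply that the rigid count $|S_{\cdot,\cU}| \equiv n$ means every morphism out of a nearby $z$ must come from a unique $\phi_i$, so the ones with targets leaving $\cV$ are exactly the $\phi_i \notin \stab_y$.
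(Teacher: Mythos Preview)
Your proof is correct and follows essentially the same approach as the paper's own proof: enumerate $S_{y,\cU}$, separate the isotropy part $\stab_y$ from the rest, use \'etale local diffeomorphisms and Remark~\ref{rmk:act} to build a small $\stab_y$-invariant $\cV$ separated from the targets of the non-isotropy morphisms, and then use the rigid count $|S_{z,\cU}|=n$ to conclude. The only cosmetic difference is that the paper works with morphisms having target $y$ (i.e.\ inverses of your $\phi_i$), so its separation condition is phrased as $\cO\cap s(\bO(i))=\emptyset$ for $i>k$ rather than your $L_{\phi_i}(\cV)\cap\cV=\emptyset$; these are equivalent.
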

\begin{proof}
	Let $n:= |\stab_x|$. Since $|S_{y,\cU}| = |\stab_x| = n$ for $y\in \cU$, there exists $\phi_1,\ldots,\phi_n \in \bX$ such that $s(\phi_i) \in \cU$ and $t(\phi_i) = y$.  We assume $\phi_i \in \stab_y$ iff $i\le k$.  By the \'{e}table property, there exist open neighborhood $\cO\subset\cU$ of $y$ and disjoint open neighborhoods $\bO(i)\subset \bX$ of $\phi_i$, such that $s:\bO(i) \to s(\bO(i)) \subset \cU$ and $t:\bO(i) \to \cO$ are diffeomorphisms. Moreover, we can assume $\cO\cap s(\bO(i)) = \emptyset$ and $s(\bO(j))\cap s(\bO(i)) = \emptyset$ for $i > k$ and $j \le k$.  By Remark \ref{rmk:act}, there exists open neighborhood $\cV\subset \cO \cap_{i=1}^k s(\bO(i))$ of $y$ admitting a $\stab_y$ action defined by $\phi_i \mapsto L_{\phi_i}$.  Then $|S_{z,\cV}|\ge |\stab_y| = k$ by the construction of $\cV$.  By the same argument used in Proposition \ref{prop:proper}, we have $S_{z,\cU} = \{t|_{\bO(i)}^{-1}(z) \}$ for $z\in \cO$. Note that $s(t_{\bO(j)}^{-1}(z)) \in s(\bO(j))$ and $s(\bO(j)) \cap \cV = \emptyset$ for $j>k$. Since $S_{z,\cV} \subset S_{z,\cU}$.  we have $|S_{z,\cV}|  \le k$ for every $z\in \cV$.  Hence the proposition holds.
\end{proof}

Next we introduce the concept of regular ep-groupoid, the motivation of such definition is to rule out pathological ep-groupoids in Example \ref{ex:irr-one} and Example \ref{ex:irr-two}.
\begin{definition}\label{def:regular}
	For $x\in \cX$, a local uniformizer $\stab_x\ltimes \cU$ around $x$ is \textbf{regular} if the following two conditions are met:
	\begin{enumerate}
		\item if $L_\phi|_{\cV} = \Id_{\cV}$ for some open subset $\cV \subset \cU$ and $\phi\in \stab_x$, then $L_\phi = \Id$ on $\cU$;
		\item for every connected\footnote{Since M-polyfolds are locally path connected, path connectedness is equivalent to connectedness.} uniformizer $\cW \subset \cU$ around $x$, if $\Phi:\cW \to \stab_x$ is a map such that $L_{\Phi}:\cW \to \cX, z\mapsto L_{\Phi(z)}(z)$ is sc-diffeomorphism, then there exists $\phi\in \stab_x$, such that $L_{\Phi(z)}(z) = L_{\phi}(z)$ for all $z\in \cW$. 
	\end{enumerate}
\end{definition}
By Definition \ref{def:regular}, any smaller uniformizer around $x$ inside a regular uniformizer around $x$ is also regular. 
\begin{remark}
	The second condition in Definition \ref{def:regular} can be weakened from all $\cW \subset \cU$ to a (countable) local topology basis $\{\cW_i\subset \cU\}_{i\in \N}$. 
	All the arguments used in this paper go through. For the simplicity of notation, we only work with Definition \ref{def:regular} in this paper.
\end{remark}

\begin{definition}\label{def:reg}
An ep-groupoid $(\cX,\bX)$ is \textbf{regular}, if for any point $x\in \cX$, there exists a regular local uniformizer around $x$.\footnote{The regularity of an ep-groupoid should be differed from the regularity of the  topology on the orbits space.} 
\end{definition}

\begin{remark}
	It is not clear whether the regularity of $(\cX,\bX)$ implies the regularity of $(\cX^i,\bX^i)$ for other $i > 0$. However, if the condition in Corollary \ref{cor:regular} holds, then $(\cX^i,\bX^i)$ are regular for all $i \ge 0$.
\end{remark}
 
In most of the applications, we can use one of the following propositions to confirm regularity. We first recall the definition of effective ep-groupoid. Let $\diff_{\sc}(x)$ be the group of germs of diffeomorphisms fixing $x$. Then an action $\stab_x\to \diff_{\sc}(\cU)$ in Theorem \ref{thm:natural} descends to a group homomorphism $\stab_x \to \diff_{\sc}(x)$. 
\begin{definition}
The \textbf{effective part of $\bm{\mathrm{stab}_x}$} is defined to be
\begin{equation}\label{eqn:eff}
\stab^{\eff}_x:= \stab_x/(\ker(\stab_x\to \diff_{\sc}(x))).
\end{equation}
\end{definition}

\begin{definition}[{\cite[Definition 7.11]{hofer2017polyfold}}]
	An ep-groupoid is effective if the homomorphism $\stab_x\to\diff_{sc}(x)$ is injective for any $x\in \cX$, i.e. $\stab_x = \stab^{\eff}_x$.
\end{definition}
The reason of passing to germs is that there might exist neighborhoods $\cV \subset \cU \subset \cX$ of $x$, such that $\stab_x\to \diff_{\sc}(\cU)$ is injective but $\stab_x\to \diff_{\sc}(\cV)$ is not injective, e.g.\ Example \ref{ex:irr-one}. 

\begin{proposition}\label{prop:regular}
	Let $(\cX,\bX)$ be an effective ep-groupoid. Assume for every $x\in \cX$, there exists a local uniformizer $\cU$ around $x$, such that for any connected uniformizer $\cV \subset \cU$ around $x$, we have $\cV\backslash \cup_{\phi\ne \Id \in \stab_x} \Fix(\phi)$ is connected, where $\Fix(\phi)$ is the fixed set of $L_{\phi}$. Then $(\cX,\bX)$ is regular.
\end{proposition}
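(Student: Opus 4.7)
The plan is to verify the two conditions of Definition \ref{def:regular} for the given uniformizer $\cU$, after first shrinking $\cU$ to its connected component around $x$ (the hypothesis clearly survives this). Only condition (1) will use effectiveness and connectedness of $\cU$; the connectedness hypothesis on $\cV \setminus \bigcup \Fix$ will enter only for condition (2).

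For condition (1), I would suppose $L_\phi|_\cV = \Id_\cV$ for some open $\cV \subset \cU$ and $\phi \in \stab_x$, and argue that the set $A := \{y \in \cU : \Sigma(\phi, y) = u(y)\}$ equals all of $\cU$. First, for $y \in \cV$, using Corollary \ref{coro:natural} and the étale property one checks that $L_{\Sigma(\phi,y)}$ agrees with $L_\phi$ in a neighborhood of $y$ and is therefore the germ of the identity; effectiveness at $y$ then forces $\Sigma(\phi, y) = u(y)$, so $\cV \subset A$. Next, $A$ is closed by continuity of $\Sigma(\phi,\cdot)$ and $u$ together with the Hausdorff property of $\bX$, and $A$ is open because near any $y_0 \in A$ the two sc-smooth sections $\Sigma(\phi, \cdot)$ and $u$ of the local diffeomorphism $s$ both fall into a common neighborhood of $u(y_0)$ on which sections of $s$ are unique. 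Connectedness of $\cU$ then gives $A = \cU$, and hence $L_\phi(y) = t(\Sigma(\phi,y)) = t(u(y)) = y$ for every $y \in \cU$.

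For condition (2), I take a connected uniformizer $\cW \subset \cU$ around $x$ and a map $\Phi : \cW \to \stab_x$ with $L_\Phi$ an sc-diffeomorphism. Condition (1), now established, forces $\Fix(\psi)$ to have empty interior for every $\psi \ne \Id$, so $\cW_0 := \cW \setminus \bigcup_{\psi \ne \Id} \Fix(\psi)$ is dense in $\cW$ and, by the hypothesis, connected. I would then show $\Phi$ is locally constant on $\cW_0$: if some $z_0 \in \cW_0$ admitted a sequence $z_n \to z_0$ in $\cW_0$ with $\Phi(z_n) = \phi \ne \phi_0 := \Phi(z_0)$ constant (possible since $\stab_x$ is finite), continuity of $L_\Phi$ would give $L_\phi(z_0) = \lim_n L_\phi(z_n) = \lim_n L_\Phi(z_n) = L_{\phi_0}(z_0)$, placing $z_0 \in \Fix(\phi_0^{-1}\phi)$ and contradicting $z_0 \in \cW_0$. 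Hence $\Phi \equiv \phi$ on $\cW_0$ for a single $\phi \in \stab_x$, so $L_\Phi = L_\phi$ on the dense subset $\cW_0$; continuity of both sides extends the equality to all of $\cW$.

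The main technical point is the open-closed argument in condition (1), which upgrades an identity on an open set $\cV$ not assumed to contain $x$ to an identity on all of $\cU$; this hinges on uniqueness of sc-smooth sections of the étale source map $s$ near the unit section. Once that is in hand, condition (2) reduces to a pigeonhole on the finite set $\stab_x$ combined with the connectedness hypothesis and a density-plus-continuity extension.
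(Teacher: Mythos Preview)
Your proof is correct and, for condition (2), follows essentially the same route as the paper: restrict to the complement of the fixed loci, use finiteness of $\stab_x$ together with continuity of $L_\Phi$ to see that $\Phi$ is locally constant there, invoke the connectedness hypothesis to pin down a single $\phi$, and extend by density. The paper phrases the local-constancy step as ``each $S_\phi$ is open'', which is the contrapositive of your sequence argument.

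The one genuine difference is condition (1). The paper dispatches it in a single sentence: ``Since the ep-groupoid is effective, the first property of regularity is satisfied for any local uniformizer by Theorem~\ref{thm:natural}.'' You instead give an explicit open--closed argument on $A=\{y\in\cU:\Sigma(\phi,y)=u(y)\}$, using effectiveness at each $y\in\cV$ to show $\cV\subset A$, Hausdorffness of $\bX$ for closedness, and uniqueness of local sections of the \'etale source map for openness. Your version is more careful: since the open set $\cV$ in condition (1) need not contain $x$, effectiveness at $x$ alone does not obviously suffice, and the passage from $\cV$ to all of $\cU$ really does use connectedness of $\cU$ (which you arrange by shrinking). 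The paper's one-liner is best read as an abbreviation of exactly this argument; what your write-up buys is that the role of connectedness and the \'etale property in condition (1) are made explicit rather than left implicit.
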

\begin{proposition}\label{prop:hofer-regular}
	If $(\cX,\bX)$ has the property that for any $x\in \cX_\infty$ and $\phi\in \stab_x$ if $\rD L_\phi: T_x\cX \to T_x\cX$ is the identity map, then $L_\phi$ is the identity map near $x$. Then $(\cX,\bX)$ is regular.
\end{proposition}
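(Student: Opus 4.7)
The plan is to construct, for each $x \in \cX$, a local uniformizer $\cU_x$ around $x$ satisfying the two conditions of Definition \ref{def:regular}. Starting from a uniformizer supplied by Theorem \ref{thm:natural}, I would shrink $\cU_x$ so that both $\cU_x$ and its smooth locus $\cU_x \cap \cX_\infty$ are connected; this is possible because M-polyfold charts are modeled on sc-retracts of open subsets of partial quadrants, and the base open set can be chosen convex. Throughout, the local action $\stab_x \to \diff_\sc(\cU_x)$, $\phi \mapsto L_\phi$, from Theorem \ref{thm:natural} is fixed.

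For condition 1, assume $L_\phi|_{\cV} = \Id_{\cV}$ for some $\phi \in \stab_x$ and nonempty open $\cV \subset \cU_x$. Consider $B := \{y \in \cU_x \cap \cX_\infty : \rD L_\phi|_y = \Id_{T_y\cX}\}$. The hypothesis of the proposition makes $B$ open in $\cU_x \cap \cX_\infty$: at any $y \in B$ it provides a neighborhood of $y$ on which $L_\phi = \Id$, so nearby smooth points also lie in $B$. On the other hand, $B$ is closed in $\cU_x \cap \cX_\infty$ because the tangent map of a sc-smooth map depends continuously (in the shifted sc-topology) on the base point. Since $\cV \cap \cX_\infty \subset B$ is nonempty by density of $\cX_\infty$, the connectedness of $\cU_x \cap \cX_\infty$ forces $B = \cU_x \cap \cX_\infty$. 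Hence $L_\phi = \Id$ on a neighborhood of every smooth point, i.e., on an open dense subset of $\cU_x$; continuity of $L_\phi$ together with Hausdorffness extends the identity to all of $\cU_x$.

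For condition 2, let $\cW \subset \cU_x$ be a connected uniformizer and $\Phi : \cW \to \stab_x$ such that $L_\Phi(z) := L_{\Phi(z)}(z)$ is a sc-diffeomorphism. By condition 1 the relation $\phi \sim \phi'$ iff $L_\phi = L_{\phi'}$ on $\cU_x$ is a well-defined equivalence on $\stab_x$. For each class $[\phi]$ set $A_{[\phi]} := \{z \in \cW : L_\Phi(z) = L_\phi(z)\}$, a closed subset of $\cW$; finitely many such sets cover $\cW$, and the interiors $A_{[\phi]}^\circ$ are pairwise disjoint for distinct classes (else two distinct $L_\phi, L_{\phi'}$ would coincide on a nonempty open set, contradicting condition 1). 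By Baire the union $\bigcup_{[\phi]} A_{[\phi]}^\circ$ is dense in $\cW$; once one shows the union equals $\cW$, connectedness together with disjointness force exactly one $A_{[\phi_0]}^\circ$ to be nonempty and equal to $\cW$, yielding $L_\Phi = L_{\phi_0}$ on $\cW$.

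The main obstacle is verifying that every $z_0 \in \cW$ lies in some $A_{[\phi]}^\circ$. At a smooth $z_0 \in \cW \cap \cX_\infty$, I would consider the sc-diffeomorphism $\tilde L := L_{\Phi(z_0)}^{-1} \circ L_\Phi$, which fixes $z_0$ and whose values near $z_0$ lie in the finite set $\{L_\eta(z) : \eta \in \stab_{z_0}\}$. A pigeonhole argument along sc-smooth curves through $z_0$, combined with the linearity of the derivative, pins down $\rD \tilde L|_{z_0} = \rD L_{\eta_*}|_{z_0}$ for a single equivalence class $[\eta_*]$ of $\stab_{z_0}$. Applying Baire on a small neighborhood of $z_0$ to the closed cover $\{z : \tilde L(z) = L_\eta(z)\}_{\eta \in \stab_{z_0}}$, one finds $\eta$ with $z_0$ in the closure of its interior, and continuity of the derivative forces $\rD L_\eta|_{z_0} = \Id$; the hypothesis applied at the smooth $z_0$ then gives $L_\eta = \Id$ near $z_0$, and condition 1 (already proven for $\cU_x$) extends this to $\eta \sim \mathrm{id}$. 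Iterating this reduction identifies $\tilde L$ with $L_{\eta_*}$ on a neighborhood of $z_0$, whence $z_0 \in A_{[\Phi(z_0) \eta_*]}^\circ$. Non-smooth $z_0$ are handled by density of smooth points and continuity of $L_\Phi$, $L_\phi$, reducing to the smooth case.
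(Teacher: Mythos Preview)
Your treatment of the first regularity condition is correct and is exactly the argument the paper gives: the set $\{y\in(\cU_x)_\infty:\rD L_\phi|_y=\Id\}$ is open by the hypothesis and closed by continuity of the tangent map, hence equals the connected set $(\cU_x)_\infty$ once it is nonempty; density of the smooth points then propagates $L_\phi=\Id$ to all of $\cU_x$.

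For the second condition the paper does not argue directly but cites \cite[Lemma~10.2]{hofer2017polyfold}, so your approach is necessarily independent. The Baire strategy is reasonable, and the curve/pigeonhole argument can indeed be made to yield $\rD\tilde L|_{z_0}=\rD L_{\eta_*}|_{z_0}$ for some $\eta_*$ (via the fact that a vector space over $\R$ is not a finite union of proper subspaces). However, there is a genuine gap. A minor point first: the values of $\tilde L$ lie among $\{L_\psi(z):\psi\in\stab_x\}$, not $\stab_{z_0}$; you must argue separately (by continuity) that each relevant $\psi$ fixes $z_0$ before the hypothesis can be invoked at $z_0$. The serious issue is the step ``continuity of the derivative forces $\rD L_\eta|_{z_0}=\Id$''. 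You have $\tilde L=L_\eta$ on a $\cW_0$-open set $C_\eta^\circ$ whose $\cW_0$-closure contains $z_0$, and you want to conclude $\rD\tilde L|_{z_0}=\rD L_\eta|_{z_0}$. But the tangent map $T\tilde L$ is only $\sc^0$, so passing to the limit at $z_0$ requires a sequence in $C_\eta^\circ$ converging to $z_0$ in at least the $\cW_1$-topology; $\cW_0$-closure does not supply such a sequence. Without this step the hypothesis cannot be applied to $\eta$, and the argument stalls before reaching ``$\tilde L=L_{\eta_*}$ on a neighborhood of $z_0$''. The phrase ``iterating this reduction'' and the reduction for non-smooth $z_0$ are too sketchy to evaluate as written.
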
	
As a simple corollary of Proposition \ref{prop:hofer-regular}, we have the following.
\begin{corollary}\label{cor:regular}
	If the linearized action $\stab_x \to \Hom(T_x\cX,T_x\cX)$ defined by $\phi \mapsto (\rD L_\phi)_x$ is injective for every point $x\in \cX_\infty$, then $(\cX,\bX)$ is a regular ep-groupoid.
\end{corollary}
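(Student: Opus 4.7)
The plan is to derive Corollary \ref{cor:regular} as an immediate consequence of Proposition \ref{prop:hofer-regular}. That proposition already reduces regularity of an ep-groupoid to a pointwise rigidity statement at smooth objects: whenever $\phi \in \stab_x$ satisfies $\rD L_\phi|_x = \Id_{T_x \cX}$, the sc-diffeomorphism $L_\phi$ must be the identity on some neighborhood of $x$. So my only task is to verify this rigidity condition under the injectivity hypothesis of the corollary.

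Fix $x \in \cX_\infty$ and a stabilizer element $\phi \in \stab_x$ with $\rD L_\phi|_x = \Id_{T_x \cX}$. The identity morphism $u(x) \in \stab_x$ satisfies $L_{u(x)} = \Id$ on the local uniformizer supplied by Theorem \ref{thm:natural}, and hence $\rD L_{u(x)}|_x = \Id_{T_x \cX}$ as well. Thus both $\phi$ and $u(x)$ are sent to the identity endomorphism of $T_x\cX$ by the linearized representation $\stab_x \to \Hom(T_x\cX, T_x\cX)$. The injectivity hypothesis of the corollary then forces $\phi = u(x)$ in $\stab_x$, and in particular $L_\phi = L_{u(x)} = \Id$ on a neighborhood of $x$.

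This verifies the hypothesis of Proposition \ref{prop:hofer-regular}, which yields that $(\cX, \bX)$ is regular and completes the proof. I do not anticipate any obstacle: the argument is essentially a tautological reformulation that replaces the geometric rigidity condition of Proposition \ref{prop:hofer-regular} by the algebraically more transparent injectivity of the linearized action of the finite isotropy group, which is a standard condition to verify in practice.
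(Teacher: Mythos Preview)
Your proof is correct and follows exactly the approach the paper intends: the paper states this result as ``a simple corollary of Proposition \ref{prop:hofer-regular}'' without further elaboration, and your argument spells out precisely why the injectivity hypothesis forces $\phi = u(x)$ whenever $\rD L_\phi|_x = \Id$, thereby verifying the hypothesis of that proposition.
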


By Proposition \ref{prop:hofer-regular}, if the ep-groupoid is modeled on finite dimensional manifolds, i.e. an orbifold, then it is automatically regular. However, the dimension jump phenomenon in M-polyfolds may destroy the regular property, see Example \ref{ex:irr-one} and Example \ref{ex:irr-two}.  The reason of introducing regularity is related to the Hausdorffness of the quotient polyfolds, see Remark \ref{rmk:reduced} and Example \ref{ex:three}. The second property is used to get Proposition \ref{prop:lifting}, so that we can control the isotropy of the quotient ep-groupoid, see \eqref{eqn:exact}. For the other consequences of the second property, see Proposition \ref{allnatural} and Theorem \ref{thm:unique}

Proposition \ref{prop:regular} is proven in the appendix. Proposition \ref{prop:hofer-regular} was proven in \cite[Lemma 10.2]{hofer2017polyfold}. In fact, \cite[Lemma 10.2]{hofer2017polyfold} only stated the second property of a regular uniformizer. However the conditions in Proposition \ref{prop:hofer-regular} imply that the set $S:=\{x\in \cU_\infty| \rD L_\phi =\Id\}$ is both open and closed in $\cU_\infty$ for a uniformizer $\cU$. Hence when $\cU_\infty$ is connected, $S\ne \emptyset$ implies $S = \cU_\infty$. Therefore if $L_\phi = \Id$ on some $\cV \subset \cU$,  $L_\phi = \Id$ on $\cU_\infty$. Since $\cU_\infty$ is dense in $\cU$, $L_\phi = \Id$ on $\cU$. This proves the first property of a regular uniformizer.  

In applications \cite{hofer2017applications,hofer2017application,li,wehrheim2012fredholm},
Corollary \ref{cor:regular} holds. From another perspective, all the related ep-groupoids are effective and the fixed set $\Fix(\phi)$ of a nontrivial element $\phi$ has ``infinite codimension". Hence it is easy to verify the connectedness of the complement. Then Proposition \ref{prop:regular} can also be applied.

Unlike the orbifold case, irregular ep-groupoids exist. The following two examples shows that there exist ep-groupoids do not satisfy any one of the two conditions of regularity.
\begin{example}\label{ex:irr-one}
Consider the following retraction from \cite[Example 1.22]{hofer2010sc}. Let $\E$ be the sc-Hilbert space with $\E_m:=H^m_{\delta_m}(\R)$, where $0 =\delta_0 < \delta_1 < \ldots < \delta_m < \ldots$ are the exponential decay weights. Choose a smooth compactly supported positive function $\beta$, such that $\int_{\R} \beta^2=1$. Define $r_t:\E\to  \E$ to be:
$$r_t(f):=\left\{\begin{array}{lr} 0, & t\le 0;\\
\int_{\R} f(x)\beta(x+e^{\frac{1}{t}}) dx\cdot \beta(x+e^{\frac{1}{t}}), & t>0.\end{array}\right.$$ 
It was checked in \cite{hofer2010sc} that $\pi: \R\times \E\rightarrow \R\times \E, (t,f) \mapsto (t,r_t(f))$ defines a $\sc^\infty$ retraction.  Then the retract $\Ima \pi$ defines an M-polyfold $\cM$.  Pictorially, $\cM$ is a closed ray attached to an open half plane at $(0,0)$. There is  a $\Z_2$ action on the M-polyfold $\cM$ induced by multiplying $-1$ on the $\E$ component. The translation groupoid $\Z_2\ltimes \cM$ is an ep-groupoid, but it does not have the regular property at $(0,0)$, since $\phi \ne \Id \in \stab_{(0,0)}$ fixes the half line part, but does not fix the half plane part. 
\end{example}
\begin{example}\label{ex:irr-two}
	Let $\E$ and $r_t$ denote the same objects as in Example \ref{ex:irr-one}. Then we have a sc-retraction $\pi: \R \times \E  \to \R \times \E , (t,f) \mapsto (t, r_{|t|}(f))$ by the same argument in \cite[Example 1.22]{hofer2010sc}. Topologically, the retract $\Ima \pi$ is two open half spaces $(-\infty, 0)\times \R$ and $(0,\infty)\times \R$ connected at the origin. $\Z_2$ can act on $\R \times \E$ by sending $(t,x)$ to $(t,-x)$, such action induces a sc-smooth $\Z_2$ action on $\Ima \pi$. We claim the second property of regularity fails for the translation ep-groupoid $\Z_2 \ltimes \Ima \pi$. Let $\Phi:\Ima \pi \to \Z_2$ be the map such that $\Phi = \Id \in \Z_2$  when $t \le 0$ and $\Phi$ is the nontrivial element of $\Z_2$ when $t > 0$, then $\eta: \Ima \pi \to \Ima \pi, z \mapsto L_{\Phi(z)}(z)$ is sc-smooth. This is because $\eta\circ \pi$ can be expressed as
	$$\eta \circ \pi(t,f)=\left\{\begin{array}{lr} (t,r_{-t}(f)), & t\le 0;\\
	(t, -r_{t}(f)), & t>0,\end{array}\right.$$ 
	this map is sc-smooth by the same argument in \cite{hofer2010sc}. Therefore the second property of regularity does not hold for the translation groupoid $\Z_2\ltimes\Ima \pi$. It is easy to check directly that neither Proposition \ref{prop:regular} nor Proposition \ref{prop:hofer-regular} hold for $\Z_2\ltimes\Ima \pi$.
\end{example}

\subsubsection{Generalized maps between ep-groupoids}
In this subsection, we review the construction of the category of ep-groupoids. A functor $F:(\cX,\bX) \to (\cY,\bY)$ between ep-groupoids is sc-smooth if both the map on objects $F^0:\cX \to \cY$ and the map on morphisms $F^1:\bX \to \bY$ are sc-smooth. When there is no ambiguity, we will also abbreviate $F^0$ and $F^1$ to $F$.
\begin{definition}[{\cite[Definition  10.1]{hofer2017polyfold}}]\label{def:ep-equi}
A sc-smooth functor $F:(\cX,\bX) \to (\cY, \bY)$ is an \textbf{equivalence} provided it has the following properties:
	\begin{itemize}
		\item $F$ is fully faithful;
		\item the induced map $|F|:|\cX|\rightarrow |\cY|$ between the orbit spaces is a homeomorphism;
		\item $F^0$ is a local sc-diffeomorphism. 
	\end{itemize}
\end{definition}
\begin{definition}[{\cite[Definition 10.2]{hofer2017polyfold}}]
	Two sc-smooth functors $F,G$ from $(\cX, \bX)$ to $(\cY,\bY)$ are called \textbf{naturally equivalent} if there exists a sc-smooth map $\tau: \cX \to \bY$ such that $\tau(x) \in \bY$ is a morphism from $F(x)$ to $G(x)$ and the following diagram commutes for any morphism $\phi\in \bX$ from $x$ to $y$,
	$$
	\xymatrix{ 
		F(x) \ar[r]^{F(\phi)} \ar[d]^{\tau(x)} & F(y) \ar[d]^{\tau(y)} \\
		G(x) \ar[r]^{G(\phi)} & G(y).
	}
	$$
	The sc-smooth map $\tau: \cX \to \bY$ is called the natural transformation from $F$ to $G$ and symbolically referred to as $\tau: F \Rightarrow G$.
\end{definition}
It was shown in \cite[\S 10.1]{hofer2017polyfold} that ``naturally equivariant" is an equivalence relation between sc-smooth functors. If two functors $F$ and $G$ are naturally equivalent, then $|F| = |G|$. Hence two naturally equivalent functors should be thought of as the ``same map"  on ep-groupoids. However, if we only use sc-smooth functors (up to natural equivalence) as morphisms between the ep-groupoids, we will not have enough morphisms. In particular, the equivalences in Definition \ref{def:ep-equi} do not necessarily have inverses. Therefore we need to invert equivalence formally, which is the process of localization. Then the following definition was introduced in \cite{hofer2017polyfold}. 

\begin{definition}[{\cite[Definition 10.8]{hofer2017polyfold}}]
	A \textbf{diagram} from ep-groupoid $(\cX, \bX)$ to ep-groupoid $(\cY, \bY)$ is 
	$$(\cX, \bX) \stackrel{F}{\leftarrow} (\cZ, \bZ) \stackrel{\Phi}{\to} (\cY, \bY),$$
	where $F$ is an equivalence and $\Phi$ is a sc-smooth functor. A digram $(\cX, \bX) \stackrel{F'}{\leftarrow} (\cZ', \bZ') \stackrel{\Phi'}{\to} (\cY, \bY)$ is a \textbf{refinement} of $(\cX, \bX) \stackrel{F}{\leftarrow} (\cZ, \bZ) \stackrel{\Phi}{\to} (\cY, \bY)$, if there is an equivalence $H:(\cZ',\bZ') \to (\cZ, \bZ)$ such that $H\circ F \Rightarrow F'$ and $H \circ \Phi \Rightarrow \Phi'$. Equivalently, we have the following 2-commutative diagram of groupoids:
	$$\xymatrix{
		(\cX, \bX) & (\cZ,\bZ) \ar[l]_F \ar[r]^{\phi} \ar@{}[ld]^(.15){}="a"^(.5){}="b" \ar@{=>} "a";"b" \ar@{}[rd]^(.15){}="c"^(.5){}="d" \ar@{=>} "c";"d" & (\cY, \bY) \\
		&  (\cZ', \bZ'). \ar[u]_{H} \ar[lu]^{F'} \ar[ru]_{\Phi'} & 
	}$$ 
Two diagrams $(\cX, \bX) \stackrel{F}{\leftarrow} (\cZ, \bZ) \stackrel{\Phi}{\to} (\cY, \bY)$ and $(\cX, \bX) \stackrel{F'}{\leftarrow} (\cZ', \bZ') \stackrel{\Phi'}{\to} (\cY, \bY)$ are equivalent if they admit a common refinement. A \textbf{generalized map} from $(\cX, \bX)$ to $(\cY,\bY)$ is an equivalence class $[d]$ of a diagram $d$ from $(\cX,\bX)$ to $(\cY,\bY)$.
\end{definition}
Common refinement defines an equivalence relation by \cite[Lemma 10.4]{hofer2017polyfold}. Using generalized maps as the morphisms between ep-groupoids, we form a category $\mathscr{EP}(\bE^{-1})$ \cite[Theorem 10.3]{hofer2017polyfold}, where $\bE$ stands for the class of equivalences between ep-groupoids. We call isomorphisms in $\mathscr{EP}(\bE^{-1})$  generalized isomorphisms. The following important property holds for generalized isomorphisms.
\begin{theorem}[{\cite[Theorem 10.4]{hofer2017polyfold}}]\label{thm:iso}
	A generalized map $[a] = [(\cX, \bX) \stackrel{F}{\leftarrow} (\cZ, \bZ) \stackrel{G}{\to} (\cY, \bY)]$ is a generalized isomorphism iff $G$ is an equivalence. Moreover, the inverse is $[a]^{-1} = [(\cY, \bY) \stackrel{G}{\leftarrow} (\cZ, \bZ) \stackrel{F}{\to} (\cX, \bX)]$.
\end{theorem}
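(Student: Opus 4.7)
The plan is to verify the two directions of the biconditional using a weak fibered product construction for ep-groupoids. For the $(\Leftarrow)$ direction, assume $G$ is an equivalence. Then the reversed diagram $b := (\cY,\bY) \stackrel{G}{\leftarrow} (\cZ,\bZ) \stackrel{F}{\to} (\cX,\bX)$ is itself a valid representative of a generalized map, since its left-pointing arrow is an equivalence. I would show $[b]$ is a two-sided inverse of $[a]$ by explicit computation of the compositions, after which Theorem \ref{thm:iso} follows.

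To compute $[a] \circ [b]$, I would form the weak fibered product $(\cZ,\bZ) \times^w_{(\cY,\bY)} (\cZ,\bZ)$ along $G$ on both sides: its objects are triples $(z_1,\psi,z_2)$ with $\psi\colon G(z_1)\to G(z_2)$ a morphism in $\bY$, and morphisms are pairs in $\bZ$ intertwining $\psi$. Since $G$ is fully faithful, the diagonal functor $\Delta\colon(\cZ,\bZ) \to (\cZ,\bZ) \times^w_{(\cY,\bY)} (\cZ,\bZ)$, $z \mapsto (z,\Id_{G(z)},z)$, is an equivalence and is naturally equivalent to both projections $P_1,P_2$. Precomposing with $\Delta$ then exhibits a common refinement between the composed diagram and the identity diagram $(\cX,\bX) \stackrel{\Id}{\leftarrow} (\cX,\bX) \stackrel{\Id}{\to} (\cX,\bX)$, proving $[a]\circ[b]=\Id_{[(\cX,\bX)]}$. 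The computation of $[b]\circ[a]$ is completely symmetric, using this time that $F$ is an equivalence.

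For the $(\Rightarrow)$ direction, suppose $[a]$ is a generalized isomorphism with inverse represented by $[b'] = [(\cY,\bY)\stackrel{H}{\leftarrow}(\cW,\bW)\stackrel{K}{\to}(\cX,\bX)]$, where $H$ is an equivalence by definition of a diagram. The composition $[a]\circ[b']=\Id_{[(\cY,\bY)]}$ is realized by the weak fibered product of $K$ along $F$ to give a diagram of the form $(\cY,\bY)\stackrel{H\circ Q_1}{\leftarrow}(\cW\times^w_{\cX}\cZ,\cdot)\stackrel{G\circ Q_2}{\to}(\cY,\bY)$, which must admit a common refinement with the identity. Unwinding this refinement yields an equivalence from some $(\cU,\bU)$ into $(\cW\times^w_{\cX}\cZ,\cdot)$ such that the composite with $G\circ Q_2$ is naturally equivalent to the composite with $H\circ Q_1$. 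Since $H$ and $F$ are equivalences and the projections $Q_1, Q_2$ become equivalences after the \'etale weak pullback (via the fully-faithfulness on the relevant side), a two-out-of-three argument for equivalences forces $G$ to be fully faithful, to induce a homeomorphism on orbit spaces, and to be a local sc-diffeomorphism. The asserted formula for $[a]^{-1}$ then follows by uniqueness of inverses in $\mathscr{EP}(\bE^{-1})$.

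The main obstacle is constructing the weak fibered product rigorously inside the category of ep-groupoids, verifying that it inherits the \'etale and proper conditions of Definition \ref{groupoid}. The \'etale property of the projections follows from that of $s$ and $t$ together with the local sc-diffeomorphism part of Definition \ref{def:ep-equi} for the equivalences involved; the proper condition is checked along the lines of Proposition \ref{prop:proper}, using compactness of isotropy groups and the fact that $|G|$ (resp.\ $|H|$) is a homeomorphism of orbit spaces. A secondary but delicate point is ensuring that the composition of generalized maps descends unambiguously to equivalence classes, which requires functoriality of the weak pullback under common refinement and compatibility with natural transformations.
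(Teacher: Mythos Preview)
The paper does not give its own proof of this statement: Theorem~\ref{thm:iso} is quoted verbatim from \cite[Theorem~10.4]{hofer2017polyfold} and used as a black box, so there is no in-paper argument to compare against. Your outline is essentially the standard localization-theoretic proof, and the $(\Leftarrow)$ direction is correct as written.

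There is, however, a genuine gap in your $(\Rightarrow)$ direction. You assert that in the weak fibered product $\cW\times^w_{\cX}\cZ$ (taken along $K$ and $F$) \emph{both} projections $Q_1,Q_2$ are equivalences. This is not so: the standard fact (\cite[Theorem~10.2]{hofer2017polyfold}) is that the projection opposite to an equivalence is again an equivalence. Since $F$ is an equivalence, $Q_1$ is; but $Q_2$ would only be an equivalence if $K$ were, and you have no such hypothesis on $K$. Consequently the two-out-of-three step you invoke---from ``$G\circ Q_2\circ H_2$ is an equivalence'' and ``$Q_2\circ H_2$ is an equivalence'' conclude ``$G$ is an equivalence''---does not go through as stated.

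What survives from $[a]\circ[b']=\Id$ is only that $G\circ Q_2\circ H_2$ is naturally equivalent to the equivalence $H\circ Q_1\circ H_2$, hence itself an equivalence; this gives, for instance, surjectivity of $|G|$ but not the full package. To close the argument you must bring in the second identity $[b']\circ[a]=\Id$ as well and combine the information from both (this is where the proof in \cite{hofer2017polyfold} does real work, amounting to a saturation/2-out-of-6 type statement for the class of equivalences). Your sketch should be amended to reflect that both compositions are needed simultaneously, and the claim about $Q_2$ should be dropped.
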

As a special case, a sc-smooth functor $\Phi:(\cX,\bX) \to (\cY,\bY)$ is an isomorphism in $\mathscr{EP}(\bE^{-1})$ iff $\Phi$ is an equivalence.

\subsubsection{Basics of polyfolds}
\begin{definition}[{\cite[Definition 16.1]{hofer2017polyfold}}]
	 A \textbf{polyfold structure} for a topological space $Z$ is a pair $((\cX, \bX),\alpha)$, where $(\cX, \bX)$ is an ep-groupoid and $\alpha$  is a homeomorphism from the orbits space $|\cX|$ to $Z$.
\end{definition}

\begin{definition}[{\cite[Definition 16.2,16.3]{hofer2017polyfold}}]\label{def:polyfold}
Two polyfold structures $((\cX,\bX),\alpha)$, $((\cY, \bY),\beta)$ are equivalent, if there exist a third ep-groupoid $(\cZ, \bZ)$ and equivalences $(\cX, \bX)\stackrel{F}{\leftarrow} (\cZ, \bZ )\stackrel{G}{\rightarrow}(\cY, \bY)$, such that $\alpha\circ |F|=\beta\circ |G|$. A \textbf{polyfold} $(Z,c)$ is a paracompact space $Z$ with an equivalence class of polyfold structures $c$. We will suppress $c$ when there is no confusion. A polyfold is \textbf{regular} resp. \textbf{tame} resp. \textbf{infinite dimensional} if it has a polyfold structure $((\cX,\bX), \alpha)$ such that $(\cX,\bX)$ is regular resp. tame resp. infinite dimensional \footnote{Hence all polyfold structures are regular resp. tame (\cite[Lemma 16.2]{hofer2017polyfold}) resp. infinite dimensional.}.
\end{definition}

The polyfold definition in \cite[Definition 16.3]{hofer2017polyfold} does not require paracompactness. Since partition of unit is very important in application, we only work with paracompact polyfold\footnote{The partition of unity on polyfold requires more than paracompactness, see \cite[Theoorem 7.4]{hofer2017polyfold}. If the M-polyfold structures on the object space is built on sc-Hilbert space, then we will have sc-smooth partition unity, see \cite[Theorem 5.12, Corollary 5.2, Theorem 7.4]{hofer2017polyfold}. Also note that the construction in Lemma \ref{lemma:gamma} is closed in the sc-Hilbert space category.}.
By \cite[Proposition 16.1]{hofer2017polyfold}, a topological space admitting a polyfold structure is locally metrizable and regular Hausdorff. Then being paracompact implies that the polyfold is also metrizable. 

\begin{remark}\label{rmk:levelshift}
	Given a polyfold $Z$ with a polyfold stricture $((\cX,\bX),\alpha)$, then  by \cite[Proposition 7.12]{hofer2017polyfold}, $Z^k:=\alpha(|\cX^k|)\subset Z$ is again a paracompact polyfold\footnote{$Z^k$ in not endowed with the subset topology, but the topology from $|\cX^k|$.} with polyfold structure $((\cX^k,\bX^k),\alpha)$ for any $k\in \N$.
\end{remark}

To get a \textbf{category of polyfolds $\bm{\mathscr{P}}$}, we need to define the morphisms between polyfolds. Given two polyfolds $(Z,c)$ and $(W,d)$, we can choose two polyfold structures $((\cX,\bX),\alpha)$ and $((\cY,\bY),\beta)$ respectively. Then we define a \textbf{morphism between those two polyfold structures} is a pair $(\mathfrak{f}, f)$, where $\mathfrak{f}$ is a generalized map from $(\cX, \bX)$ to $(\cY,\bY)$ and $f:Z\to W$, such that $ f \circ \alpha = \beta\circ|\mathfrak{f}|$. Suppose we have another pair of polyfold structures $((\cX',\bX'),\alpha')$ and $((\cY',\bY'),\beta')$ respectively and a morphism $(\mathfrak{f}', f')$ between them. Then $(\mathfrak{f}, f)$ is equivalent to $(\mathfrak{f}', f')$ if we have the following commutative diagram:
$$\xymatrix{
((\cX,\bX),\alpha) \ar[d]^{(\mathfrak{h}, \Id_Z)} \ar[r]^{(\mathfrak{f}, f)} & ((\cY,\bY),\beta)\ar[d]^{(\mathfrak{g}, \Id_W)} \\
((\cX',\bX'),\alpha')\ar[r]^{(\mathfrak{f}',f')} & ((\cY',\bY'),\beta'), 
}$$
where $\mathfrak{h}$ and $\mathfrak{g}$ are generalized isomorphisms and the commutativity is in the sense of $\mathfrak{g}\circ \mathfrak{f} = \mathfrak{f}'\circ \mathfrak{h}$ as generalized maps and $f = f'$. A morphism between two polyfolds is an equivalence class of morphisms between polyfold structures. 

\begin{definition}[{\cite[Definition 16.8]{hofer2017polyfold}}]
The category of polyfolds $\mathscr{P}$ is the category whose objects are polyfolds (Definition \ref{def:polyfold}) and morphisms are equivalence classes of morphisms between polyfold structures defined above. Let $Z,W$ be two polyfolds, we use 
\begin{equation}\label{eqn:polymor}
f:Z\stackrel{\mathscr{P}}{\to} W
\end{equation}
to emphasize that $f$ is morphism in the polyfold category $\mathscr{P}$. And we use $|f|:Z\to W$ to denote the underlying topological map.
\end{definition}

To simplify notations, \textbf{we will abbreviate $\bm{\alpha}$ and write $\bm{Z = |\cX|}$ from now on.} Then a morphism between polyfolds $Z = |\cX|$ and $W = |\cY|$ is represented an equivalence class of a generalized map $\mathfrak{f}:(\cX,\bX) \to (\cY,\bY)$.

\subsection{Group actions on polyfolds} Now we give a  definition of a $\sc^\infty$ group action on a polyfold that applies e.g. to Gromov-Witten polyfolds in the equivariant setting and Hamiltonian-Floer cohomology polyfolds in the autonomous setting. Let $G$ be a group, we use $BG$ to denote the following category:
$$\obj(BG) = \{\star\},\qquad \mor(BG) = G.$$
\begin{definition}\label{def:preaction}
	Let $Z$ be a polyfold and $G$ be a Lie group. A \textbf{$\bm{G}$-action} on $Z$ is a functor $\mathfrak{P}:BG \to \mathscr{P}$ and $\mathfrak{P}$ sends the unique object of $BG$ to $Z$.
\end{definition}	
Definition \ref{def:preaction} only describes the group property, i.e. for every $g,h\in G$, we have two polyfold morphisms $\mathfrak{P}(g),\mathfrak{P}(h):Z\to Z$ such that $\mathfrak{P}(g)\circ \mathfrak{P}(h) = \mathfrak{P}(g\circ h)$ as polyfold morphisms. To describe the sc-smoothness of an action, we need the following local condition.
\begin{definition}\label{def:action}
	Let $Z$ be a polyfold and $G$ be a Lie group. A \textbf{$\bm{\sc^\infty}$ $\bm{G}$-action} on $Z$ is a $G$-action $\mathfrak{P}$ with the following extra property: For all $g\in G$ and $p,q \in Z$ such that $|\mathfrak{P}(g)|(p) = q\in Z$, there exist a neighborhood $U$ of $g$, two equivalent polyfold structures $(\cX,\bX)$ and $(\cY, \bY)$, two points $x\in \cX, y\in\cY$ with $|x| = p$, $|y| = q$ and a local uniformizer $\cU\subset \cX$ around $x$.  So that the $U$-family of polyfold maps $U \to \mor_\mathscr{P}(|\cU|,Z)$ defined by sending $h$ to the restriction $\mathfrak{P}(h)|_{|\cU|}$ is represented by a sc-smooth functor
	$$\rho_{g,x}: U\times (\stab_x \ltimes \cU )\to (\cY,\bY),$$
	with $\rho_{g,x}(g,x)=y$.
\end{definition}
Hence the sc-smoothness of an action is characterized by the property that the functor $\mathfrak{P}$ locally (both in the group and in the polyfold)  can be expressed as sc-smooth functors. As an easy corollary of Definition \ref{def:action}, the underlying map $\rho:=|\mathfrak{P}|:G\times Z \to Z$ on the orbit space is a continuous action, where the continuity follows from the continuity of $\rho|_{U\times |\cU|} = |\rho_{g,x}|$.

\begin{remark}
Although the continuous action  $\rho:=|\mathfrak{P}|$ is determined by $\mathfrak{P}$, we will use $(\rho,\mathfrak{P})$ to denote a sc-smooth group action for easy reference.
\end{remark}

Being a sc-smooth group action imposes strong properties on morphisms $\mathfrak{P}(g)$. Indeed, the following remarks shows that the local representative $\rho_{g,x}$ in Definition \ref{def:action} defines a family of fully-faithful functors, which are also embedding of open sets on the object level.
\begin{remark}\label{rmk:emb}
Assume $G$ acts on $Z$ sc-smoothly by $(\rho,\mathfrak{P})$. Let $g \in G$ and $\rho_{g,x}:U\times (\stab_x\ltimes \cU) \to (\cY,\bY)$ be a local sc-smooth representation as in Definition \ref{def:action}. Since for each $h\in G$, the morphism $\mathfrak{P}(h)$ is an isomorphism of polyfold $Z$, i.e. $\mathfrak{P}(h)$ is represented by generalized isomorphisms. Then we have the local representative $\rho_{g,x}(h,\cdot)$ is fully faithful by Theorem \ref{thm:iso} for $h\in U$. By Proposition \ref{prop:uniform}, when $\cU$ is connected and regular, on the object level $\rho_{g,x}(h,\cdot):\cU \to \cY$ is a sc-diffeomorphism onto the open image set for every $h\in U$. 
\end{remark}

The following proposition provides a class of sc-smooth group actions.
\begin{proposition}\label{prop:action}
Let $\mathfrak{p}:G\times Z \stackrel{\mathscr{P}}{\to} Z$ be a polyfold morphism such that 
\begin{enumerate}
	\item $\mathfrak{p}(\Id, \cdot) = \Id_Z$ as polyfold morphism;
	\item $\mathfrak{p}(gh,\cdot) = \mathfrak{p}(g,\cdot)\circ \mathfrak{p}(h,\cdot)$ as polyfold morphism.
\end{enumerate} 
Then $\mathfrak{p}$ induces a $G$-action $(\rho, \mathfrak{P})$, where $\rho = |\mathfrak{p}|$ and $\mathfrak{P}(g) = \mathfrak{p}(g,\cdot)$. 
\end{proposition}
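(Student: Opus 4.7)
The plan is to verify Definition \ref{def:action} directly. The functor $\mathfrak{P}:BG\to\mathscr{P}$ with $\mathfrak{P}(g)=\mathfrak{p}(g,\cdot)$ is a well-defined functor into the polyfold category by the two assumed identities on $\mathfrak{p}$, so everything reduces to constructing, near each $g\in G$ and each pair $p,q\in Z$ with $|\mathfrak{p}|(g,p)=q$, a sc-smooth functorial representative $\rho_{g,x}$ of the restricted family $h\mapsto\mathfrak{P}(h)\big|_{|\cU|}$.

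Fix such $g,p,q$ and any polyfold structure $(\cX,\bX)$ for $Z$ with a representative $x\in\cX$ of $p$. Regard $G$ as the trivial ep-groupoid $(G,G)$ with only identity morphisms, so that $G\times Z$ is a polyfold with structure $(G\times\cX,\, G\times\bX)$. After refinement, the polyfold morphism $\mathfrak{p}$ is represented by a diagram
$$(G\times\cX,\, G\times\bX)\ \stackrel{F}{\longleftarrow}\ (\cZ,\bZ)\ \stackrel{\Phi}{\longrightarrow}\ (\cY,\bY),$$
where $(\cY,\bY)$ is a polyfold structure for $Z$ and $F$ is an equivalence. Because $|F|$ is a homeomorphism and morphisms of $G\times\bX$ preserve the $G$-coordinate, there is $z\in\cZ$ with $F^0(z)=(g,x')$ for some $x'$ in the $\bX$-orbit of $x$; after replacing $x$ by this $x'$ (which still represents $p$) we may assume $F^0(z)=(g,x)$.

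Now I will localize. Since $F$ is an equivalence, $F^0$ is a local sc-diffeomorphism, and combining full faithfulness with the fact that the source maps of $G\times\bX$ and $\bZ$ are both local sc-diffeomorphisms yields that $F^1$ is also a local sc-diffeomorphism. Choose a local uniformizer $\cU\subset\cX$ around $x$ as in Theorem \ref{thm:natural} and a neighborhood $U\subset G$ of $g$ together with a neighborhood $\cN\subset\cZ$ of $z$ so that $F^0$ restricts to a sc-diffeomorphism $\cN\to U\times\cU$; by shrinking, the corresponding restriction of $F^1$ becomes a sc-diffeomorphism $\bN\to U\times(\stab_x\times\cU)$, where morphisms of $G\times(\stab_x\ltimes\cU)$ are identified with their image in $G\times\bX$ via $\Sigma$ from Theorem \ref{thm:natural}. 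I then define
$$\rho_{g,x}^0:=\Phi^0\circ (F^0|_{\cN})^{-1}\colon U\times\cU\longrightarrow\cY,\qquad \rho_{g,x}^1:=\Phi^1\circ (F^1|_{\bN})^{-1},$$
precomposed with the identification just described. Functoriality and sc-smoothness follow from the corresponding properties of $\Phi$ and of the local inverses, and $\rho_{g,x}(g,x)=\Phi^0(z)$ is a representative of $q$, as required.

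The final verification is that the diagram $U\times(\stab_x\ltimes\cU)\xleftarrow{\operatorname{id}} U\times(\stab_x\ltimes\cU)\xrightarrow{\rho_{g,x}}(\cY,\bY)$ represents the same polyfold morphism as the pullback of $\mathfrak{p}$ along $U\times|\cU|\hookrightarrow G\times Z$; this is immediate once one observes that the inclusion of $U\times(\stab_x\ltimes\cU)$ into $(\cZ,\bZ)$ given by $(F^0|_{\cN})^{-1}$ and $(F^1|_{\bN})^{-1}$ is an equivalence onto its full image subgroupoid, so that the restricted diagram and the new diagram admit an obvious common refinement. The one genuinely non-bookkeeping step is the simultaneous object- and morphism-level localization: producing $U,\cN,\cU,\bN$ whose $F$-images fit together compatibly so that $\rho_{g,x}$ is truly a functor and not just an object map paired with a sc-smooth family of set-theoretic morphism maps. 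This uses that $F$ is an equivalence in an essential way and relies on Theorem \ref{thm:natural} to identify the morphism level locally with a translation groupoid; I expect this compatibility of the two levels of localization to be the only mildly delicate part of the argument.
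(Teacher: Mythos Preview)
Your proof is correct and follows essentially the same approach as the paper: represent $\mathfrak{p}$ by a diagram $G\times(\cX,\bX)\stackrel{F}{\leftarrow}(\cZ,\bZ)\stackrel{\Phi}{\to}(\cY,\bY)$, use the equivalence $F$ to identify a local uniformizer of $\cZ$ around a preimage of $(g,x)$ with $U\times(\stab_x\ltimes\cU)$, and set $\rho_{g,x}=\Phi\circ(F|_{\cN})^{-1}$. The paper compresses your localization argument into the single sentence ``Since $F$ is an equivalence, for small enough $U$ and $\cU$, we can assume a local uniformizer of $\cW$ also in the form of $U\times(\stab_x\ltimes\cU)$,'' whereas you spell out the object- and morphism-level compatibility explicitly; your added care is justified but not a different idea.
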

\begin{proof}
	It suffices to prove the existence of sc-smooth local representations of $\mathfrak{P}$. Note that $\mathfrak{p}$ can be represented the following digram $G\times (\cX,\bX) \stackrel{F}{\leftarrow} (\cW,\bW) \stackrel{\Phi}{\to}(\cY,\bY)$ with $F$ an equivalence. For every $(g,x)\in G\times \cX$, there are local uniformizers of $G\times (\cX,\bX)$ around $(g,x)$ in the form of $U\times (\stab_x\ltimes \cU)$, where $\cU$ is a local uniformizer for $(\cX,\bX)$ around $x$ and $U\subset G$ is a neighborhood of $g$. Since $F$ is an equivalence, for small enough $U$ and $\cU$, we can assume a local uniformizer of $\cW$ also in the form of $U\times (\stab_x\ltimes \cU)$. Then $\mathfrak{P}$ is locally represented by $\Phi|_{U\times (\stab_x\ltimes \cU)}$. 
\end{proof}

\begin{remark}
	It is likely that Proposition \ref{prop:action} is equivalent to Definition \ref{def:action}. In principle, Definition \ref{def:action} is easier to check. However, Proposition \ref{prop:action} is often the case in applications.
\end{remark}

\begin{example}
	Assume $(M,\omega)$ be closed symplectic manifold and let $J$ be a $\omega$-compatible almost structure. Suppose a compact group $G$ acts on $M$ preserving $J$, then $G$ acts on the Gromov-Witten polyfolds $Z_{A,g,m}$ in \cite[Theorem 1.7]{hofer2017applications}. On the topology side, the action of $g\in G$ is sending a stable curve $[(S,j,M,D,u)]$ \cite[Definition 1.4]{hofer2017applications} to $[(S,j,M,D,g\cdot u)]$. The sc-Fredholm section induced by the Cauchy-Riemann operator \cite[Theorem 1.11]{hofer2017applications} is $G$-equivariant, since $G$ perverse the almost complex structure $J$.  A detailed verification will be carried out in \cite{equi}.
\end{example}

Another major example of group action is the $S^1$-action on the Hamiltonian-Floer cohomology polyfolds when the Hamiltonian is autonomous, see Section \ref{s6}.


\subsection{Properties of group actions}\label{subsec:lift}
In this subsection, we study the properties of group actions on polyfolds. In the following,  we fix the following notations.
\begin{itemize}
	\item $Z$ is a polyfold with a sc-smooth $G$-action $(\rho,\mathfrak{P})$.
	\item Let $g\in G$ and $x_a\in \cX_a$, $\rho_{g,x_a}:U_a\times(\stab_{x_a}\ltimes \cU_a) \to (\cX_b,\bX_b)$ is a local representative in Definition \ref{def:action} for two equivalent polyfold structures $(\cX_a,\bX_a),(\cX_b,\bX_b)$ of $Z$, where $U_a\subset G$ is an open neighborhood of $g$ and $\cU_a\subset \cX_a$ is a local uniformizer around $x_a$. 
\end{itemize}
The local representative $\rho_{g,x_a}$ is not very useful from the perspective of the group properties of the action, since its domain and codomain are in different polyfold structures. In the following, we will show how to fix it and get a nice family of local representatives that have group properties, see Proposition \ref{prop:property}. We first set up more notations.
\begin{itemize}
	\item $(\cX,\bX)$ is a  \textbf{fixed} polyfold structure for $Z$ throughout this subsection.
	\item $x,y\in \cX$ are two objects such that $\rho(g,|x|) = |y|$.
\end{itemize}
Since $(\cX,\bX), (\cX_a,\bX_a),(\cX_b,\bX_b)$ are equivalent polyfold structures of $Z$, then we have the following sequence of equivalences of ep-groupoids, note that we suppress the morphism spaces since there are no ambiguities,
\begin{equation}\label{gammadia}
\xymatrix {& & \cX_a\ar[r]^{\rho_{g,x_a}(g,\cdot)}  & \cX_b  &\ar[l]_{F_b} \cW_b \ar[r]^{G_b} &  \cX,\\
\cX & \ar[l]_{F_a} \cW_a \ar[r]^{G_a} &  \cU_a \ar[u]_{\subset} & & &
}\end{equation}
where $F_a,G_a, F_b, G_b$ are all equivalences and $|G_b|\circ |F_b|^{-1}\circ |\rho_{g,x_a}(g,\cdot)|\circ |G_a| \circ |F_a|^{-1} = \rho(g, \cdot)$, i.e. over the orbit space $|\cU_a|\subset Z$ the composition of the sequence is the topological action $\rho(g,\cdot)$. Since $F_a,G_a, F_b, G_b$ are all equivalences, there exist $w_a\in \cW_a, w_b\in \cW_b, \phi, \psi\in \bX, \delta \in \bX_a, \eta\in \bX_b$, such that 
$$\begin{array}{rclrcl}
F_a(w_a)&=&\phi(x), & \delta(G_a(w_a)) &=& x_a, \\
\eta(\rho_{g,x_a}(g,x_a))&=&F_b(w_b),& G_b(w_b)&=&\psi(y).
\end{array}$$
That is 
\begin{equation}\xymatrix{
	\cX  & \ar[l]_{F_a}\cW_a\ar[r]^{G_a}&  \cX_a\ar[r]^{\rho_{g,x_a}(g,\cdot)} & \cX_b & \ar[l]_{F_b}\cW_b\ar[r]^{G_b} & \cX	\\	
	x\ar[d]_{\phi} & & x_a\ar[r] & \rho_{g,x_a}(g,x_a)\ar[d]_{\eta} & & y\ar[d]_{\psi}\\
	F_a(w_a) & \ar[l] w_a \ar[r] & G_a(w_a) \ar[u]^{\delta} & F_b(w_b) & \ar[l] w_b\ar[r] & G_b(w_b) & & \\
}  \nonumber \end{equation}
Here $\rho_{g,x_a}$ is only partially defined on $\cX_a$. The first row here just indicates where the elements are from and the directions of equivalences. We point out that arrows in the lower half of the diagram have two different meanings, the vertical arrows are morphisms in one ep-groupoid and the horizontal arrows are the maps on objects of equivalences. Let $F_{a,w_a}$ and $F_{b, w_b}$ denote the local sc-diffeomorphisms $F_a$ and $F_b$ near $w_a$ in $\cW_a$ and $w_b$ in $\cW_b$ respectively. With these data, we can define a map for $h$ close to $g$
\begin{equation}\label{eqn:gamma}
\Gamma(h,\cdot): z\mapsto L_\psi^{-1}\circ G_b\circ F_{b,w_b}^{-1}\circ L_\eta \circ \rho_{g,x_a}(h,\cdot)\circ L_\delta \circ G_a \circ F_{a,w_a}^{-1}\circ L_\phi(z)\end{equation}
where $z$ is close to $x$.

\begin{proposition}\label{prop:emb}
Following the notation above, there exists an open neighborhoods $V\subset U\subset G$ of $g$ and $\cV\subset  \cX$ of $x$, such that for every $h\in V$, $\Gamma(h,\cdot)$ in \eqref{eqn:gamma} is defined on $\cV$ and $\Gamma(h,\cdot)$ is sc-diffeomorphism from $\cV$ to an open set of $\cX$.
\end{proposition}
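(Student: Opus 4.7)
The plan is to verify that $\Gamma(h,\cdot)$ is, for every $h$ in a sufficiently small neighborhood of $g$, a composition of local sc-diffeomorphisms whose domains can be arranged to match up. At $h=g$ and $z=x$ the composition traces through the chain
\begin{equation*}
x \xrightarrow{L_\phi} F_a(w_a) \xrightarrow{F_{a,w_a}^{-1}} w_a \xrightarrow{G_a} G_a(w_a) \xrightarrow{L_\delta} x_a \xrightarrow{\rho_{g,x_a}(g,\cdot)} \rho_{g,x_a}(g,x_a) \xrightarrow{L_\eta} F_b(w_b) \xrightarrow{F_{b,w_b}^{-1}} w_b \xrightarrow{G_b} \psi(y) \xrightarrow{L_\psi^{-1}} y.
\end{equation*}
First I would record that each individual piece is a local sc-diffeomorphism near the relevant point. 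The maps $L_\phi, L_\delta, L_\eta, L_\psi^{-1}$ are sc-diffeomorphisms of open neighborhoods by Definition \ref{def:lphi}. The maps $F_{a,w_a}^{-1}, G_a, F_{b,w_b}^{-1}, G_b$ are local sc-diffeomorphisms on the object level since $F_a, G_a, F_b, G_b$ are equivalences of ep-groupoids and hence, by Definition \ref{def:ep-equi}, étale on objects. Finally, after (if necessary) shrinking $\cU_a$ to a connected regular local uniformizer around $x_a$, Remark \ref{rmk:emb} ensures that for every fixed $h\in U_a$ the functor $\rho_{g,x_a}(h,\cdot)$ is a sc-diffeomorphism from $\cU_a$ onto an open subset of $\cX_b$.

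Next I would shrink the target neighborhoods backwards along the composition. Starting from a small open neighborhood of $y$ in $\cX$, successive pullbacks under the continuous maps $L_\psi^{-1}, G_b, F_{b,w_b}^{-1}, L_\eta$ yield an open neighborhood $\cN_{\mathrm{out}}\subset \cX_b$ of $\rho_{g,x_a}(g,x_a)$ on which the tail of the composition is defined and a sc-diffeomorphism onto its image. Similarly, pullbacks under $L_\phi, F_{a,w_a}^{-1}, G_a, L_\delta$ produce an open neighborhood $\cN_{\mathrm{in}}\subset \cX$ of $x$ that is mapped sc-diffeomorphically into $\cU_a$, landing in an open neighborhood $\cN_a\subset\cU_a$ of $x_a$.

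Then I would use the sc-smoothness — and in particular the joint continuity — of $\rho_{g,x_a}:U_a\times\cU_a\to\cX_b$ to choose an open neighborhood $V\subset U_a$ of $g$ and (possibly after shrinking $\cN_a$) guarantee that $\rho_{g,x_a}(V\times \cN_a)\subset \cN_{\mathrm{out}}$. Setting $\cV\subset \cN_{\mathrm{in}}$ to be the preimage of $\cN_a$ under the opening segment $L_\delta\circ G_a\circ F_{a,w_a}^{-1}\circ L_\phi$, the full composition $\Gamma(h,\cdot)$ is then defined on all of $\cV$ for every $h\in V$. Being a composition of local sc-diffeomorphisms, $\Gamma(h,\cdot)$ is itself a sc-diffeomorphism from $\cV$ onto an open subset of $\cX$, which proves the proposition.

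The main obstacle is purely bookkeeping: making sure the eight different local neighborhoods can be arranged compatibly while allowing the parameter $h$ to vary. The one non-formal ingredient is that $\rho_{g,x_a}(h,\cdot)$ is a sc-diffeomorphism on objects for every fixed $h$, which requires the regularity of the uniformizer $\cU_a$ invoked via Remark \ref{rmk:emb}; this is harmless since in the regular-polyfold setting of Theorem \ref{thm:main} one can always take $\cU_a$ to be a connected regular local uniformizer around $x_a$.
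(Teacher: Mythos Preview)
Your proposal is correct and follows essentially the same approach as the paper's proof: both argue that $\Gamma(h,\cdot)$ is a composition of local sc-diffeomorphisms (the $L_\bullet$'s, the object-level maps of the equivalences, and $\rho_{g,x_a}(h,\cdot)$ via Remark~\ref{rmk:emb}), hence is itself a local sc-diffeomorphism. The paper's proof is a single sentence to this effect, whereas you have spelled out the neighborhood bookkeeping that the paper leaves implicit.
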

\begin{proof}
	Since  $L_{\psi}^{-1}, G_b, F^{-1}_{b,w_b},L_\eta, L_\delta, G_a, F^{-1}_{a,w_a}, L_{\phi}$ in \eqref{eqn:gamma} are all local sc-diffeomorphisms and $\rho_{g,x_a}(h,\cdot)$ is a sc-diffeomorphism  from a fixed open set onto the open image set for any $h$ close to $g$ by Remark \ref{rmk:emb}, then the claim holds.
\end{proof}

 Since on the orbit space, we have $|\Gamma(h,\cdot)| = \rho(h,\cdot)$. We call $\Gamma$ a \textbf{local lift of the action at $\bm{(g,x,y)}$}. Given $(g,x,y)\in G\times \cX\times \cX$, $\Gamma$ is not unique. It depends on choices of $w_a,w_b,\phi,\psi,\delta, \eta$ as well as the polyfold structures $\cW_a,\cW_b,\cX_a,\cX_b$ in the construction of \eqref{eqn:gamma}.
 
 \begin{definition}\label{def:germ}
 	Let $\cU,\cV$ be two M-polyfolds and $f:\cU \supset \cO \to \cV$  a sc-smooth map such that $f(p) = q$ for $p\in \cU,q\in \cV$. We use $[f]_p$ to denote the germ of $f$ at $p$. Then $[f]_p = [g]_p$ if there exists neighborhood $\cO'\subset \cU$ of $p$ such that $f|_{\cO'} = g|_{\cO'}$.
 \end{definition}
  
\begin{definition}\label{def:liftset}
	Let $Z$ be a polyfold with a sc-smooth $G$-action $(\rho, \mathfrak{P})$ and $(\cX,\bX)$ a polyfold structure of $Z=|\cX|$. Let $x,y\in \cX, g\in G$ such that $\rho(g,|x|) = |y|$.  We define the \textbf{local lifts set} at $(g,x,y)$ to be
	$$L_{\cX}(g,x,y) := \{[\Gamma]_{(g,x)}| \Gamma \text{ is a local lift of the action at } (g,x,y)\}.$$	
\end{definition}

Although there are infinitely many different choices involved in the construction of a local lift $\Gamma$, the following proposition asserts that the local lifts set $L_{\cX}(g,x,y)$ in Definition \ref{def:liftset} is a finite set for regular polyfolds.

\begin{proposition}\label{prop:lifting}
	 Let $Z$ be a regular polyfold with a sc-smooth $G$-action $(\rho, \mathfrak{P})$ and $(\cX,\bX)$ a polyfold structure of $Z=|\cX|$. Then for $x,y\in \cX$ and $g\in G$ with $\rho(g,|x|) = |y|$, the isotropy $\stab_x$ acts on $L_{\cX}(g,x,y)$ by pre-composing the local sc-diffeomorphism $L_\phi$ for $\phi \in \stab_x$, i.e. 
	 $$(\phi, [\Gamma]_{(g,x)}) \mapsto [\Gamma\circ L_\phi]_{(g,x)},$$
	 and this action is transitive. The isotropy $\stab_y$ also acts on $L_{\cX}(g,x,y)$ by post-composing $L_\phi$ for $\phi\in \stab_y$, i.e. 
	$$(\phi, [\Gamma]_{(g,x)}) \mapsto [L_\phi\circ \Gamma]_{(g,x)},$$
	 this action is also transitive. 
\end{proposition}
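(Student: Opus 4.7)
First I would verify that both prescribed operations preserve $L_\cX(g,x,y)$. Inspecting \eqref{eqn:gamma}, pre-composition by $L_{\phi_0}$ with $\phi_0\in\stab_x$ turns the innermost factor $L_\phi$ into $L_\phi\circ L_{\phi_0}=L_{\phi\phi_0}$ by Corollary~\ref{coro:natural}, so the resulting map is again given by \eqref{eqn:gamma} but with the morphism $\phi\in\bX$ replaced by $\phi\phi_0\in\bX$ (still a morphism $x\to F_a(w_a)$). Similarly, post-composition by $L_{\phi_1}$ with $\phi_1\in\stab_y$ turns the outermost factor $L_\psi^{-1}$ into $L_{\phi_1}\circ L_\psi^{-1}=L_{\psi\phi_1^{-1}}^{-1}$, replacing $\psi:y\to G_b(w_b)$ by $\psi\phi_1^{-1}$. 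Both actions therefore land back in $L_\cX(g,x,y)$.

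For transitivity of the $\stab_x$-action, pick $\Gamma_1,\Gamma_2\in L_\cX(g,x,y)$; by Proposition~\ref{prop:emb} each $\Gamma_i(h,\cdot)$ is a local sc-diffeomorphism, so the parametrized inverse yields a well-defined sc-smooth map
\[
\tau(h,z):=\Gamma_1(h,\cdot)^{-1}(\Gamma_2(h,z))
\]
on some open neighborhood $V\times\cV$ of $(g,x)$. The identities $|\Gamma_1|=|\Gamma_2|=\rho$ force $\tau$ to descend to the identity on orbits, so $\tau(h,z)$ always lies in the orbit of $z$. I would shrink so that $V$ is connected (possible since $G$ is locally connected) and $\tau(V\times\cV)$ is contained in a regular local uniformizer $\cU_x$ around $x$. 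By Theorem~\ref{thm:natural}, the orbit of any fixed $z\in\cU_x$ inside $\cU_x$ equals the finite discrete set $\{L_\psi(z):\psi\in\stab_x\}$, so the continuous map $h\mapsto\tau(h,z)$ from the connected set $V$ into this discrete set must be constant. Hence $\tau(h,z)=\tau(g,z)=:\sigma(z)$ throughout $V\times\cV$, reducing the problem to analyzing the single sc-diffeomorphism $\sigma:\cV\to\cU_x$.

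After shrinking $\cV$ so that it is a connected uniformizer around $x$, the orbit description assigns to each $z\in\cV$ some $\Phi(z)\in\stab_x$ with $\sigma(z)=L_{\Phi(z)}(z)$, and since $L_\Phi=\sigma$ is itself a sc-diffeomorphism, the second axiom of a regular local uniformizer (Definition~\ref{def:regular}) yields a single $\phi\in\stab_x$ with $\sigma=L_\phi$ on $\cV$. Combining with the previous paragraph gives $\Gamma_2(h,z)=\Gamma_1(h,L_\phi(z))$ on $V\times\cV$, i.e.\ $[\Gamma_2]_{(g,x)}=[\Gamma_1\circ L_\phi]_{(g,x)}$. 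Transitivity of the $\stab_y$-action follows by the mirror argument applied to $\eta(h,w):=\Gamma_2(h,\Gamma_1(h,\cdot)^{-1}(w))$ on a neighborhood of $(g,y)$: the same discrete-orbit and regularity arguments in a uniformizer around $y$ produce $\phi\in\stab_y$ with $\eta(h,w)=L_\phi(w)$, which becomes $\Gamma_2(h,z)=L_\phi(\Gamma_1(h,z))$ after substituting $w=\Gamma_1(h,z)$. The main obstacle is exactly this last identification $\sigma=L_\phi$: nothing forces the set-theoretic map $\Phi$ to be locally constant, which is precisely why the second clause of regularity is needed, and as Examples~\ref{ex:irr-one}--\ref{ex:irr-two} illustrate, the identification genuinely fails for irregular ep-groupoids.
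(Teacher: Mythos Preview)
Your argument is correct and takes a more direct route than the paper. The paper performs an explicit three-case analysis—treating separately local lifts that differ in (i) the auxiliary choices $w_a, w_b, \phi, \psi, \delta, \eta$, (ii) the intermediate polyfold structures $\cW_a, \cW_b$, and (iii) the local representation $\rho_{g,x_a}$—and in each case carries out a lengthy diagram chase via \eqref{commute}--\eqref{nearby} together with the auxiliary Lemma~\ref{lemma:localnatural}, producing explicit formulas for the relating $\phi\in\stab_x$ in terms of the construction data. You instead ignore the internal structure of \eqref{eqn:gamma} and work only with the abstract properties of a local lift (each $\Gamma_i(h,\cdot)$ is a sc-diffeomorphism onto its image covering $\rho(h,\cdot)$), forming $\tau=\Gamma_1^{-1}\circ\Gamma_2$ and applying the second regularity axiom once; this is shorter and more conceptual, while the paper's approach buys the explicit dependence of $\phi$ on the choices and isolates Lemma~\ref{lemma:localnatural} as a reusable tool. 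Two points you gloss over deserve a sentence more: the well-definedness of $\tau$ on a full product $V\times\cV$ (i.e.\ that $\Gamma_2(h,z)$ lies in $\Gamma_1(h,\cV)$ \emph{uniformly} in $h$) needs Proposition~\ref{prop:uniform}~\eqref{p2:uniform} transported through the fixed sc-diffeomorphisms in \eqref{eqn:gamma}; and the asserted continuity of $h\mapsto\tau(h,z)$ is not automatic since parametrized inversion is not available in sc-calculus, but local constancy is all you need and it follows because the closed sets $\{h\in V:\Gamma_1(h,w)=\Gamma_2(h,z)\}$ for $w$ ranging over the finite orbit $\{L_\psi(z):\psi\in\stab_x\}$ are pairwise disjoint (injectivity of $\Gamma_1(h,\cdot)$) and cover the connected $V$---the same discreteness mechanism the paper invokes at the end of its Case~3.
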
	
Roughly speaking, this proposition is a consequence of the fact that all the functors in \eqref{eqn:gamma} are equivalences. We prove Proposition \ref{prop:lifting} in Appendix \ref{A1}. Regularity plays an important role in the proof of Proposition \ref{prop:lifting}. Indeed, the irregular polyfold in Example \ref{ex:irr-two} provides an example in which Proposition \ref{prop:lifting} does not hold, see Example \ref{ex:three}.

\begin{corollary}
	Under the same assumptions in Proposition \ref{prop:lifting}, we have $|L_{\cX}(g,x,y)|=|\stab^{\eff}_x|=|\stab^{\eff}_y|$, where $\stab^{\eff}_x,\stab^{\eff}_y$ are the effective parts defined in \eqref{eqn:eff}.
\end{corollary}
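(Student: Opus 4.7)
The plan is to apply the orbit–stabilizer theorem to the two transitive actions described in Proposition \ref{prop:lifting}, after identifying the stabilizer of a given germ $[\Gamma]_{(g,x)}$ with the kernel of the natural map $\stab_x\to \diff_{\sc}(x)$ (respectively $\stab_y\to \diff_{\sc}(y)$).

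First I would set up the computation of the isotropy of the pre-composition action. Fix $[\Gamma]_{(g,x)}\in L_{\cX}(g,x,y)$ and consider $\phi\in \stab_x$ with $[\Gamma\circ L_\phi]_{(g,x)} = [\Gamma]_{(g,x)}$. By Proposition \ref{prop:emb}, there is an open neighborhood $V\times \cV$ of $(g,x)$ on which $\Gamma(h,\cdot)$ is a sc-diffeomorphism onto its image for each $h\in V$; in particular $\Gamma(g,\cdot)$ has a sc-smooth local inverse near $x$. Restricting the germ equality to $h=g$ and composing with this inverse shows that $[L_\phi]_x = [\Id]_x$ as germs. Conversely, if $[L_\phi]_x=[\Id]_x$, then $\Gamma\circ L_\phi$ and $\Gamma$ agree on an open neighborhood of $(g,x)$, so their germs coincide. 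Hence the stabilizer of $[\Gamma]_{(g,x)}$ under pre-composition is precisely $\ker(\stab_x\to \diff_{\sc}(x))$.

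Next I would apply the orbit–stabilizer theorem. Since Proposition \ref{prop:lifting} says $\stab_x$ acts transitively on $L_{\cX}(g,x,y)$, we have
\[
|L_{\cX}(g,x,y)| \;=\; \frac{|\stab_x|}{|\ker(\stab_x\to \diff_{\sc}(x))|} \;=\; |\stab^{\eff}_x|,
\]
where the last equality is the definition \eqref{eqn:eff} of $\stab^{\eff}_x$. For the second equality $|L_{\cX}(g,x,y)|=|\stab^{\eff}_y|$, I would run the identical argument with the post-composition action: if $\phi\in \stab_y$ satisfies $[L_\phi\circ \Gamma]_{(g,x)}=[\Gamma]_{(g,x)}$, then, using that $\Gamma(g,\cdot)$ is a local sc-diffeomorphism sending $x$ to $y$, composing with its inverse from the right yields $[L_\phi]_y=[\Id]_y$, and conversely. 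The stabilizer of $[\Gamma]_{(g,x)}$ under post-composition is therefore $\ker(\stab_y\to \diff_{\sc}(y))$, and orbit–stabilizer together with the transitivity in Proposition \ref{prop:lifting} again gives $|L_{\cX}(g,x,y)|=|\stab^{\eff}_y|$.

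There is no real obstacle here: the content is essentially orbit–stabilizer, and the only small check is the bijective correspondence between germ identities $[\Gamma\circ L_\phi]_{(g,x)}=[\Gamma]_{(g,x)}$ and germ identities $[L_\phi]_x=[\Id]_x$, which uses only that $\Gamma(g,\cdot)$ is a local sc-diffeomorphism (Proposition \ref{prop:emb}). The deeper work has already been done in Proposition \ref{prop:lifting}, where regularity of the polyfold structure is used to obtain the transitivity of both actions.
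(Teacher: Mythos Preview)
Your proof is correct and is exactly the argument the paper has in mind: the corollary is stated in the paper without proof, as an immediate consequence of Proposition~\ref{prop:lifting}, and the orbit--stabilizer computation you carry out (identifying the stabilizer of $[\Gamma]_{(g,x)}$ under pre- and post-composition with $\ker(\stab_x\to\diff_{\sc}(x))$ and $\ker(\stab_y\to\diff_{\sc}(y))$ respectively, using the local invertibility of $\Gamma(g,\cdot)$ from Proposition~\ref{prop:emb}) is the natural way to make this precise.
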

Another corollary of Proposition \ref{prop:lifting} in the following, asserts that the local lifts have the unique continuation property on  connected regular uniformizers.
\begin{corollary}\label{coro:unique}
	Under the same assumptions in Proposition \ref{prop:lifting}, let $\Gamma_1,\Gamma_2$ be two local lifts at $(g,x,y)$. Assume $\Gamma_1,\Gamma_2$ are both defined on $U\times\cU_x$, where $U\subset G$ is a connected open neighborhood of $g$ and $\cU_x$ is a connected regular uniformizer around $x$.If $[\Gamma_1]_{(g,x)} = [\Gamma_2]_{(g,x)}$, then $\Gamma_1|_{U\times \cU_x} = \Gamma_2|_{U\times \cU_x}$. 
\end{corollary}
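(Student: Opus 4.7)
The plan is a connectedness argument on the connected set $U \times \cU_x$. Define
\begin{equation*}
S := \{(h,z) \in U \times \cU_x : \Gamma_1 = \Gamma_2 \text{ on some open neighborhood of } (h,z)\}.
\end{equation*}
This set is open by construction and nonempty by the hypothesis $[\Gamma_1]_{(g,x)} = [\Gamma_2]_{(g,x)}$, so it suffices to show $S$ is closed.

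To establish closedness, I would pick $(h_0, z_0) \in \overline{S} \cap (U \times \cU_x)$. Continuity forces $\Gamma_1(h_0, z_0) = \Gamma_2(h_0, z_0) =: w$. The restrictions of $\Gamma_1$ and $\Gamma_2$ to a small neighborhood of $(h_0, z_0)$ are again local lifts, this time at $(h_0, z_0, w)$, via a rebasing of the data in \eqref{gammadia} and \eqref{eqn:gamma} (choose replacement data $(w_a, w_b, \phi, \psi, \delta, \eta)$ and a local sc-smooth representative $\rho_{h_0,\cdot}$ adapted to the new basepoint). Consequently their germs are elements of $L_\cX(h_0, z_0, w)$. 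Proposition \ref{prop:lifting} then yields some $\tau \in \stab_w$ with
\begin{equation*}
[\Gamma_2]_{(h_0, z_0)} = [L_\tau \circ \Gamma_1]_{(h_0, z_0)},
\end{equation*}
so $\Gamma_2 = L_\tau \circ \Gamma_1$ on an open neighborhood $W \subset U \times \cU_x$ of $(h_0, z_0)$.

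The remaining task is to show that $L_\tau$ acts as the identity near $w$. Since $Z$ is regular, choose a regular uniformizer $\cU_w$ around $w$, and pick a sequence $(h_n, z_n) \in S$ with $(h_n, z_n) \to (h_0, z_0)$. For $n$ large, $(h_n, z_n) \in W$ and there is an open neighborhood $W_n \subset W \cap \Gamma_1^{-1}(\cU_w)$ of $(h_n, z_n)$ on which $\Gamma_1 = \Gamma_2$. Combining this with $\Gamma_2 = L_\tau \circ \Gamma_1$ shows that $L_\tau$ restricts to the identity on $\Gamma_1(W_n)$. By Proposition \ref{prop:emb} each slice map $\Gamma_1(h, \cdot)$ is a local sc-diffeomorphism onto an open image, so $\Gamma_1(W_n)$ is a nonempty open subset of $\cU_w$. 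The first property of the regular uniformizer in Definition \ref{def:regular} then forces $L_\tau = \Id$ on all of $\cU_w$. Therefore $\Gamma_2 = L_\tau \circ \Gamma_1 = \Gamma_1$ on the open neighborhood $W \cap \Gamma_1^{-1}(\cU_w)$ of $(h_0, z_0)$, placing $(h_0, z_0) \in S$.

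I expect the main technical obstacle to be the rebasing step — verifying that the restriction of a local lift at $(g, x, y)$ to a neighborhood of $(h_0, z_0)$ is again a local lift, now at $(h_0, z_0, w)$, in the explicit sense of \eqref{eqn:gamma}. This requires constructing compatible replacement data by exploiting the local sc-diffeomorphism structure of the equivalences in \eqref{gammadia} in a neighborhood of $w$. Once this is granted, the remainder of the proof is pure connectedness combined with regularity of $\cU_w$.
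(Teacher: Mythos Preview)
Your argument is correct and follows essentially the same connectedness strategy as the paper. The only notable difference is cosmetic: the paper applies Proposition~\ref{prop:lifting} via \emph{pre}-composition with some $\psi\in\stab_{x_\infty}$ and then uses Corollary~\ref{coro:natural} to identify $L_\psi$ with $L_\phi$ for $\phi\in\stab_x$, invoking regularity of the \emph{given} uniformizer $\cU_x$; you instead use \emph{post}-composition with $\tau\in\stab_w$ and invoke regularity of an auxiliary uniformizer $\cU_w$ around the image point. Both variants work, and the paper's version has the mild advantage of using only the hypothesis that $\cU_x$ is regular rather than invoking regularity of the polyfold at $w$. Your concern about the ``rebasing'' step---that the germ of $\Gamma_i$ at $(h_0,z_0)$ lies in $L_{\cX}(h_0,z_0,w)$---is legitimate but is treated as evident in the paper as well; it follows directly from the construction \eqref{eqn:gamma} by shifting the auxiliary data $(w_a,w_b,\phi,\psi,\delta,\eta)$ along the local sc-diffeomorphisms.
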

\begin{proof}
	Let $S:= \{(g',x')\in U\times \cU_x|[\Gamma_1]_{(g',x')} = [\Gamma_2]_{(g',x')} \}\subset U\times \cU_x$, then $S$ is open by the definition of germ (Definition \ref{def:germ}). We claim $S$ is sequentially closed, hence closed. Assume otherwise that there exist $(g_i,x_i)\in S$ and $\lim_{i}(g_i,x_i) = (g_\infty, x_\infty) \in U\times \cU_x$ such that $(g_\infty,x_\infty)\notin S$. Since $\Gamma_1(g_i,x_i) = \Gamma_2(g_i,x_i)$ for $i<\infty$, we have $y_\infty:=\Gamma_1(g_\infty, x_\infty) = \Gamma_2(g_\infty, x_\infty)$. Therefore $[\Gamma_1]_{(g_\infty,x_\infty)}$ and $[\Gamma_2]_{(g_\infty,x_\infty)}$ are both in $L_{\cX}(g_\infty, x_\infty, y_\infty)$. By Proposition \ref{prop:lifting}, there exists $\psi \in \stab_{x_\infty}$ such that $[\Gamma_1]_{(g_\infty,x_\infty)} = [\Gamma_2\circ L_\psi]_{(g_\infty,x_\infty)}$, that is $\Gamma_1=\Gamma_2\circ L_\psi$ in a neighborhood of $(g_\infty, x_\infty)$. On the other hand, $\Gamma_1 = \Gamma_2$ in neighborhoods of $(g_i,x_i)$ by assumption. Since $\Gamma_1,\Gamma_2$ are sc-diffeomorphisms onto the image sets, we have $L_\psi$ fixes a neighborhood of $x_i$ for some $i\gg 0$. By Corollary \ref{coro:natural}, there exists $\phi \in \stab_x$ such that $L_\psi = L_\phi$. Since the $\cU_x$ is a regular uniformizer (Definition \ref{def:regular}), $L_\phi = \Id_{\cU_x}$. Hence $L_\psi = \Id$, then $[\Gamma_1]_{(g_\infty,x_\infty)} = [\Gamma_2\circ L_\psi]_{(g_\infty,x_\infty)} = [\Gamma_2]_{(g_\infty, x_\infty)}$, which contradicts the assumption. Then $S$ is an open and closed subset of the connected space $U\times \cU_x$. Since by assumption $(g,x) \in S$, we have $S = U\times \cU_x$ and the claim in the proposition follows. 
\end{proof}
By Corollary \ref{coro:unique}, passing to the germs does not loss information, i.e. the representative $\Gamma$ for a germ $[\Gamma]_{(g,x)}\in L_{\cX}(g,x,y)$ is unique if we specify a connected domain $U\times \cU_x$. 

The local lifts set $L_{\cX}(g,x,y)$ also have the following group properties, which is important for us to construct the quotient ep-groupoid.
 \begin{proposition}\label{prop:property}
 	Let $Z$ be a regular polyfold with a $G$-action $(\rho,\mathfrak{P})$ and $(\cX,\bX)$ a polyfold structure of $Z=|\cX|$. Then the local lifts set $L_{\cX}(g,x,y)$ has the following properties.
\begin{enumerate}
	\item  There is a well-defined multiplication $\circ: L_{\cX}(g,y,z)\times L_{\cX}(h,x,y)\to L_{\cX}(gh,x,z)$ with the property that if $[\Gamma_1]_{(g,y)}\in L_{\cX}(g,y,z)$ and $[\Gamma_2]_{(h,x)} \in L_{\cX}(h,x,y)$, then there is a local lift $\Gamma_{12}$ at $(gh,x,z)$ such that $[\Gamma_{12}]_{(gh,x)} = [\Gamma_1]_{(g,y)} \circ [\Gamma_2]_{(h,x)} $ and 
	\begin{equation}\label{comp} \Gamma_{12}(\epsilon gh,u)= \Gamma_1(\epsilon g, \Gamma_2(h,u))=\Gamma_1(g, \Gamma_2(g^{-1}\epsilon gh,u))\end{equation}
	for $\epsilon$ in a neighborhood of $\Id \in G$ and $u$ in a neighborhood of $x$. The sizes of the neighborhoods depend on the representative local lifts $\Gamma_1,\Gamma_2,\Gamma_{12}$. 
	\item There is a unique identity element $[\ID_x]_{(\Id,x)}\in L_{\cX}(\Id,x,x)$, such that the identity is both left and right identity in the multiplication structure. Any representative $\ID_x$ has the property that $\ID_x(\Id, u) = u$ for $u$ in a neighborhood of $x$.\footnote{By Proposition \ref{prop:lifting}, the identity element is characterized by this property.} As before, the neighborhood depends on the choice of the representative. 
	\item There is an inverse map $L_{\cX}(g,x,y)\to L_{\cX}(g^{-1},y,x)$ with respect to the multiplication and identity structures above.
	\item\label{property:5}
	For $x, y\in \cX,g\in G$ with $\rho(g,|x|) = |y|$, there exists an open neighborhood $V\times \cU\times \cO$ of $(g,x,y)$ in $G\times \cX\times \cX$ such that there exist a representative local lift $\Gamma_\alpha$ defined on $V\times \cU$ with image in $\cO$ for each element $\alpha \in L_{\cX}(g,x,y)$. Moreover, for any $(g',x')\in V\times \cU$ and $y'\in \cO$ such that $\rho(g',|x'|)=|y'|$, every element $\beta \in L_{\cX}(g',x',y')$ is represented by $[\Gamma_\alpha]_{(g',x')}$ by a unique element $\alpha \in L_{\cX}(g,x,y)$. 
\end{enumerate}
\end{proposition}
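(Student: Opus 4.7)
The approach is to prove parts (1)--(3) by direct constructions with representative local lifts drawn from diagrams of the form \eqref{gammadia}, and to prove part \eqref{property:5} by a finiteness-plus-unique-continuation argument driven by Proposition \ref{prop:lifting} and Corollary \ref{coro:unique}.

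For part (1), I would simply set $\Gamma_{12}(\epsilon gh,u) := \Gamma_1(\epsilon g,\Gamma_2(h,u))$. Sc-smoothness on a common neighborhood follows from Proposition \ref{prop:emb} and the chain rule; concatenating the two diagrams \eqref{gammadia} witnessing $\Gamma_1$ and $\Gamma_2$ (using the functorial identity $\mathfrak{P}(g)\circ\mathfrak{P}(h)=\mathfrak{P}(gh)$) produces a diagram exhibiting $\Gamma_{12}$ as a local lift at $(gh,x,z)$, so $[\Gamma_{12}]_{(gh,x)}\in L_{\cX}(gh,x,z)$. The germ class is independent of the chosen representatives: a change of $\Gamma_1$ by precomposition with $L_\phi$ for $\phi\in\stab_y$ can be absorbed into $\Gamma_2$ via post-composition with $L_\phi$ (Proposition \ref{prop:lifting}), and the $\stab_x$- and $\stab_z$-actions are handled analogously. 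The two formulae in \eqref{comp} have the same underlying topological map $\rho(\epsilon gh,\cdot)$ and both send $(gh,x)$ to $z$, so by Proposition \ref{prop:lifting} they differ at the germ level by post-composition with some $L_\psi$ for $\psi\in\stab^{\eff}_z$; inspection of the construction, which just inserts an extra identity inside the group, forces $\psi=\Id$.

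For (2) the identity $[\ID_x]_{(\Id,x)}$ is represented by the identity functor on a regular uniformizer of $x$: this is a local lift for $\mathfrak{P}(\Id)=\Id_Z$, and Proposition \ref{prop:lifting} pins down the germ uniquely. For (3), given $[\Gamma]_{(g,x)}\in L_{\cX}(g,x,y)$, Proposition \ref{prop:emb} shows that $\Gamma(h,\cdot)$ is a sc-diffeomorphism onto its image for $h$ in a neighborhood of $g$, and the formula $\Gamma^{-1}(h,u):=\Gamma(h^{-1},\cdot)^{-1}(u)$ defines a local lift at $(g^{-1},y,x)$ obtained by reversing every equivalence in \eqref{gammadia}. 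The composition of (1) then yields $[\Gamma^{-1}]\cdot[\Gamma]=[\ID_x]$ and $[\Gamma]\cdot[\Gamma^{-1}]=[\ID_y]$ as germs, since both sides have the correct underlying topological map and act identically on base points.

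For part \eqref{property:5}, Proposition \ref{prop:lifting} gives $|L_{\cX}(g,x,y)|=|\stab^{\eff}_x|<\infty$; I pick representatives $\{\Gamma_\alpha\}_{\alpha\in L_{\cX}(g,x,y)}$, apply Proposition \ref{prop:emb} together with this finiteness to shrink their domains to a common connected product $V\times\cU$ with $\cU$ a connected regular uniformizer of $x$ and $V$ a connected open neighborhood of $g$, and then choose $\cO$ to be a connected open neighborhood of $y$ inside a regular uniformizer of $y$ containing all images $\Gamma_\alpha(V\times\cU)$. Uniqueness of $\alpha$ for a given $\beta$ is immediate from Corollary \ref{coro:unique}: agreement of $[\Gamma_\alpha]$ and $[\Gamma_{\alpha'}]$ at any one $(g',x')\in V\times\cU$ propagates to $(g,x)$, forcing $\alpha=\alpha'$. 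The main obstacle is the surjectivity assertion: for $\beta\in L_{\cX}(g',x',y')$ represented by some $\tilde\Gamma$, I must produce $\alpha$ with $[\Gamma_\alpha]_{(g',x')}=\beta$. Topologically $|\tilde\Gamma|=\rho=|\Gamma_\alpha|$, so the values $\Gamma_\alpha(g',x')$ and $y'=\tilde\Gamma(g',x')$ all lie over the single orbit point $\rho(g',|x'|)$ inside the regular uniformizer $\stab_y\ltimes\cU_y$; hence, for a suitable $\alpha$, there is $\phi\in\stab_y$ with $L_\phi(\Gamma_\alpha(g',x'))=y'$, and replacing $\Gamma_\alpha$ by $L_\phi\circ\Gamma_\alpha$ gives another $\Gamma_{\alpha'}$ via the $\stab_y$-action of Proposition \ref{prop:lifting}. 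Proposition \ref{prop:lifting} applied at $(g',x',y')$ then says $\tilde\Gamma$ and $\Gamma_{\alpha'}$ differ at the germ level by post-composition with some $L_\psi$ for $\psi\in\stab^{\eff}_{y'}$, and the delicate point is to lift this $\psi$ uniquely to an element of $\stab_y$ whose action on $\{\Gamma_\alpha\}$ absorbs the discrepancy. This lifting is exactly where regularity is essential, since it rules out the pathology that distinct elements of $\stab_y$ could act identically near $y'$, and thereby ensures the $\stab_{y'}$-ambiguity is matched by a genuine choice of $\alpha$ in the finite set $L_{\cX}(g,x,y)$.
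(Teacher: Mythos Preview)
Your treatment of part \eqref{property:5} matches the paper's argument (the paper works on the $x$-side, lifting $\stab_{x'}$ to $\stab_x$ via Corollary \ref{coro:natural}, while you phrase it dually on the $y$-side; the mechanism is the same, and the injectivity step via Corollary \ref{coro:unique} is exactly the paper's). The gaps are in parts (1)--(3), and they all stem from treating ``local lift'' too loosely.

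A local lift, by its construction \eqref{eqn:gamma}, is a composition of a specific shape built from a \emph{single} local representation $\rho_{g,x_a}$ sandwiched between local sc-diffeomorphisms coming from equivalences and isotropy; membership in $L_{\cX}$ means being a germ of such an expression. In (1), concatenating the diagrams for $\Gamma_1$ and $\Gamma_2$ yields an expression with \emph{two} local representations (one near $h$, one near $g$); the functorial identity $\mathfrak{P}(g)\circ\mathfrak{P}(h)=\mathfrak{P}(gh)$ is an equality in $\mathscr{P}$, not of the chosen sc-smooth representatives, so it does not collapse the concatenation into the form \eqref{eqn:gamma}. The paper instead fixes a third local representation $\rho'_a$ near $gh$ (diagram \eqref{dia:prod}) and invokes the comparison technique from case three of the proof of Proposition \ref{prop:lifting} to identify the naive composition with a genuine local lift built from $\rho'_a$; well-definedness is then handled via Corollary \ref{center} (the restriction $\alpha\mapsto\Gamma_\alpha(gh,\cdot)$ is injective), not by your absorption argument, which as written establishes the compatibility $[\Gamma_1\circ L_\phi]\circ[\Gamma_2]=[\Gamma_1]\circ[L_\phi\circ\Gamma_2]$ rather than anything about well-definedness. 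In (2), the identity functor on a uniformizer is not manifestly of the form \eqref{eqn:gamma}; the paper builds an auxiliary polyfold structure $\cX'_a$ and an equivalence $\rho_0$ extending $\rho_{\Id,x_a}(\Id,\cdot)$, and then chooses the data in \eqref{eqn:gamma} so that the expression literally collapses to $\Gamma(\Id,z)=z$. In (3), reversing \eqref{gammadia} is problematic because $\rho_{g,x_a}(h,\cdot)$ is not an equivalence of ep-groupoids, so its pointwise inverse is not a local representation of the action near $g^{-1}$ in the sense of Definition \ref{def:action}; the paper sidesteps this entirely by picking an arbitrary $[\Gamma_2]\in L_{\cX}(g^{-1},y,x)$, composing with $[\Gamma_1]$, and correcting the result to $[\ID_x]$ by an element of $\stab_x$ via Proposition \ref{prop:lifting}.
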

We prove Proposition \ref{prop:property} in Appendix \ref{A1}.

\subsection{Quotients of polyfolds}
In this subsection, we construct quotients of polyfolds. Given a sc-smooth $G$-action $(\rho,\mathfrak{P})$ on a polyfold $Z$ as in Definition \ref{def:action}, we first construct local uniformizers of the quotient ep-groupoids in Proposition \ref{prop:localmodel}. Then we assemble such local uniformizers in Theorem \ref{quopoly} to form  quotient ep-groupoids, which give polyfold structures on an open dense set of the topological quotient $Z^2/G$.

\begin{definition}\label{def:isotorpy}
	Let $Z$ be a  polyfold with a sc-smooth $G$-action $(\rho,\mathfrak{P})$ and $(\cX,\bX)$ a polyfold structure. For each $z\in Z$, the \textbf{isotropy} $G_z$ is defined to be the isotropy of $\rho$ at $z$, i.e. 
	$$G_z:=\{g\in G|\rho(g,z)=z\}.$$
	For each $x\in \cX$, the \textbf{isotropy} $G_x$ is defined to be $G_{|x|}$. An action has \textbf{finite isotropy} iff $G_z$ is a finite group for every $z \in Z$.
\end{definition}
Since the isotropy $G_z$ will contribute to the isotropy of the quotient ep-groupoid, see \eqref{eqn:exact}, to get a proper quotient groupoid we need to at least assume the group action has finite isotropy. 

\begin{definition}
	For $x_0\in \cX_\infty$, $[\ID_{x_0}]\in L_{\cX}(\Id, x_0,x_0)$ is the identity element as asserted in Proposition \ref{prop:property}. Then we can define the \textbf{infinitesimal directions} of the group action at $x_0$ to be
	$$\mathfrak{g}_{x_0}:=\rD(\ID_{x_0})_{(\Id,x_0)}(T_{\Id}G\times \{0\})\subset T_{x_0}\cX.$$
\end{definition}
Then we have analogues of Proposition \ref{prop:red} and Proposition \ref{prop:free} as follows.
\begin{proposition}\label{prop:redfree}
	If the action has finite isotropy, then $\mathfrak{g}_{x_0}\subset (T^R_{x_0}\cX)_\infty$ (Remark \ref{rmk:reducetangent}) and $\dim \mathfrak{g}_{x_0} = \dim G$.
\end{proposition}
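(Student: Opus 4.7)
The plan is to mirror the M-polyfold proofs of Propositions \ref{prop:red} and \ref{prop:free}, substituting a representative of the identity germ in $L_\cX(\Id,x_0,x_0)$ for the (locally unavailable) strict sc-smooth action on the M-polyfold. I will fix a representative sc-smooth map $\ID_{x_0}:B\times \cV\to \cX$ of $[\ID_{x_0}]_{(\Id,x_0)}$ defined on a product neighborhood of $(\Id,x_0)$ and a chart $(\cU_{x_0},\phi,(O,\R_+^m\times \E))$ with $\phi(x_0)=(0,0)$. Setting $\gamma(g):=\phi\circ \ID_{x_0}(g,x_0)$, the subspace $\mathfrak{g}_{x_0}$ equals the image of $\rD\gamma_{\Id}:T_\Id G\to T_{(0,0)}O$ read in this chart, so the two assertions reduce to claims about $\rD\gamma_{\Id}$.

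For the inclusion $\mathfrak{g}_{x_0}\subset (T^R_{x_0}\cX)_\infty$ I will argue as in the proof of Proposition \ref{prop:red}: for each $g\in B$ the map $\ID_{x_0}(g,\cdot)$ is a local sc-diffeomorphism by Remark \ref{rmk:emb}, so it preserves the degeneracy index by \cite[Propositions 2.8, 2.10]{hofer2017polyfold}. Hence $\gamma(g)\in \{0\}\times \E$ for all $g\in B$, whence $\rD\gamma_{\Id}(T_\Id G)\subset T^R_{(0,0)}O\subset \{0\}\times \E$. The refinement to $\E_\infty$ follows from finite-dimensionality of $T_\Id G$: sc-smoothness of $\gamma$ gives a $C^1$-map into each level $\R_+^m\times \E_k$, so $\rD\gamma_{\Id}$ lands in every $\E_k$, i.e.\ in $\E_\infty$.

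For the dimension equality it suffices to prove $\rD\gamma_{\Id}$ is injective. Suppose $\xi\in T_\Id G$ satisfies $\rD\gamma_{\Id}(\xi)=0$, write $g_t:=\exp(t\xi)$, and set $c(t):=\ID_{x_0}(g_t,x_0)$. By item (5) of Proposition \ref{prop:property} the single function $\ID_{x_0}$ serves as a representative of the identity germ at every nearby basepoint $(g',x')$, and by the uniqueness in item (2) the product of two such identity germs is again the identity germ at the composed basepoint. Applying the composition formula \eqref{comp} to $[\ID_{x_0}]_{(g_t,c(s))}$ and $[\ID_{x_0}]_{(g_s,x_0)}$ then yields
\[ c(t+s)=\ID_{x_0}(g_t,c(s)) \]
for all sufficiently small $s,t$. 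Differentiating in $s$ at $s=0$ gives $c'(t)=\rD_2\ID_{x_0}|_{(g_t,x_0)}(c'(0))=0$, since $c'(0)=\rD\gamma_{\Id}(\xi)=0$ by hypothesis. By \cite[Proposition 1.7]{hofer2017polyfold} the curve $c$ is $C^1$ on every level, so $c\equiv x_0$ near $0$. Thus $g_t\in G_{x_0}$ for all small $t$; finite isotropy of $G_{x_0}$ combined with continuity of $\exp$ forces $\xi=0$, since a nontrivial one-parameter subgroup cannot be contained in a finite group near $\Id$. Therefore $\rD\gamma_{\Id}$ is injective and $\dim \mathfrak{g}_{x_0}=\dim G$.

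The main obstacle is the rigorous verification of the key identity $c(t+s)=\ID_{x_0}(g_t,c(s))$ — equivalently, that the composition of identity germs at the neighboring basepoints $(g_t,c(s))$ and $(g_s,x_0)$ is represented by the very same function $\ID_{x_0}$ at the composed basepoint $(g_{t+s},x_0)$. This is the polyfold substitute for the strict group law $\rho_\cX(gh,x)=\rho_\cX(g,\rho_\cX(h,x))$ freely used in the M-polyfold argument of Proposition \ref{prop:free}, and it rests precisely on the local bijection with $L_\cX(\Id,x_0,x_0)$ furnished by item (5) of Proposition \ref{prop:property} together with the characterization $\ID(\Id,u)=u$ of the identity element in item (2).
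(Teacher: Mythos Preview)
Your approach matches the paper's exactly: reduce both claims to Propositions \ref{prop:red} and \ref{prop:free}, replacing the strict action $\rho_\cX$ by a representative $\ID_{x_0}$ and invoking Proposition \ref{prop:property} to recover the group law. The argument for $\mathfrak{g}_{x_0}\subset (T^R_{x_0}\cX)_\infty$ is fine (the relevant reference for $\ID_{x_0}(g,\cdot)$ being a local sc-diffeomorphism is Proposition \ref{prop:emb}, not Remark \ref{rmk:emb}, since $\ID_{x_0}$ is a local lift rather than a raw representative $\rho_{g,x}$).

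The one genuine wrinkle is in your derivation of $c(t+s)=\ID_{x_0}(g_t,c(s))$. You compose $[\ID_{x_0}]_{(g_t,c(s))}$ with $[\ID_{x_0}]_{(g_s,x_0)}$ and assert the product is $[\ID_{x_0}]_{(g_{t+s},x_0)}$, invoking item (2). But $[\ID_{x_0}]_{(g_t,c(s))}$ lies in $L_\cX(g_t,c(s),\cdot)$ and is \emph{not} an identity element in the sense of item (2); that item only tells you $[\ID_y]\in L_\cX(\Id,y,y)$ is a unit. Item (5) does say each such germ restricts from a unique element of $L_\cX(\Id,x_0,x_0)$, but Proposition \ref{prop:property} never asserts that the multiplication of item (1) is compatible with these restriction bijections, so you cannot conclude that ``restriction of identity $\circ$ restriction of identity $=$ restriction of identity''. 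The paper sidesteps this by taking $g=\Id$: compose the genuine identity $[\ID_{x_0}]_{(\Id,\,\ID_{x_0}(\delta,x_0))}$ with $[\ID_{x_0}]_{(\delta,x_0)}$; now item (2) applies on the nose, the product is $[\ID_{x_0}]_{(\delta,x_0)}$ itself, and formula \eqref{comp} gives
\[
\ID_{x_0}(\epsilon\delta,u)=\ID_{x_0}(\epsilon,\ID_{x_0}(\delta,u))
\]
for $\epsilon,\delta$ near $\Id$ and $u$ near $x_0$. Setting $\epsilon=g_t$, $\delta=g_s$, $u=x_0$ yields your identity immediately. With this correction the rest of your proof goes through unchanged.
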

\begin{proof}
	By Proposition \ref{prop:property}, we have $\ID_{x_0}(\epsilon, \ID_{x_0}(\delta, x))= \ID_{x_0}(\epsilon \delta, x)$ for $\epsilon,\delta$ close to $\Id$ and $x$ close to $x_0$. Then the proof of Proposition \ref{prop:free} applies here and $\dim \mathfrak{g}_{x_0} = \dim G$. Since $\ID_{x_0}(\epsilon, \cdot)$ is a local sc-diffeomorphism for $\epsilon$ close to $\Id$ by Proposition \ref{prop:emb}, the arguments in Proposition \ref{prop:red} prove that $\mathfrak{g}_{x_0}\subset (T^R_{x_0}\cX)_\infty$.
\end{proof}
In fact, the infinitesimal directions $\mathfrak{g}_{x_0}$ have a nice invariant property, which will be used in Remark \ref{rmk:quoquo}. 
\begin{proposition}\label{prop:gammachoice}
	Let $g_0 \in G_{x_0}\subset G$ be an element in the isotropy  and a local lift $\Gamma$ at $(g_0,x_0,x_0)$, then
	\begin{equation}\label{gammachoice}
	\rD \Gamma_{(g_0,x_0)}(\{0\} \times \mathfrak{g}_{x_0}) = \mathfrak{g}_{x_0}.
	\end{equation}
\end{proposition}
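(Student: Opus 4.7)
The plan is to apply the composition law of Proposition \ref{prop:property}(1) in two different ways, pairing $\Gamma$ with the identity local lift $\ID_{x_0} \in L_\cX(\Id, x_0, x_0)$, and then differentiating the resulting functional identities in the group variable at $(g_0, x_0)$. The key observation is that $g_0 \in G_{x_0}$ forces both $\Gamma$ and $\ID_{x_0}$ to be local lifts at triples whose source and target both equal $x_0$; by the identity property (Proposition \ref{prop:property}(2)), each composite then represents the same germ as $\Gamma$ at $(g_0, x_0)$, and in particular shares its first-order jet there.

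Setting $\Gamma_1 = \Gamma$, $\Gamma_2 = \ID_{x_0}$, $g = g_0$, $h = \Id$ in \eqref{comp} produces a representative $\Gamma_{12}$ of $[\Gamma]_{(g_0, x_0)}$ satisfying $\Gamma_{12}(\epsilon g_0, u) = \Gamma(g_0, \ID_{x_0}(g_0^{-1}\epsilon g_0, u))$ on a neighborhood of $(\Id, x_0)$. Differentiating in the direction $(v, 0)$ with $v \in T_\Id G$ (keeping $u = x_0$), the left side gives $\rD \Gamma_{(g_0, x_0)}(\rD R_{g_0}(v), 0)$, where $R_{g_0}$ denotes right translation, while the chain rule applied to the right side, with the outer first argument $g_0$ fixed and the inner group argument moving by $\operatorname{Ad}_{g_0^{-1}}$, gives $\rD \Gamma_{(g_0, x_0)}(0, \rD(\ID_{x_0})_{(\Id, x_0)}(\operatorname{Ad}_{g_0^{-1}}(v), 0))$. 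Swapping roles with $\Gamma_1 = \ID_{x_0}$, $\Gamma_2 = \Gamma$, $g = \Id$, $h = g_0$ yields a second representative $\Gamma'_{12}$ of the same germ with $\Gamma'_{12}(\epsilon g_0, u) = \ID_{x_0}(\epsilon, \Gamma(g_0, u))$, whose $\epsilon$-derivative at $(\Id, x_0)$ in the direction $v$ equals $\rD(\ID_{x_0})_{(\Id, x_0)}(v, 0) \in \mathfrak{g}_{x_0}$. Combining the two computations, $\rD \Gamma_{(g_0, x_0)}(0, w) \in \mathfrak{g}_{x_0}$ for every $w = \rD(\ID_{x_0})_{(\Id, x_0)}(\operatorname{Ad}_{g_0^{-1}}(v), 0)$; since $\operatorname{Ad}_{g_0^{-1}}$ is a linear automorphism of $T_\Id G$, such $w$ exhaust $\mathfrak{g}_{x_0}$, giving the inclusion $\rD \Gamma_{(g_0, x_0)}(\{0\} \times \mathfrak{g}_{x_0}) \subset \mathfrak{g}_{x_0}$.

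For the reverse inclusion I would invoke Proposition \ref{prop:emb}: $\Gamma(g_0, \cdot)$ is a local sc-diffeomorphism near $x_0$, so $\rD \Gamma_{(g_0, x_0)}|_{\{0\} \times T_{x_0}\cX}$ is injective and in particular injective on the finite-dimensional subspace $\{0\} \times \mathfrak{g}_{x_0}$. Combined with Proposition \ref{prop:redfree}, which gives $\dim \mathfrak{g}_{x_0} = \dim G < \infty$, both sides of the established inclusion have the same finite dimension, upgrading it to equality. The main bookkeeping step is to track where the parameter $v$ ends up in each application of \eqref{comp} — once via right translation into the $G$-slot of $\rD \Gamma$, and once via conjugation and the identity lift into the $\cX$-slot — and to recognize that equality of these two images is precisely the invariance statement being claimed.
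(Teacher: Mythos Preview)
Your proof is correct and follows essentially the same approach as the paper: both apply the composition law \eqref{comp} twice (once as $\Gamma \circ \ID_{x_0}$, once as $\ID_{x_0} \circ \Gamma$) and differentiate in $\epsilon$. The paper packages this into the single chain of equalities
\[
\Gamma(g_0, \ID_{x_0}(g_0^{-1}\epsilon g_0, x_0)) = \Gamma(\epsilon g_0, x_0) = \ID_{x_0}(\epsilon, \Gamma(g_0,x_0)) = \ID_{x_0}(\epsilon, x_0),
\]
from which differentiating the outer equality gives the full equality \eqref{gammachoice} at once (since as $v$ ranges over $T_{\Id}G$ both sides sweep out $\mathfrak{g}_{x_0}$); your separate injectivity-plus-dimension argument for the reverse inclusion is unnecessary but harmless.
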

\begin{proof}
By \eqref{comp}, for $\epsilon \in G$ close to $\Id$, we have
$$\Gamma(g_0, \ID_{x_0}(g_0^{-1}\epsilon g_0, x_0)) = \Gamma(\epsilon g_0, x_0) = \ID_{x_0}(\epsilon, \Gamma(g_0,x_0)) = \ID_{x_0}(\epsilon, x_0).$$
Then \eqref{gammachoice} is proven by taking derivatives in $\epsilon$.
\end{proof}

To construct quotient polyfolds, we need to introduce the analogue of $G$-slices in the ep-groupoid setup.  Slices for ep-groupoids should be translation groupoids (Definition \ref{def:actgroupoid}), and will eventually be local uniformizers (Definition \ref{def:uni}) for quotient ep-groupoids. We first discuss the isotropy of the quotients. For every $x\in \cX$, we define $$\stab^Q_{x}:=\cup_{g\in G_{x}} L_{\cX}(g,x,x).$$ 
Then $\stab^Q_{x}$ is a group by Proposition \ref{prop:property}, and it is supposed to be the isotropy of the quotient ep-groupoid, hence the superscript ``$Q$" stands for quotient. By Proposition \ref{prop:lifting} and Proposition \ref{prop:property}, we have an exact sequence of groups:
\begin{equation}\label{eqn:exact}
1 \to \stab_{x}^{\eff} \to \stab^Q_{x} \to G_{x} \to 1,
\end{equation}
where the first map is the inclusion $\stab^{\eff}_{x} \simeq L_{\cX}(\Id,x,x) \hookrightarrow \stab^Q_{x}$ and the second map is the projection. As a consequence, $\stab^Q_{x}$ is a finite group with $|\stab^Q_{x}| = |\stab^{\eff}_{x}|\cdot|G_{x}|$. 
\begin{definition}\label{def:polyGslice}
		Let $Z$ be a regular tame polyfold with a sc-smooth $G$-action with finite isotropy (Definition \ref{def:isotorpy}) and $(\cX,\bX)$ a tame polyfold structure. For every $x_0\in \cX_{\infty}$, a \textbf{$\bm{G}$-slice for the polyfold structure $\bm{(\cX,\bX)}$} around $x_0$ is a tuple $(\cU,\tilde{\cU},V,f,\eta)$ such that the following holds.
		\begin{enumerate}
			\item\label{polyGc1} $\cU\subset \cX$ is an open neighborhood of $x_0$ and $V\subset G$ is an open neighborhood of $\Id$. 
			\item There is a representative $\ID_{x_0}$ of the identity of $L_{\cX}(\Id,x_0,x_0)$, such that  $\ID_{x_0}$ is defined on $V\times \cU$.
			\item $f:\cU\to V$ is a sc-smooth map and $\tilde{\cU}:=f^{-1}(\Id)$ is a slice (Definition \ref{def:slice}) of $\cX$ containing $x_0$.
			\item \label{slice4}  For $x \in \cU$, $g=f(x)$ is the unique element $g\in V$ such that $\ID_{x_0}(g,x) \in \tilde{\cU}$.
			\item\label{polyGc5} $\eta:\cU \to \tilde{\cU}$ defined by $x\mapsto \ID_{x_0}(f(x),x)$ is sc-smooth.
			\item\label{polyGc6} There exist representatives $\Gamma$ for each element in $\stab^Q_{x_0}$ such that $\Gamma(g_0,\cdot)$ is defined on $\cU$  and there is a sc-smooth action $\stab^Q_{x_0} \times \tilde{\cU} \to \tilde{\cU}$ defined by
			\begin{equation}\label{eqn:locaction}
			([\Gamma]_{(g_0,x_0)}, y)\mapsto \Gamma \odot y : = \eta\circ \Gamma(g_0, y).
			\end{equation}
			\item\label{polyGc7}  For every $y\in \tilde{\cU}$, the set $S_{y,\tilde{\cU}}:=\{(z,g,[\Gamma]_{(g,y)})|z\in \tilde{\cU}, g\in G, [\Gamma]_{(g,y)} \in L_{\cX}(g,y,z)\}$ has exactly $|\stab^Q_{x_0}|$ elements.
		\end{enumerate}  
\end{definition}

We now prove the existence of $G$-slices. Just like Lemma \ref{lemma:slice}, this will require a level shift in the polyfold as in Remark \ref{rmk:levelshift}. 
\begin{proposition}\label{prop:localmodel}
		Let $Z$ be a regular tame polyfold with a sc-smooth $G$-action with finite isotropy and $(\cX,\bX)$  a tame polyfold structure. For every $x_0\in \cX_{\infty}$, there exists a $G$-slice for the polyfold structure $(\cX^2,\bX^2)$ of shifted polyfold $Z^2$ around $x_0$.
\end{proposition}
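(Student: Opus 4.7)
The plan is to adapt the slice construction of Lemma \ref{lemma:slice} to the polyfold setting using the local lift structure of Section 3.3. Choose a local uniformizer $\cU_{x_0}$ around $x_0$ and a representative $\ID_{x_0}$ of the identity of $L_{\cX}(\Id, x_0, x_0)$, defined on $B\times \cU_{x_0}$ for a small neighborhood $B\subset G$ of $\Id$. I would apply Lemma \ref{lemma:gamma} to $\Gamma = \ID_{x_0}$ (taking the bundle trivially, since only the base-level output is needed here); its hypotheses are met because Proposition \ref{prop:redfree} furnishes $\mathfrak{g}_{x_0} = \rD(\ID_{x_0})_{(\Id, x_0)}(T_\Id B\times\{0\}) \subset (T^R_{x_0}\cU_{x_0})_\infty$ of dimension $\dim G$. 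This yields open neighborhoods $\cU \subset \cU_{x_0}^2$ and $V\subset B$, a sc-smooth map $f:\cU\to V$, a slice $\tilde{\cU}:=f^{-1}(\Id)\subset \cX^2$ through $x_0$, and the retraction $\eta:\cU\to\tilde{\cU}$, $\eta(x)=\ID_{x_0}(f(x),x)$. Conditions (1)--(5) of Definition \ref{def:polyGslice} then follow directly, with (4) being exactly property \eqref{part:unique} of Lemma \ref{lemma:gamma}.

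To realize condition (6), I would invoke Proposition \ref{prop:property}\eqref{property:5} to pick, after shrinking $\cU$ and $V$, a representative $\Gamma_\alpha$ for each of the finitely many $\alpha\in \stab^Q_{x_0}$, all defined on a common neighborhood $V_0\times \cU_0\to \cO_0$, with the property that every local lift at nearby points is captured by exactly one $\Gamma_\alpha$. After further shrinking $\tilde{\cU}$ so that $\Gamma_\alpha(g_\alpha,\tilde{\cU})\subset \cU$ for each $\alpha$, the formula $\alpha\odot y := \eta(\Gamma_\alpha(g_\alpha,y))$ gives sc-smooth self-maps of $\tilde{\cU}$. The identity axiom $\ID\odot y = \eta(y) = y$ is immediate. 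For associativity, apply the composition formula in Proposition \ref{prop:property}(1) twice to rewrite $\Gamma_\alpha(g_\alpha,\eta\circ\Gamma_\beta(g_\beta,y))$ as $\Gamma_{\alpha\beta}(g_\alpha g'' g_\beta, y)$ for some $g''\approx \Id$ and some local lift $\Gamma_{\alpha\beta}$ representing $\alpha\circ\beta$; Corollary \ref{coro:unique} then identifies $\Gamma_{\alpha\beta}$ with the chosen representative of $\alpha\circ\beta$. Both $\alpha\odot(\beta\odot y)$ and $(\alpha\circ\beta)\odot y$ thus land in $\tilde{\cU}$ on the single curve $g'\mapsto \Gamma_{\alpha\beta}(g',y)$; reparametrizing this curve as an $\ID_{x_0}$-orbit of $\Gamma_{\alpha\beta}(g_\alpha g_\beta, y)$ via Proposition \ref{prop:property}(1) and invoking property \eqref{part:unique} of Lemma \ref{lemma:gamma} forces the two to coincide.

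For condition (7), I would exhibit a bijection $\Psi:\stab^Q_{x_0} \to S_{y,\tilde{\cU}}$ by $\alpha\mapsto (\alpha\odot y,\ g_1^\alpha g_\alpha,\ [\Gamma_\alpha]_{(g_1^\alpha g_\alpha, y)})$ with $g_1^\alpha := f(\Gamma_\alpha(g_\alpha,y))$. Injectivity follows from Proposition \ref{prop:property}\eqref{property:5}: two equal triples would force distinct $\Gamma_\alpha,\Gamma_\beta$ to share a germ, contradicting the uniqueness of the $\alpha$-labelling. For surjectivity, the smallness of $\tilde{\cU}$ forces any $(z,g,[\Gamma])\in S_{y,\tilde{\cU}}$ to have $g$ close to $G_{x_0}$ (since $|z|,|y|$ lie near $|x_0|$ and $\rho(g,|y|)=|z|$), so Proposition \ref{prop:property}\eqref{property:5} identifies $[\Gamma]$ with a unique $[\Gamma_\alpha]_{(g,y)}$, and the same orbit-uniqueness argument used in associativity pins $z=\alpha\odot y$. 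The hardest parts are the associativity check and this bijection, where three distinct uniqueness inputs---slice uniqueness in property \eqref{part:unique} of Lemma \ref{lemma:gamma}, germ uniqueness from Corollary \ref{coro:unique}, and the lift-indexing of Proposition \ref{prop:property}\eqref{property:5}---must be orchestrated carefully; the regularity assumption on the polyfold is exactly what makes Corollary \ref{coro:unique} available and so what makes this orchestration possible.
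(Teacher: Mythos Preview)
Your proposal is correct and follows essentially the same route as the paper: apply Lemma~\ref{lemma:gamma} to $\ID_{x_0}$ (with the trivial bundle) to obtain conditions (1)--(5), use the composition law \eqref{comp} together with the slice-uniqueness property \eqref{part:unique} to verify associativity of $\odot$ for condition (6), and combine Proposition~\ref{prop:property}\eqref{property:5} with \eqref{comp} and \eqref{part:unique} to control $S_{y,\tilde{\cU}}$ for condition (7). The only organizational difference is in condition (7): the paper first shows $|S_{y,\tilde{\cU}}|\ge |\stab^Q_{x_0}|$ and then argues $|S_{y,\tilde{\cU}}|\le |\stab^Q_{x_0}|$ by contradiction via a shrinking sequence $\tilde{\cU}_{(k)}$, whereas you package the same reasoning as an explicit bijection $\Psi$. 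Your surjectivity step ``smallness of $\tilde{\cU}$ forces $g$ close to $G_{x_0}$'' is exactly the compactness/limiting argument the paper spells out, so be sure to make that step explicit; likewise, for condition (6) you should record that after proving $\Gamma_{\alpha^{-1}}\odot(\Gamma_\alpha\odot y)=y$ one still needs to pass to the $\stab^Q_{x_0}$-invariant subset $\cap_\alpha \Gamma_\alpha\odot\tilde{\cO}$ to make $\odot$ a genuine self-action.
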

\begin{proof}
	By Proposition \ref{prop:redfree}, we can apply Lemma \ref{lemma:gamma} to the identity representative $\ID_{x_0}$ and the trivial bundle $\Id:\cX\to \cX$. Therefore we have a tuple $(\cW,\tilde{\cW}, V, f,\eta)$ such that the following holds.
	\begin{enumerate}[(i)]
		\item\label{part:loc1} $\cW\subset \cX^2$ is an open neighborhood $x_0$ and $V\subset G$ is an open neighborhood $\Id$. 
		\item$\ID_{x_0}$ is defined on $V\times \cW$.
		\item $f:\cW \to V$ is sc-smooth and $\tilde{\cW}:=f^{-1}(\Id)$ is slice of $\cX$ containing $x_0$.
		\item\label{part:loc3}  For $x \in \cW$, $g=f(x)$ is the unique element $g\in V$ such that $\ID_{x_0}(g,x) \in \tilde{\cW}$.
		\item $\eta:\cW \to \tilde{\cW}$ defined by $x\mapsto \ID_{x_0}(f(x),x)$ is sc-smooth.
		\item\label{part:loc6}  We can fix representatives $\Gamma_\alpha$ for each element  $\alpha\in \stab^Q_{x_0}$, such that Property \eqref{property:5} of Proposition \ref{prop:property} holds for those chosen representatives $\Gamma_\alpha$ on $V\times \cW$.
	\end{enumerate}
	Moreover, for every open neighborhood  $\tilde{\cW}'\subset \tilde{\cW}$ of $x_0$, let $\cW'=\eta^{-1}(\tilde{\cW}')$. Then the tuple $(\cW', \tilde{\cW}', V, f|_{\cW'}, \eta|_{\cW'})$ also has property \eqref{part:loc1}-\eqref{part:loc6}. That is $(\cW', \tilde{\cW}', V, f|_{\cW'}, \eta|_{\cW'})$ satisfies condition \eqref{polyGc1}-\eqref{polyGc5} of Definition \ref{def:polyGslice}.
	
	Next we will construct an neighborhood $\tilde{\cV}\subset \tilde{\cW}$ of $x_0$ so that \eqref{eqn:locaction} defines a sc-smooth group action on $\tilde{\cV}$. First, we pick a small enough neighborhood $\tilde{\cW}'\subset \tilde{\cW}$ of $x_0$, such that \eqref{eqn:locaction} is well-defined for the chosen representative $\Gamma_\alpha$ for every $\alpha \in \stab^Q_{x_0}$, i.e. $\Gamma(g_0,\tilde{\cW}') \subset \cW$. Since 
	\begin{equation}\label{expand}
	\Gamma_\alpha\odot y = \eta\circ \Gamma_\alpha(g_0,y) = \ID_{x_0}(f\circ \Gamma_\alpha(g_0,y),\Gamma_\alpha(g_0,y)),
	\end{equation} 
	and the inverse to $\ID_{x_0}$ is itself,  we can require the neighborhood $\tilde{\cW}'$ is small enough such that we can invert \eqref{expand} to get
	\begin{equation}\label{repre}
	\Gamma_\alpha(g_0, y) =\ID_{x_0}( f\circ \Gamma_\alpha(g_0,y)^{-1}, \Gamma_\alpha \odot y), \quad \forall \alpha \in \stab^Q_{x_0}, y \in \tilde{\cW}'.
	\end{equation}
	Moreover, we can shrink $\tilde{\cW}'$ further such that the following equations are defined and hold for $\Gamma_\alpha,\Gamma_\beta$, where $\alpha \in L_{\cX}(g_\alpha, x_0, x_0), \beta \in L_{\cX}(g_\beta, x_0, x_0)$.
	\begin{eqnarray}
	(\Gamma_\alpha \circ \Gamma_\beta) \odot y & = &   \eta\circ \Gamma_\alpha\circ \Gamma_\beta(g_\alpha g_\beta,y) \nonumber \\
	&\stackrel{\eqref{comp}}{=} &\eta\circ \Gamma_\alpha(g_\alpha, \Gamma_\beta(g_\beta,y)) \nonumber \\
	&\stackrel{\eqref{repre}}{=} &\eta\circ\Gamma_\alpha(g_\alpha,  \ID_{x_0}( f\circ \Gamma_\beta(g_\beta,y)^{-1}, \Gamma_\beta\odot y)) \nonumber \\
	&\stackrel{\eqref{comp}}{=} &\eta\circ \Gamma_\alpha (g_\alpha f\circ \Gamma_\beta(g_\beta,y)^{-1}, \Gamma_\beta\odot y) \nonumber \\
	&\stackrel{\eqref{comp}}{=}&\eta\circ \ID_{x_0}(g_\alpha f\circ\Gamma_\beta(g_\beta,y)^{-1} g_\alpha^{-1}, \Gamma_\alpha(g_\alpha,\Gamma_\beta\odot y)) \nonumber\\
	& \stackrel{\eqref{part:loc3}}{=} &\eta\circ \Gamma_\alpha(g_\alpha, \Gamma_\beta\odot y) \nonumber\\
	&= & \Gamma_\alpha \odot(\Gamma_\beta\odot y) \label{eqn:assoc}
	\end{eqnarray}
    Since  we have
    \begin{equation}\label{eqn:id}
    \ID_{x_0}\odot y = \eta\circ \ID_{x_0}(\Id, y) = \eta(y) = y, \quad \forall y\in \tilde{\cW}', 
    \end{equation}
    \eqref{eqn:assoc} implies that $\Gamma_{\alpha^{-1}}\odot(\Gamma_\alpha \odot y) = y$ for every $y\in \tilde{\cW}'$. Hence $\Gamma_\alpha\odot $ is injective on $\tilde{\cW}'$. Pick an open neighborhood $\tilde{\cO} \subset \tilde{\cW}'$, such that $\Gamma\odot\tilde{\cO} \subset \tilde{\cW}'$ for all $\Gamma$. Then the injectivity of $\Gamma_{\alpha^{-1}}\odot$ on $\tilde{\cW}'$ and $\Gamma_{\alpha^{-1}}\odot(\Gamma_\alpha \odot y) = y$ on $\tilde{\cO}$ imply that $\Gamma_\alpha\odot \tilde{\cO} = (\Gamma_{\alpha^{-1}}\odot)|_{\tilde{\cW}'}^{-1}(\tilde{\cO})$. As a consequence, $\Gamma_\alpha\odot\tilde{\cO}$ is open in $\tilde{\cW}'$ and contains $x_0$ for every $\alpha \in \stab_{x_0}^Q$. We define 
    $$\tilde{\cV}: = \cap_{\alpha \in \stab^Q_{x_0}} \Gamma_\alpha \odot \tilde{\cO},$$
    then $\tilde{\cV}\subset \tilde{\cW}'$ is an open neighborhood of $x_0$.  
    Therefore \eqref{eqn:assoc} implies that \eqref{eqn:locaction} defines a sc-smooth group action of $\stab^Q_{x_0}$ on $\tilde{\cV}$.  Moreover, for every open neighborhood $\tilde{\cV}' \subset \tilde{\cV}$ of $x_0$, the $\stab^Q_{x_0}$ action can be restricted to $\tilde{\cV}'':=  \cap_{\alpha \in \stab^Q_{x_0}} \Gamma_\alpha \odot \tilde{\cV}' \subset \tilde{\cV}'$. So far, we prove that $(\cV'':=\eta^{-1}(\tilde{\cV}''), \tilde{\cV}'', V, f|_{\cV''}, \eta|_{\cV''})$ satisfies condition \eqref{polyGc1}-\eqref{polyGc6} of Definition \ref{def:polyGslice}.
    
    Finally, we verify condition \eqref{polyGc7} of Definition \ref{def:polyGslice}. We claim that we can find a neighborhood $\tilde{\cU}\subset \tilde{\cV}$ of $x_0$ invariant under the $\stab^Q_{x_0}$ action, such that $|S_{y,\tilde{\cU}}| = |\stab^Q_{x_0}|$ for every $y\in \tilde{\cU}$. First we pick a smaller neighborhood $\tilde{\cV}' \subset \tilde{\cV}$ such that for all $y \in \tilde{\cV}'$ and $\alpha \in L_{\cX}(g_\alpha,x_0,x_0)$, the following holds:
	\begin{equation}\label{orbit}
	\Gamma_\alpha \odot y =\eta\circ \Gamma_\alpha(g_\alpha,y)=\ID_{x_0}(f\circ \Gamma_\alpha(g_\alpha,y)),\Gamma_\alpha(g_\alpha,y)) \stackrel{\eqref{comp}}{=}\Gamma_\alpha(f\circ \Gamma_\alpha (g_\alpha,y)g_\alpha, y).\end{equation}
	For every open neighborhood $\tilde{\cU}' \subset \tilde{\cV}'$ of $x_0$, let $\tilde{\cU}:=  \cap_{\alpha \in \stab^Q_{x_0}} \Gamma_\alpha \odot \tilde{\cU}' \subset \tilde{\cU}'$. Then \eqref{orbit} implies that $$[\Gamma_\alpha]_{(f\circ \Gamma_\alpha (g_\alpha,y)g_\alpha,y)} \in L_{\cX}(f\circ \Gamma_\alpha (g_\alpha,y)g_\alpha, y,\Gamma_\alpha\odot y).$$ 
	Then $(\Gamma_\alpha\odot y, f\circ \Gamma_\alpha (g_\alpha,y)g_\alpha, [\Gamma_\alpha]_{(f\circ \Gamma_\alpha (g_\alpha,y)g_\alpha, y)})\in S_{y,\tilde{\cU}}$. By the uniqueness of the restriction in the property \eqref{property:5} of Proposition \ref{prop:property}, different $\alpha\in \stab^Q_{x_0}$ gives different elements in $S_{y,\tilde{\cU}}$, thus $|S_{y,\tilde{\cU}}|\ge |\stab^Q_{x_0}|$. 
	
	We claim we can find $\tilde{\cU}$ small enough, such that $|S_{y,\tilde{\cU}}| = |\stab^Q_{x_0}|$ for every $y\in \tilde{\cU}$. Assume otherwise, that is there exists a shrinking sequence of open neighborhoods $\tilde{\cU}_{(k)} \subset \tilde{\cV}'$ invariant under the $\stab^Q_{x_0}$ action, such that there exist $y_k \in \tilde{\cU}_{(k)}$ with $\lim_{k}y_k = x_0$ and for each $k$ there are at least $|\stab^Q_{x_0}|+1$ tuples $\{(z^i_k, g_k^i,[\Gamma^i_k]_{(g_k^i, y_k)})\}_{1\le i \le |\stab^Q_x|+1}$ with $\Gamma_k^i(g_k^i, y_k)=z^i_k \in \tilde{\cU}(k)$ and $\lim_{k}z^i_k= x_0$.  After passing to a subsequence, we can assume $\lim_k g^i_k\in G_{x_0}$ for all $i$. Therefore there is a set $I\subset\{1,\ldots, |\stab^Q_{x_0}|+1\}$ with at least $|\stab^Q_{x_0}|/|G_{x_0}|+1=|\stab^{\eff}_{x_0}|+1$ elements, such that there is an element $g_0\in G_{x_0}$ and for all $i\in I$, $\lim_k g^i_k=g_0\in G_x$. By property \eqref{part:loc6}, $\Gamma_k^i$ is a restriction of some $\Gamma_\alpha$ for $\alpha \in L_{\cX}(g_0,x_0,x_0)$. After passing to a subsequence, we can assume $\Gamma^i_k$ is the restriction of a fixed $\Gamma_{\alpha_i}$ for any fixed $i\in I$. Since $|L_{\cX}(g_0,x_0,x_0)|=|\stab^{\eff}_{x_0}|$, there is a subset $J\subset I$ containing at least two elements such that for $j\in J$, $\Gamma_k^j$ is the restriction of a common $\Gamma_\alpha$. Then we have $z^{j_1}_k=\Gamma_\alpha(g^{j_1}_k, y_k)$, $z^{j_2}_k=\Gamma_\alpha(g^{j_2}_k, y_k)$ for $j_1,j_2\in J$.  Therefore for $k\gg0$, we have
	$$z^{j_1}_k  = \Gamma_\alpha(g^{j_1}_k, y_k) =  \ID_{x_0}(g^{j_1}_k g_0^{-1}, \Gamma_\alpha(g_0,y_k)).$$
	Since $z^{j_1}_k \in \tilde{\cU}_{(k)}\subset \tilde{\cW}$, by property \eqref{part:loc3} we have $g^{j_1}_k g_0^{-1} = f\circ \Gamma_\alpha(g_0,y_k)$. Therefore $z^{j_1}_k = \Gamma_\alpha\odot y_k$. Similarly, we have $g^{j_2}_k g_0^{-1} = f\circ\Gamma_\alpha(g_0,y_k))$ and $z^{j_2}_k = \Gamma_\alpha\odot y_k$. This contradicts that $(z^{j_1}_k,g_k^{j_1},[\Gamma^{j_1}_k]_{(g_k^{j_1},y_k)})$ and $(z^{j_2}_k, g_k^{j_2},[\Gamma^{j_2}_k]_{(g_k^{j_2}, y_k)})$ are different. 
\end{proof}

We will see that $S_{y,\tilde{\cU}}$ in Property \eqref{polyGc7} of Definition \ref{def:polyGslice} is the orbit set in Definition \ref{def:orbitset} for quotient ep-groupoids. Therefore Property \eqref{polyGc7} of Definition \ref{def:polyGslice} will be used to prove the properness of the quotient ep-groupoids by Proposition \ref{prop:proper}. 

Now we have all the prerequisites for the proof of the following theorem. 
\begin{theorem}\label{quopoly}
	Let $Z$ be a regular tame polyfold and $(\cX,\bX)$ a polyfold structure. Let $(\rho,\mathfrak{P})$ is a sc-smooth $G$-action on $Z$ as in Definition \ref{def:action}, such that $G$ is a compact Lie group and the $G$-action has finite isotropy as in Definition \ref{def:isotorpy}. For every $x_0\in \cX_{\infty}$, let $(\cU_{x_0},\tilde{\cU}_{x_0}, V_{x_0},f_{x_0},\eta_{x_0})$ be a $G$-slice for $(\cX^2,\bX^2)$ around $x_0$. 	Let $\hZ:= \cup_{x_0\in \cX_{\infty}} \rho(G, |\cU_{x_0}|) \subset Z^2$. Then $\hZ$ is a $G$-invariant open set of $Z^2$ containing $Z_\infty$. We define sets
	$$\begin{array}{rcl}
	\cQ & := & \bigsqcup_{x_0\in \cX_{\infty}} \tilde{\cU}_{x_0}, \\
	\bQ & := & \{(x,y,g,[\Gamma]_{(g,x)})| x,y \in \cQ, g\in G, [\Gamma]_{(g,x)}\in L_{\cX}(g,x,y)\},
	\end{array}$$
	and maps   
	$$\begin{array}{rrclrcl}
	s_Q:&\bQ &\to & \cQ: & (x,y,g,[\Gamma]) &\mapsto & x,\\
	t_Q:&\bQ &\to & \cQ: & (x,y,g,[\Gamma]) &\mapsto& y,\\
	m_Q:&\bQ_s\times_t \bQ& \to & \bQ: &((y,z, g, [\Gamma]),(x,y,h,[\Pi])) &\mapsto& (x,z,gh,[\Gamma]\circ[\Pi]),\\
	u_Q:&\cQ &\to& \bQ: & x &\mapsto& (x,x,\Id, [\ID_x]),\\
	i_Q: &\bQ &\to & \bQ:, & (x,y,g,[\Gamma]) &\mapsto& (y,x,g^{-1},[\Gamma]^{-1}).\end{array}$$
	Then $\cQ,\bQ$ can be equipped with tame M-polyfold structures so that $(\cQ,\bQ)$ is a tame ep-groupoid. $(\cQ,\bQ)$ defines a polyfold structure on $\hZ/G$ such that the topological quotient map $\pi_G:\hZ \to \hZ/G$ is realized by a sc-smooth polyfold map $\mathfrak{q}:\hZ \to \hZ/G$.
\end{theorem}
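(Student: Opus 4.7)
My plan is to build tame M-polyfold structures on $\cQ$ and $\bQ$ separately, verify the ep-groupoid axioms, identify $|\cQ|$ with $\hZ/G$, and finally exhibit $\pi_G$ as a sc-smooth polyfold map. The openness and $G$-invariance of $\hZ$, and the inclusion $Z_\infty \subset \hZ$, follow directly from the definitions: each $\rho(g,\cdot)$ is a homeomorphism of $Z^2$, so $\rho(G, |\cU_{x_0}|)$ is open, and every smooth orbit meets some $|\tilde{\cU}_{x_0}|$. Each slice $\tilde{\cU}_{x_0} \subset \cX^2$ is a tame M-polyfold by Lemma \ref{lemma:ben} applied to its underlying $\R^n$-sliced retraction, so $\cQ = \bigsqcup_{x_0 \in \cX_\infty} \tilde{\cU}_{x_0}$ inherits a tame M-polyfold structure as a disjoint union.

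The technical heart of the argument lies in putting a tame M-polyfold structure on $\bQ$. Near a morphism $\phi_0 = (x_0, y_0, g_0, [\Gamma]_{(g_0, x_0)})$ with $x_0 \in \tilde{\cU}_{x_a}$ and $y_0 \in \tilde{\cU}_{x_b}$, I would invoke property \eqref{property:5} of Proposition \ref{prop:property} to fix a single representative local lift $\Gamma$ on a neighborhood $V \times \cO$ of $(g_0, x_0)$ such that every nearby element of $\bQ$ has the form $(x, y, g, [\Gamma]_{(g, x)})$ with the \emph{same} $\Gamma$. Using the composition law \eqref{comp} together with property \eqref{slice4} of Definition \ref{def:polyGslice}, one computes $f_{x_b}(\Gamma(\epsilon g_0, x_0)) = \epsilon^{-1}$ for $\epsilon$ near $\Id$, so that $\partial_g (f_{x_b} \circ \Gamma)|_{(g_0, x_0)}$ is an isomorphism onto $T_{\Id} G$. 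The sc-smooth implicit function theorem Lemma \ref{lemma:help} then solves $f_{x_b}(\Gamma(g, x)) = \Id$ for a sc-smooth function $\tau$ on a neighborhood of $x_0$ in $\tilde{\cU}_{x_a}$, and the assignment
\[
\Psi: x \mapsto \bigl(x,\,\Gamma(\tau(x), x),\,\tau(x),\,[\Gamma]_{(\tau(x), x)}\bigr)
\]
gives the desired tame M-polyfold chart on $\bQ$. Compatibility of two such charts at a common morphism, arising from two choices $\Gamma$ and $\Gamma'$, reduces by Proposition \ref{prop:lifting} to composition with a sc-diffeomorphism $L_\phi$ for $\phi$ in a finite isotropy, and Corollary \ref{coro:unique} then upgrades germ-level agreement to agreement on the entire connected regular uniformizer, making the transition sc-smooth.

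To verify the groupoid axioms, observe that in the chart $\Psi$ the source $s_Q$ is the identity and the target $t_Q(x) = \Gamma(\tau(x), x)$ is sc-smooth; the analogous chart built from the inverse local lift $\Gamma^{-1}$, obtained via the inverse operation in $L_\cX$, produces a local inverse for $t_Q$, so $(\cQ, \bQ)$ is \'etale. The composition $m_Q$ is sc-smooth by the first equality in \eqref{comp}, while $i_Q$ and $u_Q$ are sc-smooth by direct inspection in the appropriate charts. Properness is obtained via Proposition \ref{prop:proper}: the cardinality hypothesis follows from property \eqref{polyGc7} of Definition \ref{def:polyGslice} together with Proposition \ref{prop:helper}, and closedness of $t_Q(s_Q^{-1}(\overline{\cV}))$ uses compactness of $G$ and the finiteness of local lifts to extract a limit morphism from any approaching sequence.

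Finally, the natural map $\cQ \to \hZ/G$, $x \mapsto [|x|]$, descends to a continuous bijection $|\cQ| \to \hZ/G$, since two points of $\cQ$ share an image precisely when connected by an element of $\bQ$, and is a homeomorphism because each induced map $\tilde{\cU}_{x_0} \to \hZ/G$ is open; Lemma \ref{lemma:quometric} provides paracompactness. To realize $\pi_G$ as a polyfold map, I would form the intermediate ep-groupoid $(\cR, \bR)$ with object space $\bigsqcup_{x_0 \in \cX_\infty} \cU_{x_0}$ and morphisms inherited from $(\cX^2, \bX^2)$; the obvious functor to the restriction of $(\cX^2, \bX^2)$ over $\hZ$ is an equivalence, and a second sc-smooth functor to $(\cQ, \bQ)$, sending $x \in \cU_{x_0}$ to $\eta_{x_0}(x)$ on objects and assembling morphisms from the slice data $f_{x_0}$, $\ID_{x_0}$ together with the original morphism in $\bX^2$, provides the required generalized map. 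The principal technical obstacle will be the consistent construction and compatibility of the $\bQ$-charts and the sc-smoothness verification for $m_Q$, since the composition formula \eqref{comp} interweaves contributions from the $G$-action, the slice functions, and the lift representatives across three distinct slices simultaneously.
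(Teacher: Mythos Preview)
Your overall strategy matches the paper's, but there are two genuine gaps and one inefficiency worth noting.

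First, you never verify that $\bQ$ is Hausdorff and paracompact, which is required for it to carry an M-polyfold structure at all. This is not automatic: two morphisms $(x,y,g,[\Gamma_1])$ and $(x,y,g,[\Gamma_2])$ with the same projection to $\cQ\times\cQ\times G$ must be separated, and this relies essentially on the regularity hypothesis (Definition \ref{def:regular}); Example \ref{ex:three} shows Hausdorffness can fail without it. The paper devotes an entire step to this, using that the projection $\Pi:\bQ\to\cQ\times\cQ\times G$ is a closed map with finite fibers to establish that $\bQ$ is a regular topological space, and then covers $\bQ$ by a locally finite family of closed paracompact sets to conclude paracompactness via \cite[Proposition 2.17]{hofer2017polyfold}.

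Second, your intermediate groupoid $(\cR,\bR)$ with object space $\bigsqcup_{x_0}\cU_{x_0}$ does \emph{not} give an equivalence to the restriction of $(\cX^2,\bX^2)$ over $\hZ$: its orbit space is only $\bigcup_{x_0}|\cU_{x_0}|$, whereas $\hZ=\bigcup_{x_0}\rho(G,|\cU_{x_0}|)$ is generally strictly larger, so essential surjectivity fails. The paper's fix is to build $(\cT,\bT)$ with object space $\coprod_{z\in\hX}\cV_z$ indexed by \emph{every} $z\in\hX$, where each $\cV_z$ comes with a chosen local lift $\Gamma_z$ carrying it into some slice $\tilde{\cU}_{u_z}$; the quotient functor then sends $x\in\cV_z$ to $\eta_{u_z}(\Gamma_z(g_z,x))$.

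Finally, invoking Lemma \ref{lemma:help} to solve $f_{x_b}(\Gamma(g,x))=\Id$ is unnecessary and would cost an extra level shift. The solution is explicit: $\tau(z)=f_{x_b}(\Gamma(g_0,z))\cdot g_0$, whence $\Gamma(\tau(z),z)=\eta_{x_b}(\Gamma(g_0,z))$ by \eqref{comp} and property \eqref{slice4}. The paper writes the chart $s_Q^{-1}(z)=(z,\eta_{x_b}\circ\Gamma(g_0,z),\tau(z),[\Gamma]_{(\tau(z),z)})$ directly in terms of the slice data, with no implicit function theorem.
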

\begin{proof}
	From the definition of $\cQ,\bQ$ and structure maps, $(\cQ,\bQ)$ is a groupoid. $\cQ$ is a tame M-polyfold since it is a disjoint union of tame M-polyfolds. The remaining part of the proof is divided into several steps. 
	 \begin{step}
	 	$\bQ$ has a tame M-polyfold structure, such $s_Q,t_Q$ are \'{e}tale and all the structure maps are sc-smooth. 
	 \end{step}
	 \begin{proof}	
	 	We first give $\bQ$ a topology. Let $\cU\subset \cQ,\cW\subset \cQ,V\subset G$ be open subsets and $\Gamma$ a local lift defined on $V\times \cU$. Then we define 
	 	\begin{equation}\label{eqn:basis}
	 	\bQ_{\cU,\cW,V,\Gamma}:= \{(x,y,g,[\Gamma]_{(g,x)})|x\in \cU,y\in\cW,g\in V, \Gamma(g,x) = y\}\subset \bQ.
	 	\end{equation}
	 	We claim all such $\bQ_{\cU,\cW,V,\Gamma}$ form a topological basis. For any $(x,y,g,[\Gamma]_{(g,x)}) \in \bQ_{\cU',\cW',V',\Gamma'}\cap \bQ_{\cU'',\cW'',V'',\Gamma''} $, we have $[\Gamma']_{(g,x)} = [\Gamma'']_{(g,x)} = [\Gamma]_{(g,x)}$. Hence we can find smaller neighborhood $\cU \subset \cU' \cap \cU'', \cW \subset \cW' \cap \cW''$ and $V \subset V'\cap V''$, such that $(x,y,g,[\Gamma]_{(g,x)}) \in \bQ_{\cU,\cW,V,\Gamma} \subset \bQ_{\cU',\cW',V',\Gamma'}\cap \bQ_{\cU'',\cW'',V'',\Gamma''}$. This proves the claim and we can use this topological basis to put a topology on $\bQ$.
	 
	    Next we equip $\bQ$ with a tame M-polyfold structure. Let $(x,y,g,[\Gamma]_{(g,x)})\in \bQ$. If $y\in \tilde{\cU}_w$ for $w\in \cX_\infty$, then we can find a neighborhood $\cV\subset \cQ$ of $x$ such that the following hold.
	    \begin{enumerate}
	    	 \item\label{c1} $\Gamma(g,z)$ is defined for $z\in \cV$ and $\Gamma(g,z) \in \cU_w$.
	    	 \item Let $g_z := f_w\circ \Gamma(g,z)g$, then $g_z$ is in a neighborhood $V'\subset G$ of $g$, such that $V'g^{-1}\subset V_w$.
	    	 \item\label{c3} For $g'\in V'$ and $z\in \cV$, $\Gamma(g', z) = \ID_{w}(g'g^{-1}, \Gamma(g,z))$.
	   \end{enumerate}
	    Since $\eta_w\circ \Gamma(g,z) = \ID_{w}(f_w\circ \Gamma(g,z), \Gamma(g,z)) = \ID_{w}(g_zg^{-1}, \Gamma(g,z)) \stackrel{\eqref{c3}}{=} \Gamma(g_z,z)$, we can define the map
	    \begin{equation} \label{source}
	    s_{Q}^{-1}:   z \mapsto (z, \eta_w\circ \Gamma(g,z), g_z ,[\Gamma]_{(g_z,z)})\in \bQ, \qquad \forall z\in \cV. \end{equation}
	    We claim $s_Q^{-1}$ is the local inverse to $s_Q$ near $(x,y,g,\Gamma)$ as the notation indicates. Since $s_Q \circ s_Q^{-1} = \Id_{\cV}$,  it is sufficient to prove $s_Q^{-1}$ is an open map. For every open subset $\cO \subset \cV$, we will show that $s_Q^{-1}(\cO) = \bQ_{\cO,\tilde{\cU}_w, V', \Gamma}$. By \eqref{source}, $s_Q^{-1}(\cO) \subset \bQ_{\cO,\tilde{\cU}_w, V', \Gamma}$.  Let $(x',y',g',[\Gamma]_{(g',x')}) \in \bQ_{\cO,\tilde{\cU_w},V',\Gamma}$. Since $\Gamma(g',x') = y'$, by \eqref{c3} $y'=\ID_w(g'g^{-1},\Gamma(g,x'))$. By property \eqref{slice4} of Definition \ref{def:polyGslice}, we have $g'g^{-1} = f_w\circ \Gamma(g,x')$ and $y'=\eta_w\circ \Gamma(g,x')$. That is $(x',y',g',[\Gamma]_{(g',x')}) \in s_Q^{-1}(\cO)$.  Therefore $s_Q^{-1}(\cO)$ is open, hence a local sc-diffeomorphism. Then $s_Q^{-1}$ gives $\bQ$ a tame M-polyfold structure locally. The transition maps are the identity map on $\cQ$, thus $\bQ$ has a tame M-polyfold structure and the source map $s_Q:\bQ\to \cQ$ is a local sc-diffeomorphism. The target map $t_Q:\bQ\to \cQ$ is  $\sc^\infty$, since the composition of target map $t_Q$ with \eqref{source} is 
	    $$z\to  \eta_w\circ \Gamma(g,z),$$
	    which is a local sc-diffeomorphism, as one can write down a local $\sc^\infty$ inverse just like \eqref{source}. Similarly the unit map $u_Q$, the inverse map $i_Q$ and the multiplication $m_Q$ are $\sc^\infty$. This proves the \'{e}tale property. 
	\end{proof}
	\begin{step}
		$\bQ$ is Hausdorff and paracompact and $(\bQ,\cQ)$ is an \'etale groupoid.
	\end{step}
	\begin{proof}
	    By our definition of the topology on $\bQ$, we have a continuous projection:
	    $$\Pi: \bQ \to \cQ \times \cQ \times G.$$
	    Because $\cQ \times \cQ \times G$ is Hausdorff, any two points in $\bQ$ with different projections are separated by open sets. If two points have same projection, i.e. $(x,y,g,[\Gamma_1]_{(g,x)})$ and $(x,y,g,[\Gamma_2]_{(g,x)})$ such that $[\Gamma_1]_{(g,x)}\ne [\Gamma_2]_{(g,x)}$, then by Proposition \ref{prop:lifting}, $[\Gamma_2]_{(g,x)}=[\Gamma_1\circ L_\phi]_{(g,x)}$ for some $\phi\in \stab_x$. Since $[\Gamma_1]_{(g,x)}\ne [\Gamma_2]_{(g,x)}$ and the ep-groupoid $(\cX,\bX)$ is regular, thus $L_\phi$ does not fix any open subset near $x$. Then for $z$ in a neighborhood of $x$ and $h$ in a neighborhood of $g$, we have $[\Gamma_1]_{(h,z)} = [\Gamma_2\circ L_\phi]_{(h,z)}\ne [\Gamma_2]_{(h,z)}$. Thus $(x,y,g,[\Gamma_1]_{(g,x)})$ and $(x,y,g,[\Gamma_2]_{(g,x)})$ can be separated by $\bQ_{\cU,\cW,V,\Gamma_1}$ and $\bQ_{\cU,\cW,V,\Gamma_2}$ for some open neighborhoods $\cU,\cW,V$ of $x,y,g$. 
	
	    We will use \cite[Proposition 2.17]{hofer2017polyfold} to prove that $\bQ$ is paracompact. We first prove that the map $\Pi: \bQ \to \cQ \times \cQ \times G$ is a closed map. Let $\bC$ be a closed subset of $\bQ$. Note that if we have $(x_k,y_k,g_k) \in \cQ \times \cQ \times G$ such that $\lim_k(x_k,y_k,g_k) = (x,y,g)$ and $(x_k,y_k,g_k,[\Gamma_k]_{(g_k,x_k)}) \in \bC$, then by Proposition \ref{prop:property}, there is a local lift $\Gamma$ with $[\Gamma]_{(g,x)} \in L_{\cX}(g,x,y)$ such that after passing to a subsequence $[\Gamma_{k}]_{(g_k,x_k)} = [\Gamma]_{(g_k,x_k)}$. This shows that $(x,y,g,[\Gamma]_{g_k,x_k})\in \bQ$ is a limit point of $\bC$, hence $(x,y,g,[\Gamma]_{g_k,x_k})\in \bC$ and $(x,y,g) \in \Pi(\bC)$. Therefore $\Pi$ is a closed map. Next we claim that $\bQ$ is a regular space. That is for $p = (x,y,g,[\Gamma]_{(g,x)}) \in \bQ$ and a closed set $\bC \subset \bQ$ not containing $p$, we can separate them by non-intersecting open sets. If $\Pi(p) \notin \Pi(\bC)$, since $\cQ \times \cQ \times G$ is regular, we can separate $p$ and $\bC$. If $\Pi(p) \in \Pi(\bC)$,  by Proposition \ref{prop:lifting} $\bC\cap \Pi^{-1}(\Pi(p))$ is a finite set. Since $\bQ$ is Hausdorff, we can find an open neighborhood $\bU\subset \bQ$ of $p$ and $\bV\subset \bQ$ of $\bC\cap \Pi^{-1}(\Pi(p))$ separating $p$ and $\bC\cap \Pi^{-1}(\Pi(p))$. Note that $p$ and $\bC \backslash \bV$ can be separated since $\Pi(p) \notin \Pi(\bC \backslash \bV)$.  Therefore $p$ and $\bC$ can be separated hence $\bQ$ is regular. 
	    
	    Next we apply \cite[Proposition 2.17]{hofer2017polyfold} as follows. Observe that for every $(x,y,g) \in \cQ \times \cQ \times G$, the preimage $\Pi^{-1}(x,y,g) \subset \bQ$ is a finite set. Then by the argument in Step 1,  we can find small neighborhood $\cU\times \cW \times V \subset \cQ \times \cQ \times G$ of $(x,y,g)$, such that $\Pi^{-1}(\cU\times \cW \times V)$ has finitely many components and over each component $s_Q$ is a sc-diffeomorphism onto its image. Therefore $\Pi^{-1}(\cU\times \cW \times V)$ is a paracompact space hence metrizable. Since $\cQ \times \cQ \times G$ is a metrizable space and is covered by such open sets $\cU \times \cW \times V$, by \cite[Lemma 2.7]{hofer2017polyfold}, we can find a locally finite refinement $\{C_i\}_{i\in I}$ consisting closed sets. Then $\{\Pi^{-1}(C_i)\}_{i\in I}$ is a locally finite covering of $\bQ$ by closed sets.  Since $\Pi^{-1}(C_i)$ is a closed subset of some metrizable space $\Pi^{-1}(\cU\times \cW \times V)$, $\Pi^{-1}(C_i)$ is also metrizable and hence paracompact. We have $\bQ$ is also paracompact by \cite[Proposition 2.17]{hofer2017polyfold}. Therefore $\bQ$ is a tame M-polyfold and $(\cQ,\bQ)$ is a tame \'{e}table groupoid.
	\end{proof}
	
	\begin{step}
	 $(\cQ,\bQ)$ is proper.
	\end{step}
	\begin{proof}
		By definition, $\stab^Q_x$ in \eqref{eqn:exact} is the isotropy group of $(\cQ,\bQ)$ for $x\in \cQ$. By \eqref{polyGc7} of Definition \ref{def:polyGslice}, the orbit set $S_{y,\tilde{\cU}_{x}}$ has $|\stab^Q_x|$ elements for a $G-$slice $\cU_x$ and $y\in \tilde{\cU_x}$. Then by Proposition \ref{prop:proper} and Proposition \ref{prop:helper}, it is sufficient to prove that for any $z\in \cQ$ there exists an arbitrary small open neighborhood $\cV \subset \cQ$ of $z$ with the property that $t_Q(s_Q^{-1}(\overline{\cV}))$ is a closed set of $\cQ$. For every open neighborhood $\cO \subset \tilde{\cU}_x$ of $z$, we can pick $\cV\subset \cO$, such that $\cV$ is contained in an open set $\cW \subset \cU_x \subset \cX^2$ which has the property that $t:s^{-1}(\overline{\cW })\to \cX^2$ is proper. Moreover, for small enough $\cV$, we can assume there is neighborhood $V\subset G$ of $\Id$, such that $\cW = \ID_x(V,\cV)$. We claim that \begin{equation} \label{eqn:w} \overline{\cW} = \ID_x(\overline{V},\overline{\cV}).\end{equation} This is because a sequence $x_i\in \cW$ converge $x_\infty$, iff $\eta_x(x_i) \to \eta_x(x_\infty)$ in $\overline{\cV}$ and $f_x(x_i) \to f_x(x_\infty)$, that is $x_\infty \in \ID_x(\overline{V},\overline{\cV})$. By \eqref{eqn:w}, $\overline{\cW} \subset \eta_x^{-1}(\overline{\cV})$. Since $\cQ$ is metrizable, to show $t_Q(s_Q^{-1}(\overline{\cV}))$ is a closed set, it is enough to show it is sequentially closed. We pick a sequence $y_k\in t_Q(s_Q^{-1}(\overline{\cV}))$ converging to $y_\infty\in \tilde{\cU}_y$, it suffices to show that $y_\infty \in t_Q(s_Q^{-1}(\overline{\cV}))$. Since $y_k \in t_Q(s_Q^{-1}(\overline{\cV}))$, there is $x_k\in \overline{\cV} \subset \tilde{\cU}_x$ and $g_k \in G$ such that $\rho(g_k, |x_k|)=|y_k|$. Since $G$ is compact, we can assume $\lim_k g_k\to g_0\in G$ by choosing a subsequence.  We choose any $\tilde{y}_\infty\in \cX^2$ such that $\rho(g_0^{-1}, |y_\infty|)=|\tilde{y}_\infty|$ and pick any $[\Gamma]_{(g_0^{-1},y_\infty)}\in L(y_\infty, \tilde{y}_\infty,g_0^{-1})$. Let $\tilde{y}_k:=\Gamma(g_0^{-1}, y_k)$, then $\lim_{k}\tilde{y}_k=\tilde{y}_\infty$ in $\cX^2$. We also define $\tilde{x}_k:=\ID_x(g_0^{-1}g_k, x_k)$. Then for $k$ big enough, $\tilde{x}_k\in \overline{\cW}$ by \eqref{eqn:w}. Note that in the orbits space $|\cX^2|$, we have
		$$\rho(g_0^{-1}g_k, |x_k|)=|\tilde{x}_k|, \quad \rho(g_0^{-1}, |y_k|)=|\tilde{y}_k|.$$
		Therefore $|\tilde{x}_k|=|\tilde{y}_k|$ in $|\cX^2|$. By the properness of $t:s^{-1}(\overline{\cW})\to\cX^2$, there exists $\tilde{x}_\infty\in \overline{\cW}$ and a morphism $\phi: \tilde{x}_\infty\to \tilde{y}_\infty$. Since $\eta_x(\tilde{x}_\infty)\in \overline{\cV}$, we have a morphism in $\textbf{Q}$ from $\eta_x(\tilde{x}_\infty)$ to $y_\infty$ defined by
		$$(\eta_x(\tilde{x}_\infty), y_\infty, g_0f_x(\tilde{x}_\infty),  [\Gamma^{-1}\circ L_\phi \circ \ID_x]_{(g_0f_x(\tilde{x}_\infty),\eta_x(\tilde{x}_\infty))}).$$
		Therefore $y_\infty$ is in $t_Q(s_Q^{-1}(\overline{\cV}))$. Thus we prove that $t_Q(s_Q^{-1}(\overline{\cV}))$ is a closed set of $\cQ$ and the properness follows. Hence $(\cQ,\bQ)$ is a tame ep-groupoid.
	\end{proof}
	\begin{step}
		$\hZ/G$ is a tame polyfold.
	\end{step}
	\begin{proof}
		 By the definition of $\hZ$, $\hZ\subset Z^2$ is a $G$-invariant open neighborhood $Z_\infty$. By Lemma \ref{lemma:quometric}, $\hZ/G$ is metrizable space. Let $\beta$ denote the composition of maps
	     $$\beta: \cQ \stackrel{i}{\to} \cX^2 \stackrel{\pi_{\cX}}{\to} |\cX^2| = Z^2 \stackrel{\pi_G}{\to} Z^2/G,$$
	     where $i:\cQ\to \cX^2$ is defined by inclusion of the slices $\tilde{\cU}_x \hookrightarrow \cX^2$. We claim $\beta$ induces a homeomorphism $|\beta|:|\cQ| \to \hZ/G$. First,  for $x,y\in \cQ$,  the existence of $(x,y,g,[\Gamma]_{(g,x)})\in \bQ$ is equivalent to that $|x|$ and $|y|$ are in the same $G$-orbit in $Z$.  As a consequence $|\beta|$ is well-defined and is a bijection. Since $\beta$ is continuous, $|\beta|$ is also continuous. Therefore to show that $|\beta|$ is homeomorphism, it suffices to show that $|\beta|$ is an open map. Let $U$ be an open set of $|\cQ|$.  Let $\pi_{\cQ}:=\cQ \to |\cQ|$, $\eta: \coprod_{x_0\in \cX_{\infty}} \cU_{x_0} \to \cQ$ defined by $\eta|_{\cU_{x_0}} = \eta_{x_0}$ and $\iota: \coprod_{x_0\in \cX_{\infty}} \cU_{x_0} \to \cX^2$ defined by the inclusions $\cU_{x_0} \hookrightarrow \cX^2$. Then 
	     $$|\beta|(U) = \pi_G\circ \pi_{\cX}\circ i (\pi_{\cQ}^{-1}(U))= \pi_G
	    (\rho(G,\pi_{\cX}\circ \iota(\eta^{-1}(\pi_{\cQ}^{-1}(U))))).$$
	     Since $\iota$ and $\pi_{\cX}$ are open maps by \cite[Proposition 7.6]{hofer2017polyfold}. Then $\pi_{\cX}\circ \iota(\eta^{-1}(\pi_{\cQ}^{-1}(U)))$ is open, therefore $\rho(G,\pi_{\cX}\circ    \iota(\eta^{-1}(\pi_{\cQ}^{-1}(U))))$ is a $G$-invariant open set of $Z^2$. Hence $|\beta|(U)$ is open. 
	\end{proof}
	\begin{step}
		There is a sc-smooth polyfold map $\mathfrak{q}$ realizing the topological quotient $\pi_G:\hZ \to \hZ/G$.
	\end{step}
	\begin{proof}
		 Let $(\hX, \widehat{\bX})$ be the full subcategory $(\cX^2, \bX^2)$ with orbit space $\hZ$. Then $(\hX,\widehat{\bX})$ is a polyfold structure on $\hZ$. But we will construct another equivalent polyfold structure on $\hZ$ to write down the quotient map. Let $z\in \hX$, then there is an open neighborhood $\cV_z\subset \hX$ with the following structures. 
		 \begin{enumerate}
		 	\item There exists a $G$-slice $\tilde{\cU}_{u_z}$ for $u_z\in \cX_\infty$ and $g_z\in G$, such that $v_z\in \tilde{\cU}_{u_z}$ and $\rho(g_z, |z|)=|v_z|$.
		 	\item There is $[\Gamma_z]_{(g_z,z)} \in L(g_z,z,v_z)$  and 
		 	$$\Gamma_z(f_{u_z}\circ\Gamma_z(g_z,x)g_z,x)=\ID_{u_z}(f_{u_z}\circ\Gamma_z(g_z,x),  \Gamma_z(g_z,x)) = \eta_{u_z}(\Gamma_z(g_z,x))\in \tilde{\cU}_{u_z}$$
		 	holds for $x\in \cV_z$.
		 \end{enumerate}
		 Then we define an ep-groupoid $(\cT,\bT)$ as follows:
	     $$ \cT :=\coprod_{z\in \hX}  \cV_z, \quad \bT:=\{ (x,\phi,y)| x\in \cV_z, y\in \cV_w, \phi\in \bX, \phi(x)=y \} $$
	     The structures maps are:
	     $$\begin{array}{rrcl}
	     s: & (x,\phi,y)&\mapsto& x\\
	     t: & (x,\phi,y)&\mapsto & y\\
	     m: & ((y,\psi,z), (x,\phi,y))& \mapsto & (x,\psi\circ\phi,z)\\
	     u: & (x,\phi,y)&\mapsto &(y,\phi^{-1},x)\\
	     i: &x&\mapsto &(x,\Id,x).
	     \end{array}$$
	     The natural functor $(\cT,\bT) \to  (\hX,\widehat{\bX})$ by sending  $x\in \cV_z$ to $x\in \hX$ and $(x,\phi,y)$ to $\phi$ is an equivalence. 
	     
	     We claim there is a functor $Q: (\cT,\bT) \to (\cQ,\bQ)$ realizing the topological quotient map $\pi_G:\hZ\to \hZ/G$ as follows. For $x\in \cV_z$, we define $\rho_x:=f_{u_z}\circ \Gamma_z(g_z,x)g_z$ and $[\Lambda_x] = [\Gamma_z]_{(\rho_x,x)}$. Let $(x,\phi,y)\in \bT$ be a morphism,  we define
	     \begin{equation}\label{quotient}Q: 
	     \begin{aligned}
	     x &  \mapsto \ID_{u_z}(\rho_x, x)\in \tilde{\cU}_{w};\\
	     (x,\phi,y) & \mapsto (Q(x), Q(y), \rho_y\rho_x^{-1},  [\Lambda_y] \circ [L_{\phi}]_x\circ [\Lambda_x]^{-1}).
	     \end{aligned}
	     \end{equation}
	     It is a functor by the definition of structure maps. By Chain rule, $Q:\cT\to \cQ$ is sc-smooth. The \'{e}tale property implies that $Q:\bT \to \bQ$ is sc-smooth.  The functor $Q$ descends to the quotient map $\pi_G:\hZ \to \hZ/G$ on the orbit space by definition. 
	 \end{proof}
 Thus finishes the proof of the theorem.
\end{proof}

The construction in Theorem \ref{thm:quopoly} yields a polyfold structure on the quotient, once we fix a polyfold structure $(\cX,\bX)$ for $Z$ and choose $G$-slices. The following remark and proposition explain that in what sense the polyfold structures on the quotient are unique. 
\begin{remark}\label{rmk:polyunique}
	If for every $x_0\in \cX_{\infty}$, we pick two different $G$-slices around $x_0$ to construct two quotient ep-groupoids $(\cQ_a,\bQ_a),(\cQ_b,\bQ_b)$ and give $\hZ_a/G, \hZ_b/G$ polyfold structures.  Then we can take union of two sets of slices to form an ep-groupoid $(\cQ_{ab},\bQ_{ab})$, which gives $(\hZ_a\cup \hZ_b)/G$ a polyfold structure. The natural inclusions $(\cQ_a,\bQ_a) \to (\cQ_{ab},\bQ_{ab})$ and $(\cQ_b,\bQ_b) \to (\cQ_{ab},\bQ_{ab})$ induce the open inclusions of polyfolds $\hZ_a/G\to(\hZ_a\cup \hZ_b)/G$ and $\hZ_b/G\to(\hZ_a\cup \hZ_b)/G$. As a consequence, $\hZ_a/G$ and $\hZ_b/G$  restrict to the same polyfold structure on $(\hZ_a\cap \hZ_b)/G$.
\end{remark}
\begin{proposition}\label{prop:polyunique}
		Let $Z$ be a regular tame polyfold and $(\cX_a,\bX_a),(\cX_b,\bX_b)$ two polyfold structures. Assume compact Lie group $G$ acts on $Z$ sc-smoothly by $(\rho,\mathfrak{P})$ and the action only has finite isotropy. Let $\hZ_a, \hZ_b\subset Z^2$ be two open $G$-invariant open sets containing $Z_\infty$ such that $\hZ_a/G, \hZ_b/G$ are the quotient polyfolds constructed from $(\cX_a,\bX_a)$ resp. $(\cX_b,\bX_b)$ using Theorem \ref{quopoly}. Then there exists a $G$-invariant open set $\hZ \subset \hZ_a\cap \hZ_b$ containing $Z_\infty$, such that $\hZ_a/G, \hZ_b/G$ restricted to $\hZ/G$ are the same polyfold. 
\end{proposition}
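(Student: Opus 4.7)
The plan is to reduce to a common refinement. By Definition \ref{def:polyfold}, since $(\cX_a,\bX_a)$ and $(\cX_b,\bX_b)$ are equivalent polyfold structures on $Z$, there exist a tame ep-groupoid $(\cZ,\bZ)$ and equivalences $F_a:(\cZ,\bZ)\to(\cX_a,\bX_a)$ and $F_b:(\cZ,\bZ)\to(\cX_b,\bX_b)$ with $|F_a|=|F_b|$ as homeomorphisms to $Z$. A short argument using Definition \ref{def:regular} and the fact that equivalences are local sc-diffeomorphisms on objects and fully faithful functors shows that $(\cZ,\bZ)$ inherits regularity from $(\cX_a,\bX_a)$. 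It therefore suffices to construct a quotient polyfold from $(\cZ,\bZ)$ and show that, on a $G$-invariant open set containing $Z_\infty$, it agrees with each of the quotients built from $(\cX_a,\bX_a)$ and $(\cX_b,\bX_b)$.

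First I would lift the $G$-action to a $\sc^\infty$ $G$-action on $(\cZ,\bZ)$ in the sense of Definition \ref{def:action}. Given $g\in G$ and $p,q\in Z$ with $|\mathfrak{P}(g)|(p)=q$, pick any local representative $\rho_{g,x_a}:U\times(\stab_{x_a}\ltimes\cU_{x_a})\to(\cX_b,\bX_b)$ coming from the original data, and choose $z_a\in\cZ$ with $F_a(z_a)=x_a$ together with a local uniformizer $\stab_{z_a}\ltimes\cU_{z_a}$ around $z_a$ on which $F_a$ is a sc-diffeomorphism onto a sub-uniformizer of $\cU_{x_a}$. Because $F_b$ is a local sc-diffeomorphism on objects and fully faithful, we can invert it locally and postcompose to obtain a sc-smooth functor $U\times(\stab_{z_a}\ltimes\cU_{z_a})\to(\cZ,\bZ)$ representing the action. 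Thus $(\cZ,\bZ)$ carries a sc-smooth $G$-action with the same isotropy and underlying topological action as the original, fitting into a compatible picture with both $F_a$ and $F_b$.

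Next I would apply Theorem \ref{quopoly} to $(\cZ,\bZ)$ to build a quotient polyfold structure $(\cQ_c,\bQ_c)$ on an open set $\hZ_c\subset Z^2$ containing $Z_\infty$. The key observation is that the equivalence $F_a$ carries $G$-slices for $(\cZ,\bZ)$ to $G$-slices for $(\cX_a,\bX_a)$: if $(\cU,\tilde{\cU},V,f,\eta)$ is a $G$-slice around $z\in\cZ_\infty$ produced by Proposition \ref{prop:localmodel}, then $(F_a(\cU),F_a(\tilde{\cU}),V,f\circ F_a^{-1},F_a\circ\eta\circ F_a^{-1})$ is a $G$-slice for $(\cX_a^2,\bX_a^2)$ around $F_a(z)$ on small enough neighborhoods. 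By Remark \ref{rmk:polyunique} we are free to use exactly this family of slices in building $(\cQ_a,\bQ_a)$, and analogously for $(\cQ_b,\bQ_b)$ via $F_b$. Conjugation $\Gamma\mapsto F_a^{-1}\circ\Gamma\circ F_a$ (interpreted locally as germs) induces a bijection between local lift sets $L_{\cZ}(g,z,w)$ and $L_{\cX_a}(g,F_a(z),F_a(w))$ that preserves composition, identity, and inverse, by functoriality of the construction in Proposition \ref{prop:property}. Together with the identification of slices, this assembles into a sc-smooth functor $\widetilde F_a:(\cQ_c,\bQ_c)\to(\cQ_a,\bQ_a)$ that is fully faithful, a local sc-diffeomorphism on objects, and induces the identity on the relevant part of the orbit space. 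Hence $\widetilde F_a$ is an equivalence on the overlap, and symmetrically for $\widetilde F_b$; choosing a $G$-invariant open $\hZ\subset \hZ_a\cap\hZ_b\cap\hZ_c$ containing $Z_\infty$ yields the asserted identification.

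The main obstacle is showing that pullback of local lifts under $F_a$ is well-defined at the level of germs and interacts correctly with the finite-set structure of Proposition \ref{prop:lifting}. This is where regularity of $(\cZ,\bZ)$ and Corollary \ref{coro:unique} are used essentially: a germ of local lift on a connected regular uniformizer is determined uniquely by its value at one point, so the conjugation prescription above descends unambiguously to $L_{\cZ}(g,z,w)$ and the multiplication formula \eqref{comp} transports correctly through the equivalence.
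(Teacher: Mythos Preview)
Your proposal is correct and follows essentially the same route as the paper: pass to a common refinement $(\cZ,\bZ)$ of the two polyfold structures, apply Theorem \ref{quopoly} there, push the resulting $G$-slices forward through the equivalences $F_a,F_b$, and invoke Remark \ref{rmk:polyunique} to identify the quotients on the overlap. The paper's argument is somewhat more economical in that it does not separately ``lift'' the action to $(\cZ,\bZ)$ (the action is on the polyfold $Z$, so any polyfold structure on $Z$ may serve as the fixed structure in the local-lift machinery) and it skips the explicit construction of an equivalence $\widetilde F_a$ between quotient ep-groupoids, relying instead directly on Remark \ref{rmk:polyunique}; but these are elaborations of the same argument, not a different one.
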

\begin{proof}
	Since $(\cX_a,\bX_a), (\cX_b,\bX_b)$ are equivalent polyfold structures, we have equivalences
	$(\cX_a,\bX_a) \stackrel{F}{\leftarrow} (\cW,\bW) \stackrel{G}{\to}(\cX_b,\bX_b)$. We apply Theorem \ref{quopoly} to $(\cW,\bW)$, i.e. we pick $G$-slices $(\cU_{x_0}, \tilde{\cU}_{x_0}, V_{x_0}, f_{x_0}, \eta_{x_0})$ for $(\cW^2,\bW^2)$ around $x_0 \in \cW_\infty$ to construct a quotient ep-groupoid $(\cQ_{\cW},\bQ_{\cW})$ and gives $\hZ_{\cW}/G$ a polyfold structure, where $\hZ_{\cW}:=\cup_{x_0\in \cW_\infty} \rho(G, |\cU_{x_0}|)$. Since $F,G$ are equivalences,  we can require that $\cU_{x_0}$ is small enough such that $F|_{\cU_{x_0}}, G|_{\cU_{x_0}}$ are sc-diffeomorphisms. Then $(F(\cU_{x_0}), F(\tilde{\cU}_{x_0}), V_{x_0}, \allowbreak F_*f_{x_0}, F_*\eta_{x_0})$ resp. $(G(\cU_{x_0}), G(\tilde{\cU}_{x_0}), V_{x_0}, G_*f_{x_0}, G_*\eta_{x_0})$ are $G$-slices for $(\cX^2_a,\bX^2_a)$ resp. $(\cX^2_b, \bX^2_b)$. By Remark \ref{rmk:polyunique},  polyfold structure constructed from $G$-slices $(F(\cU_{x_0}), F(\tilde{\cU}_{x_0}), V_{x_0}, F_*f_{x_0}, F_*\eta_{x_0})$ and $\hZ_a/G$ induce  equivalent polyfold structures on $(\hZ_{\cW}\cap \hZ_a)/G$. Similarly, polyfold structure constructed from $G$-slices $(G(\cU_{x_0}), G(\tilde{\cU}_{x_0}), V_{x_0}, G_*f_{x_0}, G_*\eta_{x_0})$ and $\hZ_b/G$ induce the equivalent polyfold structures on $(\hZ_{\cW}\cap \hZ_b)/G$.  Therefore $\hZ_a/G, \hZ_b/G$ restricted to $(\hZ_{\cW} \cap \hZ_a\cap \hZ_b)/G$ are the same polyfold.
\end{proof}

\begin{remark}\label{rmk:reduced}
	The challenge of constructing a quotient ep-groupoid is the construction of the morphism space. 
	\begin{enumerate}
		\item 	Our approach in Theorem \ref{quopoly} is geometric in the sense that two morphisms are different iff they are represented by geometrically different data $(x,y,g, \Gamma)$. The regular property of polyfold is to the control local lifts set $L_{\cX}(g,x,y)$. It is not clear to us how to construct a quotient ep-groupoid $(\cQ,\bQ)$ with isotropy $\stab^Q_x$ satisfying an exact sequence $1\to \stab_x \to \stab^Q_x\to G_x\to 1$.
		\item 	When $G$ is the trivial group, the construction in Theorem \ref{quopoly} gives an effective polyfold structure on $\hZ \subset Z^2$. 
		\item One can still apply the construction in Theorem \ref{quopoly} to the irregular polyfold in Example \ref{ex:irr-one}, where the group $G$ is the trivial group. Then the new morphism space contains two open half planes and one half-line with the attaching point is a double point, thus not Hausdorff. This illustrates the role of the first property of regularity\footnote{However, to get an effective ep-groupoid out of a general ep-groupoid, one only needs the first condition in the regular property (Definition \ref{def:regular}), see the proof of \cite[Proposition 7.9]{hofer2017polyfold}.}.
	\end{enumerate}
\end{remark}
The following example shows the importance of the second property of regularity (Definition \ref{def:regular}) in the proof of Theorem \ref{quopoly}.
\begin{example}\label{ex:three}
	One can still  apply the construction in Theorem \ref{quopoly} to Example \ref{ex:irr-two} with the trivial group action. Note that Proposition \ref{prop:lifting} does not hold for $\Z_2\ltimes \Ima \pi$ in Example \ref{ex:irr-two}, this is because the sc-diffeomorphism $\eta:\Ima \pi \to \Ima \pi$ in Example \ref{ex:irr-two} can be completed to an autoequivalence of $\Z_2\ltimes \Ima \pi$, but $\eta \ne L_\phi$ for any $\phi \in \stab_{(0,0)}=\Z_2$.  Note that $\eta\circ \eta = \Id$, then the isotropy group $ \stab^Q_{(0,0)}$ for the quotient construction is $\Z_2\times \Z_2$ generated by $\eta$ and $L_\phi$. Therefore \eqref{eqn:exact} does not holds anymore. 
	
	In this example, one can still carry out the remaining construction. If we use $\Ima \pi$ as the only slice for the construction, then the resulted morphism space $\bQ$ is two copies of $\Ima \pi$ with two extra points $p,q$ corresponding to $\eta$ and $\eta \circ L_{\phi}$, such that $p$ connects the negative side of one copy with the positive side of the other copy and $q$ connects the remaining two sides. As a result, $\bQ$ is no longer Hausdorff due to $p$ and $q$.
\end{example}

\begin{remark}\label{rmk:quoquo}
	If we need to take another quotient on top of a quotient, we need to check whether the quotient is regular. The situation depends on the external group action. For example, if we think of the translation groupoids in Example \ref{ex:irr-one} and Example \ref{ex:irr-two} as quotients of $\Z_2$ actions, then the quotient polyfolds are not regular. In light of Corollary \ref{cor:regular}, we give a sufficient condition for the quotient to be regular. By Proposition \ref{prop:gammachoice}, $\rD \Gamma(g_0,\cdot)_{x_0}$ preserves the infinitesimal direction $\mathfrak{g}_{x_0}$ for $g_0\in G_{x_0}$ and $\Gamma \in L_{\cX}(g_0,x_0,x_0)$. Then for every $x_0\in \cX_\infty$, we can define the map $\stab^Q_{x_0} \to \Hom(T_{x_0}\cX/\mathfrak{g}_{x_0}, T_{x_0}\cX/\mathfrak{g}_{x_0})$ by $\Gamma \in L_{\cX}(g_0,x_0,x_0) \mapsto \rD \Gamma(g_0,\cdot)_{x_0}$. If such map is injective for every $x_0\in \cX_\infty$, then the quotient polyfold is regular. 
\end{remark}

\subsection{Quotients of polyfold bundles and sections}
In this section, we prove Theorem \ref{thm:main} up to the claim orientation. The construction of quotient polyfold bundles is analogous to the construction of quotient polyfolds in Theorem \ref{quopoly}, once we set up the counterpart of Proposition \ref{prop:localmodel} in the bundle case.

\subsubsection{Regular strong polyfold bundles and sc-Fredholm sections}
We first recall the basics of polyfold bundles and sc-Fredholm sections that will be used here. Like the polyfold case, we review the bundles over ep-groupoids first.
\begin{definition}[{\cite[Definition 8.4]{hofer2017polyfold}} ]\label{def:strPbund} 
	A sc-smooth functor $P:(\cE,\bE)\to (\cX,\bX)$ between two ep-groupoids is a \textbf{tame strong ep-groupoid bundle}, if the following properties hold.
	\begin{enumerate}
		\item On the object space $P^0:\cE\to \cX$ is a tame strong M-polyfold bundle.
		\item The morphism space $\bE$ is the fiber product $\bX_s\times_{P^0}\cE$. The functor on the morphism space $P^1:\bE \to \bX$ is the projection to the $\bX$ component.
		\item The structure maps on $\bE$ are given by 
		$$ \begin{array}{rcl}s(g,e) &=&e;\\
		t(g,e)&=&\mu(g,e);\\
		m((g,e),(h,a))&=&(g\circ h, a);\\
		u(e)&=&(\Id,e);\\
		i(g,e)&=&(g^{-1}, \mu(g,e)).\end{array}$$
		\item\label{cond:4} The target map $\mu$ has following properties:
		\begin{itemize}
			\item $t_{\cX}\circ P^1=P^0\circ \mu$, i.e. $\mu$ is a strong bundle map $\bE \to \cE$ covering the target map $t_{\cX}:\bX \to \cX$;
			\item $\mu$ is surjective local sc-diffeomorphism;
			\item $\mu(\Id_x,e)=e$, for $e\in \cE_x$;
			\item $\mu(g\circ h, e)=\mu(g, \mu(h,e)).$
		\end{itemize}	
	\end{enumerate}
\end{definition}

Note that for $\phi \in \bX$, there exist open neighborhoods $\cU_{s_{\cX}(\phi)}, \cU_{t_{\cX}(\phi)} \subset \cX$ and $\bU_{\phi} \subset \bX$ of $s_{\cX}(\phi), t_{\cX}(\phi)$ and $\phi$, such that $t_{\cX} : \bU_\phi \to \cU_{t_{\cX}(\phi)}$ and $s_{\cX}:\bU_\phi \to \cU_{s_{\cX}(\phi)}$ are sc-diffeomorphisms. Then $t:(P^1)^{-1}(\bU_\phi) \to (P^0)^{-1}(\cU_{t_{\cX}(\phi)})$ and $s:(P^1)^{-1}(\bU_\phi) \to (P^0)^{-1}(\cU_{s_{\cX}(\phi)})$ are both strong bundle isomorphisms. In particular, we can define the following strong bundle isomorphism covering $L_\phi$ (Definition \ref{def:lphi}), 
$$R_\phi:= t\circ s^{-1}:(P^0)^{-1}(\cU_{s(\phi)}) \to (P^0)^{-1}(\cU_{t(\phi)}).$$
Then $R_\phi$ is a special case of local action $L_{(\phi,0)}$ on the ep-groupoid $(\cE,\bE)$ for $(\phi,0) \in \bE$. 

To introduce the definition of regular strong bundle, we first define a local uniformizer for a tame strong polyfold bundle.
\begin{proposition}\label{prop:uniformizer}
	Let $P:(\cE,\bE) \to (\cX,\bX)$ be a tame strong ep-groupoid bundle and $x\in \cX$. Assume $\psi_x:\stab_x\ltimes\cU \to (\cX,\bX)$ is a local uniformizer of $(\cX,\bX)$ around $x$. Then there exists a diagram of sc-smooth functors
	$$
	\xymatrix{
		\stab_x \ltimes (P^0)^{-1}(\cU) \ar[d]^P \ar[r]^{\qquad \Psi_x} &  (\cE,\bE) \ar[d]^P \\
		\stab_x \ltimes \cU \ar[r]^{\psi_x} & (\cX,\bX),
	}
	$$
	such that the following holds.
		\begin{enumerate}
			\item $\stab_x$ acts on $(P^0)^{-1}(\cU)$ by strong bundle isomorphisms.
			\item $\Psi_x$ is fully faithful and on the object level $\Psi_x^0 :(P^0)^{-1}(\cU)  \to \cE $ is the inclusion.
			\item On the orbit space $|\Psi_x|: (P^0)^{-1}(\cU) /\stab_x \to |(P^0)^{-1}(\cU)|\subset |\cE| $ is a homeomorphism. 
		\end{enumerate} 
\end{proposition}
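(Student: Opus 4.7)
The plan is to promote the local $\stab_x$-action on $\cU$ given by Theorem~\ref{thm:natural} to a $\stab_x$-action on the total space $(P^0)^{-1}(\cU)$ by strong bundle isomorphisms, and then package the resulting translation groupoid together with an inclusion functor. Theorem~\ref{thm:natural} supplies a sc-smooth map $\Sigma:\stab_x\times \cU\to\bX$ with $s(\Sigma(\phi,y))=y$ and $t(\Sigma(\phi,y))=L_\phi(y)$. For each $\phi\in\stab_x$ and $e\in(P^0)^{-1}(\cU)$, the pair $(\Sigma(\phi,P^0(e)),e)$ lies in $\bE=\bX{}_s\times_{P^0}\cE$, so I would define
\[
R_\phi(e):=\mu\bigl(\Sigma(\phi,P^0(e)),\,e\bigr).
\]
By condition~\eqref{cond:4} of Definition~\ref{def:strPbund}, $\mu$ is a strong bundle map covering $t_{\cX}$ that is a local sc-diffeomorphism, so $R_\phi$ is a strong bundle isomorphism of $(P^0)^{-1}(\cU)$ covering $L_\phi$, with inverse $R_{\phi^{-1}}$.

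Next I would verify that $\phi\mapsto R_\phi$ is a group action. This reduces to the cocycle identity $\Sigma(\phi\psi,y)=\Sigma(\phi,L_\psi(y))\circ\Sigma(\psi,y)$, combined with the associativity $\mu(g\circ h,e)=\mu(g,\mu(h,e))$. The cocycle identity is immediate from the uniqueness clause of Theorem~\ref{thm:natural}: both sides are morphisms from $y$ to $L_{\phi\psi}(y)$ with source in $\cU$, hence equal. With the action in hand, I would define the functor $\Psi_x$ to be the inclusion on objects, and on morphisms
\[
\Psi_x^1:\stab_x\times(P^0)^{-1}(\cU)\longrightarrow\bE,\qquad (\phi,e)\longmapsto(\Sigma(\phi,P^0(e)),e).
\]
Sc-smoothness of $\Psi_x^1$ follows from sc-smoothness of $\Sigma$ and the inclusion, and functoriality follows from the same cocycle identity used above. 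This makes the displayed square commute on the nose because $P^1(\Sigma(\phi,P^0(e)),e)=\Sigma(\phi,P^0(e))=\psi_x^1(\phi,P^0(e))$.

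For fully faithfulness, let $e_1,e_2\in(P^0)^{-1}(\cU)$ and let $(\psi,e_1)\in\bE$ be any morphism with $\mu(\psi,e_1)=e_2$. Then $\psi\in\bX$ is a morphism from $P^0(e_1)$ to $P^0(e_2)$, both in $\cU$, so the third bullet of Theorem~\ref{thm:natural} yields a unique $\phi\in\stab_x$ with $\psi=\Sigma(\phi,P^0(e_1))$; this exhibits the morphism as $\Psi_x^1(\phi,e_1)$ and uniqueness proves faithfulness.

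Finally, for the orbit space statement I would argue as follows. The map $|\Psi_x|:(P^0)^{-1}(\cU)/\stab_x\to|\cE|$ factors as the composition of the homeomorphism onto $|(P^0)^{-1}(\cU)|$ (surjectivity by construction, injectivity from fully faithfulness of $\Psi_x$, continuity because $\Psi_x^0$ is an inclusion), with the inclusion of $|(P^0)^{-1}(\cU)|$ into $|\cE|$; this inclusion is open because $P^0$ is continuous and open (a strong bundle projection restricted to the preimage of $\cU$, which has open image in $|\cX|$ by the assumption that $\psi_x$ is a local uniformizer). The main subtlety throughout is bookkeeping rather than hard analysis: the one place one must use uniqueness carefully is the cocycle identity for $\Sigma$, which underlies both that $R_\phi$ is a group action and that $\Psi_x^1$ is a functor; everything else follows from the defining axioms of a tame strong ep-groupoid bundle and Theorem~\ref{thm:natural}.
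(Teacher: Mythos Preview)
Your approach is essentially identical to the paper's: define the $\stab_x$-action on $(P^0)^{-1}(\cU)$ via $R_\phi(e)=\mu(\Sigma(\phi,P^0(e)),e)$ and set $\Psi_x^1(\phi,e)=(\Sigma(\phi,P^0(e)),e)$. The paper's proof is terse and you have correctly filled in the details on fully faithfulness and the orbit-space homeomorphism.

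One small correction: your justification of the cocycle identity $\Sigma(\phi\psi,y)=\Sigma(\phi,L_\psi(y))\circ\Sigma(\psi,y)$ is not valid as written. You say ``both sides are morphisms from $y$ to $L_{\phi\psi}(y)$ with source in $\cU$, hence equal,'' but the uniqueness clause of Theorem~\ref{thm:natural} only says that $\Sigma(\cdot,y)$ is a bijection from $\stab_x$ onto the set of morphisms out of $y$ landing in $\cU$; it does \emph{not} say there is a unique morphism $y\to L_{\phi\psi}(y)$ (indeed there are $|\stab_y|$ of them). The correct one-line argument is that the last sentence of Theorem~\ref{thm:natural} asserts $(i_{\cU_x},\Sigma)$ is a \emph{functor}, so $\Sigma$ automatically respects composition in the translation groupoid, which is exactly the cocycle identity. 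With this fix your proof is complete.
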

\begin{proof}
 $\stab_x$ acts on $(P^0)^{-1}(\cU)$ by $\phi \in \stab_x \mapsto R_\phi$.  	We write $\psi_x = (\Id_{\cU}, \Sigma):\stab_x\ltimes \cU \to (\cX,\bX)$. Then we  can define $\Psi_x:\stab_x\ltimes(P^0)^{-1}(\cU) \to (\cE,\bE)$ by
	$$\Psi^0_x:(P^0)^{-1}(\cU) \hookrightarrow \cE, $$
	$$ \Psi^1_x: \stab_x\times (P^0)^{-1}(\cU)   \to \bE, \quad (\phi, e) \mapsto (\Sigma(\phi, P^0(e)), e)\in \bE = \bX_s\times_{P^0} \cE.$$
	Then $\Psi$ is fully faithful and on object level $|\Psi_x|: (P^0)^{-1}(\cU) /\stab_x \to |(P^0)^{-1}(\cU)|\subset |\cE| $ is a homeomorphism.
\end{proof}

From the proof above, we see that $(P^0)^{-1}(\cU)$ is a local uniformizer of $(\cE,\bE)$ around $(x,0)\in \cE$ and is also a strong bundle over $\cU$. We will call $\Psi_x:\stab_x\ltimes (P^0)^{-1}(\cU) \to (\cE,\bE)$ a \textbf{local uniformizer for tame strong ep-groupoid bundle $\bm{P}$ around $\bm{x}$}. When there is no ambiguity, we will call $(P^0)^{-1}(\cU)$ a local uniformizer of $P$ around $x$. Let $\diff^p_\sc(x)$ denotes the group of germs of bundle isomorphism fixing $x$. Then the action in Proposition \ref{prop:uniformizer} defines a homomorphism $\stab_x \to \diff^p_{\sc}(x)$. We define the \textbf{effective part} 
$$\stab^{p, \eff}_x:=\stab_x /\ker(\stab_x \to \diff^p_\sc(x)).$$
Note that $\stab^{p, \eff}_x$ might be different from $\stab^{\eff}_x$ defined in \eqref{eqn:eff} for the base.  

\begin{definition}
	Let $P:(\cE,\bE) \to (\cX,\bX)$ be a tame strong ep-groupoid bundle.  For $x\in \cX$, a local uniformizer  $\stab_x\ltimes P^{-1}(\cU)$ around $x$ is \textbf{regular} if the following two conditions are met:
	\begin{enumerate}
		\item if $R_\phi|_{P^{-1}(\cV)} = \Id_{P^{-1}(\cV)}$ for some open subset $\cV \subset \cU$, then $R_\phi = \Id$ on $P^{-1}(\cU)$;
		\item for every connected uniformizer $\cW \subset \cU$ of $\cX$ around  $x$, if $\Phi:P^{-1}(\cW) \to \stab_x$ is a a map such that $R_{\Phi}:P^{-1}(\cW) \to P^{-1}(\cW), w\mapsto R_{\Phi(w)}(w)$ is sc-smooth strong bundle isomorphism, then there exists $\phi\in \stab_x$, such that $R_{\Phi(w)}(w) = R_{\phi}(w)$ for all $w\in P^{-1}(\cW)$. 
	\end{enumerate}
\end{definition}

\begin{definition}\label{def:regbund}
	A strong ep-groupoid bundle $P:(\cE,\bE) \to (\cX,\bX)$ is \textbf{regular}, if for any point $x\in \cX$, there exists a regular local uniformizer of $P$ around $x$.
\end{definition}

Note that the a strong ep-groupoid bundle being regular is different from the underlying ep-groupoid being regular. It seems that being regular as a bundle is weaker than being regular as an ep-groupoid, since a regular bundle uniformizer around $x$ is a regular uniformizer around $(x,0)$. The regularity of a bundle may only imply the regularity around the zero section. 

The counterparts of Proposition \ref{prop:regular}, Proposition \ref{prop:hofer-regular} and Corollary \ref{cor:regular} hold for strong ep-groupoid bundles for the same reason.  That is we have the following.
\begin{proposition}
	Let $P:(\cE,\bE) \to (\cX,\bX)$ be a tame strong ep-groupoid bundle such that $\stab_x^{p,\eff} = \stab_x$ for all $x\in \cX$. Assume for every $x\in \cX$, there exists a local uniformizer $\cU$ around $x$, such that for any connected uniformizer $\cV\subset \cU$ around $x$, we have $P^{-1}(\cV)\backslash \cup_{\phi \ne \Id \in \stab_x} \Fix(\phi)$ is connected, where $\Fix(\phi)$ is the fixed set of $R_\phi$. Then $P$ is regular. 
\end{proposition}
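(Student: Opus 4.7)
The plan is to mirror the proof of the base analogue (Proposition \ref{prop:regular}) — whose proof is deferred to the appendix — in the strong bundle setting. Throughout, the dictionary is: replace the ep-groupoid $(\cX,\bX)$ by the total space $(\cE,\bE)$, replace $L_\phi$ by $R_\phi$, replace a uniformizer $\cV$ of the base by its preimage $P^{-1}(\cV)$, and replace base effectivity $\stab_x\to\diff_\sc(x)$ being injective by the bundle effectivity $\stab_x\to\diff^p_\sc(x)$ being injective. Fix $x\in\cX$ and let $\cU$ be a uniformizer around $x$ furnished by the hypothesis; I will verify both defining properties of a regular bundle uniformizer for $\stab_x\ltimes P^{-1}(\cU)$.

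For the first property, suppose $R_\phi|_{P^{-1}(\cV')}=\Id$ for some open $\cV'\subset\cU$ and $\phi\in\stab_x$, and argue by contradiction that $\phi=\Id$. If $\phi\ne\Id$, then bundle effectivity forces $R_\phi$ to be non-trivial in every neighborhood of the zero vector $0_x\in P^{-1}(\cU)$, so in particular $P^{-1}(\cU)\setminus\Fix(\phi)\ne\emptyset$, while $\Fix(\phi)$ has non-empty interior containing $P^{-1}(\cV')$. Choose a connected uniformizer $\cV\subset\cU$ around $x$ small enough that $P^{-1}(\cV)$ meets both $\Fix^\circ(\phi)$ and $P^{-1}(\cU)\setminus\Fix(\phi)$. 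By hypothesis the open set $P^{-1}(\cV)\setminus\bigcup_{\psi\ne\Id}\Fix(\psi)$ is connected and non-empty. A frontier argument on $\partial\Fix(\phi)\cap P^{-1}(\cV)$ — the bundle analogue of the argument used in the appendix for Proposition \ref{prop:regular} — then shows that this complement must actually be disconnected, a contradiction. Hence $\phi=\Id$ and so $R_\phi=\Id$ on $P^{-1}(\cU)$.

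For the second property, take a connected uniformizer $\cW\subset\cU$ around $x$ and $\Phi:P^{-1}(\cW)\to\stab_x$ such that $R_\Phi:w\mapsto R_{\Phi(w)}(w)$ is a sc-smooth strong bundle isomorphism. Restrict attention to the open dense subset $\cW^*:=P^{-1}(\cW)\setminus\bigcup_{\psi\ne\Id}\Fix(\psi)$, which is connected by hypothesis. On $\cW^*$, if $R_\alpha(w)=R_\beta(w)$ for some $w\in\cW^*$ and $\alpha,\beta\in\stab_x$, then $R_{\alpha\beta^{-1}}$ fixes $w$, forcing $\alpha=\beta$ because $w\notin\Fix(\alpha\beta^{-1})$ for $\alpha\beta^{-1}\ne\Id$. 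Hence $\Phi$ is uniquely determined pointwise on $\cW^*$; since $R_\Phi$ is sc-smooth and locally coincides with the sc-smooth bundle isomorphism $R_{\Phi(w)}$ at each $w\in\cW^*$, the map $\Phi:\cW^*\to\stab_x$ into the finite discrete group is locally constant, hence constant on the connected set $\cW^*$. Denoting the constant value $\phi$, one has $R_\Phi=R_\phi$ on $\cW^*$; continuity together with the density of $\cW^*$ and the first property already proved then extend this equality to all of $P^{-1}(\cW)$.

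The hard part will be the frontier/connectedness argument in property (1): deducing from "$\Fix(\phi)$ has non-empty interior and non-empty exterior inside $P^{-1}(\cV)$" that "the complement of all fixed sets in $P^{-1}(\cV)$ is disconnected" is not immediate, and requires careful bookkeeping of how the finitely many closed sets $\Fix(\psi)$ ($\psi\in\stab_x\setminus\{\Id\}$) nest inside one another and inside $P^{-1}(\cV)$. In the base case this is carried out in the appendix; the bundle version should go through identically, treating $P^{-1}(\cV)$ as a tame M-polyfold on which the finite group $\stab_x$ acts by sc-smooth strong bundle isomorphisms $R_\phi$.
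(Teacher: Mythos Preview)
Your argument for the second regularity property is essentially the paper's: restrict to the connected complement of the fixed sets, show that $\Phi$ is constant there (your ``locally constant'' step is the paper's openness-of-$S_\phi$ argument, though your phrasing ``$R_\Phi$ locally coincides with $R_{\Phi(w)}$'' is circular as written --- what you need is the continuity argument: if $p_i\to p$ with $\Phi(p_i)=\psi\ne\phi=\Phi(p)$ then $R_\Phi(p_i)=R_\psi(p_i)\to R_\psi(p)\ne R_\phi(p)=R_\Phi(p)$), and then extend by density.

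Your argument for the first property, however, has a real gap and a misreading of the paper. You try to derive property~(1) from the connectedness hypothesis via a ``frontier argument'' that you yourself flag as the hard part and do not carry out; you then assert that ``in the base case this is carried out in the appendix.'' It is not. The appendix disposes of property~(1) in one line: ``Since the ep-groupoid is effective, the first property of regularity is satisfied for any local uniformizer by Theorem~\ref{thm:natural}.'' No connectedness is used. The point is that the effectivity hypothesis $\stab_x^{p,\eff}=\stab_x$ holds at \emph{every} $x\in\cX$, not just at the base point of the uniformizer. So if $R_\phi|_{P^{-1}(\cV)}=\Id$ for some open $\cV\subset\cU$, pick any $y\in\cV$. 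Then $L_\phi(y)=y$, so $\Sigma(\phi,y)\in\stab_y$, and the germ of $R_{\Sigma(\phi,y)}$ at $(y,0)$ is the identity. Bundle effectivity at $y$ gives $\Sigma(\phi,y)=\Id_y$, and the uniqueness clause of Theorem~\ref{thm:natural} (injectivity of $\Sigma(\cdot,y)$) forces $\phi=\Id_x$, hence $R_\phi=\Id$ on all of $P^{-1}(\cU)$. This also immediately gives the density of $P^{-1}(\cW)\setminus\bigcup_{\psi\ne\Id}\Fix(\psi)$ that you need at the end of your property~(2) argument: each $\Fix(\psi)$ is closed with empty interior, so the finite union is nowhere dense.

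In short: drop the frontier argument entirely and use effectivity at a point of $\cV$ together with Theorem~\ref{thm:natural} to get property~(1); your property~(2) then goes through as in the paper.
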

\begin{proposition}
	If the strong ep-groupoid bundle $P:(\cE,\bE) \to (\cX,\bX)$ has the property that for any $x\in \cX_{\infty}$ and $\phi\in \stab_x$ if $\rD R_{\phi}:T_{(x,0)} \cE \to T_{(x,0)}\cE$ is the identity map, then $R_\phi = \Id$ locally. Then $P$ is regular.  
\end{proposition}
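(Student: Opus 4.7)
The strategy is to mimic the proof of Proposition \ref{prop:hofer-regular}, which in turn rests on \cite[Lemma 10.2]{hofer2017polyfold}, but now applied to the strong bundle isomorphisms $R_\phi$ acting on $P^{-1}(\cU)$ instead of the base action $L_\phi$ on $\cU$. Fix $x\in \cX$ and choose a local uniformizer $\stab_x\ltimes \cU$ around $x$ with $\cU$ (hence $\cU_\infty$) connected; I aim to verify both conditions of Definition~\ref{def:regular} for the bundle uniformizer $\stab_x\ltimes P^{-1}(\cU)$.

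For the first condition, suppose $R_\phi|_{P^{-1}(\cV)} = \Id$ for some $\phi\in \stab_x$ and open $\cV\subset \cU$. I will examine the set
\[ S := \{y\in \cU_\infty \st \rD R_\phi|_{(y,0)} = \Id_{T_{(y,0)}\cE}\}.\]
The set $S$ is open in $\cU_\infty$ because whenever $y\in S$, the hypothesis of the proposition (applied to $y\in \cX_\infty$ and to $\phi$, which stabilizes $y$ since it fixes a neighborhood of $(y,0)$) yields $R_\phi = \Id$ on a neighborhood of $(y,0)$ in $\cE$, and in particular $\rD R_\phi = \Id$ on a neighborhood of $y$ in $\cU_\infty$. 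The set $S$ is closed in $\cU_\infty$ by sc-continuity of the linearization $\rD R_\phi$ at each sc-level. Since $S\supset \cV_\infty\ne \emptyset$ and $\cU_\infty$ is connected, $S=\cU_\infty$. Reapplying the hypothesis at each $y\in \cU_\infty$ gives an open neighborhood of $(y,0)$ on which $R_\phi$ is the identity; combined with the fiberwise linearity of the bundle isomorphism $R_\phi$, this forces $R_\phi|_{P^{-1}(y)} = \Id$ for every $y\in \cU_\infty$. By density of $\cU_\infty$ in $\cU$ and continuity of $R_\phi$, we conclude $R_\phi = \Id$ on $P^{-1}(\cU)$.

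For the second condition, suppose $\cW\subset \cU$ is a connected uniformizer around $x$ and $\Phi:P^{-1}(\cW) \to \stab_x$ is a map such that $R_\Phi:w\mapsto R_{\Phi(w)}(w)$ is a sc-smooth strong bundle isomorphism. For each $\phi\in \stab_x$, I will consider
\[ T_\phi := \{w\in P^{-1}(\cW)_\infty \st \rD (R_\Phi)_w = \rD (R_\phi)_w \}. \]
Each $T_\phi$ is closed in $P^{-1}(\cW)_\infty$ by sc-continuity of $\rD R_\Phi$ and $\rD R_\phi$, and since $\stab_x$ is finite the union $\bigcup_{\phi\in\stab_x} T_\phi$ covers $P^{-1}(\cW)_\infty$. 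For $w\in T_\phi$, the composition $R_{\phi^{-1}}\circ R_\Phi$ agrees with the identity on derivatives at $w$, so the hypothesis of the proposition applies (after translating the base point of the uniformizer to $P(w)$, which is a smooth point, and using that $R_{\phi^{-1}}\circ R_\Phi$ is a bundle isomorphism with trivial linearization) to give $R_\Phi = R_\phi$ on a neighborhood of $w$; hence each $T_\phi$ is also open. Connectedness of $P^{-1}(\cW)_\infty$ then forces exactly one $T_\phi$ to equal the whole space, and sc-continuity together with density gives $R_\Phi = R_\phi$ on all of $P^{-1}(\cW)$.

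The main obstacle, and the one deserving the most care, is the propagation step in the first property: one must verify that the pointwise hypothesis of the proposition, stated at $(x,0)$ for a base point $x\in \cX_\infty$, can be legitimately invoked at every smooth point $y\in \cU_\infty$ encountered in the argument, and then promoted from ``$R_\phi$ is identity on an open neighborhood of the zero section'' to ``$R_\phi$ is identity on the full bundle $P^{-1}(\cU)$''. The resolution combines (i) the fiberwise linearity of $R_\phi$, which upgrades triviality on a fiber-neighborhood of $0$ to triviality on the whole fiber, and (ii) sc-continuity to pass from the dense set $P^{-1}(\cU_\infty)$ to all of $P^{-1}(\cU)$ at each sc-level.
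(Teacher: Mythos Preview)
Your argument for the first regularity condition is essentially the same open--closed propagation on $\cU_\infty$ that the paper sketches (deferring to \cite[Lemma 10.2]{hofer2017polyfold}), and it is fine. One small point: when you invoke the hypothesis at $y\in S$, you should say that the condition $\rD R_\phi|_{(y,0)}=\Id$ forces $R_\phi(y,0)=(y,0)$ and hence $L_\phi(y)=y$, so the extended morphism $\Sigma(\phi,y)$ lies in $\stab_y$; only then does the hypothesis (which is stated for elements of an isotropy group) apply. Your phrasing ``$\phi$ stabilizes $y$ since it fixes a neighborhood of $(y,0)$'' is circular at that stage.

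Your argument for the second condition has a genuine gap. The hypothesis of the proposition concerns maps of the form $R_\psi$ for a \emph{single} morphism $\psi\in\stab_y$; it says nothing about general strong bundle isomorphisms. The composite $R_{\phi^{-1}}\circ R_\Phi$ is of the form $w\mapsto R_{\phi^{-1}\Phi(w)}(w)$ with $\Phi(w)$ varying, so it is \emph{not} known to coincide with any $R_\psi$ near $w$---indeed, that is exactly what you are trying to prove. Consequently you cannot conclude from $\rD(R_{\phi^{-1}}\circ R_\Phi)_w=\Id$ that $R_\Phi=R_\phi$ near $w$, and your openness claim for $T_\phi$ fails. (There is also a smaller gap: the assertion that $\bigcup_\phi T_\phi$ covers needs a Baire argument---the closed sets $A_\phi=\{w:R_\Phi(w)=R_\phi(w)\}$ cover, so their interiors have dense union, on which derivatives agree; closedness of $T_\phi$ then pushes this to all of $\cW_\infty$.)

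A correct route, closer to how \cite[Lemma 10.2]{hofer2017polyfold} proceeds, avoids openness altogether: work on the zero section, establish closedness of each $T_\phi=\{y\in\cW_\infty:\rD(R_\Phi)_{(y,0)}=\rD(R_\phi)_{(y,0)}\}$ and covering via Baire as above, and then observe that if $y\in T_\phi\cap T_\psi$ then $\rD(R_{\psi^{-1}\phi})_{(y,0)}=\Id$. Here $R_{\psi^{-1}\phi}$ \emph{is} the action of a single morphism, so the hypothesis applies and, combined with the already--established first property, gives $R_\phi=R_\psi$ on all of $P^{-1}(\cU)$. Thus the $T_\phi$ (modulo this identification) partition the connected set $\cW_\infty$ by closed sets, forcing a single $T_{\phi_0}=\cW_\infty$; equality of the fiber actions over $\cW_\infty$ and density then yield $R_\Phi=R_{\phi_0}$ on $P^{-1}(\cW)$.
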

\begin{corollary}\label{coro:bundreg}
	If the linearized action $\stab_x \to \Hom(T_x\cX,T_x\cX)$ defined by $\phi \mapsto (\rD L_\phi)_x$ is injective for every point $x\in \cX_\infty$, then  $P:(\cE,\bE) \to (\cX,\bX)$ is a regular strong ep-groupoid bundle.
\end{corollary}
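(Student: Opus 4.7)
The plan is to derive this corollary as an immediate consequence of the preceding proposition, which gives a sufficient condition in terms of the linearized bundle action $\rD R_\phi$ at points $(x,0) \in \cE$ on the zero section. That is, I would reduce the statement about the linearized action on the base, $\rD L_\phi|_x$, to the statement about the linearized bundle action, $\rD R_\phi|_{(x,0)}$, by exploiting that $R_\phi$ is a strong bundle isomorphism covering $L_\phi$.

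First I would fix $x \in \cX_\infty$ and $\phi \in \stab_x$, and assume $\rD R_\phi : T_{(x,0)}\cE \to T_{(x,0)}\cE$ equals the identity. Since $P \circ R_\phi = L_\phi \circ P$ as sc-smooth maps and $P^0 : \cE \to \cX$ is a strong bundle, differentiating at $(x,0)$ and projecting to the base shows that $\rD L_\phi|_x : T_x\cX \to T_x\cX$ coincides with the projection to $T_x\cX$ of $\rD R_\phi|_{(x,0)}$ restricted to the tangent space of the zero section. Since $\rD R_\phi|_{(x,0)} = \Id$, this forces $\rD L_\phi|_x = \Id$.

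By the injectivity hypothesis of the corollary, the map $\stab_x \to \Hom(T_x\cX, T_x\cX)$, $\phi \mapsto \rD L_\phi|_x$, has trivial kernel, so $\phi = \Id$ in $\stab_x$, and hence $R_\phi = \Id$ everywhere (not just locally). This verifies the hypothesis of the preceding proposition for $P$, and that proposition concludes that $P$ is a regular strong ep-groupoid bundle.

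I do not expect any obstacle here: the argument is entirely formal once one observes that the bundle linearization determines the base linearization via the projection $P$, so the hypothesis of the corollary (stated purely on the base) feeds directly into the hypothesis of the preceding proposition (stated on the total space).
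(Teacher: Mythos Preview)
Your proof is correct and follows exactly the approach the paper intends: the paper states that Corollary~\ref{coro:bundreg} holds ``for the same reason'' as Corollary~\ref{cor:regular}, i.e.\ as a direct consequence of the bundle version of Proposition~\ref{prop:hofer-regular}, and your argument carries this out by reducing $\rD R_\phi|_{(x,0)} = \Id$ to $\rD L_\phi|_x = \Id$ via the covering relation $P \circ R_\phi = L_\phi \circ P$ and the surjectivity of $\rD P_{(x,0)}$.
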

We point out that the regularity of a bundle is not related to the regularity of the base. For example we can think of the ep-groupoid $\Z_2\ltimes \cM$ in Example \ref{ex:irr-one} as an ep-groupoid bundle over the trivial action groupoid $\Z_2 \ltimes \R$. Then  $\Z_2\ltimes \cM$ is not regular bundle, but the base is regular. On the other hand,  $\Z_2\ltimes (\cM \times \R)$ is an ep-groupoid bundle over $\Z_2\ltimes \cM$, where $\Z_2$ acts on the $\R$ coordinate by multiplying $-1$.  Then $\Z_2\ltimes (\cM \times \R)$ is a regular bundle over the irregular ep-groupoid $\Z_2\ltimes \cM$. 

A strong bundle functor $F:(\cE,\bE) \to (\cF,\bF)$ between two strong ep-groupoid bundles $(\cE,\bE) \to (\cX,\bX)$ and $(\cF,\bF) \to (\cY,\bY)$ is a sc-smooth functor, such that $F^0:\cE \to \cF$ and $F^1:\bE\to \bF$ are both strong bundle maps. As a consequence, $F$ induces a sc-smooth functor $f:(\cX,\bX) \to (\cY,\bY)$ on the bases.

\begin{definition}[{\cite[Definition 10.10]{hofer2017polyfold}}]
	A \textbf{strong bundle equivalence}
	$$F:(\cE,\bE) \to (\cF,\bF)$$
	is a strong bundle functor, moreover the following two conditions hold.
	\begin{enumerate}
		\item $F[i]:(\cE[i],\bE[i]) \to (\cF[i],\bF[i])$ are equivalences between ep-groupoids for $i=0,1$, covering an equivalence $f:(\cX,\bX) \to (\cY,\bY)$.
		\item $F^0, F^1$ are local strong bundle isomorphisms. 
	\end{enumerate}
\end{definition}

Just like the ep-groupoid situation, one can define generalized strong bundle maps between strong ep-groupoid bundles \cite[\S 10.5]{hofer2017polyfold},  i.e. an equivalence of diagram of strong bundle functors $\cE \stackrel{F}{\leftarrow}\cH \stackrel{\Phi}{\to }\cF$, where $F$ is a strong bundle equivalence. Then one can define the category of strong ep-groupoid bundles $\mathscr{SEP}(\bF^{-1})$ \cite[Definition 10.4]{hofer2017polyfold}, where $\bF$ is the class of strong bundle equivalences. The objects of $\mathscr{SEP}(\bF^{-1})$ are strong ep-groupoid bundles and the morphisms of $\mathscr{SEP}(\bF^{-1})$ are generalized strong bundle maps. Sections of strong ep-groupoids can be pulled back through generalized strong bundle maps and can be pushed forward if the generalized bundle map is an isomorphism \cite[Theorem 10.9]{hofer2017polyfold}. 

\begin{definition}[{\cite[Definition 8.7]{hofer2017polyfold}}]
	A \textbf{sc-Fredholm section} of a strong bundle $P: (\cE,\bE) \to (\cX, \bX)$ is a sc-smooth functor $S: (\cX, \bX) \to (\cE, \bE)$, such that $P\circ S = \Id_{(\cX,\bX)}$ and $S^{0}: \cX \to \cE$ is a sc-Fredholm section of the strong M-polyfold bundle $P^{0}:\cE \to \cX$. The section is called proper if $|(S^{-1}(0)| \subset |\cX|$ is compact in the $|\cX_0|$ topology\footnote{As a consequence, $|S^{-1}(0)|$ is compact in all $|\cX_i|$ topology and $|\cX_\infty|$ topology, by the same proof of \cite[Theorem 5.3]{hofer2017polyfold}}.
\end{definition}

\begin{definition}[{\cite[Definition 16.16]{hofer2017polyfold}}]
	Let $p: W \to Z$ be a surjection between paracompact Hausdorff spaces, a \textbf{strong polyfold bundle structure} $((\cE,\bE) \stackrel{P}{\to} (\cX,\bX), \Gamma, \gamma)$  is the following structure:
	\begin{enumerate}
		\item $P:(\cE,\bE) \to (\cX,\bX)$ is a tame strong ep-groupoid bundle;
		\item $\Gamma: |\cE| \to W$ and $\gamma:|\cX| \to Z$ are homeomorphisms, such that $\pi\circ \Gamma  = \gamma \circ |P|$. 
	\end{enumerate}
\end{definition}
The equivalence between strong polyfold bundle structures are defined similarly as the equivalence between polyfold structures, see \cite[Definition 16.17]{hofer2017polyfold}. A strong polyfold bundle $p:W\to Z$ a surjection between paracompact Hausdorff spaces with an equivalence class of strong polyfold bundle structures, see \cite[Definition 16.18]{hofer2017polyfold}. Then one can define the category of strong polyfold bundles, see \cite[\S 16.3]{hofer2017polyfold} for more details. Let $\mathscr{SPB}$ denotes the category of strong polyfold bundles \cite[Definition 16.18, 16.22]{hofer2017polyfold}. The sections of strong polyfold bundle \cite[Definition 16.27]{hofer2017polyfold} are represented by sections of a strong polyfold bundle structure up to the  equivalences induced by the pullbacks through strong polyfold bundle structure equivalences. A section is sc-Fredholm \cite[Definition 16.40]{hofer2017polyfold} if one of the representative sections (hence all) is sc-Fredholm. By \cite[Proposition 10.7]{hofer2017polyfold}, a section of a strong polyfold bundle has a unique representative for a fixed strong polyfold bundle structure. To simplify notations, we will \textbf{ abbreviate a strong polyfold bundle structure $\bm{((\cE,\bE) \stackrel{P}{\to} (\cX,\bX), \Gamma, \gamma)}$ to $\bm{(\cE,\bE) \stackrel{P}{\to} (\cX,\bX)}$} in the remaining part of this paper, that is $Z = |\cX|$ and $W = |\cE|$ and a strong polyfold bundle map is represented by an equivalence class of generalized strong bundle maps. 

\subsubsection{Group action on strong polyfold bundles}
\begin{definition}\label{def:actbundle}
		A \textbf{$\bm{\sc^\infty}$ $\bm{G}$-action} on a strong polyfold bundle $p: W\to Z$ is a functor  $\mathfrak{P}: BG \to \mathscr{SPB}$ sending the unique object of $BG$ to $W$. Moreover,  for every $g\in G$ and $p,q \in Z$ such that $|\mathfrak{P}(g)|(p,0) = (q,0)$, there exist a neighborhood $U$ of $g$, two equivalent polyfold bundle structures $(\cE,\bE) \stackrel{P}{\to} (\cX,\bX), (\cF, \bF) \stackrel{Q}{\to} (\cY, \bY)$, two points $x\in \cX, y\in\cY$ with $|x| = p, |y|= q$ and a local uniformizer $\cU$ around $x$. So that the $U$-family of polyfold bundle maps $U \to \mor_\mathscr{SPB}(|P^{-1}(\cU)|,W)$ defined by $h \mapsto \mathfrak{P}(h)$ is represented by the following sc-smooth bundle functor
		$$\rho_{g,x}: U\times (\stab_x \ltimes P^{-1}(\cU) )\to (\cF,\bF),$$
		satisfying $\rho_{g,x}(g,(x,0))=(y,0)$.
		
		Assume $G$ acts on $p:W\to Z$ by $ \mathfrak{P}$. A section $s:Z\to W$ is \textbf{$\bm{G}$-equivariant} iff it is invariant under the pullback of strong bundle isomorphism $\mathfrak{P}(g)$ for all $g\in G$.
\end{definition}
Just like the polyfold case, the sc-smooth group action $\mathfrak{P}$ induces a continuous action $\rho:G\times W \to W$. We will include this induced data, hence we will refer to a group action by $(\rho,\mathfrak{P})$. It is clear that a group action $(\rho,\mathfrak{P})$ on $p:W\to Z$ induces a group action $(\rho_Z, \mathfrak{P}_Z)$ on the base $Z$ in the sense of Definition \ref{def:action}. 
\begin{remark}\label{rmk:embbundle}
	By Remark \ref{rmk:bundleemb}, for connected and regular uniformizer $\cU$, $\rho_{g,x}(h,\cdot)$ is a local strong bundle isomorphism from $P^{-1}(\cU)$ for every $h\in V$.
\end{remark}

We can define local lifts for an action on a strong polyfold bundle like before. Let $(\rho, \mathfrak{P})$ acts on $p: W \to Z$ and $(\cE,\bE) \stackrel{P}{\to} (\cX,\bX)$ a fixed strong bundle structure on $p$. Let $x,y\in \cX$ and assume $\rho_Z(g, |x|) = |y|$ in $Z=|\cX|$.  Then by assumption, the action can be locally represented by 
\begin{equation}\label{localrepbundle}
\rho_{g,x_a}: U_a\times (\stab_{x_a} \ltimes P_a^{-1}(\cU))\to (\cE_b,\bE_b),
\end{equation}
where  $(\cE_a,\bE_a) \stackrel{P_a}{\to} (\cX_a,\bX_a)$, $(\cE_b,\bE_b) \stackrel{P_b}{\to} (\cX_b,\bX_b)$ are two strong polyfold bundles and  $\stab_{x_a}\ltimes P_a^{-1}(\cU_a)$ is a local uniformizer of $P_a$ with $|x_a| = |x| \in Z$. Then we have the following sequence of equivalences of ep-groupoids bundles, note that we suppress the morphism spaces since there are no ambiguities,
\begin{equation}\label{gammadiabundle}
\xymatrix {& & \cE_a\ar[r]^{\rho_{g,x_a}(g,\cdot)}  & \cE_b  &\ar[l]_{F_b} \cH_b \ar[r]^{G_b} &  \cE\\
	\cE & \ar[l]_{F_a} \cH_a \ar[r]^{\hspace{-2em}G_a} &  P_a^{-1}(\cU_a) \ar[u]_{\subset} & & &
}\end{equation}
where $F_a,G_a, F_b, G_b$ are all strong bundle equivalences and $|G_b|\circ |F_b|^{-1}\circ |\rho_{g,x_a}(g,\cdot)|\circ |G_a| \circ |F_a|^{-1} = \rho(g, \cdot)$, i.e. over the orbit space $|P_a^{-1}(\cU)|$ the composition of the sequence covers the action $\rho(g,\cdot)$. Let $f_a,g_a,f_b,g_b$ be the induced equivalences on the bases by $F_a,G_a,F_b,G_b$.  There exist $w_a\in \cW_a, w_b\in \cW_b, \phi, \psi\in \bX, \delta \in \bX_a, \eta\in \bX_b$, such that $f_a(w_a)=\phi(x)$, $\delta(g_a(w_a)) = x_a$, $\eta(\rho_{Z,g,x_a}(g,x_a))=f_b(w_b)$ and $g_b(w_b)=\psi(y)$, i.e.
\begin{equation*}\xymatrix{
	\cE \ar[d]_P  & \ar[l]_{F_a}\cH_a \ar[d]_{Q_a} \ar[r]^{G_a}&  \cE_a \ar[d]_{P_a} \ar[r]^{\rho_{g,x_a}(g,\cdot)} & \cE_b \ar[d]^{P_b} & \ar[l]_{F_b}\cH_b\ar[d]^{Q_b} \ar[r]^{G_b} & \cE\ar[d]^P	\\
	\cX  & \ar[l]_{f_a}\cW_a\ar[r]^{g_a}&  \cX_a\ar[r]^{\rho_{Z,g,x_a}(g,\cdot)} & \cX_b & \ar[l]_{f_b}\cW_b\ar[r]^{g_b} & \cX	\\	
	x\ar[d]_{\phi} & & x_a\ar[r] & \rho_{Z,g,x_a}(g,x_a)\ar[d]_{\eta} & & y\ar[d]_{\psi}\\
	f_a(w_a) & \ar[l] w_a \ar[r] & g_a(w_a) \ar[u]^{\delta} & f_b(w_b) & \ar[l] w_b\ar[r] & g_b(w_b) & & } 
\end{equation*}
Here $\rho_{g,x_a}$ is only partially defined on $\cE_a$. Let $F_{a,w_a}$ and $F_{b, w_b}$ denote the local bundle diffeomorphisms $F_a$ and $F_b$ near $w_a$ in $\cW_a$ and $w_b$ in $\cW_b$ respectively. Then by Remark \ref{rmk:embbundle}, there exists an open neighborhood $\cV \subset \cX$ of $x$, such that for $h$ close to $g$ and $w\in P^{-1}(\cV)$ we have
\begin{equation}\label{eqn:gammabundle}
\Lambda(h,w):=R_\psi^{-1}\circ G_b\circ F_{b,w_b}^{-1}\circ R_\eta \circ \rho_{g,x_a}(h,\cdot)\circ R_\delta \circ G_a \circ F_{a,w_a}^{-1}\circ R_\phi(w)\end{equation}
is a local bundle isomorphism from $P^{-1}(\cV)$. We call $\Lambda$ a \textbf{local lift of the bundle action at $\bm{(g,x,y)}$}. Moreover, every local lift $\Lambda$ induces a local lift of the action on the base:
\begin{equation}\label{eqn:gammabase}
\Gamma(h,z):=L_\psi^{-1}\circ g_b\circ f_{b,w_b}^{-1}\circ L_\eta \circ \rho_{Z,g,x_a}(h,\cdot)\circ L_\delta \circ g_a \circ f_{a,w_a}^{-1}\circ L_\phi(z).\end{equation}
We will use $(\Lambda,\Gamma)$ to denote a local lift of the bundle actions, although $\Gamma$ is determined by $\Lambda$. $(\Lambda, \Gamma)$ is not unique, but the only ambiguity comes from isotropy. That is we have the following analogue of Proposition \ref{prop:lifting} with an identical proof.
\begin{definition}
	Let $P:\cE \to \cX$ and $P':\cE' \to \cX'$ be two tame strong M-polyfold bundle and $\Lambda:P^{-1}(\cV) \to \cE'$ a strong bundle map for open set $\cV \subset \cX$. For $x\in \cV$, we define $[\Lambda]_x$ to be the base germ at $x$, which is an equivalence class of strong bundle maps with the equivalence is defined by 
	$$[\Lambda_1]_x=[\Lambda_2]_x \text{ iff } \Lambda_1|_{P^{-1}(\cV')} = \Lambda_2|_{P^{-1}(\cV')} \text{ for some open neighborhod $\cV'\subset \cX$ of $x$}.$$
\end{definition}
\begin{definition}\label{def:bundleliftset}
	Assume $G$ acts on a strong polyfold bundle $p:W\to Z$ and $(\cE,\bE)\stackrel{P}{\to}(\cX,\bX)$ is a bundle structure, then \textbf{local lifts set} is defined as
	$$L^p_{\cE}(g,x,y)=\{([\Lambda]_{(g,x)}, [\Gamma]_{(g,x)})|\Lambda \text{ is a local lift of the action at }(g,x,y) \text{ covering } \Gamma. \}.$$
\end{definition} 
\begin{proposition}\label{prop:liftingbundle}
Let $p:W\to Z$ be a regular strong polyfold bundle with a $G$-action $(\rho,\mathfrak{P})$ and $(\cE,\bE)\stackrel{P}{\to}(\cX,\bX)$ a strong polyfold bundle structure. Then for $x,y\in \cX$ and $g\in G$ with $\rho_Z(g,|x|)=|y|$, the isotropy $\stab_x$ acts on $L^p_{\cE}(g,x,y)$ by pre-composing with the local bundle diffeomorphism $R_\phi$ for $\phi \in \stab_x$, and this action is transitive. $\stab_y$ also acts on $L^p_{\cE}(g,x,y)$ by post-composing $R_\phi$ for $\phi\in \stab_y$, this action is also transitive. 
\end{proposition}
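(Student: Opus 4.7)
The plan is to run the proof of Proposition \ref{prop:lifting} with the polyfold regularity of $(\cX,\bX)$ replaced by the bundle regularity of $P$ (Definition \ref{def:regbund}). For well-definedness of the $\stab_x$-action, observe that $R_\phi$ on $P^{-1}(\cU)$ covers $L_\phi$, which by Theorem \ref{thm:natural} descends to the identity on $|\cU|$; hence $\Lambda\circ R_\phi$ is a strong bundle map covering $\Gamma\circ L_\phi$ that descends to the same polyfold bundle map on $|\cE|$ as $\Lambda$, so it remains a local lift of the action at $(g,x,y)$. The $\stab_y$-action is handled identically by post-composition with $R_\psi$, which covers the identity on $|\cU|$ near $y$.

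For transitivity of the $\stab_x$-action, given two local lifts $\Lambda_1,\Lambda_2$ covering $\Gamma_1,\Gamma_2$, I will first apply Proposition \ref{prop:lifting} to produce $\phi_0\in\stab_x$ with $[\Gamma_1]_{(g,x)}=[\Gamma_2\circ L_{\phi_0}]_{(g,x)}$. On a sufficiently small connected regular bundle uniformizer $P^{-1}(\cW)$ around $x$, the strong bundle self-isomorphism $F:=(\Lambda_2\circ R_{\phi_0})^{-1}\circ\Lambda_1$ is then defined, covers $\Id_\cW$, and descends to the identity on $|P^{-1}(\cW)|$. Consequently each $w\in P^{-1}(\cW)$ is connected to $F(w)$ by a morphism of $\bE$ above $\Id_{P^0(w)}$, which by Definition \ref{def:strPbund} has the form $(\phi_w,w)$ with $\phi_w\in\stab_{P^0(w)}$; identifying $\stab_{P^0(w)}$ with a subgroup of $\stab_x$ via Theorem \ref{thm:natural}, one has $F(w)=R_{\phi_w}(w)$.

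Continuity of $F$ together with the discreteness of $\stab_x$ should yield, after further shrinking to a connected $\cW'\subset\cW$, a locally constant function $\Phi:P^{-1}(\cW')\to\stab_x$ with $F=R_\Phi$ there. The second clause of the bundle regularity (Definition \ref{def:regbund}) then produces a single $\phi_1\in\stab_x$ with $F=R_{\phi_1}$ on $P^{-1}(\cW')$, from which $[\Lambda_1]_{(g,x)}=[\Lambda_2\circ R_{\phi_0}\circ R_{\phi_1}]_{(g,x)}=[\Lambda_2\circ R_{\phi_0\phi_1}]_{(g,x)}$, witnessing transitivity with $\phi=\phi_0\phi_1$.

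The main obstacle is the step promoting the pointwise assignment $w\mapsto\phi_w$ into a bona fide map $\Phi$ to which the bundle regularity axiom can be applied: a priori the branches of admissible $\phi_w$ could collide non-trivially as $w$ varies, and it is precisely to preclude such pathologies (cf.\ Examples \ref{ex:irr-one}--\ref{ex:irr-two}) that the regularity of the strong bundle $P$ is assumed. Once this upgrade is granted, the sc-smooth functoriality of $\Sigma$ from Theorem \ref{thm:natural}, together with the first clause of bundle regularity controlling the ambiguity in $\phi_w$ near $x$, should close out the argument in direct parallel with Proposition \ref{prop:lifting}.
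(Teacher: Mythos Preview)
Your approach has a genuine gap at the very first step of the transitivity argument: you invoke Proposition~\ref{prop:lifting} on the base to produce $\phi_0\in\stab_x$ with $[\Gamma_1]_{(g,x)}=[\Gamma_2\circ L_{\phi_0}]_{(g,x)}$. But Proposition~\ref{prop:lifting} requires the \emph{base} polyfold $Z$ to be regular, whereas the hypothesis here is only that the \emph{bundle} $p:W\to Z$ is regular in the sense of Definition~\ref{def:regbund}. These are genuinely different conditions---the paper itself exhibits a regular bundle $\Z_2\ltimes(\cM\times\R)$ over the irregular base $\Z_2\ltimes\cM$---so you cannot assume Proposition~\ref{prop:lifting} is available. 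Without it, you have no $\phi_0$ aligning the base maps, and the rest of your argument (reducing to a bundle automorphism covering $\Id_\cW$) never gets started.

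The paper avoids this by not factoring through the base at all. It reruns the entire diagram chase of Proposition~\ref{prop:lifting} (the three cases in Appendix~\ref{A1}) directly in the bundle category, with $R_\phi$ in place of $L_\phi$ and the strong bundle equivalences $F_a,G_a,F_b,G_b$ of \eqref{gammadiabundle} in place of their base counterparts. Every invocation of Lemma~\ref{lemma:localnatural} (which needs base regularity) is replaced by Lemma~\ref{lemma:localnaturalbundle} (which needs only bundle regularity). This is exactly what ``replacing Lemma~\ref{lemma:localnatural} with Lemma~\ref{lemma:localnaturalbundle}'' means in the paper's one-line proof. Your two-stage strategy---match the base first, then handle the fiber discrepancy---is conceptually appealing, but it only works under the stronger hypothesis that both the base and the bundle are regular.
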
 
\begin{proof}
	The claim follows from a similar diagram chasing in the proof of Proposition \ref{prop:lifting} by replacing Lemma \ref{lemma:localnatural} with Lemma \ref{lemma:localnaturalbundle}. 
\end{proof}
We point out here that there exist $([\Lambda_1],[\Gamma_1])$, $([\Lambda_2],[\Gamma_2])$ such that $[\Lambda_1]\ne [\Lambda_2]$ but $[\Gamma_1] = [\Gamma_2]$. For example, let $p: \R \to \text{pt}$ be a trivial bundle over a point. $\Z_2$ acts on $p$ by multiplying $-1$. Then the translation groupoid $Z_2\ltimes \R$ is an ep-groupoid bundle over $\Z_2\ltimes \text{pt}$.  Then there are two local lifts $(\Lambda_1,\Gamma_1),(\Lambda_2,\Gamma_2)$ of the action of the trivial group with $[\Gamma_1]=[\Gamma_2]$. The observation here is that the effective isotropy $\stab^{p, \eff}_x$ of the bundle might be larger than the effective isotropy $\stab^{\eff}_{x}$ of the base.

The local lifts set $L^p_{\cE}(g,x,y)$ also has the group properties like Proposition \ref{prop:property}. 
\begin{proposition}\label{prop:propertybundle}
 Let $p:W\to Z$ be a regular strong polyfold bundle with a $G$ action $(\rho,\mathfrak{P})$ and $(\cE,\bE)\stackrel{P}{\to}(\cX,\bX)$ a strong polyfold bundle structure. Then for $x,y \in \cX$ and $g\in G$ with $\rho_Z(g,|x|)=|y|$, $L^p_{\cE}(g,x,y)$ has the following structures.
    \begin{enumerate} 
 		\item  There is a well-defined multiplication $\circ: L^p_{\cE}(g,y,z)\times L^p_{\cE}(h,x,y)\to L^p_{\cE}(gh,x,z)$. Moreover, if $[\Lambda_1]_{(g,y)}\in L^p_{\cE}(g,y,z)$ and $[\Lambda_2]_{(h,x)} \in L^p_{\cE}(h,x,y)$ with $\Lambda_{12}$ represent $[\Lambda_1]_{(g,y)}\circ [\Lambda_2]_{(h,x)}$ locally, then 
 		\begin{equation*}\Lambda_{12}(\epsilon gh,w)= \Lambda_1(\epsilon g, \Lambda_2(h,w))=\Lambda_1(g, \Lambda_2(g^{-1}\epsilon gh,w))\end{equation*}
 		for $\epsilon$ in a neighborhood of $\Id \in G$, and $P(w)$ in a neighborhood of $x$. Therefore the induced local lifts on the base also have the property
 		\begin{equation}\label{compbundle}\Gamma_{12}(\epsilon gh,u)= \Gamma_1(\epsilon g, \Gamma_2(h,u))=\Gamma_1(g, \Gamma_2(g^{-1}\epsilon gh,u)),\end{equation}
 		for $\epsilon$ in a neighborhood of $\Id \in G$ and $u$ in a neighborhood of $x$.
 		\item There is a unique identity element $[\ID^p_x]_{(\Id,x)}\in L^p_{\cE}(\Id,x,x)$, such that $\ID^p_x(\Id,w)=w$ on $P^{-1}(\cV)$ for some neighborhood $\cV\subset \cX$ of $x$. This element is both left and right identity in the multiplication structure.
 		\item There is an (both right and left) inverse map $L^p_{\cE}(g,x,y)\to L^p_{\cE}(g^{-1},y,x)$ with respect to the multiplication and identity structures above.
 		\item There exists an open neighborhood $V\times \cU\times \cO$ of $(g,x,y)$ in $G\times \cX\times \cX$ such that there exist a representative local lift $\Lambda_\alpha$ defined on $V\times P^{-1}(\cU)$ with image in $P^{-1}(\cO)$, for each element $\alpha \in L^p_{\cE}(g,x,y)$. Moreover, for any $(g',x')\in V\times \cU$ and $y'\in \cO$ such that $\rho_Z(g',|x'|)=|y'|$, every element in $L^p_{\cE}(g',x',y')$ is represented by $[\Gamma_\alpha]_{(g',x')}$ by a unique element $\alpha \in L^p_{\cE}(g,x,y)$.
 	\end{enumerate}
 \end{proposition}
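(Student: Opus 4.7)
The plan is to mirror the proof of Proposition \ref{prop:property} in the bundle setting, exploiting that every ingredient of the local lift $\Lambda$ in \eqref{eqn:gammabundle}---namely $R_\psi^{-1}$, $G_b$, $F_{b,w_b}^{-1}$, $R_\eta$, $\rho_{g,x_a}(h,\cdot)$, $R_\delta$, $G_a$, $F_{a,w_a}^{-1}$, $R_\phi$---is a local strong bundle isomorphism ($\rho_{g,x_a}(h,\cdot)$ is strong bundle by Remark \ref{rmk:embbundle} applied to a connected regular bundle uniformizer, and the rest by construction). Hence the compositions, identities and inverses that Proposition \ref{prop:property} builds on the base lift canonically; the content to be verified is that these lifts remain strong bundle isomorphisms and that they remain local lifts in the sense of Definition \ref{def:bundleliftset}.

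For multiplication, I define $\Lambda_{12}(\epsilon gh, w) := \Lambda_1(\epsilon g, \Lambda_2(h,w))$ on a common neighborhood of $(gh,x)$. That $\Lambda_{12}$ is a local lift at $(gh,x,z)$ follows from the functoriality $\mathfrak{P}(gh) = \mathfrak{P}(g) \circ \mathfrak{P}(h)$ of the bundle action by concatenating the diagrams \eqref{gammadiabundle} for $(g,y,z)$ and $(h,x,y)$. Well-definedness on germs requires that replacing $\Lambda_1$ and $\Lambda_2$ by representatives differing via the $\stab_y$ and $\stab_x$ actions of Proposition \ref{prop:liftingbundle} produces germ-equivalent composites; this is immediate from the functoriality of $R$ on the strong bundle ep-groupoid together with the fact that $\Lambda_2$ conjugates $\stab_y$ into $\stab_x$ in the appropriate sense. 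The second identity in (1) uses the group rewriting $\epsilon g = g(g^{-1}\epsilon g)$ inside $\Lambda_1$, and the base-level formula \eqref{compbundle} is obtained by projecting through $P$. For the identity element, since $\mathfrak{P}(\Id) = \Id$ as a polyfold bundle morphism, one can choose the local representative in \eqref{localrepbundle} to satisfy $\ID^p_x(\Id, w) = w$ on a neighborhood of $(x,0)$; uniqueness as a left and right identity then follows from Proposition \ref{prop:liftingbundle}, since any other candidate differs by an element of the finite group $\stab_x$ whose associated $R_\phi$ would have to fix a full neighborhood in $P^{-1}(\cU)$ and hence be trivial by the first clause of regularity. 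The inverse of $[\Lambda]_{(g,x)}$ is constructed from the functorial inverse $\mathfrak{P}(g^{-1}) = \mathfrak{P}(g)^{-1}$, with the axioms verified by the composition formula just established.

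For (4), the finiteness $|L^p_{\cE}(g,x,y)| < \infty$ (inherited from Proposition \ref{prop:liftingbundle} and finiteness of $\stab_x^{p,\eff}$) lets one shrink the domains of representatives $\Lambda_\alpha$ to a common neighborhood $V \times P^{-1}(\cU)$ and force the image into a common $P^{-1}(\cO)$. The delicate point---and the main obstacle I expect---is the uniqueness of the representing $\alpha$ for $\beta \in L^p_{\cE}(g',x',y')$: here I would adapt the unique-continuation argument of Corollary \ref{coro:unique} by defining $S := \{(g', x') : [\Lambda_\alpha]_{(g',x')} = [\Lambda_{\alpha'}]_{(g',x')}\}$ and showing it is closed under limits. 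The base case used the second clause of regularity for the ep-groupoid $(\cX,\bX)$; in the bundle setting the relevant hypothesis is the second clause of Definition \ref{def:regbund}, which controls not merely base sc-diffeomorphisms but strong bundle isomorphisms of $\cE$ lifting them. Combined with Proposition \ref{prop:liftingbundle}---which forces any two agreeing representatives to differ by $R_\phi$ for some $\phi \in \stab_x$ fixing the bundle on an open set---this clause forces $R_\phi = \Id$ on the full uniformizer, contradicting $\alpha \neq \alpha'$. Once this uniqueness is secured, the rest of (4) is a straightforward shrinking argument in the $G \times \cX \times \cX$ direction, identical in form to the base case.
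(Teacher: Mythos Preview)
Your proposal is correct and takes essentially the same approach as the paper: the paper's proof of Proposition~\ref{prop:propertybundle} is literally the one-liner ``The claim follows from an analogous proof of Proposition~\ref{prop:property} by replacing Proposition~\ref{prop:lifting} with Proposition~\ref{prop:liftingbundle},'' and your sketch is a reasonable unpacking of what that sentence entails. One small remark: your justification of the second identity in (1) via ``group rewriting $\epsilon g = g(g^{-1}\epsilon g)$ inside $\Lambda_1$'' glosses over the same nontrivial step that in the base case occupies ``case three'' of the appendix proof of Proposition~\ref{prop:property}---namely, that switching which local representation absorbs the perturbation $g^{-1}\epsilon g$ produces the same germ up to the isotropy action---so you should point there rather than treat it as a purely algebraic rewrite.
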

\begin{proof}
  The claim follows from an analogues proof of Proposition \ref{prop:property} by replacing Proposition \ref{prop:lifting} with Proposition \ref{prop:liftingbundle}. 
\end{proof} 

Note that for every $x\in \cX$, $\stab_x^{BQ}:=\cup_{g\in G_{x}} L^p_{\cE}(g,x,x)$ forms a group by Proposition \ref{prop:propertybundle}. It is clear that we have an exact sequence of groups:
\begin{equation}\label{eqn:exactbundle}
1 \to \stab_{x}^{p, \eff} \to \stab^{BQ}_{x} \to G_{x} \to 1,
\end{equation}
where the first map is the inclusion $\stab^{p, \eff}_{x} \simeq L^p_{\cE}(\Id,x,x) \hookrightarrow \stab^{BQ}_{x}$ and the second map is the projection. In particular, $\stab^{BQ}_{x}$ is a finite group with $|\stab^{BQ}_{x}| = |\stab^{p,\eff}_{x}|\cdot|G_{x}|$. For $x_0\in \cX_\infty$, let $(\ID^p_{x_0},\ID_{x_0})$ be the identity element in $L^p_{\cE}(\Id,x_0,x_0)$. Then we define the \textbf{infinitesimal directions} to be
\begin{equation}
\mathfrak{g}_{x_0}=\rD(\ID_x)_{(\Id,x_0)}(T_{g}G\times \{0\})
\end{equation}
By the same argument in Proposition \ref{prop:redfree}, we have $\mathfrak{g}_{x_0} \subset (T^R_{x_0}\cX)_\infty$ and $\dim \mathfrak{g}_{x_0}  = \dim G$. As a consequence, we have the existence of $G$-slice defined below, which is a analogue of Proposition \ref{prop:localmodel}.
\begin{definition}\label{def:bundleGslice}
	Let $p: W\to Z$ be a regular strong polyfold bundle with a sc-smooth $G$-action with finite isotropy and $P:(\cE,\bE) \to (\cX, \bX)$ a polyfold bundle structure. A \textbf{$\bm{G}$-slice for $\bm{P}$ around $\bm{x_0 \in \cX_{\infty}}$} is a tuple $(\cU, \tilde{\cU},V,f,\eta,N)$ such that the following holds.
	\begin{enumerate}
			\item $\cU\subset \cX$ is an open neighborhood $x_0$ and $V\subset G$ is an open neighborhood $\Id$. 
			\item $\ID^p_{x_0}$ is defined on $V\times P^{-1}(\cU)$.
			\item \label{bund:unique} $f:\cU \to V$ is sc-smooth and $\tilde{\cU}:=f^{-1}(0)$ is slice  of $\cX$ containing $x_0$ and $P^{-1}(\tilde{\cU})$ is a bundle slice.
			\item  For $x \in \cU$, $g=f(x)$ is the unique element $g\in V$ such that $\ID_{x_0}(g,x) \in \tilde{\cU}$.
			\item \label{bund:return}$\eta:\cU \to \tilde{\cU}$ defined by $x\mapsto \ID_{x_0}(f(x),x)$ is sc-smooth and $N:P^{-1}(\cU) \to P^{-1}(\tilde{\cU})$ defined by $v\mapsto \ID^p_{x_0}(f(P(v)),v)$ is a sc-smooth strong bundle map.
			\item  There exist representatives $\Lambda$ for each element in $\stab^{BQ}_{x_0}$ such that $\Lambda(g_0,\cdot)$ defined on $P^{-1}(\cU)$ and there is sc-smooth action of $\stab^{BQ}_{x_0}$ acts on $P^{-1}(\tilde{\cU})$ defined by
			\begin{equation}\label{eqn:bundlocaction}
			([\Lambda]_{(g_0,x_0)},v) = \Lambda\odot v : = N(\Lambda(g_0, v)), \quad \forall \Lambda \in L^p_{\cE}(g_0,x_0,x_0), v \in P^{-1}(\tilde{\cU}).
			\end{equation}
			\item  For every $v\in P^{-1}(\tilde{\cU})$, the set $$S_{v,P^{-1}(\tilde{\cU})}:=\{(u,g,[\Lambda]_{(g,P(v))})| u\in P^{-1}(\tilde{\cU}), g\in G, [\Lambda]_{(g,P(v))} \in L^p_{\cE}(g,P(v),P(u)), \Lambda(g,v)=u\}$$ has exactly $|\stab^{BQ}_{x_0}|$ elements. As a consequence, for every $y\in \tilde{\cU}$, the set 
			$$S_{y,\tilde{\cU}}:=\{(z,g, [\Lambda]_{(g,y)})| z\in \tilde{\cU}, g\in G, [\Lambda]_{(g,y)} \in L^p_{\cE}(g,y,z)\}$$ also has exactly $|\stab^{BQ}_{x_0}|$ elements.
	\end{enumerate}
\end{definition}

The following proposition follows from the same proof of Proposition \ref{prop:localmodel} and the assertion on good slices follows from Proposition \ref{prop:goodslice}.
\begin{proposition}\label{prop:localmodelbundle}
	Let $p: W\to Z$ be a regular strong polyfold bundle with a sc-smooth $G$-action such the induced action on $Z$ has finite isotropy. Let $P:(\cE,\bE) \to (\cX, \bX)$ be a polyfold bundle structure. Then for every $x_0\in \cX_{\infty}$,  there exists a $G$-slice $(\cU, \tilde{\cU},V,f,\eta,N)$ for $P:\cE^2\to \cX^2$ around $x_0$. 
	
	If $\cX$ is infinite dimensional and there exists a sc-Fredholm section $s:\cX \to \cE$, then $\tilde{\cU}$ can be chosen to be good with respect to $s$ in the sense of Definition \ref{def:good}.
\end{proposition}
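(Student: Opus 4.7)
The plan is to mirror the construction in Proposition \ref{prop:localmodel}, but carried out on the bundle $P:\cE^2\to \cX^2$ using the identity local lift $(\ID^p_{x_0},\ID_{x_0})$ from Proposition \ref{prop:propertybundle} in place of the single identity $\ID_{x_0}$. First I would invoke Lemma \ref{lemma:gamma}, this time applied to $\Lambda := \ID^p_{x_0}$ restricted to a bundle chart around $(x_0,0)\in \cE^2$. The hypotheses of Lemma \ref{lemma:gamma} hold: the induced base map $\Gamma = \ID_{x_0}$ satisfies $\rD \Gamma_{(\Id,x_0)}(T_{\Id}G\times\{0\}) = \mathfrak{g}_{x_0}\subset (T^R_{x_0}\cX)_\infty$ and is of dimension $\dim G$, by the bundle analogue of Proposition \ref{prop:redfree} (which goes through because $\ID^p_{x_0}$ covers $\ID_{x_0}$ and hence shares the group property and the local sc-diffeomorphism property from Remark \ref{rmk:embbundle}). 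This produces open neighborhoods $\cW\subset \cX^2$ and $V\subset G$, a sc-smooth map $f:\cW\to V$ and a slice $\tilde{\cW} := f^{-1}(\Id)$ such that $P^{-1}(\tilde{\cW})$ is a bundle slice, together with the sc-smooth retraction $\eta:\cW\to \tilde{\cW}$ and, crucially (from the bundle conclusion of Lemma \ref{lemma:gamma}), a sc-smooth strong bundle map $N:P^{-1}(\cW)\to P^{-1}(\tilde{\cW})$ given by $v\mapsto \ID^p_{x_0}(f(P(v)),v)$.

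Next I would equip $P^{-1}(\tilde{\cW})$ with the $\stab^{BQ}_{x_0}$-action. Using property (4) of Proposition \ref{prop:propertybundle} (restriction of local lifts), choose, on a common neighborhood $V\times P^{-1}(\cW)$, representatives $\Lambda_\alpha$ for every $\alpha \in \stab^{BQ}_{x_0}$. Shrink $\tilde{\cW}$ to a neighborhood $\tilde{\cW}'$ so that $\Lambda_\alpha(g_\alpha,\cdot)$ sends $P^{-1}(\tilde{\cW}')$ into $P^{-1}(\cW)$ for every $\alpha$, and so that the bundle analogue of the inversion identity \eqref{repre} holds; then the computation \eqref{eqn:assoc} goes through verbatim in the bundle setting, using \eqref{compbundle} in place of \eqref{comp}, and gives $(\Lambda_\alpha\circ \Lambda_\beta)\odot v = \Lambda_\alpha\odot(\Lambda_\beta\odot v)$. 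A further shrinking to $\tilde{\cV}:=\bigcap_{\alpha}\Lambda_\alpha\odot\tilde{\cO}$ for some invariant $\tilde{\cO}$ makes the action well-defined on $P^{-1}(\tilde{\cV})$, and one checks the action descends consistently to the base $\tilde{\cV}$ since each $\Lambda_\alpha$ covers its base counterpart $\Gamma_\alpha$.

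The final step is to verify the orbit-counting property, which is the main obstacle. Following the proof of Proposition \ref{prop:localmodel}, one first exhibits $|\stab^{BQ}_{x_0}|$ manifestly distinct tuples in $S_{v,P^{-1}(\tilde{\cU})}$ using the identity $\Lambda_\alpha(f\circ\Gamma_\alpha(g_\alpha,P(v))g_\alpha,v) = \Lambda_\alpha\odot v$ and the uniqueness in the restriction property (4) of Proposition \ref{prop:propertybundle}. For the reverse inequality one argues by contradiction: assuming a shrinking sequence $\tilde{\cU}_{(k)}$ with $y_k\to x_0$ and more than $|\stab^{BQ}_{x_0}|$ tuples, one passes to subsequences so that the group elements converge to a common $g_0\in G_{x_0}$ and the local lifts are common restrictions of fixed representatives $\Lambda_\alpha$; counting with $|L^p_{\cE}(g_0,x_0,x_0)| = |\stab^{p,\eff}_{x_0}|$ from Proposition \ref{prop:liftingbundle} and $|G_{x_0}|$ forces two of the tuples to coincide via the uniqueness clause of Definition \ref{def:bundleGslice}(\ref{bund:unique}), giving the contradiction. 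This step is more delicate than in the base case because one must check that two seemingly distinct bundle-level tuples really agree as elements of $S_{v,P^{-1}(\tilde{\cU})}$, for which the key input is again Proposition \ref{prop:liftingbundle} combined with the exact sequence \eqref{eqn:exactbundle}.

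For the final ``good slice'' claim when $\cX$ is infinite dimensional and carries a sc-Fredholm section $s$, I would apply Proposition \ref{prop:goodslice} to the same bundle map $\ID^p_{x_0}$ covering $\ID_{x_0}$: since $\mathfrak{g}_{x_0}\subset (T^R_{x_0}\cX)_\infty$ has dimension $\dim G$, the proposition provides a sc-complement $\H$ of $\mathfrak{g}_{x_0}$ in the local model so that the resulting slice $\tilde{\cU}\subset \cX^2$ is good with respect to $s$. One then repeats the action-and-orbit shrinking above inside this particular slice; none of the shrinking steps affect goodness, which depends only on the transversality of $\H$ at $x_0$.
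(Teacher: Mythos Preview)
Your proposal is correct and follows essentially the same approach as the paper: the paper's proof simply states that the argument is the same as for Proposition \ref{prop:localmodel} (applied with the bundle identity lift $\ID^p_{x_0}$ via Lemma \ref{lemma:gamma} and Proposition \ref{prop:propertybundle}), and that the good-slice assertion follows from Proposition \ref{prop:goodslice}. You have faithfully expanded these details; the one place where you flag the orbit-counting as ``more delicate'' is not actually different from the base case---the contradiction argument goes through identically with $L^p_{\cE}$ and \eqref{eqn:exactbundle} replacing $L_{\cX}$ and \eqref{eqn:exact}.
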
 

\subsubsection{Quotient of strong polyfold bundles and sections}
 \begin{theorem}\label{thm:quopoly}
	Let $p:W\to Z$ be a regular tame strong polyfold bundle such that $Z$ is infinite dimensional. A compact Lie group $G$ acts on $p$ sc-smoothly. If the $G$-action on $Z$ only has finite isotropy, then there is a $G$-invariant open dense set $\hZ\subset Z^2$ containing $Z_\infty$ such that $p^{-1}(\hZ)/G \to \hZ/G$ can be equipped with a strong tame polyfold bundle structure. Moreover, the topological quotient map $\pi_G:p^{-1}(\hZ)\to p^{-1}(\hZ)/G$ is realized by sc-smooth strong polyfold bundle map $\mathfrak{q}:p^{-1}(\hZ)\to p^{-1}(\hZ)/G$.
	
	If $s:Z\to W$ is a $G$-equivariant proper sc-Fredholm section. Then $s$ induces a proper sc-Fredholm section $\os:\hZ^1/G\to p^{-1}(\hZ^1)/G$ by $\mathfrak{q}^*\os=s|_{\hZ^1}$.
\end{theorem}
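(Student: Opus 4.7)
The plan is to mirror the proof of Theorem \ref{quopoly} for polyfolds, upgraded to the bundle setting with the aid of Proposition \ref{prop:localmodelbundle} and Proposition \ref{prop:propertybundle}, and then to transfer the sc-Fredholm property via Lemma \ref{lemma:fred} after having guaranteed good slices by Proposition \ref{prop:localmodelbundle}.

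First I would fix a tame strong polyfold bundle structure $P:(\cE,\bE)\to (\cX,\bX)$ for $p$ and, for every $x_0\in\cX_\infty$, invoke Proposition \ref{prop:localmodelbundle} to pick a $G$-slice $(\cU_{x_0},\tilde\cU_{x_0},V_{x_0},f_{x_0},\eta_{x_0},N_{x_0})$ for $P:\cE^2\to\cX^2$; crucially, since $Z$ (hence $\cX$) is infinite dimensional, I would choose these slices to be \emph{good} with respect to $s$ in the sense of Definition \ref{def:good}. Then I would define $\hZ:=\bigcup_{x_0\in\cX_\infty}\rho_Z(G,|\cU_{x_0}|)\subset Z^2$, which is manifestly $G$-invariant, open, contains $Z_\infty$, and is dense by density of $Z_\infty\subset Z^2$.

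Next I would build the quotient ep-groupoid bundle in complete parallel to Theorem \ref{quopoly}. Let $\cQ^p:=\bigsqcup_{x_0\in\cX_\infty}P^{-1}(\tilde\cU_{x_0})$ with the disjoint-union tame strong M-polyfold bundle structure over $\cQ:=\bigsqcup\tilde\cU_{x_0}$ from Theorem \ref{quopoly}, and define
\begin{equation*}
\bQ^p:=\{(u,v,g,[\Lambda]_{(g,P(u))})\st u,v\in\cQ^p,\ g\in G,\ [\Lambda]_{(g,P(u))}\in L^p_{\cE}(g,P(u),P(v))\text{ with }\Lambda(g,u)=v\},
\end{equation*}
with the obvious $(s,t,m,u,i)$ induced from the group structure of Proposition \ref{prop:propertybundle}. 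I would topologize $\bQ^p$ using basis sets $\bQ^p_{\cO,\cO',V,\Lambda}$ analogous to \eqref{eqn:basis}; local sc-inverses of $s_{Q}$ in the form $u\mapsto (u,N_{x_0}\circ\Lambda(g,u),f_{x_0}(P\circ\Lambda(g,u))\,g,[\Lambda])$ give $\bQ^p$ its tame strong M-polyfold bundle structure. The regularity of the bundle (Definition \ref{def:regbund}) together with property (7) of Definition \ref{def:bundleGslice} supplies the Hausdorff and properness arguments verbatim from the polyfold case, using Proposition \ref{prop:proper} and Proposition \ref{prop:helper}. The projection $P_Q:\bQ^p\to\bQ$ is by construction a sc-smooth strong bundle functor satisfying Definition \ref{def:strPbund}, and the target map $\mu(g,u,v,[\Lambda])=v$ is a local strong bundle isomorphism because each $\Lambda(g,\cdot)$ is. This gives $\hZ/G$ a tame strong polyfold bundle structure, and the local functor $Q$ analogous to \eqref{quotient}, now upgraded to strong bundle maps using the $N_{x_0}$ and chosen representatives $\Lambda$, provides the sc-smooth strong bundle polyfold map $\mathfrak{q}$ representing $\pi_G$.

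For the Fredholm assertion, $G$-equivariance of $s$ means $\Lambda(g,\cdot)^*s=s$ wherever defined, hence $s$ descends to a section $\os$ of the quotient bundle; since this descent is, in any good $G$-slice chart $\Psi_x:P^{-1}(\tilde\cU_x)\to p^{-1}(\hZ)/G$, simply the restriction $s|_{\tilde\cU_x^1}$, the regularization property and sc-smoothness are inherited (Proposition \ref{prop:reg}). The basic germ property at every point is exactly where goodness of the slices plays its role: Lemma \ref{lemma:fred} produces a Fredholm chart for $s|_{\tilde\cU_x^1}$, and equivariance combined with the bundle-isomorphism analogue of \eqref{eqn:bundiso} transports this Fredholm chart to any other $G$-slice, so $\os$ has Fredholm charts globally. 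Finally, $\os^{-1}(0)=|s^{-1}(0)|/G$ is compact because $|s^{-1}(0)|$ is compact (Remark \ref{rmk:compact} applied to the polyfold $s$) and $G$ is compact. The step I expect to require the most care is the transfer of the basic germ property: the good slices exist only after a level shift to $\cX^2$, and one must also shift one additional level to apply Lemma \ref{lemma:fred}, which is the source of the $\hZ^1/G$ in the statement; and one must show that the Fredholm chart obtained on one good $G$-slice induces (via the strong bundle isomorphism $\Psi_{x'}^{-1}\circ\Psi_x$ of Remark \ref{rmk:polyunique}'s bundle version) a Fredholm chart around every point of the quotient, which is the place where equivariance of $s$ and regularity of the bundle must be used simultaneously.
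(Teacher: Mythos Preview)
Your proposal is correct and follows essentially the same route as the paper: build the quotient ep-groupoid bundle by replacing Proposition \ref{prop:localmodel} with Proposition \ref{prop:localmodelbundle}, then establish the sc-Fredholm property of the induced section via Proposition \ref{prop:reg} and Lemma \ref{lemma:fred}. One small point of care: the base morphism space $\bQ$ in the bundle quotient must be built with the bundle local lifts $L^p_{\cE}(g,x,y)$ rather than the $L_{\cX}(g,x,y)$ of Theorem \ref{quopoly}, since $\stab^{p,\eff}_x$ may strictly contain $\stab^{\eff}_x$; your projection $P_Q:\bQ^p\to\bQ$ is only a well-defined strong bundle functor if $\bQ$ is taken in this sense (the paper makes this explicit and remarks afterward that the two base quotients can differ).
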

\begin{proof}
	The proof is analogous to the proof of Theorem \ref{quopoly} by replacing Proposition \ref{prop:localmodel} with Proposition \ref{prop:localmodelbundle}.  Let $P:(\cE,\bE) \to (\cX,\bX)$ be strong polyfold bundle structure of $p:W\to Z$. If we pick a $G$-slice $(\cU_x,  \tilde{\cU}_x, V_x, f_x,\eta_x, N_x)$ for $P:\cE^2\to \cX^2$ around every $x\in \cX_{\infty}$, then we can construct two ep-groupoids by 
	$$\mathcal{BQ} := \coprod_{x\in \cX_\infty} P^{-1}(\tilde{\cU}_x), \quad \cQ := \coprod_{x\in \cX_\infty} \tilde{\cU}_x,$$
	and
	$$\bm{BQ} := \{(u,v,g,([\Lambda]_{(g,P(u))},[\Gamma]_{(g,P(u))}))|(u,v,g) \in \mathcal{EQ}\times \mathcal{EQ}\times G, (\Lambda,\Gamma) \in L^p_{\cE}(g,P(u),P(v)), \Lambda(g,u) = v\},$$
	$$\bQ := \{(x,y,g,([\Lambda]_{(g,x)},[\Gamma]_{(g,x)}))|(x,y,g) \in \cQ\times \cQ\times G, (\Lambda,\Gamma) \in L^p_{\cE}(g,x,y)\}.$$
	The structure maps are defined similarly as in Theorem \ref{quopoly}. By the same argument of Theorem \ref{quopoly}, $(\mathcal{BQ},\bm{BQ})$ and $(\cQ,\bQ)$ are ep-groupoids. The obvious projection $P_Q:(\mathcal{BQ},\bm{BQ})\to (\cQ,\bQ)$ defines a strong ep-groupoid bundle and gives $p^{-1}(\hZ)/G \to \hZ/G$ a strong polyfold bundle structure, where $\hZ = \cup_{x \in \cX_{\infty}}\rho_Z(G,|\cU_x|) \subset Z^2$.
	
	Let $S:(\cX,\bX) \to (\cE,\bE)$ be the representative of the section $s$, such representative is unique by \cite[Proposition 10.7]{hofer2017polyfold}. Let $([\Lambda]_{(g,x)},[\Gamma]_{(g,x)}) \in L^p_{\cE}(g,x,y)$ and assume $\Lambda$ is defined on $V\times P^{-1}(\cU)$ for neighborhoods $V\subset G$ of $g$ and $\cU \subset \cX$ of $x$. Since $s$ is $G$-equivariant and by the uniqueness of representative $S$ on $\cX$ (also on $\cU$), we have:
	\begin{equation}\label{eqn:equi}
	S^0\circ \Gamma(h,z) = \Lambda(h,S^0(z)), \quad \forall (h,z)\in V\times \cU.
	\end{equation}
	Let $S^0_Q: \cQ \to \mathcal{BQ}$ denote the restriction $S|_{\tilde{\cU}_x}:\tilde{\cU}_x\to P^{-1}(\tilde{\cU}_x)$. We define $S^{1}_Q:\bQ \to \bm{BQ}$ to be 
	\begin{equation}\label{eqn:morsec}
	(x,y,g,([\Lambda]_{(g,x)},[\Gamma]_{(g,x)})) \mapsto (S_Q^{0}(x), S_Q^{0}(y), g, ([\Lambda]_{(g,x)},[\Gamma]_{(g,x)})).
	\end{equation}
	\eqref{eqn:morsec} is well-defined by \eqref{eqn:equi}. We claim $S_Q:=(S^{0}_Q,S^{1}_Q)$ is a functor from $(\cQ,\bQ)$ to $(\mathcal{BQ},\bm{BQ})$. It is clear that $s_{\mathcal{BQ}}\circ S^{1}_Q = S^{0}_Q\circ s_{\cQ}$ and $t_{\mathcal{BQ}}\circ S^{1}_Q = S^{0}_Q\circ t_{\cQ}$, where $s_{\mathcal{BQ}},s_{\cQ}$ are the source maps for $(\mathcal{BQ},\bm{BQ})$ and $(\cQ,\bQ)$, $t_{\mathcal{BQ}},t_{\cQ}$ are the target maps for $(\mathcal{BQ},\bm{BQ})$ and $(\cQ,\bQ)$. It remains to prove that the compatibility with composition. Let $m_{\mathcal{BQ}}$ and $m_\cQ$ denote the composition in 
	$(\mathcal{BQ},\bm{BQ})$ and $(\cQ,\bQ)$. Then we have
	\begin{eqnarray*}
		& & S^{1}_Q(m_Q(y,z,g, ([\Lambda_1],[\Gamma_1])), (x,y,h, ([\Lambda_2],[\Gamma_2]))) \\
		& = & S^{1}_Q((x,z,gh,([\Lambda_1]\circ [\Lambda_2],[\Gamma_1]\circ [\Gamma_2]))) \\
		& = & (S^{0}_Q(x), S^{0}_Q(z), gh, ([\Lambda_1]\circ [\Lambda_2],[\Gamma_1]\circ [\Gamma_2])) \\
		& =  & m_{\mathcal{BQ}}((S^{0}_Q(y),S^{0}_Q(z),g, ([\Lambda_1],[\Gamma_1])), (S^{0}_Q(x),S^{0}_Q(y),h, ([\Lambda_2],[\Gamma_2]))) \\
		& = &   m_{\mathcal{BQ}}( S^{1}_Q(y,z,g, ([\Lambda_1],[\Gamma_1])), S^{1}_Q(x,y,h, ([\Lambda_2],[\Gamma_2])).
	\end{eqnarray*}
	This finishes the proof of the claim. By Proposition \ref{prop:localmodelbundle}, for every $x\in \cX_\infty$, there exists $G$-slice $\tilde{\cU}'_x$ such that $S^{0}_Q|_{(\tilde{\cU}'_x)^1}$ has a Fredholm chart around $x$ by Lemma \ref{lemma:fred}. Although $\tilde{\cU}_x$ may be different from $\tilde{\cU}'_x$, $S^{0}_Q|_{\tilde{\cU}_x}$ is equivalent to  $S^{0}_Q|_{\tilde{\cU}'_x}$ by a strong bundle isomorphism like the proof of Theorem \ref{thm:free}. Hence $S^{0}_Q|_{\tilde{\cU}^1_x}$  has a Fredholm chart around $x$. By Proposition \ref{prop:reg}, $S_Q$ has the regularizing property.  Therefore $S_Q:\cQ^1 \to \mathcal{BQ}^1$ is a sc-Fredholm functor, hence the induced section $\os:\hZ^1/G \to \pi^{-1}(\hZ^1)/G$ on the quotient polyfold is sc-Fredholm. $\os^{-1}(0)$ is compact because $\os^{-1}(0) = s^{-1}(0)/G$ and both $s^{-1}(0)$ and $G$ are compact.
\end{proof}
\begin{remark}
	By the same argument in Proposition \ref{prop:polyunique}, if one uses two sets of different choices of $G$-slice from two different polyfold bundle structures to construct two quotient polyfold bundle $p^{-1}(\hZ_a)/G$ and $p^{-1}(\hZ_b)/G$, then there exists a $G$-invariant open set $\hZ\subset \hZ_a\cap \hZ_b$ containing $Z_\infty$ such that the restrictions of $p^{-1}(\hZ_a)/G,p^{-1}(\hZ_b)/G$ give equivalent polyfold structures on $p^{-1}(\hZ)/G$.
\end{remark}

\begin{remark}
Theorem \ref{thm:quopoly} provides a quotient of the base polyfold $Z$ even if $Z$ is not regular. Assume the base is  also regular, the constructions in Theorem \ref{thm:quopoly} and Theorem \ref{quopoly} may yield different quotients. The reason is that $\stab^{p,\eff}_x$ and $\stab^{\eff}_x$ may be different. However, when the base is effective, Theorem \ref{thm:quopoly} and Theorem \ref{quopoly} yield the same quotient on the base.
\end{remark}

\section{Orientation}\label{s4}
Orientations of sc-Fredholm sections were discussed in \cite[Chapter 6, Section 9.3 and 18.5]{hofer2017polyfold}. We first review briefly the basic concepts and properties of the orientation theory that will be used in this paper. 

\subsection{Orientations of sc-Fredholm sections}
In the case of M-polyfold, let $s:\cX\to \cY$ be a sc-Fredholm section of a tame strong M-polyfold bundle $p:\cY\to \cX$. Then there exists a continuous $\Z_2$-bundle $\mathscr{O}_s$ over $\cX_\infty$ called the \textbf{orientation bundle}, which is essentially the $\Z_2$ reduction of the ``determinant bundle" of the linearization $\rD s$, see \cite[Chapter 6]{hofer2017polyfold} for details. One of the basic property of $\mathscr{O}_s$ is the following.

\begin{proposition}\cite[Chapter 6, Theoerem 12.11]{hofer2017polyfold}\label{prop:oriiso}
	Let $p:\cY \to \cX,p':\cY' \to \cX'$ be two tame strong M-polyfold bundles and $s:\cX \to \cY, s':\cX'\to \cY'$ two sc-Fredholm sections. Assume there is a strong bundle isomorphism $\Phi:\cY\to \cY'$ such that $\Phi_*s = s'$, then there is an induced continuous $\Z_2$-bundle isomorphism $\Phi_*:\mathscr{O}_s \to \mathscr{O}_{s'}$ satisfying the obvious functorial properties.
\end{proposition}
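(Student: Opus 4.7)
The plan is to build $\Phi_*$ pointwise from the linearization of $\Phi$ and then verify that it assembles into a continuous $\Z_2$-bundle map using the construction of $\mathscr{O}_s$ via determinant lines. Since $\Phi: \cY \to \cY'$ is a strong bundle isomorphism, it covers a sc-diffeomorphism $\phi:\cX \to \cX'$ with a sc-smooth linear fiber isomorphism $\Phi_x: \cY_x \to \cY'_{\phi(x)}$ for each $x$. Differentiating the relation $\Phi \circ s = s' \circ \phi$ at a smooth point $x \in \cX_\infty$ yields the commutative square
\begin{equation*}
\rD s'_{\phi(x)} \circ \rD\phi_x \;=\; \Phi_x \circ \rD s_x,
\end{equation*}
in which $\rD\phi_x: T_x\cX \to T_{\phi(x)}\cX'$ is a sc-isomorphism. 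This identifies $\ker \rD s_x \xrightarrow{\rD\phi_x} \ker \rD s'_{\phi(x)}$ and $\coker \rD s_x \xrightarrow{\Phi_x} \coker \rD s'_{\phi(x)}$, hence induces an isomorphism of the determinant lines
\begin{equation*}
\det \rD s_x \;:=\; \Lambda^{\max}\ker \rD s_x \otimes \Lambda^{\max}(\coker \rD s_x)^* \;\longrightarrow\; \det \rD s'_{\phi(x)},
\end{equation*}
and, after the $\Z_2$-reduction that defines the orientation bundle, a map $\Phi_{*,x}:\mathscr{O}_{s,x} \to \mathscr{O}_{s',\phi(x)}$.

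The main step is to upgrade this pointwise construction to a continuous bundle isomorphism. Recall from \cite[Chapter 6]{hofer2017polyfold} that the topology on $\mathscr{O}_s$ is built from local trivializations obtained via sc$^+$-stabilizations: near $x_0 \in \cX_\infty$ one chooses a finite-rank sc$^+$-section $\gamma$ so that $\rD s_{x_0}+\gamma(x_0)$ is surjective on a neighborhood, which trivializes the determinant bundle by finite-dimensional kernels. I would transport such a stabilization $\gamma$ via $\Phi$ to a stabilization $\Phi_*\gamma$ for $s'$ on the corresponding neighborhood in $\cX'$; then the pointwise isomorphism above becomes identified with the (classical, continuous) determinant map induced by $\rD\phi$ between the resulting continuously varying finite-dimensional kernel bundles. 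This yields continuity of $\Phi_*$ in the local trivializations, hence globally, and independence from the stabilizing section $\gamma$ follows from the standard comparison argument \cite[Chapter 6]{hofer2017polyfold} that shows changes of $\gamma$ produce compatible isomorphisms of determinant lines.

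Finally, the functorial properties (compatibility with identity and compositions) follow by the same pointwise computation: for $\Phi = \Id$ the square is trivial and the induced map on determinant lines is the identity, while for a composition $\Psi \circ \Phi$ the chain rule gives $\rD(\psi \circ \phi) = \rD\psi \circ \rD\phi$ and $(\Psi\circ\Phi)_x = \Psi_{\phi(x)}\circ \Phi_x$, so the induced maps on kernels and cokernels compose strictly. Passing to determinant lines and then to $\Z_2$ reductions preserves this composition, so $(\Psi\circ \Phi)_* = \Psi_*\circ \Phi_*$.

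The expected obstacle is purely bookkeeping with stabilizations: one must check that two different choices of stabilizing sc$^+$-sections $\gamma_1$ and $\gamma_2$ give the same map $\Phi_*$, and that the local finite-dimensional trivializations on the $\cX$ and $\cX'$ sides are compatible under $\Phi_*$. This is essentially already done in the construction of $\mathscr{O}_s$ in \cite[Chapter 6]{hofer2017polyfold}; the present result is a naturality statement, and the work is in verifying that all the identifications used to define $\mathscr{O}_s$ are themselves $\Phi$-equivariant. No new analytic input is required beyond the chain rule and the definition of a strong bundle isomorphism.
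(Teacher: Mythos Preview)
The paper does not provide its own proof of this proposition: it is stated with a citation to \cite[Chapter 6, Theorem 12.11]{hofer2017polyfold} and no proof block follows. Your sketch is the standard naturality argument for determinant/orientation bundles and is correct in outline; in particular, transporting a local $\sc^+$-stabilization through $\Phi$ and comparing the resulting finite-dimensional kernel bundles is exactly how one verifies continuity, and the functoriality follows from the chain rule as you say. Since the paper simply imports the result, there is no alternative approach to compare against.
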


In the case of ep-groupoids, $(\cX_\infty,\bX_\infty)$ is a groupoid such that the source and target maps are local homeomorphisms. The proper property in Definition \ref{groupoid} holds by Proposition \ref{prop:proper}. We will call such groupoid a \textbf{topological ep-groupoid}. We can define bundles over topological ep-groupoid, in particular line bundles or $\Z_2$-bundles, in a similar way to Definition \ref{def:strPbund}.

\begin{proposition}\label{prop:oribundle}
	Let $S:(\cX,\bX) \to (\cE,\bE)$ be a sc-Fredholm functor of a tame strong ep-groupoid bundle $P:(\cE,\bE) \to (\cX,\bX)$, then $\mathscr{O}_S := (\mathscr{O}_{S^0}, \mathscr{O}_{S^1})$ defines a topological $\Z_2$-bundle over $(\cX_\infty,\bX_\infty)$. 
\end{proposition}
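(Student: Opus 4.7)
The plan is to verify that $(\mathscr{O}_{S^0},\mathscr{O}_{S^1})$ satisfies the topological analogue of the bundle axioms in Definition \ref{def:strPbund}, by transporting the M-polyfold orientation theory (in particular Proposition \ref{prop:oriiso}) along the structure maps of the groupoid. Since $S^0:\cX\to\cE$ is a sc-Fredholm section of the tame strong M-polyfold bundle $P^0:\cE\to\cX$, the orientation bundle $\mathscr{O}_{S^0}$ is a continuous $\Z_2$-bundle on $\cX_\infty$. Because $\bE=\bX\,{}_s\!\times_{P^0}\cE$ and $S^1(\phi)=(\phi,S^0(s(\phi)))$ (so that the section $S^1$ of $P^1$ is, up to the product factor $\bX$ which contributes no Fredholm direction, the pullback of $S^0$ by the local sc-diffeomorphism $s$), Proposition \ref{prop:oriiso} applied to the local sc-diffeomorphisms provided by $s$ gives a canonical identification $\mathscr{O}_{S^1}\cong s^{*}\mathscr{O}_{S^0}$ of continuous $\Z_2$-bundles on $\bX_\infty$. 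This is the analogue of the fiber product structure on $\bE$ in item (2) of Definition \ref{def:strPbund}.

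The next step is to lift the target map $\mu:\bE\to\cE$ of item (4) of Definition \ref{def:strPbund} to a map $\mu_{*}:\mathscr{O}_{S^1}\to\mathscr{O}_{S^0}$. For each morphism $\phi\in\bX$, choose open neighborhoods $\bU_\phi\subset\bX$ and $\cU_{s(\phi)},\cU_{t(\phi)}\subset\cX$ with $s,t$ restricting to sc-diffeomorphisms on $\bU_\phi$, and set $R_\phi:=\mu\circ(s|_{\bU_\phi})^{-1}$, which is a local strong bundle isomorphism from $(P^0)^{-1}(\cU_{s(\phi)})$ to $(P^0)^{-1}(\cU_{t(\phi)})$. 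Because $S$ is a functor, $(R_\phi)_*S^0|_{\cU_{s(\phi)}}=S^0|_{\cU_{t(\phi)}}$, so Proposition \ref{prop:oriiso} produces a continuous $\Z_2$-bundle isomorphism $(R_\phi)_{*}:\mathscr{O}_{S^0}|_{\cU_{s(\phi)}\cap\cX_\infty}\to\mathscr{O}_{S^0}|_{\cU_{t(\phi)}\cap\cX_\infty}$. Using the identification of the previous paragraph, the local prescription $(\phi,\xi)\mapsto(R_\phi)_{*}\xi$ defines $\mu_{*}$ on $(P_{\mathscr{O}}^1)^{-1}(\bU_\phi\cap\bX_\infty)$, where $P_{\mathscr{O}}^1:\mathscr{O}_{S^1}\to\bX_\infty$ is the projection; by construction $\mu_{*}$ covers the target map $t:\bX_\infty\to\cX_\infty$ and is a local homeomorphism.

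It remains to check that the locally defined lifts agree on overlaps and that the four sub-conditions in item (4) of Definition \ref{def:strPbund} pass to $\mu_{*}$. The agreement on overlaps and the properties $\mu_{*}(\mathrm{id}_x)_{*}=\mathrm{id}$, $\mu_{*}((g\circ h)_{*},\xi)=\mu_{*}(g_{*},\mu_{*}(h_{*},\xi))$, and the invertibility encoded in $i$ all reduce, via the construction of $R_{\phi\circ\psi}=R_\phi\circ R_\psi$ on suitable overlaps, to the functorial clause of Proposition \ref{prop:oriiso}, which says that the induced maps on $\mathscr{O}_{\cdot}$ respect composition of strong bundle isomorphisms. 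The main technical point, which is the only step requiring care, is the global continuity of $\mu_{*}$: one has to show that the assignment $\phi\mapsto(R_\phi)_{*}$ is continuous in $\phi$ across different choices of the local neighborhoods $\bU_\phi$. This follows by covering $\bX_\infty$ with such neighborhoods and using that Proposition \ref{prop:oriiso} is natural, so the transition between two choices $(\bU_{\phi},R_\phi)$ and $(\bU_{\phi}',R_{\phi}')$ induces the identity on orientation bundles on the overlap. Once $\mu_{*}$ is established, all remaining axioms of a topological $\Z_2$-bundle over the topological ep-groupoid $(\cX_\infty,\bX_\infty)$ are formal, completing the proof.
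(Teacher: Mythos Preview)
Your proposal is correct and follows essentially the same route as the paper. Both arguments use Proposition \ref{prop:oriiso} applied to the local strong bundle isomorphisms furnished by the \'etale source and target maps to identify $\mathscr{O}_{S^1}\cong s^{*}\mathscr{O}_{S^0}$ and to define the target map $\mu_*$, with the functoriality clause of Proposition \ref{prop:oriiso} giving $(\phi\circ\psi)_*=\phi_*\circ\psi_*$; the only cosmetic difference is that the paper packages the construction as $\phi_*:=t_*\circ s_*^{-1}$ directly from the source and target maps of $(\cE,\bE)$, whereas you phrase the same map via $R_\phi=\mu\circ(s|_{\bU_\phi})^{-1}$.
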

\begin{proof}
	Let $\phi \in \bX_\infty$, then the source map $s$ and target map $t$ of $(\cE,\bE)$ induce isomorphisms $s_*:(\mathscr{O}_{S^1})_\phi \to (\mathscr{O}_{S^0})_{s_{\cX}(\phi)}$ and $t_*:(\mathscr{O}_{S^1})_\phi \to (\mathscr{O}_{S^0})_{t_{\cX}(\phi)}$ by Proposition \ref{prop:oriiso}, where $s_{\cX},t_{\cX}$ are source and target maps of $(\cX,\bX)$. We define $\phi_*:=t_*\circ s^{-1}_*: (\mathscr{O}_{S^0})_{s_{\cX}(\phi)}\to  (\mathscr{O}_{S^0})_{t_{\cX}(\phi)}$. The functorial property of Proposition \ref{prop:oriiso} implies that $\phi_*\circ \psi_* = (\phi\circ \psi)_*$. Then we have $\mathscr{O}_{S^1} \simeq \mathscr{O}_{S^0} \prescript{}{p}{\times}_{s_{\cX}}\bX_\infty$ defined by $(\mathscr{O}_{S^1})_\phi \ni o \mapsto (s_*o, \phi)$. Then $\mu:\mathscr{O}_{S^0} \prescript{}{p}{\times}_{s_{\cX}} \bX_\infty \to  \mathscr{O}_{S^0}, (o,\phi) \mapsto \phi_*o$ defines a $\Z_2$-bundle over $(\cX_\infty,\bX_\infty)$ following Definition \ref{def:strPbund} by direct check. 
\end{proof}

\begin{definition}[{\cite[Definition 12.12]{hofer2017polyfold}}]
	An orientation of $S:(\cX,\bX) \to (\cE,\bE)$ is a continuous section of $\mathscr{O}_S$.
\end{definition}

\begin{proposition}[{\cite[Proposition 12.4]{hofer2017polyfold}}]\label{prop:o}
	A sc-Fredholm section $S:(\cX,\bX) \to (\cE,\bE)$ is orientable iff the following two conditions holds.
	\begin{enumerate}
		\item $|\mathscr{O}_S| \to |\cX_\infty|$ is a continuous $\Z_2$-bundle.
		\item $|\mathscr{O}_S| \to |\cX_\infty|$ admits a continuous section.
	\end{enumerate}
\end{proposition}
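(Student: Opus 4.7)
The plan is to show that both conditions are equivalent to the existence of a continuous $\bX_\infty$-equivariant section of $\mathscr{O}_{S^0}\to\cX_\infty$. The key observation driving both directions is the following dichotomy at each point $x\in\cX_\infty$: by Proposition \ref{prop:oriiso} applied to the local sc-diffeomorphisms $L_\phi$ induced by elements $\phi\in\stab_x$, the isotropy $\stab_x$ acts on the two-element fiber $(\mathscr{O}_{S^0})_x\cong\Z_2$, and this action is either trivial or by interchange. In the former case the fiber of $|\mathscr{O}_S|$ over $|x|$ has two elements; in the latter it has exactly one. So the first condition in Proposition \ref{prop:o} is equivalent to the statement that $\stab_x$ acts trivially on $(\mathscr{O}_{S^0})_x$ for every $x\in\cX_\infty$.

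For the forward direction, assume $S$ is orientable with continuous section $\sigma:\cX_\infty\to\mathscr{O}_{S^0}$ that is equivariant under the $\bX_\infty$-action $\mu$ from Proposition \ref{prop:oribundle}. For any $x\in\cX_\infty$ and $\phi\in\stab_x$, equivariance forces $\phi_*\sigma(x)=\sigma(x)$; since $(\mathscr{O}_{S^0})_x\cong\Z_2$ admits a fixed point only for the trivial action, the isotropy acts trivially on every fiber, so condition (1) holds. The section $\sigma$ then descends through the quotient map $\mathscr{O}_{S^0}\to|\mathscr{O}_S|$ to a continuous section, giving (2). To verify local triviality of $|\mathscr{O}_S|$, I would work in a local uniformizer $\cU_x\subset\cX$ from Theorem \ref{thm:natural}: since $\stab_x$ acts trivially on the fibers of $\mathscr{O}_{S^0}|_{\cU_x\cap\cX_\infty}$ (a fact that propagates from the central fiber to all nearby fibers by continuity and discreteness of $\Z_2$, together with Corollary \ref{coro:natural}), the restriction $\mathscr{O}_{S^0}|_{\cU_x\cap\cX_\infty}\to|\cU_x\cap\cX_\infty|$ is a $\Z_2$-bundle; composing with the homeomorphism $|\cU_x\cap\cX_\infty|\to|\cU_x|\cap|\cX_\infty|$ from Definition \ref{def:uni} gives the required local trivialization.

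For the backward direction, assume (1) and (2) hold, and let $\bar\sigma:|\cX_\infty|\to|\mathscr{O}_S|$ be a continuous section. The triviality of the stabilizer action established by (1) makes the quotient map $q:\mathscr{O}_{S^0}\to|\mathscr{O}_S|$ a fiberwise bijection over each $|\cU_x|$ that arises from a local uniformizer $\cU_x$. I then define $\sigma:\cX_\infty\to\mathscr{O}_{S^0}$ as follows: for $y\in\cU_x\cap\cX_\infty$, set $\sigma(y)$ to be the unique element of $(\mathscr{O}_{S^0})_y$ mapping to $\bar\sigma(|y|)$ under $q$. This is continuous because $q$ restricted to $\mathscr{O}_{S^0}|_{\cU_x\cap\cX_\infty}$ is a local homeomorphism (being a fiberwise bijection of $\Z_2$-bundles over a Hausdorff base) and $\bar\sigma\circ|\cdot|$ is continuous. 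Independence of the choice of uniformizer and $\bX_\infty$-equivariance follow because both sides of the would-be equality $\phi_*\sigma(x)=\sigma(t_\cX(\phi))$ project under $q$ to $\bar\sigma(|x|)=\bar\sigma(|t_\cX(\phi)|)$, and $q$ is fiberwise bijective. Gluing these local definitions yields the desired global orientation.

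The main obstacle is really the continuity of the lifted section $\sigma$ near points with nontrivial isotropy. Here one must use that a local uniformizer $\cU_x$ provides enough morphisms (by Theorem \ref{thm:natural} and Corollary \ref{coro:natural}) so that the fiberwise bijection $q|_{\mathscr{O}_{S^0}|_{\cU_x\cap\cX_\infty}}$ is actually an open map onto the corresponding restriction of $|\mathscr{O}_S|$; once this is established the remaining verifications are formal.
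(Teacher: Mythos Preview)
The paper does not supply a proof of this proposition: it is stated as a citation of \cite[Proposition 12.4]{hofer2017polyfold} and no argument is given here. So there is nothing in the present paper to compare your proposal against.

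That said, your approach is the natural one and is essentially correct. The reduction to the dichotomy ``$\stab_x$ acts trivially on $(\mathscr{O}_{S^0})_x$ for all $x$'' is exactly the right organizing principle, and both directions follow from it as you outline. One small point worth tightening: in the forward direction you argue that triviality of the $\stab_x$-action on $(\mathscr{O}_{S^0})_x$ propagates to nearby fibers ``by continuity and discreteness of $\Z_2$.'' This is not automatic, because for $y\in\cU_x$ close to $x$ the group acting on $(\mathscr{O}_{S^0})_y$ is $\stab_y$, not $\stab_x$, and you need the section $\sigma$ itself (not just triviality at $x$) to conclude that $\stab_y$ also acts trivially. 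The cleaner way to phrase this is: equivariance of $\sigma$ directly gives $\phi_*\sigma(y)=\sigma(y)$ for every $y\in\cX_\infty$ and every $\phi\in\stab_y$, so the isotropy action is trivial everywhere at once, and then local triviality of $|\mathscr{O}_S|$ over $|\cU_x|$ follows because the quotient of a trivial $\Z_2$-bundle by a trivial fiberwise action is again a trivial $\Z_2$-bundle. With that adjustment your argument goes through.
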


Finally in the polyfold case, by \cite[Theorem 12.12]{hofer2017polyfold}, generalized strong bundle isomorphisms $\mathfrak{f}$ induce fiber preserving homeomorphisms of the orbit spaces of orientation bundles. Then an orientation for a polyfold sc-Fredholm section of a \textit{regular} strong polyfold bundle is an orientation on one strong polyfold bundle structure. It is well-defined by the bundle counterpart of Theorem \ref{thm:unique}.  When orientation exists, the transverse solution set is oriented by \cite[Theorem 6.3,Theorem 15.6]{hofer2017polyfold}.

\begin{definition}\label{def:admitori}
	We say a sc-Fredholm $s:Z\to W$ of a regular strong polyfold bundle $p:W\to Z$ \textbf{admits orientation bundle} if there exists representative (hence for any representative) $S:(\cX,\bX) \to (\cE,\bE)$ such that $|\mathscr{O}_S| \to |\cX_\infty|$ is a continuous $\Z_2$ bundle. As a consequence, there is a continuous $\Z_2$-bundle $\mathscr{O}_s := |\mathscr{O}_S| \to Z_\infty$.
\end{definition}
\begin{remark}\label{rmk:admitori}
It is clear that a sc-Fredholm section admits orientation bundle iff there is a representative $S:(\cX,\bX) \to (\cE,\bE)$ such that for all $\phi \in \stab_{x}$ with $x\in \cX_\infty$, $\phi_*:(\mathscr{O}_{S^0})_x \to  (\mathscr{O}_{S^0})_x$ defined in the proof of Proposition \ref{prop:oribundle} is identity. 
\end{remark}

\subsection{Quotients of orientations}
The main results of this subsection are that group action of strong bundle induce group action on the orientation bundle, and the quotient of that group action can be identified the orientation bundle of the quotient, when the latter exists. 

Let $p:W\to Z$ be a regular strong polyfold bundle with a sc-smooth $G$-action $(\rho,\mathfrak{P})$ (Definition \ref{def:actbundle}). Assume $s:Z\to W$ is a $G$-equivariant sc-Fredholm section and $s$ admits orientation bundle as in Definition \ref{def:admitori}. If we fix a polyfold bundle structure $(\cE,\bE) \stackrel{P}{\to} (\cX,\bX)$, then for any $x,y\in \cX_\infty,g\in G$ such that $\rho_{\cX}(g,|x|) = |y|$, we have a set of local lifts $L^p_{\cE}(g,x,y)$ in Definition \ref{def:bundleliftset}. Then for any $[\Lambda]_{(g,x)} \in L^p_{\cE}(g,x,y)$, we have a pushforward of orientation $\Lambda_*: (\mathscr{O}_{S^0})_x \to  (\mathscr{O}_{S^0})_y$. By Proposition \ref{prop:liftingbundle} and Remark \ref{rmk:admitori}, the pushforward does not depend on  $[\Lambda]_{(g,x)} \in L^p_{\cE}(g,x,y)$. Hence we can simply write the pushforward as $g_*:(\mathscr{O}_{S^0})_x \to  (\mathscr{O}_{S^0})_y$. Therefore the group property in Proposition \ref{prop:propertybundle} implies that the pushforward has a similar group property $(gh)_* = g_*\circ h_*$. As a result, we have the the following proposition. 

\begin{proposition}\label{prop:oriaction}
	Under the same setup as above, the $G$-action on $p$ induces a continuous $G$-action on the orientation bundle on $\mathscr{O}_s \to Z_\infty$.  
\end{proposition}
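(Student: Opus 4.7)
The plan is to build the $G$-action on $\mathscr{O}_s$ from the pointwise pushforward maps already described in the paragraph preceding the statement, and then verify the three properties that a continuous action requires: well-definedness of the maps, the group law, and joint continuity in $(g,z)\in G\times Z_\infty$. Fix a regular strong polyfold bundle structure $(\cE,\bE)\stackrel{P}{\to}(\cX,\bX)$ representing $p$ with the unique section representative $S:(\cX,\bX)\to(\cE,\bE)$ of $s$. For $g\in G$ and $x\in\cX_\infty$, set $y\in\cX_\infty$ with $|y|=\rho_Z(g,|x|)$ and define $g_*:(\mathscr{O}_{S^0})_x\to(\mathscr{O}_{S^0})_y$ by $g_* := \Lambda_*$ for any $[\Lambda]_{(g,x)}\in L^p_{\cE}(g,x,y)$, where $\Lambda_*$ is the pushforward of orientations along the strong bundle isomorphism $\Lambda(g,\cdot)$ given by Proposition 2.2. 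Independence of the representative $\Lambda$ follows from Proposition 3.32: two lifts in $L^p_{\cE}(g,x,y)$ differ by pre-composition with an $R_\phi$ for some $\phi\in\stab_x$, and since $s$ admits an orientation bundle, Remark 4.3 says $\phi_*$ is the identity on $(\mathscr{O}_{S^0})_x$. Independence of the target representative $y$ (within its $\stab$-orbit) uses the same remark on the $\stab_y$ side.

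Next, I would verify the group law $(gh)_* = g_*\circ h_*$ directly from the multiplication structure on local lifts supplied by Proposition 3.33: if $[\Lambda_1]_{(g,y)}\in L^p_{\cE}(g,y,z)$ and $[\Lambda_2]_{(h,x)}\in L^p_{\cE}(h,x,y)$, then there is a representative $\Lambda_{12}$ of $[\Lambda_1]\circ[\Lambda_2]$ with $\Lambda_{12}(gh,\cdot)=\Lambda_1(g,\cdot)\circ\Lambda_2(h,\cdot)$ on a neighborhood of $x$, so the functoriality asserted in Proposition 4.1 gives $(\Lambda_{12})_* = (\Lambda_1)_*\circ(\Lambda_2)_*$. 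The identity $(\ID^p_x,\ID_x)$ is the identity on $(\mathscr{O}_{S^0})_x$ by the same functoriality together with the fact that $\ID^p_x(\Id,\cdot)$ equals the identity on a neighborhood.

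The main obstacle is continuity of the assembled map $G\times \mathscr{O}_{S^0}\to \mathscr{O}_{S^0}$ and then of its descent to $G\times\mathscr{O}_s\to\mathscr{O}_s$. For this I would argue locally around any fixed $(g_0,x_0,y_0)$ with $|y_0|=\rho_Z(g_0,|x_0|)$. By property (4) of Proposition 3.33, there is a neighborhood $V\times\cU\times\cO$ of $(g_0,x_0,y_0)$ such that every $\alpha\in L^p_{\cE}(g_0,x_0,y_0)$ admits a representative $\Lambda_\alpha$ defined on the whole of $V\times P^{-1}(\cU)$, and any local lift at a nearby $(g,x,y)$ is a restriction of some $\Lambda_\alpha$. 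The pushforward $(\Lambda_\alpha(g,\cdot))_*$ is a continuous map of $\Z_2$-bundles in $g\in V$: this follows from Proposition 4.1 applied to the $V$-family of strong bundle isomorphisms $\Lambda_\alpha(g,\cdot)$, which yields a continuous $\Z_2$-bundle isomorphism on each fiber that varies continuously because the construction of $\mathscr{O}$ in \cite[Chapter 6]{hofer2017polyfold} is itself functorial and continuous in $\sc^0$ families. Summing over the finite set of elements in $L^p_{\cE}(g_0,x_0,y_0)$ (which all yield the same pushforward by independence established above) gives a continuous local expression for $g_*$ on $V\times\pi^{-1}(\cU_\infty)$, where $\pi:\mathscr{O}_{S^0}\to\cX_\infty$ is the bundle projection.

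Finally, I would descend the action to the orbit spaces. Since $\mathscr{O}_s=|\mathscr{O}_S|$ by Definition 4.1 and both the bundle maps $\phi_*$ for $\phi\in\bX_\infty$ and the action maps $g_*$ commute (they both pull back the section $S$), the fiberwise action descends to a well-defined continuous map $G\times\mathscr{O}_s\to\mathscr{O}_s$. The group law descends automatically, and the previously verified local continuity passes to the quotient because the quotient maps $\mathscr{O}_{S^0}\to\mathscr{O}_s$ and $\cX_\infty\to Z_\infty$ are open. I expect the continuity step to be the only nontrivial one, and within it the key input is the continuous dependence of $\mathscr{O}$ on a smooth family of sc-Fredholm operators, which is built into the construction of the orientation bundle.
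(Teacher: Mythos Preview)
Your proposal is correct and follows essentially the same approach as the paper: independence of the lift via Proposition~\ref{prop:liftingbundle} and Remark~\ref{rmk:admitori}, the group law via Proposition~\ref{prop:propertybundle}, descent to the orbit space via commutativity of $g_*$ with $\phi_*$, and continuity via Proposition~\ref{prop:oriiso}. The paper compresses your continuity step into a single stroke by packaging the $V$-family as one strong bundle isomorphism $\widetilde{\Lambda}:U\times P^{-1}(\cU)\to U\times\cE$, $(h,v)\mapsto(h,\Lambda(h,v))$, and invoking Proposition~\ref{prop:oriiso} once for $\widetilde{\Lambda}_*$; your more explicit unpacking of this step is fine but not needed.
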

\begin{proof}
	To see that the pushforward descends to the orbit space, we have the following commutative diagram
	$$
	\xymatrix{
		(\mathscr{O}_{S^0})_x \ar[d]^{\phi_*} \ar[r]^{\Lambda_*} & (\mathscr{O}_{S^0})_y \ar[d]^{\psi_*}\\
		(\mathscr{O}_{S^0})_z \ar[r]^{\Lambda'_*} & (\mathscr{O}_{S^0})_w
		}
	$$
	where $[\Lambda]_{(g,x)} \in L^p_{\cE}(g,x,y), [\Lambda']_{(g,z)} \in L^p_{\cE}(g,z,w)$ and $\phi \in \mor(x,z),\psi \in \mor(y,w)$. This is because $  [R_{\psi^{-1}}\circ \Lambda'\circ R_\phi]_{(g,x)} \in L^p_{\cE}(g,x,y)$. Assume $\Lambda$ is defined $U\times P^{-1}(\cU) \to \cE$, then we have a local strong bundle isomorphism $\widetilde{\Lambda}:U\times P^{-1}(\cU) \to U\times \cE, (h,v) \mapsto (h,\Lambda(h,v))$. Then the continuity of the group action follows from the continuity of $\widetilde{\Lambda}_*$ by Proposition \ref{prop:oriiso}.      
\end{proof}

\begin{definition}\label{def:preori}
	Let $p:W\to Z$ be a regular strong polyfold bundle with a sc-smooth $G$-action. Assume $s:Z\to W$ is a $G$-equivariant sc-Fredholm section and $s$ is oriented, we say $G$ preserves the orientation iff the $G$-action in Proposition \ref{prop:oriaction} preserves the orientation section.
\end{definition}
As a corollary of Proposition \ref{prop:oriaction}, when $G$ is connected, $G$ always preserves the orientation if $s$ is oriented. 

\begin{proposition}\label{orient}
	Under the assumptions of Theorem \ref{quopoly}, assume the sc-Fredholm section $s$ is oriented and the $G$-action preserves the orientation. Then there is an isomorphism $\mathscr{O}_{s}/G \to \mathscr{O}_{\os}$, in particular, $\os$ is orientable.
\end{proposition}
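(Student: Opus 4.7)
The plan is to identify $\mathscr{O}_s/G$ and $\mathscr{O}_{\os}$ locally on the $G$-slices that build $\os$, and to check that these identifications glue globally. To set up, I would fix a polyfold bundle structure $P:(\cE,\bE)\to(\cX,\bX)$ with representative sc-Fredholm functor $S$ of $s$, and employ good $G$-slices $(\cU_x,\tilde{\cU}_x,V_x,f_x,\eta_x,N_x)$ around each $x\in \cX_\infty$ from Proposition \ref{prop:localmodelbundle} to construct $P_Q:(\mathcal{BQ},\bm{BQ})\to(\cQ,\bQ)$ and the representative $S_Q$ of $\os$ as in Theorem \ref{thm:quopoly}. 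By \eqref{eqn:morsec}, $S_Q^0|_{\tilde{\cU}_x}=S^0|_{\tilde{\cU}_x}$, and Lemma \ref{lemma:fred} together with Proposition \ref{prop:reg} then promote $S^0|_{\tilde{\cU}_x}$ to a sc-Fredholm section of the bundle slice $P^{-1}(\tilde{\cU}_x)\to \tilde{\cU}_x$.

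Next, after fixing an auxiliary orientation on the Lie algebra $\cg$ of $G$, I would construct a local isomorphism $\Theta_x:\mathscr{O}_{S^0}|_{(\tilde{\cU}_x)_\infty}\to \mathscr{O}_{S_Q^0}|_{(\tilde{\cU}_x)_\infty}$ as follows. For any smooth $y\in \tilde{\cU}_x$, $G$-equivariance of $s$ together with Proposition \ref{prop:redfree} gives $\mathfrak{g}_y\subset \ker \rD S^0_y$, and the slice condition in Definition \ref{def:bundleGslice} gives $T_y\cX=\mathfrak{g}_y\oplus T_y\tilde{\cU}_x$, so
\begin{equation*}
\ker \rD S^0_y=\mathfrak{g}_y\oplus \ker \rD(S^0|_{\tilde{\cU}_x})_y,\qquad \coker \rD S^0_y=\coker \rD(S^0|_{\tilde{\cU}_x})_y.
\end{equation*}
This yields a canonical isomorphism of determinant lines $\det \rD S^0_y\cong \det\mathfrak{g}_y\otimes \det \rD(S^0|_{\tilde{\cU}_x})_y$; the infinitesimal action $\cg\to\mathfrak{g}_y$ continuously trivialises $\det\mathfrak{g}_y$, and the chosen orientation of $\cg$ then produces the continuous $\Z_2$-bundle isomorphism $\Theta_x$.

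Then I would check the $\Theta_x$ descend and glue. Since the $G$-action preserves the orientation section, every isotropy element $g\in G_z$ acts as the identity on the fiber $(\mathscr{O}_s)_z$, so $\mathscr{O}_s/G$ is a well-defined continuous $\Z_2$-bundle over $Z_\infty/G$. The orientation-preservation hypothesis, together with Proposition \ref{prop:oriaction}, further ensures that for any morphism $(x,y,g,([\Lambda]_{(g,x)},[\Gamma]_{(g,x)}))\in \bQ$ the pushforward $\Lambda_*$ on $\mathscr{O}_{S^0}$ intertwines $\Theta_x$ with the $\bQ$-action on $\mathscr{O}_{S_Q}$ coming from Proposition \ref{prop:oribundle}, so the local $\Theta_x$ assemble into a global continuous $\Z_2$-bundle isomorphism $|\Theta|:\mathscr{O}_s/G\to \mathscr{O}_{\os}$. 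The $G$-invariant orientation section of $\mathscr{O}_s$ then descends through $|\Theta|$ to a continuous section of $\mathscr{O}_{\os}$, orienting $\os$ by Proposition \ref{prop:o}.

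The main obstacle will be the groupoid compatibility in the gluing step. Morphisms in $\bX^2$ act by local strong bundle isomorphisms and commute with $\Theta_x$ tautologically, but morphisms in $\bQ$ also encode the $G$-action, where the adjoint action on $\cg$ could in principle introduce orientation twists into the factor $\det\mathfrak{g}_y$. Showing these twists cancel in the target $\mathscr{O}_{S_Q}$ is exactly the content of the orientation-preservation hypothesis via Proposition \ref{prop:oriaction}, and is also the reason the auxiliary orientation of $\cg$ drops out of the final isomorphism.
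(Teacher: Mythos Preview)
Your proposal is correct and takes essentially the same approach as the paper: both fix an orientation of $G$ (equivalently of $\cg$), use the $G$-slice to split off the group directions and thereby identify $\mathscr{O}_{S^0}$ locally with $\mathscr{O}_{S_Q^0}$, and then invoke the orientation-preservation hypothesis together with the fact that conjugation on $G$ preserves orientation to glue across slices. The paper packages this via the strong bundle isomorphism $(f_a,N_a):P^{-1}(\cU_a)\to V_a\times P^{-1}(\tilde{\cU}_a)$ and Proposition~\ref{prop:oriiso}, computing the transition explicitly as $\Phi(h,v)=(ghg^{-1},R_{(x,y,g,[\Lambda])}(v))$, which is the global version of your kernel/cokernel determinant-line decomposition and sidesteps the need to linearize $S^0$ at non-zeros.
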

\begin{proof}
	Let $(\cU_a,\tilde{\cU}_a,V_a,f_a,\eta_a,N_a)$ and  $(\cU_b,\tilde{\cU}_b,V_b,f_b,\eta_b,N_b)$ be two $G$-slice of a polyfold bundle structure $(\cE,\bE) \stackrel{P}{\to} (\cX,\bX)$ used in the construction of the quotient in Theorem \ref{thm:quopoly}. For $x \in (\tilde{\cU}_a)_\infty,y\in (\tilde{\cU}_b)_\infty$ such that there is a local lift $\Lambda$ with $[\Lambda]_{(g,x)} \in L^p_{\cE}(g,x,y)$ covering $\Gamma$ on the base. Then we have the following commutative diagram of local strong bundle isomorphisms
	\begin{equation}\label{eqn:com}
	\xymatrix{
		P^{-1}(\cU_a) \ar[rr]^{(f_a,N_a)} \ar[d]^{\Lambda(g,\cdot)} && V_a \times P^{-1}(\tilde{\cU}_a) \ar[d]^{\Phi}\\
		P^{-1}(\cU_b) \ar[rr]^{(f_b,N_b)} && V_b\times P^{-1}(\tilde{\cU}_b).
			}
	\end{equation}
	Note that the inverse to $(f_a,N_a)$ is given by $\ID^p_a$, hence here $\Phi: V_a \times P^{-1}(\tilde{\cU}_a) \to V_b\times P^{-1}(\tilde{\cU}_b)$ is defined by $\Phi:(h,v) \mapsto (f_b\circ P \circ \Lambda(g, \Id_a^p(h,v)), N_b\circ \Lambda(g, \Id_a^p(h,v)))$. By Proposition \ref{prop:propertybundle}, when $h$ is close to $\Id$, we have  $\Lambda(g, \Id_a^p(h,v)) = \Lambda(gh, v) = \ID_b(ghg^{-1},\Lambda(g,v))$. By condition \eqref{bund:unique} and \eqref{bund:return} of Definition \ref{def:bundleGslice}, $N_b\circ \Lambda(g, \Id_a^p(h,v)) = N_b\circ \Lambda(g,v)$. Hence we have
	\begin{equation}\label{eqn:formula}
	\Phi(h,v) = (f_b \circ \Gamma(gh,P(v)), N_b\circ \Lambda(g,v)).
	\end{equation}
	Let $\tilde S_a:= S|_{\tilde{\cU}_a}$ and  $\tilde S_b:= S|_{\tilde{\cU}_b}$, where $S:(\cX,\bX) \to (\cE,\bE)$ is the preventative of $s$. Then $s$ being $G$-equivariant implies that the diagram \eqref{eqn:com} commutes with sections, i.e. $\Lambda(g,\cdot)_*S = S$, $(f_a,N_a)_* S = \tilde{S}_a$, $(f_b,N_b)_* S = \tilde{S}_b$ and $\Phi_* \tilde{S}_a = \tilde{S}_b$. From the construction in Theorem \ref{thm:quopoly}, $N_b\circ \Lambda(g,v) = R_{(x,y,g,[\Lambda])}(v)$ on the quotient ep-groupoid $(\mathcal{BQ},\textbf{BQ})$ for $(x,y,g,[\Lambda]) \in \bQ$, where $R_{(x,y,g,[\Lambda])}$ is local action of $(x,y,g,[\Lambda])\in \bQ$ on $\mathcal{BQ}$. Moreover, when $P(v) = x$, by \eqref{eqn:formula} we have $\pi_{V_b}\circ \Phi(h,v) = f_b\circ \Gamma(gh,x) = f_b \circ \Id_b(ghg^{-1}, \Gamma(g,x)) = f_b\circ \ID_b(ghg^{-1},y)$, which by condition \eqref{bund:unique} of Definition \ref{def:bundleGslice}, is $ghg^{-1}$.  In particular, $\rD(\pi_{V_b}\circ \Phi)_{(\Id,(x,0))}:T_{\Id} V_a \times \{0\} \to T_{\Id} V_b$ preserve the orientation if we fix an orientation of $G$. Then we have a commutative diagram of orientation bundles,
	$$
	\xymatrix{
		\mathscr{O}_S|_{\cU_a} \ar[d]^{\Lambda(g,\cdot)_*} \ar[rr]^{(f_a,N_a)_*} && \mathscr{O}_{\tilde{S}_a}|_{V_a\times \tilde{\cU_a}} \ar[d]^{\Phi_* = (x,y,g,[\Lambda])_*} \\
		\mathscr{O}_S|_{\cU_b} \ar[rr]^{(f_b,N_b)_*} && \mathscr{O}_{\tilde{S}_b}|_{V_b\times \tilde{\cU_b}}. 
		}
	$$
	The orientation of $\mathscr{O}_S$ pushed forward to an orientation of $\mathscr{O}_{\tilde{S}_a}|_{V_a\times \tilde{\cU_a}}$ by $(f_a, N_a)$. After fixing an orientation of $G$ (hence $V_a$),  an orientation of $\mathscr{O}_{\tilde{S}_a}|_{V_a\times \tilde{\cU_a}}$ determines an orientation of $\mathscr{O}_{\tilde{S}_a}|_{\tilde{\cU_a}}$. By assumption, the orientation is preserved by the group action, hence $\Lambda(g,\cdot)_*$ preserves the orientation. The commutative diagram above shows that the pushforward orientation is well-defined on the quotient. 
\end{proof}
\begin{remark}
	Using the orientation scheme in the proof of Proposition \ref{orient}, when $s$ is transverse to $0$, $T_x\os^{-1}(0)$ is oriented by basis $\la \theta_1,\ldots,\theta_m\ra$ in such a way that $\la \xi_1,\ldots,\xi_n,\theta_1,\ldots, \theta_m\ra$ gives the orientation of $T_xs^{-1}(0)$, where $\la \xi_1,\ldots, \xi_n\ra$ is the infinitesimal directions of the group action and gives the fixed orientation of $G$.  
\end{remark}

Combining Theorem \ref{thm:quopoly} and Proposition \ref{orient},  we get a proof of the main Theorem \ref{thm:main}.

\section{Equivariant Transversality}\label{s5}
As corollaries of Theorem \ref{thm:main} and the perturbation theory \cite[Theorem 5.6, Theorem 15.4]{hofer2017polyfold} on polyfolds, we have the following basic equivariant transversality results.

\begin{corollary}{\label{coro:p1}}
	Let $p:\cY \to \cX$ be a tame strong M-polyfold bundle equipped with a sc-smooth $G$-action and a $G$-equivariant proper sc-Fredholm section $s$, such that the induced action on $\cX$ is free. Assume $\cX$ is infinite dimensional and supports sc-smooth bump functions. Then there exists an $G$-invariant open neighborhood $\hX\subset \cX^3$ containing $\cX_\infty$ and an equivariant $\sc^+$-perturbation $\gamma$ on $\hX$, such that $s+\gamma$ is proper and in general position. Moreover, if $s$ is already in general position on a $G$-invariant closed set $C\subset \hX$, then $\gamma$ can be chosen to satisfy $\gamma|_C = 0$. 
\end{corollary}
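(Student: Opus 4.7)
The plan is to reduce equivariant transversality to the non-equivariant perturbation theory on the quotient M-polyfold. By Theorem \ref{thm:free}, since $\cX$ is infinite dimensional and the $G$-action on $\cX$ is free, there is a $G$-invariant open set $\hX' \subset \cX^2$ containing $\cX_\infty$ such that $\bar p : p^{-1}(\hX')/G \to \hX'/G$ is a tame strong M-polyfold bundle, the quotient map $\pi_G$ is an sc-smooth strong bundle map, and $s|_{(\hX')^1}$ descends to a proper sc-Fredholm section $\os : (\hX')^1/G \to p^{-1}((\hX')^1)/G$. I would then apply the standard perturbation theorem \cite[Theorem 5.6, Theorem 15.4]{hofer2017polyfold} to $\os$ to obtain an $\sc^+$-perturbation $\bar\gamma$ on a possibly smaller open neighborhood of the zero set (which causes the further level shift from $\cX^2$ to $\cX^3$) such that $\os + \bar\gamma$ is proper and in general position. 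The desired equivariant perturbation upstairs is then the pullback $\gamma := \pi_G^*\bar\gamma$, which is manifestly $G$-equivariant and $\sc^+$ because $\pi_G$ is $\sc^\infty$ as a strong bundle map.

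To invoke \cite[Theorem 5.6, Theorem 15.4]{hofer2017polyfold} on the quotient one needs $\sc^\infty$ bump functions on $\hX'/G$. Since $\cX$ supports sc-smooth bump functions by assumption and $G$ is compact, averaging over $G$ produces $G$-invariant sc-smooth bump functions on $\cX$, which descend to sc-smooth bump functions on $\hX'/G$ via $\pi_G$. Equivalently, the slice charts $\tilde{\cO} \subset \cX^2$ constructed in Lemma \ref{lemma:slice} are open subsets of $\cX^2$ and hence inherit the bump function property, and the M-polyfold structure on $\hX'/G$ is locally modeled on these slices.

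Properness of $s+\gamma$ follows from properness of $\os+\bar\gamma$: the zero set $(s+\gamma)^{-1}(0) = \pi_G^{-1}((\os+\bar\gamma)^{-1}(0))$ is a principal $G$-bundle over a compact set with compact fiber $G$, hence compact. That $s+\gamma$ is in general position is the content of Remark \ref{rmk:general}: the infinitesimal directions $\rD\gamma_\Id(T_\Id G)$ sit in $(T^R_x\cX)_\infty$, are annihilated by $\rD s_x$ by equivariance (hence also by $\rD(s+\gamma)_x$, which again by equivariance annihilates these directions) and together with $T^R_x\tilde{\cO}$ span $T^R_x\cX$; so surjectivity of $\rD(\os+\bar\gamma)_x|_{T^R_x\tilde{\cO}}$ is equivalent to surjectivity of $\rD(s+\gamma)_x|_{T^R_x\cX}$.

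For the relative statement, if $s$ is already in general position on the $G$-invariant closed set $C \subset \hX$, then $|C| := \pi_G(C)$ is closed in $\hX/G$ (since $C$ is $G$-saturated) and $\os$ is in general position on $|C|^1$ by the same argument via Remark \ref{rmk:general}. The relative form of the perturbation theorem, which is built from the $\sc^+$ bump-function construction in \cite[\S 5.3, \S 15.2]{hofer2017polyfold}, produces $\bar\gamma$ with $\bar\gamma|_{|C|^1} = 0$, and then $\gamma|_C = \pi_G^*\bar\gamma|_C = 0$. The main technical points, beyond routine bookkeeping of the level shifts, are verifying that the bump-function hypothesis descends to the quotient and tracking the identification of general position under $\pi_G$; both are handled by the machinery already established in Section \ref{subsec:free}.
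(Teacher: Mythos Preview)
Your proposal is correct and follows essentially the same approach as the paper: pass to the free quotient via Theorem \ref{thm:free}, verify that bump functions descend (the paper cites Corollary \ref{coro:equi} for this), apply \cite[Theorem 5.6]{hofer2017polyfold} on the quotient, pull back, and invoke Remark \ref{rmk:general} for general position; the relative statement is handled by choosing $\bar\gamma$ to vanish on $C/G$. One small clarification: the shift from $\cX^2$ to $\cX^3$ already comes from part \eqref{part:freetwo} of Theorem \ref{thm:free} (the sc-Fredholm property of $\os$ lives on $\hX^1/G$), not from the perturbation theorem itself.
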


\begin{proof}
	By Theorem \ref{thm:free}, $\os$ is a sc-Fredholm section of $p^{-1}(\hX)/G\to \hX/G$ for a $G$-invariant open set $\hX \subset \cX^3$ containing $\cX_\infty$. By Corollary \ref{coro:equi}, $\hX/G$ also supports sc-smooth bump functions. By \cite[Theorem 5.6]{hofer2017polyfold}, there exists a $\sc^+$-perturbation $\overline{\gamma}$ on $\hX/G$ such that  $\os+\overline{\gamma}$ is proper and in general position.  Let $\gamma:=\pi^*_G\overline{\gamma}$, where $\pi_G:\hX \to \hX/G$ is the quotient, then $s+\gamma$ is $G$-equivariant, proper and transverse. By Remark \ref{rmk:general}, $s+\gamma$ is in general position. For the last assertion, we can chose $\overline{\gamma}|_{C/G} = 0$ by the proof of \cite[Theorem 5.6]{hofer2017polyfold}, then $\gamma|_{C} = 0$. 
\end{proof}

\begin{remark}
	Since $\hX$ contains of a neighborhood of $s^{-1}(0)$ in $\cX^3$, if we choose the $\supp \gamma$ small enough, then $\gamma$ can be extended to $\cX^3$ by $0$.  However, if there are infinite M-polyfolds with free group actions, keeping this property on the supports in a coherent way might be challenging.  Since in applications, we only need regular moduli spaces, which are contained in $\cX_\infty$, having $\gamma$ defined on a neighborhood of $\cX_\infty$ is sufficient.
\end{remark}

By \cite[Theorem 15.4]{hofer2017polyfold}, we have the following polyfold version with an identical proof.

\begin{corollary}{\label{coro:p2}}
	Let $p: W\to Z$ be a regular tame strong polyfold bundle equipped with a sc-smooth $G$-action and a $G$-equivariant proper sc-Fredholm section $s$, such that the induced action on $Z$ has finite isotropy. Assume $Z$ is infinite-dimensional and supports sc-smooth bump functions. Then there exists a $G$-invariant neighborhood $\hZ$ of $Z^3$ containing $Z_\infty$ and an equivariant $\sc^+$-multisection perturbation $\kappa$ defined on $\hZ$, such that $s+\kappa$ is proper and in general position. Moreover, if $s$ is already in general position on a $G$-invariant closed set $C\subset \hZ$, then $\kappa$ can be chosen to satisfy $\kappa|_C = 0$. 
\end{corollary}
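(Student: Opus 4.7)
The plan is to mirror the proof of Corollary \ref{coro:p1} exactly, substituting the polyfold quotient theorem \ref{thm:main} for the M-polyfold version, and the multisection perturbation theory for the single-valued perturbation theory. First, I would apply Theorem \ref{thm:main} to obtain a $G$-invariant open set $\hZ \subset Z^2$ containing $Z_\infty$, a regular tame strong polyfold bundle $\bar p\colon p^{-1}(\hZ)/G \to \hZ/G$, and a proper sc-Fredholm section $\bar s\colon \hZ^1/G \to p^{-1}(\hZ^1)/G$ with $\pi_G^*\bar s = s|_{\hZ^1}$, where $\pi_G\colon \hZ \to \hZ/G$ is the quotient polyfold map. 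At this point we are working on $Z^3$ because the ambient quotient lives in $\hZ^1 \subset Z^3$ after the level shifts required by Theorem \ref{thm:main}.

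Next I would verify that $\hZ/G$ supports sc-smooth bump functions. This follows from the polyfold analogue of Corollary \ref{coro:equi}: a sc-smooth bump function on $\hZ/G$ pulls back via the sc-smooth polyfold map $\mathfrak{q}$ from Theorem \ref{thm:main} to a $G$-invariant sc-smooth bump function on $\hZ$, and conversely averaging a bump function on $\hZ$ over the compact Lie group $G$ produces a $G$-invariant one that descends. With this in hand, I would apply \cite[Theorem 15.4]{hofer2017polyfold} to the proper sc-Fredholm section $\bar s$ of the regular tame strong polyfold bundle $\bar p$: this produces a $\sc^+$-multisection perturbation $\bar\kappa$ on $\hZ^1/G$ such that $\bar s + \bar\kappa$ is proper and in general position. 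If $s$ is already in general position on a $G$-invariant closed set $C \subset \hZ^1$, then $C/G \subset \hZ^1/G$ is closed, $\bar s$ is already in general position on $C/G$ by Remark \ref{rmk:general} (or its multisection analogue), and \cite[Theorem 15.4]{hofer2017polyfold} allows us to arrange $\bar\kappa|_{C/G} = 0$.

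Finally I would define $\kappa := \mathfrak{q}^*\bar\kappa$ on $\hZ^1$, interpreted in the sense that a $\sc^+$-multisection pulls back through a sc-smooth strong polyfold bundle map by pulling back the local section branches with their rational weights. Since $\mathfrak{q}$ is the topological quotient map realized in the polyfold category, the pullback $\kappa$ is automatically $G$-equivariant. Properness of $s+\kappa$ follows from properness of $\bar s + \bar\kappa$ together with compactness of $G$ (as in the last line of the proof of Theorem \ref{thm:quopoly}); and the general position condition descends/lifts through $\mathfrak{q}$ by the same tangent-space splitting argument used in Remark \ref{rmk:general}, namely $T^R_x\cX = \mathfrak{g}_x \oplus T^R_x\tilde\cU_x$ on a $G$-slice combined with the equivariance $\rD s_x(\mathfrak{g}_x) = 0$. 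The condition $\kappa|_C = 0$ is immediate from $\bar\kappa|_{C/G} = 0$.

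The main point requiring care, and the only genuinely new ingredient beyond Corollary \ref{coro:p1}, is checking that the $G$-equivariant $\sc^+$-multisection pullback correspondence $\bar\kappa \leftrightarrow \kappa$ behaves well with respect to the local branch and weight data. This is essentially a bookkeeping exercise using the $G$-slices constructed in Proposition \ref{prop:localmodelbundle}: on each slice the quotient map is realized by a local strong bundle isomorphism, so branches of $\bar\kappa$ and their weights lift uniquely, and the equivariant extension over $G$-translates is forced by $G$-invariance of $\bar\kappa$ on the quotient. No new transversality input is needed beyond \cite[Theorem 15.4]{hofer2017polyfold} applied downstairs.
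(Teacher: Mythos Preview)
Your proposal is correct and follows exactly the approach the paper intends: the paper states that Corollary~\ref{coro:p2} has ``an identical proof'' to Corollary~\ref{coro:p1} via \cite[Theorem 15.4]{hofer2017polyfold}, and you have written out precisely that identical proof with the appropriate substitutions (Theorem~\ref{thm:main} for Theorem~\ref{thm:free}, multisections for single-valued sections). Your extra care about the multisection pullback through $G$-slices is a reasonable elaboration but not a new ingredient.
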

\begin{remark}
	\cite[Theorem 5.6, Theorem 15.4]{hofer2017polyfold} assert the abundance of transverse perturbations and the perturbation can be chosen to be supported in arbitrarily small neighborhoods of $s^{-1}(0)$. One can get similar properties for equivariant perturbations under the conditions in Corollary \ref{coro:p1} and Corollary \ref{coro:p2}.
\end{remark}

For more general group actions, equivariant transversality is often obstructed. The obstructions usually arise from points with larger isotropy. Therefore the first step to analyze equivariant transversality would be understanding those with biggest isotropy group, namely the fixed locus. Moreover, the study of fixed locus is important for the localization theorem in \cite{equi}. We will first discuss the finite dimensional case to motivate the discussion in the polyfold case. 

\subsection{Manifold case}
In this subsection, we discuss equivariant transversality near the fixed locus in the case of manifolds. We first show that there is a standard local model for the equivariant transversality problem near the fixed locus. We call a vector bundle $p:E\to M$ a \textbf{$\bm{G}$-vector bundle} iff $G$ acts on $p$ such that the induced $G$-action on $M$ is trivial. Then the fibers of $G$-vector bundle are $G$-representations.  Let $\{V^\lambda\}_{\lambda\in \Lambda}$ denote the set of all the nontrivial irreducible $G$-representations.
\begin{proposition}\label{prop:dec}
	Let $p:E\to M$ be a $G$-vector bundle. Then there is a decomposition of $G$-vector bundles $E = E^G\oplus_{\lambda\in \Lambda} E^\lambda$ such that $E^G$ is fixed by the $G$-action and the fibers of $E^\lambda$ are sums of the irreducible representation $V^\lambda$.
\end{proposition}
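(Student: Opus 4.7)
The plan is to construct the decomposition fiberwise using the standard isotypic projections from representation theory, then verify that these projections assemble into smooth bundle maps of locally constant rank, so that their images are smooth subbundles.

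First, I would introduce the isotypic projections. For each $\lambda \in \Lambda$ with character $\chi^\lambda$ and for the trivial representation with $\chi^G \equiv 1$, define bundle endomorphisms $\pi^\lambda, \pi^G : E \to E$ by
$$\pi^\lambda_x(v) := \frac{\dim V^\lambda}{\vol(G)} \int_G \overline{\chi^\lambda(g)}\, \rho_x(g)v \, dg, \qquad \pi^G_x(v) := \frac{1}{\vol(G)} \int_G \rho_x(g) v \, dg,$$
where $\rho_x$ denotes the $G$-action on the fiber $E_x$. Since the $G$-action on $E$ is smooth and $G$ is compact, each $\pi^\lambda$ and $\pi^G$ is a smooth bundle endomorphism of $E$ covering $\Id_M$. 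Standard representation theory (Peter--Weyl / character orthogonality) applied fiberwise gives that $\pi^\lambda_x$ and $\pi^G_x$ are $G$-equivariant projections whose images are the $V^\lambda$-isotypic component and the $G$-invariant part of $E_x$ respectively, and that $\pi^G_x + \sum_{\lambda \in \Lambda} \pi^\lambda_x = \Id_{E_x}$ with pairwise orthogonal images.

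Next I would verify that each $\pi^\lambda$ and $\pi^G$ has locally constant rank, so that their images are smooth subbundles. Pick any $x_0 \in M$ and a local trivialization of $E$ over a connected neighborhood $U$ of $x_0$. In this trivialization the action is a continuous family of homomorphisms $\rho_x : G \to GL(E_{x_0})$, and the multiplicity of $V^\lambda$ in $E_x$ is the $\Hom_G$-dimension
$$m^\lambda(x) = \dim \Hom_G(V^\lambda, E_x) = \frac{1}{\vol(G)} \int_G \overline{\chi^\lambda(g)}\, \chi_{\rho_x}(g) \, dg,$$
which is continuous in $x$ and integer-valued, hence locally constant; similarly for $m^G(x)$. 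Therefore $\rank \pi^\lambda_x = m^\lambda(x) \cdot \dim V^\lambda$ and $\rank \pi^G_x = m^G(x)$ are locally constant, so $E^\lambda := \Ima \pi^\lambda$ and $E^G := \Ima \pi^G$ are smooth $G$-invariant subbundles of $E$. Combined with the orthogonal projection identity above, this yields the claimed decomposition
$$E = E^G \oplus \bigoplus_{\lambda \in \Lambda} E^\lambda$$
as $G$-vector bundles, with $E^G$ fixed pointwise by $G$ and each $E^\lambda_x$ a sum of copies of $V^\lambda$.

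The only real content is the locally constant rank step; once that is in hand the rest is routine representation-theoretic bookkeeping. I do not expect a serious obstacle since integrality of the multiplicity is immediate from character orthogonality. One small point to note is that a priori $\Lambda$ is infinite, but on any component $U$ of $M$ only finitely many $m^\lambda(x)$ are nonzero (those appearing in $E_{x_0}$), so the direct sum is finite on each component.
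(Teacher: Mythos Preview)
Your proposal is correct and follows essentially the same approach as the paper: both construct the isotypic projections $P^\lambda$ by integrating the $G$-action against the character $\chi^\lambda$, then take $E^\lambda$ to be the image (equivalently $\ker(\Id - P^\lambda)$). The paper's normalization constant is $\dim V^\lambda / \dim_\R \End_G(V^\lambda)$ rather than your $\dim V^\lambda / \vol(G)$, reflecting that it works with real representations; and the paper simply asserts $E^\lambda$ is a smooth subbundle without spelling out the locally-constant-rank step you include, but otherwise the arguments coincide.
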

\begin{proof}
	Since $G$ is compact, every finite dimensional representation can be decomposed into direct sum of irreducible representations. That is each fiber $E_x$ can be decomposed into irreducible representations, it suffices to show that the $V^\lambda$ component of $E_x$  forms a smooth subbundle. Let $\chi^{\lambda}:G\to \R$ denote the character of the irreducible representation $V^\lambda$ and $\rho$ denote the group action on $E$. We equip $G$ with a Haar measure $\mu$ such that $\mu(G) = 1$, then 
	$$P^\lambda: E \to E, \quad v \mapsto       \frac{\dim V^\lambda}{\dim_\R \End(V^\lambda)} \int_G \chi^\lambda(g) \rho(g,v) \rd\mu$$
	defines a smooth projection on $E$, e.g see \cite{brocker2013representations}. On each fiber $P^\lambda_x:E_x\to E_x$ is the projection to the $V^\lambda$ component. Therefore $E^\lambda := \ker (\Id - P^\lambda)$ is a smooth subbundle of $E$. 
\end{proof}

\begin{proposition}\label{prop:Glocal}
	Let $p:E\to M$ be a vector bundle over a closed manifold with a $G$-action $\rho$ for a compact group $G$. Suppose $M^G\subset M$ is the fixed point set of the induced $G$-action $\rho_M$ on $M$.  Then we have the following.
	\begin{enumerate}
		\item $M^G$ is a submanifold of $M$.
		\item Let $N$ denote the normal bundle of $M^G\subset M$, then the linearization of $\rho_M$ induces a $G$-action on $N$ such that $N$ is $G$-vector bundle. 
		\item There exists a $G$-invariant open neighborhood $U\subset M$ of $M^G$, such that there is a $G$-equivariant bundle isomorphism
		$$\Phi: E|_U \to \pi^*(E|_{M^G}),$$
		where $\pi$ is the projection from $N$ to $M^G$. 
	\end{enumerate}
\end{proposition}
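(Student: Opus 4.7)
\medskip

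The plan is to reduce everything to a linear model at each fixed point using $G$-invariant geometry, and then to trivialize the bundle $E$ radially using an equivariant connection.

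First I would fix a $G$-invariant Riemannian metric $g_M$ on $M$, obtained by averaging any metric over the Haar measure on the compact group $G$. For each $x \in M^G$, the derivative of $\rho_M$ gives a linear $G$-action on $T_xM$ by isometries with respect to $(g_M)_x$. Applying Proposition \ref{prop:dec} to the trivial bundle $T_xM \to \{x\}$ yields an orthogonal $G$-splitting $T_xM = (T_xM)^G \oplus N_x$, where $N_x$ is the sum of the non-trivial isotypic pieces. The $G$-invariant exponential map $\exp_x$ then provides a local diffeomorphism from a neighborhood of $0$ in $T_xM$ to a neighborhood of $x$ in $M$, which intertwines the linear $G$-action on $T_xM$ with $\rho_M$. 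Since $G$-fixed points in $T_xM$ are precisely $(T_xM)^G$, the fixed locus $M^G$ corresponds locally to this linear subspace, proving part (1). Simultaneously this shows that $T_xM^G = (T_xM)^G$, so that the metric-orthogonal complement $N_x$ is canonically identified with the normal fiber at $x$, and the linearized action preserves this splitting; bundling these together over $M^G$ produces the $G$-vector bundle structure on $N$ asserted in part (2).

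For part (3) I would first promote the pointwise equivariant exponential construction to a global equivariant tubular neighborhood. Concretely, $\exp: N \to M$ restricted to a small $G$-invariant open neighborhood $W \subset N$ of the zero section is a $G$-equivariant diffeomorphism onto a $G$-invariant open $U \supset M^G$ in $M$ (equivariance follows because $G$ acts by isometries and the exponential is natural under isometries). This identifies $U$ with an open neighborhood of the zero section of $N$ and turns the normal-bundle projection $\pi: N \to M^G$ into a $G$-equivariant retraction $U \to M^G$. Next I would choose a connection $\nabla$ on the vector bundle $E \to M$ and average it over $G$ to obtain a $G$-equivariant connection $\nabla^G$ on $E$. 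Using $\nabla^G$, I would define $\Phi: E|_U \to \pi^*(E|_{M^G})$ by parallel transport along the radial geodesic rays $t \mapsto \exp_{\pi(y)}(ty)$ inside the normal fibers, sending a vector $v \in E_y$ to the parallel translate at $\pi(y) \in M^G$. Because $G$ acts by isometries on $M$ and $\nabla^G$ is $G$-invariant, $G$ sends radial geodesics to radial geodesics and commutes with parallel transport, so $\Phi$ is $G$-equivariant; it is a bundle isomorphism on a possibly smaller $G$-invariant neighborhood of $M^G$ by the standard tubular-neighborhood argument.

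The only mildly delicate point will be verifying that the pieces are simultaneously $G$-equivariant and globally well-defined on a single neighborhood $U$ of $M^G$, rather than only on neighborhoods of individual orbits. Compactness of $M^G$ (since $M$ is closed) together with the compactness of $G$ makes this routine: a uniform radius for the tubular neighborhood and for the domain of parallel transport exists, and $G$-invariance of all chosen data (metric, exponential map, connection) is preserved under shrinking $W$ to a $G$-invariant sub-neighborhood. Thus no genuine obstacle arises, and the main content is the careful bookkeeping of equivariance of the averaged structures.
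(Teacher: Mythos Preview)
Your proof is correct and parallels the paper's argument closely for parts (1) and (2): both fix a $G$-invariant metric and use the equivariance of the exponential map to locally linearize the action at a fixed point, identifying $T_xM^G=(T_xM)^G$ and hence exhibiting $M^G$ as a submanifold with normal bundle $N$ carrying the linearized $G$-action.

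For part (3) there is a genuine, if minor, difference in strategy. You average the \emph{connection} first to obtain a $G$-equivariant $\nabla^G$, and then parallel transport along radial rays is automatically equivariant, so the resulting $\Phi$ is $G$-equivariant by construction. The paper instead takes an arbitrary connection, builds the (a priori non-equivariant) parallel-transport trivialization $\Psi$, and then averages the \emph{bundle map} itself over $G$:
\[
\Phi(x,v)=\Bigl(x,\int_G \rho(g,\psi\circ\rho(g^{-1},(x,v)))\,\rd\mu\Bigr).
\]
Since $\Psi|_{E|_{M^G}}=\Id$ is already equivariant, the averaged $\Phi$ agrees with $\Id$ along $M^G$ and is therefore an isomorphism on a possibly smaller invariant tube. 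Your route avoids this a posteriori shrinking argument at the cost of verifying that averaging preserves the connection axioms (which is immediate since connections form an affine space); the paper's route is more hands-on but generalizes verbatim to settings where an equivariant connection is not readily available. Either way the content is the same.
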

\begin{proof}
We equip $M$ with a $G$-invariant metric $d$.  If $x\in M^G$, then there is a linear representation:
$$\rD(\rho_M)_x: G\to O(T_xM),$$
where $O(T_xM)$ is the orthogonal group. By the uniqueness of geodesic, we have
$$\rho_M(g, \exp(\xi))=\exp(\rD(\rho_M)_x(g)\cdot \xi), \quad \forall \xi \in T_xM.$$
Therefore $M^G$ is a submanifold and the tangent $T_xM^G$ is the fixed subspace $(T_xM)^G$. Let $\pi:N\to M^G$ denote the normal bundle of $M^G\subset M$. Linearization $\rD(\rho_M)_x$ induces a $G$-action on each fiber $N_x$. Then by the equivariant tubular neighborhood theorem \cite[Theorem 3.18]{emerson2010equivariant}, there is a $G$-invariant neighborhood of $M^G$ equivariantly diffeomorphic to the normal bundle $N$. With a little abuse of notation, we will call this tubular neighborhood $N$. 

Fixing a connection, we have a bundle isomorphism $\Psi: E|_N\to \pi^* (E|_{M^G})$ by parallel transportation, such that $\Psi|_{E|_{M^G}} = \Id$ is equivariant. We can write $\Psi$ as $(x,v)\mapsto (x,\psi(x,v))$, where $\psi(x,v)\in E_{\pi(x)}$. We equip $G$ with a Haar measure $\mu$ such that $\mu(G) = 1$, then we define
$$\Phi: E|_N\to \pi^* (E|_{M^G}), (x,v)\to \left(x,\int_G \rho(g, \psi\circ \rho(g^{-1},(x,v))) \rd \mu \right).$$
Then $\Phi$ is an equivariant bundle map and $\Phi|_{E|_{M^G}} = \Psi|_{E|_{M^G}}=\Id$. Hence $\Phi$ is an equivariant bundle isomorphism for a smaller $G$-invariant tubular neighborhood of $M^G$ in $N$.  
\end{proof}

As a corollary of Proposition \ref{prop:Glocal}, the equivariant transversality problem on the fixed locus $M^G$ is equivalent to the equivariant transversality problem on the local model $\pi^*(E|_{M^G})\to N$. Since $N,E|_{M^G}$ are $G$-vector bundles over $M^G$, by Proposition \ref{prop:dec} we have decompositions of $G$-vector bundles:
\begin{equation}\label{eqn:bundledec}
N = \oplus_{\lambda \in \Lambda} N^\lambda,\qquad E|_{M^G} := E^G\oplus_{\lambda \in \Lambda} E^\lambda.
\end{equation}
Let $s:N\to \pi^*(E|_{M^G})$ be a $G$-equivariant section. If $s(x)=0$ for $x\in M^G$, by Schur's lemma the linearized operator $\rD s_x$ can be decomposed into
\begin{equation}\label{eqn:sectiondec}
\rD s_{x}=\rD^Gs_x \oplus_{\lambda \in \Lambda} \rD^\lambda s_x,
\end{equation}
where $\rD^Gs_x: T_xM^G\to E_x^G$ is the linearization of the fixed part $s^G:=s|_{M^G}:M^G\to E^G$ and $\rD^\lambda s_x: N^\lambda_x \to E^\lambda_x $ is $G$-equivariant. Thus $G$-equivariant transversality on $M^G$ means that $s|_{M^G}:M^G\to E^G$ is transverse to $0$ and all $\rD^\lambda s_x$ are surjective for all $x\in (s^G)^{-1}(0)$.  If $\rank E^\lambda>\rank N^\lambda$ for some $\lambda$, then we can not expect equivariant transversality, unless $(s^{G})^{-1}(0)=\emptyset$. Even if $\rank E^\lambda \le \rank N^\lambda $ for all $\lambda\in \Lambda$ appearing in the decomposition, there are still some other obstructions. 
\begin{definition}\label{def:singular}
	The spaces of equivariant linear maps $\Hom_G(N_x^\lambda,E_x^\lambda)$ form a vector bundle over manifold $M^G$. We define 
	\begin{equation}\label{eqn:singular}
	S^\lambda : = \{ (x,h)\in\Hom_G(N^\lambda,E^\lambda)|x\in M^G, h\in\Hom_G(N^\lambda_x, E^\lambda_x) \text{ is not surjective.}\}.
	\end{equation}
	Then $S^\lambda \to M^G$ is a fiber bundle
\end{definition}

\begin{proposition}\label{prop:codim}
	Assume $\ind \rD^\lambda s \ge 0$, then the fiber of $S^\lambda$ is a real variety and the following two properties hold.
	\begin{enumerate}
		\item The real codimension of $S^\lambda_x$ in $\Hom_G(N_x^\lambda,E_x^\lambda)$ is $(\frac{\ind D^\lambda s}{\dim V^\lambda} + 1)\dim_\R \End_G(V^\lambda)$.
		\item The real codimension of the singularities of $S^\lambda_x$ in $S^\lambda_x$ is $(\frac{\ind D^\lambda s}{\dim V^\lambda} + 3)\dim_\R \End_G(V^\lambda)\ge 3$.  
	\end{enumerate} 
\end{proposition}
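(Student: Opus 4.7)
The plan is to reduce both codimension estimates to the classical codimension formula for determinantal varieties, after using Schur's lemma to put $\Hom_G(N^\lambda_x, E^\lambda_x)$ into matrix form. Fixing $x \in M^G$, I would first choose $G$-isomorphisms $N^\lambda_x \cong (V^\lambda)^{\oplus a}$ and $E^\lambda_x \cong (V^\lambda)^{\oplus b}$, so that $a - b = \ind \rD^\lambda s / \dim V^\lambda \ge 0$ by hypothesis. Let $D := \End_G(V^\lambda)$, a finite-dimensional real division algebra (hence $\R$, $\C$, or $\H$ by Frobenius), and set $d := \dim_\R D$. Schur's lemma yields the identification
\[
\Hom_G(N^\lambda_x, E^\lambda_x) \;\cong\; \mathrm{Mat}_{b \times a}(D),
\]
under which surjectivity of an equivariant $h$ corresponds to the associated $D$-linear map $D^a \to D^b$ being surjective, i.e., to the matrix having full $D$-rank $b$. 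Thus $S^\lambda_x$ is cut out inside $\mathrm{Mat}_{b\times a}(D)$ by the vanishing of the minors detecting $D$-rank $\le b-1$, and is in particular a real algebraic subvariety.

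Next I would invoke the uniform codimension formula: for each $r$, the locus $\{A \in \mathrm{Mat}_{b\times a}(D) : \rank_D A \le r\}$ has real codimension $d(b-r)(a-r)$. For $D = \R$ or $\C$ this is the classical Harris formula, and in all three cases it can be verified directly by a dimension count. The rank-exactly-$r$ stratum is the image of the smooth map
\[
\{B \in \mathrm{Mat}_{b\times r}(D) : B\ \text{injective}\} \times \{C \in \mathrm{Mat}_{r\times a}(D) : C\ \text{surjective}\} \;\to\; \mathrm{Mat}_{b\times a}(D), \qquad (B,C)\mapsto BC,
\]
whose fibers are $\mathrm{GL}_r(D)$-torsors of real dimension $dr^2$. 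This gives real dimension $d(rb + ra - r^2) = dr(a+b-r)$ for the stratum, hence ambient real codimension $d(b-r)(a-r)$. Setting $r = b-1$ yields the first assertion:
\[
\mathrm{codim}_\R S^\lambda_x \;=\; d(a-b+1) \;=\; \Bigl(\tfrac{\ind \rD^\lambda s}{\dim V^\lambda} + 1\Bigr)\dim_\R \End_G(V^\lambda).
\]

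For the second assertion I would use the standard fact that the singular locus of the determinantal variety $\{A : \rank_D A \le r\}$ is precisely $\{A : \rank_D A \le r-1\}$ (the rank-exactly-$r$ stratum being smooth inside $\mathrm{Mat}_{b\times a}(D)$ via the parametrization above, while at rank-drop points tangent directions from distinct factorizations fail to agree). Applying this with $r = b-1$ identifies the singularities of $S^\lambda_x$ with the rank $\le b-2$ stratum, of ambient real codimension $2d(a-b+2)$, so
\[
\mathrm{codim}_{S^\lambda_x}\bigl(\mathrm{Sing}\, S^\lambda_x\bigr) \;=\; 2d(a-b+2) - d(a-b+1) \;=\; d(a-b+3) \;=\; d\Bigl(\tfrac{\ind \rD^\lambda s}{\dim V^\lambda} + 3\Bigr),
\]
which is $\ge 3$ because $d \ge 1$ and $a - b \ge 0$.

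The step requiring the most care is the $D = \H$ case of the determinantal codimension and of the singular-locus identification: non-commutativity of $\H$ precludes a direct appeal to complex algebraic geometry, so one must either argue from the explicit parametrization above (the approach I favor, since it needs only linear algebra over $D$ and the smoothness of matrix multiplication) or embed $\mathrm{Mat}_{b\times a}(\H)$ into $\mathrm{Mat}_{2b\times 2a}(\C)$ as the fixed points of an antilinear involution and verify that the stratifications match. Once this uniform codimension formula is in hand, both assertions are immediate arithmetic.
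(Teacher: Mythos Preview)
Your proposal is correct and follows essentially the same route as the paper: reduce via Schur's lemma and the Frobenius theorem to matrices over the division algebra $D=\End_G(V^\lambda)$, identify $S^\lambda_x$ with the determinantal locus of non-full-rank matrices, and read off both codimensions from the standard stratification. The only difference is that the paper simply cites Harris for the codimension formula $(n-m+1)\dim_\R D$ and asserts the singular-locus identification, whereas you supply the explicit $(B,C)\mapsto BC$ parametrization and the resulting dimension count; this makes your argument more self-contained, particularly for the quaternionic case, but the underlying idea is identical.
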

\begin{proof}
	For the irreducible representation $V^\lambda$, by Schur's Lemma the endomorphism ring $\End_{G}(V^\lambda)$ is a finite-dimensional division ring over $\R$. Hence by Frobenius Theorem, there are only three possibilities for $\End_{G}(V^\lambda)$ , namely $\R,\C$ and $\H$. The not-onto maps in $\Hom_G((V^\lambda)^n, (V^\lambda)^m)=\Hom_{\End_G(V^\lambda)}(\End_G(V^\lambda)^n,\End_G(V^\lambda)^m)$, i.e. $\End_G(V^\lambda)$-coefficient matrices with rank smaller than $m$. Such matrices form a determinantal variety $S$ and $S$ has real codimension $(n-m+1)\dim_{\R} \End_G(V^\lambda)$ in $\Hom_{\End_G(V^\lambda)}(\End_G(V^\lambda)^n,\End_G(V^\lambda)^m)$ \cite[Proposition 12.2]{harris2013algebraic}. The singularities come from those matrices with rank smaller than $m-1$, whose real codimension in $S$ is $(m-n+3)\dim_{\R} \End_G(V^\lambda)$. Therefore when $m\ge n$, this singularities are of real codimension bigger than $3\dim_{\R}\End_G(V^\lambda) \ge 3$.
\end{proof}

Now we can rephrase equivariant transversality near fixed locus as follows.
\begin{enumerate}
	\item $s|_{M^G}:M^G\to E^G$ is transverse to $0$;
	\item $\rD^\lambda_x \notin S^\lambda$ for all $x\in s|^{-1}_{M^G}(0)$ and $\lambda \in \Lambda$.
\end{enumerate}
By Proposition \ref{prop:codim}, since the singularities of $S^\lambda$ have high enough codimension, there is a well-defined Euler class $e(S^\lambda)\in H^{\codim S^\lambda}(M^G)$ if $M^G$ is closed. Such class can be represented by a pseudo-cycle $t^{-1}(S^\lambda)$ for a generic section $t$ of $\Hom_G(N^\lambda, E^\lambda) \to M^G$ \cite[Section 6.5]{mcduff2012j}. Then if $e(E^G) \cup e(S^\lambda)\ne 0$ for some $\lambda$, it is impossible to find equivariant transverse sections on $M^G$, here $e(E^G)$ is the Euler class of $E^G\to M^G$. However in the following simple case, we can use a dimension argument to get equivariant transversality near the fixed locus.
\begin{proposition}\label{prop:dim}
	Following the notation in Proposition \ref{prop:Glocal}, \eqref{eqn:bundledec} and \eqref{eqn:sectiondec}, let $s:N\to \pi^*(E|_{M^G})$ be a $G$-equivariant section. Let $S^\lambda$ be the fiber bundle defined in Definition \ref{def:singular}. If $$\dim M^G - \rank E^G < \codim S^\lambda = (\frac{\ind \rD^\lambda s}{\dim V^\lambda} + 1)\dim_{\R} \End_G(V^\lambda) $$ for all $\lambda$ such that $\rank E^\lambda > 0$, then there exists $G$-equivariant section $\gamma$ such that $s+\gamma$ is transverse to $0$ on $M^G$. 
\end{proposition}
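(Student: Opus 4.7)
The plan is to construct the equivariant perturbation $\gamma$ in two independent stages that exploit the block decompositions \eqref{eqn:bundledec}--\eqref{eqn:sectiondec}: first perturb the invariant part $s|_{M^G}$ so that it becomes transverse as an ordinary section of $E^G\to M^G$, and then for each non-trivial weight $\lambda$ perturb the isotypic linearization $\rD^\lambda s$, viewed as a section of the finite-dimensional bundle $\Hom_G(N^\lambda,E^\lambda)\to M^G$, so that it misses the singular locus $S^\lambda$ along the zero set of the invariant part. Both operations produce genuinely $G$-equivariant perturbations because they are assembled from $G$-invariant data on $M^G$, respectively from $G$-equivariant \emph{linear} maps between the isotypic fibers.

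For the first stage, $s^G := s|_{M^G}$ is already a smooth section of $E^G \to M^G$, so by the classical transversality theorem there is an arbitrarily small section $\gamma_0$ of $E^G$ with $s^G + \gamma_0 \pitchfork 0$. Pull $\gamma_0$ back along $\pi : N \to M^G$, sitting inside the $E^G$-summand of $\pi^*(E|_{M^G})$; the resulting section is automatically $G$-equivariant since $G$ acts trivially on $E^G$ and on the base $M^G$. After replacing $s$ by $s+\gamma_0$ we may assume $Z := (s^G)^{-1}(0) \subset M^G$ is a smooth submanifold of dimension $d := \dim M^G - \rank E^G$, and this perturbation leaves the bundle maps $\rD^\lambda s_x$ unchanged on $M^G$ since $\gamma_0$ vanishes to infinite order in the $N^\lambda$-directions when extended by pullback.

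For the second stage, fix $\lambda$ with $\rank E^\lambda > 0$. The assignment $x \mapsto \rD^\lambda s_x$ is a smooth section of $\Hom_G(N^\lambda,E^\lambda) \to M^G$, and the fiber bundle $S^\lambda$ is, by Proposition~\ref{prop:codim}, a real subvariety of codimension $(\tfrac{\ind \rD^\lambda s}{\dim V^\lambda}+1)\dim_\R\End_G(V^\lambda)$ whose singular locus has codimension at least $\codim S^\lambda + 2\dim_\R\End_G(V^\lambda) \ge \codim S^\lambda + 2$. Choose a finite-dimensional space $\Xi^\lambda$ of smooth sections of $\Hom_G(N^\lambda,E^\lambda)$ supported in a neighborhood of $Z$ that spans each fiber over $Z$; the evaluation map $Z \times \Xi^\lambda \to \Hom_G(N^\lambda,E^\lambda)$ defined by $(x,\xi)\mapsto \rD^\lambda s_x + \xi(x)$ is then a submersion, so by parametric transversality, generic $\xi \in \Xi^\lambda$ produce a map $Z \to \Hom_G(N^\lambda,E^\lambda)$ transverse to both the smooth stratum of $S^\lambda$ and its singular strata. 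Since the hypothesis $d < \codim S^\lambda$ rules out non-empty transverse intersection with the smooth stratum, and the singular strata have strictly larger codimension (hence are also missed), the perturbed linearization $\rD^\lambda s_x + \xi(x)$ is surjective for every $x\in Z$. Each such $\xi$ extends $G$-equivariantly to the total space of $N$ by the formula $(x,v) \mapsto \xi(x)\cdot v$ (cut off outside a tubular neighborhood via a $G$-invariant bump function, the existence of which follows from averaging any bump function on $M$ over $G$).

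Summing $\gamma := \gamma_0 + \sum_\lambda \tilde\xi^\lambda$ over the finitely many $\lambda$ with $\rank E^\lambda>0$ yields the required equivariant perturbation, since the $\tilde\xi^\lambda$ vanish along $M^G$ and therefore do not disturb the transversality of $s^G + \gamma_0$, while each $\rD^\lambda(s+\gamma)_x = \rD^\lambda s_x + \xi^\lambda(x)$ is surjective for $x\in Z=(s+\gamma)^{-1}(0)\cap M^G$. The main technical point will be the parametric transversality argument together with the careful handling of the stratified variety $S^\lambda$, which is where Proposition~\ref{prop:codim}'s lower bound on the codimension of singularities is indispensable; assembling the equivariant extension off $M^G$ is routine once the local linear data on $Z$ are fixed.
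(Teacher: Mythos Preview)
Your proof is correct and follows essentially the same two-stage strategy as the paper: first perturb $s^G$ to make it transverse on $M^G$, then on the resulting zero set $Z$ perturb each isotypic linearization $\rD^\lambda s$ inside $\Hom_G(N^\lambda,E^\lambda)$ to avoid $S^\lambda$, using the dimension hypothesis. The paper's version is terser---it simply asserts that a section $t^\lambda$ on $Z$ avoiding $S^\lambda$ exists because $\dim Z<\codim S^\lambda$---whereas you spell out the parametric transversality argument and explicitly invoke the singularity-codimension bound from Proposition~\ref{prop:codim} to handle the stratified nature of $S^\lambda$; this extra care is justified but not strictly new content.
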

\begin{proof}
	Let $\gamma:M^G\to E^G$ be a perturbation such that $s^G+\gamma:M^G\to E^G$ is transverse to $0$. Then $s+\pi^*\gamma$ is a $G$-equivariant section such that $(s+\pi^*\gamma)|_{M^G}:M^G\to E^G$ is transverse to $0$. Hence without loss of generality, we can assume $s^G:M^G\to E^G$ is transverse to $0$. In particular $(s^G)^{-1}(0)$ is a smooth manifold with dimension $\dim M^G - \rank E^G$.
	
	For each $\lambda$ such that $\rank E^\lambda > 0$, we can find a section $t^\lambda: (s^G)^{-1}(0) \to \Hom_G(N^\lambda,E^\lambda)|_{(s^G)^{-1}(0)}$, such that $\rD^\lambda s_x+t^\lambda (x) \notin S^\lambda_x $ for every $x\in (s^G)^{-1}(0)$. This is because $\dim (s^G)^{-1}(0)< \codim S^\lambda$. 
	We then extend $t^\lambda$ to a section of $\Hom_G(N^\lambda,E^\lambda)\to M^G$. For all other $\lambda$ such that $\dim E^\lambda = 0$, we define $t^{\lambda} = 0$. Since $\oplus_{\lambda\in \Lambda} t^\lambda$ can be understood as an equivariant section of $\pi^*(E|_{M^G}) \to N$ vanishing on $M^G$, then $s+\oplus_{\lambda \in \Lambda} t^\lambda$ is a $G$-equivariant transverse section on $M^G$.  
\end{proof}	
\begin{remark}
	Assume $s^G: M^G\to E^G$ is transverse, then one can study the equivariant transversality from obstruction theory, where the obstructions lies in $H^{k+1}((s^G)^{-1}(0),\allowbreak \pi_k(\Hom_G(N^\lambda_x,E^\lambda_x)\backslash S^\lambda_x))$.
\end{remark}

\subsection{Polyfold case}
Let $p:W\to Z$ be a strong polyfold bundle with $G$-action and $s:Z\to W$ an equivariant proper sc-Fredholm section. We need an analogue of Proposition \ref{prop:Glocal} in the polyfold case, thus we introduce the following assumption (Definition \ref{def:tube}). Let $\Lambda$ be the index set for all the nontrivial irreducible representations $V^\lambda$ of $G$.

\begin{definition}\label{def:lambundle}
	We define \textbf{$\bm{G}$-tame (strong) M-polyfold bundle} to be a tame (strong) M-polyfold bundle $\pi: \cN \to \cX$ with a sc-smooth $G$-action $\rho$, such that the induced action on the base $\rho_{\cX}$ is trivial. Let $\lambda \in \Lambda$, we call a $G$-tame (strong) M-polyfold bundle $\cN \to \cX$ a \textbf{$\bm{\lambda}$-tame (strong) M-polyfold bundle} iff for any vector $v\in \cN_x$, the $G$-invariant space spanned by $v$ is $G$-equivariantly isomorphic to $V^\lambda$. 
\end{definition}
To guarantee the existence of bump functions and partition of unity, \textbf{we assume from now on that all the M-polyfolds are modeled on sc-Hilbert spaces\footnote{In fact, it is sufficient to assume the first level spaces are Hilbert spaces, see \cite[Corollary 5.2]{hofer2017polyfold}.}}. Such assumption also plays a role in Proposition \ref{prop:complement}.

\begin{definition}\label{def:tube}
	We say the $G$-action on $p:W\to Z$ satisfies the \textbf{tubular neighborhood assumption on the fixed locus without isotropy} if the following conditions hold.
	\begin{enumerate}
		\item The fixed part $Z^G$ and $W^G$ are tame subpolyfolds \cite[Definition 16.10]{hofer2017polyfold} of $Z$ and $W$, such that $W^G$ is a strong polyfold bundle over $Z^G$. Since $s$ is equivariant, we have $s^G:=s|_{Z^G}: Z^G \to W^G$. We also require that $s^G$ is a proper sc-Fredholm section.
		\item\label{noiso} $Z^G$ has no isotropy, i.e. $Z^G$ is a tame M-polyfold. Hence $W^G$ is a  tame strong M-polyfold bundle over $Z^G$.
		\item\label{normal} There exists a $G$-invariant neighborhood $N\subset Z$ of $(s^G)^{-1}(0):=s^{-1}(0)\cap Z^G$, such that there is a projection $\pi: N \to N\cap Z^G$ making $N$ a $G$-tame M-polyfold bundle. We have a decomposition of $G$-tame M-polyfold bundles  $N = \widehat{\oplus}_{\lambda \in \Lambda} N^\lambda$, such that $N^\lambda$ is a $\lambda$-tame M-polyfold bundle. Here $\widehat{\oplus}$ stands for some completion of the direct sum, see \eqref{complete} of Remark \ref{rmk:tube}.
		\item\label{strong} The $G$-tame strong M-polyfold bundle $W|_{N\cap Z^G}$ has a similar decomposition, i.e. we have $\lambda$-tame strong M-polyfold bundles $W^\lambda$ over $Z^G$ for $\lambda \in \Lambda$, such that $W[i]|_{N\cap Z^G} = W^G[i] \widehat{\oplus}_{\lambda \in \Lambda} W^\lambda [i]$ for $i=0,1$. Moreover, there is a $G$-equivariant strong bundle isomorphism $W|_N \to \pi^*(W|_{N\cap Z^G})$. 
		\item\label{dim} We have $\rank W_x^\lambda=\infty$ for any $x\in N\cap Z^G$ unless $W^\lambda$ is the rank zero bundle.
	\end{enumerate} 
\end{definition}
In the following, we will abbreviate tubular neighborhood assumption on the fixed locus without isotropy (Definition \ref{def:tube}) to \textbf{tubular neighborhood assumption}.

\begin{remark}\label{rmk:tube}
	Several remarks on Definition \ref{def:tube} are in order.
	\begin{enumerate}
		\item A less restrictive assumption is dropping \eqref{noiso} in Definition \ref{def:tube}, so that the fixed locus $Z^G$ is a polyfold and all $W^G, N^\lambda, W^\lambda$ are polyfold bundles. Condition \eqref{noiso} guarantees the existences of a global stabilization in Proposition \ref{prop:stab}, which is important for us to actually get the equivariant transversality when possible. However, for the localization theorem in \cite{equi},  we can drop \eqref{noiso} and work with polyfold fixed locus.
		\item 	The tubular neighborhood assumption does not always hold for general polyfolds. In fact, the tubular neighborhood Theorem does not hold for M-polyfolds. For example, there is no neighborhood of the origin with linear structures in the object M-polyfold in Example \ref{ex:irr-one}. Such phenomenon is due to that M-polyfolds are modeled on retracts. Although one may expect a tubular neighborhood of retract fiberation, we need  linear structures in the proof of Theorem  \ref{thm:equitran}. 
		\item \label{complete} Let $\E^\lambda$ be a family of sc-Hilbert spaces, $\widehat{\oplus}_{\lambda \in \Lambda} \E^\lambda$ is sc-Hilbert space such that $\oplus_{\lambda \in \Lambda} \E^\lambda \hookrightarrow \widehat{\oplus}_{\lambda \in \Lambda} \E^\lambda$ is a dense inclusion.
		\item	The condition \eqref{dim} in Definition \ref{def:tube} is used in the proof of Proposition \ref{prop:stab}. Such condition is satisfied in all known applications.
	\end{enumerate}	
\end{remark}


Definition \ref{def:tube} without \eqref{noiso} is usually satisfied in applications. The no-isotorpy assumption holds for the $S^1$-action on the autonomous Hamiltonian-Floer cohomology polyfold,  see Section \ref{s6} for details.

\begin{proposition}\label{prop:fix}
	Suppose the tubular neighborhood assumption (Definition \ref{def:tube}) holds, then there exists a $G$-equivariant $\sc^+$ perturbation $\gamma$, such that $(s+\gamma)|_{Z^{G}}:Z^G\to W^G$ is transverse to $0$ and $s+\gamma$ is proper. In particular, $(s+\gamma)^{-1}(0)\cap Z^G$ is a compact manifold. Moreover, one can choose $\gamma$ such that $\supp \gamma$ is contained in arbitrarily small neighborhood of $s^{-1}(0)\cap Z^G$ in $Z$. 
\end{proposition}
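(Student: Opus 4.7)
The plan is to produce $\gamma$ by first solving the transversality problem on the smaller M-polyfold $Z^G$ and then extending by the tubular neighborhood structure. First, because $Z^G$ is a tame M-polyfold (Definition \ref{def:tube}\eqref{noiso}) modelled on sc-Hilbert spaces, it supports sc-smooth bump functions and $\sc^+$-sections, so the M-polyfold perturbation machinery \cite[Theorem 5.6]{hofer2017polyfold} applies to the proper sc-Fredholm section $s^G:Z^G\to W^G$. This produces a $\sc^+$-section $\bar\gamma:Z^G\to W^G$ whose support can be taken inside any prescribed neighborhood $U\subset Z^G$ of $(s^G)^{-1}(0)$ and whose norm can be taken arbitrarily small, so that $s^G+\bar\gamma$ is transverse to the zero section and $(s^G+\bar\gamma)^{-1}(0)$ is compact and contained in $U$.

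Next, I would use the tubular neighborhood structure to upgrade $\bar\gamma$ to a $G$-equivariant perturbation on $Z$. On the $G$-invariant neighborhood $N$ of $(s^G)^{-1}(0)$ provided by Definition \ref{def:tube}\eqref{normal}, the projection $\pi:N\to N\cap Z^G$ and the $G$-equivariant bundle isomorphism $W|_N\cong \pi^*(W|_{N\cap Z^G})$ of Definition \ref{def:tube}\eqref{strong} allow us to form $\pi^*\bar\gamma$ as a $\sc^+$-section of $W|_N$ taking values in the $G$-fixed summand $\pi^*W^G$. Since the $G$-action is trivial on $W^G$, this section is automatically $G$-equivariant. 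Choosing a $G$-invariant sc-smooth bump function $\beta$ on $Z$ (averaged from an ordinary sc-smooth bump function over the compact group $G$) that is supported in $N$ and equals $1$ on a smaller $G$-invariant neighborhood $N'$ of $(s^G)^{-1}(0)$, I set
\[
\gamma := \beta\cdot \pi^*\bar\gamma,
\]
extended by zero outside $N$. This is a $G$-equivariant $\sc^+$-section of $W$ whose support is contained in a prescribed neighborhood of $s^{-1}(0)\cap Z^G$.

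Finally, I would verify the two required properties. For transversality of $(s+\gamma)|_{Z^G}$: since $Z^G$ is $G$-fixed and $\beta|_{Z^G}$ equals $1$ on $N'\cap Z^G\supset (s^G)^{-1}(0)$ while $\bar\gamma$ is supported in $U\subset N'\cap Z^G$, we have $(s+\gamma)|_{Z^G}=s^G+\bar\gamma$ on $U$ and $(s+\gamma)|_{Z^G}=s^G$ outside $U$. Taking $\bar\gamma$ small enough that the zero set of $s^G+\bar\gamma$ stays inside $U$, we conclude $(s+\gamma)|_{Z^G}$ is transverse to $0$ with compact zero set, a smooth manifold by \cite[Theorem 5.6]{hofer2017polyfold}. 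For properness of $s+\gamma$ globally, the standard polyfold stability argument applies: because $s$ is proper, $\gamma$ has support in a fixed neighborhood of the compact set $s^{-1}(0)\cap Z^G\subset s^{-1}(0)$, and $\gamma$ is a $\sc^+$-perturbation of arbitrarily small size, any sequence in $(s+\gamma)^{-1}(0)$ either has all but finitely many terms outside $\supp\gamma$ (where $s+\gamma=s$ has compact zero set) or stays inside the $\sc^+$-compact piece over $\supp\gamma$; in either case it admits a convergent subsequence.

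The main subtlety I expect is the last step: ensuring global properness of $s+\gamma$ on $Z$ when $s^{-1}(0)$ may extend well beyond the fixed locus. The key is that the support of $\gamma$ is disjoint from the portion of $s^{-1}(0)$ away from $Z^G$, so outside a small neighborhood $N'$ we have $s+\gamma=s$ with compact zero set, while inside $N'$ the $\sc^+$-nature of $\gamma$ combined with the compactness property of sc-Fredholm sections under $\sc^+$-perturbations \cite[Theorem 3.10]{hofer2017polyfold} controls the zero set of $s+\gamma$ by that of $s$. Adjusting the size of $\bar\gamma$ and the cut-off neighborhood $N'$ simultaneously via the properness of $s$ yields the required compactness.
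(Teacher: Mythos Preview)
Your construction of $\gamma$ follows essentially the same route as the paper: perturb $s^G$ on $Z^G$ to transversality via \cite[Theorem 5.6]{hofer2017polyfold}, pull back along $\pi$ using the equivariant bundle isomorphism of Definition \ref{def:tube}\eqref{strong}, and cut off with a $G$-invariant bump function supported in $N$. The verification that $(s+\gamma)|_{Z^G}=s^G+\bar\gamma$ is correct.

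The properness argument, however, contains an unjustified claim and is more complicated than necessary. You assert that ``the support of $\gamma$ is disjoint from the portion of $s^{-1}(0)$ away from $Z^G$'', but nothing guarantees this: $s^{-1}(0)$ may well meet $N\setminus Z^G$ in points arbitrarily close to $s^{-1}(0)\cap Z^G$, so no choice of $N'$ forces $\supp\gamma$ to avoid them. Your case split for sequences in $(s+\gamma)^{-1}(0)$ therefore does not go through, and \cite[Theorem 3.10]{hofer2017polyfold} concerns preservation of the sc-Fredholm property, not properness.

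The fix is simpler than what you attempted, and is what the paper does. Since $\bar\gamma$ can be chosen arbitrarily small in an auxiliary norm \cite[Definition 5.1]{hofer2017polyfold}, so can $\gamma=\beta\cdot\pi^*\bar\gamma$; properness of $s+\gamma$ then follows directly from the stability of compactness under small $\sc^+$-perturbations \cite[Theorem 5.1]{hofer2017polyfold}. No disjoint-support argument is needed.
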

\begin{proof}
	By \cite[Theorem 5.6]{hofer2017polyfold}, for any neighborhood $U\subset Z$ of $s^{-1}(0)\cap Z^G$, we can find a $\sc^+$-perturbation $\tau$ such that $s^G+\tau:Z^G\to W^G$ is a proper transverse section and $\supp \tau \subset N\cap U \cap  Z^G$. Using the tubular neighborhood assumption, $\pi^*\tau$ is a $G$-equivariant $\sc^+$-perturbation on $N$. By Corollary \ref{coro:equi} and sc-smooth partition of unit, there exists a $G$-invariant sc-smooth function $f: N\to [0,1]$ such that  $f|_{(s^G+\tau)^{-1}(0)} = 1$ and $\supp f \subset N \cap U$. Then $\gamma:=f\pi^*\tau$ is $G$-equivariant $\sc^+$-perturbation supported in $N\cap U$ and $(s+\gamma)|_{Z^G}:=s^G+\tau:Z^G\to W^G$ is transverse to $0$. 
	Since $\gamma$ can be chosen to be arbitrarily small in some auxiliary norm (\cite[Definition 5.1]{hofer2017polyfold}), then $s+\gamma$ is proper by \cite[Theorem 5.1]{hofer2017polyfold}
\end{proof}

Let $x\in Z^G_\infty$ such that $s(x) = 0$. Since $Z^G$ and $N$ are M-polyfolds by assumption, we can take the linearization of $s$ on $Z^G$ instead of on a polyfold structure. Since $s$ is equivariant, we have 
$$\rD s_x=\rD^Gs_x\oplus_{\lambda \in \Lambda} \rD^\lambda s_x,$$ 
where $\rD^Gs_x$ is the linearization of $s^G:Z^G\to W^G$ at $x$ and $\rD^\lambda s$ are $G$-equivariant linear Fredholm operators from $N^\lambda_x\to W^\lambda_x$. To get equivariant transversality near $Z^G$, we need at least $\ind \rD^\lambda s_x \ge 0$ for all $x\in (s^G)^{-1}(0)$ and $\lambda \in \Lambda$. To translate the discussion in the finite dimensional case to the polyfold world, we need to understand $\Hom_G(N^\lambda, W^\lambda)$.  In the following, we show that we can find a finite dimensional substitute of $\Hom_G(N^\lambda, W^\lambda)$. The following two propositions are proven in Appendix \ref{C}.
\begin{proposition}\label{prop:stab}
Under the tubular neighborhood assumption (Definition \ref{def:tube}), assume $(s^G)^{-1}(0)$ is a compact manifold. Then there is a $G$-invariant finite-dimensional trivial subbundle $\overline{W}^\lambda\subset W^\lambda_\infty$ over $(s^G)^{-1}(0)$ with constant rank, such that $\overline{W}^\lambda$ covers $\coker \rD^\lambda s$ over $(s^G)^{-1}(0)$. 
\end{proposition}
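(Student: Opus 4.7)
The plan is to reduce the statement to producing finitely many sc-smooth sections $\sigma_1,\dots,\sigma_{K'}$ of $W^\lambda_\infty\to M := (s^G)^{-1}(0)$ that (i) are $\End_G(V^\lambda)$-linearly independent at every $x\in M$ and (ii) whose $G$-invariant span projects surjectively onto $\coker \rD^\lambda s_x$ at every $x\in M$. Once such sections are in hand, setting
$$\overline{W}^\lambda_x := \bigoplus_{i=1}^{K'}\la G\cdot\sigma_i(x)\ra$$
yields, by (i), a well-defined $G$-invariant trivial subbundle of $W^\lambda_\infty|_M$ of constant rank $K'\dim V^\lambda$ and, by (ii), one that covers $\coker\rD^\lambda s$.

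For the local step I fix $x_0\in M$ and use Schur's lemma together with the $\lambda$-bundle hypothesis to observe that $\coker \rD^\lambda s_{x_0}\subset W^\lambda_{x_0}$ is a finite direct sum of copies of $V^\lambda$. I pick smooth vectors $v_1,\dots,v_k\in W^\lambda_\infty$ at $x_0$ whose $G$-orbits together project surjectively onto this cokernel. Each $v_i$ extends to a local sc-smooth section of $W^\lambda_\infty$ on a neighborhood of $x_0$ in the finite-dimensional smooth manifold $M\subset Z^G$, using that $Z^G$ is a tame M-polyfold by condition \eqref{noiso} of Definition \ref{def:tube}. By upper semicontinuity of $\dim\coker\rD^\lambda s_x$ and continuity of $\rD^\lambda s$, the $G$-span of these local sections continues to cover $\coker \rD^\lambda s_x$ on a possibly smaller neighborhood $U_{x_0}$.

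Covering the compact manifold $M$ by finitely many such neighborhoods $U_{x_1},\dots,U_{x_m}$ and gluing the corresponding local sections via a sc-smooth partition of unity subordinate to this cover (available because M-polyfolds here are sc-Hilbert modeled; see \cite[Corollary 5.2, Theorem 7.4]{hofer2017polyfold}) produces finitely many globally defined sc-smooth sections $\tau_1,\dots,\tau_N$ of $W^\lambda_\infty\to M$ whose $G$-span still covers $\coker \rD^\lambda s_x$ at every $x\in M$. This achieves (ii) globally.

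The remaining and main obstacle is arranging (i), i.e.\ $\End_G(V^\lambda)$-linear independence at every point of $M$, while preserving (ii). This is where condition \eqref{dim} of Definition \ref{def:tube} enters decisively: each nontrivial fiber $W^\lambda_x$ is infinite-dimensional, so the space of sc-smooth sections of $W^\lambda_\infty\to M$ is correspondingly large, and the subset of $K'$-tuples failing $\End_G(V^\lambda)$-linear independence at some $x\in M$ is a closed subvariety whose codimension grows without bound with $K'$. Adding sufficiently many further ambient sc-smooth sections to the family $\{\tau_i\}$ and perturbing the whole collection by arbitrarily small sc-smooth sections, one achieves (i) by a standard Baire-category/Sard–Smale argument; the covering property (ii) is preserved because it is open under such perturbations, and compactness of $M$ ensures that finitely many perturbations suffice. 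The hard part will be formalizing this simultaneous genericity across the compact base, and condition \eqref{dim} is precisely what prevents the degeneracy locus from being cofinal and makes the construction possible.
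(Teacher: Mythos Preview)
Your local step reaches the right conclusion but the justification is wrong. You invoke ``continuity of $\rD^\lambda s$,'' yet in sc-calculus the linearization $x\mapsto \rD^\lambda s_x$ is \emph{not} continuous in the operator norm in a local trivialization; the paper explicitly flags this just before Proposition~\ref{prop:tang}, citing \cite[Theorem~4.1]{counter}. What you actually need is the openness statement of Proposition~\ref{prop:coro} (i.e.\ \cite[Corollary~3.3]{counter}): if a fixed finite-rank trivial subbundle covers $\coker\rD s_x$ at one smooth point, it does so on a neighborhood in the level-2 shifted space. That is the substitute for norm continuity here, and you should cite it rather than a continuity you do not have.

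The more serious gap is your step (i). You propose to obtain pointwise $\End_G(V^\lambda)$-linear independence by ``adding sufficiently many further ambient sc-smooth sections \ldots\ and perturbing the whole collection by arbitrarily small sc-smooth sections'' via a ``Baire-category/Sard--Smale argument.'' There is no Sard--Smale theorem available for sc-smooth maps in this generality, and the space of sc-smooth sections of $W^\lambda_\infty$ is not a Banach manifold on which the standard genericity package applies. Condition~\eqref{dim} of Definition~\ref{def:tube} gives infinite-dimensional fibers, but that alone does not supply residual sets of transverse sections in sc-calculus. As written, this step is a sketch of an argument that works in the classical $C^\infty$ setting but has no established analogue here.

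The paper avoids this entirely by a constructive, simplex-by-simplex extension. It first covers $M=(s^G)^{-1}(0)$ by open sets carrying local trivial $G$-invariant subbundles covering the cokernel (via Proposition~\ref{prop:coro}), then chooses a compatible triangulation. The key technical inputs are: Proposition~\ref{prop:complement}, which produces a $G$-invariant sc-smooth complement $F^\perp$ to any finite-rank $G$-invariant subbundle $F$ using an averaged sc-smooth metric; and Propositions~\ref{prop:ext1}--\ref{prop:ext2}, which extend a nowhere-vanishing section (resp.\ a trivial $G$-invariant subbundle) from the boundary of a simplex to its interior (resp.\ from one simplex to all of $M$), exploiting the infinite-dimensional fibers only to furnish a finite-rank trivial subbundle of rank exceeding $\dim M$ in which a generic-position argument over the \emph{finite-dimensional} simplex succeeds. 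One then extends $F_1$ globally to $\widetilde{F_1}$, perturbs $F_2$ slightly inside a large trivial subbundle of $\widetilde{F_1}^\perp$ so that its projection to $\widetilde{F_1}^\perp$ remains trivial of the same rank and still covers the cokernel on $\triangle_2$, extends it to $\widetilde{F_2}\subset\widetilde{F_1}^\perp$, and iterates. The sum $\sum_i\widetilde{F_i}$ is the desired $\overline{W}^\lambda$. This approach replaces your infinite-dimensional genericity step with finite-dimensional transversality over each simplex, which is why it goes through.
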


For every $x\in N\cap Z^G_\infty$, there exists $\sc^+$ perturbation $\tau:Z^G\to W^G$ such that $s(x)+\tau(x) = 0$. Then $f\pi^*\tau$ is a $G$-equivariant perturbation defined on $Z$ for an equivariant bump function $f$ supported in a neighborhood $x$.  Then we can define $\rD^\lambda s_x$ to be  $\rD^\lambda(s+f\pi^*\tau)_x$, i.e. the $\lambda$-component of $\rD (s+f\pi^*\tau)_x$.  Since $\rD^\lambda(f_1\pi^*\tau_1-f_2\pi^*\tau_2)_x = 0$ for different choices of $f$ and $\tau$, $\rD^\lambda s_x$ is well-defined for every $x\in N\cap Z^{G}_\infty$. Although $\rD^\lambda s_x$ may not be continuous in the norm topology in a local trivialization \cite[Theorem 4.1]{counter}, the following key properties of $\rD^\lambda s_x$ hold.

\begin{proposition}\label{prop:tang}
Under the tubular neighborhood assumption (Definition \ref{def:tube}), we have the following.
\begin{enumerate}
	\item $\ind \rD^\lambda s$ is locally constant on $N\cap Z^G_\infty$ using $Z^G_i$ topology for any $0\le i \le \infty$.
	\item For every $k \in \N$, $\{x\in N\cap Z^G_\infty|\dim \coker \rD^\lambda s_x \le k\}$ is an open subset of $Z^G_\infty$ with $Z^G_i$ topology for any $2\le i \le \infty$. 
	\item Suppose that $s^G:Z^G\to W^G$ is transverse to $0$. Let $\overline{W}^\lambda$ be a trivial bundle asserted in Proposition \ref{prop:stab}, then $\overline{N}^\lambda : = (\rD^\lambda s)^{-1}(\overline{W}^\lambda)$ is smooth subbundle in $N^\lambda_\infty$ over $(s^G)^{-1}(0)$. Moreover, $\rank \overline{N}^\lambda_x-\rank \overline{W}^\lambda_x=\ind \rD^\lambda s_x$.
\end{enumerate}
\end{proposition}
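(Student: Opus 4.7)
The strategy is to reduce each of the three claims to known facts about families of classical Fredholm operators by working in a Fredholm chart for $s$ (Definition \ref{def:fredchart}) compatible with the $G$-equivariant tubular decomposition in Definition \ref{def:tube}. For part (1), at each $x_0\in (N\cap Z^G)_\infty$ I would first choose a local $\sc^+$-section $\tau:Z^G\to W^G$ with $\tau(x_0)=-s(x_0)$ and an equivariant bump $f$, so that $\widetilde s:=s+f\pi^*\tau$ is an equivariant section vanishing at $x_0$ with the same linearization $\lambda$-components as $s$. The basic germ property puts $\widetilde s$ into a normal form $(c,w)\mapsto(c,w+B(c,w))$ plus a $\sc^+$-term, whose linearization at $x_0$ is a classical Fredholm operator. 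Because the $G$-action on $N$ and $W|_{Z^G}$ preserves the isotypic splitting, I can $G$-average the bundle chart to make the Fredholm chart respect the decomposition, so the linearization splits as $\rD^G s_{x_0}\oplus\bigoplus_\lambda\rD^\lambda s_{x_0}$. Local constancy of $\ind\rD \widetilde s$ (standard for sc-Fredholm sections) together with the local constancy of $\ind\rD^G s$ (from the sc-Fredholm section $s^G$) forces each $\ind\rD^\lambda s$ to be locally constant, after noting that the isotypic type $\lambda$ varies continuously along $Z^G_\infty$.

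For part (2) I would combine the Fredholm chart above with the standard fact that cokernel dimension is upper semi-continuous for continuous families of Fredholm operators. The level shift to $i\ge 2$ enters exactly because the linearization of a sc-smooth map $f$ at $a\in\E_{i+1}$ lands in $\F_i$, so continuous dependence of $\rD^\lambda s_x$ on $x$ (after projection to any finite-dimensional quotient) is only available on the shifted level; concretely, composing $\rD^\lambda s_x$ with the projection $\pi_{W^\lambda/\overline{V}}$ onto a finite-dimensional quotient by any $G$-invariant trivial subbundle $\overline{V}\subset W^\lambda_\infty$ of rank $k$ covering $\coker\rD^\lambda s_{x_0}$ yields a sc-smooth family $\overline{T}_x$ of operators into a finite-dimensional space, and openness of surjectivity gives openness of $\{\dim\coker\rD^\lambda s_x\le k\}$.

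For part (3), with $\overline{W}^\lambda$ as in Proposition \ref{prop:stab}, set $\overline{T}_x:=\pi_{W^\lambda/\overline{W}^\lambda}\circ\rD^\lambda s_x$, surjective for every $x\in(s^G)^{-1}(0)$, and observe $\overline{N}^\lambda_x=\ker\overline{T}_x$. Pick a sc-complement $K_0^\perp\subset N^\lambda_{x_0}$ of $\ker\overline{T}_{x_0}$ (which exists by Proposition \ref{prop:complement} since we are in the sc-Hilbert setting); since $\overline{T}_x|_{K_0^\perp}$ remains an isomorphism onto the fixed finite-dimensional target in a neighborhood (by openness from part (2) applied to the surjection, together with the locally constant index), $\ker\overline{T}_x$ is the graph of $-(\overline{T}_x|_{K_0^\perp})^{-1}\circ\overline{T}_x|_{\ker\overline{T}_{x_0}}$, a linear map depending smoothly on $x$ as a map into a finite-dimensional space. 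This exhibits $\overline{N}^\lambda$ as a smooth finite-rank subbundle of $N^\lambda_\infty$ over $(s^G)^{-1}(0)$. The rank formula follows from the short exact sequence $0\to\ker\rD^\lambda s_x\to\overline{N}^\lambda_x\to\overline{W}^\lambda_x\cap\Ima\rD^\lambda s_x\to 0$ together with $\dim(\overline{W}^\lambda_x\cap\Ima\rD^\lambda s_x)=\rank\overline{W}^\lambda-\dim\coker\rD^\lambda s_x$, giving $\rank\overline{N}^\lambda_x-\rank\overline{W}^\lambda=\ind\rD^\lambda s_x$.

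The principal technical obstacle is the already-noted failure of $\rD^\lambda s_x$ to vary continuously in the operator-norm topology on a fixed local trivialization, which seems to preclude treating $\rD^\lambda s$ as a continuous family of Fredholm operators. The resolution exploited throughout the plan is that the pathology lies entirely in the infinite-dimensional ``contracting'' direction of the basic germ normal form: composition with the projection onto any finite-dimensional quotient (as with $\overline{T}_x$) kills that direction and recovers genuine sc-continuity on the shifted level $i\ge 2$. This is precisely why part (2) requires a level shift, why the finite-rank trivial bundle $\overline{W}^\lambda$ produced by Proposition \ref{prop:stab} is crucial, and why all of the subbundle arguments in part (3) can be carried out after reducing to maps into finite-dimensional spaces.
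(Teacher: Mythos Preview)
Your approach for part (2) is essentially the paper's: both invoke the cokernel--covering criterion (Proposition \ref{prop:coro}, i.e.\ \cite[Corollary 3.3]{counter}) to get upper semicontinuity on the shifted level. Parts (1) and (3), however, contain genuine gaps.

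For part (1), the argument ``$\ind\rD s$ is locally constant and $\ind\rD^G s$ is locally constant, hence each $\ind\rD^\lambda s$ is locally constant'' does not work: it only yields that $\sum_{\lambda}\ind\rD^\lambda s$ is locally constant, and there is no way to isolate a single $\lambda$. The restriction of $s$ to $N^\lambda$ is not a section of $W^\lambda$ (only the linearization respects the isotypic splitting, by Schur), so you cannot simply apply sc-Fredholm index theory to the $\lambda$-component on its own. The paper instead connects any two points $x,y\in V_\infty$ by a path inside the zero set of a transverse $(\R^n\times[0,1])$-parametrized family $s^G+\gamma_0+\sum r_i\gamma_i$ (existence from \cite[Proposition 6.1]{hofer2017polyfold}); along this compact path one then runs the stabilization argument of parts (2)--(3) to produce a smooth finite-rank $\overline{N}^\lambda$ over the path, whose constant rank forces $\ind\rD^\lambda s$ to be constant.

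For part (3), your graph argument writes $\ker\overline{T}_x$ as the graph of $-(\overline{T}_x|_{K_0^\perp})^{-1}\circ\overline{T}_x|_{\ker\overline{T}_{x_0}}$, but $K_0^\perp$ and $W^\lambda/\overline{W}^\lambda$ are \emph{infinite}-dimensional, so the inverse $(\overline{T}_x|_{K_0^\perp})^{-1}$ is an isomorphism between infinite-dimensional sc-Banach spaces, and its smooth dependence on $x$ is exactly the issue you flagged earlier (no operator-norm continuity in sc-calculus). Composition with projection to a finite-dimensional quotient does not help here because the target $K_0^\perp$ of the graph map is not finite-dimensional. The paper avoids this by returning to the nonlinear section: extend the trivial $\overline{W}^\lambda$ over a neighborhood $U\supset (s^G)^{-1}(0)$, and consider the stabilized section $\pi^*\iota+s$ on the total space $\pi^*(\oplus_\lambda\overline{W}^\lambda)$. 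This is a genuine equivariant sc-Fredholm section which is transverse (since $\overline{W}^\lambda$ covers the cokernel), so the polyfold implicit function theorem gives $(\pi^*\iota+s)^{-1}(0)$ as a smooth manifold; $\overline{N}^\lambda$ is then identified with the $\lambda$-part of the normal bundle of $(s^G)^{-1}(0)$ inside it. The moral is that in sc-calculus, smoothness of kernel bundles must be extracted from the sc-Fredholm IFT applied to a nonlinear section, not from inverting families of linearizations directly.
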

After applying Proposition \ref{prop:tang}, we can choose a $G$-invariant decomposition $N^\lambda=\overline{N}^\lambda \oplus \underline{N}^\lambda$ by Proposition \ref{prop:complement}.  Take $\tau^\lambda \in \Hom_G(\overline{N}^\lambda, \overline{W}^\lambda)$, then $\rD^\lambda s+\tau^\lambda\circ \pi_{\overline{N}^\lambda}: N^\lambda \to W^\lambda$ is surjective iff $\rD^\lambda s|_{\overline{N}^\lambda}+\tau^\gamma:\overline{N}^\lambda \to \overline{W}^\lambda$ is surjective. Moreover,  $\tau^\lambda \circ \pi_{\overline{N}^\lambda}$ can be viewed as a perturbation in the $\lambda$-direction.

\begin{theorem}\label{thm:equitran}
	Suppose the tubular neighborhood assumption (Definition \ref{def:tube}) holds. If for all $\lambda \in \Lambda$ and $x\in (s^G)^{-1}(0)$, we have 
	\begin{equation}\label{eqn:cond}
	\ind s_x^G < (\frac{\ind \rD^\lambda s_x}{\dim V^\lambda} + 1)\dim_{\R} \End_G(V^\lambda), \end{equation}
	then there exists a $G$-invariant $\sc^+$-perturbation $\gamma$ supported in $N$, such that $s+\gamma$ is proper and transverse to $0$ on $Z^G$. 
\end{theorem}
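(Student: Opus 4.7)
My plan is to imitate the finite-dimensional argument of Proposition \ref{prop:dim} on the tubular neighborhood, where the tubular neighborhood assumption together with Propositions \ref{prop:stab} and \ref{prop:tang} reduce the equivariant transversality problem to a finite-dimensional one on the zero set of the fixed part. First, I would apply Proposition \ref{prop:fix} to produce a $G$-equivariant $\sc^+$-perturbation $\gamma_0$ of the form $\chi_0\cdot \pi^*\tau_0$, supported in a small neighborhood $N_0\subset N$ of $(s^G)^{-1}(0)$, such that $(s+\gamma_0)|_{Z^G}=s^G+\tau_0$ is transverse to $0$ and $s+\gamma_0$ remains proper. Writing $\tilde{s}:=s+\gamma_0$, the set $M:=(\tilde{s}^G)^{-1}(0)$ is then a compact smooth manifold of dimension $\ind s^G$ contained in $N\cap Z^G_\infty$.

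Next, for each $\lambda\in\Lambda$ with $W^\lambda\ne 0$, I would invoke Proposition \ref{prop:stab} to obtain a $G$-invariant finite-dimensional trivial subbundle $\overline{W}^\lambda\subset W^\lambda_\infty$ over $M$ covering $\coker \rD^\lambda \tilde{s}$, and then Proposition \ref{prop:tang}(3) to see that $\overline{N}^\lambda:=(\rD^\lambda \tilde{s})^{-1}(\overline{W}^\lambda)$ is a smooth finite-rank $G$-subbundle of $N^\lambda_\infty|_M$ with $\rank\overline{N}^\lambda-\rank\overline{W}^\lambda=\ind \rD^\lambda s$. Thus $\Hom_G(\overline{N}^\lambda,\overline{W}^\lambda)\to M$ is a finite-rank smooth vector bundle whose non-surjective locus $S^\lambda$ has fiberwise real codimension $(\tfrac{\ind\rD^\lambda s_x}{\dim V^\lambda}+1)\dim_{\R}\End_G(V^\lambda)$ by Proposition \ref{prop:codim}. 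The dimension hypothesis \eqref{eqn:cond} gives $\dim M=\ind s^G_x<\codim S^\lambda_x$ pointwise, so a standard transversality argument produces smooth $G$-equivariant sections $t^\lambda:M\to \Hom_G(\overline{N}^\lambda,\overline{W}^\lambda)$ with the property that $\rD^\lambda \tilde{s}_x|_{\overline{N}^\lambda_x}+t^\lambda(x)$ is surjective onto $\overline{W}^\lambda_x$ for every $x\in M$; combined with the fact that $\rD^\lambda \tilde{s}_x$ is already surjective onto $W^\lambda_x/\overline{W}^\lambda_x$, this yields surjectivity of $\rD^\lambda \tilde{s}_x+t^\lambda(x)\circ\pi_{\overline{N}^\lambda_x}:N^\lambda_x\to W^\lambda_x$ at every $x\in M$.

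To globalize, I would extend each $t^\lambda$ to a smooth $G$-equivariant fiberwise linear bundle map $\tilde{t}^\lambda:N^\lambda\to W^\lambda$ over a neighborhood of $M$ in $Z^G$, using a $G$-invariant splitting $N^\lambda=\overline{N}^\lambda\oplus\underline{N}^\lambda$ from Proposition \ref{prop:complement} and the inclusion $\overline{W}^\lambda\hookrightarrow W^\lambda$. Viewing a point $v$ in the tubular neighborhood as an element of $N^\lambda_{\pi(v)}$, define $\gamma^\lambda(v):=\tilde{t}^\lambda(v)$, which is $G$-equivariant by construction and $\sc^+$ because its image lies in the finite-dimensional smooth subbundle $\overline{W}^\lambda\subset W^\lambda_\infty$. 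Using Corollary \ref{coro:equi} and the sc-smooth partition of unity available in the sc-Hilbert setting, I would cut off $\gamma^\lambda$ by a $G$-invariant bump function $\chi$ supported in $N$ and equal to $1$ on a neighborhood of $M$, and set $\gamma:=\gamma_0+\sum_{\lambda\in\Lambda}\chi\gamma^\lambda$. Linearizing at any point $x\in M$ shows that $\rD(s+\gamma)_x$ decomposes as $\rD^G(s^G+\tau_0)_x\oplus \bigoplus_\lambda(\rD^\lambda \tilde{s}_x+t^\lambda(x)\circ\pi_{\overline{N}^\lambda_x})$, which is surjective by our constructions; equivariance is preserved by $G$-invariance of all ingredients; and properness of $s+\gamma$ follows by choosing the perturbation small in the auxiliary norm of \cite[Theorem 5.1]{hofer2017polyfold}.

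The main obstacle I expect is step three: making sure the finite-dimensional section $t^\lambda$ defined on $M$ extends to a globally $\sc^+$, $G$-equivariant, fiberwise-linear section of $W^\lambda\to N$ whose linearization at points of $M$ recovers $t^\lambda\circ\pi_{\overline{N}^\lambda}$. This requires that $\overline{N}^\lambda$ and $\overline{W}^\lambda$, which are a priori only defined over $M$, extend to smooth $G$-subbundles over a tubular neighborhood of $M$ in $Z^G$, which I would handle by invoking local triviality of $\Hom_G(N^\lambda,W^\lambda)$ together with the openness statement in Proposition \ref{prop:tang}(2). Once these extensions exist, the rest of the argument is essentially a $G$-equivariant version of the standard polyfold perturbation patching.
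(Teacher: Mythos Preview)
Your overall strategy matches the paper's: make $s^G$ transverse via Proposition~\ref{prop:fix}, apply the stabilization Propositions~\ref{prop:stab} and~\ref{prop:tang} to reduce to a finite-dimensional problem on $M=(s^G)^{-1}(0)$, use the codimension count from Proposition~\ref{prop:codim} to produce $t^\lambda$, then extend, cut off by an invariant bump function, and check properness.

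The difference lies exactly in the obstacle you flag. You propose to extend the subbundles $\overline{N}^\lambda$ and $\overline{W}^\lambda$ from $M$ to a neighborhood in $Z^G$ and then extend $t^\lambda$ as a section of $\Hom_G(\overline{N}^\lambda,\overline{W}^\lambda)$. This is delicate in the M-polyfold setting: $\overline{N}^\lambda=(\rD^\lambda s)^{-1}(\overline{W}^\lambda)$ need not extend to a smooth subbundle off $M$, and your appeal to ``local triviality of $\Hom_G(N^\lambda,W^\lambda)$'' and Proposition~\ref{prop:tang}(2) does not resolve this. The paper sidesteps the issue entirely. Since $s^G$ is transverse, $M\subset Z^G$ is a sub-M-polyfold by \cite[Theorem~3.13]{hofer2017polyfold}, so locally there is a sc-smooth retraction $r:V\to V$ with $r(V)=M\cap V$. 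One then extends the full bundle map $\tau^\lambda\circ\pi_{\overline{N}^\lambda}:N^\lambda|_M\to W^\lambda[1]|_M$ by pullback along $r$ via Proposition~\ref{prop:pull}, patches the local extensions by a partition of unity, and only afterwards averages over $G$ via Corollary~\ref{coro:ave} to restore equivariance (rather than demanding $t^\lambda$ be equivariant from the start). This avoids ever needing $\overline{N}^\lambda$ or $\overline{W}^\lambda$ off $M$. One further point you should add: by Proposition~\ref{prop:tang} and compactness of $M$, only finitely many $\lambda$ require a nonzero $\theta^\lambda$, so the sum $\sum_\lambda \theta^\lambda$ makes sense.
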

\begin{proof}
	By Proposition \ref{prop:tang}, the set of points where \eqref{eqn:cond} holds is open in $N\cap Z^G_0$ topology. Therefore, by Proposition \ref{prop:fix} we can assume $s^G:Z^G\to W^G$ is transverse to $0$ and \eqref{eqn:cond} holds for every $x\in ({s^G})^{-1}(0) \subset N\cap Z^G$. By Proposition \ref{prop:stab} and Proposition \ref{prop:tang}, there exists $G$-equivariant trivial subbundle $\overline{W}^\lambda\subset W^\lambda_\infty$ over $(s^G)^{-1}(0)$ covering the cokernel of $\rD^\lambda s$, and $\overline{N}^\lambda:=D^\lambda s^{-1}(\overline{N}^\lambda)\subset N^\lambda_\infty$ is a finite rank bundle over $(s^G)^{-1}(0)$.  Since the fiber bundle $S^\lambda$ (Definition \ref{def:singular}) in $\Hom_G(\overline{N}^\lambda,\overline{W}^\lambda)$ has codimension $(\frac{\ind \rD^\lambda s_x}{\dim V^\lambda} + 1)\dim_{\R} \End_G(V^\lambda)$. By the same argument in Proposition \ref{prop:dim}, we can find a section $\tau^\lambda$ in $\Hom_G(\overline{N}^\lambda,\overline{W}^\lambda)\to (s^G)^{-1}(0)$, such that $\rD^\lambda s_x|_{\overline{N}_x^\lambda}+\tau^\lambda(x):\overline{N}^\lambda_x\to \overline{W}^\lambda_x$ is surjective for every point in $x\in (s^G)^{-1}(0)$. Let $\pi_{\overline{N}^\lambda}$ denote the projection $N^\lambda\to \overline{N}^\lambda$ guaranteed by Proposition \ref{prop:complement}, then $\tau^\lambda\circ\pi_{\overline{N}^\lambda}$ is a $\sc^\infty$ bundle map $N^{\lambda}\to \overline{W}^\lambda\subset W^\lambda[1]$ over $(s^G)^{-1}(0)$.  Since we assume $s^G$ is transverse to zero, by \cite[Theorem 3.13]{hofer2017polyfold} $(s^G)^{-1}(0)\subset Z^{G}$ is sub-M-polyfold in the sense of \cite[Definition 2.12]{hofer2017polyfold}. That is for every $x\in (s^G)^{-1}(0)$, there exists a neighborhood $V \subset N\cap Z^{G}$ of $x$ and a sc-smooth retraction $r:V\to V$ such that $r(V) = (s^G)^{-1}(0)\cap V$.  Then $r^*(\tau^\lambda\circ \pi_{\overline{N}^\lambda}|_{(s^G)^{-1}(0)\cap V}):N^\lambda|_V \to W^\lambda[1]|_V$ defines a sc-smooth bundle map  extension of $\tau^\lambda\circ \pi_{\overline{N}^\lambda}|_{(s^G)^{-1}(0)\cap V}$ by Proposition \ref{prop:pull}.  Applying a partition of unity argument to all such $r^*(\tau^\lambda\circ \pi_{\overline{N}^\lambda}|_{(s^G)^{-1}(0)\cap V})$, we can find a sc-smooth bundle map $\theta^\lambda: N^\lambda|_U \to W^\lambda[1]|_U$ for a neighborhood $U\subset Z^G$ of $(s^G)^{-1}(0)$ such that $\theta^\lambda|_{N|_{(s^G)^{-1}(0)}}=\tau^\lambda \circ \pi_{\overline{N}^\lambda}$. By Corollary \ref{coro:ave},  we can assume $\theta^\lambda: N^\lambda|_U \to W^\lambda[1]|_U$ is a $G$-equivariant bundle map and $\theta^\lambda|_{N^\lambda|_{(s^G)^{-1}(0)}} = \tau^\lambda \circ \pi_{\overline{N}^\lambda}$ by averaging.  Using the tubular neighborhood assumption, $\theta^{\lambda}$ can be treated as a $\sc^+$-section of $W\to Z$ on $N|_U\subset Z$. We apply the same procedure to all $\lambda$, such that $\rD^\lambda s_x$ is not surjective for some $x\in(s^G)^{-1}(0)$.  By Proposition \ref{prop:tang} and the compactness of $(s^G)^{-1}(0)$, there are only finitely many such $\lambda$. Then $s+f\sum_{\lambda\in \Lambda}\theta^\lambda$ is an equivariant section and transverse on $Z^G$, where $f$ is a $G$-invariant bump function on $N$ such that $f|_{(s^G)^{-1}(0)} = 1$ and $\supp f \subset N|_U$. We can choose $f$ such that $f\sum_{\lambda\in \Lambda}\theta^\lambda$ is supported in arbitrarily small neighborhood of $(s^G)^{-1}(0)$ in $Z$ and arbitrarily small in some auxiliary norm \cite[Definition 12.2]{hofer2017polyfold}, then $s+f\sum_{\lambda\in \Lambda}\theta^\lambda$ is proper by \cite[Theorem 12.9]{hofer2017polyfold}.
\end{proof}

In the case of $S^1$-action, every nontrivial irreducible representation is two-dimensional and classified by a non-negative integer weight. The endomorphism ring is isomorphic to $\C$. 

\begin{corollary}[Corollary \ref{s1}]\label{S1}
	Suppose $p:W\to Z$ is regular tame strong polyfold bundle with a proper sc-Fredholm section $s$ and $Z$ is infinite dimensional. Assume $S^1$ acts on $p$ such that $s$ is equivariant and the tubular neighborhood assumption (Definition \ref{def:tube}) holds. Provided that for all weights $\lambda \in \N^+$ and $x\in s^{-1}(0)\cap Z^{S^1}$, we have $\ind \rD^\lambda s_x+2>\ind \rD^{S^1}s_x$. Then there exists a $S^1$-invariant neighborhood $\hZ\subset Z^3$ containing $Z_\infty$ and an equivariant $\sc^+$-multisection perturbation $\kappa$ on $\hZ$, such that 
	\begin{enumerate}
		\item $s+\kappa$ is transverse to zero and proper;
		\item there exist a $S^1$-invariant neighborhood $U\subset Z^3$ of $(Z^{S^1})^3$, such that $\kappa|_U$ is single valued.
	\end{enumerate}
	If $s$ is transverse to $0$ on an invariant closed set $D$, then we can require $\kappa = 0$ on $D$. 
\end{corollary}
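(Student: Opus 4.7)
The strategy is to combine Theorem \ref{thm:equitran} near the fixed locus $Z^{S^1}$ with Corollary \ref{coro:p2} away from it, and then glue the two perturbations via an $S^1$-invariant decomposition of a neighborhood of $Z_\infty$.

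First I would specialize to $G=S^1$. Every nontrivial irreducible real $S^1$-representation is the weight-$\lambda$ representation $V^\lambda$ for some $\lambda\in\N^+$, and satisfies $\dim V^\lambda=2$ with $\End_{S^1}(V^\lambda)\cong\C$, so $\dim_\R\End_{S^1}(V^\lambda)=2$. Consequently, the inequality \eqref{eqn:cond} in Theorem \ref{thm:equitran} reads $\ind\rD^{S^1}s_x<\ind\rD^\lambda s_x+2$, which is precisely the hypothesis of the corollary. Theorem \ref{thm:equitran} therefore produces an $S^1$-invariant single-valued $\sc^+$-perturbation $\gamma_1$, supported in an arbitrarily small invariant neighborhood of $(s^{S^1})^{-1}(0)$, such that $s_1:=s+\gamma_1$ is proper and transverse to $0$ on $Z^{S^1}$. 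Since the construction of $\gamma_1$ in Theorem \ref{thm:equitran} uses local extensions via an invariant partition of unity (cf.\ Proposition \ref{prop:fix}), I can additionally require $\gamma_1|_D=0$ when $s$ is already transverse on the invariant closed set $D$.

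Next, using the openness of transversality and the compactness of $s_1^{-1}(0)$, I would pick an $S^1$-invariant open neighborhood $\Omega$ of $s_1^{-1}(0)\cap Z^{S^1}$ on which $s_1$ is transverse to $0$, and then (working in an $S^1$-invariant metric on $Z$, cf.\ Lemma \ref{lemma:quometric}) nested invariant open sets $\Omega_{00}\subset\overline{\Omega_{00}}\subset\Omega_0\subset\overline{\Omega_0}\subset\Omega$, still containing $s_1^{-1}(0)\cap Z^{S^1}$. The two compact invariant subsets $s_1^{-1}(0)\cap Z^{S^1}$ and $A:=s_1^{-1}(0)\setminus\Omega_0$ of $s_1^{-1}(0)$ are disjoint, so I can separate them by an $S^1$-invariant open set $V\supset A$ with $\overline V\cap\overline{\Omega_{00}}=\emptyset$ and, after shrinking, with $s_1^{-1}(0)\cap\overline V$ compact and contained in $V$. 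Since $\overline V\cap Z^{S^1}=\emptyset$, the $S^1$-action on the infinite-dimensional polyfold $V$ has only finite isotropy, and $s_1|_V$ is a proper sc-Fredholm section of $p^{-1}(V)\to V$. Corollary \ref{coro:p2} applied to $s_1|_V$ then yields an $S^1$-invariant open $\widehat V\subset V^3$ containing $V_\infty$ together with an $S^1$-equivariant $\sc^+$-multisection perturbation $\kappa_2$ on $\widehat V$, supported in an arbitrarily small invariant neighborhood of $s_1^{-1}(0)\cap V$ (hence with $\supp\kappa_2\cap\overline{\Omega_{00}}^3=\emptyset$), such that $s_1+\kappa_2$ is transverse and proper on $\widehat V$; the moreover clause of Corollary \ref{coro:p2} gives $\kappa_2|_{D\cap\widehat V}=0$ under the transverse-on-$D$ assumption.

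Finally I would glue. Extend $\kappa_2$ by $0$ to the invariant open set $\hZ:=\widehat V\cup\Omega^3\subset Z^3$, which still contains $Z_\infty$, and define $\kappa:=\gamma_1+\kappa_2$. On the invariant open neighborhood $\Omega_{00}^3\cap\hZ$ of $(Z^{S^1})^3$ we have $\kappa=\gamma_1$, which is single-valued, verifying (2). On $\widehat V$ the section $s+\kappa=s_1+\kappa_2$ is transverse and proper, while on $\Omega^3\setminus\supp\kappa_2$ we have $s+\kappa=s_1$, which is transverse there. Thus $s+\kappa$ is transverse on $\hZ$, and its zero set is the union of two compact pieces, giving (1). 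The moreover clause follows from $\kappa|_D=\gamma_1|_D+\kappa_2|_D=0$. The main technical difficulty will be the second paragraph: choosing the invariant $V$ so that $s_1|_V$ is genuinely proper on $V$ while $\overline V$ stays disjoint from $\overline{\Omega_{00}}$ and $\widehat V\cup\Omega^3$ still covers $Z_\infty$. This rests on the compactness and fixed/free separation of $s_1^{-1}(0)$ together with the ability in Corollary \ref{coro:p2} to localize $\supp\kappa_2$ to arbitrarily small neighborhoods of $s_1^{-1}(0)\cap V$.
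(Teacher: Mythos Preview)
Your overall strategy---apply Theorem \ref{thm:equitran} near the fixed locus, apply Corollary \ref{coro:p2} on the finite-isotropy part, and glue---is exactly what the paper does, and your translation of \eqref{eqn:cond} to the $S^1$ case is correct. However, there is a genuine gap, which you yourself flag at the end but do not resolve: your set $\hZ=\widehat V\cup\Omega^3$ does \emph{not} contain $Z_\infty$. You chose $\Omega$ as a neighborhood of the compact set $s_1^{-1}(0)\cap Z^{S^1}$ and $V$ as a neighborhood of the compact set $A=s_1^{-1}(0)\setminus\Omega_0$, so their union is only a neighborhood of $s_1^{-1}(0)$. Any point $z\in Z^{S^1}_\infty$ with $s_1(z)\neq 0$ lies in neither $\Omega$ (since $Z^{S^1}$ is not compact and you only covered the zeros) nor $V$ (since $\overline V\cap Z^{S^1}=\emptyset$). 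The compactness and support-localization arguments you invoke at the end control properness and single-valuedness, but they cannot make two sets localized near $s_1^{-1}(0)$ cover all of $Z_\infty$.

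The paper's fix is simple and you are very close to it: take $U$ to be an invariant neighborhood of \emph{all} of $(Z^{S^1})^3$, not just of the zeros. This is legitimate because at any $y\in Z^{S^1}$ with $s_1(y)\neq 0$ there is a neighborhood on which $s_1\neq 0$, so transversality holds vacuously there; combined with openness of transversality near the compact set $s_1^{-1}(0)\cap Z^{S^1}$, one gets an invariant open $U\supset(Z^{S^1})^3$ with $s_1$ transverse and proper on $\overline U$. Then apply Corollary \ref{coro:p2} to \emph{all} of $Z\setminus Z^{S^1}$ (finite isotropy everywhere), using its ``moreover'' clause on the closed set $C=\overline U\setminus Z^{S^1}$ to make the new perturbation agree with $\gamma_1$ there. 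Now $\hZ=U\cup V$ contains $(Z^{S^1}_\infty)\cup(Z_\infty\setminus Z^{S^1}_\infty)=Z_\infty$ by construction, and the gluing $\kappa=\gamma_1$ on $U$, $\kappa=\tau$ on $V\setminus U$ is well-defined and single-valued on $U$.
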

\begin{proof}
	By Theorem \ref{thm:equitran}, there is a $S^1$-equivariant $\sc^+$-perturbation $\gamma$ supported in $N$ such that $s+\gamma$ is transverse to $0$ on $Z^{S^1}$. By \cite[Corollary 3.3]{counter}, there is a $S^1$-invariant neighborhood $U\subset Z^3$ of $(Z^{S^1})^3$, such that $s+\tau$ is proper and transverse to $0$ on $\overline{U} \subset Z^3$. Let $C := \overline{U}\backslash Z^{S^1}$, then $C$ is a closed subset in $Z^3\backslash Z^{S^1}$.  Since the $S^1$-action on $Z\backslash Z^{S^1}$ has finite isotropy, by Corollary \ref{coro:p2} there exists an equivariant transverse $sc^+$-multisection perturbation $\tau$ on $V \subset Z^3\backslash Z^{S^1}$ containing $Z_\infty\backslash Z^{S^1}_\infty$ such that $\tau|_{C \cap V} = \gamma|_{C\cap V}$ and $s+\tau$ is proper on $Z^3\backslash U$. Then 
	$$\kappa(z) = \left\{ \begin{array}{rl} 
	\gamma(z), & z \in U;\\
	\tau(z), & z \in V\backslash U;
	\end{array}
	\right.$$
	is an equivariant transverse perturbation defined on the $G$-invariant open set $\hZ := V\cup U\subset Z^3$ containing $Z_\infty$ and is single valued on $U$. The claim on the closed set $D$ follows from a similar argument in Corollary \ref{coro:p1}.
\end{proof}

The following Proposition is a direct consequence of the chain rule \cite[Theorem 1.1]{hofer2017polyfold}, which is used in the proof of Theorem \ref{thm:equitran}.
\begin{proposition}\label{prop:pull}
	Let $(K,(\R_+^m\times \E)\times \F)$ and $(P,(\R_+^m\times \E)\times \H)$ be two tame M-polyfold bundle retracts over the same tame M-polyfold retract $(O, \R_+^m\times \E)$. Let $R_1,R_2$ be the bundle retractions defining $K$ and $P$ and assume both $R_1$ and $R_2$ cover the tame retractions $r$ on the base.  Suppose we have a sc-smooth retraction $\pi:O\to O$ and a sc-smooth bundle map $f:K|_{\pi(O)} \to P|_{\pi(O)}$, then 
	$$\pi^*f: K \to P, \quad (x,v) \mapsto R_2(x,\pi_\H\circ f\circ R_1(\pi(x), v)) $$
	is a sc-smooth bundle map extending $f$.
\end{proposition}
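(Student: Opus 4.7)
The plan is essentially a chain-rule verification once the bookkeeping of retracts is unpacked. First I would unpack notation: write $K = R_1(U \lhd \F)$ and $P = R_2(U \lhd \H)$, with $R_i(y,v) = (r(y), \varrho_i(y) v)$ for a common tame retraction $r:U\to U$ and $O = r(U)$; recall that $\pi:O\to O$ is a sc-smooth retraction so $\pi(O)\subset O\subset U$, and for any $y\in O$ we have $r(y)=y$. Then I would verify that $\pi^*f(x,v) = R_2(x, \pi_\H \circ f\circ R_1(\pi(x),v))$ is well-defined on all of $K$: for $(x,v)\in K$, $\pi(x)\in\pi(O)\subset O$, so $R_1(\pi(x),v) = (\pi(x), \varrho_1(\pi(x))v)$ lies in $K|_{\pi(O)}$, placing it in the domain of $f$; since $f$ is a bundle map covering the identity on $\pi(O)$, $f\circ R_1(\pi(x),v) = (\pi(x), w)$ for some $w\in\H$, and then $R_2(x,w) = (r(x), \varrho_2(x)w) = (x, \varrho_2(x)w)\in P$ since $x\in O$.

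Next I would establish sc-smoothness by exhibiting $\pi^*f$ as a composition of sc-smooth maps: the assignment $(x,v)\mapsto (x,\pi(x),v)$ (sc-smooth as $\pi$ is sc-smooth), followed by applying $R_1$ to the last two coordinates, then applying $f$, then the bounded linear projection $\pi_\H$, and finally pairing the output with $x$ and applying $R_2$. The chain rule \cite[Theorem 1.1]{hofer2017polyfold} then gives sc-smoothness of $\pi^*f$.

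I would then verify the bundle map properties. Linearity on fibers is immediate since $R_1(\pi(x),\cdot)$, $f$, $\pi_\H$, and $R_2(x,\cdot)$ are each linear in the fiber variable. The map covers the identity on $O$ because $r|_O = \Id_O$, which matches the base map of $f$ on the overlap $\pi(O)$. For the extension property, suppose $(x,v) \in K|_{\pi(O)}$, so $x\in\pi(O)$ and $\pi(x)=x$; then $R_1(\pi(x),v) = R_1(x,v) = (x,v)$ because $R_1$ is the identity on $K$. Writing $f(x,v) = (x,w)$, we get $\pi_\H\circ f\circ R_1(\pi(x),v) = w$, and since $(x,w)\in P$ we have $R_2(x,w) = (x,w) = f(x,v)$, so $\pi^*f|_{K|_{\pi(O)}} = f$.

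There is no conceptual obstacle; the only thing to be careful about is not confusing the retraction $\pi$ on the base with the bundle retractions $R_1, R_2$, and remembering that $r$ acts as the identity on $O$, which is what makes the two $R_i$ applications collapse correctly at the right moments.
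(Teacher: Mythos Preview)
Your proposal is correct and takes essentially the same approach as the paper, which simply states that the proposition is a direct consequence of the chain rule \cite[Theorem 1.1]{hofer2017polyfold}. You have spelled out in detail the well-definedness, sc-smoothness via composition, fiberwise linearity, and the extension property, all of which are implicit in the paper's one-line justification.
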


\section{Application: Hamiltonian-Floer Cohomology for Autonomous Hamiltonians}\label{s6}
Let $(M,\omega)$ be a closed symplectic manifold, the weak Arnold conjecture asserts that for any non-degenerate Hamiltonian $H_t:S^1\times M \to \R$, then the number of periodic orbits of the Hamiltonian vector field $X_{H_t}$ is bounded below by $\sum_{i} \dim_{\R} H^i(M;\R)$. Floer \cite{floer1989symplectic} developed an invariant, which is now referred as Hamiltonian-Floer cohomology, to prove the conjecture. Given a non-degenerate Hamiltonian, the cochain complex of the Hamiltonian-Floer cohomology is generated by periodic orbits. A typical strategy initiated by Floer for symplectic aspherical manifolds and later generalized to all symplectic manifolds \cite{fukaya1999arnold,liu1998floer} is to show that the Hamiltonian-Floer cohomology is isomorphic to the regular cohomology. In order to relate Hamiltonian-Floer cohomology with regular cohomology, one can either use a $C^2$-small time independent Hamiltonian \cite{floer1989symplectic,fukaya1999arnold,liu1998floer} or the PSS construction \cite{piunikhin1996symplectic}. In both case, the proof of isomorphism boils down to existence of $S^1$-equivariant transversality. Another strategy for the weak Arnold conjecture, which does not prove the isomorphism, can be found in \cite{filippenko2018polyfold}. In particular, $S^1$-equivariant transversality is not required there. The purpose of this section is to explain $S^1$-equivariant transversality can be achieved in the polyfold setup for Hamiltonian-Floer cohomology, assuming Hamiltonian-Floer cohomology is generally constructed and proven to be invariant using polyfolds. As a result, we have an isomorphism between the Hamiltonian-Floer cohomology and regular cohomology, which yields the weak Arnold conjecture.

\subsection{Hamiltonian-Floer cohomology}
We first review the Hamiltonian-Floer cohomology setup from \cite[Chapter 12]{mcduff2012j}. Let $\widetilde{\cL M}$ be the space of equivalence classes $[(x,u)]$, where $x:S^1\to M$ is a contractible loop in $M$ and $u$ is extension of $x$ to disk. Here $(x,u)$ is equivalent to $(x,v)$ iff the boundary sum $u\# (-v)$ is homologous to zero. Let $H_t:S^1\times M \to \R$ be a Hamiltonian. Then the symplectic action functional $\cA_H:\widetilde{\cL M} \to \R$ is defined by
$$\cA_H([(x,u)]) = -\int_D u^*\omega - \int_{S^1} H_t(x(t))dt.$$
Critical points of $\cA_H$ correspond to $[(x,u)]$ where $x$ is a contractible periodic orbit of $X_{H_t}$. Let $\widetilde{\cP}(H_t)\subset \widetilde{LM}$ be the set of critical points of $\cA_H$. For $\tilde{x}\in \widetilde{\cP}(H_t)$, we use $x$ to denote the underlying periodic orbit. Assume all periodic orbits are non-degenerated. Let $J_t$ be a $S^1$ family of compatible almost complex structures on $M$. For $\tilde{x}_-,\tilde{x}_+ \in \widetilde{\cP}(H_t)$, solutions to the Floer equation below can be viewed as downwards gradient flow lines of $\cA_H$ from $\tilde{x_-}$ to $\tilde{x}_+$,
\begin{equation}\label{eqn:HF}\partial_s u+J_t(u)(\partial_tu-X_t(u))=0,\end{equation}
such that $\lim_{s\to \pm \infty} u(s,t) = x_\pm(t)$ and $\tilde{x}_-\# u = \tilde{x}_+$. Then the index of \eqref{eqn:HF} is given by $\mu_{CZ}(\tilde{x}_-)-\mu_{CZ}(\tilde{x}_+)$, where $\mu_{CZ}$ is the Conley-Zehnder index \cite{robbin1995spectral}. Here we only point out that 
$$\mu_{CZ}(A\#\tilde x) = \mu_{CZ}(\tilde{x})-2c_1(A)$$
for $\tilde{x}\in \widetilde{\cP}(H_t)$ and $A\in H_2^S(M):=\Ima(\pi_2(M) \to H_2(M))$. Let $\cM_{\tilde{x}_-,\tilde{x}_+}$ be the compactified moduli space of solutions to \eqref{eqn:HF} module the $\R$-translation by adding the broken flow lines. Assume transversality holds, i.e. $\cM_{\tilde{x}_-,\tilde{x}_+}$ are compact smooth manifold with boundary of dimension $\mu_{CZ}(\tilde{x}_-)-\mu_{CZ}(\tilde{x}_+)-1$ whenever $\mu_{CZ}(\tilde{x}_-)-\mu_{CZ}(\tilde{x}_+) \le 2$. For $\mu_{CZ}(\tilde{x}_-)-\mu_{CZ}(\tilde{x}_+) = 2$, boundary of $\cM_{\tilde{x}_-,\tilde{x}_+}$ comes from broken flow lines, i.e. 
$$\partial\cM_{\tilde{x}_-,\tilde{x}_+} = \bigcup_{\substack{\tilde{y}\in \widetilde{\cP}(H_t)\\ \mu_{CZ}(\tilde{x}_-)-\mu_{CZ}(\tilde{y})=1}} \cM_{\tilde{x}_-,\tilde{y}}\times \cM_{\tilde{y},\tilde{x}_+}.$$
They are oriented in a coherent way by \cite{floer1993coherent}. Then we can define a cochain complex $C_{HF}^*$ consisting of elements $\sum_{\tilde{x}\in \widetilde{\cP}(H_t)} a_{\tilde{x}}\tilde{x}$ such that for every $C$
$$\#\{ \tilde{x}\in \widetilde{\cP}(H_t)| a_{\tilde{x}}\ne 0, A_{H}(\tilde{x})<C\}<\infty.$$
The differential $\delta_{HF}$ is defined by
\begin{equation}\label{eqn:differential}
\delta_{HF} \tilde{y} := \sum_{\tilde{x}\in \widetilde{\cP}(H_t)} (\int_{\cM_{\tilde{x},\tilde{y}}} 1)\tilde{x}.
\end{equation}
Let 
$$\Lambda:=\left\{\textstyle{\sum_{A\in H_2^S(M)}}f_A q^A \st  \forall C \in \R, \#\left\{A\in H^S_2(M)\st f_A\ne 0,\omega(A) < C\right\}<\infty\right\}$$ be the Novikov field with $q^A$ graded by $2c_1(A)$. Then $\Lambda$ acts on $CF^*$ by $A\cdot \tilde{x} = \tilde{x}\#(-A)$. Since we have $\cM_{\tilde{x}_-, \widetilde{x}_+} = \cM_{\tilde{x}_-\#A, \widetilde{x}_+\#A}$, we have $\delta_{HF}$ is a $\Lambda$-module map. With such preparation, the Hamiltonian-Floer cohomology is defined as
$$HF^*(H,J) := H^*(C_{HF}^*,\delta_{HF}),$$
which is a $\Lambda$-module. The key property is that $HF^*(H,J)$ is independent of $H,J$ by a continuation map construction. In order to achieve transversality we assumed above, one can use a generic almost complex structure $J_t$ when sphere bubbles can ruled out \cite{audin2014morse,floer1989symplectic} or one need to apply virtual techniques \cite{fukaya1999arnold,liu1998floer,wehrheim2012fredholm} for general symplectic manifolds.

\subsubsection{Hamiltonian-Floer cohomology for autonomous Hamiltonians}
To relate Hamiltonian-Floer cohomology to regular cohomology, we can use a very special Hamiltonian. Let $H:M\to \R$ be a Morse function. We can assume the $C^2$ norm of $H$ is small enough such that all time-$1$ period orbits of the Hamiltonian vector field $X_H$ are critical points of $H$.  Let $\cC(H)$ denote the set of critical points of $H$. We can assume $H$ is also self-indexing in the sense that 
\begin{equation}
H(x)>H(y) \text{ iff }\ind x > \ind y, \forall x,y\in \cC(H),
\end{equation}
where $\ind x,\ind y$ are the Morse indexes. A critical point $x$ can be viewed as $[(x,0)] \in \widetilde{\cP}(H)$ and $\mu_{CZ}([x,0])=n-\ind x$. In this special case, $C_{HF}^*$ is the free $\Lambda$-module generated by $\cC(H)$ and grading for $q^A x $ for $x\in \cC(H)$ and $A\in H^S_2(M)$ is given by $n-\ind x + 2c_1(A)$. If we pick an almost complex structure $J$ such that $g := \omega(\cdot, J\cdot)$ satisfies the Sard-Smale condition for the Morse function $H$, i.e. the stable and unstable manifolds of the gradient vector field $\nabla_g H$ intersect transversely. For $x,y\in \cC(H)$, let $\cM^0_{x,y}$ denote the compactified moduli space of gradient flow lines for $(H,g)$ from $x$ to $y$. Then we have $\cM^0_{x,y}\subset \cM_{x,y}$. Assume we can achieve transversality for all $\cM_{q^Ax,q^By}$ of dimension smaller than $2$ using this time independent almost complex structure $J$. Then if $2c_1(A)+n-\ind x = 2c_1(B)+n-\ind y + 1$, we have
\begin{equation}\label{eqn:rel}
\cM_{q^Ax,q^By} = \left\{\begin{array}{lr}
\emptyset, & \text{ if }A\ne B,\\
\cM^0_{x,y}, & \text{ if }A = B.
\end{array}\right.
\end{equation}
This is because that the rotation of the $S^1$-coordinate acts on $\cM_{q^Ax,q^By}$. Since  $\dim \cM_{q^Ax,q^By} = 0$, the action must be trivial. In other words, any solution in $\cM_{q^Ax,q^By}$ must be independent of the $S^1$ coordinate. Hence the claim follows.

Using $\cM^0_{x,y}$, one can define the Morse cochain complex $C_M^*=\oplus_{x\in \cC(H)}\Lambda \la x \ra$ with grading $|x|=2n-\ind x$, i.e. the Morse index of $-H$ at $x$. The differential $\delta_M$ is defined by
$$\delta_M y  = \sum_{x\in \cC(H)}(\int_{\cM^0_{x,y}}1 )  x .$$
The cohomology of $(C_M^*, \delta_M)$ is denoted by $HM^*(-H,g)$ and it is isomorphic to the regular cohomology with Novikov coefficient $H^*(M;\Lambda)$ \cite{audin2014morse}. Then by \eqref{eqn:rel}, we have $HF^*(H,J)=HM^{*+n}(-H,J) = H^{*+n}(M;\Lambda)$. Hence the weak Arnold conjecture follows. In absence of sphere bubbles, transversality using time independent $J$ was proven in \cite{floer1989symplectic,salamon1992morse}.
\subsubsection{Polyfold setup}\label{s6s1}
For general symplectic manifold, perturbing the almost complex is not sufficient. Since the key property we need in addition to usual transversality is that $S^1$ acts on $\cM_{q^Ax,q^Bx}$, we need to argue that  $S^1$-equivariant transversality is possible in framework of the virtual technique used to implement Hamiltonian-Floer cohomology. Such arguments were carried out for Kuranishi type constructions in \cite{fukaya1999arnold,liu1998floer}. The purpose of this section is to resolve those difficulties transparently in a manner that is applicable to any abstract perturbation approach. To be specific and rigorous, we work in the technical context of polyfold theory and formulate the setup given for Floer theory as assumptions blew. Note that this is the same setup as in \cite{fukaya1999arnold,liu1998floer}, with exception of the precise smooth nature of the spaces $Z_{x,y,A}$.

In the following, we will consider Hamiltonian-Floer polyfolds for an autonomous $C^2$ small non-degenerate Hamiltonian $H$ and time independent almost complex structure. We denote by $\hbar > 0$ the minimal symplectic area of nontrivial $J$-holomorphic curve. For simplicity, we can choose $H$ such that $\max H -\min H < \hbar$. We first list some properties of Hamiltonian-Floer cohomology polyfolds as assumptions.
\begin{assumption}\label{ass:poly}
	\hspace{1em}
	\begin{enumerate}
		\item\label{a1} For every pair of critical points $x,y \in \cC(H)$ and a homology class $A\in H^S_2(M)$ such that $H(y)-H(x)+\omega(A) > 0$, there is an effective regular tame strong polyfold bundle $p_{x,y,A}:W_{x,y,A} \to Z_{x,y,A}$ modeled on sc-Hilbert spaces and an oriented sc-Fredholm section $s_{x,y,A}:Z_{x,y,A}\to W_{x,y,A}$ with compact zero set. We have $$\ind s_{x,y,A} = \mu_{CZ}(x,-A) - \mu_{CZ}(y,0)-1=\ind y - \ind x + 2c_1(A)-1.$$ 
		$s_{x,y,A}$ are oriented in an coherent way \cite{floer1993coherent}. The maximal degeneracy index on each polyfold $Z_{x,y,A}$ is finite. When $A=0$, the polyfold $Z_{x,y,0}$ has trivial isotropy, i.e. an M-polyfold. 
		\item\label{a2} There are natural sc-smooth identifications
		\begin{eqnarray*}
		\partial^k Z_{x,y,A} & = & 	\bigcup_{\substack{x_1,\ldots, x_{k-1} \in C(H)\\ \sum_{i=1}^k A_k = A}}  \partial^0 Z_{x,x_1,A_1}\times \ldots  \times \partial ^0Z_{x_{k-1},y,A_k},\\
		W_{x,z,A}|_{\partial^k Z_{x,y,A}} & = & \bigcup_{\substack{x_1,\ldots, x_{k-1} \in C(H)\\ \sum_{i=1}^k A_k = A}}  W_{x,x_1,A_1}|_{\partial^0 Z_{x,x_1,A_1}}\times \ldots  \times W_{x_{k-1},y,A_k}|_{\partial ^0Z_{x_{k-1},y,A_k}},\\
		s_{x,y,A}|_{\partial^0 Z_{x,x_1,A_1}\times \ldots  \times \partial ^0Z_{x_{k-1},y,A_k}} & = & s_{x,x_1,A_1}\times \ldots \times s_{x_{k-1},y,A_k}.
		\end{eqnarray*}
		Here $\partial^k Z\subset Z$ is the set of points with degeneracy index $k$.
		 \item\label{a3} The reparametrization in the $S^1$-coordinate is a $S^1$-action on $p_{x,y,A}$, such that $s_{x,y,A}$ is $S^1$-equivariant. The $S^1$-action on $Z_{x,y,A}$ has finite isotropy unless $A = 0$. 
		 \item\label{a4}The fixed part $s^{S^1}_{x,y,0}:=s_{x,y,0}|_{Z^{S^1}_{x,y,0}}:Z^{S^1}_{x,y,0}\to W^{S^1}_{x,y,0}$ is the sc-Fredholm section on the Morse homology M-polyfold sketched in \cite{fabert2016polyfolds} for $(H, g)$ and is in general position by the Sard-Smale assumption. 
		 \item \label{a5} When $\ind y - \ind x = 1$ and $s_{x,y,0}+\kappa_{x,y,0}$ is transverse to $0$ for a $S^1$-equivariant $\sc^+$-multisection $\kappa_{x,y,0}$ vanishing on $Z^{S^1}_{x,y,0}$, then for every $q\in (s_{x,y,0}+\kappa_{x,y,0})^{-1}(0)\cap Z^{S^1}_{x,y,0}$, the orientation of $q$ induced from $s_{x,y,0}+\kappa_{x,y,0}$ is the same as the orientation of $q$ induced from $s^{S^1}_{x,y,0}$.
	\end{enumerate}
\end{assumption}		 
\begin{remark}
	Since $\max H - \min H <\hbar$, there is no sphere bundles in $Z_{x,y,0}$, hence $Z_{x,y,0}$ is a M-polyfold. 
\end{remark}
Roughly speaking, the polyfold $Z_{x,y,A}$ is the space of cylinders with bubbles from $x$ to $y$ representing homology class $A$, fibers of $W_{x,y,A}$ are spaces of sections of $(0,1)$ form and the section $s_{x,y,A}$ is the Floer equation. One way to construct polyfolds satisfying Assumption \ref{ass:poly} is through the SFT polyfolds \cite{hofer2017application}. Alternatively, one can generalize the construction in \cite{wehrheim2012fredholm} to general symplectic manifolds by a fiber product construction \cite{ben2018fiber} with the Gromov-Witten polyfolds \cite{hofer2017applications}. Constructions using Kuranishi type theory with similar properties were developed in \cite{fukaya1999arnold,liu1998floer}.

To define the Hamiltonian-Floer cohomology from a system of polyfolds as described in Assumption \ref{ass:poly}, we need perturbations satisfying the following conditions.
\begin{definition}\label{def:coh}
	A family of $\sc^+$-multisection perturbations $\{\kappa_{x,y,A}\}$ is \textbf{transverse and coherent on level $\bm{l\in \N}$} iff we have following.
	\begin{enumerate}
	\item For all $x,y,A$, $\kappa_{x,y,A}$ is $\sc^+$-multisection of $p_{x,y,A}:W^l_{x,y,A}\to Z^l_{x,y,A}$ over an open neighborhood of $(Z_{x,y,A})_\infty$ in $Z^l_{x,y,A}$, such that $s_{x,y,A}+\kappa_{x,y,A}$ is proper. 
	\item For all $x,y,A$ such that $\ind s_{x,y,A} \le 1$, we have $s_{x,y,A}+\kappa_{x,y,A}$ is transverse to $0$.
	\item For $x_0,\ldots, x_k \in \cC(H)$, $A_1,\ldots, A_k \in H_s^S(M)$ and $A:=\sum_{i=1}^k A_i$, we have
	\begin{equation}\label{eqn:coh}
	\kappa_{x_0,x_k,A}|_{\partial^k Z_{x_0,x_k,A}} = \kappa_{x_0,x_1,A_1}\times \ldots \times \kappa_{x_{k-1},x_k,A_k},
	\end{equation}
	on a neighborhood of ($\partial^0Z_{x_0,x_1,A_1})_\infty \times\ldots \times (\partial^0Z_{x_{k-1},x_k,A_k})_\infty$ in $\partial^0Z^l_{x_0,x_1,A_1} \times\ldots \times \partial^0Z^l_{x_{k-1},x_k,A_k} \subset \partial^k Z^l_{x_0,x_k,A}$.
	\end{enumerate}
\end{definition}

\begin{proposition}\label{prop:trans}
	There exists a transverse and coherent perturbation family ${\kappa_{x,y,A}}$ for any level $l\in \N$.
\end{proposition}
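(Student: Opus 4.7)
The plan is to construct $\{\kappa_{x,y,A}\}$ by induction on the action difference
$c(x,y,A) := H(y)-H(x)+\omega(A)$. Because $\omega$ attains only a discrete set of values on $H_2^S(M)$ below any fixed bound, and because the choice $\max H - \min H < \hbar$ separates Hamiltonian action differences from sphere-bubble energy, the values of $c$ below any given level form a finite set, enumerated as $0 < c_1 < c_2 < \ldots$; this makes the induction well-founded.

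For the $A=0$ pieces I set $\kappa_{x,y,0} \equiv 0$. This is admissible because the classical Salamon--Zehnder / Floer argument shows that for $C^2$-small autonomous $H$, every solution of the Floer equation representing the zero spherical class is $t$-independent, so $s_{x,y,0}^{-1}(0)$ agrees with the Morse trajectory space in $Z_{x,y,0}^{S^1}$ and is transverse by Assumption~\ref{ass:poly} part~\ref{a4}. At any such $t$-independent solution $u$, decomposing the linearization of the Floer operator into Fourier weights along $S^1$ yields on weight $\lambda \geq 1$ the first-order operator $\partial_s \mp 2\pi\lambda + S(s)$ on $\C^n$-valued maps, with $\|S\|$ controlled by $\|H\|_{C^2}$; since the asymptotic spectrum $\{\pm 2\pi\lambda\}$ is bounded away from zero for small $H$, this operator is invertible on the weighted Sobolev spaces in use. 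Hence $\rD s_u$ is already surjective, so $s_{x,y,0}$ is transverse, and the coherence relation~\eqref{eqn:coh} is automatic: any non-empty boundary face factors as $\prod_i Z_{x_i,x_{i+1},A_i}$ with $\omega(A_i)\ge 0$ and $\sum A_i = 0$, which (using $\omega$-compatibility of $J$) forces every $A_i=0$, so the product of the zero perturbations on the factors matches $\kappa_{x,y,0}\equiv 0$.

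For the inductive step on $(x,y,A)$ with $A\neq 0$ and $c(x,y,A)=c_n$, assume $\kappa_{x',y',A'}$ has been constructed whenever $c(x',y',A') < c_n$. Assumption~\ref{ass:poly} part~\ref{a2} identifies each stratum of $\partial Z_{x,y,A}$ as a product of polyfolds of strictly smaller complexity, so~\eqref{eqn:coh} together with the inductive data defines an equivariant $\sc^+$-multisection $\kappa^{\partial}_{x,y,A}$ on an invariant neighborhood of $\partial Z_{x,y,A}$ in $Z^l_{x,y,A}$; compatibility across overlapping faces follows from associativity of the boundary stratification and from the inductive coherence. Using the sc-smooth partition of unity available on sc-Hilbert-space-modeled polyfolds (Corollary~\ref{coro:equi}), $\kappa^{\partial}_{x,y,A}$ extends to a $S^1$-invariant $\sc^+$-multisection on a larger invariant collar. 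By Assumption~\ref{ass:poly} part~\ref{a3} the $S^1$-action on $Z_{x,y,A}$ has finite isotropy, so Corollary~\ref{coro:p2}, applied with $G = S^1$ and with the invariant closed set $D$ taken to be the closure of the region where $s+\kappa^{\partial}_{x,y,A}$ is already in general position, furnishes an equivariant multisection extension $\kappa_{x,y,A}$ that is proper, transverse to $0$ (when $\ind s_{x,y,A}\le 1$), and agrees with $\kappa^{\partial}_{x,y,A}$ on $D$; the coherent orientation data of Assumption~\ref{ass:poly} together with the framework of \cite{floer1993coherent} transports to the multisection perturbations in the standard way. This closes the induction.

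The main technical point is the $C^2$-smallness argument invoked for the $A=0$ case: one must verify uniform invertibility of $\rD^\lambda s_u$ on every positive Fourier weight at every Morse trajectory $u$, which presupposes that the Hamiltonian-Floer polyfold admits charts near a $t$-independent solution in which the tangent and fiber sc-spaces split as Fourier series compatibly with the $S^1$-action, i.e.\ the tubular-neighborhood structure of Definition~\ref{def:tube}. This structure is implicitly built into any plausible construction via SFT-polyfolds \cite{hofer2017application} or via a fiber-product construction over \cite{wehrheim2012fredholm, hofer2017applications}, but it must be checked explicitly for whichever concrete model of the Hamiltonian-Floer polyfold is used.
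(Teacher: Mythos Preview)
You are proving more than Proposition~\ref{prop:trans} asks: Definition~\ref{def:coh} imposes no $S^1$-equivariance, yet you build it in throughout and treat $A=0$ specially. The paper's proof is correspondingly much simpler. It inducts on the \emph{maximal degeneracy index} of $Z_{x,y,A}$, finite by Assumption~\ref{ass:poly}\,\eqref{a1}: polyfolds with no boundary are perturbed first via \cite[Theorem~15.4]{hofer2017polyfold}; for a polyfold of maximal degeneracy index $k+1$, the boundary is assembled from factors of maximal degeneracy index at most $k$, so the coherent boundary data already exists as a structurable multisection and extends inward (transversely when $\ind\le 1$) via \cite[Theorem~15.5]{hofer2017polyfold}. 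No equivariance, no special role for $A=0$.

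Your approach also has two concrete gaps. First, the induction on $c(x,y,A)$ is not well-founded: the claim that $\omega$ takes a discrete set of values on $H_2^S(M)$ below any bound fails in general---think of $S^2\times S^2$ with irrational area ratio---so no enumeration $c_1<c_2<\cdots$ exists. The degeneracy-index induction avoids this completely.

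Second, the coherence check for $\kappa_{x,y,0}\equiv 0$ breaks down. Assumption~\ref{ass:poly}\,\eqref{a1} only requires $H(x_{i+1})-H(x_i)+\omega(A_i)>0$, giving $\omega(A_i)>-\hbar$, not $\omega(A_i)\ge 0$; the appeal to ``$\omega$-compatibility of $J$'' concerns holomorphic curves, not arbitrary polyfold elements. Under Assumption~\ref{ass:poly}\,\eqref{a2} the boundary $\partial Z_{x,y,0}$ can therefore contain faces $Z_{x,z,A_1}\times Z_{z,y,-A_1}$ with $A_1\ne 0$, and your nontrivial $\kappa_{x,z,A_1}$ then conflicts with $\kappa_{x,y,0}|_\partial=0$. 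Consistently with this, the paper's equivariant construction in Theorem~\ref{thm:ham} does \emph{not} set $\kappa_{x,y,0}=0$; it uses Corollary~\ref{S1} to produce a genuine equivariant perturbation near the fixed locus, and Proposition~\ref{prop:equi}\,\eqref{e2} asserts only $\ind\rD^\lambda s=0$, not the invertibility you claim.
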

\begin{proof}[Sketch of proof]
	To construct transverse and coherent perturbations as in Definition \ref{def:coh}, one usually needs some inductive arguments. The induction is typically based on the maximal degeneracy index of the polyfold. We first find perturbations on those polyfolds without boundary, i.e. those polyfolds with maximal degeneracy index $0$,  and we choose the perturbations to be transverse if the index is not bigger than $1$ using \cite[Theorem 15.4]{hofer2017polyfold}.  Then by coherent condition \eqref{eqn:coh}, we have perturbations on the boundaries of polyfolds with maximal degeneracy index $1$. We extend the perturbations into the interior. When we use multisection in the perturbation, the existence of extension of a multisection from boundary requires the perturbation to be structurable \cite[Definition 13.17, Definition 14.4]{hofer2017polyfold}. \cite[Theorem 15.5]{hofer2017polyfold} guarantees the extended perturbation is again structurable. We require transversality if the index is smaller than $1$. To see such requirement is feasible, note that on $Z_{x,y,A_1}\times Z_{y,z,A_2}\subset \partial^1 Z_{x,z,A_1+A_2}$ we have $\ind_{x,y,A_1}+\ind_{y,z,A_2} = \ind_{x,z,A_1+A_2}-1$. Hence if $\ind_{x,z,A_1+A_2}\le 1$, then either both $\ind_{x,y,A_1}$ and $\ind_{y,z,A_2}$ are zero or one of them is negative. Therefore by induction hypothesis, if $\ind_{x,y,A_1} = \ind_{y,z,A_2} = 0$ we have $\kappa_{x,y,A_1}\times \kappa_{x,y,A_2}$ is a transverse perturbation to $s_{x,y,A_1}\times s_{y,z,A_2}$. If one of the indexes is negative, then  $(s_{x,y,A_1}\times s_{y,z,A_2} + \kappa_{x,y,A_1}\times \kappa_{x,y,A_2})^{-1}(0) = \emptyset$. In either case, $\kappa_{x,y,A_1}\times \kappa_{y,z,A_2}$ is a transverse perturbation on $Z_{x,y,A_1}\times Z_{y,z,A_2}$. Therefore we can find transverse extension to the interior using \cite[Theorem 15.5]{hofer2017polyfold}. The induction argument goes on with respect to the maximal degeneracy index. The claim on properness of $s_{x,y,A}+\kappa_{x,y,A}$ follows from \cite[\S 12.4]{hofer2017polyfold}.
\end{proof}

Given a transverse and coherent perturbation family $\{\kappa_{x,y,A}\}$,  we can define a differential $\delta_{HF}$ on $C_{HF}^*$,
\begin{equation}\label{eqn:diff}
\delta_{HF}  y  := \sum_{\substack{y\in C(H),A \\ \ind s_{x,y,A} = 0}} (\int_{(s_{x,y,A}+\kappa_{x,y,A})^{-1}(0)} 1)q^{A}  x .
\end{equation}
To get a cohomology, we need the following, which is a part of the polyfold construction for Hamiltonian-Floer cohomology.  Since they are not the topic of this paper, we list it as assumptions.
\begin{assumption}\label{ass:coh}
	The coherent orientations of $s_{x,y,A}$ in Assumption \ref{ass:poly} imply that $\delta_{HF} \circ \delta_{HF} = 0$.
\end{assumption}

\subsection{$S^1$-equivariant coherent perturbations}\label{s6s2}
In the following, we will show that there exists a transverse and coherent perturbation family $\{\kappa_{x,y,A}\}$, such that we have an isomorphism $HF^*(H,J,\{\kappa_{x,y,A}\}):=H^*(C_{HF}^*,\delta_{HF})\cong H^{*+n}(M)\otimes \Lambda$. In addition to the isomorphism, to get a proof of the weak Arnold conjecture, we still need the invariance of the Hamiltonian-Floer cohomology not only for autonomous Hamiltonians sketched above but also for general time dependent Hamiltonians. We list it as an assumption.
\begin{assumption}\label{ass:ind}
	For every non-degenerate Hamiltonian $H_t:S^1\times M \to \R$, there is a cochain complex generated by $\widetilde{\cP}(H_t)$ whose cohomology is isomorphic to $HF^*(H,J,\{\kappa_{x,y,A}\})$ above.
\end{assumption}
Assumption \ref{ass:ind} is verified by constructing Hamiltonian-Floer cohomology polyfolds for general Hamiltonians and a continuation map construction. They are parts of the Hamiltonian-Floer cohomology polyfolds package, hence are expected to be derived from \cite{hofer2017application}. The counterparts using classical construction and Kuranishi construction can be found in   \cite{audin2014morse,floer1989symplectic,fukaya1999arnold,liu1998floer}.

To relate the Hamiltonian-Floer cohomology with Morse cohomology, it suffices to find transverse and coherent perturbations, which are also $S^1$-equivariant. We will use Corollary \ref{S1} to show the $S^1$-equivariant transversality. Therefore we need the following.

\begin{proposition}\label{prop:equi}
	For the polyfolds in Assumption \ref{ass:poly}, the following two properties hold.
	\begin{enumerate}
		\item\label{e1}The $S^1$-action on $p_{x,y,0}:W_{x,y,0}\to Z_{x,y,0}$ satisfies the tubular neighborhood assumption in Definition \ref{def:tube}.
		\item\label{e2}For every $x,y\in \cC(H)$ such that $\ind y - \ind x \in \{ 1,2 \}$ and $\lambda\in \N^+$, we have $\ind \rD^\lambda s_{x,y,0} = 0$ on $s_{x,y,0}^{-1}(0)\cap Z^{S^1}_{x,y,0}$.\footnote{In fact, the claim holds for every $x,y \in \cC(H)$, see Subsection \ref{subsec:ver}.}
	\end{enumerate}
\end{proposition}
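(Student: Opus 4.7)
The plan for part \eqref{e1} is to build the tubular neighborhood by hand from the exponential chart of $Z_{x,y,0}$ near an $S^1$-fixed cylinder $u(s,t)=\gamma(s)$. Because $\max H - \min H < \hbar$ rules out sphere bubbling at $A=0$, Assumption \ref{ass:poly}(\ref{a1}) makes $Z_{x,y,0}$ a tame M-polyfold, and near $\gamma$ a chart is given by $\xi \mapsto \exp_{\gamma}\xi$ for $\xi$ ranging in an open neighborhood of $0$ in a sc-Hilbert space $\mathbb{H}_\gamma$ of Sobolev sections of $\gamma^*TM \to \R\times S^1$ with exponential weights in $s$. The $S^1$-action by rotation in $t$ commutes with this chart and acts on $\mathbb{H}_\gamma$ by $(\theta\cdot \xi)(s,t)=\xi(s,\,t-\theta)$. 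Fourier expansion in $t$ gives the sc-orthogonal splitting
\begin{equation*}
\mathbb{H}_\gamma \;=\; \mathbb{H}_\gamma^{S^1} \,\widehat{\oplus}\, \widehat{\oplus}_{\lambda \in \N^+}\, \mathbb{H}_\gamma^\lambda,
\end{equation*}
where the zeroth summand consists of sections constant in $t$ and the $\lambda$-summand is spanned by the $\pm\lambda$-Fourier modes and carries several copies of the real weight-$\lambda$ irreducible representation of $S^1$. The first summand recovers the Morse-theoretic M-polyfold chart of \cite{fabert2016polyfolds} for $Z^{S^1}_{x,y,0}$, consistent with Assumption \ref{ass:poly}(\ref{a4}), and the remaining summands furnish a normal bundle $N = \widehat{\oplus}_\lambda N^\lambda$ with each $N^\lambda$ a $\lambda$-tame M-polyfold bundle over $Z^{S^1}_{x,y,0}$ in the sense of Definition \ref{def:lambundle}. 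Applying the same Fourier decomposition to the fibers of $W_{x,y,0}$ (Sobolev sections of $\Lambda^{0,1}\otimes u^*TM$) produces $W_{x,y,0}|_{Z^{S^1}_{x,y,0}} = W^{S^1}\,\widehat{\oplus}\,\widehat{\oplus}_\lambda W^\lambda$; each nontrivial $W^\lambda$ has infinite rank because a single Fourier mode already contributes a Sobolev space of $n$-dimensional sections on $\R$, establishing Definition \ref{def:tube}(\ref{dim}). The required equivariant strong bundle isomorphism $W_{x,y,0}|_N \cong \pi^*(W_{x,y,0}|_{N\cap Z^{S^1}_{x,y,0}})$ is then obtained by $J$-complex-linear parallel transport along the exponential fibers, averaged over $S^1$ to make it equivariant.

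The plan for part \eqref{e2} is a direct spectral-flow calculation, given the decomposition produced in part \eqref{e1}. On an $S^1$-fixed zero $u(s,t)=\gamma(s)$ the linearization has the form
\begin{equation*}
\rD s_{x,y,0}\,\xi \;=\; \nabla_s \xi + J(\gamma)\,\nabla_t \xi + S(s)\,\xi,
\end{equation*}
where $S(s)$ is a zeroth-order symmetric operator satisfying $\|S(s)\|_{\mathrm{op}} \le C\,\|H\|_{C^2}$; since $H$ is chosen $C^2$-small, I may assume $\|S(s)\|_{\mathrm{op}}<1$ uniformly in $s$. Restricting to the weight-$\lambda$ subspace and writing $\xi(s,t)=\phi(s)\,e^{i\lambda t}$ in a $J$-complex trivialization of $\gamma^*TM$, so that $J\partial_t$ acts as multiplication by $-\lambda$, the operator becomes
\begin{equation*}
\rD^\lambda s_{x,y,0}\,\phi \;=\; \partial_s \phi + \bigl(S(s)-\lambda\bigr)\phi
\end{equation*}
on Sobolev sections $\phi:\R\to \C^n$. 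For every $\lambda\ge 1$ the symmetric operators $A_\lambda(s):=S(s)-\lambda$ are uniformly negative definite in $s$, so $A_\lambda(\pm\infty)$ are nondegenerate and no eigenvalue of $A_\lambda(s)$ crosses zero. By the standard spectral-flow formula for operators of the form $\partial_s + A(s)$ with asymptotically nondegenerate endpoints, this gives $\ind \rD^\lambda s_{x,y,0} = 0$ on the complex weight-$\lambda$ sub-bundle, and hence also on its underlying real structure.

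I expect the main obstacle to lie in part \eqref{e1}: while the Fourier decomposition is elementary fiberwise, verifying that the resulting normal-bundle splitting and the $J$-parallel-transport/averaging trivialization assemble into genuine sc-smooth tame strong polyfold bundle data (and not merely continuous data), and are compatible with the broken-flow-line boundary identifications in Assumption \ref{ass:poly}(\ref{a2}), requires carefully checking the sc-smoothness of the $\lambda$-Fourier projections and of the $S^1$-averaging against whichever concrete sc-structure realizes $Z_{x,y,0}$. Part \eqref{e2}, by contrast, reduces cleanly to spectral-flow bookkeeping once the decomposition of part \eqref{e1} is in place.
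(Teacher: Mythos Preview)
Your plan for part \eqref{e1} matches the paper's construction: the normal bundle of $Z^{S^1}_{x,y,0}$ inside $Z_{x,y,0}$ is built from the nonconstant Fourier modes of the exponential chart, the $\lambda$-summands come from the $\sin(2\pi\lambda t),\cos(2\pi\lambda t)$-modes, and the strong-bundle trivialization over the normal directions is by parallel transport. One minor point: the parallel transport is already $S^1$-equivariant (rotation in $t$ commutes with pointwise parallel transport along the exponential geodesic), so no averaging is needed there; the paper averages instead to produce an $S^1$-invariant fiber metric on the normal bundle, which it then uses to cut down to an honest $\epsilon$-disk tubular neighborhood. Your assessment that the sc-smooth gluing near broken trajectories is the delicate part is accurate.

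Contrary to your expectation, the real gap is in part \eqref{e2}. The ansatz $\xi(s,t)=\phi(s)e^{i\lambda t}$ with $\phi(s)\in\C^n$ captures only half of the real weight-$\lambda$ subspace: as real $S^1$-representations the Fourier modes $e^{+i\lambda t}$ and $e^{-i\lambda t}$ are isomorphic, so the real weight-$\lambda$ isotypic piece is parametrized by a \emph{pair} $(a_\lambda,b_\lambda)\in\C^n\times\C^n$ (equivalently by both $\phi_+e^{i\lambda t}$ and $\phi_-e^{-i\lambda t}$). Worse, the zeroth-order term $S(s)$ (the paper's $A(s)$) is only $\R$-linear on $\C^n$, hence $S(\phi e^{i\lambda t})\neq (S\phi)e^{i\lambda t}$ in general and the span of $\{\phi e^{i\lambda t}:\phi\in\C^n\}$ is not even invariant under $\rD s$; your formula $\partial_s\phi+(S(s)-\lambda)\phi$ is therefore not the restriction of $\rD^\lambda s$ to anything. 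In the paper the correct restricted operator is $\tfrac{d}{ds}-B(s)$ on $\C^n\times\C^n\cong\R^{4n}$, where the dominant part of $B(s)$ is $(a,b)\mapsto(2\pi\lambda\, i b,\,-2\pi\lambda\, i a)$ with real eigenvalues $\pm 2\pi\lambda$ each of multiplicity $2n$; since $A(\pm\infty)$ is $C^0$-small, the asymptotic operators $B^\pm$ both have negative-eigenspace dimension $2n$, and the spectral flow vanishes. Your heuristic that the large rotational eigenvalues swamp the Hessian term is exactly the right idea, but the asymptotic operator has signature $(2n,2n)$ rather than being definite, and it is the equality of signatures at $s=\pm\infty$---not definiteness---that forces the index to be zero.
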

We prove Proposition \ref{prop:equi} in Subsection \ref{subsec:ver} for the direct Hamiltonian-Floer cohomology polyfolds described in \cite{wehrheim2012fredholm}. Similar properties are expected to hold for Hamiltonian-Floer cohomology polyfolds from SFT \cite{hofer2017application}.

\begin{theorem}\label{thm:ham}
	Suppose Assumption \ref{ass:poly} holds. There exists a transverse and coherent perturbation family $\{\kappa_{x,y,A}\}$ on level $3$ such that $\kappa_{x,y,A}$ is $S^1$-equivariant for all $x,y,A$.
\end{theorem}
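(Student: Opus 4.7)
The plan is to run the inductive construction behind Proposition \ref{prop:trans}, upgrading every step so that the $\sc^+$-multisection it produces is already $S^1$-equivariant. Order the triples $(x,y,A)$ first by the Hofer-type quantity $E(x,y,A) := \omega(A) + H(y) - H(x)$, which is nonnegative on triples with nonempty moduli (action decrease along Floer cylinders) and takes values in a discrete set in any bounded interval by Gromov compactness of $J$-holomorphic spheres; within a fixed value of $E$ order further by the maximal degeneracy index of $Z_{x,y,A}$, which is finite by Assumption \ref{ass:poly}\eqref{a1}. The inductive hypothesis is that $\kappa_{x',y',A'}$ has been built for all strictly smaller triples, is $S^1$-equivariant and proper on level~$3$, and is transverse to zero wherever $\ind s_{x',y',A'} \le 1$. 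The coherence identity \eqref{eqn:coh} then prescribes an $S^1$-equivariant $\sc^+$-multisection on a neighborhood of $(\partial^{\ge 1}Z_{x,y,A})_\infty$ in $Z^3_{x,y,A}$, equivariant because each boundary stratum is a product of already-equivariant perturbations from lower triples and the $S^1$-action on a product acts factorwise. The task at each step is then to extend this equivariant boundary datum to an equivariant, proper interior perturbation, transverse whenever $\ind s_{x,y,A} \le 1$.

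The extension splits into two cases according to $A$. If $A \ne 0$, Assumption \ref{ass:poly}\eqref{a3} guarantees that the $S^1$-action on $Z_{x,y,A}$ has finite isotropy, and I would invoke Corollary \ref{coro:p2} applied on $Z^3_{x,y,A}$ with the invariant closed set $C := \partial^{\ge 1}Z^3_{x,y,A}$ carrying the inductively constructed boundary perturbation (already equivariant and transverse there); the ``moreover'' clause of Corollary \ref{coro:p2} then yields an equivariant transverse proper extension equal to the boundary datum on $C$. If $A = 0$, Proposition \ref{prop:equi}\eqref{e1} supplies the tubular-neighborhood hypothesis of Corollary \ref{S1}, and its weight inequality $\ind \rD^\lambda s_{x,y,0} + 2 > \ind \rD^{S^1} s_{x,y,0}$ must be checked only where transversality is required, namely where $\ind s_{x,y,0} = \ind y - \ind x - 1 \le 1$, i.e.\ $\ind y - \ind x \in \{1,2\}$; in that range Proposition \ref{prop:equi}\eqref{e2} gives $\ind \rD^\lambda s_{x,y,0} = 0$ while Assumption \ref{ass:poly}\eqref{a4} (via the identification with the Morse section) gives $\ind \rD^{S^1} s_{x,y,0} \in \{0,1\}$, so $0 + 2 > \ind \rD^{S^1} s_{x,y,0}$ holds. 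Corollary \ref{S1} then delivers an equivariant transverse extension, single valued near $Z^{S^1}_{x,y,0}$, and its closed-set clause pins it to the prescribed boundary data where the latter is already transverse. For triples with $\ind s_{x,y,A} \ge 2$, where transversality is not needed, I would extend the boundary datum by the standard non-equivariant \cite[Theorem 15.5]{hofer2017polyfold} and then symmetrize via the Haar measure on $S^1$ with an equivariant cut-off from Corollary \ref{coro:equi}, which does not disturb the already-invariant boundary values. Properness is arranged at every step by making the new interior contribution small in the auxiliary norm.

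The main obstacle is the equivariant boundary-to-interior extension of a \emph{structured} multisection, since structurability is required by the polyfold perturbation theory underlying \eqref{eqn:diff} and Assumption \ref{ass:coh}. In the $A \ne 0$ case the obstacle is dispatched inside Corollary \ref{coro:p2}, because the finite-isotropy quotient polyfold $Z_{x,y,A}/S^1$ furnished by Theorem \ref{thm:main} allows one to apply the standard non-equivariant structured extension to $\overline{s}_{x,y,A}$ and pull back. In the $A = 0$ case one must splice the single-valued equivariant perturbation produced by Theorem \ref{thm:equitran} in an invariant neighborhood $U$ of $Z^{S^1}_{x,y,0}$ with an equivariant multivalued perturbation on $Z_{x,y,0}\setminus \overline{U}$ constructed on the free quotient $(Z_{x,y,0}\setminus Z^{S^1}_{x,y,0})/S^1$; the splice is carried out by an $S^1$-invariant bump function, which is possible because the intersection $\partial^{\ge 1}Z_{x,y,0}\cap Z^{S^1}_{x,y,0}$ consists entirely of broken configurations of fixed Morse trajectories, i.e.\ it is already covered by the inductive Morse boundary data and therefore compatible with both pieces of the splicing. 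Once this splicing is realized at each inductive step, the induction closes and the resulting family $\{\kappa_{x,y,A}\}$ is $S^1$-equivariant, transverse, coherent, and proper on level~$3$.
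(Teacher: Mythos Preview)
Your overall architecture matches the paper's: induct on a complexity measure, split into $A \neq 0$ (finite isotropy, work on the quotient) versus $A = 0$ (use Corollary \ref{S1} after checking its index hypothesis via Proposition \ref{prop:equi}). The paper orders purely by maximal degeneracy index; your energy-then-degeneracy ordering is an acceptable alternative and the verification of the weight inequality for $\ind y - \ind x \in \{1,2\}$ is correct.

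There are, however, two genuine gaps.

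First, your treatment of triples with $\ind s_{x,y,A} \ge 2$ proposes to extend non-equivariantly via \cite[Theorem 15.5]{hofer2017polyfold} and then symmetrize by the Haar integral. This does not work: a $\sc^+$-multisection is not a section, and Haar averaging is not defined for multisections (Corollary \ref{coro:ave} applies only to single-valued bundle maps). The paper avoids this entirely. For $A = 0$ with $\ind > 1$ in the base case it simply sets $\kappa_{x,y,0} = 0$; in the inductive step it uses the same quotient-and-extend mechanism as in the low-index case, merely dropping the transversality requirement.

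Second, you correctly flag structurability of the boundary multisection on the quotient as ``the main obstacle'', but your resolution is too thin. Corollary \ref{coro:p2} does not extend a prescribed boundary datum; its ``moreover'' clause only allows $\kappa|_C = 0$ when $s$ itself (not $s + \kappa_{\mathrm{bdry}}$) is already transverse on $C$. The real work is to show that the inductively given $\overline{\kappa_{x,z,A_1} \times \kappa_{z,y,A_2}}$ is structurable on the boundary of the quotient, so that \cite[Theorem 15.5]{hofer2017polyfold} can be applied there. The paper carries a strengthened inductive hypothesis for exactly this purpose: $\overline{\kappa_{x,y,A}}$ is structurable on $Z_{x,y,A}/S^1$ for $A \neq 0$, and $\kappa_{x,y,0}$ is single-valued near $Z^{S^1}_{x,y,0}$ with $\overline{\kappa_{x,y,0}}$ structurable on $(Z_{x,y,0} \setminus Z^{S^1}_{x,y,0})/S^1$. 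At each inductive step one decomposes every boundary product into its two factor-components and checks, using Proposition \ref{prop:str} (single-valued implies structurable; structurability survives pullback and direct sum), that the descended boundary multisection is structurable on the relevant quotient. This case-by-case bookkeeping is the actual substance of the argument; your sketch of the $A = 0$ splice is on the right track but does not carry it out, and your $A \neq 0$ case appeals to a corollary that does not provide the needed extension.
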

As a corollary of Theorem \ref{thm:ham}, one have the following.
\begin{corollary}\label{coro:iso}
	Using perturbations in Theorem \ref{thm:ham}, we have $\delta_{HF} = \delta_M$. In particular, when Assumption \ref{ass:ind} holds, the weak Arnold conjecture holds.
\end{corollary}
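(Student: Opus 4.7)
The plan is to evaluate the Floer differential term by term, using the $S^1$-equivariance of the perturbations $\kappa_{x,y,A}$ to reduce to the Morse picture on the fixed locus, and then to deduce the Arnold bound from Assumption \ref{ass:ind} by a rank argument.

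For each $x,y\in\mathcal{C}(H)$ and $A\in H_2^S(M)$ with $\ind s_{x,y,A}=0$, the compact zero locus $(s_{x,y,A}+\kappa_{x,y,A})^{-1}(0)$ is an oriented, $S^1$-invariant weighted branched $0$-manifold (the $S^1$-invariance follows from equivariance of $\kappa_{x,y,A}$). If $A\neq 0$, Assumption \ref{ass:poly}\eqref{a3} asserts that the $S^1$-action on $Z_{x,y,A}$ has only finite isotropy, so every $S^1$-orbit is $1$-dimensional; an $S^1$-invariant $0$-dimensional compact subset must therefore be empty, and the corresponding integral in \eqref{eqn:diff} vanishes. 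If $A=0$ and $\ind y-\ind x=1$, I use that by Assumption \ref{ass:poly}\eqref{a4} the restriction $s^{S^1}_{x,y,0}$ is already in general position, so the last sentence of Corollary \ref{S1} allows us to run the inductive construction of Theorem \ref{thm:ham} with $\kappa_{x,y,0}$ chosen to vanish on $D:=Z^{S^1}_{x,y,0}$. Away from the fixed locus the $S^1$-action is again of finite isotropy, so the same dimension argument forces the entire zero set into $Z^{S^1}_{x,y,0}$, where it coincides with $(s^{S^1}_{x,y,0})^{-1}(0)=\overline{\mathcal{M}}^0_{x,y}$. Corollary \ref{S1} also guarantees $\kappa_{x,y,0}$ is single-valued near the fixed locus, so the weighted count collapses to an ordinary signed count, and Assumption \ref{ass:poly}\eqref{a5} identifies the signs with the Morse signs. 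Hence $\delta_{HF}y=\sum_x\bigl(\int_{\overline{\mathcal{M}}^0_{x,y}}1\bigr)x=\delta_M y$.

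From $\delta_{HF}=\delta_M$ I obtain
\[
HF^*(H,J,\{\kappa_{x,y,A}\})\cong HM^{*+n}(-H,g)\cong H^{*+n}(M;\Lambda).
\]
Given Assumption \ref{ass:ind}, for any non-degenerate $H_t$ there is a cochain complex generated by $\widetilde{\mathcal{P}}(H_t)$, which descends to a free $\Lambda$-module on $\mathcal{P}(H_t)=\widetilde{\mathcal{P}}(H_t)/H_2^S(M)$, whose cohomology is isomorphic to the same Novikov cohomology. The $\Lambda$-rank of $H^{*+n}(M;\Lambda)$ equals $\sum_i\dim_{\mathbb R}H^i(M;\mathbb R)$, so the $\Lambda$-rank of the cochain complex, namely $|\mathcal{P}(H_t)|$, must be at least $\sum_i\dim_{\mathbb R}H^i(M;\mathbb R)$; this is the weak Arnold conjecture.

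The main subtlety is the branched-manifold version of the statement ``an $S^1$-invariant compact $0$-dimensional set on which the $S^1$-action has only finite isotropy must be empty.'' One needs to argue at the level of the weighted branched $0$-manifolds produced by $\sc^+$-multisection perturbation, rather than ordinary manifolds, which is exactly where the single-valuedness provided by Corollary \ref{S1} on a neighborhood of the fixed locus becomes essential: at the only place where the zero set actually lives the perturbation is genuinely single-valued, so the Morse count is recovered without tracking branch multiplicities. Compatibility with Assumption \ref{ass:coh} (so that $\delta_{HF}\circ\delta_{HF}=0$ with this specific perturbation family) is automatic because Theorem \ref{thm:ham} provides a coherent transverse family and \eqref{eqn:coh} was already used in its construction.
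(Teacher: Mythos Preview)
Your proof is correct and follows essentially the same approach as the paper's: use finite isotropy plus $S^1$-invariance to kill the $A\neq 0$ contributions and the off-fixed-locus part of the $A=0$ zero set, then identify what remains with the Morse count via Assumption~\ref{ass:poly}\eqref{a5}. You are in fact more explicit than the paper on one point that matters: Assumption~\ref{ass:poly}\eqref{a5} requires $\kappa_{x,y,0}$ to \emph{vanish} on $Z^{S^1}_{x,y,0}$, and you correctly invoke the last clause of Corollary~\ref{S1} (together with Assumption~\ref{ass:poly}\eqref{a4}) to arrange this within the induction of Theorem~\ref{thm:ham}; the paper's proof uses this implicitly without stating it.
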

\begin{proof}
	For every $x,y,A$ such that $\ind_{x,y,A} = 0$, since $S^1$-action has finite isotropy if $A\ne 0$， then equivariant transversality implies that $(s_{x,y,A}+\kappa_{x,y,A})^{-1}(0) = \emptyset$ when $A \ne 0$. If $A = 0$ then equivariant transversality implies that $(s_{x,y,0}+\kappa_{x,y,0})^{-1}(0) = (s_{x,y,0}^{S^1})^{-1}(0)$ with the same orientation. Hence $\delta_{HF}  =  \delta_M$ and the remaining claim follows.
\end{proof}
	
Before proving Theorem \ref{thm:ham}, we first gather some properties of structurable multisections from \cite[Chapter 14]{hofer2017polyfold} that will be crucial for our argument.
\begin{proposition}\label{prop:str}
	Let $p:W\to Z$  and $p':W'\to Z$ be two strong tame polyfold bundles, then we have the following properties. 
	\begin{enumerate}
		\item If $\sc^+$-multisection $\kappa$ of $p$ is single-valued, then $\kappa$ is structurable.
		\item Let $\{U,V\}$ be an open cover of $Z$. If $\sc^+$-multisection $\kappa$ is single-valued on $U$ and structurable on $V$, then $\kappa$ is structurable on $Z$. \footnote{This follows from the proof of \cite[Corollary 13.1]{hofer2017polyfold}}
		\item Let $\kappa,\kappa'$ be structurable $\sc^+$-multisections of $p$ resp. $p'$, then $\kappa\oplus \kappa'$ is a structurable $\sc^+$-multisection of $p\oplus p': W\oplus W' \to Z$.
		\item Let $f:X \to Z$ be a polyfold map and $\kappa$ a structurable $\sc^+$-multisection of $p$, then $f^*\kappa$ is a structurable $\sc^+$-multisection of $f^*p: f^*W \to X$. \footnote{This follows from \cite[Theorem 14.3]{hofer2017polyfold}.}
	\end{enumerate}
\end{proposition}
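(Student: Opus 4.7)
The plan is to verify the four properties in turn, working from the definition of structurable $\sc^+$-multisection in \cite[Definition 14.4]{hofer2017polyfold}. The footnotes already flag that property (2) reduces to the proof of \cite[Corollary 13.1]{hofer2017polyfold} and property (4) reduces to \cite[Theorem 14.3]{hofer2017polyfold}, so the substantive content is in (1) and (3).

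For (1) I would equip a single-valued $\sc^+$-section $\kappa$ with the trivial branched structure: the identity covering $Z \to Z$ with a single sheet of weight $1$ carrying $\kappa$ itself. Every condition in Definition 14.4 of \cite{hofer2017polyfold} is then satisfied by direct inspection, because the nontrivial content of those conditions concerns the local combinatorics of several branches, which is vacuous here. For (3) I would start from local branched models: an open cover $\{O_\alpha\}$ of $Z$ together with branched sc-smooth covers $T_\alpha \to O_\alpha$, $T'_\alpha \to O_\alpha$ and weighted sections $\sigma_\alpha : T_\alpha \to W|_{O_\alpha}$, $\sigma'_\alpha : T'_\alpha \to W'|_{O_\alpha}$ representing $\kappa|_{O_\alpha}$ and $\kappa'|_{O_\alpha}$ respectively. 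The candidate branched structure for $\kappa \oplus \kappa'$ over $O_\alpha$ is the fibered product $T_\alpha \times_{O_\alpha} T'_\alpha$, equipped with the Whitney-sum section $(\sigma_\alpha \circ \mathrm{pr}_{T_\alpha},\, \sigma'_\alpha \circ \mathrm{pr}_{T'_\alpha})$ and weights multiplied from the two factors. I would then check that (i) this fibered product remains a branched sc-smooth cover of $O_\alpha$, (ii) the associated weighted section reconstructs $(\kappa \oplus \kappa')|_{O_\alpha}$ on $W \oplus W'$, and (iii) the local representations glue coherently across $\{O_\alpha\}$. The main obstacle will be (i), as forming fibered products in the polyfold-theoretic branched setting requires verifying tameness and local sc-smooth compatibility; however this is a general fact about fibered products in the polyfold category and contains no new analytic input. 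Step (ii) is a routine unwinding of the definition of $\kappa \oplus \kappa'$, and (iii) is automatic from the corresponding gluing for $\kappa$ and $\kappa'$.

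For (2) I would interpret the single-valued representation on $U$ as the trivial branched structure furnished by (1) and paste it to the structurable representation on $V$ using the gluing argument of \cite[Corollary 13.1]{hofer2017polyfold}, where on $U \cap V$ the branched structure from $V$ already contains a branch agreeing with $\kappa|_U$. For (4), the pullback $f^*\kappa$ is represented locally by the pullback branched cover $f^*T \to X$ together with the pulled-back section $f^*\sigma$; sc-smoothness of the branched cover is preserved under pullback by the chain rule, and the compatibility axioms transport across $f$ directly, matching the content of \cite[Theorem 14.3]{hofer2017polyfold}.
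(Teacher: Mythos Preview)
The paper does not actually prove this proposition: it is introduced as ``we first gather some properties of structurable multisections from \cite[Chapter 14]{hofer2017polyfold},'' and no proof environment follows the statement. The footnotes already attribute (2) and (4) to specific results in \cite{hofer2017polyfold}, and (1), (3) are likewise meant to be read off from that reference rather than proved here.

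Your sketches for (1) and (4) are in the right spirit and essentially match how these facts are established in \cite{hofer2017polyfold}. For (2), your one-line reduction is fine given that the footnote already points to the relevant argument. For (3), your fibered-product-of-branched-covers idea is the natural construction, but be aware that the definition of structurability in \cite[Definition 13.17, 14.4]{hofer2017polyfold} involves more than just local branched models: it requires a good system of local section structures together with compatibility conditions on the transition data and a common saturated open cover. Your step (iii) (``automatic from the corresponding gluing'') glosses over the fact that the two structurable data for $\kappa$ and $\kappa'$ may be defined with respect to different covers and different local groupoid charts; one first has to pass to a common refinement on which both local structures are simultaneously defined before forming the fibered product. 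This is not a serious gap, but it is where the actual bookkeeping lives, and it is the reason the paper defers to \cite{hofer2017polyfold} rather than reproducing the argument.
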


To prove Theorem \ref{thm:ham}, we need to make an induction argument sketched in Proposition \ref{prop:trans} in a $S^1$-equivariant way. On a single polyfolds, the $S^1$-equivariant transverse perturbation is constructed using Corollary \ref{coro:p2} and Corollary \ref{S1}. The key point of the proof is that we can maintain both the $S^1$-equivariant property and the structurable property in the induction. 

\begin{proof}[Proof of Theorem \ref{thm:ham}]
	We apply the induction argument sketched in Proposition \ref{prop:trans} to find such perturbations. To make sure that we can stay in the category of structurable multisections, we claim that there exist $S^1$-equivariant transverse and coherent perturbation family such that the following two conditions holds 
	\begin{enumerate}
		\item $\overline{\kappa_{x,y,A}}$ is structurable for  $A \ne 0$;
		\item $\kappa_{x,y,0}$ is single-valued near fixed locus and $\overline{\kappa_{x,y,0}}$ is structurable on $(Z_{x,y,0}\backslash Z^{S^1}_{x,y,0})/S^1$.
	\end{enumerate}
	where $\overline{\kappa_{x,y,A}}$ and $\overline{\kappa_{x,y,0}}$ are induced sections on the quotients. Strictly speaking, the quotients are defined on open dense sets with a level shift by three. We will not spell them out for simplicity.
	
	We prove the claim by induction on the maximal degeneracy index. The base case is maximal degeneracy index $0$, i.e. polyfolds without boundary. On $Z_{x,y,A}$ with $A\ne 0$, then by Assumption \ref{ass:poly}, the $S^1$-action has finite isotropy. Hence there exists $S^1$-equivariant transverse $\sc^+$ multisection perturbation $\kappa_{x,y,A}$ and $\overline{\kappa_{x,y,A}}$ can be chosen to be structurable on $Z_{x,y,A}/S^1$ by \cite[Theorem 15,4]{hofer2017polyfold}. On $Z_{x,y,0}$ with $0 \le \ind s_{x,y,0} \le 1$, then by Proposition \ref{prop:equi}, Corollary \ref{S1} can be applied to find $S^1$-equivariant perturbation $\kappa_{x,y,0}$ such that $\kappa_{x,y,0}$ is single-valued in a neighborhood of $Z^{S^1}_{x,y,0}$ and $\overline{\kappa_{x,y,0}}$ is structurable on $(Z_{x,y,0}\backslash Z^{S^1}_{x,y,0})/S^1$.
	On $Z_{x,y,0}$ with $\ind s_{x,y,0} > 1$, we simply require $\kappa_{x,y,0} = 0$. Finally, the self-index condition implies that $Z_{x,y,0} =\emptyset$ for every $x,y$ such that $\ind x \ge \ind y$. Therefore the claim holds for maximal degeneracy index $0$. 
	
	Assume the claim holds on polyfolds with maximal degeneracy index at most $k$. We will prove that there exist perturbations satisfying the same properties on all polyfolds with maximal degeneracy index $k+1$. We divide the proof into two cases. 
	
	\textit{Case 1: $Z_{x,y,A}$ with $A\ne 0$.} By Assumption \ref{ass:poly}, the $S^1$-action has finite isotropy. We claim that induced perturbation on $\partial Z_{x,y,A}/S^1$ is structurable, hence we can extend it to a structurable perturbation $\overline{\kappa_{x,y,A}}$ on $Z_{x,y,A}/S^1$. On the boundary face $Z_{x,z,A_1}\times Z_{z,y,A_2} \subset \partial Z_{x,y,A}$ with $A = A_1+A_2$, the quotient bundle $(W_{x,z,A_1}\times W_{z,y,A_2})/S^1$ has a decomposition $(W_{x,z,A_1} \times \{0\})/S^1\oplus (\{0\} \times W_{z,y,A_2})/S^1$. Let $\pi_1,\pi_2$ denote the two projections. If $A_1 \ne 0$ then $$\pi_1 \circ \overline{\kappa_{x,z,A_1}\times \kappa_{z,y,A_2}} =\mathrm{pr}_1^*\overline{\kappa_{x,z,A_1}},$$
	where $\mathrm{pr}_1:(Z_{x,y,A_0}\times Z_{y,z,A_1})/S^1\to Z_{x,y,A_0}/S^1$ is the quotient of the projection. By induction hypothesis, $\overline{\kappa_{x,z,A_1}}$ is structurable. Hence by Proposition \ref{prop:str}, $\pi_1 \circ \overline{\kappa_{x,y,A_0}\times \kappa_{y,z,A_1}}$ is structurable.  If $A_1 = 0$, then $\pi_1 \circ \overline{\kappa_{x,z,0}\times \kappa_{y,z,A}}$ on $(U\times Z_{y,z,A_1})/S^1$ is single-valued by induction hypothesis, where $U$ is a $S^1$-invariant neighborhood of $Z^{S^1}_{x,y,0}$. Similar to the $A_1\ne 0$ case, we have $\pi_1 \circ \overline{\kappa_{x,z,0}\times \kappa_{z,y,A}}$ is structurable on $((Z_{x,z,0}\backslash Z^{S^1}_{x,z,0})\times Z_{z,y,A})/S^1$. Therefore by Proposition \ref{prop:str}, $\pi_1 \circ \overline{\kappa_{x,z,0}\times \kappa_{z,y,A}}$ is structurable. So far, we show that $\pi_1 \circ \overline{\kappa_{x,z,A_1}\times \kappa_{z,y,A_2}}$ is structurable for any $A_1$, similarly  $\pi_2 \circ \overline{\kappa_{x,z,A_1}\times \kappa_{z,y,A_2}}$ is structurable. Then by Proposition \ref{prop:str}, $\overline{\kappa_{x,z,A_1}\times \kappa_{z,y,A_2}} = \pi_1\circ \overline{\kappa_{x,z,A_1}\times \kappa_{z,y,A_2}}\oplus \pi_2\circ \overline{\kappa_{x,z,A_1}\times \kappa_{z,y,A_2}}$ is structurable.  Therefore we have an extension $\overline{\kappa_{x,y,A}}$ from $\partial Z_{x,y,A}/S^1$ to $Z_{x,y,A}/S^1$, and the pullback $\kappa_{x,y,A}$ to $Z_{x,y,A}$ is $S^1$-equivariant. When $\ind s_{x,y,A} \le 1$, we have $\overline{s_{x,y,A}}$ is transverse on $\partial Z_{x,y,A}/S^1$ by the same argument in Proposition \ref{prop:trans}. Therefore using \cite[Theorem 15.5]{hofer2017polyfold}, we can require $\overline{\kappa_{x,y,A}}$ is a transverse perturbation on $Z_{x,y,A}/S^1$, hence $\kappa_{x,y,A}$ is an equivariant transverse perturbation on $Z_{x,y,A}$.
	
	\textit{Case 2: $Z_{x,y,0}$.} By the same argument above, $\overline{\kappa_{x,z,A}\times \kappa_{z,y,-A}}$ is structurable if $A \ne 0$, and  $\overline{\kappa_{x,z,0}\times \kappa_{z,y,0}}$ is structurable on $(Z_{x,z,0}\times Z_{z,y,0} \backslash Z_{x,z,0}^{S^1}\times Z^{S^1}_{z,y,0})/S^1$. By induction hypothesis, $\kappa_{x,z,0}\times \kappa_{z,y,0}$ is single valued on $U\times V$ where $U\subset Z_{x,z,0},V\subset Z_{z,y,0}$ are neighborhoods of $Z^{S^1}_{x,z,0},Z^{S^1}_{z,y,0}$. Hence there exists extension $\kappa_{x,y,0}$ such that $\kappa_{x,y,0}$ is single valued in a neighborhood of $Z^{S^1}_{x,y,0}$ and $\overline{\kappa_{x,y,0}}$ is structurable on $(Z_{x,y,0}\backslash Z^{S^1}_{x,y,0})/S^1$. When  $0 \le \ind s_{x,y,0} \le 1 $, by Corollary \ref{S1} we can require that $s_{x,y,0}$ is transverse to $0$. 
    Therefore the claim holds for all polyfolds by induction, and Theorem \ref{thm:ham} follows.  
\end{proof}

\subsection{Analyzing fixed locus in ``the direct construction"}\label{subsec:ver}
There are mainly two constructions of Hamiltonian-Floer cohomology polyfolds satisfying Assumption \ref{ass:poly} and Assumption \ref{ass:ind}. One way is deriving it from the SFT polyfolds \cite{hofer2017application}, i.e. the polyfolds of cylinders in $\R \times S^1 \times M$. The other construction is more direct by considering cylinders in $M$. When there are no sphere bubbles, the later construction was sketched in \cite{wehrheim2012fredholm}. In this subsection, we prove Proposition \ref{prop:equi} for the direct construction of Hamiltonian-Floer cohomology polyfolds. 
\subsubsection{The Morse homology M-polyfolds and Hamiltonian-Floer cohomology polyfolds}
We first give a brief description of the Morse homology M-polyfolds which contain compactified Morse trajectory spaces following \cite{fabert2016polyfolds}. For $x,y \in \cC(H)$, a flow line $[\gamma]$ from $x$ to $y$ is an equivalence class of curves $\gamma:\R \to M$ with $\lim_{t\to -\infty} \gamma(t) = x$ and $\lim_{t\to \infty} \gamma(t) = y$ with exponential decay at two ends. Here $\gamma_1$ is equivalent to $\gamma_2$ iff $\gamma_1(s+\cdot) = \gamma_2(\cdot)$ for some $s \in \R$. A broken flow line is a tuple $([\gamma_1],\ldots,[\gamma_k])$ such that $\lim_{t\to \infty} \gamma_i(t) = \lim_{t\to -\infty} \gamma_{i+1}(t)$ for all $1 \le i \le k-1$. The M-polyfold $\cX_{x,y}$ consists of both unbroken and broken flow lines from $x$ to $y$. A chart of $\cX_{x,y}$ near an unbroken flow linw $[\gamma]$ is modeled on a neighborhood of $0$ in 
\begin{equation}\label{eqn:Eg}
\E_{\gamma}:=\{v\in H^{3+i}_{\delta_i}(\R,\gamma^*TM)| v(0)\in H \}_{i\in \N},
\end{equation}
with chart map $v\in \E_{\gamma} \to [\exp_\gamma(v)]$, 
where $H$ is a hyperplane in $T_{\gamma(0)}M$ transverse to $\gamma$ at $\gamma(0)$, and $\delta_i>0$ are exponential weights in the Sobolev spaces\footnote{We require that $\delta_i$ is increasing to guarantee that $\E_\gamma$ is a sc-Hilbert space. We also require $\delta_i$ is small than minimum of the absolute values of eigenvalues of $\rm{Hess}(H)$ on all critical points to include every Morse trajectory. }. A chart near a broken flow line $([\gamma_1],\ldots, [\gamma_n])$ is modeled on a tame sc-retraction constructed from pregluing, see \cite{fabert2016polyfolds} for details.

We can similarly define the sc-Hilbert space $\H_{\gamma} := \{ H^{2+i}_{\delta_i}(\R, \gamma^*TM)\}_{i\in \N}$. Then there is a strong M-polyfold bundle $\cY_{x,y}$ over $\cX_{x,y}$, such that the fiber over $([\gamma_1],\ldots, [\gamma_k])$ isomorphic to $\H_{\gamma_1} \times \ldots \times \H_{\gamma_k}$ if we choose representatives $\gamma_1,\ldots,\gamma_k$. Moreover, there is a proper sc-Fredholm section $s_{x,y}:\cX_{x,y} \to \cY_{x,y}$ given by
$$s_{x,y}([\gamma_1],\ldots, [\gamma_n]) = (\gamma'_1 - \nabla H(\gamma_1), \ldots, \gamma'_n - \nabla H(\gamma_n) ).$$
This section is in general position if we assume the Sard-Smale condition. 

Similarly, for $x,y \in \cC(H)$ such that $\ind y > \ind x$, a cylindrical flow line $[u]$ from $x$ to $y$ is an equivalence class of curves $u:\R \times S^1 \to M$ with $\lim_{t\to -\infty} u(t,\cdot) = x$ and $\lim_{t\to \infty} u(t,\cdot) = y$ with exponential decay at two ends. Here $u_1$ is equivalent to $u_2$ iff $u_1(s+\cdot,\cdot) = u_2(\cdot,\cdot)$ for some $s \in \R$. We have a M-polyfold description of cylindrical flows lines from $x$  to $y$ including broken flow lines, which serves as the Hamiltonian-Floer polyfold $Z_{x,y,0}$. Let $u:\R\times S^1 \to M$ be a flow from $x$ to $y$ with exponential decay at two ends and $[u]$ denote the equivalence class in $Z_{x,y,0}$. Then a chart of $Z_{x,y,0}$ near $u$ is modeled on a neighborhood of $0$ in 
\begin{equation}\label{eqn:Eu}
\E_{u}:=\{v\in H^{3+i}_{\delta_i}(\R\times S^1,u^*TM)| v(0,0)\in H \}_{i\in \N},
\end{equation}
with chart map $v\in \E_{u} \to [\exp_u(v)]$, where $H$ is a hyperplane in $T_{u(0,0)}M$ transverse to $u(s,0)$ at $u(0,0)$, and $\delta_i > 0$ are the exponential weights as before. A chart near a broken flow $([u_1],\ldots, [u_k])$ is modeled on a tame sc-retraction constructed from pregluing, see \cite{wehrheim2012fredholm} for details. Let $\H_u := \{H^{2+i}_{\delta_i}(\R\times S^1,u^*TM) \}_{i\in \N}$. The strong M-polyfold bundle $W_{x,y,0}$ is constructed in a similar way, and the fiber over $([u_1],\ldots, [u_k])$ isomorphic to $\H_{u_1}\times \ldots \times \H_{u_k}$, once we choose representatives $u_1,\ldots, u_k$. Then the Floer equation gives a proper section $s_{x,y,0}:Z_{x,y,0} \to W_{x,y,0}$ such that
\begin{equation}\label{eqn:floer}
s_{x,y,0}([u_1],\ldots, [u_k]) = (\partial_s u_1 + J\partial_t u_1 - \nabla H(u_1), \ldots, \partial_s u_k + J\partial_t u_k - \nabla H(u_k) ).
\end{equation}
It was proven in \cite{wehrheim2012fredholm} that this section is sc-Fredholm.

In the following, we prove Proposition \ref{prop:equi}.

\begin{proof}[Proof of Proposition \ref{prop:equi} \eqref{e1}]
 The reparametrization in the $S^1$-coordinate acts sc-smoothly on $p_{x,y,0}:W_{x,y,0} \to Z_{x,y,0}$ by
 $$\theta\cdot (\overline{u}(\cdot,\cdot),\overline{v}(\cdot,\cdot)) := (\overline{u}(\cdot,\theta+\cdot), \overline{v}(\cdot,\theta+\cdot)),$$
 where $\overline{u}= ([u_1],\ldots, [u_k])$ and $\overline{v}=(v_1,\ldots,v_k)$ for $v_i\in \H_{u_i}$. Then the section $s_{x,y,0}$ is $S^1$-equivariant. The fixed points of the $S^1$-action on $W_{x,y,0} \to Z_{x,y,0}$ are curves independent of the $S^1$-coordinate, i.e. the Morse homology M-polyfold bundle $\cY_{x,y} \to \cX_{x,y}$. Thus to verify Definition \ref{def:tube}, we need to prove the following on $W_{x,y,0}\to Z_{x,y,0}$.
 \begin{enumerate}
 		\item There exists a $S^1$-invariant neighborhood $N_{x,y,0}\subset Z_{x,y,0}$ of $(s^{S^1}_{x,y,0})^{-1}(0):=s_{x,y,0}^{-1}(0)\cap Z^{S^1}$, such that there is a projection $\pi: N_{x,y,0} \to N_{x,y,0}\cap Z^{S^1}_{x,y,0}$ making $N_{x,y,0}$ a $S^1$-tame M-polyfold bundle. We have a decomposition of $S^1$-tame M-polyfold bundles  $N_{x,y,0} = \widehat{\oplus}_{\lambda \in \N^+} N_{x,y,0}^\lambda$, such that $N_{x,y,0}^\lambda$ is a $\lambda$-tame M-polyfold bundle. 
 		\item The $S^1$-tame strong M-polyfold bundle $W_{x,y,0}|_{N_{x,y}\cap Z^{S^1}_{x,y,0}}$ has a similar decomposition, i.e. we have $\lambda$-tame strong M-polyfold bundles $W^\lambda_{x,y,0}$ over $Z^{S^1}_{x,y,0}$ for $\lambda \in \N^+$, such that $W[i]_{x,y,0}|_{N_{x,y,0}\cap Z^{S^1}_{x,y,0}} = W^{S^1}[i]_{x,y,0} \widehat{\oplus}_{\lambda \in \N^+} W^\lambda [i]_{x,y,0}$ for $i=0,1$. Moreover, there is a $G$-equivariant strong bundle isomorphism $W_{x,y,0}|_{N_{x,y}} \to \pi^*(W_{x,y,0}|_{N_{x,y,0}\cap Z^{S^1}_{z,y,0}})$. 
 		\item We have $(\rank W^\lambda_{x,y,0})_z=\infty$ for any $z\in N_{x,y,0}\cap Z^{S^1}_{x,y,0}$ unless $W^\lambda_{x,y,0}$ is the rank zero bundle.
 \end{enumerate}
To construct a tubular neighborhood, we construct the normal bundle of $\cX_{x,y}\subset Z_{x,y,0}$. There is a M-polyfold bundle $\cN_{x,y} \to \cX_{x,y}$ where the fiber over $([\gamma_1],\ldots, [\gamma_k])$ is $\F_{\gamma_1}\times \ldots \times \F_{\gamma_k}$ for chosen representatives $\gamma_1,\ldots,\gamma_k$, where 
$$\F_{\gamma}:= \{ \xi \in H^{3+i}_{\delta_i}(\R \times S^1,\gamma^*TM) | \textstyle{\int}_{S^1}\xi(\cdot,t) dt = 0\}_{i\in \N}.$$
Then there is a sc-smooth map $\mathrm{EXP}: \cN_{x,y} \to Z_{x,y,0}$ given by
$$(([\gamma_1],\ldots, [\gamma_k]), (\xi_1,\ldots, \xi_k))\mapsto ([\exp_{\gamma_1} \xi_1], \ldots, [\exp_{\gamma_k} \xi_k]),$$
where $(\exp_{\gamma_i}\xi_i)(s,t) = \exp_{\gamma_i(s)}\xi_i(s,t)$. To see that $\mathrm{EXP}$ is a local sc-diffeomorphism near $\cX_{x,y}$, for example near a unbroken flow line $[\gamma] \in \cX_{x,y} \subset Z_{x,y,0}$,  instead of using \eqref{eqn:Eu} as the local model, we can use 
$$\E'_\gamma:= \{v\in H^{3+i}_{\delta_i}(\R\times S^1,u^*TM)| \textstyle \int_{S^1}v(0,t) dt\in H \}_{i\in \N}$$
as the local model. Then we have $\E'_\gamma = \E_\gamma \oplus \F_\gamma$ and $\mathrm{EXP}$ is the local sc-diffeomorphism near $\gamma$ in $\cN_{x,y}$. The situation near a broken flow line is similar. Therefore there is neighborhood $V$ of the compact set $(s^{S^1}_{x,y,0})^{-1}(0) \subset \cN_{x,y}$, such that $\mathrm{EXP}|_V$ is a sc-diffeomorphism. To describe $\mathrm{EXP}(V)$ as a tubular neighborhood, we first construct a sc-smooth metric $g:\cN_{x,y}\otimes \cN_{x,y} \to \R$ by partition of unity, see the proof of Proposition \ref{prop:complement}. The rotation in the $S^1$-direction makes $\cN_{x,y}$ into a $S^1$-M-polyfold bundle. By Corollary \ref{coro:ave}, the metric $g$ can be chosen to be $S^1$-invariant. Then there is a $\epsilon > 0$ and an open neighborhood $U$ of $(s^{S^1}_{x,y,0})^{-1}(0)$ in $\cX_{x,y}$, such that the $\epsilon$-ball bundle $B_{\epsilon}(U)\subset \cN_{x,y}$ is contained in $V$. Then $\mathrm{EXP}_\epsilon :\cN_{x,y}|_U \to Z_{x,y,0}$ defined by 
\begin{equation}\label{eqn:normalmap}
(([\gamma_1],\ldots, [\gamma_k]), (\xi_1,\ldots, \xi_k))\mapsto ([\exp_{\gamma_1} (f_\epsilon \circ g(\xi_1,\xi_1) \cdot \xi_1)], \ldots, [\exp_{\gamma_n} (f_\epsilon \circ g(\xi_k,\xi_k)\cdot \xi_k)]),
\end{equation}
is a sc-diffeomorphism onto its image, where $f_\epsilon:\R_+ \to [0,1]$ is smooth map such that $r\mapsto f_\epsilon(r^2)r$ is diffeomorphism from $\R_+$ to $[0,\epsilon)$ and $f_\epsilon$ is identity near $0$. Since the metric $g$ is $S^1$-invariant, $\mathrm{EXP}_\epsilon$ is $S^1$-invariant. Hence there is a tubular neighborhood $\mathrm{EXP}_\epsilon(B_\epsilon(U))$ containing $(s^{S^1}_{x,y,0})^{-1}(0)$, which is sc-diffeomorphic to $\cN_{x,y}|_U$.

Next we show that we have a decomposition of $\cN_{x,y}$ into different representations using Fourier series. Note that the nontrivial representation of $S^1$ is classified by a positive integer weight. For every weight $\lambda  \in \N_+$, similar to the construction of $\cN_{x,y}$, we have a M-polyfold subbundle $\cN_{x,y}^{\lambda}\subset \cN_{x,y}$ such that the fiber over 
$([\gamma_1],\ldots, [\gamma_k])$ is given by $\F_{\gamma_1}^{\lambda}\times \ldots \times \F_{\gamma_k}^{\lambda}$, where
$$\F_{\gamma}^{\lambda}:= \left\{a(s)\sin(2\lambda \pi t) + b(s)\cos(2\lambda \pi t)\st a,b \in H^{3+i}_{\delta_i}(\R,\gamma^*TM)\right\}_{\i\in \N} \subset \F_\gamma.$$
Then $\cN_{x,y}^{\lambda}$ is a $\lambda$-M-polyfold bundle in the sense of Definition \ref{def:lambundle}. In addition, $\oplus_{\lambda > 0} \cN_{x,y}^{\lambda} \hookrightarrow \cN_{x,y}$ is a dense inclusion by the properties of Fourier series. Moreover, $W_{z,y,0}|_U$ has a similar decomposition, and each component of the decomposition has infinite rank. 

Finally, we verify that $W_{z,y,0}|_{B_\epsilon(U)}$ is equivariantly isomorphic to $\pi^*W_{z,y,0}|_U$ as strong bundles, where $\pi$ is the projection $\cN_{x,y} \to U$. Let $[\overline{\gamma}] = ([\gamma_1],\ldots,[\gamma_k]) \in U$, $\overline{\xi}=(\xi_1,\ldots, \xi_k) \in (\cN_{x,y})_{([\gamma_1],\ldots,[\gamma_k])}$ and $\overline{\eta}=(\eta_1,\ldots,\eta_k) \in (W_{x,y,0})_{([\gamma_1],\ldots,[\gamma_k])}$. The equivariant strong bundle isomorphism $\Phi: \pi^*W_{z,y,0}|_U \to  W_{x,y,0}|_{B_\epsilon(U)}$ is given by
\begin{equation}
\Phi:([\overline{\gamma}],\overline{\xi},\overline{\eta})\mapsto(\mathrm{EXP}_\epsilon([\overline{\gamma}],\overline{\xi}), (P_{\xi_1}\eta_1, \ldots, P_{\xi_n}\eta_n)),
\end{equation}
where $(P_{\xi_i}\eta_i)(s,t)$ is the parallel transportation of $\eta_i(s,t)$ along the path $[0,1]\rightarrow M, v\mapsto \exp_{\gamma_i(s)} (v \cdot f_\epsilon \circ g(\xi_i,\xi_i) \cdot \xi_i(s,t))$. Then $\Phi$ is $S^1$-equivariant strong bundle isomorphism. This finishes the proof of Proposition \ref{prop:equi} \eqref{e1}.
\end{proof}

\begin{proof}[Proof of Proposition \ref{prop:equi} \eqref{e2}]
The sc-Fredholm section $s_{x,y,0}$ is given by \eqref{eqn:floer}. The linearization of $s_{x,y,0}$ at an unbroken flow line $\gamma \in (s^{S^1}_{x,y,0})^{-1}(0)$ is given by
$$\rD (s_{x,y,0})_\gamma \xi=\nabla_s \xi +J\nabla_t \xi - \nabla_\xi \nabla H,$$
where $\nabla$ is the Levi-Civita connection the for metric $g=\omega(\cdot, J\cdot)$. After a complex trivialization $\Psi:\C^n\to \gamma^*{TM}$, the section $\xi$ can be viewed as a map from $\R\times S^1$ to $\C^n$ and 
$$\rD (s_{x,y,0})_\gamma \xi=\partial_s \xi+i\partial_t \xi+A\xi,$$ 
where $A:= \Psi^{-1}(\nabla_s \Psi -\nabla_\Psi \nabla H) \in C^\infty(\R,\Hom_\R(\C^n,\C^n))$, see \cite[\S 2.2]{salamon1997lectures}. Since $\gamma'(s)$ decays exponentially and $H$ is $C^2$-small, $A(\pm \infty):=\lim_{s\to \pm \infty}A(s)=-\lim_{s\to \pm \infty}\Psi^{-1}\nabla_{\Psi}\nabla H$ is $C^0$ small. To prove Proposition \ref{prop:equi} \eqref{e2}, we need to analyze $\rD^\lambda (s_{x,y,0})_\gamma$. Let $\xi \in \cN^\lambda_{x,y}$, under the trivialization $\Psi$, we can write
$$\xi(s,t) = a_\lambda(s) \sin 2\lambda \pi t + b_\lambda(s) \cos 2\lambda \pi t,$$
for $a_\lambda,b_\lambda \in \H^{3}_\delta(\R,\C^n):=\{H^{3+i}_{\delta_i}(\R,\C^n)\}_{i\in\N}$. Therefore we have $D^\lambda (s_{x,y,0})_\gamma$ given by
$$ 
 \xi \mapsto (a_\lambda'(s)-i2\lambda \pi b_\lambda(s) +A(s)a_\lambda(s))\sin 2 \lambda \pi t+( b_\lambda'(s)+i2\lambda \pi a_\lambda(s) +A(s)b_\lambda(s))\cos 2\lambda \pi t.
$$
Using $\sin 2\lambda \pi t$ and $\cos 2\lambda \pi t$ as basis, the linearization above can be rewritten as 
\begin{eqnarray} 
D^\lambda (s_{x,y,0})_\gamma (a_\lambda(s),b_\lambda(s)) & = &(a_\lambda'(s)-i2\lambda \pi b_\lambda(s) +A(s)a_\lambda(s), b_\lambda'(s)+i2\lambda \pi a_\lambda(s) +A(s)b_\lambda(s)) \nonumber \\
& = & (\textstyle\frac{\rd}{\rd s} - B(s))(a_\lambda(s),b_\lambda(s)). \label{eqn:lambda} 
\end{eqnarray}
Here $B(s) \in \Hom_\R(\C^n\times \C^n,\C^n\times \C^n)$ is defined by $B(s)(a,b) = (i2\lambda b +A(s)a, -2\lambda i a + A(s)b)$ for $a,b\in \C^n$. Let $B^{\pm} := \lim_{s\to \pm\infty} B(s)$. The index of \eqref{eqn:lambda} can be computed by spectral flow and is given by $\dim E^u(B^-) - \dim E^u(B^+)$ by \cite[Theorem 2.1]{robbin1995spectral}, where $E^u(B)$ is the negative eigenspace of the matrix $B$. Since $A(\pm \infty)$ is small in $C^0$ norm, we have $\dim E^u(B^-) = \dim E^u(B^+) = 2n$. Therefore $\ind D^\lambda s_{x,y,0} = 0$. This verifies Proposition \ref{prop:equi} \eqref{e1} for unbroken flow lines. The proof for broken flow lines is similar, since the $\lambda$-direction linearization at a broken flow line is the product of the $\lambda$-direction linearization on every component, i.e. the gluing parameter does not interact with any of the $\lambda$-directions.
\end{proof}

\appendix
\section{Properties of Local Lifts}\label{A1}
First, we set up the following properties of the local action $L_\phi$, which follows directly from Definition \ref{def:lphi}.
\begin{proposition}\label{prop:fact}
	 Let $F:(\cX,\bX)  \to (\cY,\bY)$ be a fully faithful sc-smooth functor, such that $F^0:\cX \to \bY$ is a local sc-diffeomorphism. Then the following holds.
	\begin{enumerate}
		\item For $\phi\in \bX$, then we have the following equation as germs around $s(\phi)$ (Definition \ref{def:germ}).
		\begin{equation}\label{commute}
		[L_{F(\phi)}\circ F]_{s(\phi)}=[F\circ L_\phi]_{s(\phi)}.\end{equation} 
		\item For composable $\phi,\psi\in \bX$, we have
		\begin{equation}\label{assoc}
		[L_\phi \circ L_\psi]_{s(\psi)}=[L_{\phi\circ \psi}]_{s(\psi)}.
		\end{equation} 
		\item If $L_\phi$ is defined over $\cV$, then there exists a neighborhood $\bV$ of $\phi\in \bX$ such that for every $\psi\in \bV$, $L_\psi$ is defined on $\cV$ and
		\begin{equation}\label{nearby}
		L_\phi=L_\psi \text{ on } \cV.
		\end{equation}
	\end{enumerate}	
\end{proposition}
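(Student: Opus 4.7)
My plan is to prove all three properties by direct unraveling of the definition $L_\phi = t\circ s^{-1}$ from Definition \ref{def:lphi}, where $s^{-1}$ denotes the local inverse of the source map on a suitable neighborhood $\bU_\phi$ of $\phi$ in $\bX$. The key is to choose the neighborhoods $\bU$ of $\phi$ carefully, exploiting the étale property of source and target maps, and then use functoriality of $F$ on morphisms. I do not expect serious obstacles; the only care needed is in choosing the neighborhoods consistently.

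For property \eqref{nearby}, suppose $L_\phi$ is defined on $\cV$, so there is an open neighborhood $\bU_\phi\subset\bX$ of $\phi$ with $s:\bU_\phi\to\cV$ a sc-diffeomorphism and $L_\phi = t\circ(s|_{\bU_\phi})^{-1}$. I would then simply set $\bV:=\bU_\phi$ and observe that for any $\psi\in\bV$, the same neighborhood $\bU_\phi$ may be taken as $\bU_\psi$ in the definition of $L_\psi$. Therefore $L_\psi = t\circ(s|_{\bU_\phi})^{-1} = L_\phi$ on $\cV$. This is the easiest of the three.

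For property \eqref{commute}, since $F^0$ is a local sc-diffeomorphism and $F^1$ is fully faithful with $s_{\cY}\circ F^1 = F^0\circ s_{\cX}$ and $t_{\cY}\circ F^1 = F^0\circ t_{\cX}$, I can choose neighborhoods $\bU_\phi$ of $\phi$ and $\bU_{F(\phi)}$ of $F(\phi)$ so that $F^1(\bU_\phi)\subset\bU_{F(\phi)}$, $s_{\cX}:\bU_\phi\to\cU_{s(\phi)}$ and $s_{\cY}:\bU_{F(\phi)}\to\cU_{s(F(\phi))}$ are sc-diffeomorphisms, and $F^0$ restricts to a sc-diffeomorphism between the relevant open sets. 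The functoriality of $F$ together with the equality $s_{\cY}\circ F^1 = F^0\circ s_{\cX}$ then yields $F^1\circ(s_{\cX}|_{\bU_\phi})^{-1} = (s_{\cY}|_{\bU_{F(\phi)}})^{-1}\circ F^0$ on $\cU_{s(\phi)}$ (after shrinking). Composing with $t_{\cY}$ on the left and using $t_{\cY}\circ F^1 = F^0\circ t_{\cX}$ gives $L_{F(\phi)}\circ F^0 = F^0\circ L_\phi$ as germs at $s(\phi)$.

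For property \eqref{assoc}, with $\phi,\psi\in\bX$ composable and $\psi$ having source $s(\psi)$, I choose $\bU_\psi$ with $s:\bU_\psi\to\cU_{s(\psi)}$ a sc-diffeomorphism and $\bU_\phi$ similarly around $\phi$, shrinking $\bU_\psi$ so that $t(\bU_\psi)\subset s(\bU_\phi)$. For $z\in\cU_{s(\psi)}$ set $\alpha(z):=(s|_{\bU_\psi})^{-1}(z)$, a morphism $z\to L_\psi(z)$, and $\beta(z):=(s|_{\bU_\phi})^{-1}(L_\psi(z))$, a morphism $L_\psi(z)\to L_\phi(L_\psi(z))$. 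The composition $\beta(z)\circ\alpha(z)$ depends sc-smoothly on $z$ and at $z = s(\psi)$ equals $\phi\circ\psi$. Since $s$ is étale, $\beta(z)\circ\alpha(z)$ is the unique element of $\bU_{\phi\circ\psi}$ (chosen suitably small) with source $z$, hence $\beta(z)\circ\alpha(z) = (s|_{\bU_{\phi\circ\psi}})^{-1}(z)$. Applying $t$ gives $L_{\phi\circ\psi}(z) = L_\phi(L_\psi(z))$ as germs at $s(\psi)$, completing the proof.
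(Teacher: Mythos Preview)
Your proposal is correct and is precisely the direct unraveling of Definition \ref{def:lphi} that the paper has in mind; indeed the paper offers no detailed proof and simply states that the proposition ``follows directly from Definition \ref{def:lphi}.'' Your choice of neighborhoods and use of the functor identities $s_{\cY}\circ F^1=F^0\circ s_{\cX}$, $t_{\cY}\circ F^1=F^0\circ t_{\cX}$ are exactly what is needed.
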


We first prove a direct corollary of regularity (Definition \ref{def:reg}). 
\begin{lemma}\label{lemma:localnatural}
	Let $(\cX,\bX)$ be a regular ep-groupoid. Let $x,y$ be two points in $\cX$ and $\cU_x,\cU_y$ two local uniformizers around $x$ resp. $y$. If there is a sc-smooth map $F:\cU_x\to \cU_y$ such that 
	\begin{enumerate}
		\item $F$ is a sc-diffeomorphism from $\cU_x$ to an open subset of $\cX$;
		\item $|F(z)|=|z|$ for every $z\in \cU_x$ and $F(x) = y$.
    \end{enumerate}	
	Then there exists a $\theta \in \mor(x,y)$ and a local uniformizer $\cV$ around $x$, such that $F|_{\cV}= L_\theta$.
\end{lemma}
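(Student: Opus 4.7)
The plan is to reduce $F$ to a local action $L_{\theta'}$ by first constructing a candidate morphism $\theta \in \mor(x,y)$ from the \'etale property, then comparing $F$ with $L_\theta$ using a pointwise $\stab_y$-correction, and finally invoking the second clause of the regularity hypothesis to show that this correction is globally a single element of $\stab_y$.

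First I would produce $\theta$. Since $F(x)=y$ and $|F(x)|=|x|$, we have $|x|=|y|$, so there exists $\theta \in \mor(x,y)$. By the \'etale property, the associated sc-diffeomorphism $L_\theta = t\circ s^{-1}$ is defined on some open neighborhood of $x$. Using the regularity of $(\cX,\bX)$, pick a regular local uniformizer $\cU_y^{\reg}$ around $y$; after shrinking $\cU_x$ we may assume $L_\theta$ and $F$ are both defined on $\cU_x$ and take values in $\cU_y \cap \cU_y^{\reg}$. Set $\cW := L_\theta(\cU_x)$; this is an open neighborhood of $y$ inside $\cU_y^{\reg}$, and after restricting to the connected component containing $y$ it is a connected uniformizer around $y$ that is automatically regular.

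Next I would build the $\stab_y$-valued correction. For every $z \in \cU_x$ the identity $|F(z)|=|z|=|L_\theta(z)|$ shows that $F(z)$ and $L_\theta(z)$ lie in a common $\bX$-orbit inside $\cU_y$; by Definition \ref{def:uni} the map $|\cU_y|/\stab_y \to |\cX|$ is injective, so there exists $\phi(z)\in\stab_y$ with $L_{\phi(z)}\bigl(L_\theta(z)\bigr)=F(z)$. Writing $w=L_\theta(z)$, this defines a (possibly discontinuous) map $\Phi:\cW \to \stab_y$ whose composite satisfies $L_{\Phi(w)}(w)=F\circ L_\theta^{-1}(w)$ for all $w\in\cW$. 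Since both $L_\theta^{-1}$ and $F$ are sc-diffeomorphisms onto open sets, the composite $L_\Phi$ is itself a sc-diffeomorphism onto an open subset of $\cX$, which is precisely the hypothesis of clause (2) in Definition \ref{def:regular}.

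Applying that clause to the regular uniformizer $\cU_y^{\reg}$ with the connected sub-uniformizer $\cW$ yields a single $\phi\in\stab_y$ with $L_{\Phi(w)}(w)=L_\phi(w)$ for all $w\in\cW$. Setting $\theta':=\phi\circ\theta\in\mor(x,y)$ and using relation \eqref{assoc} of Proposition \ref{prop:fact}, on $\cV:=L_\theta^{-1}(\cW)$ we obtain
\[
F(z)=L_\phi\bigl(L_\theta(z)\bigr)=L_{\phi\circ\theta}(z)=L_{\theta'}(z),
\]
so $F|_{\cV}=L_{\theta'}$ as required; $\cV$ is a local uniformizer around $x$ since it is an open neighborhood of $x$ inside $\cU_x$.

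The main subtlety, rather than any hard analysis, is bookkeeping around the regular uniformizer: the given $\cU_y$ need not itself satisfy clause (2) of Definition \ref{def:regular}, so one must shrink through a regular uniformizer and restrict to a connected component; conversely, the non-uniqueness of $\Phi(w)$ when $\stab_y$ acts ineffectively causes no problem because only the composite $L_\Phi$ — which is automatically sc-smooth as it equals $F\circ L_\theta^{-1}$ — enters the regularity condition.
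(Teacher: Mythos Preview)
Your proof is correct and follows essentially the same approach as the paper's: choose any $\theta_0\in\mor(x,y)$, compare $F$ and $L_{\theta_0}$ pointwise via an element of $\stab_y$, and invoke clause~(2) of regularity on a connected sub-uniformizer to make this correction constant. The only cosmetic difference is that you parametrize the correction map $\Phi$ over $\cW=L_\theta(\cU_x)$ and write $L_{\Phi(w)}(w)=F\circ L_\theta^{-1}(w)$, whereas the paper parametrizes over $\cU=F(\cV)$ and writes $L_{\phi_{F^{-1}(u)}}(u)=L_{\theta_0}\circ F^{-1}(u)$; these are inverse formulations of the same identity and lead to $\theta'=\phi\circ\theta$ versus $\theta'=\phi^{-1}\circ\theta_0$ respectively.
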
	
\begin{proof}
Since $|y|=|F(x)|=|x|$,  there exists $\theta_0\in \mor(x,y)$.  Then there is an open neighborhood $\cV$ of $x$ such that $L_{\theta_0}$ is defined on $\cV$ with images in $\cU_y$, and $\cU:=F(\cV)\subset \cU_y$ is a connected regular uniformizer around $y$. For every $z\in \cV$, we have $|L_{\theta_0}(z)|=|z|=|F(z)|$. Since $L_{\theta_0}(z)$ and $F(z)$ are in the local uniformizer $\cU_y$, then for any $z\in \cV$, there exists a $\phi_z\in \stab_{y}$, such that
\begin{equation}\label{eqn:eqn}
	L_{\theta_0}(z)=L_{\phi_z}\circ F(z).
\end{equation}
Then $L_{\phi_{F^{-1}(u)}}(u)=L_{\theta_0}\circ F^{-1}(u)$ for $u\in \cU$. Since $\cU$ is a connected regular uniformizer, we have
	$$L_{\phi_{F^{-1}(u)}}(u)=L_\phi(u)$$
	for some $\phi\in \stab_{y}$. Then \eqref{eqn:eqn} induces:
	$$L_{\phi^{-1}\circ \theta_0}(z)=F(z), \quad \forall  z\in \cV.$$
\end{proof}	

A typical scenario of applying this lemma is when we have two equivalences of ep-groupoids $F,G:(\cW,\bW)\to (\cX,\bX)$ such that $|F| = |G|$. Then for $w_1,w_2\in \cW$ and $\phi \in \mor(w_1,w_2)$, then there exists $\theta \in \mor(F(w_1),G(w_2))$ such that locally:
$$L_\theta=G_2\circ L_\phi \circ F^{-1}_{w_1}.$$

In the bundle case, we have the following  analogue of Lemma \ref{lemma:localnatural} with an identical proof.
\begin{lemma}\label{lemma:localnaturalbundle}
		Let $(\cE,\bE)\stackrel{P}{\to}(\cX,\bX)$ be a regular strong ep-groupoid bundle. Let $x,y$ be two points in $\cX$ and $\cU_x,\cU_y$ two local uniformizers around $x$ resp. $y$. If there is a sc-smooth strong bundle map $F:P^{-1}(\cU_x)\to P^{-1}(\cU_y)$ such that 
		\begin{enumerate}
			\item $F$ is a sc-diffeomorphism from $P^{-1}(\cU_x)$ to an open subset of $\cE$;
			\item $|F(v)|=|v|$ for every $v\in P^{-1}(\cU_x)$ and $F(x,0) = (y,0)$.
		\end{enumerate}	
		Then there exists a $\theta \in \mor(x,y)$ and a local uniformizer $\cV$ around $x$, such that $F|_{P^{-1}(\cV)}= R_\theta$.
\end{lemma}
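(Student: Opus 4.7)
The plan is to mimic the proof of Lemma \ref{lemma:localnatural} verbatim, simply replacing each occurrence of the base action $L_\phi$ by the bundle action $R_\phi$ and each open set in $\cX$ by its preimage under $P$. First, since $|F(x,0)| = |(y,0)|$, there exists a morphism $\theta_0 \in \mor(x,y)$. Consequently the local bundle isomorphism $R_{\theta_0}$ is defined on $P^{-1}(\cV')$ for some open neighborhood $\cV' \subset \cU_x$ of $x$, with image contained in $P^{-1}(\cU_y)$. Shrinking $\cV'$, I may assume that $\cU := P(F(P^{-1}(\cV'))) \subset \cU_y$ is a connected regular uniformizer of $P$ around $y$ (regular uniformizers form a neighborhood basis, by the definition of a regular bundle).

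Next, for every $v \in P^{-1}(\cV')$, both $R_{\theta_0}(v)$ and $F(v)$ lie in $P^{-1}(\cU_y)$ and satisfy $|R_{\theta_0}(v)| = |v| = |F(v)|$. Since $\cU_y$ is a local uniformizer of $P$, there exists $\phi_v \in \stab_y$ with
\begin{equation}\label{eqn:bundleeqn}
R_{\theta_0}(v) = R_{\phi_v} \circ F(v).
\end{equation}
Setting $u = F(v)$, equation \eqref{eqn:bundleeqn} reads $R_{\phi_{F^{-1}(u)}}(u) = R_{\theta_0} \circ F^{-1}(u)$ for $u \in P^{-1}(\cU)$. Thus the assignment $\Phi: P^{-1}(\cU) \to \stab_y$, $u \mapsto \phi_{F^{-1}(u)}$, satisfies the property that the induced map $R_\Phi: P^{-1}(\cU) \to \cE$, $u \mapsto R_{\Phi(u)}(u)$, coincides with the sc-smooth strong bundle isomorphism $R_{\theta_0} \circ F^{-1}$ onto its image.

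Now I apply the second property in the definition of a regular strong bundle uniformizer to the connected regular uniformizer $\cU$ around $y$: there exists $\phi \in \stab_y$ such that $R_\Phi(u) = R_\phi(u)$ for all $u \in P^{-1}(\cU)$. Unwinding this gives $R_{\theta_0}(v) = R_\phi \circ F(v)$ for all $v \in P^{-1}(\cV')$, and setting $\theta := \phi^{-1} \circ \theta_0 \in \mor(x,y)$ together with $\cV := \cV'$ we obtain $F|_{P^{-1}(\cV)} = R_\theta$, as required.

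The only real content beyond the base case is the observation that the map $u \mapsto R_\Phi(u) = R_{\theta_0} \circ F^{-1}(u)$ is automatically a sc-smooth strong bundle isomorphism onto its image, because $F$ and $R_{\theta_0}$ both are; this is what makes the second property of a regular bundle uniformizer directly applicable. No further obstacles arise.
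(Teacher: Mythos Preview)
Your proof is correct and follows exactly the approach the paper indicates: the paper itself states that Lemma~\ref{lemma:localnaturalbundle} has ``an identical proof'' to Lemma~\ref{lemma:localnatural}, and you have carried out precisely that translation, replacing $L_\phi$ by $R_\phi$ and open sets in $\cX$ by their $P$-preimages. Your final remark that $R_\Phi = R_{\theta_0}\circ F^{-1}$ is automatically a sc-smooth strong bundle isomorphism (so that the second regularity property applies directly) is the one place where a reader might pause, and you have identified it correctly.
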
	

To prove Proposition \ref{prop:lifting}, we need to argue that the local representation $\rho_{g,x}$ of a group action in Definition \ref{def:action} can be chosen as a family of embeddings and the inverses of $\rho_{g,x}(h,\cdot)$ can be defined on a uniform open set for all $h$ close to $g$, i.e. we have the following.
	
\begin{proposition}\label{prop:uniform}
	Let $\rho_{g,x}: U\times (\stab_{x}\ltimes \cU) \to (\cX,\bX)$ be a local representation of a sc-smooth group action $(\rho, \mathfrak{P})$ near $(g,x)$ on a connected regular uniformizer $\stab_{x}\ltimes \cU$.  Then we have
	\begin{enumerate}
		\item for each $h\in U$, $\rho_{g,x}(h,\cdot):\stab_{x}\ltimes \cU \to (\cX,\bX)$ is a fully faithful functor and on object space and $\rho_{g,x}(h,\cdot)$ is a sc-diffeomorphism onto an open subset;
		\item\label{p2:uniform} there exists a neighborhood $U'\subset U$ of $g$, such that $\cap_{h\in U'}\rho_{g,x}(h, \cU)$ contains an open set $\cV \subset \cX$, which contains $\{\rho_{g,x}(h,x)|h\in U'\}$.
	\end{enumerate}
	As a consequence of \eqref{p2:uniform}, $\rho_{g,x}(h,\cdot)^{-1}$ is defined on $\cV$ for all $h\in U'$ and contains $x$ in its image. 
\end{proposition}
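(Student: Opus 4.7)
The plan is to prove part (1) in three stages—fully faithfulness, the local sc-diffeomorphism property of the object map, and global injectivity—then deduce part (2) by an implicit-function argument.

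First I would establish fully faithfulness and the local sc-diffeomorphism property of $\rho_{g,x}^0(h,\cdot)$. Since $\mathfrak{P}(h)$ is a polyfold isomorphism, its restriction to $|\cU|$ is an open embedding of polyfolds; the sc-smooth functor $\rho_{g,x}(h,\cdot)$ represents this open embedding, so an application of Theorem \ref{thm:iso} to a diagram representation shows that $\rho_{g,x}(h,\cdot)$ must be fully faithful onto its image subgroupoid. Locally around each $u \in \cU$ the isomorphism $\mathfrak{P}(h)$ can also be represented by an equivalence $\Psi$ of ep-groupoids in the sense of Definition \ref{def:ep-equi}, whose object map is a local sc-diffeomorphism. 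Any two representatives of the same polyfold map differ by a natural equivalence $\tau:\cU \to \bX$, and pre/post-composition with the étale maps $L_{\tau(\cdot)}$ preserves the local sc-diffeomorphism property of the object map. Hence $\rho_{g,x}^0(h,\cdot)$ is a local sc-diffeomorphism at every point of $\cU$.

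For global injectivity, suppose $\rho_{g,x}^0(h,y_1) = \rho_{g,x}^0(h,y_2)$. Applying $|\cdot|$ and using the injectivity of $|\rho_{g,x}(h,\cdot)|:|\cU|\to Z$ together with the uniformizer property of $\cU$ yields $y_2 = L_\phi(y_1)$ for some $\phi \in \stab_x$. Functoriality combined with the étale property of source and target gives the germ-level identity
\[ \rho_{g,x}^0(h,L_\phi(y)) \;=\; L_{\rho_{g,x}^1(h,\phi,y_1)}\bigl(\rho_{g,x}^0(h,y)\bigr) \]
for $y$ near $y_1$. I would then consider the set $S := \{y \in \cU : \rho_{g,x}^0(h,L_\phi(y)) = \rho_{g,x}^0(h,y)\}$, show that it is closed (by Hausdorffness), propagate its openness by exploiting the étale structure on $\bX$, conclude $S = \cU$ by connectedness of $\cU$, and finally invoke the first regularity condition of Definition \ref{def:regular} to upgrade the conclusion to $L_\phi = \Id$ on all of $\cU$, forcing $y_1 = y_2$.

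For part (2) I would consider the sc-smooth map $\Phi: U \times \cU \to G \times \cX$, $(h,u) \mapsto (h,\rho_{g,x}^0(h,u))$. Its sc-linearization at $(g,x)$ is block triangular with the identity on the $G$-block and with $\rD\rho_{g,x}^0(g,\cdot)_x$ on the $\cU$-block, the latter being an isomorphism by part (1). The sc-smooth inverse function theorem then shows $\Phi$ is a local sc-diffeomorphism near $(g,x)$, whose image contains a product neighborhood $U'\times \cV$ of $(g,\rho_{g,x}^0(g,x))$. For every $h \in U'$ and every $y' \in \cV$ there is thus $u \in \cU$ with $\rho_{g,x}^0(h,u)=y'$, yielding $\cV \subset \rho_{g,x}^0(h,\cU)$. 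Shrinking $U'$ once more, by continuity of $h\mapsto \rho_{g,x}^0(h,x)$, places $\{\rho_{g,x}^0(h,x): h \in U'\}$ inside $\cV$.

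The main obstacle will be establishing the openness of the set $S$ in the injectivity step of part (1): the étale identity only conjugates $L_\phi$ to a local isotropy rotation $L_{\rho_{g,x}^1(h,\phi,y_1)}$ at $\rho_{g,x}^0(h,y_1)$, yet the regularity hypothesis is formulated for $\cU$ viewed as a uniformizer around $x$ rather than around $\rho_{g,x}^0(h,y_1)$. Bridging this mismatch by the connectedness-plus-closedness scheme, while simultaneously controlling how the isotropy section $y \mapsto \rho_{g,x}^1(h,\phi,y)$ moves across $\cU$, is the delicate point that must be executed carefully.
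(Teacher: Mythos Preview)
Your approach to part (1) is close in spirit to the paper's, and the paper in fact defines the analogous set using germ equality, $S=\{p\in\cU:[\rho_{g,x}(h,\cdot)]_p=[\rho_{g,x}(h,\cdot)\circ L_\phi]_p\}$, which makes openness tautological and shifts the work to closedness; your pointwise definition reverses this. Either can be made to work, but the germ formulation sidesteps exactly the obstacle you flag at the end, since there is no need to control how the isotropy element varies.

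The real gap is in part (2). You invoke an ``sc-smooth inverse function theorem'' for the map $\Phi:(h,u)\mapsto(h,\rho^0_{g,x}(h,u))$, arguing that the block-triangular linearization with invertible diagonal blocks yields a local sc-diffeomorphism. But no such inverse function theorem exists in sc-calculus: invertibility of the linearization at a point is \emph{not} sufficient for a local sc-smooth inverse (this is precisely the content of \cite{counter}, and the reason the paper introduces sc-Fredholm sections with their elaborate germ condition in the first place; see also the discussion before Lemma~\ref{lemma:help}, which isolates the only special case available, namely solving for a finite-dimensional variable). So your argument for (2) does not go through.

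The paper avoids any inverse-function argument entirely. Instead it works on the orbit space: choose a $G$-invariant metric on $Z$ (Lemma~\ref{lemma:quometric}), pick a uniformizer $\cW$ around $y=\rho_{g,x}(g,x)$ and shrink $U'$ so that $\bigcup_{h\in U'}\rho(h,|x|)$ has small diameter compared to the radius of a ball in $|\cU'|$. Metric triangle inequality then gives $B_{r/2}(|y|)\subset\bigcap_{h\in U'}\rho(h,|\cU'|)$ on the orbit space. To lift this to $\cX$, take $\cV\subset\cW$ to be the $\stab_y$-invariant set over $B_{r/2}(|y|)$ and argue by counting: for $z\in\cV$ one has $|z|\in\rho(h,|\cU'|)$, so $L_\phi(z)=\rho_{g,x}(h,w)$ for some $\phi\in\stab_y$ and $w\in\cU'$; now full faithfulness and the injectivity from part~(1) force the $\stab_x$-orbit of $w$ to map bijectively onto the $\stab_y$-orbit of $z$, so $z$ itself lies in $\rho_{g,x}(h,\cU')$. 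This is a purely topological and combinatorial argument that requires no sc-analytic inverse.
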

\begin{proof}
	For the first assertion, for each $h\in G$, $\mathfrak{P}(h):Z\to Z$ is an isomorphism of polyfolds.  As a local representation of such map, $\rho_{g,x}(h,\cdot): \stab_{x} \ltimes \cU\to (\cX,\bX)$ is fully faithful and local sc-diffeomorphism on objects. 
	To show it is a sc-diffeomorphism on objects, we only need to show it is injective on objects. Assume otherwise, i.e. there exist $h\in U, y,z\in \cU$, such that $\rho_{g,x}(h,y) = \rho_{g,x}(h,z)$.  Since $\rho_{g,x}(h,\cdot)$ is local diffeomorphism, we have $\rho_{g,x}(h,\cdot)_{z}^{-1} \circ \rho_{g,x}(h,\cdot)$ is well defined in a neighborhood of $y$, where $\rho_{g,x}(h,\cdot)_{z}^{-1}$ is the inverse of $\rho_{g,x}(h,\cdot)$ near $z$. By Lemma \ref{lemma:localnatural}, $[\rho_{g,x}(h,\cdot)_{z}^{-1} \circ \rho_{g,x}(h,\cdot)]_y = [L_\psi]_y$  for some $\psi \in \mor(y,z)$. Since $\cU$ is a local uniformizer, $L_\psi = L_\phi$ for some $\phi \in \stab_x$ by Corollary \ref{coro:natural}. Such $\phi$ is unique and nontrivial in $\stab^{\eff}_x$, because if we have another $\phi'$ such that $[L_\phi]_y = [L_{\phi'}]_y$,  then $L_{\phi^{-1} \circ \phi'}$ is identity near $y$. By the regular property (Definition \ref{def:regular}), $L_{\phi^{-1} \circ \phi'} = \Id$ on $\cU$, i.e. $\phi =\phi'\in \stab^{\eff}_x$.  Next we consider the following set
	$$S: = \{p\in\cU|   [\rho_{g,x}(h,\cdot)]_p = [\rho_{g,x}(h,\cdot) \circ L_\phi]_p\}\subset \cU.$$
	$S$ is open by definition and $S$ is not empty by our assumption. We claim $S$ is also closed. Suppose we have $\lim_{i}x_i = x_\infty$ for $x_i \in S$ and $x_\infty \in \cU$. Since $\rho_{g,x}(h,x_i) = \rho_{g,x}(h,L_\phi(x_i))$, we have $\rho_{g,x}(h,x_\infty) = \rho_{g,x}(h,L_\phi(x_\infty))$. Therefore by the same argument before, there exists a unique $\eta \in \stab^{\eff}_x$, such that $[\rho_{g,x}(h,\cdot)_{L_{\phi}(x_\infty)}^{-1} \circ \rho_{g,x}(h,\cdot)]_{x_\infty} = [L_\eta]_{x_\infty}$. Since $\rho_{g,x}(h,\cdot)_{L_{\phi}(x_\infty)}^{-1}  = \rho_{g,x}(h,\cdot)|_{L_{\phi}(x_i)}^{-1} $ for $i\gg 0$, we have $[\rho_{g,x}(h,\cdot)_{L_{\phi}(x_i)}^{-1} \circ \rho_{g,x}(h,\cdot)]_{x_i} = [L_\eta]_{x_i}$ for $i\gg 0$. By assumption $[\rho_{g,x}(h,\cdot)_{L_{\phi}(x_i)}^{-1} \circ \rho_{g,x}(h,\cdot)]_{x_i} = [L_\phi]_{x_i}$. We must have $\eta = \phi$ in $\stab^{\eff}_x$. Therefore $S$ is closed. Since $\cU$ is connected and $S$ is not empty by assumption, we have $S = \cU$. Since $L_\phi \ne \Id$ on any open subset $\cU$ by regularity, there exists $y_i \to x$ such that $L_\phi(y_i) \ne y_i$. This contradicts $\rho_{g,x}(h,\cdot)$ is a local diffeomorphism near $x$.
	
	To prove the second assertion, we first pick a local uniformizer $\cW$ around $y := \rho_{g,x}(g,x)$. Then there exist an open neighborhood $U'\subset U$ of $g$ and a uniformizer $\cU'\subset \cU$ around $x$ such that $\rho_{g,x}(U',\cU')\subset \cW$. Since a polyfold is a metrizable space,  by the proof of Lemma \ref{lemma:quometric}, we may choose the metric $d$ to be $G$-invariant. Because open set $|\cU'|\subset Z$ contains a $r$-ball around $|x|$. Then we can choose the neighborhood $U'\subset U$ of $g$ small enough, such that the following two conditions hold:
	\begin{itemize}
		\item $U'$ is connected;
		\item $\cup_{h\in U'}\rho(h,|x|)$ has diameter smaller than $\frac{r}{2}$.
	\end{itemize} 
    Since $\rho(h, B_r(|x|))$ is a $r$-ball centered at $\rho(h,|x|)$, we have $B_{\frac{r}{2}}(|y|) \subset \bigcap_{h \in U'} \rho(h, B_r(|x|))$.  Let $\cV \subset \cW$ be the $\stab_y$-invariant neighborhood of $y$ such that $|\cV| = B_{\frac{r}{2}}(|y|)$. We claim that $\cV\subset \rho_{g,x}(h,\cU')$ for every $h\in U'$. Since on object level we have $|\cV| \subset \rho(h,|\cU'|)$ and $\rho_{g,x}(h,\cU')\subset \cW$, for every $z\in \cV$ there exists a $\phi \in \stab_y$, such that $L_\phi(z)=\rho_{g,x}(h,w)$ for some $w\in \cU'$. Since $\rho_{g,x}(h,\cdot)$ is a fully faithful functor, we have $\stab_w \simeq \stab_{L_\phi(z)}$. Note that the $\stab_x$-orbit of $w$ in $\cU'$ has size $|\stab_x|/|\stab_w|$ and the $\stab_y$-orbit of $L_\phi(z)$ in $\cV$ has size $|\stab_y|/|\stab_{L_\phi(z)}| = |\stab_x|/|\stab_w|$. Then $\rho_{g,x}(h,\cdot)$ identifies the $\stab_x$-orbit of $w$ and the $\stab_y$-orbit of $L_\phi(z)$ because $\rho_{g,x}(h,\cdot)$ is injective by the first assertion. We must have $z \in \rho_{g,x}(h,\cU')$, since $z$ is in the $\stab_y$-orbit of $L_{\phi}(z)$. Therefore $\cV\subset \rho_{g,x}(h,\cU')\subset \rho_{g,x}(h,\cU)$ for every $h\in U'$.
\end{proof}  

\begin{remark}\label{rmk:bundleemb}
	By a similar argument, when $\rho_{g,x}:U\times (\stab_x\ltimes P^{-1}(\cU)) \to (\cE,\bE)$ is a local representation of a sc-smooth action on a strong polyfold bundle. If $\cU$ is connected and regular, then $\rho_{g,x}(h,\cdot)$ is a sc-diffeomorphism from $P^{-1}(\cU)$ onto an open image for every $h\in U$. 
\end{remark}	
	
\begin{proposition}[Proposition \ref{prop:lifting}]\label{prop:Alifting}
	Let $Z$ be a regular polyfold with a sc-smooth $G$-action $(\rho, \mathfrak{P})$ and $(\cX,\bX)$ a polyfold structure of $Z=|\cX|$. Then for $x,y\in \cX$ and $g\in G$ with $\rho(g,|x|) = |y|$, the isotropy $\stab_x$ acts on $L_{\cX}(g,x,y)$ by pre-composing the local sc-diffeomorphism $L_\phi$ for $\phi \in \stab_x$, i.e. 
	$$(\phi, [\Gamma]_{(g,x)}) \mapsto [\Gamma\circ L_\phi]_{(g,x)},$$
	and this action is transitive. The isotropy $\stab_y$ also acts on $L_{\cX}(g,x,y)$ by post-composing $L_\phi$ for $\phi\in \stab_y$, i.e. $$(\phi, [\Gamma]_{(g,x)}) \mapsto [L_\phi\circ \Gamma]_{(g,x)},$$
	this action is also transitive.
\end{proposition}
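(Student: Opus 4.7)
The plan is to verify well-definedness of the two actions and then prove transitivity, with transitivity being the substantive part. For well-definedness, I would begin by inspecting the construction \eqref{eqn:gamma} of a local lift $\Gamma$: the morphism $\phi$ appearing there (the morphism from $x$ to $F_a(w_a)$) can freely be replaced by $\phi \circ \phi'$ for any $\phi' \in \stab_x$, since $\phi'(x)=x$ leaves $w_a$ unchanged and $F_a$ still sends $w_a$ to $(\phi\circ\phi')(x)$. By the associativity relation \eqref{assoc} this substitutes $L_\phi$ with $L_\phi\circ L_{\phi'}$ in \eqref{eqn:gamma}, so the modified local lift is precisely $\Gamma\circ L_{\phi'}$; hence $[\Gamma\circ L_{\phi'}]_{(g,x)}\in L_{\cX}(g,x,y)$. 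An identical modification of the morphism $\psi$ handles $[L_{\psi'}\circ\Gamma]_{(g,x)}$ for $\psi'\in\stab_y$. Well-definedness on germ classes is immediate since the operations are applied pointwise.

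For transitivity of the $\stab_x$-action, given $\Gamma_1,\Gamma_2\in L_{\cX}(g,x,y)$, I would produce $\phi\in\stab_x$ with $[\Gamma_1]_{(g,x)}=[\Gamma_2\circ L_\phi]_{(g,x)}$ as follows. First shrink to a connected regular uniformizer $\cU$ around $x$; by Proposition \ref{prop:emb} and Proposition \ref{prop:uniform} (applied through the factorization \eqref{eqn:gamma} via the underlying local representation $\rho_{g,x_a}$), after further shrinking to a neighborhood $V$ of $g$, each $\Gamma_i(h,\cdot)$ is a sc-diffeomorphism from a neighborhood of $x$ onto an open set, and these images share a common open subset containing $\Gamma_i(V\times\{x\})$. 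This allows me to define the sc-smooth map $\Psi(h,z):=\Gamma_2(h,\cdot)^{-1}(\Gamma_1(h,z))$ near $(g,x)$. Because $|\Psi(h,z)|=|z|$ on orbits, $\Psi(h,x)$ lies in the orbit of $x$; by continuity it remains in $\cU$ for $h$ near $g$, and since $L_\phi(x)=x$ for every $\phi\in\stab_x$ (as $s(\phi)=t(\phi)=x$), the only point of $\cU$ in the orbit of $x$ is $x$ itself. Thus $\Psi(h,x)=x$, so $\Psi(h,\cdot)$ is a local sc-diffeomorphism fixing $x$ and descending to the identity on orbits. Lemma \ref{lemma:localnatural} then yields an element $\phi(h)\in\stab_x$, unique in $\stab_x^{\eff}$, with $\Psi(h,\cdot)=L_{\phi(h)}$ as germs at $x$.

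The main obstacle is then to show $h\mapsto\phi(h)\in\stab_x^{\eff}$ is locally constant near $g$; this is where regularity enters decisively. I would argue by contradiction: if not, finiteness of $\stab_x$ yields a sequence $h_n\to g$ with $\phi(h_n)=\phi_0\ne\phi(g)$ in $\stab_x^{\eff}$. Then for every $z$ near $x$, sc-continuity of $\Psi$ gives $L_{\phi_0}(z)=\Psi(h_n,z)\to\Psi(g,z)=L_{\phi(g)}(z)$, so $L_{\phi_0^{-1}\circ\phi(g)}$ is the identity on a neighborhood of $x$. The first clause of Definition \ref{def:regular}, applied inside the regular uniformizer $\cU$, then forces $\phi_0=\phi(g)$ in $\stab_x^{\eff}$, a contradiction. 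Consequently $\Psi=L_{\phi(g)}$ on a full neighborhood of $(g,x)$, i.e.\ $\Gamma_1=\Gamma_2\circ L_{\phi(g)}$ there, proving transitivity. The post-composition case is entirely parallel: define $\Xi(h,w):=\Gamma_1(h,\Gamma_2(h,\cdot)^{-1}(w))$ near $(g,y)$, verify $\Xi(h,y)=y$ using the local uniformizer around $y$ together with $L_\psi(y)=y$ for $\psi\in\stab_y$, apply Lemma \ref{lemma:localnatural} to produce $\psi(h)\in\stab_y$ with $\Xi(h,\cdot)=L_{\psi(h)}$, and invoke the same continuity-plus-regularity argument to conclude $\psi(h)$ is locally constant; substituting $w=\Gamma_2(h,z)$ gives $\Gamma_1=L_{\psi(g)}\circ\Gamma_2$ near $(g,x)$. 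Without regularity the discrete label $\phi(h)$ could in principle jump in a sc-smooth family, as witnessed by Example \ref{ex:irr-two}, and this is precisely the crux where the regularity hypothesis cannot be dropped.
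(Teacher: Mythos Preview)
Your approach is genuinely different from the paper's and conceptually more direct. The paper proves transitivity by an explicit case analysis on the construction \eqref{eqn:gamma}: it varies one ingredient at a time --- first the auxiliary data $w_a,w_b,\phi,\psi,\delta,\eta$ (Case 1), then the intermediate ep-groupoids $\cW_a,\cW_b$ via weak fiber products (Case 2), and finally the local representation $\rho_{g,x_a}$ itself (Case 3) --- each time producing an explicit element of $\stab_x$ through long chains of \eqref{commute}, \eqref{assoc}, \eqref{nearby}. Your method bypasses this bookkeeping by treating $\Gamma_1,\Gamma_2$ only as families of local sc-diffeomorphisms over the fixed structure $(\cX,\bX)$ and comparing them via $\Psi(h,\cdot)=\Gamma_2(h,\cdot)^{-1}\circ\Gamma_1(h,\cdot)$; this is essentially the mechanism of the paper's Case 3, applied once and for all rather than after reduction through Cases 1--2. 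What the paper's route buys is an explicit formula for the relating isotropy element in terms of the construction data; what yours buys is brevity.

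Two points in your sketch need more care. First, Lemma \ref{lemma:localnatural} only gives $\Psi(h,\cdot)=L_{\phi(h)}$ as a \emph{germ} at $x$, on a neighborhood that a priori depends on $h$; your continuity step uses this equality on a \emph{fixed} connected regular uniformizer. The paper handles exactly this issue in Case 3 by an open--closed argument on the set $S=\{p\in\cU:[\Psi(h,\cdot)]_p=[L_{\phi(h)}]_p\}$, where closedness uses Lemma \ref{lemma:localnatural} at the limit point, Corollary \ref{coro:natural}, and the first regularity clause. Second, joint sc-smoothness of $\Psi$ in $(h,z)$ is not immediate, since it requires inverting the family $h\mapsto\Gamma_2(h,\cdot)$ and sc-calculus has no general implicit function theorem. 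You can avoid this: once $\Psi(h,\cdot)=L_{\phi(h)}$ on a fixed $\cU$, rewrite as $\Gamma_2(h,L_{\phi(h)}(p))=\Gamma_1(h,p)$; if $\phi(h_n)=\phi_0\ne\phi(g)$ along $h_n\to g$, then forward continuity of $\Gamma_1,\Gamma_2$ and injectivity of $\Gamma_2(g,\cdot)$ force $L_{\phi_0}(p)=L_{\phi(g)}(p)$ for every $p\in\cU$, contradicting $\phi_0\ne\phi(g)$ in $\stab_x^{\eff}$. With these two patches your proof is complete.
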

\begin{proof}
	We first recall the definition of local lifts from Subsection \ref{subsec:lift}. For $x,y\in \cX$  and $g\in G$ such that $\rho(g,|x|)=|y|$, we have two equivalent polyfold structures $(\cX_a,\bX_a)$,$(\cX_b,\bX_b)$ and a local uniformizer $\cU_a\subset \cX_a$ around $x_a$, such that $|x_a|=x$ and the action can be locally represented by 
	$$\rho_{g,x_a}:U\times (\stab_{x_a}\ltimes \cU_a) \to (\cX_b,\bX_b),$$
	where $U\subset G$ is a neighborhood of $g$. Since $(\cX_a,\bX_a),(\cX_b,\bX_b)$ and $(\cX,\bX)$ are equivalent polyfold structures, we have the following diagram of equivalences as in \eqref{gammadia} (we suppress the morphism space from now on):
	\begin{equation*}
	\xymatrix{
	\cX & \ar[l]_{F_a} \cW_a\ar[r]^{G_a}& \cX_a\ar[r]^{\rho_{g,x_a}(\cdot, \cdot)} & \cX_b & \ar[l]_{F_b}  \cW_b\ar[r]^{G_b} & \cX} 
    \end{equation*}
	A local lift $\Gamma$ at $(g,x,y)$ is constructed by first finding  $w_a\in \cW_a,w_b\in \cW_b,\phi,\psi\in \bX, \delta \in \bX_a, \eta\in \bX_b$, such that $F_a(w_a)=\phi(x)$,  $\delta(G_a(w_a)) = x_a$, $\eta(\rho_{g,x_a}(g,x_a))=F_b(w_b)$ and $G_b(w_b)=\psi(y)$. Let $F_{a,w_a}$ resp. $F_{b,w_b}$ denote the local diffeomorphisms $F_a$ resp. $F_b$ near $w_a$ resp. $w_b$. We define for $z$ close to $x$ and $h$ close to $g$,
	\begin{equation}\label{eqn:def}
	\Gamma(h,z):= L_{\psi^{-1}}\circ G_b\circ F_{b,w_b}^{-1}\circ L_\eta\circ \rho_{g,x_a}(h,\cdot)\circ L_{\delta}\circ G_a\circ F_{a,w_a}^{-1}\circ L_\phi(z).
	\end{equation}
	
	\textit{Case one - different choices of $w_a,w_b,\phi,\psi,\delta,\eta$: }  Assume we have two sets of different choices inducing two local lifts $\Gamma,\Gamma'$.
	\begin{equation}\xymatrix{
		\cX  & \ar[l]_{F_a}\cW_a\ar[r]^{G_a}&  \cX_a\ar[r]^{\rho_{g,x_a}(g,\cdot)} & \cX_b & \ar[l]_{F_b}\cW_b\ar[r]^{G_b} & \cX	\\	
		x\ar[d]_{\phi} & & x_a \ar[r]& \rho_{g,x_a}(g,x_a)\ar[d]^\eta & & y\ar[d]^{\psi}\\
		F_a(w_a) & \ar[l] w_a\ar[dd]_{\theta_a} \ar[r] & G_a(w_a) \ar[u]^\delta & F_b(w_b)  & w_b\ar[l]\ar[r] \ar[dd]^{\theta_b} & G_b(w_b)\\
		x\ar[d]_{\phi'} & & x_a \ar[r]& \rho_{g,x_a}(g,x_a)\ar[d]^{\eta'} & & y\ar[d]^{\psi'}\\
		F_a(w_a') & \ar[l] w_a'\ar[r] & G_a(w_a') \ar[u]^{\delta'} & F_b(w_b')  & w_b'\ar[l]\ar[r] & G_b(w_b')\\
	}  \nonumber \end{equation}

Since $F_a$ is an equivalence, $F_a$ induces a homeomorphism on the orbit spaces, so we can find $\theta_a\in \bW_a$, such that $\theta_a(w_a)=w_a'$. Moreover $F_b:\mor(w_b,w_b')\to\mor(F_b(w_b),F_b(w'_b))$ is a bijection, thus there exists $\theta_b\in \mor(w_b,w_b')$, such that 
\begin{equation}\label{theta2}
F_b(\theta_b)= \eta'\circ \rho_{g,x_a}(g,\delta'\circ G_a(\theta_a)\circ \delta^{-1})\circ \eta^{-1}.
\end{equation}
Here $\delta'\circ G_a(\theta_a)\circ \delta^{-1} \in \mor(x_a, x_a)$ and $\rho_{g,x_a}(g,\delta'\circ G_a(\theta_a)\circ \delta^{-1})\in \mor(\rho_{g,x_a}(g,x_a),\rho_{g,x}(g,x_a))$ is the functor $\stab_{x_a}\ltimes \cU_b \to (\cX_b,\bX_b)$. The following computation is made for $z$ close for $x$ and $h$ close to $g$, or equivalently one can treat them as germs. In the following, the underlined part is where we apply changes and the bold part is the resulted formula after change. 
\begin{eqnarray}
 \Gamma(h,z) & \stackrel{\eqref{eqn:def}}{=} & L_{\psi^{-1}}\circ G_b\circ F_{b,w_b}^{-1}\circ L_{\eta}\circ \rho_{g,x_a}(h,\cdot)\circ L_{\delta} \circ G_a\circ \underline{F_{a,w_a}^{-1}\circ L_{\phi}(z)} \nonumber \\
 &\stackrel{\eqref{commute}+\eqref{assoc}}{=}& L_{\psi^{-1}}\circ G_b\circ F_{b,w_b}^{-1}\circ L_{\eta}\circ \rho_{g,x_a}(h,\cdot) \underline{\circ L_{\delta} \circ G_a \circ \bm{L_{\theta^{-1}_a}}}\bm{\circ F_{a,w'_a}^{-1}\circ L_{ F_a(\theta_a)\circ \phi}(z)} \nonumber \\
 &\stackrel{\eqref{commute}+\eqref{assoc}}{=} & L_{\psi^{-1}}\circ G_b\circ F_{b,w_b}^{-1}\circ \underline{L_{\eta}\circ \rho_{g,x_a}(h,\cdot)\circ \bm{L_{\delta \circ G_a(\theta^{-1}_a)\circ \delta'^{-1}}}} \bm{\circ L_{\delta'}\circ G_a}\circ F_{a,w'_a}^{-1}\circ L_{ F_a(\theta_a)\circ \phi}(z) \nonumber \\
&\stackrel{\eqref{commute}}{=} & L_{\psi^{-1}}\circ G_b\circ F_{b,w_b}^{-1}\circ \bm{L_{\eta \circ \rho_{g,x_a}(\underline{h},\delta \circ G_a(\theta^{-1}_a)\circ \delta'^{-1})}\circ \rho_{g,x_a}(h,\cdot)}\circ L_{\delta'} \circ G_a\circ F_{a,w'_a}^{-1}\circ L_{F_a(\theta_a)\circ \phi}(z) \nonumber\\
&\stackrel{\eqref{nearby}}{=} & L_{\psi^{-1}}\circ G_b\circ F_{b,w_b}^{-1}\circ  \underline{L_{\eta \circ \rho_{g,x_a}(\bm{g},\delta \circ (G_a(\theta^{-1}_a)\circ \delta'^{-1})}}\circ \rho_{g,x_a}(h,\cdot)\circ L_{\delta'} \circ G_a\circ F_{a,w'_a}^{-1}\circ L_{F_a(\theta_a)\circ \phi}(z) \nonumber\\
&\stackrel{\eqref{theta2}}{=}& L_{\psi^{-1}}\circ G_b\circ F_{b,w_b}^{-1}\circ \underline{\bm{L_{F_b(\theta_b^{-1})\circ \eta'}}}\circ \rho_{g,x_a}(h,\cdot)\circ  L_{\delta'}\circ G_a\circ F_{a,w'_a}^{-1}\circ L_{F_a(\theta_a)\circ \phi}(z) \nonumber\\
& \stackrel{\eqref{assoc}}{=} & L_{\psi^{-1}}\circ G_b\circ \underline{F_{b,w_b}^{-1}\circ \bm{L_{F_b(\theta_b^{-1})}}}\bm{\circ L_{\eta'}}\circ \rho_{g,x_a}(h,\cdot)\circ  L_{\delta'} \circ G_a\circ F_{a,w'_a}^{-1}\circ L_{ F_a(\theta_a)\circ \phi}(z) \nonumber\\
&\stackrel{\eqref{commute}+\eqref{assoc}}{=}&\underline{L_{\psi^{-1}}\circ G_b\circ \bm{L_{\theta_b^{-1}}}}\bm{\circ F_{b,w'_b}^{-1}}\circ L_{\eta'}\circ \rho_{g,x_a}(h,\cdot)\circ  L_{\delta'} \circ G_a\circ F_{a,w'_a}^{-1}\circ L_{ F_a(\theta_a)}(z)\circ \phi \nonumber\\
&\stackrel{\eqref{commute}+\eqref{assoc}}{=} &\bm{L_{\psi^{-1}\circ G_b(\theta_b^{-1})}\circ G_b}\circ F_{b,w'_b}^{-1}\circ L_{\eta'}\circ \rho_{g,x_a}(h,\cdot)\circ L_{\delta'} \circ  G_a\circ F_{a,w'_a}^{-1}\circ L_{ F_a(\theta_a)\circ \phi}(z) \nonumber\\
&= &L_{\psi^{-1}\circ G_b(\theta_b^{-1})\circ \psi'}\circ \Gamma'(h,\cdot)\circ L_{\phi'^{-1}\circ F_a(\theta_a)\circ \phi}(z). \nonumber  
\end{eqnarray}	
So far, we prove that:
$$[\Gamma]_{(g,x)}=[L_{\beta}\circ \Gamma'\circ L_{\alpha}]_{(g,x)}$$
for $\alpha:=\phi'^{-1}\circ F_a(\theta_a)\circ \phi \in \stab_x, \beta:=\psi^{-1}\circ G_b(\theta_b^{-1})\circ \psi'\in \stab_y$. By the similar arguments as above, one can move $L_{\beta}$ inside to get
$$[\Gamma]_{(g,x)}=[\Gamma'\circ L_{\beta'}\circ  L_{\alpha}]_{(g,x)}$$
for $$\beta':=\phi^{-1}\circ F_a(G_a^{-1}(\delta^{-1}\circ\rho_{g,x_a}(g,\eta^{-1}\circ F_b(G_b^{-1}(\psi\circ \beta\circ \psi^{-1}))\circ \eta) \circ \delta))\circ \phi \in \stab_x.$$
Therefore different choices of $w_a,w_b,\phi,\psi,\delta,\eta$ give different germs of local lifts up to pre-composing the $\stab_x$-action. Post-composing with $\stab_y$-action can be rewritten as pre-composing the $\stab_x$ action. The transitivity for the pre-composition action is equivalent to the transitivity for the post-composition action.

\textit{Case two-different choices of equivalent polyfold structures $\cW_a$ and $\cW_b$:}  If we have two ep-groupoids $\cW_a, \cW_a'$ along with equivalences $F_a:\cW_a\to \cX$, $G_a:\cW_a\to \cX_a$, $F_a':\cW'_a \to \cX$ and $G'_a:\cW'_a \to \cX_a$. Let $\cW_a\times_{\cX} \cW_a'$ denote the weak fiber product \cite[Definition 10.3]{hofer2017polyfold} of $F_a$ and $F'_a$, then by \cite[Theorem 10.2]{hofer2017polyfold}, we have two equivalences $\pi:\cW_a\times_{\cX}\cW'_a \to \cW_a$ and $\pi':\cW_a\times_{\cX}\cW'_a \to \cW'_a$. Therefore, we have the following (not necessarily commutative) diagram:
$$
\xymatrix{
 & \ar[ld]_{F_a}\cW_a\ar[rd]^{G_a}&  & & &\\
 \cX &  \ar[u]_{\pi} \cW_a\times_{\cX}\cW'_a  \ar[d]^{\pi'} & \cX_a \ar[r]^{\rho_{g,x_a}(g,\cdot)}& \cX_b & \ar[l]_{F_b} \cW_b\ar[r]^{G_b} & \cX.\\
 & \ar[lu]^{F_a'}\cW'_a\ar[ru]_{G'_a}&	& & &
}
$$
We can choose $w\in \cW_a\times_{\cX} \cW'_a$ such that $|w|=|x|$ in $Z$, let $w_a:=\pi(w)\in \cW_a,w'_a:=\pi'(w)\in \cW'_a$ Then we can find $\phi,\phi'\in \bX$, $\delta,\delta'\in \bX_a$, $\eta\in \bX_b$, $w_b\in \cW_b$ and $\psi\in \bX$, such that we have the following diagram:
$$
\xymatrix{
	F_a(w_a) & \ar[l]_{F_a} w_a \ar[r]^{G_a} & G_a(w_a)\ar[d]_{\delta} &  &  & \\
	x\ar[u]^\phi\ar[d]_{\phi'} & w\ar[u]^\pi\ar[d]_{\pi'} & x_a \ar[r]^{\rho_{g,x_a}(g,\cdot) \quad} & \rho_{g,x_a}(g,x_a)  \ar[d]_{\eta} & &  y \ar[d]_{\psi}\\
	F'_a(w'_a) & \ar[l]_{F'_a} w'_a \ar[r]^{G'_a} & G'_a(w'_a)\ar[u]^{\delta'} & F_b(w_b) & \ar[l]_{F_b} w_b \ar[r]^{G_b} & G_b(w_b), \\
}
$$
which define two different local lifts $\Gamma$ and $\Gamma'$. We only need to prove $\Gamma$ and $\Gamma'$ are related by a pre-composition and post-composition of the $\stab_x$-action.

By Lemma \ref{lemma:localnatural}, there exists $\theta_1\in \mor(F_a(w_a),F'_a(w_a'))$, $\theta_2\in \mor(G_a(w_a), G'_a(w'_a))$, such that
\begin{eqnarray}
[L_{\theta_1}]_{F_a(w_a)} & = & [F'_a\circ\pi'\circ \pi^{-1}_{w}\circ F^{-1}_{a,w_a}]_{F_a(w_a)}\label{theta1'} \\
{}[L_{\theta_2}]_{G_a(w_a)} & = & [G'_a\circ \pi'\circ \pi^{-1}_w \circ G^{-1}_{a,w_a}]_{G_a(w_a)}\label{theta2'} 
\end{eqnarray}
 If we write 
\begin{equation}\label{theta3'}
\theta_3:=F_b^{-1}(\eta\circ \rho_{g,x_a}(g, \delta' \circ\theta_2 \circ \delta^{-1} )\circ \eta^{-1}) \in \stab_{w_b}
\end{equation}
Then $h$ close to $g$ and $z$ close to $x$ we have
\begin{eqnarray*}
	\Gamma(h,z) &= & L_{\psi^{-1}}\circ G_b\circ F^{-1}_{b,w_b}\circ L_{\eta} \circ \rho_{g,x_a}(h,\cdot) \circ L_{\delta} \circ G_a\circ \underline{F^{-1}_{a,w_a}\circ L_{\phi}(z)}\\
	&\stackrel{\eqref{theta1'}}{=} & L_{\psi^{-1}}\circ G_b\circ F^{-1}_{b,w_b}\circ L_{\eta} \circ \rho_{g,x_a}(h,\cdot) \circ L_{\delta} \circ \underline{G_a\circ   \bm{\pi \circ   \pi'^{-1}_{w}}}\bm{\circ  F'^{-1}_{a,w'_a}\circ L_{\theta_1\circ \phi}(z)}\\
	&\stackrel{\eqref{theta2'}}{=} & L_{\psi^{-1}}\circ G_b\circ F^{-1}_{b,w_b}\circ L_{\eta} \circ \underline{\rho_{g,x_a}(h,\cdot)\circ L_{\delta}\circ \bm{L_{\theta_2^{-1}}}} \bm{\circ G'_a}\circ F'^{-1}_{a,w'_a}\circ L_{\theta_1\circ \phi}(z)\\
	&\stackrel{\eqref{commute}}{=} &L_{\psi^{-1}}\circ G_b\circ F^{-1}_{b,w_b}\circ L_{\eta} \circ \bm{ L_{\rho_{g,x_a}(\underline{h},\delta\circ \theta_2^{-1} \circ \delta'^{-1})}\circ \rho_{g,x_a}(h,\cdot) \circ L_{\delta'}} \circ G'_a\circ F'^{-1}_{a,w'_a}\circ L_{\theta_1\circ \phi}(z)\\
	&\stackrel{\eqref{nearby}}{=} &L_{\psi^{-1}}\circ G_b\circ F^{-1}_{b,w_b}\circ \underline{L_{\eta}  \circ  L_{\rho_{g,x_a}(\bm{g}, \delta\circ \theta_2^{-1} \circ \delta'^{-1})}} \circ \rho_{g,x_a}(h,\cdot) \circ L_{\delta'} \circ G'_a\circ F'^{-1}_{a,w'_a}\circ L_{\theta_1\circ \phi}(z)\\
	&\stackrel{\eqref{theta3'}}{=}&L_{\psi^{-1}}\circ G_b\circ \underline{F^{-1}_{b,w_b}\circ \bm{L_{F_b(\theta_3^{-1}) \circ \eta}}} \circ \rho_{g,x_a}(h,\cdot) \circ L_{\delta'} \circ G'_a\circ F'^{-1}_{a,w'_a}\circ L_{\theta_1\circ \phi}(z)\\
	&= &\underline{L_{\psi^{-1}}\circ G_b\circ \bm{L_{\theta_3^{-1}}}}\bm{\circ F^{-1}_{b,w_b}\circ L_{ \eta}} \circ \rho_{g,x_a}(h,\cdot) \circ L_{\delta'} \circ G'_a\circ F'^{-1}_{a,w'_a}\circ L_{\theta_1\circ \phi}(z)\\
	&= &\bm{L_{\psi^{-1}\circ G_b(\theta_3^{-1})}\circ G_b}\circ  F^{-1}_{b,w_b}\circ L_{ \eta} \circ \rho_{g,x_a}(h,\cdot)\circ L_{\delta'} \circ G'_a\circ F'^{-1}_{a,w'_a}\circ L_{\theta_1\circ \phi}(z)\\
	&= & L_{\psi^{-1}\circ G_b(\theta_3^{-1})\circ \psi}\circ \Gamma'\circ L_{\phi'^{-1}\circ \theta_1 \circ \phi}
\end{eqnarray*}
We can move $L_{\psi^{-1}\circ G_2(\theta_3^{-1})\circ \psi}$ inside just like before, thus $[\Gamma]_{(g,x)}$ and $[\Gamma']_{(g,x)}$ are in the same $\stab_x$-orbit. Similarly, different polyfold structures for $\cW_b$ also give local lifts in the same orbit.

\textit{Case three-different local representations of the actions:}  Suppose we have polyfold structures $\cX_a,\cX'_a,\cX_b,\cX'_b$ and two connected regular local uniformizers $\cU_a,\cU_a'$ around $x_a\in \cX_a$ and $x'_a \in \cX'_a$ respectively, such that the action is locally represented by both $\rho_{g,x_a}: U\times(\stab_{x_a} \ltimes \cU_a) \to \cX_b$ and $\rho'_{g,x'_a}:U\times (\stab_{x'_a} \ltimes \cU'_a) \to \cX'_b$. A priori, the polyfold structures $\cW_a,\cW'_a$ might be different. However, since they are equivalent, there is a common refinement. Similarly for $\cW_b,\cW'_b$, there is a common refinement. Since we have shown the case of changing $\cW_a,\cW_b$. Therefore we can assume the following diagram of equivalences:
$$
\xymatrix{
& & \cX_a \ar[r]^{\rho_{g,x_a}} & \cX_b & &\\
\cZ & \ar[l]_{F_a} \cW_a \ar[ru]^{G_a}\ar[rd]_{G'_a} &  &  & \ar[lu]_{F_b} \ar[ld]^{F'_b} \cW_b \ar[r]^{G_b} &\cX\\
& & \cX'_a \ar[r]^{\rho'_{g,x'_a}} & \cX'_b & &
}
$$	
To define the local lifts for the two representations, we fix elements $w_a\in \cW_a,w_b\in \cW_b$, and morphisms $\phi\in \mor(x,F_a(w_a))$, $\psi \in \mor(y,G_b(w_b))$, $\delta \in \mor(G_a(w_a),x_a)$, $\delta' \in \mor(G'_a(w_a), x'_a)$, $\eta \in \mor(\rho_{g,x_a}(g,x_a), F_b(w_b)) $, $\eta' \in \mor(\rho'_{g,x'_a}(g,x'_a), F'_b(w_b)) $, such that we have the following diagram:
$$
\xymatrix{ 
	& & x_a \ar[r] & \rho_{g,x_a}(g,x_a) \ar[d]^{\eta}& &  \\
	x\ar[d]_{\phi} & & G_a(w_a)\ar[u]^{\delta} & F_b(w_b) & & y\ar[d]_{\psi}\\ 
	F_a(w_a) & \ar[l]_{\quad F_a}\ar[ru]^{G_a}\ar[rd]_{G'_a} w_a & & & \ar[lu]_{F_b}\ar[ld]^{F'_b} w_b \ar[r]^{G_b\quad} & G_b(w_b)\\
	& & G'_a(w_a)\ar[d]_{\delta'}  & F_b'(w_b) &  & \\
    & & x'_a \ar[r] & \rho'_{g,x'_a}(g,x'_a)\ar[u]_{\eta'}& &}
$$
For every $h$ close to $g$, we define the local diffeomorphism around $w_a$:
$$\mu_h:=G'^{-1}_{a,w_a}\circ L_{\delta'^{-1}}\circ \rho'_{g,x'_a}(h,\cdot)^{-1} \circ L_{\eta'^{-1}}\circ F'_b\circ F^{-1}_{b,w_b} \circ L_{\eta}\circ \rho_{g,x_a}(h,\cdot)\circ L_{\delta} \circ G_a.$$
By Proposition \ref{prop:uniform}, there is neighborhood $U'\subset U$ of $g$ and regular local uniformizers $\cV,\cW$ of $w_a$, such that for every $h\in U'$, $\mu_h$ is a diffeomorphism from $\cV$ with images in $\cW$. We can assume $\cV$ is a connected local uniformizer around $w_a$. By Lemma \ref{lemma:localnatural}, we can find $\theta_g\in \stab_{w_a}$, such that near $w_a$ we have 
$$[L_{\theta_g}]_{w_a}=[\mu_g]_{w_a}.$$
We claim $\mu_g = L_{\theta_g}$ on $\cV$. To see this, we define
$$S:=\{p\in \cV| [\mu_g]_p = [L_{\theta_g}]_p \}.$$
Then $S$ is open and non-empty by definition. Since $\cV$ is connected, it suffices to prove $S$ is closed. Let $p_i\in S$ such that $\lim_i p_i = p_\infty \in \cV$. Then by Lemma \ref{lemma:localnatural}, $[\mu_g]_{p_\infty} = [L_{\lambda}]_{p_\infty}$ for $\lambda \in \mor(p_\infty,\mu_g(p_\infty))$. By Corollary \ref{coro:natural}, $L_\lambda = L_{\theta'_g}$ on $\cW$ for $\theta'_g\in \stab_{w_a}$.  Since $L_{\theta'_g} = L_{\theta_g}$ on a neighborhood of $p_i$ for $i\gg 0$, the regular property of $\cW$ implies that $\theta'_g = \theta_g$ in $\stab^{\eff}_{w_a}$. Hence $p_\infty$ is also in $S$ and $S$ is closed.

Similarly for every $h\in U'$, we can find $\theta_h \in \stab_{w_a}$, such that 
$$L_{\theta_h}=\mu_h \text{ on } \cV.$$
If there is a sequence of $h_i$ converging to $g$, such that $L_{\theta_{h_i}}\ne L_{\theta_g}$ on $\cV$. Since $|\stab_{w_a}|<\infty$, we can pick a subsequence such that $L _{\theta_{h_i}}=L_{\theta}$ for $\theta \in \stab_{w_a}$. Since $L_{\theta_{h_i}}\ne L_{\theta_g}$, there is an object $p\in \cV$, such that $L_{\theta^{-1}}\circ L_{\theta_g}(p)\ne p$. Then 
\begin{equation}\label{eqn:contr}
\mu_{h_i}(p)=L_{\theta}(p)\ne L_{\theta_g}(p)=\mu_g(p).
\end{equation}
Note that $|\mu_{h_i}(p)| = |\mu_g(p)|$, $\mu_{h_i}(p)$ only has finite possibilities. Then \eqref{eqn:contr} contradicts that $\mu_h$ is continuous in $h$. Therefore there exists a neighborhood $U''\subset U'$ of $g$, such that $\mu_h=L_{\theta_g}$ for $h\in U''$. Therefore the two local lifts $\Gamma$, $\Gamma'$ are related as follows.
\begin{eqnarray*}
\Gamma(h,z) &= & L_{\psi^{-1}}\circ G_b\circ F^{-1}_b\circ L_{\eta}\circ \rho_{g,x_a}(h,\cdot)\circ L_{\delta}\circ G_a\circ F^{-1}_a \circ L_{\phi}(z)\\
&= & L_{\psi^{-1}}\circ G_b\circ F'^{-1}_b \circ L_{\eta'}\circ \rho_{g,x'_a}'(h,\cdot) \circ L_{\delta'}\circ G'_a\circ L_{\theta_g}\circ F_a^{-1} \circ L_{\phi}(z) \\
&= & \Gamma'(h,\cdot)\circ L_{\phi^{-1}\circ F_a^{-1}(\theta_g)\circ \phi}(z).
\end{eqnarray*} 
Therefore, we have the group action by $\stab_x$ is transitive. 	
\end{proof}	

As a corollary of Proposition \ref{prop:lifting}, we have the following.
\begin{corollary}\label{center}
	Let $\cU$ be a connected regular local uniformizer of $x$, such that every element $\alpha$ in $L_{\cX}(g,x,y)$ is represented local lift $\Gamma_\alpha(g,\cdot)$ defined over $\cU$, then
	$$
	\begin{aligned}
	L_{\cX}(g,x,y) & \to \Map_{\sc^\infty}(\cU,\cX)\\
	 \alpha & \mapsto \Gamma_\alpha(g,\cdot)
	\end{aligned}
	$$
	is injective. 
\end{corollary}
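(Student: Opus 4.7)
The plan is to show that any two local lifts $\Gamma_\alpha, \Gamma_\beta$ which agree on all of $\cU$ after restricting to the slice $\{g\} \times \cU$ must represent the same element of $L_{\cX}(g,x,y)$. The key inputs are the transitivity of the $\stab_x$-action given by Proposition \ref{prop:lifting}, the fact that $\Gamma_\alpha(g,\cdot)$ is a local sc-diffeomorphism near $x$ (Proposition \ref{prop:emb}), and the first clause of the regularity condition on $\cU$ (Definition \ref{def:regular}).

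Concretely, suppose $\alpha, \beta \in L_{\cX}(g,x,y)$ satisfy $\Gamma_\alpha(g,\cdot) = \Gamma_\beta(g,\cdot)$ on $\cU$. First I would use Proposition \ref{prop:lifting}: since the pre-composition action of $\stab_x$ on $L_{\cX}(g,x,y)$ is transitive, there exists $\phi \in \stab_x$ with $[\Gamma_\beta]_{(g,x)} = [\Gamma_\alpha \circ L_\phi]_{(g,x)}$, where the composition is understood as the germ at $(g,x)$ of the sc-smooth map $(h,z) \mapsto \Gamma_\alpha(h, L_\phi(z))$ (well defined because $L_\phi(x) = x$). Evaluating the germ identity $[\Gamma_\beta]_{(g,x)} = [\Gamma_\alpha \circ L_\phi]_{(g,x)}$ along the slice $h = g$, and combining with the hypothesis $\Gamma_\alpha(g,\cdot) = \Gamma_\beta(g,\cdot)$, yields
\[
\Gamma_\alpha(g, z) \;=\; \Gamma_\alpha(g, L_\phi(z))
\]
for all $z$ in some neighborhood of $x$.

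Next, by Proposition \ref{prop:emb} the map $\Gamma_\alpha(g,\cdot)$ is a local sc-diffeomorphism at $x$, so the above identity forces $L_\phi(z) = z$ on an open neighborhood of $x$ inside $\cU$. Because $\cU$ is a regular uniformizer, the first clause of Definition \ref{def:regular} upgrades this to $L_\phi = \mathrm{Id}$ on all of $\cU$. In particular the germ $[L_\phi]_x$ is trivial, so $[\Gamma_\alpha \circ L_\phi]_{(g,x)} = [\Gamma_\alpha]_{(g,x)}$, and hence $[\Gamma_\beta]_{(g,x)} = [\Gamma_\alpha]_{(g,x)}$, i.e.\ $\alpha = \beta$ in $L_{\cX}(g,x,y)$.

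There is no real obstacle here; the only point requiring care is book-keeping with germs versus global maps—specifically, making sure that the hypothesis ``$\Gamma_\alpha(g,\cdot)=\Gamma_\beta(g,\cdot)$ on $\cU$'' is strong enough to conclude the germ identity in the $z$-direction (which it obviously is, since the two maps already agree pointwise on $\cU$), and that the germ identity produced by transitivity of the $\stab_x$-action is taken in the joint $(h,z)$ variable so that slicing at $h=g$ is legitimate. Once these are handled, the regularity of $\cU$ is precisely what closes the argument, and the statement follows.
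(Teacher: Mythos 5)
Your argument is correct and follows essentially the same route as the paper: reduce to a $\phi \in \stab_x$ via transitivity (Proposition~\ref{prop:lifting}), slice at $h=g$, cancel the local sc-diffeomorphism $\Gamma_\alpha(g,\cdot)$ from Proposition~\ref{prop:emb} to get $L_\phi=\Id$ near $x$, and conclude equality of germs. One small note: your intermediate step invoking the first clause of Definition~\ref{def:regular} to upgrade $L_\phi=\Id$ from a neighborhood of $x$ to all of $\cU$ is harmless but superfluous — the conclusion only requires germ equality at $(g,x)$, so $L_\phi=\Id$ near $x$ already gives $[\Gamma_\alpha\circ L_\phi]_{(g,x)}=[\Gamma_\alpha]_{(g,x)}$ directly, which is how the paper closes the argument. (The connectedness and regularity hypotheses on $\cU$ are really there so that the assignment $\alpha\mapsto\Gamma_\alpha(g,\cdot)$ is well defined in the first place, via Corollary~\ref{coro:unique}.)
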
	
\begin{proof}
	Assume otherwise that two different $\Gamma_\alpha, \Gamma_\beta$ satisfy $\Gamma_\alpha(g,\cdot) = \Gamma_\beta(g,\cdot)$. By Proposition \ref{prop:lifting}, $[\Gamma_\alpha]_{(g,x)} = [\Gamma_\beta\circ L_\phi]_{(g,x)}$ for $\phi \in \stab_x$. Therefore we have$[\Gamma_\alpha(g,\cdot)]_x = [\Gamma_\beta(g,\cdot)]_x = [\Gamma_\beta(g,L_\phi(\cdot))]_x$. Since $\Gamma_\alpha(g,\cdot),\Gamma_\beta(g,\cdot)$ are local sc-diffeomorphisms by Proposition \ref{prop:emb}, we have $L_\phi = \Id$ near $x$. This contradicts $[\Gamma_\alpha]_{(g,x)} \ne [\Gamma_\beta]_{(g,x)}$. 
\end{proof}

\begin{proposition}[Proposition \ref{prop:property}]
 	 	Let $Z$ be a regular polyfold with a $G$-action $(\rho,\mathfrak{P})$ and $(\cX,\bX)$ a polyfold structure of $Z=|\cX|$. Then the local lifts set $L_{\cX}(g,x,y)$ has the following properties.
 	 	\begin{enumerate}
 	 		\item\label{pp2}  There is a well-defined multiplication $\circ: L_{\cX}(g,y,z)\times L_{\cX}(h,x,y)\to L_{\cX}(gh,x,z)$ with the property that if $[\Gamma_1]_{(g,y)}\in L_{\cX}(g,y,z)$ and $[\Gamma_2]_{(h,x)} \in L_{\cX}(h,x,y)$, then there is a local lift $\Gamma_{12}$ at $(gh,x,z)$ such that $[\Gamma_{12}]_{(gh,x)} = [\Gamma_1]_{(g,y)} \circ [\Gamma_2]_{(h,x)} $ and 
 	 		\begin{equation*} \Gamma_{12}(\epsilon gh,u)= \Gamma_1(\epsilon g, \Gamma_2(h,u))=\Gamma_1(g, \Gamma_2(g^{-1}\epsilon gh,u))\end{equation*}
 	 		for $\epsilon$ in a neighborhood of $\Id \in G$ and $u$ in a neighborhood of $x$. The sizes of the neighborhoods depend on the representative local lifts $\Gamma_1,\Gamma_2,\Gamma_{12}$. 
 	 		\item\label{pp3} There is a unique identity element $[\ID_x]_{(\Id,x)}\in L_{\cX}(\Id,x,x)$, such that the identity is both left and right identity in the multiplication structure. Any representative $\ID_x$ has the property that $\ID_x(\Id, u) = u$ for $u$ in a neighborhood of $x$. As before, the neighborhood depends on the choice of the representative \footnote{By Proposition \ref{prop:lifting}, the identity element is characterized by this property.}. 
 	 		\item\label{pp4} There is an inverse map $L_{\cX}(g,x,y)\to L_{\cX}(g^{-1},y,x)$ with respect to the multiplication and identity structures above.
 	 		\item\label{pp5}
 	 		For $x, y\in \cX,g\in G$ with $\rho(g,|x|) = |y|$, there exists an open neighborhood $V\times \cU\times \cO$ of $(g,x,y)$ in $G\times \cX\times \cX$ such that there exist a representative local lift $\Gamma_\alpha$ defined on $(V\times \cU)$ with image in $\cO$, for each element $\alpha \in L_{\cX}(g,x,y)$. Moreover, for any $(g',x')\in V\times \cU$ and $y'\in \cO$ such that $\rho(g',|x'|)=|y'|$, every element $\beta \in L_{\cX}(g',x',y')$ is represented by $[\Gamma_\alpha]_{(g',x')}$ by a unique element $\alpha \in L_{\cX}(g,x,y)$. 
 	 	\end{enumerate}
\end{proposition}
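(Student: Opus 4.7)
The plan is to construct each of the four structures by assembling local data and then verifying the group-theoretic properties using the diagram calculus already developed in the proof of Proposition \ref{prop:Alifting}. Throughout, we exploit that each factor $\mathfrak{P}(g)$ in the group action is an isomorphism in $\mathscr{P}$, so its local representatives $\rho_{g,x_a}(h,\cdot)$ are fully faithful functors that are sc-diffeomorphisms on objects (by Proposition \ref{prop:uniform}), and we use that on a connected regular uniformizer two local sc-diffeomorphisms whose germs agree at one point agree on the whole uniformizer (the argument already used in the third case of Proposition \ref{prop:Alifting}).

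First I would construct the multiplication in \eqref{pp2}. Given $[\Gamma_1]_{(g,y)}\in L_{\cX}(g,y,z)$ and $[\Gamma_2]_{(h,x)}\in L_{\cX}(h,x,y)$, the candidate is $\Gamma_{12}(\epsilon gh,u):=\Gamma_1(\epsilon g,\Gamma_2(h,u))$ for $(\epsilon,u)$ in a small neighborhood of $(\Id,x)$; the second equality in \eqref{comp} is a formal rewriting using $\epsilon g=g(g^{-1}\epsilon g)$. To see that $\Gamma_{12}$ is itself a local lift, I would expand $\Gamma_1,\Gamma_2$ as in \eqref{eqn:gamma} using diagrams of equivalences, and use the functoriality $\mathfrak{P}(g)\circ\mathfrak{P}(h)=\mathfrak{P}(gh)$ together with the existence of a common refinement of the two local representing functors (after enlarging the equivalences involved) to exhibit $\Gamma_{12}$ in the form \eqref{eqn:gamma} for a diagram realizing $\rho_{gh,x_a}$ near $(gh,x)$. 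Well-definedness on germ classes then follows from Proposition \ref{prop:Alifting}: replacing $\Gamma_1$ by $L_\phi\circ\Gamma_1$ (resp.\ $\Gamma_2$ by $\Gamma_2\circ L_\psi$) with $\phi\in\stab_z$, $\psi\in\stab_x$ changes $\Gamma_{12}$ by a post-composition in $\stab_z$ and pre-composition in $\stab_x$, while a change of the form $\Gamma_1\mapsto\Gamma_1\circ L_\phi$ with $\phi\in\stab_y$ is absorbed by the matching change $\Gamma_2\mapsto L_\phi\circ\Gamma_2$ into the same germ class of $\Gamma_{12}$.

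For the identity in \eqref{pp3}, I would take the local representative of $\mathfrak{P}(\Id)=\Id_Z$: choose trivial diagrams $\cX_a=\cX_b=\cX$ with $\rho_{\Id,x}(\Id,\cdot)=\Id$, and all intermediate morphisms the identity. This yields a local lift $\ID_x$ with $\ID_x(\Id,u)=u$ on a uniformizer around $x$. Uniqueness of the germ class is Proposition \ref{prop:Alifting} (the $\stab_x$-orbit through this distinguished germ contains every $[\Gamma]_{(\Id,x)}\in L_{\cX}(\Id,x,x)$ with $\Gamma(\Id,x)=x$, but only the trivial one satisfies $\Gamma(\Id,\cdot)=\Id$ on an open neighborhood, since $L_\phi\ne \Id$ on any open set whenever $\phi\in\stab_x^{\rm eff}$ is nontrivial by regularity). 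The identity axioms in the multiplication follow immediately from the composition formula \eqref{comp}. For the inverse in \eqref{pp4}, given $[\Gamma]_{(g,x)}\in L_{\cX}(g,x,y)$, I would use that $\Gamma(g,\cdot)$ is a local sc-diffeomorphism by Proposition \ref{prop:emb}, together with a local representative of $\mathfrak{P}(g^{-1})$, to build $\Gamma^{-1}(h,v):=\Gamma(g,\cdot)^{-1}\circ L_{\text{(bookkeeping morphisms)}}\circ\rho_{g^{-1},y_a}(h,\cdot)(\cdots)$; the composition $[\Gamma]\circ[\Gamma^{-1}]$ then has a representative that is the identity on objects near $y$, so it equals $[\ID_y]$ by the uniqueness just established.

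Property \eqref{pp5} is where I expect the main technical work, and it is also the statement we actually use in Proposition \ref{prop:localmodel} and Theorem \ref{quopoly}. Proposition \ref{prop:Alifting} gives a bijection $\stab_x^{\rm eff}\to L_{\cX}(g,x,y)$, $\phi\mapsto[\Gamma_0\circ L_\phi]_{(g,x)}$ for one choice of reference lift $\Gamma_0$. I would take a connected regular uniformizer $\cU$ around $x$ and a neighborhood $V\subset G$ of $g$ on which the representative $\Gamma_0$ of Proposition \ref{prop:uniform} is defined, and set $\Gamma_\alpha:=\Gamma_0\circ L_\phi$ on $V\times\cU$ for the $\phi$ corresponding to $\alpha$; shrinking $V$ and $\cU$ (and picking $\cO$ containing $\bigcup_\alpha\Gamma_\alpha(V\times\cU)$) we may assume every $\Gamma_\alpha$ maps into $\cO$. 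Given now $(g',x',y')\in V\times\cU\times\cO$ with $\rho(g',|x'|)=|y'|$ and $\beta\in L_{\cX}(g',x',y')$, the set $L_{\cX}(g',x',y')$ has cardinality $|\stab_{x'}^{\rm eff}|\le|\stab_x^{\rm eff}|$ by upper semicontinuity of isotropy; each $[\Gamma_\alpha]_{(g',x')}$ lies in $L_{\cX}(g',x',y')$ provided $\Gamma_\alpha(g',x')=y'$ (satisfied after adjusting $\cO$), so we have at least $|\stab_x^{\rm eff}|$ elements among the $[\Gamma_\alpha]_{(g',x')}$. The hard step is injectivity: if $[\Gamma_\alpha]_{(g',x')}=[\Gamma_{\alpha'}]_{(g',x')}$ then by Proposition \ref{prop:Alifting} there is $\psi\in\stab_{x'}$ with $\Gamma_\alpha=\Gamma_{\alpha'}\circ L_\psi$ near $(g',x')$; composing with $\Gamma_{\alpha'}(g',\cdot)^{-1}$ (a local sc-diffeomorphism by Proposition \ref{prop:emb}) shows $L_{\alpha'^{-1}\alpha}=L_\psi$ locally, so by Corollary \ref{coro:natural} and the regularity of $\cU$ we conclude $\alpha=\alpha'$ in $\stab_x^{\rm eff}$. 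This gives the required bijection, and unique continuation (Corollary \ref{coro:unique}) then shows the assignment $\beta\mapsto\alpha$ is locally constant, hence given by a fixed $\alpha$ in a still smaller neighborhood if needed.
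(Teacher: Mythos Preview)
Your overall strategy matches the paper's, but there are two concrete gaps.

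\textbf{Identity element \eqref{pp3}.} You write ``choose trivial diagrams $\cX_a=\cX_b=\cX$ with $\rho_{\Id,x}(\Id,\cdot)=\Id$''. Definition~\ref{def:action} only asserts the \emph{existence} of some local representative $\rho_{\Id,x_a}:U\times(\stab_{x_a}\ltimes\cU_a)\to\cX_b$; it does not let you choose $\cX_a=\cX_b$ or $\rho_{\Id,x_a}(\Id,\cdot)=\Id$. The local lift \eqref{eqn:gamma} is built from that given data, so you cannot simply declare the diagram trivial. The paper's proof handles this honestly: it observes $\rho_{\Id,x_a}(\Id,\cdot)$ is an embedding of uniformizers covering the inclusion $|\cU_a|\hookrightarrow Z$, and then \emph{constructs} a new polyfold structure $\cX'_a$ (containing $\cU_a$ as a component) together with an equivalence $\rho_0:\cX'_a\to\cX_b$ extending $\rho_{\Id,x_a}(\Id,\cdot)$. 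With this auxiliary structure and a careful choice of the morphisms $\eta,\delta,\phi,\psi$ in \eqref{eqn:gamma}, a direct computation using \eqref{commute} collapses the chain to $\Gamma(\Id,z)=z$. Uniqueness and the identity axioms then follow from Corollary~\ref{center} rather than from tracing pre/post-compositions.

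\textbf{Surjectivity in \eqref{pp5}.} You assert that each $[\Gamma_\alpha]_{(g',x')}$ lies in $L_{\cX}(g',x',y')$ ``provided $\Gamma_\alpha(g',x')=y'$ (satisfied after adjusting $\cO$)''. This is false: shrinking $\cO$ does not force $\Gamma_\alpha(g',x')=y'$; all you know is $|\Gamma_\alpha(g',x')|=|y'|$, so the two points lie in the same $\stab_y$-orbit in $\cO$. Consequently your counting argument (comparing $|\stab_x^{\rm eff}|$ germs with $|L_{\cX}(g',x',y')|=|\stab_{x'}^{\rm eff}|$) breaks down. The paper's argument goes the other way: starting from one $\Gamma_\alpha$, pick $\phi\in\stab_y$ with $L_\phi\circ\Gamma_\alpha(g',x')=y'$; by Proposition~\ref{prop:lifting} and Corollary~\ref{coro:unique} on the connected $V\times\cU$, $L_\phi\circ\Gamma_\alpha=\Gamma_\beta$ for some $\beta\in L_{\cX}(g,x,y)$, so $[\Gamma_\beta]_{(g',x')}\in L_{\cX}(g',x',y')$. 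Now any element of $L_{\cX}(g',x',y')$ is $[\Gamma_\beta\circ L_{\phi'}]_{(g',x')}$ for some $\phi'\in\stab_{x'}$; by Corollary~\ref{coro:natural} this $L_{\phi'}$ equals $L_\delta|_{\cU}$ for a $\delta\in\stab_x$, and again Corollary~\ref{coro:unique} gives $\Gamma_\beta\circ L_\delta=\Gamma_\gamma$ on $V\times\cU$. Your injectivity step is essentially the paper's, though invoking Proposition~\ref{prop:Alifting} there is unnecessary: $[\Gamma_\alpha]_{(g',x')}=[\Gamma_{\alpha'}]_{(g',x')}$ already means the representatives agree near $(g',x')$, hence $L_{\phi_\alpha}=L_{\phi_{\alpha'}}$ on an open set, and regularity of $\cU$ forces $\alpha=\alpha'$ in $\stab_x^{\rm eff}$.

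For \eqref{pp2} your sketch is close to the paper's; note that the paper encodes well-definedness via Corollary~\ref{center} (the injection into $\Map_{\sc^\infty}(\cU,\cX)$ sends the composition to composition of maps), which is tidier than tracking individual pre/post-compositions. For \eqref{pp4} the paper avoids building an explicit inverse lift: pick any $[\Gamma_2]\in L_{\cX}(g^{-1},y,x)$, form $[\Gamma_2]\circ[\Gamma_1]\in L_{\cX}(\Id,x,x)$, and use Proposition~\ref{prop:lifting} to find $\phi\in\stab_x$ with $[L_\phi\circ\Gamma_2\circ\Gamma_1]=[\ID_x]$.
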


\begin{proof}
	For Property \eqref{pp2}, let $\rho(h,|x|)=|y|$ and $\rho(g,|y|)=|z|$, fix two three local representations of the actions: 
	$$
	\begin{aligned}\rho_a: &U\times (\stab_{x_a} \ltimes \cU_a) & \to \cX_b\\
	\rho_c: &V\times (\stab_{x_c} \ltimes \cU_c) & \to \cX_d\\
	\rho'_a:&W\times (\stab_{x'_a}\ltimes \cU'_a) & \to \cX'_d
	\end{aligned}$$
	for open neighborhoods $U\ni h$, $V\ni g$ and $W\ni gh$, and $\cU_a,\cU_c,\cU'_a$ are local uniformizers in $\cX_a,\cX_c,\cX'_a$ around $x_a,x_c,x'_a$.  We have the following diagram of equivalences:
	\begin{equation}\label{dia:prod}
	\xymatrix{
	& & \cX_a\ar[r]^{\rho_a(h,\cdot)} & \cX_b & \ar[l]_{F_b} \cW_b \ar[r]^{G_b} & \cX & \ar[l]_{F_c} \cW_c \ar[r]^{G_c} & \cX_c \ar[r]^{\rho_c(g,\cdot)} & \cX_d & &\\
	\cX & \ar[l]_{F_a} \ar[ru]^{G_a} \ar[rd]_{G'_a} \cW_a & & & & & & &  & \ar[lu]_{F_d} \ar[ld]^{F'_d} \ar[r]^{G_d} \cW_d & \cX\\
	& & \cX'_a \ar[rrrrrr]^{\rho'_a(gh,\cdot)} & &&&& &\cX'_d &
    }
	\end{equation}
	Let $[\Gamma_1]_{(g,y)} \in L_{\cX}(g,y,z),[\Gamma_2]_{(h,x)}\in L_{\cX}(h,x,y)$. By the same argument in the case three of the proof of Proposition \ref{prop:Alifting}, for $\epsilon$ close to $\Id$ and $u$ close to $x$ we have
	$$\Gamma_1(\epsilon g, \Gamma_2(h,u))=\Gamma_1(g, \Gamma_2(g^{-1}\epsilon g h,u)).$$ 
	Applying the same argument in the case three of the proof of Proposition \ref{prop:Alifting} to the diagram \eqref{dia:prod}, we can find $[\Gamma_{12}]_{(gh,x)} \in L(x,z,gh)$ such that
	$$\Gamma_1(\epsilon g, \Gamma_2(h,u))=\Gamma_1(g, \Gamma_2(g^{-1}\epsilon g h,u))=\Gamma_{12}(\epsilon gh, u).$$
	We define $[\Gamma_1]_{(g,y)}\circ [\Gamma_2]_{(h,x)}$ to be $[\Gamma_{12}]_{(gh,x)}$. To verify that the definition does not depend on the specific digram, note that the map in Corollary \ref{center} transfers the product structure defined by any diagram \eqref{dia:prod} to the composition of maps. By injectivity result in Corollary \ref{center} and that the composition of maps is a fixed structure, the multiplication structure is well-defined.
	
	For Property \eqref{pp3}, note that $\mathfrak{P}(\Id,\cdot)$ is an equivalence of polyfold structure. The local representation $\rho_{\Id,x_a}:U\times (\stab_{x_a}\ltimes \cU_a) \to \cX_b$ has the property that $\rho_{\Id,x_a}(\Id,\cdot)$ is an embedding of local uniformizer and induces the inclusion of $|\cU| \to Z$ on the orbit space. Then by the argument in proof of the quotient map in Theorem \ref{quopoly} or \cite[\S 17.1]{hofer2017polyfold}, we can build a new polyfold structure $\cX'_a$ containing $\cU$ as a component\footnote{Explicitly, $\cX'_a$ can be chosen as $\cU \cup \cX_b$.}, such that there is an equivalence $\rho_0:\cX'_a \to \cX_b$ covers the identity map on $Z$ extending $\rho_{\Id,x_a}(\Id,\cdot)$. Then we have the following diagram:
	$$
	\xymatrix{ \cX & \ar[l]_{F_a} \ar[r]^{G_a} \cW_a & \cX'_a \ar[r]^{\rho_{\Id,x_a}(\Id,\cdot)} & \cX_b & \ar[l]_{\rho_0\circ G_a} \cW_a \ar[r]^{F_a} &\cX \\
	x\ar[d]^\phi &   & x_a \ar[r] &\rho_{\Id,x_a}(\Id,x_a) \ar[d]^\eta  &  & x \ar[d]^{\phi}\\
	F_1(w_a) & \ar[l] w_a \ar[r] &  G_a(w_a)\ar[u]^{\delta} & \rho_0\circ G_a(w_a) & \ar[l] w_a \ar[r] & F_a(w_a). 
	}
	$$
	Since $\rho_{\Id,x_a}(\Id,x_a)=x_a=\rho_0(x_a)$, we can choose $\eta = \rho_0(\delta^{-1})$. Then the local lift from this digram is
	\begin{eqnarray*}
	\Gamma(\Id, z) & =  &  L_{\phi^{-1}} \circ F_a\circ G_a^{-1}\circ \rho_0^{-1} \circ L_{\rho_0(\delta^{-1})}\circ \rho_{\Id,x_a}(\Id,\cdot)\circ L_{\delta} \circ G_a\circ F_a^{-1}\circ L_{\phi}(z) \\
	& \stackrel{\eqref{commute}}{=} & L_{\phi^{-1}} \circ F_a\circ G_a^{-1}\circ L_{\delta^{-1}}\circ \rho_0^{-1} \circ  \rho_{\Id,x_a}(\Id,\cdot) \circ L_\delta\circ G_a\circ F_a^{-1}\circ L_{\phi}(z)\\
	& = & z
	\end{eqnarray*}
	By Corollary \ref{center}, the identity element is unique and is both left and right identity.
	
	As a consequence of Corollary \ref{center}, the inverse of an element is unique and is both left and right inverse. To prove the existence, for every $[\Gamma_1]_{(g,x)}\in L_{\cX}(g,x,y)$, we pick some $[\Gamma_2]_{(g^{-1},y)}\in L_{\cX}(g^{-1},y,x)$. Then $[\Gamma_2]_{(g^{-1},y)}\circ [\Gamma_1]_{(g,x)}\in L_{\cX}(\Id,x,x)$. By Proposition \ref{prop:lifting}, there exists $\phi\in \stab_x$ such that $[L_{\phi}\circ \Gamma_2\circ \Gamma_1]_{(\Id,x)}=[\ID_x]_{(\Id,x)}$, thus $[L_{\phi}\circ \Gamma_2]_{(g^{-1}, y)}$ is an inverse to $[\Gamma_1]_{(g,x)}$. This proves Property \eqref{pp4}.
	
	For Property \eqref{pp5}, pick connected regular local uniformizers $\cU,\cO$ around $x$ and $y$ and a connected open set $V$ of $g$ such that for every $\alpha \in L(x,y,g)$ there is a representative $\Gamma_\alpha$ defined on $V \times \cU $ with image contained in $\cO$. For $x'\in \cU$, $y'\in \cO$ and $g'\in g$ such that $\rho(g',|x'|)=|y'|$, we have $|\Gamma_\alpha(g',x')|=|y'|$ for any $\alpha \in L_{\cX}(g,x,y)$. By Theorem \ref{thm:natural} there exists a $\phi\in \stab_{y}$, such that $L_{\phi}\circ \Gamma_\alpha (g',x')=y'$. By the Proposition \ref{prop:lifting}, $[L_{\phi}\circ \Gamma_\alpha]_{(g,x)}=[\Gamma_\beta]_{(g,x)}$ for some $\beta \in L_{\cX}(g,x,y)$. Since $V,\cU$ are connected, by Corollary \ref{coro:unique} we have $\Gamma_\beta = L_\phi\circ \Gamma_\alpha$ on $V\times \cU$. Therefore $[\Gamma_\beta]_{(g',x')}$ defines an element in $L_{\cX}(g',x',y')$. Then by Proposition \ref{prop:lifting}, every element in $L_{\cX}(g',x',y')$ is $[\Gamma \circ L_{\phi'}]_{(g',x')}$ precomposing with $L_{\phi'}$ for $\phi'\in\stab_{x'}$. By Corollary \ref{coro:natural}, $L_{\phi'}$ is the restriction of $L_{\delta}$ for some $\delta\in \stab_x$. Therefore any element in $L_{\cX}(g',x',y')$ comes from a restriction of $\Gamma_\beta \circ L_\delta$ for some $\delta \in \stab_x$. 
	Again by Corollary \ref{coro:unique}, $\Gamma_\beta \circ L_\delta = \Gamma_\gamma$ on $V\times \cU$ for some $\gamma \in L_{\cX}(g,x,y)$. We claim that the regular property of the polyfold implies that this restriction is injective. If $[\Gamma_\alpha]_{(g',x')} =[\Gamma_\beta]_{(g',x')}$ for $\alpha,\beta \in L_{\cX}(g,x,y)$, then 
	 by Proposition \ref{prop:lifting} and Corollary \ref{coro:unique} there exists $\psi \in \stab_x$ such that $[\Gamma_\alpha]_{(g',x')}=[\Gamma_\beta\circ L_\psi]_{(g',x')} = [\Gamma_\beta]_{(g',x')}$. Therefore $[L_\psi]_{x'} = [\Id]_{x'}$. Then by Definition \ref{def:regular}, $L_\psi = \Id$ on the local uniformizer $\cU$, which means $[\Gamma_\alpha]_{(g,x)} = [\Gamma_\beta]_{(g,x)}$. 
	\end{proof}

\begin{proposition}[Proposition \ref{prop:regular}]
	Let $(\cX,\bX)$ be an effective groupoid. Assume for every $x\in \cX$, there exists a local uniformizer $\cU$ around $x$, such that for any connected uniformizer $\cV \subset \cU$ around $x$, we have $\cV\backslash \cup_{\phi\ne \Id \in \stab_x} \Fix(\phi)$ is connected, where $\Fix(\phi)$ is the fixed set of $L_{\phi}$. Then $(\cX,\bX)$ is regular.
\end{proposition}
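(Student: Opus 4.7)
The plan is to verify the two defining properties of a regular uniformizer (Definition~\ref{def:regular}) around an arbitrary $x\in\cX$. Start from the local uniformizer $\cU$ furnished by the hypothesis, and replace $\cU$ by its connected component containing $x$ so that $\cU$ is connected; let $\Sigma:\stab_x\times\cU\to\bX$ be the sc-smooth map from Theorem~\ref{thm:natural}. A preliminary observation I would record first is that $\Sigma(\Id_x,\cdot):\cU\to\bX$ is a sc-smooth section of the étale map $s:\bX\to\cX$ (since $L_{\Id_x}=\Id_\cU$) and agrees with the unit section $u$ at $x$; local uniqueness of sections of an étale map, combined with an open-and-closed argument on the connected $\cU$, then yields $\Sigma(\Id_x,z)=u(z)$ for every $z\in\cU$.

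For property (1), suppose $\phi\in\stab_x$ satisfies $L_\phi|_\cV=\Id_\cV$ for some non-empty open $\cV\subset\cU$, and pick any $y\in\cV$. Then $L_\phi(y)=y$, so $\Sigma(\phi,y)\in\stab_y$; the definitions $L_\phi=t\circ s^{-1}$ and $L_{\Sigma(\phi,y)}=t\circ s^{-1}$ show these two sc-diffeomorphisms agree on a neighborhood of $y$. Since $L_\phi\equiv\Id$ on $\cV$, the germ of $L_{\Sigma(\phi,y)}$ at $y$ is trivial, so effectiveness at $y$ forces $\Sigma(\phi,y)=u(y)=\Sigma(\Id_x,y)$. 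The uniqueness clause in Theorem~\ref{thm:natural} then gives $\phi=\Id_x$ and hence $L_\phi=\Id_\cU$, which is property (1).

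For property (2), fix a connected uniformizer $\cW\subset\cU$ around $x$ and a map $\Phi:\cW\to\stab_x$ such that $L_\Phi:z\mapsto L_{\Phi(z)}(z)$ is a sc-diffeomorphism. Property (1) just proved implies $\Fix(\psi)$ has empty interior in $\cU$ for every $\psi\in\stab_x\setminus\{\Id_x\}$, so $\cW^{\mathrm{free}}:=\cW\setminus\bigcup_{\psi\ne\Id_x}\Fix(\psi)$ is open and dense in $\cW$; by hypothesis it is also connected. Every $z\in\cW^{\mathrm{free}}$ has trivial $\stab_x$-isotropy, hence trivial $\stab_z$ by Theorem~\ref{thm:natural}, and I would choose free uniformizers around $z$ and $L_{\Phi(z)}(z)$ in which each $\bX$-orbit meets the uniformizer in at most one point. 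Then for $z'\in\cW^{\mathrm{free}}$ close enough to $z$, continuity of $L_\Phi$ places both $L_{\Phi(z')}(z')$ and $L_{\Phi(z)}(z')$ in the free uniformizer around $L_{\Phi(z)}(z)$; they lie in the same $\bX$-orbit of $z'$, so they must coincide, which makes $\Phi(z')^{-1}\Phi(z)$ fix the free point $z'$ and therefore equal $\Id_x$. Thus $\Phi$ is locally constant on $\cW^{\mathrm{free}}$, hence constant by connectedness of the (non-empty) $\cW^{\mathrm{free}}$; call the common value $\phi_0$. The continuous maps $L_\Phi$ and $L_{\phi_0}$ then agree on the dense set $\cW^{\mathrm{free}}$, and hence on all of $\cW$.

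The main obstacle is property (1): effectiveness is only a germ-level condition, while the conclusion $L_\phi=\Id_\cU$ must hold on the whole $\cU$ even though the open set $\cV$ may be far from $x$. The trick is to transfer the action of $\phi\in\stab_x$ to the morphism $\Sigma(\phi,y)\in\stab_y$ at a point $y\in\cV$ where effectiveness applies directly, and then use the uniqueness in Theorem~\ref{thm:natural} together with the globally defined identity $\Sigma(\Id_x,\cdot)=u$ on $\cU$ to pull the conclusion back to the equality $\phi=\Id_x$ in $\stab_x$; the connectedness-of-free-part hypothesis is not needed for (1) but is essential in (2) to upgrade local constancy of $\Phi$ to global constancy.
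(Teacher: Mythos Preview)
Your proof is correct and follows essentially the same route as the paper's. For property~(2) both arguments show that $\Phi$ is locally constant on the connected free part $\cW^{\mathrm{free}}$ and then extend by density; for property~(1) the paper is extremely terse (just citing effectiveness and Theorem~\ref{thm:natural}), and your explicit transfer $\phi\mapsto\Sigma(\phi,y)\in\stab_y$ together with the identity $\Sigma(\Id_x,\cdot)=u$ is exactly the mechanism behind that one-line claim.

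One minor simplification available for the local-constancy step in~(2): rather than invoking free uniformizers and the single-orbit-intersection property, the paper argues directly by contradiction. If $S_\phi:=\{z\in\cW^{\mathrm{free}}:\Phi(z)=\phi\}$ were not open, pick $p\in S_\phi$ and (after passing to a subsequence, since $\stab_x$ is finite) $p_i\to p$ with $p_i\in S_\psi$ for some fixed $\psi\ne\phi$; then $L_\Phi(p_i)=L_\psi(p_i)\to L_\psi(p)\ne L_\phi(p)=L_\Phi(p)$, where the inequality uses that $p$ is a free point. This contradicts continuity of $L_\Phi$ and avoids the auxiliary uniformizer construction.
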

\begin{proof}
	Since the ep-groupoid is effective, the first property of regularity is satisfied for any local uniformizer by Theorem \ref{thm:natural}. To proved the second property, we show that $\cU$ is a regular local uniformizer. Consider any connected uniformizer $\cV \subset \cU$ and map $\Phi: \cV\to\stab_x$ such that $L_\Phi$ is sc-smooth, for $\phi\in \stab_x$ we define 
	$$S_\phi:=\{z|z\in \cV \backslash \cup_{\phi\ne \Id} \Fix(\phi), \Phi(z)=\phi \}.$$
	We claim $S_{\phi}$ is open, assume otherwise, there is point $p\in S_\phi$, and $p_i\in S_{\psi}$ converging to $p$ for $\psi\ne \phi$. Then
	$$\lim_{i\to\infty} L_{\Phi(p_i)}(p_i)=\lim_{i\to\infty}L_{\psi}(p_i)=L_{\psi}(p)\ne L_{\phi}(p)=L_{\Phi(p)}(p)$$ 
	contradicts the continuity of $L_{\Phi(z)}(z)$. Since $\cV\backslash \cup_{\phi\ne \Id} \Fix(\phi)$ is connected, hence there is a $\phi\in \stab_x$, such that  $\cV\backslash \cup_{\phi\ne \Id} \Fix(\phi) = S_\phi$. That is   
	$$L_{\Phi(z)}(z)=L_{\phi}(z) \qquad \forall z\in \cV \backslash \cup_{\phi\ne \Id} \Fix(\phi).$$ 
	By the first property of regularity, $\cV\backslash \cup_{\phi\ne \Id} \Fix(\phi)$ is dense $\cV$, thus
		$$L_{\Phi(z)}(z)=L_{\phi}(z) \qquad \forall z\in \cV.$$ 
\end{proof}	

We mention here some other consequences of the regular property of polyfolds.

\begin{proposition}\label{allnatural}
	If $(\cX,\bX)$ is an effective and regular ep-groupoid and we have two equivalences $F,G: (\cW,\bW) \to (\cX,\bX)$, such that $|F|=|G|$. Then $F$ and $G$ are naturally equivalent.
\end{proposition}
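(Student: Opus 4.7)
The plan is to define the natural transformation $\tau: \cW \to \bX$ pointwise by imposing the germ-level condition that $L_{\tau(w)}$ agrees with $G \circ F^{-1}$ near $F(w)$. For each $w_0 \in \cW$, since $F$ is a local sc-diffeomorphism on objects, the composition $G \circ F^{-1}$ is a well-defined sc-smooth map from a neighborhood of $F(w_0)$ in $\cX$ to $\cX$, and the hypothesis $|F| = |G|$ ensures it preserves orbit classes and sends $F(w_0)$ to $G(w_0)$. Applying Lemma \ref{lemma:localnatural} to $G \circ F^{-1}$ (which requires the regular local uniformizers guaranteed by the regularity of $(\cX,\bX)$) produces a morphism $\theta_0 \in \mor(F(w_0), G(w_0))$ and a uniformizer on which $G \circ F^{-1} = L_{\theta_0}$. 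I set $\tau(w_0) := \theta_0$.

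Uniqueness of $\theta_0$ comes from effectiveness: if $\theta, \theta' \in \mor(F(w_0), G(w_0))$ both satisfy $L_{\theta} = L_{\theta'}$ as germs at $F(w_0)$, then $\theta^{-1} \theta' \in \stab_{F(w_0)}$ acts as the identity germ, so the injectivity $\stab_{F(w_0)} \to \diff_\sc(F(w_0))$ forces $\theta = \theta'$. This gives a well-defined map $\tau: \cW \to \bX$ with $\tau(w) \in \mor(F(w), G(w))$. For sc-smoothness, given $w_0$ with $\tau(w_0) = \theta_0$, choose a neighborhood $\bU \subset \bX$ of $\theta_0$ on which both $s$ and $t$ are sc-diffeomorphisms. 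Then $\sigma(w) := s|_\bU^{-1}(F(w))$ is sc-smooth in $w$, has source $F(w)$, and target $L_{\theta_0}(F(w)) = G(w)$. By property \eqref{nearby}, $L_{\sigma(w)} = L_{\theta_0}$ holds uniformly for $w$ close to $w_0$, so $L_{\sigma(w)} = G \circ F^{-1}$ as germs at $F(w)$, and uniqueness gives $\sigma(w) = \tau(w)$. Hence $\tau$ is sc-smooth.

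For naturality, given $\phi \in \bW$ from $w_1$ to $w_2$, define $\tilde{\tau} := G(\phi) \circ \tau(w_1) \circ F(\phi)^{-1} \in \mor(F(w_2), G(w_2))$. Using the relations \eqref{commute} and \eqref{assoc} of Proposition \ref{prop:fact} applied to both $F$ and $G$, I compute, as germs near $F(w_2)$,
$$L_{\tilde{\tau}} = L_{G(\phi)} \circ L_{\tau(w_1)} \circ L_{F(\phi^{-1})} = L_{G(\phi)} \circ (G \circ F^{-1}) \circ L_{F(\phi^{-1})} = L_{G(\phi)} \circ G \circ L_{\phi^{-1}} \circ F^{-1} = G \circ L_{\phi} \circ L_{\phi^{-1}} \circ F^{-1} = G \circ F^{-1},$$
where the third equality uses $F^{-1} \circ L_{F(\phi^{-1})} = L_{\phi^{-1}} \circ F^{-1}$ as germs near $F(w_2)$, obtained from \eqref{commute} for $F$. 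By the uniqueness established above, $\tilde{\tau} = \tau(w_2)$, which rearranges to $\tau(w_2) \circ F(\phi) = G(\phi) \circ \tau(w_1)$, the naturality square.

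The main obstacle I anticipate is bookkeeping the germ-level identifications rigorously while invoking effectiveness to upgrade germ-equalities of $L_\theta$'s to equalities of morphisms in $\bX$; regularity is needed only to invoke Lemma \ref{lemma:localnatural} for local existence, whereas effectiveness is what makes the pointwise definition globally consistent and forces naturality from germ-level computation. A subsidiary subtlety is ensuring $\tau$ is sc-smooth rather than merely continuous, which is why I parametrize nearby $\tau(w)$ via the local inverse of the source map on a fixed open neighborhood of $\theta_0$ in $\bX$ and then identify this sc-smooth candidate with $\tau$ by uniqueness.
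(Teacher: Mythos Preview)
Your proof is correct and follows essentially the same approach as the paper: invoke Lemma \ref{lemma:localnatural} to produce $\tau(w)$ with $L_{\tau(w)} = G\circ F^{-1}$ locally, use effectiveness for uniqueness, and deduce sc-smoothness and naturality. The paper's proof is a three-line sketch that asserts sc-smoothness ``follows from the \'etale property'' and leaves naturality implicit, whereas you have spelled out both verifications in full --- the local-inverse-of-source argument for sc-smoothness and the germ computation via Proposition \ref{prop:fact} for naturality --- which is exactly what the paper's sketch would unpack to.
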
	

\begin{proof}
	By Lemma \ref{lemma:localnatural}, for any $w\in \cW$, there exists a morphism $\phi_w \in \mor(F(w),G(w))$, such that $L_{\phi_w}= G\circ F^{-1}$ near $F(w)$. Since $\cX$ is reduced, $\phi_w$ is unique. Then the assignment $w\to \phi_w$ is a $\sc^\infty$ natural equivalence between $F$ and $G$, the sc-smoothness follows from the \'etale property.
\end{proof}	

The following theorem was proven in \cite{hofer2017polyfold}, if we assume the ep-groupoids are regular.
\begin{theorem}[{\cite[Theorem 10.7]{hofer2017polyfold}}]\label{thm:unique}
	Let $(\cX,\bX)$ and $(\cY,\bY)$ be two regular ep-groupoids, if $\mathfrak{f}$ and $\mathfrak{g}$ are two generalized isomorphism from $(\cX,\bX)$ to $(\cY,\bY)$ such that $|\mathfrak{f}| = |\mathfrak{g}|$, then $\mathfrak{f} =\mathfrak{g}$.
\end{theorem}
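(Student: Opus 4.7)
The strategy is to show that the two defining diagrams of $\mathfrak{f}$ and $\mathfrak{g}$ admit a common refinement, which, after a weak fiber product reduction, amounts to constructing a natural equivalence between two equivalences into $(\cY,\bY)$ that agree on orbit spaces---essentially an extension of Proposition \ref{allnatural} to the regular but possibly non-effective setting.

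Represent $\mathfrak{f}$ and $\mathfrak{g}$ by diagrams $(\cX,\bX) \stackrel{F_i}{\leftarrow} (\cZ_i,\bZ_i) \stackrel{\Phi_i}{\to} (\cY,\bY)$ for $i=1,2$, with all four functors equivalences by Theorem \ref{thm:iso}. Form the weak fiber product $(\cW,\bW) := (\cZ_1,\bZ_1) \times_{(\cX,\bX)} (\cZ_2,\bZ_2)$ with projection equivalences $\pi_i : \cW \to \cZ_i$, which by construction carries a canonical natural equivalence $F_1 \circ \pi_1 \Rightarrow F_2 \circ \pi_2$. Then $(\cX,\bX) \stackrel{F_1 \circ \pi_1}{\leftarrow} (\cW,\bW) \stackrel{\Phi_1 \circ \pi_1}{\to} (\cY,\bY)$ is manifestly a refinement of $\mathfrak{f}$ via $H = \pi_1$, and will also be a refinement of $\mathfrak{g}$ via $H = \pi_2$ provided we exhibit a natural equivalence $\tau : \Phi_1 \circ \pi_1 \Rightarrow \Phi_2 \circ \pi_2$.

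Writing $\psi_i := \Phi_i \circ \pi_i : \cW \to \cY$, these are equivalences with $|\psi_1| = |\psi_2|$. For each $w \in \cW$, the composition $\psi_2 \circ \psi_1^{-1}$ is a local sc-diffeomorphism between uniformizers of $\cY$ near $\psi_1(w)$ and $\psi_2(w)$ inducing the identity on orbits. Applying Lemma \ref{lemma:localnatural} (which uses the regularity of $(\cY,\bY)$) provides $\theta_w \in \mor(\psi_1(w), \psi_2(w))$ and a connected regular uniformizer $\cV_w$ around $\psi_1(w)$ on which $\psi_2 \circ \psi_1^{-1} = L_{\theta_w}$. The \'etale property of $s,t : \bY \to \cY$ then extends $\theta_w$ to a sc-smooth local section $\tau_w$ defined on $\psi_1^{-1}(\cV_w) \subset \cW$ satisfying $s \circ \tau_w = \psi_1$ and $t \circ \tau_w = \psi_2$, via $\tau_w(u) := (s|_{\bO})^{-1}(\psi_1(u))$ for an open neighborhood $\bO \ni \theta_w$ of $\bY$ on which source and target are sc-diffeomorphisms.

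The main obstacle is verifying naturality of each $\tau_w$ and patching them into a single global sc-smooth natural transformation. For any $\phi : u_1 \to u_2$ in $\bW$ with both endpoints in the domain of $\tau_w$, the morphisms $\tau_w(u_2) \circ \psi_1(\phi)$ and $\psi_2(\phi) \circ \tau_w(u_1)$ of $\bY$ share source $\psi_1(u_1)$ and target $\psi_2(u_2)$ and induce the same germ of sc-diffeomorphism $\psi_2 \circ L_\phi \circ \psi_1^{-1}$; hence they differ by an element of the ``ineffective kernel'' $\ker(\stab_{\psi_2(u_2)} \to \diff_\sc(\psi_2(u_2)))$, which by the first condition of regularity acts trivially on the whole regular uniformizer. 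Viewing the pointwise difference as a sc-smooth assignment valued in the stabilizer whose associated $L$-action is a sc-diffeomorphism, the second condition of regularity in Definition \ref{def:regular} forces it to be locally constant in the effective quotient, so the local $\tau_w$ can be corrected by a constant element of the ineffective kernel to restore naturality on its domain. A patching argument over a locally finite cover of $\cW$ by such connected domains, handling overlaps by the same regularity-driven constancy, then assembles the adjusted local sections into a global sc-smooth natural transformation $\tau : \psi_1 \Rightarrow \psi_2$. This exhibits the common refinement and proves $\mathfrak{f} = \mathfrak{g}$.
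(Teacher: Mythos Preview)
The paper does not give its own proof of this statement; it simply records that the result is \cite[Theorem 10.7]{hofer2017polyfold} and remarks that the hypothesis there is subsumed by regularity. The closest the paper comes is Proposition~\ref{allnatural}, which proves the key step---two equivalences into $(\cX,\bX)$ with equal orbit maps are naturally equivalent---but only under the additional assumption that $(\cX,\bX)$ is \emph{effective}. Your weak-fiber-product reduction together with Lemma~\ref{lemma:localnatural} is exactly that argument, so in the effective case your proposal and the paper's Proposition~\ref{allnatural} coincide.

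The genuine gap is your passage from the effective to the merely regular case. Both conditions in Definition~\ref{def:regular} are stated purely in terms of the germs $L_\phi$, so they impose no constraint on the ineffective kernel $\ker(\stab_y\to\diff_\sc(y))$ itself. Your claim that the discrepancy in the naturality square---which you correctly locate in this kernel---can be ``corrected by a constant element'' does not follow: the second regularity condition only yields $L_{\Phi(z)}(z)=L_\phi(z)$ for a single $\phi$, which is vacuous once the discrepancy already acts trivially. Concretely, take the translation groupoid $K\ltimes M$ with $K=\Z/2\times\Z/2$ acting trivially on a connected manifold $M$. This groupoid is regular in the sense of Definition~\ref{def:regular} (both conditions hold trivially since every $L_\phi=\Id$), yet the autoequivalence that is the identity on objects and swaps the two $\Z/2$ factors on morphisms is not naturally equivalent to the identity: naturality would force the swap to be inner, which it is not in an abelian group. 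Both induce the identity on $|M|$. Thus the correction-and-patching step cannot succeed in this generality, and the non-effective case needs either a further hypothesis (such as effectiveness, as in Proposition~\ref{allnatural}) or an argument that goes beyond Definition~\ref{def:regular}.
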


\section{Stabilization on the Fixed Locus}\label{C}
In this section, all sc-Banach spaces are sc-Hilbert spaces. Therefore we have sc-smooth bump functions and partition of unity by \cite[Theorem 7.4]{hofer2017polyfold}. Let $V^\lambda$ be a fixed irreducible representation of $G$.  An M-polyfold subbundle of $\cE$ is an M-polyfold bundle $\cF$ together with a sc-smooth bundle inclusion $\cF \hookrightarrow \cE$. Before proving Proposition \ref{prop:stab}, we first set up some technical propositions that will be used in the proof.

\begin{proposition}\label{prop:complement}
	Let $p:\cE\to \cX$ be a $\lambda$-M-polyfold bundle and $F\subset \cE$ is a finite-dimensional $G$-invariant subbundle with constant rank. Then there is a complement $\lambda$-M-polyfold subbundle $F^\perp\subset \cE$, such that $F^\perp$ is $G$-invariant and $F^\perp\oplus F=\cE$.
\end{proposition}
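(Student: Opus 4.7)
The plan is to construct $F^\perp$ as the orthogonal complement of $F$ with respect to a $G$-invariant sc-smooth Hermitian structure on $\cE$, and then verify that the resulting complement inherits the $\lambda$-structure automatically from fiber-wise representation theory.

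First, I would build a sc-smooth inner product $g$ on $\cE$. Locally, in any strong bundle chart, $\cE$ is modeled on a sc-Hilbert space, which carries a canonical inner product. Using a sc-smooth partition of unity on $\cX$ (available since we are modeling on sc-Hilbert spaces, cf.\ \cite[Theorem 7.4]{hofer2017polyfold}), we glue these local inner products to a global sc-smooth fiberwise inner product $g$ on $\cE$. Next, I would make $g$ both $G$-invariant by averaging over the compact Lie group $G$ via the Haar measure, exactly as in the averaging trick used in Proposition \ref{prop:Glocal} and Corollary \ref{coro:ave}; the result is still a sc-smooth fiberwise inner product because $\rho$ acts by sc-smooth bundle isomorphisms and $G$ is compact.

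Given such a $g$, the second step is to write down the orthogonal projection $\pi_F:\cE\to F$ determined by $g$ and $F$. Because $F$ has constant finite rank, locally we may choose a sc-smooth frame $e_1,\dots,e_k$ of $F$; applying Gram--Schmidt with respect to $g$ yields a sc-smooth orthonormal frame, and $\pi_F(v)=\sum_i g(v,e_i)e_i$ is manifestly a sc-smooth strong bundle map. The kernel $F^\perp:=\ker\pi_F$ is then a sc-smooth strong M-polyfold subbundle of $\cE$ with $\cE=F\oplus F^\perp$. Since $F$, $g$, and the $G$-action are all $G$-equivariant, $\pi_F$ intertwines the $G$-action, so $F^\perp$ is $G$-invariant.

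It remains to check that $F^\perp$ is a $\lambda$-M-polyfold bundle in the sense of Definition \ref{def:lambundle}. For any $v\in F^\perp_x\subset \cE_x$, the $G$-invariant subspace spanned by $v$ in $\cE_x$ is already $G$-equivariantly isomorphic to $V^\lambda$ by hypothesis on $\cE$; this subspace is then automatically contained in $F^\perp_x$, since $F^\perp_x$ is a $G$-invariant complement and $\pi_F$ is $G$-equivariant (a $G$-subrepresentation of $F^\perp_x$ projected into $F$ must be zero because $F\cap F^\perp=0$). Thus $F^\perp$ satisfies the defining property of a $\lambda$-bundle.

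The main obstacle is really the first step: constructing a sc-smooth $G$-invariant fiberwise inner product that behaves well under the level structure. The partition-of-unity gluing is standard in the sc-Hilbert setting, and the averaging argument is routine once one checks that sc-smoothness is preserved by integration over the compact group $G$ (this uses that the $G$-action on $\cE$ is sc-smooth and $G$ is compact, so the averaging produces a sc-smooth tensor; compare Corollary \ref{coro:ave}). Once $g$ is in hand, the rest of the argument is essentially linear algebra done fiberwise plus Schur's lemma, and no further analytic subtlety arises because $F$ has finite constant rank.
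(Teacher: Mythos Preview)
Your proposal is correct and follows essentially the same approach as the paper: construct a sc-smooth fiberwise inner product via partition of unity on the sc-Hilbert models, average it over $G$ to make it invariant, build the orthogonal projection $\pi_F$ using a local Gram--Schmidt frame, and take $F^\perp=\ker\pi_F$. The paper's proof is nearly identical, with the only difference being that it dispatches the $\lambda$-bundle property of $F^\perp$ in one line (``automatically''), whereas you spell out the representation-theoretic reason.
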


\begin{proof}
	Pick an open cover $\{\cU_\alpha\}_{\alpha \in A}$ of $\cX$, such that over each $U_\alpha$, $\cE$ has a trivialization modeled on the image of a bundle retraction $R_\alpha: U_\alpha\times \E_\alpha\to U_\alpha\times \E_\alpha$ and $\Phi_\alpha: p^{-1}(\cU_\alpha)\to \Ima R_\alpha$ is the local trivialization, for some sc-Hilbert space $\E_\alpha$. Using a partition of unity $\{f_\alpha\}$ subordinated to $\{\cU_\alpha\}$, then there is a $\sc^\infty$ map $g(\cdot,\cdot):\cE\oplus \cE\to \R$ by $$(x,e_1,e_2)\to \sum f_\alpha(x)\langle \pi_\E\circ \Phi_\alpha (x,e_1), \pi_\E\circ \Phi_\alpha(x,e_2)\rangle_{\E_0}.$$ 
	Then $g$ defines a metric on $\cE$. By averaging over $G$ using Corollary \ref{lemma:avesmooth}, we can assume $g$ is $G$-invariant on $\cE$. Using the metric $g$, we can define a map $\pi_F:\cE \to \cE$ to be the orthogonal projection from $\cE$ to $F$. Since $g$ and $F$ are both $G$-invariant, $\pi_F$ is a $G$-equivariant map.  We claim that $\pi_F$ is sc-smooth.  To see that,  let $x\in \cX$, we can trivialize $F$ locally to find $n$ sc-smooth sections $s_1,\ldots,s_n$ of $F$ near $x$, such that $s_1,\ldots,s_n$ are point-wise linearly independent and span $F$. Since $F\subset \cE$ is a subbundle, in particular, the inclusion $F\subset \cE$ is sc-smooth, $s_1,\ldots,s_n$ can be treated as $sc^\infty$ sections of $\cE$. Using the Gram-Schmidt process, we can assume $s_1,\ldots,s_n$ form an orthonormal basis. Therefore the projection to $F$ near $x$ can be written as 
	$$\pi_F(y,e) = \sum_{i=1}^ng(y,e,s_i(y)) s_i(y). $$
	Therefore $\pi_F$ is sc-smooth. Since $\pi_F$ is a $G$-equivariant projection, $\Id - \pi_F$ is a  $G$-equivariant projection from $\cE$ to $\ker \pi_F$. Composing the bundle chart maps of $\mathcal{E}$ with $\Id-\pi_F$, we can get bundle charts for $\ker \pi_F$. Since $\pi_F$ is $G$-equivariant,  $F^\perp:=\ker \pi_F$ is a $G$-invariant M-polyfold subbundle of $\cE$ and is a complement of $F$. The $G$-invariant subbundle $F^\perp$ of the $\lambda$-M-polyfold bundle $\cE$ is automatically a $\lambda$-M-polyfold bundle
\end{proof}

The following two propositions discuss extensions of sections in M-polyfold bundle. 
\begin{proposition}\label{prop:ext1}
	Given a triangulation on a compact $n$-dimensional manifold $M$ and $W^\lambda$ is a $\lambda$-M-polyfold bundle over $M$. For $(x,w) \in W^\lambda_x$, we use $I_{(x,w)} \subset W^\lambda_x$ to denote the $G$-invariant subspace spanned by $(x,w) \in W^\lambda_x$. Assume over an open neighborhood $U$ of a $k$-simplex $\triangle^k\subset M$, the bundle is modeled on a bundle retraction $R: U\times \F\to U\times\F$ and there are elements $\rho_1,\ldots, \rho_{n+1} \in \F_\infty$, such that $I_{R(u,\rho_1)}, \ldots, I_{R(u,\rho_{n+1})}$ do not have nontrivial intersections with each other for all $u\in U$. If there exists a nowhere vanishing $\sc^\infty$ section $s:V\to W^\lambda$ over an open neighborhood $V$ of $\partial \triangle^k$, then there is a nowhere vanishing section $\tilde{s}$ defined on a neighborhood $O$ of $\triangle^k$, such that $s=\tilde{s}$ on a neighborhood of $\partial \triangle^n$.
\end{proposition}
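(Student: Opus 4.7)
The plan is to first extend $s$ to an $\sc^\infty$ section $s'$ on a full neighborhood $O \subset U$ of $\triangle^k$ using sc-smooth partitions of unity (available because $W^\lambda$ is modeled on sc-Hilbert spaces by the standing assumption of Section \ref{s5}), and then to perturb $s'$ along the directions $R(\cdot,\rho_1),\ldots,R(\cdot,\rho_{n+1})$ by a constant vector $t\in\R^{n+1}$. Concretely, pick an $\sc^\infty$ cutoff $\phi:O\to[0,1]$ that vanishes on a smaller open neighborhood $V'\subset V$ of $\partial\triangle^k$ (so there $s'\equiv s$ is nowhere vanishing) and equals $1$ on a compact ``core'' of $\triangle^k$ away from $V$. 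Define
\begin{equation*}
\sigma:O\times\R^{n+1}\to W^\lambda,\qquad \sigma(x,t):=s'(x)+\phi(x)\sum_{i=1}^{n+1}t_i\,R(x,\rho_i).
\end{equation*}
For every $t$ the section $\sigma(\cdot,t)$ agrees with $s$ on $V'$, so any choice yields a candidate extension with the correct behavior near $\partial\triangle^k$; the task is to choose $t$ close to $0\in\R^{n+1}$ so that $\sigma(\cdot,t)$ is nowhere zero on $\triangle^k$.

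The hypothesis that the $G$-invariant subspaces $I_{R(x,\rho_i)}\subset W^\lambda_x$ intersect only trivially forces linear independence of $R(x,\rho_1),\ldots,R(x,\rho_{n+1})$ over $\R$, so the map $t\mapsto\sum t_i R(x,\rho_i)$ is an injection $\R^{n+1}\hookrightarrow W^\lambda_x$ for every $x\in U$. Consequently, for each $x\in\triangle^k$ with $\phi(x)>0$ the equation $\sigma(x,t)=0$ has at most one solution $t=t(x)\in\R^{n+1}$, while for $x$ with $\phi(x)=0$ one has $\sigma(x,t)=s(x)\neq 0$ for every $t$. Using a $G$-equivariant fiberwise inner product on $W^\lambda|_O$ (obtained by averaging a local inner product over $G$, as in the proof of Proposition \ref{prop:complement}) one may orthogonally project $s'(x)$ onto the finite-rank $\sc^\infty$ subbundle $S:=\bigoplus_i I_{R(\cdot,\rho_i)}$ and invert the resulting smoothly varying finite-dimensional system to obtain the coefficient map $x\mapsto t(x)$ as an $\sc^\infty$ function on the open locus where it is defined.

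Thus the bad set $B:=\{t\in\R^{n+1}:\sigma(x,t)=0\text{ for some }x\in\triangle^k\}$ is the image of a smooth map from (an open subset of) the $k$-dimensional simplex $\triangle^k$ into $\R^{n+1}$. Since $k\le n<n+1$, that image has Lebesgue measure zero in $\R^{n+1}$, so $\R^{n+1}\setminus B$ is dense near $0$; pick $t^*\in\R^{n+1}\setminus B$ arbitrarily close to $0$. Setting $\tilde s:=\sigma(\cdot,t^*)$ produces an $\sc^\infty$ section on $O$ which coincides with $s$ on $V'$ and vanishes nowhere on $\triangle^k$; by continuity and compactness of $\triangle^k$ it is nowhere zero on a slightly shrunk neighborhood of $\triangle^k$, which serves as the required $O$. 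The main technical obstacle will be establishing the $\sc^\infty$ regularity of $x\mapsto t(x)$ so that the dimension count genuinely forces $B$ to have measure zero; once the $G$-equivariant inner product and the projection onto $S$ are in place, this reduces to inverting a smoothly varying $(n+1)\times(n+1)$ real matrix (the Gram matrix of the $R(x,\rho_i)$), whose nondegeneracy is exactly the linear-independence hypothesis.
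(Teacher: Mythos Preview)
Your approach is correct and takes a different route from the paper's. The paper works inside the rank-$m(n+1)$ trivial $G$-invariant subbundle $F := \bigoplus_i I_{R(\cdot,\rho_i)}$ (where $m = \dim V^\lambda$): it projects a rough extension $s_1$ onto $F$, perturbs slightly so that $\pi_F \circ s_1$ becomes transverse (hence nonvanishing, since $\operatorname{rank} F > n$) near an inner collar, and then fills in a nowhere-zero section of $F$ over the remaining core; the final answer is $\tilde s = \pi_{F^\perp} s_1 + s_F$. Your argument instead parametrizes perturbations by a constant $t \in \R^{n+1}$ along the $(n+1)$ real directions $R(\cdot,\rho_i)$ and uses a Sard-type dimension count. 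The paper's route uses the full $G$-bundle $F$ and is closer in spirit to obstruction theory; yours is more direct but trades that for the regularity check you flag at the end.

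One point to tighten: pairwise trivial intersection of the $I_{R(x,\rho_i)}$ does \emph{not} by itself force $\R$-linear independence of the $R(x,\rho_i)$ --- for instance, three distinct complex lines in $\C^2$ have pairwise trivial intersection while their real generators may satisfy a linear relation. The paper's proof in fact uses that the $m(n+1)$ vectors $g_j R(u,\rho_i)$ are independent, i.e.\ that the $I_{R(u,\rho_i)}$ are in \emph{direct sum}; this is evidently the intended reading of the hypothesis, and under it your injectivity claim holds. For the measure-zero step you can also avoid worrying about where $t(\cdot)$ is defined: extend $T$ to all of $\{\phi>0\}\cap\triangle^k$ by solving only the projected system $\langle R(x,\rho_j), \sigma(x,t)\rangle = 0$ (the Gram matrix is invertible by the independence just noted), observe that $B \subset T(\{\phi>0\}\cap\triangle^k)$, and use that a $C^1$ image of a $k$-manifold in $\R^{n+1}$ has measure zero.
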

\begin{proof}
	We will prove the claim for $k=n$ and the remaining cases are similar.
	Assume $\dim V^\lambda = m$, then there exist $g_1,\ldots, g_m \in G$, such that for any $v\ne 0 \in V^\lambda$, $\{g_1 v,\ldots, g_mv\}$ form a basis of $V^\lambda$. Since $I_{R(u,\rho_1)}, \ldots, I_{R(u,\rho_{n+1})}$ do not have nontrivial intersections with each other for all $u\in U$, we have $g_1R(u,\rho_1), \ldots, g_mR(u,\rho_1), \allowbreak g_1R(u,\rho_2)\ldots, g_m R(u,\rho_{n+1})$ are linearly independent for all $u \in U$, i.e. they span a $G$-invariant trivial subbundle $F\subset W^\lambda$ of constant rank. By Proposition \ref{prop:complement}, we have a decomposition over $U$:
	$$W^\lambda=F\oplus F^\perp.$$
	Let $\pi_F$ denote the projection to $F$. Let $O$ denote the open neighborhood $\triangle^n\cup V$ of $\triangle^n$ and $\triangle_2^n \subset \triangle_1^n \subset \triangle^n$ be a two smaller $n$-simplexes with $\triangle^n\backslash V \subset \triangle^n_2$. Using a smooth bump function, we have a section $s_1$ on $O$ such that $s_1=s \ne 0$ on $O\backslash \triangle_2^n$. We can find a smooth $C^0$ small perturbation $p$ on $O$ in the finite dimensional bundle $F$, such that
	\begin{enumerate}
		\item $p = 0$  on $O\backslash \triangle^n_1$;
		\item $s_1+p \ne 0$ on  on $\triangle^n_1\backslash \triangle^n_2$;
		\item $\pi_F\circ s_1+p$ is a transverse section of $F$ near the $\partial \triangle^n_2$.
	\end{enumerate}
	By dimension reason, transversality here implies $\pi_F\circ s_1+p\ne 0$ near $\partial \triangle^n_2$.  Since $\dim F > n$,  $\pi_F\circ s_1+p$ can be extended smoothly to a section of $F$ from $O \backslash \triangle^n_2$ to $O$ and is nowhere zero on $\triangle_2^n$.  Denote the extension  by $s_F$. That is we have $s_F = \pi_F\circ s_1+p$ on $O\backslash \triangle_2^n$ and $s_F$ is nowhere zero on $\triangle_2^n$. Then
	$$\tilde{s}:= s_1 - \pi_F\circ s_1+ s_F$$
	satisfies the conditions. On $O\backslash \triangle_1^n$, $\tilde{s} = s_1=s \ne 0$; On $\triangle_1^n \backslash \triangle_2^n$, $\tilde{s} = s_1+p \ne 0$; On $\triangle_2^n$, $\pi_F\circ \tilde{s} = s_F \ne 0$. Moreover $\tilde{s} = s$ on $V':=O\backslash \triangle_1^n\subset V\cap O$. 
\end{proof}

\begin{proposition}\label{prop:ext2}
	Let $M$ be a compact $n$-dimensional manifold and $W^\lambda \to M$ be an infinite-dimensional $\lambda$-M-polyfold bundle.  Then for any triangulation on $M$, there exists a refined triangulation, such that if we have a trivial  $G$-invariant subbundle $F\subset W_\lambda$ over a neighborhood of an $n$-simplex $\triangle$ , then we can extend $F$ from $\triangle$ to $M$ as a trivial $G$-invariant subbundle of the same rank\footnote{But it may change the bundle outside the simplex.}.
\end{proposition}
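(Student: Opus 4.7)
The plan is to reduce the extension of the whole subbundle $F$ to the extension of finitely many nowhere-vanishing sections, then extend those sections inductively across a refined triangulation using Proposition \ref{prop:ext1}. First I would refine the given triangulation so that every $n$-simplex $\triangle_\alpha$ is contained in an open set $U_\alpha \subset M$ on which $W^\lambda$ is modeled by a bundle retraction $R_\alpha : U_\alpha \times \F_\alpha \to U_\alpha \times \F_\alpha$; the refinement can be made arbitrarily fine because $M$ is compact and the covering $\{U_\alpha\}$ of bundle charts is locally finite. Since $W^\lambda$ is an infinite-dimensional $\lambda$-bundle, on each $U_\alpha$ I can choose countably many smooth points $\rho^{(\alpha)}_1,\rho^{(\alpha)}_2,\ldots \in (\F_\alpha)_\infty$ with the property that the $G$-invariant subspaces $I_{R_\alpha(u,\rho^{(\alpha)}_i)}$ are pairwise independent in $(W^\lambda)_u$ for every $u\in U_\alpha$; this is possible because the fiber is infinite dimensional and the condition of independence is open, and I can arrange the list to contain in particular more than $\mathrm{rank}\,F + n+1$ elements, which will be needed when invoking Proposition \ref{prop:ext1}.

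Next I would encode the trivial $G$-invariant subbundle $F$ over the starting $n$-simplex $\triangle_0$ by nowhere-vanishing sections $s_1,\ldots,s_k : V_0 \to W^\lambda$ defined on a neighborhood $V_0$ of $\triangle_0$, such that $F=\bigoplus_{i=1}^k I_{s_i}$, and moreover $I_{s_i(x)}$ are pairwise independent for every $x\in V_0$. To reconstruct $F$ after extension it suffices to extend each $s_i$ as a nowhere-vanishing section to all of $M$ while preserving that the $I_{s_i(x)}$ remain pairwise independent; then $\bigoplus I_{s_i}$ is the desired trivial $G$-invariant subbundle of the same rank. I would then order the $n$-simplices $\triangle_0,\triangle_1,\ldots,\triangle_N$ so that each $\triangle_j$ shares at least one $(n{-}1)$-face with $\triangle_0\cup\cdots\cup\triangle_{j-1}$, and extend $s_1,\ldots,s_k$ one simplex at a time. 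At stage $j$, I already have the sections defined and pairwise $G$-independent on a neighborhood $V_{j-1}$ of $\triangle_0\cup\cdots\cup\triangle_{j-1}$; in particular they are defined on a neighborhood of $\partial\triangle_j$. Working inside the bundle chart $U_{\alpha(j)}\supset\triangle_j$, I apply Proposition \ref{prop:ext1} successively to $s_1,\ldots,s_k$ to obtain nowhere-vanishing extensions $\tilde s_1,\ldots,\tilde s_k$ on a neighborhood of $\triangle_j$ agreeing with $s_i$ near $\partial\triangle_j$.

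The delicate point, and the main obstacle, is maintaining pairwise $G$-independence of the $I_{\tilde s_i(x)}$ throughout the extension, since Proposition \ref{prop:ext1} only guarantees non-vanishing. I would handle this by incorporating the independence condition into the transversality argument that underlies Proposition \ref{prop:ext1}: after choosing a large enough finite-dimensional $G$-invariant trivial subbundle $F^{(j)}\subset W^\lambda|_{U_{\alpha(j)}}$ spanned by $G$-translates of $R_{\alpha(j)}(u,\rho^{(\alpha(j))}_\ell)$ for sufficiently many $\ell$ (this uses infinite-dimensionality and the independence of the chosen $\rho_\ell$'s), the condition that $I_{\tilde s_i(x)}$ fail to be independent from the already-constructed $\bigoplus_{l\ne i}I_{\tilde s_l(x)}$ defines a subvariety of codimension at least $\dim_\R \End_G(V^\lambda)\cdot(\mathrm{rank}\,F^{(j)} - (k-1)\dim V^\lambda)$ in $F^{(j)}_x$, which by taking $F^{(j)}$ large enough exceeds $n=\dim M$. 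A generic perturbation inside $F^{(j)}$, supported away from $\partial\triangle_j$ via a $G$-invariant bump function (from Corollary \ref{coro:equi}), therefore produces an extension avoiding the bad locus. Iterating over $j=1,\ldots,N$ finishes the construction.

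The principal difficulty is thus not the existence of nowhere-vanishing extensions but the simultaneous preservation of the $G$-independence condition; this is precisely where infinite-dimensionality of $W^\lambda$ is used, to enlarge the finite-dimensional perturbation bundle $F^{(j)}$ until the singular stratum has codimension greater than $n$ so that a generic perturbation misses it.
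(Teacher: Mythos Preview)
There is a genuine gap in your extension scheme, and the paper takes a different (and simpler) route on the independence issue.

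\textbf{The gap.} You order the top-dimensional simplices so that each $\triangle_j$ shares at least one $(n{-}1)$-face with $\triangle_0\cup\cdots\cup\triangle_{j-1}$, and then assert that the sections are defined on a neighborhood of $\partial\triangle_j$. But sharing a single face does not place the entire boundary $\partial\triangle_j$ inside the previous union; in general most of $\partial\triangle_j$ lies in simplices you have not yet visited. Proposition~\ref{prop:ext1} requires the section on a neighborhood of the \emph{full} boundary $\partial\triangle^k$, so it cannot be applied as written. The paper fixes this by extending skeleton by skeleton rather than top simplex by top simplex: assign nonzero values at the $0$-simplices outside $\triangle$, then use Proposition~\ref{prop:ext1} to fill in the $1$-simplices, then the $2$-simplices, and so on up to dimension $n$. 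At each stage the boundary of a $k$-simplex lies in the $(k{-}1)$-skeleton, which has already been handled.

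\textbf{The independence condition.} You identify this as the main obstacle and propose a transversality argument inside a large auxiliary trivial subbundle. The paper avoids this entirely by a much simpler device: extend the sections one at a time and work in successive complements. After $s_1$ is extended to a global nowhere-vanishing section, Proposition~\ref{prop:complement} yields a $G$-invariant complement $I_{s_1}^\perp\subset W^\lambda$, which is again an infinite-dimensional $\lambda$-bundle. Project $s_2,\ldots,s_k$ into $I_{s_1}^\perp$ (over $\triangle$ they remain linearly independent), then extend the projected $s_2$ as a nowhere-vanishing section of $I_{s_1}^\perp$ by the same skeleton-by-skeleton procedure. Iterating, the resulting sections have $G$-spans that are pairwise independent by construction, with no genericity or codimension count needed.
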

\begin{proof}[Proof of Proposition \ref{prop:ext2}]
	Given a triangulation, we can always find a refinement of triangulation such that the conditions in Proposition \ref{prop:ext1} hold because $W^\lambda$ is infinite dimensional. Since $F$ is a trivial $\lambda$-bundle, we can find sections $s_1,\ldots,s_m$ of $F$, such that $F=\oplus_{i=1}^m I_{s_i}$, where $I_{s_i}$ the trivial $G$-bundle generated by $s_1$.  We extend $s_1$ first, pick all the $0$-simplexes in $M$ not contained in $\triangle^n$, we can assign nonzero vectors to them, then by Proposition \ref{prop:ext1}, we can extend $s_1$ to neighborhoods of all the $1$-simplexes not contained in $\triangle_n$. Therefore after applying Proposition \ref{prop:ext1} enough times, $s_1$ can be extended to a nonzero section in $W^\lambda$. By Proposition \ref{prop:complement}, there is a complement to $I_{s_1}$, denoted by $I_{s_1}^\perp$. Then the projection of $s_2,\ldots, s_m$ to $I_{s_1}^\perp$ are linearly independent over $\triangle^n$. Then we can extend the projection of $s_2$ in $I_{s_1}^\perp$ by the same argument. Repeat it $m$ times, we find the extension $F$. 
\end{proof}

Before proving Proposition \ref{prop:Astab}, we first state \cite[Corollary 3.3]{counter}, which will be useful for our proof.

\begin{proposition}\label{prop:coro}
	Let $s:O\to K$ be a sc-Fredholm section for a strong bundle retract $K$ and $x\in O_\infty$. Let $F\subset K$ be a trivial finite dimensional bundle defined over a neighborhood of $x$, which covers $\coker \rD s_x$.\footnote{$\rD s$ makes sense here, because $K\to O$ has a natural splitting.} Then $F$ covers $\coker \rD s$ on a neighborhood of $x$ in $O^2$.  
\end{proposition}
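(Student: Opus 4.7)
The proposition is presented explicitly as a restatement of \cite[Corollary 3.3]{counter}, so strictly speaking the ``proof'' consists of invoking that reference. Below I outline the underlying idea, which is what I would write down if I had to reprove it from scratch.

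The plan is to work locally in a Fredholm chart around $x$ in the sense of Definition~\ref{def:fredchart}. After passing to the filled section $s^f$ and applying the strong bundle isomorphism $A$, I may assume that on a neighborhood of $(0,0)\in\R_+^m\times\R^k\times W$ the section $s^f$ is the sum of a basic germ and a $\sc^+$-section $\gamma$. In this local model the cokernel of $\rD s_x^f$ has a clean description: it sits inside the $\R^N$-factor coming from the basic germ normal form, and the contracting property of $B(c,w)$ makes $\rD s_y^f$ well-controlled in the $w$-direction.

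Next, I would introduce the augmented bundle map
\begin{equation*}
T_y\colon T_yO\oplus F_y\longrightarrow K_y,\qquad (\xi,v)\longmapsto \rD s_y(\xi)+\iota(v),
\end{equation*}
where $\iota\colon F\hookrightarrow K$ is the inclusion. Since $F$ is finite dimensional and $\rD s_y$ is Fredholm, $T_y$ is Fredholm, and by hypothesis $T_x$ is surjective. The goal then reduces to showing that surjectivity persists for $y$ in a neighborhood of $x$ in $O^2$. The standard strategy is to pick a closed complement $C\subset T_xO\oplus F_x$ of $\ker T_x$, so that $T_x|_C$ is a continuous bijection onto $K_x$, and then transport $C$ to nearby fibers; surjectivity at nearby $y$ then follows from an open mapping / Neumann series argument applied to $T_y|_{C}$, once sufficient continuity of $y\mapsto T_y$ is known.

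The main obstacle—and the reason the level shift from $O$ to $O^2$ appears in the statement—is exactly the well-known failure of continuity of $y\mapsto \rD s_y$ in the operator norm topology at level zero, see \cite[Theorem~4.1]{counter}. The resolution, carried out in \cite{counter}, is that the basic-germ-plus-$\sc^+$ structure provides precisely enough regularity at the shifted levels for the Fredholm stability argument sketched above to go through. This is the content of \cite[Corollary~3.3]{counter}, which I invoke here without reproducing the technical details.
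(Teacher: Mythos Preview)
Your proposal is correct and matches the paper exactly: the paper does not prove this proposition at all but simply states it as a citation of \cite[Corollary~3.3]{counter}, and you identify this right away. The additional sketch you provide of the underlying argument (augmented operator $T_y$, Fredholm stability, the role of the level shift) is a reasonable bonus exposition, but the paper itself contains nothing beyond the bare citation.
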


\begin{proposition}[Proposition \ref{prop:stab}]\label{prop:Astab}
	Under the tubular neighborhood assumption (Definition \ref{def:tube}), assume $(s^G)^{-1}(0)$ is a compact manifold. Then there is a $G$-invariant finite-dimensional trivial subbundle $\overline{W}^\lambda\subset W^\lambda_\infty$ over $(s^G)^{-1}(0)$ with constant rank, such that $\overline{W}^\lambda$ covers $\coker \rD^\lambda s$ over $(s^G)^{-1}(0)$.  
\end{proposition}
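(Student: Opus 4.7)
The strategy is to construct $\overline{W}^\lambda$ in three stages: produce local trivial $G$-invariant subbundles of $W^\lambda_\infty$ that cover $\coker \rD^\lambda s$ near each point; inflate them to a uniform $V^\lambda$-multiplicity by exploiting the infinite rank of $W^\lambda$; and finally assemble them into a globally trivial bundle of constant rank using the simplex-by-simplex extension of Proposition \ref{prop:ext2}. The first two stages are essentially finite-dimensional representation theory combined with the extension statement of Proposition \ref{prop:coro}, while the last stage is where the infinite-dimensional assumption in Definition \ref{def:tube}(5) enters crucially.

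For the local stage I would fix $x_0\in (s^G)^{-1}(0)$. Because $\rD^\lambda s_{x_0}:N^\lambda_{x_0}\to W^\lambda_{x_0}$ is $G$-equivariant Fredholm and $W^\lambda$ is $\lambda$-typic, $\coker\rD^\lambda s_{x_0}$ is a finite sum of copies of $V^\lambda$. Choose smooth vectors $w_1,\ldots,w_{k_0}\in W^\lambda_{x_0,\infty}$ whose $G$-invariant spans $I_{w_1},\ldots,I_{w_{k_0}}$ are pairwise independent and together surject onto $\coker\rD^\lambda s_{x_0}$, and push them through the local strong-bundle trivialization of $W^\lambda$ to local smooth sections $\sigma_1,\ldots,\sigma_{k_0}$. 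Their pointwise $G$-span is a local trivial $G$-invariant subbundle $F_{x_0}\subset W^\lambda_\infty$ of $V^\lambda$-multiplicity $k_0$. Applying Proposition \ref{prop:coro} to a local $\sc^+$-perturbation making $s$ vanish at $x_0$, the finite-dimensional $F_{x_0}$ covers $\coker\rD^\lambda s$ on an open neighborhood $U_{x_0}\subset (s^G)^{-1}(0)$. By compactness of $(s^G)^{-1}(0)$ we get a finite subcover $\{U_{x_i}\}_{i=1}^N$; set $r:=\max_i k_i$. Using Definition \ref{def:tube}(5) to pick additional smooth $G$-invariant sections pointwise independent from the existing ones (and using Proposition \ref{prop:complement} to guarantee these sit in a $G$-invariant complement), inflate each $F_{x_i}$ to a trivial $G$-invariant subbundle $F'_{x_i}\subset W^\lambda_\infty|_{U_{x_i}}$ of uniform $V^\lambda$-multiplicity $r$; the covering property is preserved since we only enlarge.

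For the global stage, choose a triangulation of $(s^G)^{-1}(0)$ subordinate to $\{U_{x_i}\}$ and refined as in Proposition \ref{prop:ext2}. Construct $\overline{W}^\lambda$ by induction on the skeleton, top-simplex by top-simplex: on the initial simplex, contained in some $U_{x_{i_0}}$, set $\overline{W}^\lambda:=F'_{x_{i_0}}$; to extend across a new top-simplex $\triangle\subset U_{x_i}$, I would apply Proposition \ref{prop:ext2} to extend the bundle already built on a neighborhood of $\partial\triangle$ across $\triangle$ as a trivial $G$-invariant subbundle of the same $V^\lambda$-multiplicity $r$, while simultaneously arranging, via the flexibility of Proposition \ref{prop:ext1}, that the extension contains a subbundle spanning $F'_{x_i}|_\triangle$. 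This ensures that $\overline{W}^\lambda_y$ contains $(F'_{x_i})_y$ and hence still surjects onto $\coker\rD^\lambda s_y$ for every $y\in\triangle$. After finitely many steps the process terminates, yielding the desired $\overline{W}^\lambda$ over all of $(s^G)^{-1}(0)$.

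The main obstacle is the last step: maintaining the cokernel-covering property simultaneously with global triviality and constant rank. The obstruction to covering $\coker\rD^\lambda s_y$ is finite-dimensional at each point, whereas the flexibility available to us, thanks to $\mathrm{rank}\,W^\lambda=\infty$ in Definition \ref{def:tube}(5), is infinite-dimensional; Proposition \ref{prop:ext1} and Proposition \ref{prop:ext2} precisely package this flexibility into a usable inductive extension principle. The delicate bookkeeping is to track, at each inductive step, that the freshly extended sections not only agree with the ones already chosen on the boundary of $\triangle$ but also realize the $r$ prescribed copies of $V^\lambda$ that contain $F'_{x_i}|_\triangle$; this is where the triangulation must be chosen sufficiently fine so that on each simplex a single $F'_{x_i}$ is available and the trivialization hypotheses of Proposition \ref{prop:ext1} can be arranged.
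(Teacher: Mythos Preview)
Your local stage and the use of Proposition~\ref{prop:coro} match the paper's argument. The genuine problem is in your global assembly. You try to build a single trivial $G$-invariant subbundle of fixed $V^\lambda$-multiplicity $r=\max_i k_i$ by extending it simplex by simplex, requiring at each new $\triangle$ that the extension ``contains a subbundle spanning $F'_{x_i}|_\triangle$''. But $F'_{x_i}$ already has multiplicity $r$, so this would force the extension to \emph{equal} $F'_{x_i}$ on $\triangle$; on $\partial\triangle$ this conflicts with the boundary values inherited from the previously built bundle (which came from a different $F'_{x_j}$ and has no reason to agree with $F'_{x_i}$ there). Propositions~\ref{prop:ext1} and~\ref{prop:ext2} give you nonvanishing extensions matching prescribed boundary data, but they do not let you prescribe the values of the extension in the interior of the simplex, so you cannot force the extended bundle to cover $\coker\rD^\lambda s$ on $\triangle$.

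The paper resolves this by abandoning the fixed-rank constraint. Instead of extending one bundle across simplices, it extends each local bundle $F_i$ (defined only near $\triangle_i$) to a trivial subbundle $\widetilde{F_i}$ over all of $M$, using Proposition~\ref{prop:ext2}. To keep the sum $\widetilde{F_1}+\cdots+\widetilde{F_l}$ of constant rank, each successive $F_i$ is first perturbed slightly (within a large trivial piece $V^\lambda\otimes\R^{2N}$ sitting in the complement $(\widetilde{F_1}\oplus\cdots\oplus\widetilde{F_{i-1}})^\perp$, available by Proposition~\ref{prop:complement} and infinite rank) so that its projection to that complement is still injective, and then extended inside that complement. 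The final $\overline{W}^\lambda=\widetilde{F_1}\oplus\cdots\oplus\widetilde{F_l}$ has larger rank than your $r$, but the cokernel-covering property on each $\triangle_i$ is immediate because $\overline{W}^\lambda$ contains (a small perturbation of) $F_i$ there. The point is that the rank is allowed to accumulate; your attempt to keep it minimal is exactly what creates the incompatibility between boundary matching and interior covering.
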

\begin{proof}[Proof of Proposition \ref{prop:Astab}]
	By Proposition \ref{prop:coro}, for every point $x$ in $M=(s^G)^{-1}(0)$, there exists a neighborhood $U\subset M$\footnote{Since $M$ is compact and Hausdorff for any $Z^G_i$ topology, the $Z^G_i$ topologies on $M$ are equivalent to each other for different $i$.} of $x$ and sections $s_1,\ldots, s_k$ of $W^\lambda$ over $U$, such that $I_{s_1}, \ldots, I_{s_k}$ have trivial intersection with each other and covers the cokernel of $\rD^\lambda s$ over $U$. We can find a finite cover of $M$, such that each open set in the cover has this property. Then we can find a triangulation such that every simplex is contained in some open set in the cover and the triangulation satisfies Proposition \ref{prop:ext2}. We order the top-dimensional simplex by $\triangle_1,\ldots, \triangle_l$.  $N$ denotes of the sum of the numbers of the vector fields over every open set in that open cover or simply take $N\gg0$. Therefore for each top simplex $\triangle_i$, we have a $G$-invariant trivial bundle $F_i$ covers the cokernel of $\rD^\lambda s$ on $\triangle_i$. 
	
	By Proposition \ref{prop:ext2}, we can find trivial subbundle $\widetilde{F_1}$ extending $F_1$ hence covers $\coker \rD^\lambda s$ over $\triangle_1$. Similarly, we could have a extension $\widetilde{F_2}$ of $F_2$, which covers $\coker \rD^\lambda s$ on $\triangle_2$. However, if $\widetilde{F_1}\cap \widetilde{F_2}\ne 0$, the rank of $\widetilde{F_1}+\widetilde{F_2}$ may be varying. To avoid this, we need to modify $F_2$ slightly and then extend it in the infinite dimensional complement $\widetilde{F_1}^\perp$, which is guaranteed by Proposition \ref{prop:complement}. By Proposition \ref{prop:ext2} and infinite dimensionality of $\widetilde{F_1}^\perp$, we can find a trivial bundle $V^\lambda \otimes \R^{2N} \subset \widetilde{F_1}^\perp$ over $M$. Let $\pi_{\R^{2N}}$ denote the projections from $W^\lambda$ to $ V^\lambda \otimes \R^{2N}$ by compositing the projections $W^\lambda\to \widetilde{F_1}^\perp$ and $\widetilde{F_1}^\perp \to V^\lambda \otimes \R^{2N}$. Then for any small $\epsilon > 0$, there exists $G$-equivariant bundle map $\tau:F_2|_{\triangle_2}\to V^\lambda \otimes \R^{2N}|_{\triangle_2}$, such that $|\tau|<\epsilon$ and $\pi_{\R^{2N}}|_{F_2}+\tau$ is injective. For $\epsilon$ small enough, $\Ima(\Id+\tau)|_{F_2}$ still covers $\coker\rD^\lambda s$ over $\triangle_2$ and is $G$-invariant.  Then $F'_2:=\pi_{\widetilde{F_1}^\perp}\Ima((\Id+\tau)|_{F_2})$ is still a trivial $G$-invariant subbundle over $\triangle_2$ of the same rank as $F_2$, $F'_2\cap \widetilde{F_1} = 0$ and $F'_2+\widetilde{F_1}$ covers $\coker \rD^\lambda s$ on $\triangle_2$. We can extend $F'_2$ to $\widetilde{F_2}$ over $M$ in $\widetilde{F_1}^\perp$. Then $\widetilde{F_1}+\widetilde{F_2}$ is a $G$-invariant subbundle and covers $\rD^\lambda s$ on $\triangle_1\cup \triangle_2$. Applying the same argument to $\triangle_3,\ldots, \triangle_l$, we can find the required $\overline{W}^\lambda$.  
\end{proof}

\begin{proposition}[Proposition \ref{prop:tang}]
Under the tubular neighborhood assumption (Definition \ref{def:tube}), we have the following.
\begin{enumerate}
	\item\label{tang:1} $\ind \rD^\lambda s$ is locally constant on $Z^G_\infty$ using $Z^G_i$ topology for any $0\le i \le \infty$.
	\item\label{tang:2} For every $k \in \N$, $\{x\in Z^G_\infty|\dim \coker \rD^\lambda s_x \le k\}$ is an open subset of $Z^G_\infty$ with $Z^G_i$ topology for any $2\le i \le \infty$. 
	\item\label{tang:3} Suppose that $s^G:Z^G\to W^G$ is transverse to $0$. Let $\overline{W}^\lambda$ be a trivial bundle asserted in Proposition \ref{prop:stab}, then $\overline{N}^\lambda : = (\rD^\lambda s)^{-1}(\overline{W}^\lambda)$ is smooth subbundle in $N^\lambda_\infty$ over $(s^G)^{-1}(0)$. Moreover, $\rank \overline{N}^\lambda_x-\rank \overline{W}^\lambda_x=\ind \rD^\lambda s_x$
\end{enumerate}
\end{proposition}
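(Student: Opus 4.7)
The plan is to prove each of the three parts by combining the stability of Fredholm operators with the $G$-equivariant local model provided by the tubular neighborhood assumption (Definition \ref{def:tube}).

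For part \eqref{tang:1}, I would fix $x_0 \in N \cap Z^G_\infty$ and choose a $G$-equivariant $\sc^+$ perturbation $\sigma = f\pi^*\tau$ such that $(s+\sigma)(x_0) = 0$, where $\tau$ is a local $\sc^+$ section of $W^G$ near $\pi(x_0)$ and $f$ is a $G$-invariant bump function. The key observation is that $\sigma$ takes values in $W^G$, so its $\lambda$-component vanishes identically; hence $\rD^\lambda(s+\sigma)_x = \rD^\lambda s_x$ for all $x \in N \cap Z^G_\infty$ near $x_0$. Since $s+\sigma$ is sc-Fredholm with $(s+\sigma)(x_0)=0$, there is a Fredholm chart near $x_0$ in which the linearization, up to a $\sc^+$-correction, is the derivative of a basic germ whose fiberwise Fredholm index is locally constant. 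The tubular neighborhood assumption provides $G$-equivariant trivializations, so the $\lambda$-isotypic splitting is preserved throughout this identification, and the standard Fredholm-index stability applied to the $\lambda$-component yields the result.

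For part \eqref{tang:2}, I would use upper semi-continuity of $\dim \coker$ on the space of Fredholm operators. In the Fredholm chart chosen above, after the $G$-equivariant identification, the family $x\mapsto \rD^\lambda(s+\sigma)_x = \rD^\lambda s_x$ becomes a norm-continuous family of Fredholm operators in the $Z^G_2$-topology. The level shift of $2$ arises from the smoothness of the strong bundle isomorphism $\Lambda(x)$ in a Fredholm chart and from the $\sc^+$-shift; this is essentially the content of \cite[Corollary 3.3]{counter}, already used in Proposition \ref{prop:coro}. Upper semi-continuity of $\dim \coker$ in norm-topology then gives openness of $\{\dim \coker \rD^\lambda s \le k\}$ in the $Z^G_i$-topology for all $i\ge 2$.

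For part \eqref{tang:3}, I would restrict to $M := (s^G)^{-1}(0)$, which by the transversality assumption and \cite[Theorem 3.13]{hofer2017polyfold} is a compact smooth submanifold of $Z^G$. On $M$, the bundles $N^\lambda$ and $W^\lambda$ restrict to $G$-tame M-polyfold bundles over a finite-dimensional base, and $\rD^\lambda s$ becomes a smooth (in the classical sense) family of $G$-equivariant Fredholm operators between them. By Proposition \ref{prop:stab}, $\overline{W}^\lambda$ is a finite-rank trivial $G$-invariant subbundle covering $\coker \rD^\lambda s$ over $M$, so the composition $q := \pi \circ \rD^\lambda s : N^\lambda \to W^\lambda/\overline{W}^\lambda$ is a smooth $G$-equivariant bundle map which is fiberwise surjective, and hence $\overline{N}^\lambda = \ker q = (\rD^\lambda s)^{-1}(\overline{W}^\lambda)$ is a smooth finite-rank subbundle of $N^\lambda_\infty|_M$. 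The rank identity follows from the short exact sequence $0 \to \ker \rD^\lambda s_x \to \overline{N}^\lambda_x \to \overline{W}^\lambda_x \cap \Ima \rD^\lambda s_x \to 0$ together with the fact that $\overline{W}^\lambda_x \twoheadrightarrow \coker \rD^\lambda s_x$ has kernel $\overline{W}^\lambda_x \cap \Ima \rD^\lambda s_x$, giving $\rank \overline{N}^\lambda_x = \dim \ker \rD^\lambda s_x + \rank \overline{W}^\lambda_x - \dim \coker \rD^\lambda s_x = \ind \rD^\lambda s_x + \rank \overline{W}^\lambda_x$.

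The main obstacle will be establishing the norm-continuity in part \eqref{tang:2}, since the linearization of an sc-Fredholm section is generally not continuous in $Z^G_0$-topology by \cite{counter}. This requires threading the $G$-equivariant isotypic projections through the Fredholm chart identification carefully so that they commute with the trivialization $\Lambda(x)$; the tubular neighborhood assumption's condition that the bundle decomposition is $G$-equivariant ensures that the projection to the $\lambda$-component is a $\sc^\infty$ bundle map and hence preserves the continuity established by the basic germ property.
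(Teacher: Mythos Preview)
Your argument for part~\eqref{tang:2} is essentially correct and matches the paper: both reduce to Proposition~\ref{prop:coro} (i.e.\ \cite[Corollary 3.3]{counter}). However, your phrasing ``becomes a norm-continuous family of Fredholm operators in the $Z^G_2$-topology'' overstates what that result actually gives: it does \emph{not} establish norm-continuity of $x\mapsto \rD^\lambda s_x$, only that a fixed finite-dimensional subbundle covering the cokernel at one point continues to cover it nearby. The paper itself warns immediately before the proposition that ``$\rD^\lambda s_x$ may not be continuous in the norm topology in a local trivialization \cite[Theorem 4.1]{counter}.''

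This is exactly where parts~\eqref{tang:1} and~\eqref{tang:3} of your proposal break down. For~\eqref{tang:1}, you perturb so that $(s+\sigma)(x_0)=0$ and then invoke ``standard Fredholm-index stability applied to the $\lambda$-component.'' But index stability requires norm-continuity of the family, which you have not established and which may in fact fail. The paper instead argues indirectly: given two points $x,y$ in a chart it constructs $\sc^+$-perturbations $\gamma_0,\gamma_1,\dots,\gamma_n$ (all in $W^G$) so that the zero set of the auxiliary section $s+\gamma_0+\sum r_i\gamma_i$ on $\R^n\times[0,1]\times V$ contains a path from $x$ to $y$ along which the section is transverse. On this path one can then apply the argument for~\eqref{tang:3} to conclude the $\lambda$-index is constant.

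For~\eqref{tang:3}, your assertion that ``$\rD^\lambda s$ becomes a smooth (in the classical sense) family of $G$-equivariant Fredholm operators'' is again unjustified for the same reason, so the kernel of your composite $q=\pi\circ\rD^\lambda s$ is not obviously a smooth subbundle. The paper avoids this by extending $\overline{W}^\lambda$ to a neighborhood $U$ of $(s^G)^{-1}(0)$, forming the sc-Fredholm section $\pi^*\iota+s$ on the total space $\pi^*(\oplus_\lambda\overline W^\lambda)$, and using that this section is transverse so its zero set is a smooth manifold by the polyfold implicit function theorem. Then $\overline N^\lambda$ is identified with the $\lambda$-direction of the normal bundle of $(s^G)^{-1}(0)$ inside this smooth zero set, from which smoothness and the rank formula follow. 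Your linear-algebra computation of the rank identity at the end is fine, but it needs this smoothness input first.
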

\begin{proof}
	 \eqref{tang:2} follows from Proposition \ref{prop:coro}. For \eqref{tang:3}, first note that by \eqref{tang:2} and compactness of $(s^G)^{-1}(0)$, there are finitely many $\lambda \in \Lambda$ such that $\rD^\lambda s$ is not surjective on $(s^G)^{-1}(0)$. Therefore we can construct a finite rank bundle $\oplus_{\lambda \in \Lambda} \overline{W}^\lambda$  covers the cokernel of $\rD s$ on $(s^{G})^{-1}(0)$. Since $\overline{W}^\lambda$ is trivial and $(s^G)^{-1}(0)$ is a sub-M-polyfold of $Z^G$, by Proposition \ref{prop:pull} generators of $\overline{W}^\lambda$ can be existed to a neighborhood $U\subset Z^G$ of 	$(s^G)^{-1}(0)$. Hence we have an extended trivial $G$-subbundle $\pi: \overline{W}^\lambda \to U$ of $W^\lambda|_U$. Let $\iota$ be the inclusion $\oplus_{\lambda\in \Lambda}\overline{W}^\lambda \to W|_{U}$, then
	 $$\pi^*\iota + s: \pi^*(\oplus_{\lambda \in \Lambda} \overline{W}^\lambda)\to \pi^*(W|_{U})$$
	 defines an equivariant sc-Fredholm section. Therefore $(\pi^*\iota+s)^{-1}(0)$ is a manifold containing $(s^{G})^{-1}(0)$ as a submanifold. Then $\overline{N}^\lambda$ is identified with the $\lambda$-direction of the normal bundle of $(s^G)^{-1}(0)$ in $(\pi^*\iota+s)^{-1}(0)$. The assertion on index is a fact from linear Fredholm theory.
	 
	 For \eqref{tang:1}, let $x\in Z^G_\infty$ and assume there is a neighborhood $V$ of $x$ modeled on a sc-retract $(O,\R^m_+\times \E)$ such that $O=r(U)$ for a connected neighborhood $U\subset \R^m_+\times \E$ of $(0,0)$. Then it suffices to prove $\ind \rD^\lambda s$ is constant on $V$. Let $y \in V_\infty$, then there exists a set of $\sc^+$-perturbations $\gamma_0,\gamma_1,\ldots, \gamma_n$ of $[0,1]\times W^G|_V\to [0,1]\times V$, such that the zero set of $s +\gamma_0+\sum_{i=1}^n r_i\gamma_i: \R^n\times [0,1]\times V\to \R^n\times [0,1] \times W^G$ contain a path from $(0, 0, x)\in \R^n\times [0,1]\times V$ to $(0,1,y)\in \R^n\times[0,1]\times V$ and $s + \gamma_0+ \sum_{i=1}^n r_i\gamma_i$ is transverse along the path. The existence of such perturbations follows from the proof of \cite[Proposition 6.1]{hofer2017polyfold}. Then the argument for \eqref{tang:2} can be applied to show that $\ind \rD^\lambda s_x = \ind \rD^\lambda s_y$.
\end{proof}

\section{Averaging in Polyfolds}
In this section, we prove that averaging argument works for polyfolds. The following Lemma follows from a classical argument e.g. \cite[Theorem 9.42]{rudin1976principles}, where the integration in Banach space is defined using Riemann sum, e.g. see \cite{gordon1991riemann}. 
\begin{lemma}\label{lemma:int}
		Let $M$ be a compact manifold with a volume form. Let $U\subset \R_+^m\times E$ and $V \subset \R_+^n\times F$ be two open subsets for Banach spaces $E,F$.  Assume we have a  map $f:M\times U \to V$, then
		\begin{enumerate}
			\item\label{int:1} if $f$ is continuous, then 
				$$\overline{f}: U\to \R_+^n\times F,\quad x \mapsto \int_M f(p,x) \rd p $$
			is continuous;
			\item\label{int:2} if $f$ is $C^1$, then $\overline{f}$ is $C^1$ and the differential is 
			$$\rD \overline{f}_x: \R^m\times E \to \R^n\times F, \quad v \mapsto   \int_M \rD f_{(p,x)} v \rd p. $$
		\end{enumerate}
\end{lemma}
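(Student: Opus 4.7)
The plan is to adapt the standard proof of continuity and differentiability of parameter integrals in Banach spaces, noting only that one should check the partial-quadrant codomain $\R_+^n\times F$ is preserved. The integral $\int_M f(p,x)\,\rd p$ is defined via Riemann sums (as in \cite{gordon1991riemann}); since $f(p,x)\in V\subset \R_+^n\times F$ and the partial quadrant $\R_+^n\times F$ is convex and closed, every Riemann sum divided by $\vol(M)$ lies in $\R_+^n\times F$, so the limit $\overline{f}(x)$ lies in $\R_+^n\times F$ as claimed.

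For part \eqref{int:1}, I would fix $x_0\in U$ and exploit the compactness of $M$ to upgrade pointwise continuity of $f$ on $M\times\{x_0\}$ to joint continuity uniform in $p$. Concretely, given $\ep>0$, for each $p\in M$ choose neighborhoods $W_p\subset M$ of $p$ and $U_p\subset U$ of $x_0$ with $\|f(q,x)-f(p,x_0)\|<\ep/2$ on $W_p\times U_p$; a finite subcover $\{W_{p_i}\}$ yields a single neighborhood $U_0:=\bigcap_i U_{p_i}$ of $x_0$ on which $\|f(q,x)-f(q,x_0)\|<\ep$ for all $q\in M$. Then
\[
\|\overline{f}(x)-\overline{f}(x_0)\|\le \int_M \|f(p,x)-f(p,x_0)\|\,\rd p\le \ep\cdot\vol(M),
\]
proving continuity.

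For part \eqref{int:2}, I would write the remainder as a double integral and use the Banach-space mean value theorem: for $v$ small,
\[
f(p,x+v)-f(p,x)-\rD f_{(p,x)}v=\int_0^1\bigl(\rD f_{(p,x+tv)}-\rD f_{(p,x)}\bigr)v\,\rd t.
\]
The same uniform-continuity argument from part \eqref{int:1}, now applied to the continuous map $\rD f: M\times U\to L(\R^m\times E,\R^n\times F)$, gives $\|\rD f_{(p,x+tv)}-\rD f_{(p,x)}\|<\ep$ uniformly in $p\in M$ and $t\in[0,1]$ for $\|v\|$ small. Therefore
\[
\Bigl\|\overline{f}(x+v)-\overline{f}(x)-\int_M \rD f_{(p,x)}v\,\rd p\Bigr\|\le \ep\cdot\vol(M)\cdot\|v\|,
\]
which identifies $\rD\overline{f}_x(v)=\int_M \rD f_{(p,x)}v\,\rd p$. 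Continuity of $x\mapsto \rD\overline{f}_x$ follows by applying part \eqref{int:1} to the continuous map $(p,x)\mapsto \rD f_{(p,x)}$ valued in the Banach space $L(\R^m\times E,\R^n\times F)$. I do not anticipate any serious obstacle; the only point requiring care is the uniform-in-$p$ continuity argument, which is routine given the compactness of $M$.
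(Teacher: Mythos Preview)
Your proposal is correct and is precisely the classical argument the paper has in mind: the paper does not give a proof but simply refers to \cite[Theorem 9.42]{rudin1976principles} for differentiation under the integral sign and to \cite{gordon1991riemann} for the Riemann integral in Banach spaces. Your write-up is a faithful unpacking of that reference, including the uniform-in-$p$ continuity via compactness of $M$ and the integral mean-value remainder estimate.
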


\begin{proposition}\label{prop:bound}
	Let $\mathscr{L}(E,F)$ be the space of bounded linear maps between Banach spaces $E,F$.  Given a compact manifold $M$ and a map $D:M \to \mathscr{L}(E,F)$ such that $T: M\times E \to F, (p,e) \mapsto (p, D(p)e)$ is continuous.  Then $\sup_{p\in M} ||D(p)|| < \infty$. 
\end{proposition}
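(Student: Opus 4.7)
The plan is to reduce the claim to the classical Banach--Steinhaus (uniform boundedness) theorem, exploiting the fact that joint continuity of $T$ gives pointwise boundedness of the family $\{D(p)\}_{p\in M}$.

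First I would observe that for each fixed vector $e\in E$, the map
$g_e\colon M\to F$, $p\mapsto D(p)e$, is continuous. This is immediate from the assumed joint continuity of $T$ on $M\times E$ (it is the restriction of $T$ to the slice $M\times\{e\}$). Since $M$ is compact, $g_e(M)\subset F$ is compact and hence norm-bounded, so $\sup_{p\in M}\|D(p)e\|_F<\infty$ for every $e\in E$.

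Next, since $E$ is a Banach space and $\{D(p)\}_{p\in M}$ is a pointwise-bounded family of bounded linear operators $E\to F$, the Banach--Steinhaus theorem gives a uniform operator-norm bound: $\sup_{p\in M}\|D(p)\|_{\mathscr L(E,F)}<\infty$, which is exactly the statement to be proved.

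There is no real obstacle here; the only thing to verify carefully is that the hypothesis of joint continuity of $T$ (as stated in the proposition) does yield continuity in $p$ for each fixed $e$, and this is trivial. No completeness or compactness assumption on $F$ is needed, only that $E$ is a Banach space, which is part of the standing setup.
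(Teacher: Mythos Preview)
Your proof is correct, but it takes a genuinely different route from the paper. The paper argues directly from joint continuity at the points $(p,0)$: since $T(p,0)=0$ and $T$ is continuous, there is a neighborhood $U\times B_\delta(0)$ of $(p,0)$ mapped into $B_\epsilon(0)$, and homogeneity of $D(q)$ then gives $\|D(q)\|\le \epsilon/\delta$ for all $q\in U$; compactness of $M$ and a finite cover finish the argument. Your argument instead uses only separate continuity in $p$ for each fixed $e$ to get pointwise boundedness, and then invokes Banach--Steinhaus. Your route is slightly less elementary (it rests on the Baire category theorem through Banach--Steinhaus) and requires completeness of $E$, while the paper's direct estimate would go through for any normed space $E$; on the other hand, your argument extracts the conclusion from a weaker piece of the hypothesis, since it never uses continuity of $T$ in the $E$-variable.
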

\begin{proof}
	For $p \in M$, since $T$ is continuous, for every $\epsilon > 0$, there exits a $\delta > 0$ and a neighborhood $U\subset M$ of $p$ such that $T|_{U\times B_{\delta}(0)} \subset B_{\epsilon}(0)$.  Hence for every  $q \in U$, $||D(q)|| \le \frac{\epsilon}{\delta}$. Since $M$ is compact, we have $\sup_{p\in M} ||D(p)|| < \infty$.
\end{proof}

\begin{lemma}\label{lemma:ave}
	Let $M$ be a compact manifold with a volume form. Let $U\subset \R_+^m\times \E$ and $V \subset \R_+^n\times \F$ be two open subsets. Assume we have a $\sc^1$ map $f:M\times U \to V$, then
	$$\overline{f}: U\to \R_+^n\times \F,\quad x \mapsto \int_M f(p,x)\rd p$$
	is a $\sc^1$ map.
\end{lemma}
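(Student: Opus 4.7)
\medskip

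The plan is to verify the two defining properties of an $\sc^1$ map: (i) $\overline{f}$ preserves levels and is continuous on each level, and (ii) for every $x\in U_{i+1}$ there is a bounded linear derivative $\rD\overline{f}_x\colon(\R^m\times\E)_i\to(\R^n\times\F)_i$ making the tangent map sc-continuous. The candidate derivative is of course obtained by differentiating under the integral sign, so essentially this is a classical Leibniz rule reproduced level by level, once one knows that on each level the integrand is $C^1$ in the $U$-variable with uniformly bounded differential.

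First, I would handle $\sc^0$-continuity. By hypothesis $f\colon M\times U_i\to \R_+^n\times\F_i$ is continuous for every $i\ge 0$, and since $\R_+^n$ is convex and closed under positive linear combinations, $\overline{f}(x):=\int_M f(p,x)\,\rd p$ lies in $\R_+^n\times\F_i$ whenever $x\in U_i$. Applying Lemma \ref{lemma:int}\eqref{int:1} with $E=\R^m\times\E_i$ and $F=\R^n\times\F_i$ then gives that $\overline{f}\colon U_i\to\R_+^n\times\F_i$ is continuous.

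Second, I would define the derivative. Since $f$ is $\sc^1$, for every $(p,x)\in M\times U_{i+1}$ the partial derivative $\rD_2 f_{(p,x)}\colon(\R^m\times\E)_i\to(\R^n\times\F)_i$ in the $U$-direction exists, is bounded linear, and the map
\begin{equation*}
M\times U_{i+1}\times(\R^m\times\E)_i\longrightarrow (\R^n\times\F)_i,\qquad (p,x,v)\longmapsto \rD_2 f_{(p,x)}(v)
\end{equation*}
is continuous. For fixed $x\in U_{i+1}$, Proposition \ref{prop:bound} then gives $\sup_{p\in M}\|\rD_2 f_{(p,x)}\|<\infty$, so
\begin{equation*}
\rD\overline{f}_x(v):=\int_M \rD_2 f_{(p,x)}(v)\,\rd p
\end{equation*}
is well-defined and bounded linear from $(\R^m\times\E)_i$ to $(\R^n\times\F)_i$. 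To check that this is indeed the level-$i$ derivative of $\overline{f}\colon U_{i+1}\to V_i$, I would invoke that $f$ is $\sc^1$ to pass to the classical fact (by \cite[Proposition 1.7]{hofer2017polyfold}) that $f\colon M\times U_{i+1}\to V_i$ is $C^1$ in the Fréchet sense, then apply Lemma \ref{lemma:int}\eqref{int:2} with $E=\R^m\times\E_{i+1}$ and $F=\R^n\times\F_i$ to conclude that $\overline{f}\colon U_{i+1}\to\R_+^n\times\F_i$ is $C^1$ with the claimed derivative.

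Finally, I would verify that the tangent map $T\overline{f}\colon U^1\oplus(\R^m\times\E)\to(\R_+^n\times\F)\oplus(\R^n\times\F)$ sending $(x,v)$ to $(\overline{f}(x),\rD\overline{f}_x v)$ is sc-continuous. The first component on level $i$ is $\overline{f}\colon U_{i+1}\to(\R_+^n\times\F)_{i+1}$, already established. For the second component, continuity of
\begin{equation*}
U_{i+1}\times(\R^m\times\E)_i\longrightarrow(\R^n\times\F)_i,\qquad (x,v)\longmapsto \int_M \rD_2 f_{(p,x)}(v)\,\rd p
\end{equation*}
follows by reapplying Lemma \ref{lemma:int}\eqref{int:1} to the jointly continuous integrand $(p,x,v)\mapsto \rD_2 f_{(p,x)}(v)$. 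The main technical point to keep in mind—though it is minor here—is ensuring joint continuity of $\rD_2 f$ in the variables $(p,x,v)$ at the required level shift, so that both the Leibniz differentiation and the continuity of the tangent map drop out of the two parts of Lemma \ref{lemma:int}; everything else is a bookkeeping exercise in level shifts.
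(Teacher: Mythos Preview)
Your proposal is correct and follows essentially the same approach as the paper: both use Lemma~\ref{lemma:int}\eqref{int:1} for $\sc^0$-continuity, Lemma~\ref{lemma:int}\eqref{int:2} for the level-wise $C^1$ property and identification of the derivative, Proposition~\ref{prop:bound} for boundedness of $\rD\overline{f}_x$, and Lemma~\ref{lemma:int}\eqref{int:1} again for continuity of the tangent map. The only minor discrepancy is that you cite \cite[Proposition~1.7]{hofer2017polyfold} to obtain $f\colon M\times U_{i+1}\to V_i$ is $C^1$, whereas the paper invokes \cite[Proposition~1.5]{hofer2017polyfold}, which is the precise $\sc^1$ characterization (Proposition~1.7 concerns $\sc^k$ maps being $C^k$ after shifts, typically stated for $\sc^\infty$); the fact you use is correct either way.
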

\begin{proof}
	By \eqref{int:1} of Lemma \ref{lemma:int}, $\overline{f}$ is a $\sc^0$ map. By \cite[Proposition 1.5]{hofer2017polyfold},  for every $k\ge 1$ the induced map $f: M\times U_k \to \R_+^n\times \F_{k-1}$ is $C^1$ and the partial tangent map $Tf: M\times T U_{k-1}\to TV_{k-1},  (p,(x,u)) \mapsto (f(p,x), \rD f_{(p,x)}(0,u))$ is continuous. Then by Lemma \ref{lemma:int}, $\overline{f}: U_k \to \R^n\times \F_{k-1}$ is $C^1$ and $\overline{Tf} = T\overline{f}: TU \to T(\R_+^n\times \F)$ is $\sc^0$.  By \cite[Proposition 1.5]{hofer2017polyfold}, to show $\overline{f}$ is $\sc^1$ it suffices to show that  for $x \in U_k$, the linear map
	$$v\in \R^m\times \E_{k-1} \mapsto \int_M \rD f_{(p,x)}v \rd p$$
	defines an element in $\mathscr{L}(\R^m\times \E_{k-1}, \R^n\times \F_{k-1})$. This is because $||Df_{(p,x)}||$ is uniformly bounded for $p\in M$ and fixed $x$ by Proposition \ref{prop:bound}.
\end{proof}

By induction, we have the following corollary of Lemma \ref{lemma:ave}.
\begin{corollary}\label{lemma:avesmooth}
		Let $M$ be a compact manifold with a volume form. Let $U\subset \R_+^m\times \E$ and $V \subset \R_+^n\times \F$ be two open subsets. Assume we have a $\sc^k$ map $f:M\times U \to V$ for $0 \le k \le \infty$, then
	$$\overline{f}: U\to \R_+^n\times \F,\quad x \mapsto \int_M f(p,x)\rd p$$
	is a $\sc^k$ map.
\end{corollary}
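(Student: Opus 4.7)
The plan is to proceed by induction on $k$, using Lemma \ref{lemma:ave} as both the base case ($k=1$) and the template for the inductive step. The case $k=0$ is already covered by part \eqref{int:1} of Lemma \ref{lemma:int}.

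For the inductive step, suppose the corollary holds up to level $k-1$, and let $f\colon M\times U\to V$ be a $\sc^k$ map. The key observation is that by treating $M$ purely as a parameter space, the partial tangent map
\[
Tf\colon M\times TU \to TV,\qquad (p,(x,u))\mapsto \bigl(f(p,x),\, \rD f_{(p,x)}(0,u)\bigr),
\]
is $\sc^{k-1}$, because $f$ being $\sc^k$ in the $U$-variable (uniformly and smoothly in $p\in M$) says exactly that all iterated partial tangent maps up to order $k-1$ in $U$ exist and are continuous on the appropriate shifted levels. By the inductive hypothesis applied to this $\sc^{k-1}$ map (whose target sc-Banach space is now $T(\R_+^n\times \F)$), the averaged map
\[
\overline{Tf}\colon TU \to T(\R_+^n\times\F),\qquad (x,u)\mapsto \int_M Tf(p,(x,u))\rd p
\]
is $\sc^{k-1}$.

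The remaining (and only nontrivial) step is to identify $\overline{Tf}$ with the tangent map $T\overline{f}$ of the averaged map. This is exactly the content of the second half of the proof of Lemma \ref{lemma:ave}: for $x\in U_j$ with $j\ge 1$, the linear map $v\mapsto \int_M \rD f_{(p,x)}v\rd p$ is well-defined and bounded thanks to the uniform bound $\sup_{p\in M}\|\rD f_{(p,x)}\|<\infty$ from Proposition \ref{prop:bound}, and part \eqref{int:2} of Lemma \ref{lemma:int} applied on each level identifies this integral with the classical derivative of $\overline{f}$. Hence $T\overline{f}=\overline{Tf}$, which is $\sc^{k-1}$ by the inductive hypothesis, so $\overline{f}$ is $\sc^k$ by \cite[Proposition 1.5]{hofer2017polyfold}. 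The induction closes, and the $k=\infty$ case follows since $\sc^\infty$ means $\sc^k$ for every finite $k$.

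The only real obstacle is bookkeeping the level shifts: one must make sure that when one views $Tf$ as a $\sc^{k-1}$ map over the sc-Banach space $T(\R^m\times\E)$ with its shifted filtration $(TU)_i=U_{i+1}\times(\R^m\times\E_i)$, the inductive hypothesis genuinely applies to the target $T(\R_+^n\times\F)$. This is routine because the definition of $\sc^k$ in \cite[Definition 1.9]{hofer2017polyfold} is formulated precisely so that tangent bundles of partial quadrants are again (shifted) partial quadrants, and the hypothesis on $f$ being $\sc^k$ in the $U$-direction transfers verbatim to $Tf$ being $\sc^{k-1}$ in the $TU$-direction, with $M$ retained as a smooth finite-dimensional parameter.
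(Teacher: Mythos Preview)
Your proof is correct and follows the same approach as the paper, which simply states ``By induction, we have the following corollary of Lemma \ref{lemma:ave}.'' You have carefully spelled out the inductive step---identifying $T\overline{f}=\overline{Tf}$ via the $\sc^1$ case and then applying the inductive hypothesis to the partial tangent map $Tf\colon M\times TU\to TV$---which is exactly what the paper leaves implicit.
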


The following two corollaries are direct consequences of Corollary \ref{lemma:avesmooth}. 
\begin{corollary}\label{coro:equi}
	Let $G$ acts on a polyfold $Z$ by $(\rho,\mathfrak{P})$. Assume we have two $G$-invariant neighborhoods $V\subset U$ and a sc-smooth function $f:Z\to [0,1]$ such that $f|_V  = 1$ and $\supp f \subset U$. Let $\mu$ be a Haar measure on $G$ with $\mu(G) = 1$. Then $\overline{f}:= \int_G \mathfrak{P}(g,\cdot)^*f \rd \mu $ is a sc-smooth function on $Z$ such that $\overline{f}|_V = 1$ and $\supp \overline{f} \subset U$. 
\end{corollary}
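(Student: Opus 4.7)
The plan is to split the proof into (i) sc-smoothness of $\overline{f}$, which requires the averaging technology, and (ii) the pointwise identity $\overline{f}|_V = 1$ and the support estimate, which are straightforward consequences of $G$-invariance. Since $\overline{f}$ is the integral over the compact Lie group $G$ of the pullback of $f$ by the action, the natural tool is Corollary \ref{lemma:avesmooth}; the obstruction is that $f \circ \rho : G \times Z \to [0,1]$ has no single sc-smooth representative, only the local ones from Definition \ref{def:action}.

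To verify sc-smoothness near an arbitrary $z_0 \in Z$, I would fix a polyfold structure $(\cX_0, \bX_0)$ for $Z$ and a representative $x_0 \in \cX_0$ of $z_0$. For every $g \in G$, Definition \ref{def:action} provides a neighborhood $U_g \subset G$ of $g$ and a sc-smooth local representative $\rho_{g, x_g} : U_g \times (\stab_{x_g} \ltimes \cU_g) \to (\cY_g, \bY_g)$ with $|\rho_{g,x_g}^0(h,w)| = \rho(h,|w|)$, possibly in some equivalent polyfold structure. Using the sc-diffeomorphisms encoded by the equivalence of polyfold structures, I would pull $\rho_{g,x_g}^0$ back to a sc-smooth map on $U_g \times \cU_g'$ for some uniformizer $\cU_g' \subset \cX_0$ around $x_0$, and then compose with the sc-smooth function $f$ (which pulls back to a sc-smooth function on any polyfold structure) to obtain a sc-smooth $F_g : U_g \times \cU_g' \to [0,1]$ satisfying $F_g(h,w) = f(\rho(h,|w|))$. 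By compactness of $G$, I would extract a finite subcover $U_{g_1}, \ldots, U_{g_n}$, set $\cU_0 := \bigcap_i \cU_{g_i}'$, and pick a smooth partition of unity $\{\chi_i\}$ on $G$ with $\chi_i$ compactly supported in $U_{g_i}$. Then
$$\overline{f}(|w|) = \sum_{i=1}^n \int_G \chi_i(g)\, F_{g_i}(g,w)\, \mathrm{d}\mu(g), \qquad w \in \cU_0,$$
where each integrand is the zero-extension to $G \times \cU_0$ of a sc-smooth function, the extension being sc-smooth because $\chi_i$ vanishes to all orders outside $\supp\chi_i \Subset U_{g_i}$. Pulling back through a sc-retraction describing the chart of $\cU_0$ and invoking Corollary \ref{lemma:avesmooth} with $M = G$ equipped with its Haar-measure volume form yields sc-smoothness of $\overline{f}$ on $\cU_0$.

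For the remaining assertions I would use only $G$-invariance of the sets $V$ and $U$: for any $z \in V$ and any $g \in G$ one has $\rho(g,z) \in V$, hence $f(\rho(g,z)) = 1$ identically, and $\overline{f}(z) = \mu(G) = 1$. For the support, the inclusion $\{\overline{f} \neq 0\} \subset \rho(G, \{f \neq 0\})$ together with sequential compactness of $G$ (which shows $\rho(G, \supp f)$ is closed by a standard sequence argument: $z_n = \rho(g_n, a_n)$ with $a_n \in \supp f$, $g_n \to g$, so $a_n = \rho(g_n^{-1}, z_n) \to \rho(g^{-1}, z) \in \supp f$) and the $G$-invariance of $U$ give $\supp \overline{f} \subset \rho(G, \supp f) \subset U$.

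The main obstacle is the bookkeeping between equivalent polyfold structures in the sc-smoothness step: different $g$'s produce local representatives whose domains and codomains a priori live in different equivalent polyfold structures for $Z$, and they must all be transported to a single common uniformizer around $x_0 \in \cX_0$ in the fixed structure before Corollary \ref{lemma:avesmooth} can be applied. Once this reduction through sc-diffeomorphisms arising from the equivalence relation on polyfold structures has been executed, the averaging argument via the partition of unity on $G$ is essentially formal.
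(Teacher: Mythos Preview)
Your proposal is correct and follows the same route as the paper: both reduce sc-smoothness of $\overline{f}$ to Corollary~\ref{lemma:avesmooth}, and the paper in fact records Corollary~\ref{coro:equi} only as a ``direct consequence'' of that averaging result without further argument. Your write-up is more careful than the paper in one respect: you explicitly address that the action is only \emph{locally} represented by sc-smooth functors (Definition~\ref{def:action}), and you bridge this with a finite cover of $G$ and a partition of unity $\{\chi_i\}$ to produce a globally defined sc-smooth integrand on $G\times\cU_0$ before invoking Corollary~\ref{lemma:avesmooth}. The paper suppresses this step entirely. Your treatment of $\overline{f}|_V=1$ and $\supp\overline{f}\subset U$ via $G$-invariance is standard and correct.
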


The following corollary is used in the proof of Theorem \ref{thm:equitran}.
\begin{corollary}\label{coro:ave}
	Let $\cN \to \cX$ and $\cE \to \cX$ be two $G$-M-polyfold bundles. Let $\mu$ be a Haar measure on $G$ with $\mu(G) = 1$. Assume we have a sc-smooth bundle map $f:\cN \to \cE$ then 
	$$\overline{f}: \cN \to \cE, \quad v \mapsto \int_G g\cdot f(g^{-1} v) \rd g$$
	is a $G$-equivariant sc-smooth bundle map. 
\end{corollary}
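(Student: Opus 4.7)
The plan is to verify three properties of $\overline{f}$---$G$-equivariance, the bundle-map property, and sc-smoothness---with only the last requiring real work. Equivariance follows from left-invariance of the Haar measure: the substitution $g \mapsto hg$ gives
$$\overline{f}(h\cdot v) = \int_G g\cdot f(g^{-1}h\cdot v)\,\rd g = \int_G (hg')\cdot f((g')^{-1}v)\,\rd g' = h\cdot \overline{f}(v).$$
Since the $G$-actions on $\cN$ and $\cE$ cover the identity on $\cX$ (Definition~\ref{def:lambundle}), every integrand $g\cdot f(g^{-1}v)$ lies in the single fiber $\cE_{\pi_\cN(v)}$, so the integral does too; thus $\overline{f}$ covers $\Id_\cX$, and fiberwise linearity is inherited from $f$ and from the linear $G$-action.

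For sc-smoothness, fix $v_0\in \cN$ with base point $x_0\in \cX$. The key observation is that because the induced base action of $G$ is trivial, a single pair of strong bundle charts $(\cO,\Phi_\cN,(P_\cN,U_0\lhd \F_\cN))$ for $\cN$ and $(\cO,\Phi_\cE,(P_\cE,U_0\lhd \F_\cE))$ for $\cE$ over a common base chart around $x_0$ is automatically preserved by $G$ fiberwise. In these charts the $\cN$-action reads $(x,w)\mapsto(x,\tilde\varrho_\cN(g,x)w)$ for a family $\tilde\varrho_\cN:G\times U_0\to \mathscr{L}(\F_\cN,\F_\cN)$, and similarly $\tilde\varrho_\cE$, both sc-smooth (extended to all of $\F$ by pre-composing with the retract cover); $f$ is represented by a sc-smooth fiberwise-linear $F(x,v)$. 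A direct computation gives
$$\overline{F}(x,v) \;:=\; \pi_{\F_\cE}\circ \Phi_\cE\circ \overline{f}\circ \Phi_\cN^{-1}(x,v) \;=\; \int_G \tilde\varrho_\cE(g,x)\, F\bigl(x,\tilde\varrho_\cN(g^{-1},x)v\bigr)\,\rd g.$$

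The integrand is a sc-smooth map $G\times (U_0\lhd \F_\cN)\to \F_\cE$ by the chain rule, invoking sc-smoothness of the two $G$-actions (Definition~\ref{def:act}), sc-smoothness of $f$, and smoothness of inversion on $G$. Since $G$ is a compact Lie group carrying the Haar volume form, Corollary~\ref{lemma:avesmooth} applies with $M=G$ and $U=U_0\lhd\F_\cN$, showing $\overline{F}$ is sc-smooth. Hence $\overline{f}$ is sc-smooth near the arbitrary point $v_0$, completing the proof.

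The main technical obstacle I anticipate is the chart bookkeeping that reduces the problem to a single integral of the type covered by Corollary~\ref{lemma:avesmooth}. Triviality of the base action is used precisely to avoid a partition-of-unity argument on $G$: one pair of bundle charts around $x_0$ represents the integrand simultaneously for \emph{all} $g\in G$. One must also verify that the families $\tilde\varrho_\cN(g,x),\tilde\varrho_\cE(g,x)$ are jointly sc-smooth in $(g,x)$ as operator-valued functions, which unpacks the strong bundle map hypothesis of the action directly. Once these are in hand, the reduction to Corollary~\ref{lemma:avesmooth} is immediate and no further analytic work is required.
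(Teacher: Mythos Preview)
Your proposal is correct and is precisely the detailed unpacking of what the paper leaves implicit: the paper simply states that Corollary~\ref{coro:ave} is a ``direct consequence of Corollary~\ref{lemma:avesmooth}'' without further argument, and your reduction via local bundle charts (using triviality of the base action to make one chart serve for all $g\in G$) followed by an application of Corollary~\ref{lemma:avesmooth} is exactly that direct consequence spelled out.
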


\bibliographystyle{plain} 
\bibliography{ref}
\end{document}